\def\@cite#1#2{{\m@th\upshape\bfseries%
[{#1\if@tempswa{\m@th\upshape\mdseries, #2}\fi}]}}
\theoremstyle{plain}
\newtheorem{thm}{Theorem}[section]
\newtheorem{cor}[thm]{Corollary}
\newtheorem{ass}[thm]{Assumption}
\newtheorem{prop}[thm]{Proposition}
\newtheorem{lem}[thm]{Lemma}
\newtheorem{sublem}[thm]{Sublemma}
\newtheorem{conv}[thm]{Convention}
\theoremstyle{definition}
\newtheorem{defn}[thm]{Definition}
\newtheorem{war}[thm]{Warning}
\newtheorem{ex}[thm]{Example}
\newtheorem{prob}[thm]{Problem}
\newtheorem{conj}[thm]{Conjecture}
\theoremstyle{remark}
\newtheorem{rem}[thm]{Remark}
\numberwithin{equation}{subsection}
\renewcommand{\bold}[1]{\medskip \noindent {\bf #1 }\nopagebreak}
\newcommand{\nc}{\newcommand}
\newcommand{\rnc}{\renewcommand}
\newcommand{\red}{\color{black}}
\newcommand{\black}{\color{black}}
\nc\bA{\mathbb{A}}
\nc\bB{\mathbb{B}}
\nc\bC{\mathbb{C}}
\nc\bD{\mathbb{D}}
\nc\bE{\mathbb{E}}
\nc\bF{\mathbb{F}}
\nc\bG{\mathbb{G}}
\nc\bH{\mathbb{H}}
\nc\bI{\mathbb{I}}
\nc{\bJ}{\mathbb{J}} 
\nc\bK{\mathbb{K}}
\nc\bL{\mathbb{L}}
\nc\bM{\mathbb{M}}
\nc\bN{\mathbb{N}}
\nc\bO{\mathbb{O}}
\nc\bP{\mathbb{P}}
\nc\bQ{\mathbb{Q}}
\nc\bR{\mathbb{R}}
\nc\bS{\mathbb{S}}
\nc\bT{\mathbb{T}}
\nc\bU{\mathbb{U}}
\nc\bV{\mathbb{V}}
\nc\bW{\mathbb{W}}
\nc\bY{\mathbb{Y}}
\nc\bX{\mathbb{X}}
\nc\bZ{\mathbb{Z}}
\nc\cA{\mathcal{A}}
\nc\cB{\mathcal{B}}
\nc\cC{\mathcal{C}}
\rnc\cD{\mathcal{D}}
\nc\cE{\mathcal{E}}
\nc\cF{\mathcal{F}}
\nc\cG{\mathcal{G}}
\rnc\cH{\mathcal{H}}
\nc\cI{\mathcal{I}}
\nc{\cJ}{\mathcal{J}} 
\nc\cK{\mathcal{K}}
\rnc\cL{\mathcal{L}}
\nc\cM{\mathcal{M}}
\nc\cN{\mathcal{N}}
\nc\cO{\mathcal{O}}
\nc\cP{\mathcal{P}}
\nc\cQ{\mathcal{Q}}
\rnc\cR{\mathcal{R}}
\nc\cS{\mathcal{S}}
\nc\cT{\mathcal{T}}
\nc\cU{\mathcal{U}}
\nc\cV{\mathcal{V}}
\nc\cW{\mathcal{W}}
\nc\cY{\mathcal{Y}}
\nc\cX{\mathcal{X}}
\nc\cZ{\mathcal{Z}}
\nc\bfA{\mathbf{A}}
\nc\bfB{\mathbf{B}}
\nc\bfC{\mathbf{C}}
\nc\bfD{\mathbf{D}}
\nc\bfE{\mathbf{E}}
\nc\bfF{\mathbf{F}}
\nc\bfG{\mathbf{G}}
\nc\bfH{\mathbf{H}}
\nc\bfI{\mathbf{I}}
\nc{\bfJ}{\mathbf{J}} 
\nc\bfK{\bk}
\nc\bfL{\mathbf{L}}
\nc\bfM{\mathbf{M}}
\nc\bfN{\mathbf{N}}
\nc\bfO{\mathbf{O}}
\nc\bfP{\mathbf{P}}
\nc\bfQ{\mathbf{Q}}
\nc\bfR{\mathbf{R}}
\nc\bfS{\mathbf{S}}
\nc\bfT{\mathbf{T}}
\nc\bfU{\mathbf{U}}
\nc\bfV{\mathbf{V}}
\nc\bfW{\mathbf{W}}
\nc\bfY{\mathbf{Y}}
\nc\bfX{\mathbf{X}}
\nc\bfZ{\mathbf{Z}}
\newcommand{\bk}{{\mathbf{k}}}
\nc{\dmo}{\DeclareMathOperator}
\nc{\wt}{\widetilde}
\rnc{\Re}{\operatorname{Re}}
\rnc{\Im}{\operatorname{Im}}
\rnc{\span}{\operatorname{span}}
\dmo{\rank}{rank}
\dmo{\End}{End}
\dmo{\Hom}{Hom}
\dmo{\Jac}{Jac}
\dmo{\Id}{Id}
\dmo{\Ann}{Ann}
\dmo{\Area}{Area}
\dmo{\CP}{\bC P^1}
\dmo{\rk}{rank}
\dmo{\rel}{rel}
\dmo{\ra}{\rightarrow}
\rnc{\Col}{\operatorname{Col}}
\nc{\ColOne}{\Col_{\bfC_1}}
\nc{\ColOneX}{\ColOne(X,\omega)}
\nc{\ColTwo}{\Col_{\bfC_2}}
\nc{\ColTwoX}{\ColTwo(X,\omega)}
\nc{\ColOneTwo}{\Col_{\bfC_1, \bfC_2}}
\nc{\ColOneTwoX}{\ColOneTwo(X,\omega)}
\nc{\MOne}{\cM_{\bfC_1}}
\nc{\MTwo}{\cM_{\bfC_2}}
\nc{\MOneTwo}{\cM_{\bfC_1, \bfC_2}}
\dmo{\For}{\cF}
\nc{\GL}{\mathrm{GL}^+(2, \bR)}
\title{Reconstructing orbit closures from their boundaries}
\author{Paul~Apisa \and Alex~Wright}
\date{}
\begin{document}
\maketitle
\thispagestyle{empty}

\begin{abstract}
We introduce and study \emph{diamonds} of $\GL$-invariant subvarieties of Abelian and quadratic differentials, which allow us to recover information on an invariant subvariety by simultaneously considering two degenerations, and which provide a new tool for the classification of invariant subvarieties. We classify a surprisingly rich collection of diamonds where the two degenerations are contained in ``trivial" invariant subvarieties. \red Our main results have been applied  to classify large collections of invariant subvarieties; the statement of those results do not involve diamonds, but their proofs rely on them.\black  
\end{abstract}

\renewcommand{\thefootnote}{\fnsymbol{footnote}} 
\footnotetext{2010 \emph{Mathematics Subject Classification}: 32G15, 37D40, 14H15.}     
\renewcommand{\thefootnote}{\arabic{footnote}}

\tableofcontents

\section{Introduction}\label{S:intro}

The $\GL$-orbit closure of a translation surface is a properly immersed smooth suborbifold \cite{EM, EMM} and algebraic variety \cite{Fi2}. Conversely, every subvariety of translation surfaces that is $\GL$-invariant and irreducible is an orbit closure, so we use ``invariant subvariety" as a synonym for ``orbit closure", it being implicit that our subvarieties are irreducible unless otherwise indicated. 

This paper concerns the classification of invariant subvarieties. Previous classification results in genus 2 \cite{Mc5}, and subsequent classification results in genus 3 and higher, recalled below, give hope for strong, general results, but recently discovered examples \cite{MMW, EMMW} underscore the difficulty of obtaining such results. 

Here we develop new tools for the classification problem. Our study advances an emerging paradigm, which is that invariant subvarieties may be studied inductively, using their boundary. While considering a single degeneration is often insufficient, we show that one can often completely determine the structure of an invariant subvariety from two degenerations that form what we call a diamond. Our methods provide a framework for further analysis, and our results \red are \black crucial ingredients in two \red subsequent \black papers on classification  \red\cite{ApisaWrightGemini, ApisaWrightHighRank}. \black

The broader goal of this paper and the \red subsequent \black papers is to realize a portion of Mirzakhani's vision for classification: there should be easily verified conditions which imply that an invariant subvariety is ``trivial", and which are so broadly applicable that one could say they solve a major portion of the classification problem; see Remark \ref{R:Mirzakhani} for more details.

\subsection{Diamonds}\label{SS:diamonds}

Before discussing our main results (Theorems \ref{T:IntroFull} and \ref{T:DoubleIntro}) in the next two subsections, we must introduce the setup. 

Given a collection $\bfC$ of parallel cylinders on a translation surface $(X,\omega)$, we define the standard cylinder dilation $a_t^\bfC(X,\omega)$ to be the result of rotating the surface so the cylinders are horizontal, applying 
$$a_t=\begin{pmatrix} 1 & 0 \\ 0 & e^t \end{pmatrix}$$
only to the cylinders in $\bfC$, and then applying the inverse rotation. We define  
$$\Col_{\bfC}(X,\omega) = \lim_{t\to -\infty} a_t^\bfC(X,\omega).$$
This is the result of collapsing the cylinders in $\bfC$ in the direction perpendicular to their core curves, while keeping their circumferences constant and leaving the rest of the surface otherwise unchanged. We will be almost exclusively interested in the case when the collapse causes the surface to degenerate. The limit is taken in the What You See Is What You Get partial compactification studied in \cite{MirWri, ChenWright}. 

If $(X,\omega)$ is contained in an invariant subvariety $\cM$, there are many choices of $\bfC$ for which $a_t^\bfC(X,\omega)\in \cM$ for all $t\in \bR$ \cite{Wcyl}. In this case we say the standard dilation  of $\bfC$ remains in $\cM$, and we obtain that
$\Col_{\bfC}(X,\omega)$ is contained in \red an \black invariant subvariety $\cM_{\bfC}$ in the boundary of $\cM$. 

Suppose now that $(X,\omega)\in \cM$ has two collections of cylinders $\bfC_1$ and $\bfC_2$ such that 
\begin{enumerate}
\item $\bfC_1$ and $\bfC_2$ are disjoint, and moreover do not share any boundary saddle connections, 
\item the standard dilations of each $\bfC_i$ remain in $\cM$, and
\item  the collapses of each $\bfC_i$ do indeed cause the surface to degenerate.
\end{enumerate}
Motivated by Figure \ref{F:FirstDiamond}, we call this data $((X,\omega), \cM, \bfC_1, \bfC_2)$ a \emph{diamond}. In the first point we view each $\bfC_i$ as a subset of the surface (rather than a set of cylinders on the surface). 

$\bfC_2$ gives rise to a collection of cylinders on $\ColOneX$, which we denote $\ColOne(\bfC_2)$, and similarly with the indices swapped. We may define
$$\ColOneTwoX=\Col_{\ColTwo(\bfC_1)}\ColTwoX=\Col_{\ColOne(\bfC_2)}\ColOneX,$$
and this surface is contained in an invariant subvariety $\MOneTwo$ that is simultaneously in the boundary of $\cM$, $\MOne$, and $\MTwo$. (As we specify in Convention \ref{CV:connected}, although the surfaces in $\cM$ are typically assumed to be connected, we allow $\MOneTwo$, and sometimes $\MOne$ and $\MTwo$, to consist of multi-component surfaces.) 

\begin{figure}[h]\centering
\begin{tikzcd}[column sep=0em]
 & (X, \omega) \in \cM \arrow[dr, dash] \arrow[dl, dash, swap] &  \\
\ColOneX \in \MOne  \arrow[dr, dash] & & \ColTwoX \in \MTwo \arrow[dl, dash, swap]\\
& \ColOneTwoX \in \MOneTwo 
\end{tikzcd}
\caption{}
\label{F:FirstDiamond}
\end{figure}

While the surface and cylinders are necessary to codify the relation between these invariant subvarieties, we think of the essential part of a diamond as the four invariant subvarieties. 

Frequently we will demand that our diamonds be \emph{generic}; see Definition \ref{D:GenericDiamond}. This is a very mild assumption, and one can always obtain a generic diamond from a non-generic diamond. 

We also consider diamonds of quadratic differentials, which are defined exactly as above. 

\subsection{Full loci of covers}\label{SS:IntroFull}

We now consider (branched) covers of (half) translation \red surfaces\black, as defined in Definition \ref{D:Covering}. We require our covers to be branched only over marked points and zeros. This is of course not a true restriction, since one can simply declare the branch points to be marked. 

For any cover, and any small deformation of the base, one obtains a deformation of the cover. Let $\cM$ and $\cN$ be invariant subvarieties of Abelian or quadratic differentials. We say that $\cM$ is a \emph{full} locus of covers of $\cN$ if every surface in $\cM$ is a  cover of a surface in $\cN$ in such a way that all deformations of the codomain in $\cN$ give rise to covers in $\cM$. If $\cN$ is a connected component of a stratum of Abelian or quadratic differentials, we simply say that $\cM$ is a full locus of covers. 

Our analysis begins with the Diamond Lemma, see Lemma \ref{L:diamond}. Under the assumption that $\MOne$ and $\MTwo$ consist of covers of (typically lower genus) surfaces, if additional assumptions hold, the Diamond Lemma implies that $\cM$ similarly consists of covers. This leaves open the possibility that, despite consisting of covers, $\cM$ could be an unexpected and complicated invariant subvariety properly contained in a full locus of covers. 

We will say that a cover of translation or half-translation surfaces satisfies \emph{Assumption CP} (for Cylinder Preimage) if the preimage of every cylinder is a union of cylinders. Here our conventions, stated in Definition \ref{D:CylAndBoundary}, are crucial: cylinders do not contain their boundary, and  their boundary must be a union of saddle connections. These conventions imply in particular that if the preimage of a cylinder $C$ is a union of cylinders, then each cylinder in the preimage has the same height as $C$.  

If Assumption CP is not satisfied, then there must be a preimage of a marked point or pole that is an unmarked non-singular point. In particular, since we do not allow branching over unmarked non-singular points, any cover of a translation surface without marked points automatically satisfies Assumption CP. 

Our first main result is the following. 

\begin{thm}\label{T:IntroFull}
If $((X, \omega), \cM, \bfC_1, \bfC_2)$ forms a generic diamond where $\cM_{\bfC_1}$ and $\cM_{\bfC_2}$ are full loci of covers satisfying Assumption CP and $\MOneTwo$ consists of connected surfaces,  then $\cM$ is a full locus of covers of a stratum of Abelian or quadratic differentials. 
\end{thm}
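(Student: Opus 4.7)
The plan is to apply the Diamond Lemma (Lemma \ref{L:diamond}) to upgrade the covering structures on the two boundary pieces $\cM_{\bfC_1}$ and $\cM_{\bfC_2}$ to a single covering structure on $\cM$, and then to identify the base of the resulting cover with an entire stratum component via a dimension count.

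First I would set up the compatibility needed to feed into the Diamond Lemma. Write the boundary covers as $\pi_i : \cM_{\bfC_i} \to \cN_i$ with each $\cN_i$ a connected component of a stratum of (half) translation surfaces. Under Assumption CP, each cylinder of $\Col_{\bfC_j}(\bfC_i)$ on a surface in $\cM_{\bfC_j}$ pulls back through $\pi_j$ to a disjoint union of cylinders of matching heights, so further collapsing descends cleanly and yields a covering map from $\MOneTwo$ onto a further-degenerated stratum component. Because $\MOneTwo$ consists of connected surfaces, the two such descriptions coming from $\pi_1$ and $\pi_2$ must share image stratum and agree on combinatorial data (degree, branching, monodromy); this is the compatibility needed to invoke Lemma \ref{L:diamond}. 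That lemma then produces a covering map $\pi : (X,\omega) \to (Y,\eta)$ whose degenerations along $\bfC_1$ and $\bfC_2$ recover $\pi_1$ and $\pi_2$, with $(Y,\eta)$ in some irreducible invariant subvariety $\cN$ having $\cN_1$ and $\cN_2$ in its boundary. Since admitting such a cover of fixed combinatorial type is open, $\GL$-equivariant, and compatible with small deformations, irreducibility of $\cM$ propagates the covering structure to all of $\cM$; fullness on the interior then follows from fullness on the two boundary pieces together with the fact that local deformations of $(Y,\eta)$ in $\cN$ lift through the branched covering in directions transverse to the boundary.

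It remains to identify $\cN$ with a connected component of a stratum. Since full loci of covers share dimension with their base, $\dim \cN = \dim \cM$, and a standard cylinder dilation that degenerates drops dimension by one, so $\dim \cN = \dim \cN_i + 1$ for $i=1,2$. Let $\cH$ denote the connected component of the ambient stratum containing $\cN$; collapsing $\cH$ along the image of $\bfC_1$ produces a boundary stratum component that contains $\cN_1$, which by fullness of $\cN_1$ in its own stratum must coincide with $\cN_1$. Hence $\dim \cH = \dim \cN_1 + 1 = \dim \cN$, and irreducibility forces $\cN = \cH$.

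The step I expect to be the main obstacle is the compatibility argument at the start: matching the two boundary covering structures combinatorially along their common degeneration so that the Diamond Lemma applies in the form I need. Connectedness of $\MOneTwo$ is precisely what prevents the two descriptions from splitting into incompatible pieces, while Assumption CP is what makes cylinder collapses on the cover correspond bijectively to cylinder collapses on the base, without which the candidate base $\cN$ would fail to lie in a stratum at all and the dimension count in the final step would break down.
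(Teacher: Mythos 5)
The overall architecture you propose (feed the two boundary covers into the Diamond Lemma, then identify the base with a stratum component) is the same as the paper's, and you are right to flag the compatibility step as the main obstacle — but that step is a much bigger gap than you give it credit for, and it is in fact where almost all of the work of the paper's proof lives. Connectedness of $\ColOneTwoX$ does not by itself force the two induced covering maps $\Col(f_1)$ and $\Col(f_2)$ to have the same fibers. For surfaces that are not torus covers, what one needs is Proposition \ref{P:AtMostOneCP}, which says a connected non-torus-cover admits at most one map onto a generic stratum surface satisfying Assumption CP; that statement is nontrivial (it uses M\"oller's minimality theorem, the classification of hyperelliptic components, and a careful pole/marked-point branching analysis, and is sharp: without Assumption CP Example \ref{E:ManyCovers} produces arbitrarily many such maps on a single surface). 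When $\ColOneTwoX$ is a torus cover (i.e.\ $\MOneTwo$ has rank one) no such uniqueness holds, and the paper has to carry out a stratum-by-stratum case analysis (Lemmas \ref{L:LowerRel:SameStratum}, \ref{L:HigherRel:SameStratum}, \ref{L:VS-AgreeR1}, \ref{L:HigherRelAgree}). Worst of all, there is a case with $\ColOneTwoX$ connected (Lemma \ref{VS-SpecialCase}: $\Col(f_1) = g_1\circ g$ into $\cH(0)$ versus $\Col(f_2) = g_2\circ g$ into $\cQ(-1^4)$) where $\Col(f_1) \neq \Col(f_2)$ and the Diamond Lemma simply does not produce a cover from $(X,\omega)$; there the conclusion is rescued only by a bespoke ``overcollapse/attack'' argument using Corollary \ref{C:ConstantRatio}. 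Your proposed proof has no mechanism for handling this case.

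The final step also contains a real error. You write $\dim\cH = \dim\cN_1 + 1$ after collapsing $f(\bfC_1)$ inside $\cH$, but that collapse drops $\dim\cH$ by the number of independent saddle connections destroyed, not by one. If $f(\bfC_1)$ consists of two $\cN$-parallel simple cylinders (which happens precisely when $\cN$ is a hyperelliptic locus), the collapse drops $\dim\cH$ by two, giving only $\dim\cH \ge \dim\cN$ and leaving open that $\cN$ is the codimension-one hyperelliptic locus in $\cH$. That possibility is real, and the paper handles it not by a dimension count but by invoking the substantive classification results Proposition \ref{P:DiamondWithH} and Theorem \ref{T:P1} for the reduced diamond on the base; when those say the base orbit closure is a hyperelliptic locus rather than a full stratum, one salvages the theorem by post-composing $f$ with the hyperelliptic involution, so that the map lands in a genus-zero quadratic stratum. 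Your argument omits both the hyperelliptic escape hatch and the input from Sections \ref{S:DiamondEasy} and \ref{S:DiamondQ} that makes the identification of the base possible at all.
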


In fact we obtain the conclusion of Theorem \ref{T:IntroFull} in most situations where the surfaces in $\MOneTwo$, and even $\MOne$ and $\MTwo$, are disconnected; see Theorem \ref{T:FullV2} for a more detailed statement. The possibility of disconnected surfaces adds significant extra difficulty, but is important since connected surfaces can and frequently do degenerate to disconnected surfaces.  

The simple statement of Theorem \ref{T:IntroFull}  belies surprising subtlety. For example, the degree of the covers for $\cM$ can be twice the degree of the covers for $\cM_{\bfC_1}$ and $\cM_{\bfC_2}$, as discussed in the proof of Lemma \ref{L:VS-Agree}. And Assumption CP may fail for $\cM$, even though it holds for $\cM_{\bfC_1}$ and $\cM_{\bfC_2}$, as discussed in  Remark \ref{R:LostCP}.

Part of this subtlety is associated with  the example illustrated in Figure \ref{F:QQnotQdiamond}, showing a diamond where both $\MOne$ and $\MTwo$ are strata of quadratic differentials, but $\cM$ is not. 

\begin{figure}[h]\centering
\includegraphics[width=\linewidth]{QQnotQdiamond.pdf}
\caption{A diamond of quadratic differentials, where both sides are strata but the top is not.
Here $\cM$ is a codimension 1 hyperelliptic locus locally defined by $a=b$, which is equivalent to $c=d$.}
\label{F:QQnotQdiamond}
\end{figure}

We emphasize the generality of Theorem \ref{T:IntroFull}. If we assumed $\MOneTwo$ does not consist of torus covers and we dealt only with Abelian differentials (excluding quadratic differentials), the proof would be short. The more general statement, although vastly more difficult, is crucial for applications and to obtain meaningful insight into the richness of invariant subvarieties. 

\subsection{Abelian and quadratic doubles}\label{SS:full}

Diamonds where $\MOne$ and $\MTwo$ are full loci of covers \emph{not} satisfying Assumption CP are much more difficult to understand, and it seems entirely possible that their analysis could result in the discovery of new invariant subvarieties.  Here we only begin such an analysis. Our main result in this direction  \red is crucial for the subsequent papers \cite{ApisaWrightGemini, ApisaWrightHighRank}\black, and, although it only concerns certain degree two covers, it is broad enough to illustrate an interesting phenomenon which is typically incompatible with Assumption CP. 

For the next definition, we emphasize that we allow strata to parameterize surfaces with marked points; we treat marked points as zeros of order zero.

We define an \emph{Abelian double} to be a full locus of covers of a component of a stratum of Abelian differentials such that the covering maps have degree two, the covers are connected, and all preimages of marked points are either singularities or marked points. 

We define a \emph{quadratic double} to be a full locus of covers of a component of a stratum of quadratic differentials such that the covering maps are the holonomy double cover and all preimages of marked points are marked points. The preimage of a pole may be marked or unmarked. We assume the quadratic differentials have non-trivial holonomy, so again the covers are connected.

In the Abelian case, different choices of degree two covering map might lead to different  Abelian doubles associated to the same component of a stratum. In the quadratic case, different choices of which preimages of poles to mark might lead to different quadratic doubles associated to the same component of a stratum. 

While Abelian doubles must satisfy Assumption CP, quadratic doubles can fail to satisfy this assumption, if not all preimages of poles are marked. 

\begin{thm}\label{T:DoubleIntro}
If $((X, \omega), \cM, \bfC_1, \bfC_2)$ forms a generic diamond where $\cM_{\bfC_1}$ and $\cM_{\bfC_2}$ are Abelian or quadratic doubles, then $\cM$ is a full locus of covers of a stratum of Abelian or quadratic differentials. 

Moreover, $\cM$ is one of the following: an Abelian or quadratic double, or a codimension one locus in a full locus of double covers of a component of a stratum of Abelian differentials. 
\end{thm}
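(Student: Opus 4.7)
The plan is to reduce the first assertion to Theorem~\ref{T:IntroFull} (in the form permitting disconnected $\MOneTwo$, i.e., Theorem~\ref{T:FullV2}), and then to deduce the classification by analysing the degree and marked-point data of the resulting cover.

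First I would verify the hypotheses of Theorem~\ref{T:IntroFull}. When both $\cM_{\bfC_i}$ are Abelian doubles, Assumption CP is automatic from the definition, since preimages of marked points are forced to be marked or singular. When one or both are quadratic doubles where CP fails --- which happens precisely when some unmarked pole has an unmarked preimage --- I would enlarge the diamond by marking all such pole preimages on every surface of $\cM$ and of its boundary loci. The enlargement produces quadratic double boundaries satisfying CP; Theorem~\ref{T:IntroFull} applies, and forgetting the added markings transfers the conclusion to $\cM$, which is thereby shown to be a full locus of degree $d$ covers of some component $\cN$ of a stratum of Abelian or quadratic differentials.

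To identify $\cM$, I would invoke the observation following Theorem~\ref{T:IntroFull} (formalised via Lemma~\ref{L:VS-Agree}), which forces $d$ to be either equal to the boundary degree $2$ or twice it. If $d=2$, the marked-point conditions for an Abelian or quadratic double on each boundary propagate to $\cM$, and the type of $\cN$ forces $\cM$ itself to be an Abelian or quadratic double of the matching type. If $d=4$, the cover factors as a composition of two degree-two covers; combining the deck involutions of the two boundary double covers $\cM_{\bfC_i}$ with the diamond machinery globalises one of these factors to a degree-two Abelian covering defined on all of $\cM$. The ambient full locus of double covers of the corresponding component of an Abelian stratum then contains $\cM$, and a dimension count in period coordinates near a generic point of the diamond shows that $\cM$ cuts out exactly one linear constraint --- the relation $a=b$ in Figure~\ref{F:QQnotQdiamond} exemplifies this --- establishing the codimension one conclusion.

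The main obstacle is the $d=4$ case: the deck involutions provided by the two boundary strata are a priori only local, and one must show they assemble into a globally defined Abelian-type degree-two quotient on $\cM$, cut out by precisely one linear period relation. Verifying that the quotient stratum is Abelian rather than quadratic, and that the sublocus defined by $\cM$ has codimension exactly one, is the technical heart of the argument. Handling the disconnected $\MOneTwo$ case requires additional bookkeeping, following the pattern already established in the proof of Theorem~\ref{T:FullV2}.
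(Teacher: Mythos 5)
Your proposal rests on a reduction that does not work. The central idea—mark all unmarked pole preimages on surfaces in $\cM$ and on the boundary loci so that Assumption CP is restored, then invoke Theorem~\ref{T:IntroFull}/\ref{T:FullV2}—has two fatal problems. First, the marking procedure is not well-defined on $\cM$ itself: you cannot identify which points of $(X,\omega)\in\cM$ are ``preimages of poles'' until you already know the covering structure on $\cM$, which is precisely what the theorem is trying to establish. You know the covering structure only on $\MOne$ and $\MTwo$, and promoting the relevant boundary points to global point markings over $\cM$ is itself a nontrivial claim (it would amount to showing these points are periodic points or part of irreducible point markings for $\cM$, which requires results of the kind you are trying to prove). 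Second, and more fundamentally, marking points cannot repair the real obstruction in the mixed case. When $\MOne$ is an Abelian double and $\MTwo$ is a quadratic double, the two maps $f_1$ (whose deck transformation has derivative $+\mathrm{Id}$) and $f_2$ (whose deck transformation has derivative $-\mathrm{Id}$) have genuinely different fibers, and therefore ``$\Col(f_1) = \Col(f_2)$'' fails. This is exactly the phenomenon highlighted in Section~\ref{SS:full} around Figure~\ref{F:ExtraPossibilityNoW}: $\MOneTwo$ must be simultaneously an Abelian and quadratic double, carrying two distinct involutions. The Diamond Lemma hypothesis is violated and Theorem~\ref{T:FullV2} (and the Lemma~\ref{L:VS-Agree} machinery it relies on) simply does not apply. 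Note also that the paper explicitly remarks in its ``Organization'' paragraph that the results of Section~\ref{S:FullLoci} are not required elsewhere in the paper; the classification in Theorem~\ref{T:DoubleIntro} is proved from Theorems~\ref{P1} and~\ref{TP2}, which are independent and substantially harder arguments, not corollaries of Theorem~\ref{T:IntroFull}.

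Your sketch of the $d=4$ case compounds this by asserting that ``combining the deck involutions\ldots with the diamond machinery globalises one of these factors,'' but that globalisation is precisely where the bulk of the work lies. The paper's Theorem~\ref{TP2} is devoted to this, and the argument is far from a dimension count: it involves a careful analysis of whether $\bfC_1$ consists of simple, half-simple, or complex cylinders, whether the involution $T$ or $J$ ``extends'' across the degeneration (Definition~\ref{D:ExtraSymmetry}), the ``attacking'' technique to rule out impossible configurations, and a delicate treatment of marked points; Section~\ref{S:ExtraSymmetrySupplement} alone adds another layer of case analysis for the codimension-one case. Similarly, the two-quadratic-doubles case is Theorem~\ref{P1}, which still requires checking agreement at the base (not automatic even with CP) and invoking Theorem~\ref{T:P1}, itself a nontrivial induction. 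The only piece of your proposal that matches the paper is the use of Lemma~\ref{L:StrongTheorem1.1} when both $\cM_{\bfC_i}$ are Abelian doubles, which is indeed where CP holds automatically and the reduction to Theorem~\ref{T:IntroFull} succeeds cleanly.
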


This is an abbreviated form of Theorems \ref{P1} and \ref{TP2},  which describe the codimension one loci that occur.

The interesting phenomenon that appears here but not in Theorem \ref{T:IntroFull} is that $\MOne$ might be an Abelian double while $\MTwo$ is a quadratic double; see Figure \ref{F:ExtraPossibilityNoW}. 
\begin{figure}[h]\centering
\includegraphics[width=\linewidth]{}
\caption{A diamond  where $\MOne$ is an Abelian double, $\MTwo$ is a quadratic double, and $\cM$ is neither.  The invariant subvariety $\cM$ is described in Theorem \ref{TP2} case \eqref{I:ExtraSymmetry}.}
\label{F:ExtraPossibilityNoW}
\end{figure}
In this case the degree two covering maps defined on surfaces in $\MOne$ and $\MTwo$ give rise to distinct covering maps on surfaces in $\MOneTwo$. In fact, $\MOneTwo$ must be simultaneously an Abelian and quadratic double. Simultaneous  Abelian and quadratic doubles  arise from certain hyperelliptic connected components of strata of quadratic differentials; see Section \ref{SS:simul}.

\subsection{Additional remarks}\label{SS:context}

\bold{Context.} We start by discussing the relation to previous work. 

\begin{rem}\label{R:Mirzakhani} 
Mirzakhani conjectured a single statement that, if true, would be a major part of the classification of invariant subvarieties. This conjecture is often described as ``all $GL(2, \bR)$-orbit closures of rank at least two are trivial", and a version was first recorded in \cite[Conjectures 1.6, 1.7]{Wfield}. Here ``trivial" should mean ``a locus of covers", but it was not completely clear at the time the conjecture was made how to make a precise definition,  because it was not known what relations might be imposed on the branch points of the covering maps. This issue has been clarified in \cite{Apisa, ApisaWright}, so that we can now confidently propose that, in the language above,  trivial should mean a full locus of covers of a component of a stratum of Abelian or quadratic differentials.\footnote{The second author first learned of this conjecture in October 2012. The exact wording of the conjecture in \cite{Wfield} states only that  ``every translation surface in $\cM$ [the invariant subvariety] covers a quadratic differential (half-translation surface) of smaller genus", but correspondence and conversations between Mirzakhani and the second author suggest that a stronger conjecture was intended, i.e. that the locus of surfaces being covered would be a component of a stratum of Abelian or quadratic differentials. The stronger version is more in line with the discussion of the conjecture in \cite{Wcyl,ANW, AN, aulicino2016rank, Apisa2}.}

Mirzakhani's Conjecture was disproven in \cite{MMW, EMMW}, but the counterexamples are all rank two. It is plausible that all invariant subvarieties of rank at least three might be full loci of covers of connected components of strata of Abelian or quadratic differentials. 
\end{rem} 

Recent progress on the classification problem includes finiteness results \cite{EFW, BHM, LNW}, strong results in genus 3 \cite{NW, ANW, AN, aulicino2016rank, Ygouf} and for hyperelliptic components \cite{Apisa2, Apisa:Rank1}, and classification of full rank invariant subvarieties \cite{MirWri2}. Especially important here will be results considering cylinder deformations \cite{Wcyl}, the boundary of invariant subvarieties \cite{MirWri, ChenWright}, and marked points \cite{Apisa, ApisaWright}.

Adjacent recent developments include progress on the isoperiodic foliation \cite{McM:iso, CDF, Ygouf2, HamErg, HooperWeiss}, compactifications \cite{Many, lms, Benirschke}, the unipotent flow \cite{BSW, CSW}, and Prym eigenforms \cite{LNcomponents, Wg4, LanneauMoller}. Surveys of the field include \cite{ForniMatheus:Intro, Z, MT, Wsurvey}.

\bold{Techniques.} The main technique in this paper is induction. Given a hypothetical counterexample to one of our main results, we try to degenerate cylinders disjoint from $\bfC_1$ and $\bfC_2$ to produce a smaller counterexample. The results of \cite{MirWri, ChenWright} allow us to understand the invariant subvarieties containing these degenerations, and the results of \cite{Apisa, ApisaWright} prove surprisingly useful whenever the degenerations produce new marked points. Base cases are handled using diverse techniques: some can be ruled out by surprisingly easy numerology powered by \cite{AEM}; some are ruled out using marked point results; and some are handled in various ways using the existence and non-existence of certain cylinder deformations. For example, sometimes we use a new technique in which we ``overcollapse" a collection of cylinders $\bfC$ to produce a deformation that changes the modulus of some disjoint cylinders $\bfD$, contradicting a partial generalization of the Veech dichotomy proved in \cite{MirWri} and recalled below in Corollary \ref{C:ConstantRatio}. We call this ``attacking $\bfD$ with $\bfC$".

Omnipresent in our analysis are Masur and Zorich's results on generically parallel, or ``hat-homologous", cylinders and saddle connections on quadratic differentials \cite{MZ}. On a generic Abelian differential, all cylinders are simple. In contrast, we summarize in Theorem \ref{T:MZ} the five types of cylinders that, according to Masur and Zorich, may appear on generic quadratic differentials. This richness in behaviour contributes significantly to the length of this paper.


Beyond showcasing how our new ``attacking" technique can be profitably combined with many other techniques, the broader novelty of this paper is that it introduces diamonds as a paradigm for a more systematic study of the classification problem. Our results \red are \black illustrated in our \black subsequent \red work \cite{ApisaWrightGemini, ApisaWrightHighRank}, where the statements \red do \black not involve diamonds but the proofs \red rely on \black them.

\bold{Organization.} In Section \ref{S:DiamondLemma}, we define generic diamonds and prove the Diamond Lemma, which is the starting point for all our analysis. Sections \ref{S:Pre} and \ref{S:PreQD} establish definitions and preliminaries, which the reader may refer back to as necessary. 

In Section \ref{S:DiamondEasy} we classify  the easiest diamonds, namely those where one side is a component of a stratum of Abelian differentials. Before turning to harder diamonds, in Section \ref{S:ComplexEnvelope} we classify certain codimension one invariant subvarieties of quadratic differentials. This is a key tool for subsequent results, and suggests some open problems, listed in Subsection \ref{SS:OpenProblems}. 

Section \ref{S:DiamondQ} classifies diamonds where both sides are quadratic doubles. Section \ref{S:FullLoci} proves Theorem \ref{T:IntroFull}, and concludes in Section \ref{SS:OpenProblems2} with related open problems. Section \ref{S:Q-hyp} gives preliminaries concerning hyperelliptic strata of quadratic differentials. These preliminaries are used in Section \ref{S:DiamondQDouble}, which completes the proof of Theorem \ref{T:DoubleIntro} by classifying diamonds where one side is an Abelian double and one side is a quadratic double. 

This paper is highly modular. In particular, the only statements from each of Sections \ref{S:DiamondEasy}, \ref{S:ComplexEnvelope}, and  \ref{S:DiamondQ} that are used elsewhere in the paper are Proposition \ref{P:DiamondWithH}, Theorem \ref{T:complex-gluing0}, and  Theorem \ref{P1} respectively. The results from Section \ref{S:FullLoci} are not required elsewhere in the paper. (We use Lemma \ref{L:StrongTheorem1.1} for convenience once in Section \ref{S:DiamondQDouble}, but the reader may also supply a more direct argument).

\bold{Conventions.} Cylinders do not include their boundary saddle connection (Definition \ref{D:CylAndBoundary}); with important exceptions, most surfaces are assumed to be connected (Convention \ref{CV:connected}); quadratic differentials are assumed to have non-trivial holonomy (Convention \ref{CV:NoTrivHol}); and, for translation covers, the fiber of a marked point must contain a marked point or singular point (Definition \ref{D:Covering}). 

\bold{Acknowledgments.}  During the preparation of this paper, the first author was partially supported by NSF Postdoctoral Fellowship DMS 1803625, and the second author was partially supported by a Clay Research Fellowship,  NSF Grant DMS 1856155, and a Sloan Research Fellowship.  

\section{The Diamond Lemma}\label{S:DiamondLemma}

In this section, we establish a versatile result that allows one to conclude that an orbit closure is a locus of covers. \red We begin with this topic to immediately illustrate one of the key ideas in the paper, but some readers may prefer to start instead with the background material in Sections \ref{S:Pre} and \ref{S:PreQD}. \black

We will use notation that is typical for Abelian differentials, but the results will  apply equally well to quadratic differentials.  \red We build on the definitions of cylinder collapses and diamonds in Section \ref{SS:diamonds}. \black

Given a diamond
$$((X,\omega), \cM, \bfC_1, \bfC_2)$$
where both $\MOne$ and $\MTwo$ consist of covers, our goal is to conclude that $\cM$ is a locus of covers. So we assume that each $\Col_{\bfC_i}(X,\omega)$ admits a half translation cover 
$$f_i : \Col_{\bfC_i}(X,\omega)\to (Y_i, q_i).$$
Note that $f_i(\overline{\Col_{\bfC_i}(\bfC_{i+1})})$ is the closure of a union of  cylinders parallel to the cylinders in $\bfC_{i+1}$. We will assume that 
$$\overline{\Col_{\bfC_i}(\bfC_{i+1})}=f_i^{-1}(f_i(\overline{\Col_{\bfC_i}(\bfC_{i+1})})).$$
This assumption gives that any standard cylinder deformation of ${\Col_{\bfC_i}(\bfC_{i+1})}$ on $\Col_{\bfC_i}(X,\omega)$ covers the corresponding deformation of the  cylinders whose closure is $f_i(\overline{\Col_{\bfC_i}(\bfC_{i+1})})$ on $(Y_i, q_i)$; see Subsection \ref{SS:CylinderDeformations} for the definition of ``standard".

\begin{rem}
See Figure \ref{F: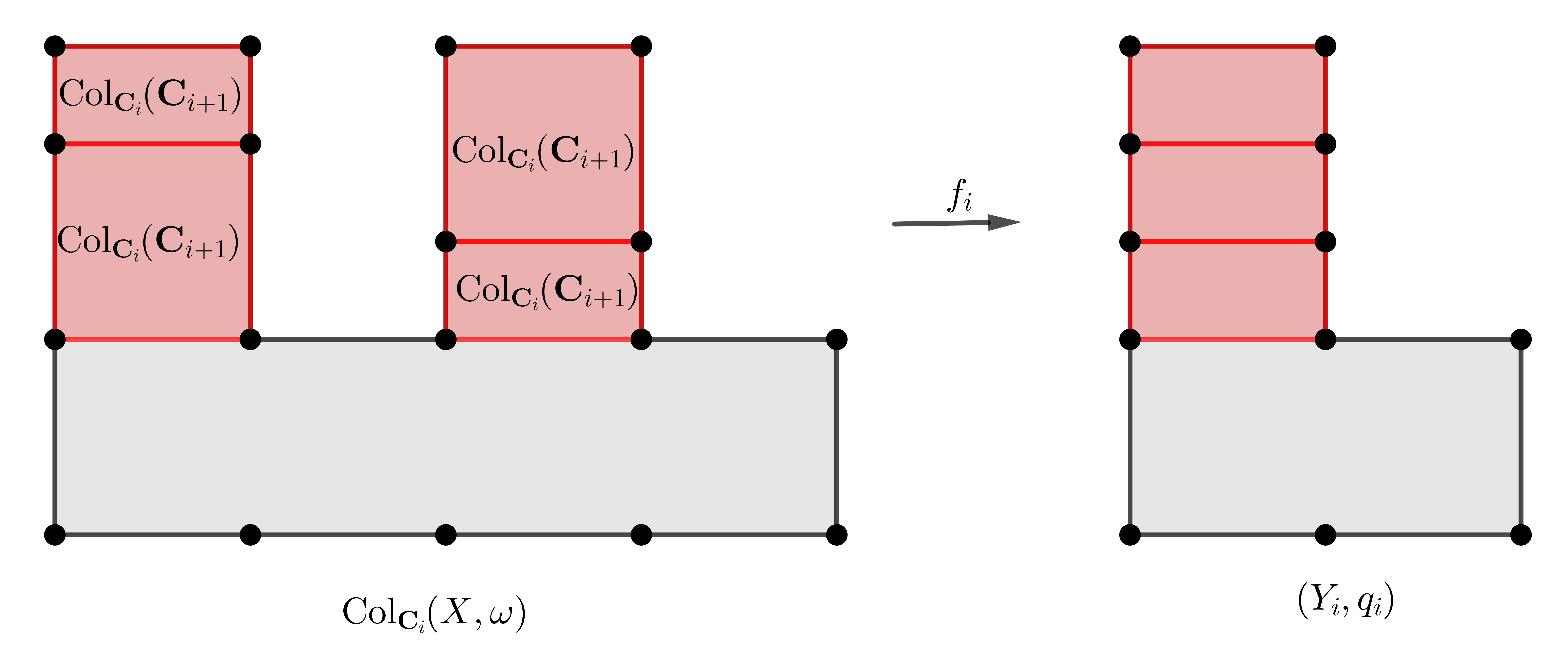} for an example showing why we use the closure of $\Col_{\bfC_i}(\bfC_{i+1})$, keeping in mind the conventions in Definitions \ref{D:Covering} and \ref{D:CylAndBoundary} highlighted in the introduction. One does not need to use closures if $(Y_i, q_i)$ does not contain marked points or poles.
\begin{figure}[h]\centering
\includegraphics[width=0.8\linewidth]{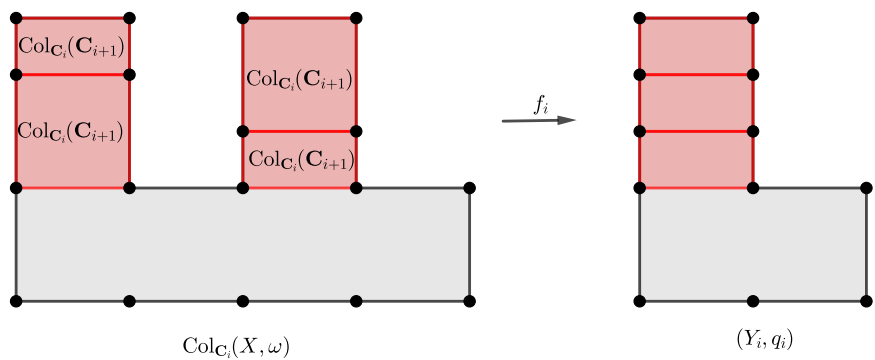}
\caption{An example where $\overline{\Col_{\bfC_i}(\bfC_{i+1})}$ is the preimage of its image, but $f_i^{-1}(f_i({\Col_{\bfC_i}(\bfC_{i+1})})) - \Col_{\bfC_i}(\bfC_{i+1})$ is two saddle connections, each joining a marked point to itself. Here $\Col_{\bfC_i}(\bfC_{i+1})$ consists of four cylinders and $(Y_i, q_i)\in \cH(2,0^2)$.}
\label{F:PreimageOfImage.pdf}
\end{figure}
\end{rem}

As we will see presently, we get a limiting map 
$$\Col_{\Col_{\bfC_i}(\bfC_{i+1})}(f_i) : \Col_{\Col_{\bfC_i}(\bfC_{i+1})} \Col_{\bfC_i}(X,\omega)\to \Col_{f_i(\Col_{\bfC_i}(\bfC_{i+1}))} (Y_i, q_i).$$

Here we use $\Col_{f_i(\Col_{\bfC_i}(\bfC_{i+1}))}$ to denote the collapse of the collection of cylinders whose closure is $f_i(\overline{\Col_{\bfC_i}(\bfC_{i+1})})$.

\begin{lem}\label{L:DefinitionCol(f)}
Suppose that $f: (X, \omega) \ra (Y, \eta)$ is a half-translation covering. Let $\bfC\subset (X,\omega)$ be a collection of parallel cylinders such that $f^{-1}(f(\overline\bfC)) = \overline\bfC$ and $\overline\bfC \neq (X,\omega)$. Then there is a half-translation surface covering map $$\Col_{\bfC}(f): \Col_{\bfC}(X, \omega) \ra \Col_{f(\bfC)}(Y, \eta)$$ of the same degree.
\end{lem}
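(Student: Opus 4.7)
The plan is to show that the hypothesis $f^{-1}(f(\overline\bfC)) = \overline\bfC$ forces $f$ to respect the cylinder structures on both sides, so that the standard dilation commutes with $f$, and then to pass to the $t\to-\infty$ limit in the WYSIWYG compactification.

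First I would verify that $f$ restricts to a translation (or half-translation) cover $\bfC \to f(\bfC)$ of cylinder collections. The hypothesis $f^{-1}(f(\overline\bfC)) = \overline\bfC$ together with the covering property implies that each cylinder $D \subset f(\bfC)$ has preimage equal to a union of cylinders in $\bfC$, and since covers are locally isometries away from branch points, each such preimage cylinder has the same height as $D$ and wraps around $D$ some integer number of times. This step uses the convention that cylinders do not contain their boundary: the equality of closures is what rules out the possibility that the preimage of a cylinder contains extra saddle connections interior to a preimage cylinder.

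Next I would use the height-preservation to conclude that for every $t \in \bR$ the identity $f \circ a_t^\bfC = a_t^{f(\bfC)} \circ f$ holds, since the standard dilations only rescale heights, are supported on the specified cylinders, and act identically on the boundaries. Thus for each finite $t$ we get a half-translation cover $a_t^\bfC(f)$ of the same degree as $f$, and these fit into a one-parameter family that is continuous in $t$. The family extends to $t = -\infty$ by continuity in the partial compactification of \cite{MirWri, ChenWright} used to define the collapse: outside $\overline\bfC$ the source surface, target surface, and covering map are all unchanged, while inside $\overline\bfC$ the cylinders of $\bfC$ collapse onto horizontal saddle-connection configurations that map surjectively (in the appropriate multiplicity) onto the collapse of $f(\bfC)$ in $(Y, \eta)$. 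The hypothesis $\overline\bfC \neq (X,\omega)$ ensures that the limit source surface $\Col_\bfC(X,\omega)$ is a legitimate (not identically zero) differential, so the map makes sense.

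Finally, I would check that the limiting map $\Col_\bfC(f)$ is itself a half-translation cover of the same degree as $f$. Over any point of $\Col_{f(\bfC)}(Y,\eta)$ not lying in the collapsed image of $f(\bfC)$, the fiber of $\Col_\bfC(f)$ is identical to the fiber of $f$ over the corresponding point of $(Y, \eta)$, so has cardinality $\deg(f)$. Over a point in the collapsed image, the fiber consists of the points of $\Col_\bfC(X,\omega)$ coming from the collapsed copies of $\bfC$, and by Step 1 the degree of $\bfC \to f(\bfC)$ agrees with $\deg(f)$, so the fiber count (with multiplicity) is again $\deg(f)$. The main technical obstacle is verifying continuity of the map at $t = -\infty$ and that the limit indeed lies in the WYSIWYG boundary; this reduces to a local calculation near each collapsed cylinder showing that the cover of an infinitely-thin cylinder degenerates to a cover of the corresponding saddle-connection configuration, which is immediate from the explicit polygonal presentation given by the $a_t^\bfC$ deformation.
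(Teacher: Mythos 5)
The gap is in Step 1. The hypothesis $f^{-1}(f(\overline\bfC))=\overline\bfC$ does \emph{not} imply that each cylinder of $\bfC$ has the same height as the cylinder it covers, nor that the preimage of a target cylinder is a union of cylinders in $\bfC$; both claims are what the paper later calls Assumption CP, which is strictly stronger and is deliberately \emph{not} assumed in this lemma (the lemma feeds into the Diamond Lemma, which must apply to covers failing Assumption CP). Concretely, if $m\in(Y,\eta)$ is a marked point separating two adjacent cylinders $D,D'$ of height $h$ each, and one preimage of $m$ under $f$ is an unmarked non-singular point, then that preimage lies in the interior of a single cylinder $C\subset(X,\omega)$ of height $2h$, whose core circle maps onto the boundary saddle connection through $m$. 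Taking $\bfC$ to be $C$ together with the remaining preimages of $D$ and $D'$ satisfies the lemma's hypothesis but violates both of your Step-1 claims (this is exactly the mechanism recorded in Remark~\ref{R:LostCP}). Since Steps 2 and 4 explicitly cite the height-matching claim, the argument as written has a hole. The conclusion of Step 2 is nevertheless true, but the correct reason is different: $a_t$ is a single linear map applied uniformly to all of $\bfC$ and to all of $f(\bfC)$, and a local isometry automatically intertwines these two deformations regardless of any height relation; the only role of the hypothesis $\overline\bfC=f^{-1}(f(\overline\bfC))$ here is to guarantee that the supports of the two dilations correspond under $f$, so nothing outside $\bfC$ needs to be deformed.

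The paper's proof avoids both the false height claim and the $t\to-\infty$ convergence argument by constructing $\Col_\bfC(f)$ directly. On $\Col_\bfC(X,\omega)-\Col_\bfC(\bfC)=(X,\omega)-\overline\bfC$ it is simply the restriction of $f$ (the hypothesis ensures this restriction lands in $\Col_{f(\bfC)}(Y,\eta)-\Col_{f(\bfC)}(f(\bfC))$); for a non-singular, non-marked point $p$ of $\Col_\bfC(\bfC)$, its preimage under the collapse is a vertical segment $S_p\subset\overline\bfC$, and $\Col_\bfC(f)(p)$ is defined to be the collapse of the segment $f(S_p)$; the remaining finite set is handled by continuity. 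This explicit description is precisely what the Diamond Lemma later needs, and it sidesteps the WYSIWYG convergence issues that you flag as the ``main technical obstacle.'' If you want to salvage your limiting approach, replace Step 1 by the intertwining argument above and verify convergence directly via a polygonal presentation as in \cite{MirWri}; you will arrive at the same explicit map.
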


Since we will require an explicit understanding of $\Col_{\bfC}(f)$, we give an explicit proof. 

\begin{proof}
Notice that 
\[ \Col_{\bfC}(X, \omega) - \Col_{\bfC}(\bfC) = (X, \omega) - \overline{\bfC}\] and
\[\Col_{f(\bfC)}(Y, \eta) - \Col_{f(\bfC)}(f(\bfC)) = (Y, \eta) -  \overline{f(\bfC)}. \]
These equalities and the condition that $f^{-1}(f(\overline\bfC)) = \overline\bfC$ implies that we can define $\Col_{\bfC}(f)$ on $\Col_{\bfC}(X, \omega) - \Col_{\bfC}(\bfC)$ to be given by the restriction of $f$ to  $ (X, \omega) - \overline{\bfC}$. 

$\Col_{\bfC}(\bfC)$  is a union of saddle connections. Consider a point $p$ in $\Col_{\bfC}(\bfC)$ that isn't a singularity or marked point. We can extend the definition of $\Col_{\bfC}(f)$ to $p$ as follows. 

The set $\Col_{\bfC}^{-1}(p)$ that collapses to $p$ is a line segment $S_p$ contained in $\overline{\bfC}$. This line segment is mapped by $f$ to a line segment $f(S_p)$ in $f(\bfC)$, and we may define $\Col_{\bfC}(f)(p)$ to be $\Col_{f(\bfC)}(f(S_p))$. 

This defines an extension of $\Col_{\bfC}(f)$ to the complement of a finite set of points, on which $\Col_{\bfC}(f)$ can be defined by continuity.
\end{proof}
 
Even when they are isomorphic, there isn't always a canonical way to identify the codomains of the maps $\Col_{\ColOne(\bfC_{2})}(f_1)$ and $\Col_{\ColTwo(\bfC_{1})}(f_{2})$, because the codomains may have automorphisms. But they do have the same domain, namely $\ColOneTwoX$. We will say that \emph{$f_1$ and $f_2$ agree at the base of the diamond} if $\Col_{\ColOne(\bfC_{2})}(f_1)$ and $\Col_{\ColTwo(\bfC_1)}(f_{2})$ have the same fibers (each fiber of one of these maps is also a fiber for the other). We will write ``$\Col(f_1) = \Col(f_2)$" as shorthand for this condition. 

The main result of this section verifies the intuition that, if these two maps agree, one should be able to somehow glue them together to obtain a map whose domain is $(X,\omega)$. 
 
\begin{lem}[The Diamond Lemma]\label{L:diamond}
Given a diamond using the notation above, with maps $f_i$ as above such that $$\overline{\Col_{\bfC_i}(\bfC_{i+1})}=f_i^{-1}(f_i(\overline{\Col_{\bfC_i}(\bfC_{i+1})})),$$ assume that $f_1$ and $f_2$ agree at the base of the diamond.

Then $(X,\omega)$ admits a covering map $f$ to a quadratic differential, with $\overline\bfC_{i}=f^{-1}(f(\overline\bfC_{i}))$, and $f_i = \Col_{\bfC_{i}}(f)$.
\end{lem}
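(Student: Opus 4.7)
The plan is to construct the codomain $(Y,\eta)$ by gluing $Y_1$ and $Y_2$ along a common region, and then to define $f:(X,\omega)\to(Y,\eta)$ piecewise, using $f_1$ on $X\setminus\overline{\bfC_1}$ and $f_2$ on $X\setminus\overline{\bfC_2}$. The hypothesis that $f_1$ and $f_2$ agree at the base provides the compatibility needed both to glue the codomains and to ensure that the two piecewise definitions of $f$ agree on their overlap.

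\emph{Gluing the codomain.} Let $D_i:=f_i(\Col_{\bfC_i}(\bfC_{i+1}))\subset Y_i$ denote the cylinders that are images of $\bfC_{i+1}$'s traces, and let $\hat E_i:=f_i(\Col_{\bfC_i}(\bfC_i))\subset Y_i$ denote the saddle connections arising from the collapse of $\bfC_i$. The agreement at the base yields a canonical half-translation isomorphism $\phi$ between the codomains of $\Col(f_1)$ and $\Col(f_2)$ satisfying $\phi\circ\Col(f_1)=\Col(f_2)$. Using the explicit local description of $\Col_{\bfC}(f)$ from Lemma \ref{L:DefinitionCol(f)} together with the hypothesis $\overline{\Col_{\bfC_i}(\bfC_{i+1})}=f_i^{-1}(f_i(\overline{\Col_{\bfC_i}(\bfC_{i+1})}))$, one checks that $\phi$ restricts to a half-translation isomorphism from $Y_1\setminus(\overline{D_1}\cup\hat E_1)$ onto $Y_2\setminus(\overline{D_2}\cup\hat E_2)$, and that in the codomains of the two collapses, $\phi$ identifies the collapsed-cylinder saddle connections $\hat D_1$ with $\hat E_2$ and $\hat E_1$ with $\hat D_2$. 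The surface $Y$ is then defined as $Y_1\sqcup Y_2$ modulo these identifications: the saddle connections $\hat E_2\subset Y_2$ are ``re-inflated'' to become the cylinders $D_1\subset Y_1$ in $Y$, and symmetrically on the other side.

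\emph{Constructing $f$ and verifying the conclusions.} For $x\in X\setminus\overline{\bfC_i}$, the natural map $(X,\omega)\to\Col_{\bfC_i}(X,\omega)$ sends $x$ to itself, so we may set $f(x):=f_i(x)\in Y_i\subset Y$. On the intersection $X\setminus(\overline{\bfC_1}\cup\overline{\bfC_2})$ the two resulting definitions agree under the gluing, as a direct unpacking of $\phi\circ\Col(f_1)=\Col(f_2)$. Because $\bfC_1$ and $\bfC_2$ share no boundary saddle connections, $\overline{\bfC_1}\cap\overline{\bfC_2}$ is a finite set of singular points, and $f$ extends continuously there. Since each $f_i$ is a branched covering of the same degree, so is $f$; the preimage condition $\overline{\bfC_i}=f^{-1}(f(\overline{\bfC_i}))$ follows from the corresponding hypothesis on $f_i$ and the construction of $Y$; and $f_i=\Col_{\bfC_i}(f)$ holds by definition.

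\emph{Main obstacle.} The hardest step is verifying that $\phi$ correctly identifies the two pairs of collapsed saddle connection sets in the codomains so that the gluing produces a Hausdorff half-translation surface with the correct cylinder structure. The identity $\phi(\hat D_1)=\hat E_2$ encodes the essential geometric content: the collapsed $D_1$-cylinders in $\Col(Y_1)$ (arising from actual cylinders in $Y_1$) must match the saddle connections $\hat E_2$ in $\Col(Y_2)$ (which are not cylinders in $Y_2$, but become cylinders in the eventual $Y$). Establishing this identity is a direct computation using Lemma \ref{L:DefinitionCol(f)} to trace how the line segments $\Col_{\bfC_i}(\bfC_i)$ map under $f_i$ and then through the further collapse. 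A secondary local concern is continuity of $f$ at the finitely many common boundary singularities in $\overline{\bfC_1}\cap\overline{\bfC_2}$, which is handled by matching the local branching structures of $f_1$ and $f_2$ at the corresponding singular points.
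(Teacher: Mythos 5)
Your proposal takes the dual route to the paper's: you build the codomain $Y$ by gluing $Y_1$ and $Y_2$ and define $f$ piecewise, whereas the paper works entirely in the domain, gluing the fiber equivalence relations $\sim_i$ on the $\Col_{\bfC_i}(X,\omega)$ and taking $(X,\omega)/\sim$ as the codomain. Both are legitimate formulations of the same construction. However, there is a genuine gap in your verification that the two piecewise definitions of $f$ agree, and you have mislocated the hard step.

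You assert that on the overlap $X\setminus(\overline{\bfC_1}\cup\overline{\bfC_2})$ the definitions $f_1$ and $f_2$ agree ``as a direct unpacking of $\phi\circ\Col(f_1)=\Col(f_2)$.'' That is not correct. For $x$ in the overlap, agreement in $Y$ requires all three of: (a) $f_1(x)\in Y_1\setminus(\overline{D_1}\cup\hat E_1)$, (b) $f_2(x)\in Y_2\setminus(\overline{D_2}\cup\hat E_2)$, and then (c) $\phi(f_1(x))=f_2(x)$, which \emph{does} follow from $\phi\circ\Col(f_1)=\Col(f_2)$. But (a) has two parts: $f_1(x)\notin\overline{D_1}$, which follows from the stated preimage hypothesis $f_1^{-1}(f_1(\overline{\ColOne(\bfC_2)}))=\overline{\ColOne(\bfC_2)}$; and $f_1(x)\notin\hat E_1:=f_1(\Col_{\bfC_1}(\bfC_1))$, which requires $f_1^{-1}(\hat E_1)=\Col_{\bfC_1}(\bfC_1)$. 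This last equality is exactly the paper's Sublemma \ref{SL:DLkey} and is \emph{not} among your hypotheses; if it failed, a point $x$ in the overlap could have $f_1(x)\in\hat E_1$, and then $f_1(x)$ would not be identified with any point of $Y_2$, so your piecewise $f$ would be inconsistent. The identity $f_i^{-1}(\hat E_i)=\Col_{\bfC_i}(\bfC_i)$ is the substantive technical step of the whole lemma: the paper proves it by a contradiction argument that traces a hypothetical extra saddle connection through both collapses, uses $\Col(f_1)=\Col(f_2)$, and invokes the preimage hypothesis on the \emph{other} side $i+1$. It is not a direct consequence of Lemma \ref{L:DefinitionCol(f)}.

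Relatedly, the step you single out as the ``main obstacle'' --- the identity $\phi(\hat D_1)=\hat E_2$ (and symmetrically $\phi(\hat E_1)=\check D_2$) --- is in fact the easy part: it follows immediately by applying $\phi\circ\Col(f_1)=\Col(f_2)$ to the sets $\ColOneTwo(\bfC_2)$ and $\ColOneTwo(\bfC_1)$. So you have correctly identified a true fact as a direct computation, but it is not where the difficulty lies. If you insert a proof of $f_i^{-1}(\hat E_i)=\Col_{\bfC_i}(\bfC_i)$ (the analogue of Sublemma \ref{SL:DLkey}) before the piecewise definition of $f$, your codomain-gluing argument should go through.
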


\begin{cor}\label{C:DenseDiamond}
If additionally the orbit closure of $(X,\omega)$ is $\cM$, then  every surface in $\cM$ is a cover of a half translation surface in such a way that each $f_i$ is a limit of  associated covering maps.
\end{cor}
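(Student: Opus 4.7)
The plan is to upgrade the single covering map $f:(X,\omega)\to(Y,q)$ produced by Lemma \ref{L:diamond} to a family of covering maps parameterized by all surfaces in $\cM$, using density of the $GL^+(2,\bR)$-orbit of $(X,\omega)$ in $\cM$. The property of admitting a half-translation covering of fixed degree and combinatorial type is $GL^+(2,\bR)$-invariant: for any $g\in GL^+(2,\bR)$, the same underlying (conformal) map $f$ gives a half-translation covering $g\cdot(X,\omega)\to g\cdot(Y,q)$ with the same degree, branching divisor, and monodromy. In particular, every surface in the $GL^+(2,\bR)$-orbit of $(X,\omega)$ is a cover of the corresponding surface in the stratum of $(Y,q)$.

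Next I would show closedness: given $(X',\omega')\in\cM$ realized as $\lim g_n\cdot(X,\omega)$ with $g_n\in GL^+(2,\bR)$, the codomains $g_n\cdot(Y,q)$ lie in a fixed stratum and their areas are a fixed fraction of the (bounded, bounded-away-from-zero) areas of $g_n\cdot(X,\omega)$. After passing to a subsequence, $g_n\cdot(Y,q)$ converges to some $(Y',q')$; since any degeneration of the codomain would force a compatible degeneration of the domain, $(Y',q')$ lies in the same stratum. A standard compactness argument for proper holomorphic maps of fixed degree between convergent sequences of compact surfaces then produces a limiting covering $f':(X',\omega')\to(Y',q')$ of the desired degree and combinatorial type. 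Combining invariance, closedness, and density of the orbit yields the first assertion.

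For the statement about each $f_i$, the surfaces $a_t^{\bfC_i}(X,\omega)$ lie in $\cM$ for every $t\in\bR$ by the definition of a diamond, and the preimage condition $\overline{\bfC_i}=f^{-1}(f(\overline{\bfC_i}))$ provided by Lemma \ref{L:diamond} makes $f$ equivariant for the cylinder dilations $a_t^{\bfC_i}$ on the domain and $a_t^{f(\bfC_i)}$ on the codomain. Hence $f$ restricts to a covering $a_t^{\bfC_i}(X,\omega)\to a_t^{f(\bfC_i)}(Y,q)$ for each $t$. Letting $t\to-\infty$, Lemma \ref{L:DefinitionCol(f)} identifies the limit map with $\Col_{\bfC_i}(f)$, which equals $f_i$ by Lemma \ref{L:diamond}; thus $f_i$ is the limit of the covering maps associated to the surfaces $a_t^{\bfC_i}(X,\omega)\in\cM$.

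The main obstacle is the closedness step: one must ensure that a convergent sequence of degree-$d$ coverings converges to a degree-$d$ covering, rather than acquiring unexpected branching or having its codomain degenerate onto the stratum boundary. This is standard in the translation surface setting once areas are controlled, but requires some explicit bookkeeping of singularities and marked points (together with the conventions in Definition \ref{D:Covering}) to rule out pathologies at the limit.
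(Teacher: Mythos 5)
Your argument is correct and takes the same route as the paper: $\cM$ must be contained in the locus of half-translation covers containing $(X,\omega)$ with its covering map $f$ (from the Diamond Lemma), because such loci are closed and $GL^+(2,\bR)$-invariant, and $(X,\omega)$ has dense orbit in $\cM$. The paper states this in one line and takes the closedness and the equivariance under cylinder dilations (hence the statement about the $f_i$) for granted, whereas you spell both out explicitly — but the underlying argument is identical.
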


\begin{proof}
 $\cM$ must be contained in a  locus of half-translation covers, since such loci are closed and invariant. 
\end{proof}

\begin{proof}[Proof of Lemma \ref{L:diamond}] \red
Consider the equivalence relation $\sim_i$ on $\Col_{\bfC_i}(X,\omega)$ whose equivalence classes are exactly the fibers of $f_i$. Roughly speaking, we will ``glue together" $\sim_1$ and $\sim_2$ to get an equivalence relation $\sim$ on $(X,\omega)$, and show that $(X,\omega)/{\sim}$ has the structure of a quadratic differential. 

The definition of the collapse maps implies that we have the inclusions illustrated in Figure \ref{F:SubsurfaceInclusions}. 
\begin{figure}[h]\centering
\begin{tikzcd}[column sep=-1.5em]
 & (X, \omega) &  \\
\ColOneX - \ColOne({\bfC_1})  \arrow[ur, hook] & & \ColTwoX - \ColTwo({\bfC_2}) \arrow[ul, hook']\\
& \ColOneTwoX - \ColOneTwo({\bfC_1 \cup \bfC_2}) \arrow[ul, hook'] \arrow[ur, hook]
\end{tikzcd}
\caption{}
\label{F:SubsurfaceInclusions}
\end{figure}

\noindent Via these inclusions, 
$$ \bigcup_{i=1,2} \left( \Col_{\bfC_i}(X,\omega) - \Col_{\bfC_i}(\bfC_i) \right) = (X,\omega) - \overline{\bfC}_1 \cap \overline{\bfC}_2.$$

We can give the outline of the proof more precicely as follows. We will show in the next sublemma that $\Col_{\bfC_i}(X,\omega) - \Col_{\bfC_i}(\bfC_i)$ is closed under $\sim_i$. This will allow us to glue together the equivalence relations on these sets to obtain an equivalence relation $\sim$ on $(X,\omega)$ minus the finite set $\overline{\bfC}_1 \cap \overline{\bfC}_2$. 

\begin{rem}
The notation $\Col_{\bfC_i}(\bfC_i)$ can be understood via the map $\Col_{\bfC_i} : (X,\omega) \to \Col_{\bfC_i}(X,\omega)$ that is in general multi-valued. (This map can be viewed in two steps: An honest collapse map, and then a step that deletes nodes and fills in punctures. The composition is multi-valued on the subset of $(X,\omega)$ that collapses to a node via the initial honest collapse map. See \cite{ChenWright} for more discussion.)  
\end{rem}

\begin{sublem}\label{SL:DLkey}
$f_i^{-1}(f_i( \Col_{\bfC_i}(\bfC_i))) = \Col_{\bfC_i}(\bfC_i).$
\end{sublem}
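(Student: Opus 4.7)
The inclusion $\Col_{\bfC_i}(\bfC_i) \subseteq f_i^{-1}(f_i(\Col_{\bfC_i}(\bfC_i)))$ is tautological. For the reverse, by symmetry we treat $i=1$ and argue by contradiction. Suppose there exist $p\in\Col_{\bfC_1}(X,\omega)$ and $q\in\Col_{\bfC_1}(\bfC_1)$ with $f_1(p)=f_1(q)$ but $p\notin\Col_{\bfC_1}(\bfC_1)$. Via the identification $\Col_{\bfC_1}(X,\omega)-\Col_{\bfC_1}(\bfC_1)=(X,\omega)-\overline{\bfC_1}$ from Lemma~\ref{L:DefinitionCol(f)}, we may view $p$ as a point of $(X,\omega)$ outside $\overline{\bfC_1}$.

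The main idea is to transport the coincidence $f_1(p)=f_1(q)$ to the $f_2$-side of the diamond, where the hypothesis $\overline{\Col_{\bfC_2}(\bfC_1)}=f_2^{-1}(f_2(\overline{\Col_{\bfC_2}(\bfC_1)}))$ (saturation of $\bfC_1$ under $f_2$) applies directly. First, apply $\Col_{\Col_{\bfC_1}(\bfC_2)}$ to descend both $p$ and $q$ to the base $\Col_{\bfC_1,\bfC_2}(X,\omega)$. Because $\bfC_1$ and $\bfC_2$ have disjoint closures up to a finite set of singularities, $q$ maps to a point $\tilde q\in\Col_{\bfC_1,\bfC_2}(\bfC_1)$, and $p$ maps to some $\tilde p$. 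The relation $f_1(p)=f_1(q)$ descends to $\Col(f_1)(\tilde p)=\Col(f_1)(\tilde q)$, and the hypothesis $\Col(f_1)=\Col(f_2)$ then gives $\Col(f_2)(\tilde p)=\Col(f_2)(\tilde q)$.

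Now lift back to $\Col_{\bfC_2}(X,\omega)$. In the generic case $p\notin\overline{\bfC_2}$, the point $\tilde p$ lifts to a single point $\bar p\in\Col_{\bfC_2}(X,\omega)-\overline{\Col_{\bfC_2}(\bfC_1)}$ (the same point $p$ on $(X,\omega)$), while $\tilde q$ lifts to a transverse segment $S_q$ inside the closure of a cylinder in $\Col_{\bfC_2}(\bfC_1)$. Since the cylinders $f_2(\Col_{\bfC_2}(\bfC_1))$ are saturated under $f_2$, the image $f_2(S_q)$ is a transverse segment contained in the closure of a single cylinder of the image; unwinding the definition of $\Col(f_2)$ (as in Lemma~\ref{L:DefinitionCol(f)}), the equality $\Col(f_2)(\tilde p)=\Col(f_2)(\tilde q)$ forces $f_2(\bar p)\in\overline{f_2(\Col_{\bfC_2}(\bfC_1))}=f_2(\overline{\Col_{\bfC_2}(\bfC_1)})$. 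Applying the saturation hypothesis for $i=2$ then yields $\bar p\in\overline{\Col_{\bfC_2}(\bfC_1)}$, contradicting $p\notin\overline{\bfC_1}$.

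\textbf{Main obstacle.} The delicate part is handling the non-generic case $p\in\overline{\bfC_2}$, where $\tilde p$ descends into $\overline{\Col_{\bfC_1,\bfC_2}(\bfC_2)}$ and its lift $\bar p$ lies on $\Col_{\bfC_2}(\bfC_2)$ rather than in its complement. The same chain of implications still shows $\bar p\in\overline{\Col_{\bfC_2}(\bfC_1)}$, and to get a contradiction one must argue that interior points of $\Col_{\bfC_2}(\bfC_2)$ cannot lie in $\overline{\Col_{\bfC_2}(\bfC_1)}$: this follows because on $(X,\omega)$ the cylinders $\bfC_2$ and the closure $\overline{\bfC_1}$ meet only at a finite set of shared singularities, so after collapsing $\bfC_2$, the saddle connections $\Col_{\bfC_2}(\bfC_2)$ and the closure $\overline{\Col_{\bfC_2}(\bfC_1)}$ remain in geometrically distinct regions of $\Col_{\bfC_2}(X,\omega)$, sharing only isolated singular points. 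A similar bookkeeping handles $p$ on boundary saddle connections of $\bfC_2$, as well as the finite set of singular points and branch points, which can be dispatched by continuity after establishing the generic statement.
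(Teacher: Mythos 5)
Your proposal is correct and follows essentially the same strategy as the paper's proof: assume the excess is nonempty, push it down to the base of the diamond, use the agreement $\Col(f_1) = \Col(f_2)$ to switch sides, and invoke the saturation hypothesis on the $f_2$ side together with the non-adjacency of $\bfC_1$ and $\bfC_2$ to derive a contradiction. The difference is that you work at the level of individual points, while the paper works with saddle connections: it first notes that $\Col_{\bfC_i}(\bfC_i)$ has no isolated points, so a nonempty excess contains a whole saddle connection, and then tracks that saddle connection through the collapses. The saddle-connection formulation automatically stays away from the finite singular/marked locus and from the boundary of the cylinders (an excess saddle connection cannot merge into $\Col_{\bfC_i}(\bfC_i)$ when $\Col_{\bfC_i}(\bfC_{i+1})$ is collapsed, again by non-adjacency), so it sidesteps the case analysis you are forced into in your ``main obstacle'' paragraph (the $p\in\overline{\bfC_2}$ case, the finitely many singular and branch points, etc.). Your handling of those cases is correct in outline, but the pointwise bookkeeping --- in particular which identification $\bar p$ lives in, and the fact that the two collapse orders $\Col_{\Col_{\bfC_1}(\bfC_2)}\circ\Col_{\bfC_1}$ and $\Col_{\Col_{\bfC_2}(\bfC_1)}\circ\Col_{\bfC_2}$ only commute off the singular locus --- is glossed over; the paper isolates exactly this commutativity caveat and works with sets of saddle connections rather than points to keep the argument clean. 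You would likely find the saddle-connection reformulation tidier to write out in full.
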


\begin{proof}
Because the collapse map may be multi-valued, $\Col_{\bfC_i}(\bfC_i)$, a priori, is a finite set of saddle connections and isolated points. Our first claim is that there are no isolated points. This follows from the fact that gluing in the cylinders of $\bfC_i$ into the saddle connections of $\Col_{\bfC_i}(\bfC_i)$ does not involve the hypothetical isolated points. 

Suppose in order to find a contradiction that the sublemma is false; so 
$$f_i^{-1}(f_i( \Col_{\bfC_i}(\bfC_i)))-\Col_{\bfC_i}(\bfC_i)$$ 
contains a saddle connection. Because $\bfC_1$ and $\bfC_2$ do not share boundary saddle connections, collapsing $\Col_{\bfC_i}(\bfC_{i+1})$ cannot cause this saddle connection to merge with $\Col_{\bfC_i}(\bfC_i)$. Hence, we get 
$$\Col(f_i)^{-1}(\Col(f_i)(  \Col_{\Col_{\bfC_i}(\bfC_{i+1})} \Col_{\bfC_i}(\bfC_i))) - \Col_{\Col_{\bfC_i}(\bfC_{i+1})} \Col_{\bfC_i}(\bfC_i)$$
contains a saddle connection. (Here we write $\Col(f_i)$ instead of $\Col_{\Col_{\bfC_i}(\bfC_{i+1})}(f_i)$.) 

Since $\Col(f_1) = \Col(f_2)$, we get the same statement with $\Col(f_i)$ replaced with $\Col(f_{i+1})$; namely that
$$\Col(f_{i+1})^{-1}(\Col(f_{i+1})(  \Col_{\Col_{\bfC_i}(\bfC_{i+1})} \Col_{\bfC_i}(\bfC_i))) - \Col_{\Col_{\bfC_i}(\bfC_{i+1})} \Col_{\bfC_i}(\bfC_i)$$
contains a saddle connection.

Because of the subtle multi-valued nature of the  collapse maps, it isn't clear whether  
$$\Col_{\Col_{\bfC_1}(\bfC_{2})} \circ \Col_{\bfC_1} = \Col_{\Col_{\bfC_2}(\bfC_{1})} \circ \Col_{\bfC_2}.$$
Nonetheless, the  definition of the collapse maps implies that this commutativity holds off the preimage of the singular points and marked points in $\ColOneTwoX$. Hence we get that 
$$\Col(f_{i+1})^{-1}(\Col(f_{i+1})(  \Col_{\Col_{\bfC_{i+1}}(\bfC_{i})} \Col_{\bfC_{i+1}}(\bfC_i))) - \Col_{\Col_{\bfC_{i+1}}(\bfC_{i})} \Col_{\bfC_{i+1}}(\bfC_i)$$
contains a saddle connection.

 Since we have assumed that 
$$\overline{\Col_{\bfC_{i+1}}(\bfC_{i})}=f_{i+1}^{-1}(f_{i+1}(\overline{\Col_{\bfC_{i+1}}(\bfC_{i})})),$$
the definition of the $\Col(f_{i+1})$ implies that, at worst up to a finite set of points,  $$\Col(f_{i+1})^{-1}(\Col(f_{i+1})(  \Col_{\Col_{\bfC_{i+1}}(\bfC_{i})} \Col_{\bfC_{i+1}}(\bfC_i))) = \Col_{\Col_{\bfC_{i+1}}(\bfC_{i})} \Col_{\bfC_{i+1}}(\bfC_i),$$ which is a contradiction.
%
\end{proof}

The image of the inclusion 
$$\ColOneTwoX - \ColOneTwo({\bfC_1 \cup \bfC_2})\hookrightarrow \ColOneX$$
is equal to $$\ColOneX - \ColOne({\bfC_1})-\overline{\ColOne(\bfC_2)}.$$
Since $\overline{\ColOne(\bfC_2)}$ is closed under $\sim_1$ by assumption and $\ColOne({\bfC_1})$ is closed under $\sim_1$ by Sublemma \ref{SL:DLkey}, we see that the image of the inclusion is closed under $\sim_1$ (and similarly on the other side of the diamond). 

The assumption that $\Col(f_1)=\Col(f_2)$ implies that the restrictions of the $\sim_i$ to 
$$\ColOneTwoX - \ColOneTwo({\bfC_1 \cup \bfC_2})$$
obtained via these inclusions agree. 

Viewing all these sets as subsets of $(X,\omega)$, it follows that there is an equivalence relation $\sim$ on 
$(X,\omega)-\overline{\bfC}_1 \cap \overline{\bfC}_2$ which restricts to $\sim_i$. All of the sets obtained via the above inclusions are closed under $\sim$. (Note that $\overline{\bfC}_1 \cap \overline{\bfC}_2$ is contained in the set of singularities of $(X,\omega)$.)

By construction, $\sim$ has the property that as a point moves in the complement of the singularities, each point of the equivalence relation moves with slope $\pm1$ (see Definition \ref{D:slope} for the definition of the ``slope" of a point marking).  As we review in the next paragraph, it follows that $((X,\omega) \setminus \overline{\bfC}_1 \cap \overline{\bfC}_2)/{\sim}$ can be endowed with a half translation surface structure in such a way that the map 
$$(X,\omega) \setminus \overline{\bfC}_1 \cap \overline{\bfC}_2\to ((X,\omega) \setminus \overline{\bfC}_1 \cap \overline{\bfC}_2)/{\sim}$$
is a map of half translation surfaces with punctures. This extends to a half-translation surface map defined on $(X,\omega)$. 

The construction of the half translation surface structure on $((X,\omega) \setminus \overline{\bfC}_1 \cap \overline{\bfC}_2)/{\sim}$  is \black very similar to the proof of \cite[Lemma 2.8]{ApisaWright}, but we review  the details here. The quotient map is a  covering map, and restrictions of the quotient map to small balls where the map is injective can be used to endow the quotient with an atlas of charts whose transition maps are of the form $z\mapsto \pm z+C$. In the neighborhood of each puncture, the map, being a local isometry, must have a standard form, and we can fill in the punctures to get a map of closed surfaces. 
\end{proof}

\section{Preliminaries on orbit closures}\label{S:Pre}

We will briefly review some facts about invariant subvarieties. In the remainder of the section, $\cM$ will denote a connected  \red $\GL$-invariant \black subvariety and $(X, \omega)$ will be a point in $\cM$. 

\subsection{Rank and rel}\label{SS:RankAndRel}
First we recall that the tangent space of $\cM$ at a point $(X, \omega) \in \cM$ is naturally identified with a subspace of $H^1(X, \Sigma; \mathbb{C})$, where $\Sigma$ denotes the set of zeros of $\omega$ on $X$. Let $p: H^1(X, \Sigma; \mathbb{C}) \ra H^1(X; \mathbb{C})$ denote the projection from relative to absolute cohomology. The subspace $\ker(p)\cap T_{(X, \omega)} \cM$ is called the \emph{rel subspace  of $\cM$ at $(X,\omega)$}. We call $$\mathrm{rel}(\cM)= \dim_\bC \ker(p)\cap T_{(X, \omega)} \cM$$ \emph{the rel of $\cM$}.

By Avila-Eskin-M\"oller \cite{AEM}, for any $(X,\omega) \in \cM$, $p\left( T_{(X, \omega)} \cM \right)$ is a complex symplectic vector space, in particular its complex dimension is even. The \emph{rank} of $\cM$ is defined to be half the complex-dimension of $p\left( T_{(X, \omega)} \cM \right)$, which is independent of the choice of $(X, \omega) \in \cM$.

An invariant subvariety $\cM$ in a stratum of connected genus $g$ Abelian differentials is called \emph{full rank} if its rank is $g$. The main result of  \cite{MirWri2} is the following, \red where it is implicit the surfaces do not have marked points. \black 

\begin{thm}[Mirzakhani-Wright]\label{T:MirWriFullRank}
Let $\cM$ be a full rank invariant subvariety.
Then $\cM$ is either a connected component of a stratum, or the locus
of hyperelliptic translation surfaces therein.
\end{thm}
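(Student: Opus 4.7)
The plan is to pin down the tangent space $T_{(X,\omega)}\cM$ at a generic point $(X,\omega) \in \cM$. The full rank hypothesis, combined with the Avila--Eskin--M\"oller theorem that $p(T_{(X,\omega)}\cM)$ is a complex symplectic subspace, forces $p(T_{(X,\omega)}\cM)=H^1(X;\bC)$, so absolute periods on $\cM$ already vary with the full $2g$ complex degrees of freedom. Consequently $\cM$ differs from its ambient component of the stratum $\cH$ only through linear constraints on relative periods, and the task is to show these are either trivial (giving the whole component) or enforced by a hyperelliptic involution (giving the hyperelliptic locus).

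The main tool will be Wright's cylinder deformation theorem \cite{Wcyl}. Fix a generic $(X,\omega)$ and consider an equivalence class $\bfC$ of $\cM$-parallel horizontal cylinders. The standard shear along $\bfC$ lies in $T_{(X,\omega)}\cM$, and its image in $H^1(X;\bC)$ is a positive linear combination of Poincar\'e duals of the core curves of the cylinders in $\bfC$. Using the self-duality of the full symplectic space $p(T_{(X,\omega)}\cM)=H^1(X;\bC)$, together with the fact that individual shears of the constituent cylinders must pair symplectically with the absolute periods in a compatible way, I would show that either $\bfC$ is a singleton, or $\bfC$ consists of exactly two cylinders of equal circumference whose core curves are homologous up to sign. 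In particular, if every equivalence class in every direction is a singleton, then each horizontal cylinder can be sheared and stretched independently inside $\cM$; combined with full absolute freedom this yields $\dim_\bC T_{(X,\omega)}\cM = \dim_\bC T_{(X,\omega)}\cH$, so $\cM$ is a connected component of the stratum.

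In the remaining case I need to produce a single global involution $\tau:X\to X$ with $\tau^*\omega=-\omega$ that realizes all the cylinder pairings; any such $\tau$ is a hyperelliptic involution, and the locus of surfaces admitting such a $\tau$ inside a given component is precisely the hyperelliptic locus, forcing equality with $\cM$ by a dimension count (the hyperelliptic locus has the same rank $g$ as $\cM$). To construct $\tau$, I would exploit the density of completely periodic directions in the orbit closure to see the pairing of cylinders in an open set of directions simultaneously, and use the rigidity of flat surface automorphisms (which are determined by their action on a single cylinder decomposition) to piece these directional pairings into one ambient involution.

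The main obstacle will be this globalization step: local pairings of cylinders within equivalence classes could a priori be realized by different symmetries in different directions, and showing they fit together into a single honest involution of the underlying surface requires a delicate combination of rigidity and the density of completely periodic directions inside $\cM$. I expect to handle the low-complexity cases by hand and to reduce higher-complexity cases to them by induction via degeneration to boundary strata, using the description of boundary invariant subvarieties in \cite{MirWri, ChenWright}, where lower-rank components of the boundary can be analyzed directly since the full-rank hypothesis on $\cM$ yields strong constraints on its boundary.
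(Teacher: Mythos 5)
The paper only cites this result from \cite{MirWri2}; it does not reprove it, so I am comparing your plan to that paper's argument. Your starting point is correct: full rank means $p(T_{(X,\omega)}\cM)=H^1(X;\bC)$, so $\cM$ is cut out by linear conditions involving only relative periods, and the cylinder deformation theorem is the right tool. The observation that $\cM$-parallelism forces core curves to be homologous up to sign is a genuine key lemma and has a clean justification you gesture at: the ratio of core holonomies is locally constant on $\cM$, and if the absolute periods vary with full $2g$-dimensional freedom then this ratio, viewed as a meromorphic function on $H^1(X;\bC)$, would have to be globally constant, forcing the core classes to be proportional and hence equal up to sign (they are primitive, being nonseparating simple closed curves).

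Two steps in your proposal are not sound as written, and one of them is the load-bearing step. First, the passage from ``homologous core curves'' to ``equivalence classes are singletons or pairs, with pairs isometric'' does not follow from the ``self-duality'' heuristic; you still have to rule out three or more disjoint $\cM$-parallel cylinders with pairwise homologous cores, and you need the isometry of the two cylinders in a pair (you use it tacitly when you later reach for a hyperelliptic involution, which forces equal heights and circumferences). Neither is immediate. Second, and more seriously, your globalization strategy --- ``rigidity of flat surface automorphisms determined by their action on a single cylinder decomposition'' --- presupposes you already have an automorphism, whereas the difficulty is precisely that you only have, direction by direction, a matching of cylinders, and it is not clear that a single affine map realizes all of them simultaneously. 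This is where your sketch has a genuine gap. The actual argument in \cite{MirWri2} does not attempt to build the involution directly from directional pairings; it runs an induction on dimension by collapsing cylinders (using the boundary theory of \cite{MirWri}), shows the boundary of a full rank $\cM$ is again of the constrained form, handles small base cases explicitly, and detects the hyperelliptic or full-stratum structure through those degenerations. So the degeneration route you mention at the end as a fallback is really the main route, and the direct-construction route you put first does not close the globalization gap.
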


\subsection{Field of definition and translation covers}

\begin{defn}\label{D:Covering}
A \emph{translation covering} from $(X, \omega)$ to $(Y, \eta)$ is defined to be a holomorphic map $f: X \ra Y$ branched only over singularities and marked points such that:
\begin{enumerate}
\item $f^* \eta = \omega$, 
\item\label{D:Covering:2} all marked points on $(X,\omega)$ map to marked points on $(Y,\eta)$, and
\item\label{D:Covering:3}  each marked point on $(Y,\eta)$ has at least one preimage on $(X,\omega)$ that is a singular or marked point. 
\end{enumerate}
A \emph{half-translation surface covering}, from a translation surface or half-translation surface to a half-translation surface, is defined similarly, with the additional stipulations that:
\begin{enumerate} 
\setcounter{enumi}{3}
\item\label{D:Covering:4} a marked point may map to a simple pole, 
\item\label{D:Covering:5} but poles need not have a preimage that is a singular or marked point. 
\end{enumerate}
\end{defn}

The requirements concerning marked points \red(especially items \ref{D:Covering:2}, \ref{D:Covering:3}, and  \ref{D:Covering:5}) \black   are not standard, but will be convenient here. 

\red

Without item \ref{D:Covering:2}, one could deform the domain $(X,\omega)$ without changing the codomain $(Y,\eta)$; and without item \ref{D:Covering:3} one could deform the codomain without deforming the domain (both while remaining in an appropriate locus of covers). So \ref{D:Covering:2} and \ref{D:Covering:3} combined ensure that  deformations of the codomain surface correspond locally to deformations of the covering map, and to deformations of the domain surface (again remaining in an appropriate locus of covers). 
Items \ref{D:Covering:4} and \ref{D:Covering:5} give us the flexibility either to mark preimages of poles or not to. \black 

A translation surface is \emph{square-tiled} if it is a translation cover of square torus with only one marked point (so that the cover is branched at most over that one point). It is called a \emph{torus cover} if it admits a map to genus one translation surface with any number of marked points. 

The \emph{field of definition} $\bk(\cM)$ of an invariant subvariety $\cM$ is the smallest subfield of $\mathbb{R}$ such that $\cM$ can be defined by equations in $\bk(\cM)$ in any local period coordinate chart. 

\begin{lem}\label{L:DenseSquares}
If $\bk(\cM) = \mathbb{Q}$, then square-tiled surfaces are dense in $\cM$. 
\end{lem}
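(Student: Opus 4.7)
The plan is to combine Wright's theorem on the field of definition with the observation that a translation surface whose relative periods lie in $\bQ(i)$ is, after rescaling, square-tiled in the sense of Definition \ref{D:Covering}. Throughout, I treat $\cM$ in local period coordinates around an arbitrary point $(X,\omega)\in\cM$ and aim to produce square-tiled surfaces arbitrarily close to $(X,\omega)$.

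The first step is to extract the rational linear structure of $\cM$. I would invoke Wright's theorem that for every $(X,\omega)\in\cM$ the tangent space $T_{(X,\omega)}\cM\subset H^1(X,\Sigma;\bC)$ is defined over $\bk(\cM)$. Since $\bk(\cM)=\bQ$, there is a $\bQ$-subspace $V\subset H^1(X,\Sigma;\bQ)$ with $T_{(X,\omega)}\cM = V\otimes_{\bQ}\bC$. Consequently, in any chart, $\cM$ is locally the solution set of a system $\sum_j a_{ij}z_j = c_i$ with $a_{ij}\in\bQ$ and $c_i=\sum_j a_{ij}\,z_{(X,\omega),j}\in\bC$.

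The main obstacle is to show that a nearby point in $\cM$ satisfies such a system with all $c_i\in\bQ(i)$, for then the affine subspace has dense $\bQ(i)$-rational solutions by elementary linear algebra. To overcome this, I would combine two ingredients: the $\GL(2,\bR)$-action, which acts on the constants $c_i$ componentwise on $\bC=\bR^2$, and the $\GL(2,\bR)$-ergodicity of Eskin--Mirzakhani--Mohammadi, which implies in particular that every $\GL(2,\bR)$-orbit in $\cM$ is dense in $\cM$. Using the rational structure of $V$ from Step~1 together with Wright's structural results on absolute periods, the goal is to exhibit a $\bQ(i)$-rational surface arbitrarily close to $(X,\omega)$; this reduces to a density claim in finite-dimensional linear algebra once the rational structure is in hand.

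The final step is to translate between $\bQ(i)$-rational surfaces and square-tiled surfaces. Given a nearby $(X'',\omega'')\in\cM$ with all relative periods in $\bQ(i)$, I would rescale so all periods lie in $\bZ[i]$; integration of $\omega''$ modulo $\bZ[i]$ then defines a translation covering $X''\to\bC/\bZ[i]$ branched only over the origin, which, with the origin as the unique marked point, realizes $(X'',\omega'')$ as square-tiled in the sense of Definition \ref{D:Covering}. The hardest part of the argument is producing a single $\bQ(i)$-rational surface near a prescribed point of $\cM$; everything else is linear algebra together with the formal translation between $\bZ[i]$-periods and covers of the standard torus.
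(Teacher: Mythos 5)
Your overall plan (use $\bk(\cM)=\bQ$ to get a $\bQ$-linear structure in period coordinates, find dense $\bQ(i)$-rational points, observe that these are square-tiled) matches the intent of the paper's one-line proof. But there is a gap at the step you yourself flag as the ``main obstacle,'' and the proposed fix via Eskin--Mirzakhani--Mohammadi does not resolve it.

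The issue: you set up the local equations for $\cM$ as an affine system $\sum_j a_{ij} z_j = c_i$ with $a_{ij}\in\bQ$ but constants $c_i\in\bC$, and then try to find a nearby point where the $c_i$ become $\bQ(i)$-rational. This is not the right formulation, and the detour through EMM ergodicity does not repair it --- density of a $\GL(2,\bR)$-orbit gives no control on when a point of the orbit has $\bQ(i)$-rational period coordinates, so no ``density claim in finite-dimensional linear algebra'' follows from it. The observation you are missing is that $\GL(2,\bR)$-invariance already forces the $c_i$ to vanish: $\cM$ is invariant under the dilations $\lambda\cdot\mathrm{Id}$ with $\lambda>0$, which act on period coordinates by $z\mapsto\lambda z$, so the image of $\cM$ in period coordinates is a cone. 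Hence the local affine subspace through any $(X,\omega)$ contains the origin and equals the linear subspace $V\otimes_{\bQ}\bC$, with $V$ a $\bQ$-subspace of $H^1(X,\Sigma;\bQ)$ since $\bk(\cM)=\bQ$. Density of $\bQ(i)$-points in $V\otimes_{\bQ}\bC$ is then immediate (take $\bQ(i)$-linear combinations of a $\bQ$-basis), and your final step translating $\bQ(i)$-periods (after a rational rescaling, which stays in $\cM$) into a cover of the unit square torus is fine. So the correct argument is more elementary than what you propose: drop EMM entirely, and instead use scaling-invariance to kill the constant terms. This is what the paper's terse proof, and the cited reference, tacitly rely on.
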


\begin{proof}
Any surface whose period coordinates are contained in $\bQ[i]$ is in particular square-tiled. \red (See  \cite[Section 1.5.2]{HS-IntroVeech} for a similar proof and discussion.)\black
\end{proof}

\begin{lem}\label{L:R1Arithmetic}
$\cM$ is a locus of torus covers if and only if it has rank 1 and $\bk(\cM) = \mathbb{Q}$.
\end{lem}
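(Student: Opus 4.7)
The plan is to prove both directions separately.

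$(\Rightarrow)$ Suppose every surface in $\cM$ is a torus cover. Pick $(X,\omega) \in \cM$ with covering map $f : (X,\omega) \to (E,\eta)$ to a genus-one translation surface. Since local deformations of $(X,\omega)$ in $\cM$ correspond to deformations of the base $(E,\eta)$ and of the branch data (cf.\ Definition \ref{D:Covering}), the image $p(T_{(X,\omega)}\cM)$ is contained in the complex 2-dimensional subspace $f^*H^1(E;\bC) \subset H^1(X;\bC)$. Hence the rank is at most $1$; the reverse inequality is automatic from $\GL$-invariance since $\cM$ is positive-dimensional. For the field of definition, along a component consisting of covers of a fixed combinatorial type, the locus is cut out in period coordinates by the equations stating that each absolute period of $\omega$ equals a fixed integer combination of two distinguished periods, namely integrals along cycles mapping to a $\bZ$-basis of $H_1(E;\bZ)$. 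These equations have integer coefficients, so $\bk(\cM) = \bQ$.

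$(\Leftarrow)$ Assume $\cM$ has rank $1$ and $\bk(\cM) = \bQ$. By \cite{AEM}, the subspace $p(T_{(X,\omega)}\cM) \subseteq H^1(X;\bC)$ has complex dimension $2$. Because $\cM$ is defined over $\bQ$ in period coordinates and $p$ is defined over $\bQ$, this projected subspace is defined over $\bQ$, so has the form $V_{\bQ} \otimes_{\bQ} \bC$ for some $2$-dimensional subspace $V_{\bQ} \subset H^1(X;\bQ)$. The tangent directions from the $\GL$-action place the real and imaginary parts $[\omega_x],[\omega_y]$ in $V_{\bR} := V_{\bQ} \otimes_{\bQ} \bR$, and these classes are $\bR$-linearly independent since $\omega$ has positive area. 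For any $\bQ$-basis $\alpha,\beta$ of $V_{\bQ}$, there then exist $\bR$-linearly independent complex numbers $v_1,v_2$ such that every period $\int_\gamma \omega$ with $\gamma \in H_1(X;\bZ)$ equals $a(\gamma)v_1 + b(\gamma)v_2$ with $a(\gamma),b(\gamma) \in \bQ$.

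Clearing denominators over a finite generating set of $H_1(X;\bZ)$, the periods of $\omega$ take values in a lattice $\Lambda = \tfrac{1}{N}(\bZ v_1 + \bZ v_2) \subset \bC$. The integration map $p \mapsto \int_{p_0}^{p} \omega \bmod \Lambda$ then yields a holomorphic map $X \to \bC/\Lambda$ pulling back the standard differential to $\omega$; marking the finitely many images of marked points of $X$ that are not already branch values produces a torus cover in the sense of Definition \ref{D:Covering}. Since $(X,\omega) \in \cM$ was arbitrary, $\cM$ is a locus of torus covers.

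The main obstacle is the $(\Leftarrow)$ direction, and specifically the step that converts the rank $1$ plus $\bk(\cM)=\bQ$ hypothesis into a genuine $\bQ$-structure on the absolute periods of $\omega$ at every point. As a sanity check, one can alternatively deduce the conclusion on a dense set via Lemma \ref{L:DenseSquares} (square-tiled surfaces are by definition torus covers) and then extend to all of $\cM$ using the fact that the locus of torus covers of bounded combinatorial complexity is closed and $\GL$-invariant; but the direct construction above avoids any appeal to density and makes the covering map explicit.
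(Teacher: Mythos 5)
Your $(\Leftarrow)$ direction and the rank computation in $(\Rightarrow)$ are correct, and they expand the paper's one-line proof (which simply cites the standard fact that a translation surface is a torus cover iff its absolute periods span a rank-two $\bZ$-module) into a careful direct construction of the covering map. The minor quibble there is that you should explicitly mark the branch values of the integration map $X\to\bC/\Lambda$ on the torus, not only the images of the marked points of $X$; Definition \ref{D:Covering} allows branching only over singularities and marked points, and a torus has no singularities.

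The genuine gap is in the $(\Rightarrow)$ claim that $\bk(\cM)=\bQ$. You argue that ``the locus'' of torus covers of a fixed combinatorial type is cut out by integer-coefficient equations on the absolute periods, and conclude $\bk(\cM)=\bQ$. But ``a locus of torus covers'' in this paper means only that every surface in $\cM$ is a torus cover, not that $\cM$ equals the full locus of covers. Your equations show the full locus $\cM_{\mathrm{full}}$ is $\bQ$-defined, but $\cM$ may be a proper $\GL$-invariant subvariety of $\cM_{\mathrm{full}}$, cut out by additional constraints purely in the rel directions (relations among branch points on the base torus). Nothing in your argument shows those additional constraints have rational coefficients, which is exactly what $\bk(\cM)=\bQ$ requires. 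Closing this gap needs an external input: either Wright's theorem from \cite{Wfield} that for a rank-one invariant subvariety the field of definition equals the holonomy field of a generic surface (and the holonomy field of a torus cover is $\bQ$), or Apisa's classification of marked-point orbit closures over tori, which shows the rel constraints are automatically $\bQ$-linear. The paper's one-line proof implicitly leans on the same external fact.
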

\begin{proof}
This follows from the fact that a translation surface is a torus cover if and only if its absolute periods span a $\bZ$-module of rank 2.
\end{proof}

We conclude this subsection with the following result of M\"oller \cite[Theorem 2.6]{M2} and its extension in Apisa-Wright \cite[Lemma 3.3]{ApisaWright}. We emphasize that it applies to connected surfaces, although later we will use it to get some information for multi-component surfaces.  

\begin{thm}\label{T:MinimalCover}
Suppose that $(X, \omega)$ is not a torus cover. There is a unique translation surface $(X_{min}, \omega_{min})$ and a translation covering $$\pi_{X_{min}}: (X, \omega) \rightarrow (X_{min}, \omega_{min})$$ such that any translation cover from $(X, \omega)$ to translation surface is a factor of $\pi_{X_{min}}$. 

Additionally, there is a quadratic differential $(Q_{min}, q_{min})$ with a degree
1 or 2 map $(X_{min}, \omega_{min}) \rightarrow (Q_{min}, q_{min})$ such that any map from $(X, \omega)$ to a quadratic differential is a factor of the composite map $\pi_{Q_{min}} :
(X, \omega) \rightarrow (Q_{min}, q_{min})$.
\end{thm}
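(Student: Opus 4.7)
The plan is to establish existence and uniqueness of the minimal translation cover $(X_{min}, \omega_{min})$ by showing that the collection of translation covers of $(X,\omega)$, partially ordered by ``$f$ is a factor of $g$'', has a maximum; then derive the quadratic statement from it.

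The central construction is a \emph{common refinement}. Given two translation covers $f_i : (X,\omega) \to (Y_i, \eta_i)$ for $i=1,2$, I would define the equivalence relation $\sim$ on $X$ generated by $p \sim q$ whenever $f_1(p) = f_1(q)$ or $f_2(p) = f_2(q)$, and take $(Z, \zeta) := (X,\omega)/{\sim}$ with the induced translation structure. Away from the singularities of $(X,\omega)$ the relation $\sim$ is described locally by a finite collection of translation maps, so if each equivalence class is finite then $Z$ inherits a half-translation (in fact translation) structure and each $f_i$ factors through the quotient map $(X,\omega) \to (Z,\zeta)$ by construction. This is directly analogous to the gluing argument in the proof of the Diamond Lemma.

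The main obstacle is showing that equivalence classes are finite: alternating between fibers of $f_1$ and $f_2$ could in principle generate an unbounded orbit. This is where the hypothesis that $(X,\omega)$ is not a torus cover enters. The passages between fibers are local translations whose displacement vectors lie in $\mathrm{Per}(X)$, so one must show the collection of such local translations, applied iteratively, produces only finitely many effects on any given point. In the non-torus-cover case the absolute period lattice has $\bZ$-rank at least $3$, which provides the needed rigidity to rule out infinite orbits; the torus-cover case is excluded precisely because this argument would fail there.

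Once common refinements exist, I would conclude by arguing that only finitely many codomains arise up to isomorphism. Any translation surface $(Y,\eta)$ which is a translation quotient of $(X,\omega)$ satisfies $\mathrm{Per}(Y) \subseteq \mathrm{Per}(X)$, has $g(Y) \leq g(X)$, and has bounded combinatorial data (number of zeros and marked points, and their orders), leaving only finitely many possibilities. Iterating the common-refinement operation therefore terminates at a unique maximum $(X_{min}, \omega_{min})$, which by its defining property makes $\pi_{X_{min}}$ the initial map with the stated factorization property.

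For the quadratic statement, any half-translation covering $\pi : (X,\omega) \to (Q,q)$ lifts through the orientation double cover $\widetilde Q \to Q$ to a translation cover $(X,\omega) \to (\widetilde Q, \widetilde\eta)$, since $(X,\omega)$ has trivial holonomy and so $\pi_\ast$ lands in the index-two subgroup of $\pi_1(Q\setminus\{\text{branch locus}\})$ corresponding to $\widetilde Q$. By the first part this lift factors through $\pi_{X_{min}}$, so every quadratic-differential factor of $(X,\omega)$ is the composition of $\pi_{X_{min}}$ with a quadratic-differential factor of $(X_{min},\omega_{min})$. It remains to show $(X_{min}, \omega_{min})$ admits at most one degree-two quadratic factor. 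If it had two distinct holonomy-reversing involutions $\tau_1, \tau_2$, then $\tau_1 \tau_2$ would be a nontrivial translation automorphism, and the quotient $(X_{min}, \omega_{min})/\langle \tau_1\tau_2\rangle$ would be a strictly smaller translation cover of $(X,\omega)$, contradicting the maximality of $(X_{min},\omega_{min})$. This yields the unique $(Q_{min}, q_{min})$ with map of degree $1$ or $2$ and completes the proof.
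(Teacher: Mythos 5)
This is a result the paper \emph{cites} (M\"oller~[Theorem~2.6] and Apisa--Wright~[Lemma~3.3]) rather than proves, so there is no in-paper argument to compare against; I will assess your proof on its own terms.

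Your overall strategy --- build a common refinement of two translation covers as a quotient by the equivalence relation generated by the two fiber relations, then argue the poset of covers has a maximum --- is a reasonable plan and in the right spirit. The reduction of the quadratic statement to the translation statement is also correct: any half-translation map to a quadratic differential lifts through the orientation double cover (using Convention~\ref{CV:NoTrivHol}), the lift factors through $\pi_{X_{min}}$ by the first part, and uniqueness of the holonomy-reversing involution follows because if $\tau_1 \neq \tau_2$ were two such involutions then $\tau_1\tau_2$ would be a nontrivial translation automorphism of $X_{min}$ (of finite order, since $g(X_{min})\ge 2$), and quotienting by it would contradict minimality.

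However, there is a genuine gap at exactly the point you flag: showing the equivalence classes of $\sim$ are finite. You write that ``the absolute period lattice has $\bZ$-rank at least $3$, which provides the needed rigidity to rule out infinite orbits,'' but this is a heuristic, not an argument. The displacement vectors lying in a dense subgroup of $\bC$ does not by itself bound orbit sizes. What one can actually extract cleanly from the non-torus-cover hypothesis is a \emph{degree bound}: since no translation quotient of $X$ can have genus $\le 1$ (else $X$ would be a torus cover), Riemann--Hurwitz gives $2g_X - 2 \ge d(2g_Y - 2) \ge 2d$, so $d \le g_X - 1$. This is what makes ``only finitely many codomains'' correct, and it is where the hypothesis genuinely enters. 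But the degree bound alone does not make the finiteness of $\sim$-classes automatic: the relation $\sim$ is generated by two bounded-degree fiber relations, and a priori the group they generate (inside the group of developing-map symmetries of the universal cover of $X\setminus\Sigma$) could fail to be discrete or of finite index over $\pi_1(X\setminus\Sigma)$. Ruling this out is the real content of the theorem, and it requires a further argument --- for instance an accumulation/compactness argument showing that infinitely many orbit points would force a nontrivial one-parameter family of local translation symmetries, hence a torus cover; or the Hodge-theoretic argument in M\"oller's paper. As written, this step is asserted rather than proved, so the proof is incomplete precisely where it matters.

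A secondary, smaller point: when forming $Z=X/{\sim}$, one should verify that the resulting map satisfies the paper's bookkeeping conditions for translation coverings (Definition~\ref{D:Covering}), in particular items~\ref{D:Covering:2} and~\ref{D:Covering:3} about marked points. This is manageable but should not be elided, because the degree bound above silently uses that the codomain is a bona fide translation surface with singularities and marked points handled consistently.
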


\subsection{Point markings}

This section recalls some definitions and results from \cite{ApisaWright}. 

Let $\cM$ be an invariant subvariety of a stratum $\cH$. Define $\cH^{*n}$ to be the same stratum with $n$-additional marked points, and let $\pi:\cH^{*n}\to \cH$ be the map that forgets marked points. Define a \emph{$n$-point marking over $\cM$} to be an invariant subvariety $\cN\subset \cH^{*n}$ such that $\pi(\cN)$ is a dense subset of $\cM$.

A $1$-point marking over $\cM$ of the same dimension as $\cM$ is called a \emph{periodic point}. Similarly, if $(X,\omega)$ has orbit closure $\cM$, a periodic-point on $(X,\omega)$ is one such that $(X,\omega)$ is contained in a periodic point for $\cM$. 

A $n$-point marking is called \emph{irreducible} if it is not obtained by combining a $k$-point marking and a $(n-k)$-point marking for some $0<k<n$; see \cite{ApisaWright} for the precise definition. 

Given a point marking, we call a marked point \emph{free} if it can be deformed freely in $\cN$ without changing the unmarked surface or the positions of the other marked point. Any $n$-point marking with $n>1$ that has a free point is reducible, since it arises from combining a 1-point marking with the free marked point and an $(n-1)$-point marking arising from the remaining marked points. 

Similarly, any $n$-point marking with $n>1$ that has a periodic point is reducible, since it arises from the 1-point marking giving that periodic point together with an $(n-1)$-point marking arising from the remaining marked points. 

For a 2-point marking $\cN$ over $\cM$, one of the following is true: 
\begin{itemize}
\item $\dim \cN=\dim \cM$, and both marked points are periodic,
\item $\dim \cN=\dim \cM+2$, and both marked points are free,
\item $\dim \cN=\dim \cM+1$, one marked point is free and the other is periodic, 
\item $\dim \cN=\dim \cM+1$ and $\cN$ is irreducible. 
\end{itemize}

In the last case, fixing the surface in $\cM$, there is 1-dimension of freedom to change the position of the two marked points, and the position of each marked point locally determines the position of the other. The slope, which we now define, describes the relative speed at which the two marked points move. 

\begin{defn}\label{D:slope}
Let $\cN$ be an irreducible $2$-point marking over $\cM$ and let $(X, \omega; \{p_1, p_2\})$ denote a generic surface in $\cN$. If $\gamma_i$ is a path from a zero $z$ to $p_i$ for $i\in \{1,2\}$, then there is a constant $a$ such that at all points in $\cN$ near $(X, \omega; \{p_1, p_2\})$, 
\[ \int_{\gamma_1} \omega = a \int_{\gamma_2} \omega + \int_\gamma \omega \]
where $\gamma \in H_1(X, \Sigma; \bC)$ and $\Sigma$ is the set of zeros and marked points of $(X,\omega)$ (excluding $p_1, p_2$). The \emph{slope} of $\cN$ is defined to be $a$ or $1/a$, whichever is larger. 

If $\cM$ belongs to a stratum of quadratic differentials, the slope is defined similarly but is only defined up to sign. 
\end{defn}

The following is a version of \cite[Theorem 2.8]{ApisaWright}, which follows immediately from its proof. 

\begin{lem}[Apisa-Wright]\label{L:ApisaWright}
Suppose that $\cM$ is an invariant subvariety of Abelian differentials, but is not a locus of torus covers, and that $\cN$ is an irreducible 2-point marking over $\cM$. Let $(X,\omega, \{p_1, p_2\}) \in \cN$. 

If $\cN$ has slope 1, there is a translation cover $f: (X, \omega) \ra (Y, \eta)$ to a translation surface $(Y, \eta)$ with $f(p_1) = f(p_2)$.

If $\cN$ has slope $-1$, there is a half-translation cover $f: (X, \omega) \ra (Y, \eta)$ to a quadratic differential $(Y, \eta)$ with $f(p_1) = f(p_2)$.

If $\cM$ consists of quadratic differentials, and the loci of holonomy double covers isn't a locus of torus covers, and the slope of $\cN$ is $\pm1$, then the same conclusion holds.
\end{lem}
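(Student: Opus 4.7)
The plan is to unpack the slope $\pm 1$ condition as a local isometric identification between neighborhoods of $p_1$ and $p_2$, and then to globalize this identification into a covering map by flat-geometric analytic continuation. The hypothesis that $\cM$ (or its holonomy double cover in the quadratic case) is not a locus of torus covers will intervene precisely to ensure that the resulting pseudogroup has finite orbits.

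First I would treat the Abelian case. With slope $a=\pm 1$, the defining relation $\int_{\gamma_1}\omega = a\int_{\gamma_2}\omega + \int_\gamma \omega$ says that, holding the unmarked surface $(X,\omega)$ fixed, any infinitesimal displacement of $p_2$ by a vector $v$ forces a displacement of $p_1$ by $av$ in order to remain in $\cN$. This produces a local map $\phi$ from a disk around $p_2$ onto a disk around $p_1$ that in flat charts is a translation (when $a=1$) or a rotation by $\pi$ (when $a=-1$). I would then analytically continue $\phi$ along paths in $(X,\omega)$ avoiding the singular set; since $\phi$ is a local isometry of a flat surface, the continuation is well defined along each path, and the germs so obtained generate a pseudogroup $\Gamma$ of local isometries of $(X,\omega)$ with $p_1$ and $p_2$ in the same $\Gamma$-orbit. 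When the $\Gamma$-orbits are finite, the quotient $(X,\omega)/{\sim_\Gamma}$ inherits a translation (respectively half-translation) structure, and the projection is the desired cover $f$ with $f(p_1)=f(p_2)$.

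For the quadratic case I would run the same construction directly on the half-translation surface $(X,\omega)$, so that the elements of $\Gamma$ are honest local half-translations from the outset; alternatively, one can lift the 2-point marking to the holonomy double cover, apply the Abelian case there, and descend through the holonomy involution, invoking the hypothesis that the holonomy double cover of $\cM$ is not a locus of torus covers. Either route yields a half-translation cover $f$ identifying $p_1$ and $p_2$ as required.

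The hard part will be showing that the $\Gamma$-orbits are finite. If some orbit were infinite, the closure of $\Gamma$ in the group of local isometries would contain a one-parameter subgroup of local translations of $(X,\omega)$. A standard integration argument then forces every absolute period of $(X,\omega)$ to lie in a rank-two $\bZ$-submodule of $\bC$, so $(X,\omega)$ is a torus cover. Because the construction of $\Gamma$ depends $\GL$-equivariantly on the marked data and varies holomorphically over $\cN$, this property propagates $\GL$-invariantly throughout $\cM$, and Lemma \ref{L:R1Arithmetic} then forces $\cM$ itself to be a locus of torus covers, contradicting the hypothesis (with the analogous contradiction on the holonomy double cover in the quadratic case). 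This closure/discreteness step is where the torus-cover assumption is essential and where I expect the main technical difficulty to lie.
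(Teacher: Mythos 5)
The paper does not prove this lemma; it records it as a version of \cite[Theorem 2.8]{ApisaWright} ``which follows immediately from its proof,'' so your proposal has to be judged on its own terms rather than against an argument in the text. Your opening moves are on the right track: the slope $\pm 1$ relation, after freezing the unmarked surface, does produce a flat translation (or rotation by $\pi$) germ $\phi$ between neighborhoods of $p_2$ and $p_1$, and analytically continuing $\phi$ with the aim of building a covering is a natural way to try to globalize it. The gap, which you yourself flag as ``the hard part,'' is the finiteness argument, and I do not think it survives in the form sketched.

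The assertion that an infinite orbit forces the closure of $\Gamma$ to contain a one-parameter subgroup of local translations cannot be treated as standard. The germs of $\Gamma$ based at a single point do not form a subgroup of a Lie group: composition of germs is only defined when domains and codomains match, so ``the group of local isometries'' is a pseudogroup, and the usual fact that a closed subgroup of a Lie group accumulating at the identity contains a one-parameter subgroup simply does not apply. What an infinite orbit on a compact surface actually gives you is a sequence of germs that are translations by vectors $v_n \to 0$ with shrinking and shifting domains; upgrading these to a genuine one-parameter flow on $(X,\omega)$, and then integrating that flow to conclude the absolute periods span a rank-two lattice, are substantial claims rather than a ``standard integration argument'' --- particularly because even known non-torus-covers can harbor many local translation germs that fail to patch into anything global. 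There is also a smaller unaddressed point: even granting finite orbits, the quotient $(X,\omega)/\!\sim_\Gamma$ is a half-translation surface only once the orbit relation is shown to be a finite branched correspondence (proper and closed in $X\times X$), which your step for forming the cover takes for granted. The ingredient your argument is missing is that the $\GL$-invariance and algebraicity of $\cN$ have to do work in the finiteness step itself, not just to propagate a conclusion afterward: for a surface with dense orbit, the locus of admissible $(p_1,p_2)\subset X\times X$ is pinned down by the invariant-subvariety structure, and it is that rigidity --- not a soft accumulation argument in the pseudogroup --- which excludes infinite orbits outside the torus-cover case.
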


The following is one of the main results of \cite{Apisa} for Abelian differentials and of Apisa-Wright \cite{ApisaWright} for quadratic differentials. 

\begin{thm}[Apisa,  Apisa-Wright]\label{T:StrataMarkedPoints}
Connected components of strata of Abelian or quadratic differentials that have rank at least two do not have periodic points unless they are hyperelliptic components, in which case the periodic points are the Weierstrass points. 
\end{thm}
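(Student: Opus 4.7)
The plan is to proceed by induction on the dimension of the stratum. Suppose $(X,\omega)$ is generic in a connected component $\cC$ of a stratum with rank at least two, and that $p$ is a periodic point on $(X,\omega)$ that is neither a zero nor a pre-existing marked point; the goal is to show that $\cC$ is hyperelliptic and $p$ is a Weierstrass point. The base cases are the low-genus strata, especially $g=2$, handled using McMullen's classification and direct analysis of the few stratum components of rank at least two.

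For the inductive step, I would use Wright's cylinder deformation results to find a collection $\bfC$ of parallel cylinders on $(X,\omega)$ whose standard dilation preserves the marked stratum $\cC^{*1}$ and whose collapse genuinely degenerates the surface. Under the collapse, the marked point $p$ is carried to a marked point $p'$ on a boundary surface lying in a smaller stratum, via the boundary theory of Mirzakhani-Wright and Chen-Wright; moreover $p'$ remains a periodic point on that smaller stratum component. The inductive hypothesis then forces $p'$ to be either a singular point or a Weierstrass point of a hyperelliptic boundary component. Varying the choice of degeneration (both direction and cylinder collection), one accumulates enough constraints on the position of $p$ to either identify $p$ with a singular point of $\omega$ (a contradiction) or force $(X,\omega)$ to admit a hyperelliptic involution fixing $p$. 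Combined with Theorem \ref{T:MirWriFullRank}, this forces $\cC$ itself to be a hyperelliptic component and $p$ to be one of its Weierstrass points.

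The main obstacle is handling degenerations to rank $1$ boundary strata (torus covers), where the inductive hypothesis does not apply. For these one must use the fact that a generic surface in a rank $\geq 2$ stratum is not a torus cover, together with the slope analysis of $2$-point markings as in Lemma \ref{L:ApisaWright}, applied to $p$ and its image under a small translation arising from the deformation, to constrain $p$ directly. Additional care is required for disconnected boundary degenerations, which must be analyzed component by component so that the periodic-point information from each component can be combined, and for the quadratic differential case, where one typically lifts to the holonomy double cover to transfer the problem to an Abelian stratum while tracking the commuting involution; here periodic points on a quadratic stratum component correspond to pairs of periodic points exchanged by the holonomy involution, and Weierstrass points on hyperelliptic Abelian components match Weierstrass points on the quadratic base.
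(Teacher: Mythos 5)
The paper you are working from does not prove Theorem~\ref{T:StrataMarkedPoints}; it cites it as an external result, taken from \cite{Apisa} in the Abelian case and from \cite{ApisaWright} in the quadratic case. So there is no ``paper's own proof'' here to compare against, and your proposal has to stand on its own as a self-contained plan.

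Read that way, the outline is a plausible shape for an argument, but the crucial step is stated as a hope rather than a proof. You write that by varying the cylinder collapse one ``accumulates enough constraints on the position of $p$ to either identify $p$ with a singular point\ldots or force $(X,\omega)$ to admit a hyperelliptic involution fixing $p$.'' That sentence \emph{is} the theorem; everything hard is hidden in it. In particular you do not explain why the union of the constraints coming from several different degenerations is rigid enough to pin down a single point, nor how you rule out the possibility that $p$ wanders into the interior of the collapsing cylinders (in which case the limit marked point $p'$ is not even well-defined on the boundary surface, and the inductive hypothesis does not apply). You also do not address the combinatorial problem of ensuring that a degeneration preserving the marked point lands in a connected boundary component of rank at least two; by your own admission, when the boundary is rank one the induction breaks down, and your fix for that case does not type-check: Lemma~\ref{L:ApisaWright} concerns an irreducible \emph{two}-point marking over an invariant subvariety, and there is no sense in which a periodic point $p$ together with ``its image under a small translation'' constitutes such a marking, so there is nothing to apply the lemma to.

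It is also worth flagging that the proofs in the actual sources use ingredients your sketch never mentions. The quadratic case in \cite{ApisaWright} leans heavily on M\"oller's theory of minimal translation covers (essentially Theorem~\ref{T:MinimalCover} in this paper), which is what lets one convert the existence of a constrained marked point into the existence of a nontrivial map to a lower-complexity surface, and hence into hyperellipticity via Lemma~\ref{L:InvolutionImpliesHyp-background}. The Abelian case in \cite{Apisa} likewise does not reduce to a bare induction on cylinder collapses; it needs structural input about how a periodic point interacts with cylinder deformations and with the Hodge-theoretic rigidity available at rank at least two. A successful version of your induction would need to identify and supply an analogue of these inputs, not merely degenerate and recurse.
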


\begin{defn}\label{D:F}
Suppose that $(X, \omega)$ is an Abelian or quadratic differential with marked points that belongs to an invariant subvariety $\cM$. Then $\For(X, \omega)$ will denote $(X, \omega)$ once marked points are forgotten. Similarly, we will define $\For(\cM)$ to be the invariant subvariety that is the closure of $\{ \For(Y, \eta) : (Y, \eta) \in \cM \}$.
\end{defn}

\subsection{Cylinder deformations}\label{SS:CylinderDeformations}
In this section we recall some definitions and results from \cite{Wcyl}, together with supplemental results from \cite{MirWri}.

\begin{defn}\label{D:CylAndBoundary}
A \emph{cylinder} on a  translation or half-translation surface is \red an \black isometric embedding of $\bR/(c \bZ) \times (0,h)$ into the surface, which is not the restriction of an isometric embedding of a larger cylinder. The circumference of the cylinder is defined to be $c$, and its height is defined to be $h$. 

The map extends to a map of $\bR/(c \bZ) \times [0,h]$ to the surface, which is not in general an embedding. The images of  $\bR/(c \bZ) \times \{0\}$ and $\bR/(c \bZ) \times \{h\}$ are the two \emph{boundary components} of the cylinder; they consist of saddle connections, and together they form the \emph{boundary} of the cylinder. The \emph{multiplicity} of a saddle connection in a component of the boundary is the number of preimages it has in the corresponding $\bR/(c \bZ) \times \{0\}$ or $\bR/(c \bZ) \times \{h\}$; this must be \red at most 2 \black (see Figure \ref{F:CylinderTypes} for an example).
\end{defn}

Two parallel cylinders on  $(X, \omega)\in\cM$ are called \emph{$\cM$-parallel} or \emph{$\cM$-equivalent} if they remain parallel on all nearby surfaces in $\cM$. A maximal collection $\bfC$ of $\cM$-parallel cylinders on $(X, \omega)$ is called an \emph{$\cM$-equivalence class}. 

At all surfaces near $(X, \omega)$ in $\cM$ on which the cylinders in $\bfC$ persist they remain $\cM$-parallel. However, it is possible that at nearby surfaces $\bfC$ is only a subset of an $\cM$-equivalence class. In other words, new cylinders might appear on nearby surfaces that are $\cM$-equivalent to cylinders in $\bfC$. 

If $\bfC = \{C_1, \hdots, C_n\}$ is a collection of parallel cylinders on a flat surface $(X, \omega)$ with core curves $\{ \gamma_1, \hdots, \gamma_n\}$, then we will say that the core curves are consistently oriented if their holonomy vectors are positive real multiples of each other.  

If $\bfC = \{C_1, \hdots, C_n\}$ is an $\cM$-equivalence class of cylinders on $(X, \omega) \in \cM$ with consistently oriented core curves $\{\gamma_1, \hdots, \gamma_n\}$, then the \emph{standard shear} in $\bfC$ is defined to be 
$$\sigma_{\bfC} := \sum_{i=1}^n h_i \gamma_i^*$$ 
where $h_i$ denotes the height of cylinder $C_i$ and $\gamma_i^*$ is the intersection number with $\gamma_i$.  The following is the main theorem of \cite{Wcyl}, restated in a form closer to \cite[Theorem 4.1]{MirWri}.

\begin{thm}[Wright]\label{T:CDT}
The standard shear $\sigma_{\bfC}$ belongs to the tangent space of $\cM$ whenever $\bfC$ is an $\cM$-equivalence class.
\end{thm}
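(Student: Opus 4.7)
The plan is to combine the invariance of $\cM$ under the horocycle flow $u_s = \left(\begin{smallmatrix} 1 & s \\ 0 & 1 \end{smallmatrix}\right)$, a 1-parameter subgroup of $\GL$, with the real-linear structure of $\cM$ in period coordinates furnished by Eskin--Mirzakhani--Mohammadi. After rotating by an element of $\GL$, we may assume the cylinders in $\bfC$ are horizontal. Let $\bfC = \bfC_1, \ldots, \bfC_k$ enumerate all $\cM$-equivalence classes of horizontal cylinders on $(X, \omega)$. Since $u_s$ preserves $\cM$, the infinitesimal generator of the horocycle flow at $(X, \omega)$ already lies in $T_{(X, \omega)} \cM$, and that generator is exactly $\sigma_{\mathrm{total}} = \sum_{j=1}^k \sigma_{\bfC_j}$. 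The task is to separate this sum and extract $\sigma_{\bfC_1} = \sigma_{\bfC}$.

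My approach is to apply the same observation across a neighborhood of $(X, \omega)$ in $\cM$ and differentiate. On a nearby surface $(X', \omega') \in \cM$, the horizontal cylinders persist with perturbed heights $h_C'$, and the decomposition into $\cM$-equivalence classes of horizontal cylinders is preserved. Hence the horocycle generator at $(X', \omega')$ equals $\sum_C h_C' \gamma_C^*$ and lies in $T_{(X', \omega')} \cM$. Using the flat affine identification of tangent spaces of $\cM$ coming from the real-linear structure in period coordinates, $\sigma_{\mathrm{total}}$ becomes a section of $T \cM$ on a neighborhood of $(X, \omega)$. Differentiating this section along any tangent vector $v \in T_{(X, \omega)} \cM$ produces a new tangent vector in $T_{(X, \omega)} \cM$ of the form $\sum_C (\delta_v h_C)\, \gamma_C^*$, where $\delta_v h_C$ denotes the rate of change of the height of $C$ in the direction $v$.

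The main obstacle is to show that this family of tangent vectors, together with $\sigma_{\mathrm{total}}$, already generates $\sigma_{\bfC_j}$ for each $j$ individually. The essential point is that the real-linear equations defining $\cM$ impose $\cM$-parallelism constraints only \emph{within} a single equivalence class: by the definition of $\cM$-equivalence, core curves of cylinders in distinct classes are not required to remain parallel on nearby surfaces, so one expects tangent directions in $T_{(X, \omega)} \cM$ along which the heights of cylinders in different equivalence classes vary independently. Combined with the fact that $\cM$-parallelism within a class is a real-linear condition on periods, a careful linear algebra argument shows that the span of $\sigma_{\mathrm{total}}$ and the image of $v \mapsto \sum_C (\delta_v h_C)\,\gamma_C^*$ contains each $\sigma_{\bfC_j}$ individually. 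The delicate part will be producing the required cross-class-independent tangent directions in $T_{(X,\omega)}\cM$ without circular appeal to the theorem being proved; this requires analyzing the defining real-linear equations of $\cM$ directly and exploiting the fact that the equivalence classes partition the horizontal core curves into disjoint sets.
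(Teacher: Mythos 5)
The paper does not prove this statement; it records Theorem~\ref{T:CDT} by citation to \cite{Wcyl} (restated in the form of \cite[Theorem 4.1]{MirWri}), so the relevant comparison is with Wright's original argument. Your proposal runs into trouble immediately: the infinitesimal generator of the horocycle flow at $(X,\omega)$ is $[\operatorname{Im}\omega]\in H^1(X,\Sigma;\bR)$, and this equals $\sum_j\sigma_{\bfC_j}$ \emph{only} when the horizontal cylinders fill $X$, i.e.\ when $(X,\omega)$ is horizontally periodic. For a general surface the horizontal foliation has minimal components that contribute to $[\operatorname{Im}\omega]$, and your identity fails. Reducing to the horizontally periodic case is itself nontrivial; in \cite{Wcyl} this is where results in the style of Smillie--Weiss \cite{SW2} about horocycle orbit closures enter, and you have omitted that ingredient.

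Even granting horizontal periodicity, the differentiation step does not produce anything new. The section of $T\cM$ you propose to differentiate is the horocycle vector field $(X',\omega')\mapsto[\operatorname{Im}\omega']$, and under the Gauss--Manin (flat) identification its derivative along $v\in T_{(X,\omega)}\cM$ is $[\operatorname{Im} v]$. But $T_{(X,\omega)}\cM$ is a $\bC$-subspace of $H^1(X,\Sigma;\bC)$ defined over $\bR$ (as you note, this is Eskin--Mirzakhani--Mohammadi plus Avila--Eskin--M\"oller), hence closed under complex conjugation and therefore under $\operatorname{Im}$; so $[\operatorname{Im} v]$ is in $T_{(X,\omega)}\cM$ tautologically, and the derivative separates nothing. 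If instead you insist on differentiating the formula $\sum_C h'_C\gamma_C^*$, the cocycles $\gamma_C^*$ are pinned to the horizontal direction, which rotates under a generic deformation, and the formula agrees with the horocycle generator only on the sub-locus of horizontally periodic surfaces; differentiating there again just recovers $[\operatorname{Im} v]$ in a thinner family of directions. The ``careful linear algebra'' you defer to, producing tangent directions along which the heights of distinct equivalence classes vary independently, is not a technical cleanup step --- it \emph{is} the theorem, and it is exactly the part that neither the horocycle generator nor its trivial derivative supplies. Wright's actual argument is of a different character: it works on a horizontally periodic representative, exploits the long-time dynamics of the horocycle flow together with closedness of $\cM$ and the real-linear structure in period coordinates, and deduces from a limiting/compactness argument that the twist space must contain each $\sigma_{\bfC}$. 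That global dynamical input has no analogue in the local differentiation you propose.
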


Notice that when $\bfC$ is a collection of horizontal cylinders, the straight line path in $\cM$ determined by the tangent direction $\sigma_\bfC$ at $(X, \omega)$ determines a family of translation surfaces formed from $(X, \omega)$ by applying $$\begin{pmatrix} 1 & t \\ 0 & 1 \end{pmatrix}$$ for $t \in \bR$ to the cylinders in $\bfC$ on $(X, \omega)$ while fixing the rest of the surface. Similarly, if $\bfC$ is a collection of horizontal cylinders the straight line path in $\cM$ determined by the tangent direction $i \sigma_\bfC$ at $(X, \omega)$ gives the standard dilation in $\bfC$.

If $\bfC = \{C_1, \hdots, C_n\}$ is an $\cM$-equivalence class of cylinders with core curves $\{\gamma_1, \hdots, \gamma_n\}$ on a surface $(X, \omega)$ in $\cM$, then the \emph{twist space of $\bfC$}, denoted $\mathrm{Twist}(\bfC)$, is the collection of complex linear combinations of $\{\gamma_1^*, \hdots, \gamma_n^*\}$ that belong to the tangent space of $\cM$ at $(X, \omega)$. Mirzakhani and Wright showed the following partial converse to Theorem \ref{T:CDT} \cite[Theorem 1.5]{MirWri}; see Lemma \ref{L:PD} for an alternate proof. 

\begin{thm}[Mirzakhani-Wright]\label{T:CylECTwistSpace}
Every element of $\mathrm{Twist}(\bfC)$ can be written uniquely as the sum of a  multiple of the standard shear on $\bfC$ and an element of $\ker(p)$.
\end{thm}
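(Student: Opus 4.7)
The plan is to work via Poincar\'e duality inside $H_1(X;\bC)$, where the Avila--Eskin--M\"oller symplecticity of $p(T\cM)$ becomes a usable algebraic constraint. Under the Poincar\'e duality isomorphism $H^1(X;\bC) \cong H_1(X;\bC)$, let $V \subseteq H_1(X;\bC)$ be the subspace corresponding to $p(T_{(X,\omega)}\cM)$, and let $U = \operatorname{span}_\bC(\gamma_1, \hdots, \gamma_n)$, where $\gamma_i$ is the core curve of $C_i$. Under this isomorphism $p(\gamma_i^*)$ corresponds to $\gamma_i$, so for any $v = \sum d_i \gamma_i^* \in \mathrm{Twist}(\bfC)$ the class $p(v)$ corresponds to $\sum d_i \gamma_i \in U$ and $p(\sigma_\bfC)$ corresponds to $\sum h_i \gamma_i$. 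My goal is to show that $U \cap V$ is one-dimensional, spanned by $\sum h_i \gamma_i$; this will produce a scalar $\lambda$ with $p(v) = \lambda p(\sigma_\bfC)$, equivalently $v - \lambda\sigma_\bfC \in \ker(p)$, and uniqueness will follow from $p(\sigma_\bfC) \neq 0$.

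First, $p(\sigma_\bfC) \neq 0$: pairing with $[\omega]$ gives $\sum h_i \int_{\gamma_i}\omega$, which is a nonzero multiple of the common direction of the parallel holonomies. For existence, the key step is to encode $\cM$-parallelism as a tangential constraint. After orienting the $\gamma_i$ consistently so that the circumferences $c_i > 0$, each ratio $\mathrm{hol}(\gamma_i)/\mathrm{hol}(\gamma_j) = c_i/c_j$ is a holomorphic function on $\cM$ taking only real values, hence locally constant; differentiating yields $w(\gamma_i) = c_i \phi(w)$ for all $w \in T\cM$, where $\phi(w) \in \bC$ is a single scalar depending on $w$. Consequently, for any $\alpha = \sum a_i \gamma_i \in U$ with $\sum a_i c_i = 0$, one has $w(\alpha) = \phi(w)\sum a_i c_i = 0$ for every $w \in T\cM$; by Poincar\'e duality this says $\alpha$ lies in the annihilator $V^\perp$ of $V$ with respect to the algebraic intersection form. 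Thus the codimension-one subspace $W = \{\sum a_i \gamma_i : \sum a_i c_i = 0\}$ of $U$ is contained in $U \cap V^\perp$.

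Since $V$ is symplectic with respect to the intersection form, $V \cap V^\perp = 0$, hence $\dim(U \cap V) + \dim(U \cap V^\perp) \leq \dim U$. Combined with $\dim(U \cap V^\perp) \geq \dim W = \dim U - 1$, this forces $\dim(U \cap V) \leq 1$. Theorem \ref{T:CDT} places $\sigma_\bfC \in T\cM$, so $\sum h_i \gamma_i \in V$; being nonzero by the pairing computation above, it spans $U \cap V$. Hence for $v \in \mathrm{Twist}(\bfC)$, the class $p(v)$ lies in $V \cap U$ and so $p(v) = \lambda p(\sigma_\bfC)$ for a unique $\lambda$, completing the argument. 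The main subtlety I anticipate is the step $W \subseteq V^\perp$: one must justify that a holomorphic $\bR$-valued function on $\cM$ is locally constant, and then translate the resulting tangential identity $w(\gamma_i) = c_i\phi(w)$ into the claimed algebraic annihilator statement via Poincar\'e duality. Once that bridge from the analytic $\cM$-parallel condition to a linear-algebraic constraint on $V$ is in place, the rest is a short dimension count on a symplectic space.
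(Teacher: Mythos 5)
Your proof is correct and is essentially the same argument the paper gives as an alternate proof via Lemma~\ref{L:PD} and Sublemma~\ref{SL:SymplecticLinearAlgebra}: both hinge on transporting $p(T_{(X,\omega)}\cM)$ through Poincar\'e duality, invoking the Avila--Eskin--M\"oller symplecticity to get $V\cap V^{\perp}=0$, and using the collinearity of the core curves of an $\cM$-equivalence class as functionals on the tangent space (which you derive from constancy of holonomy ratios, as in \cite[Lemma~4.7]{Wcyl}) together with the Cylinder Deformation Theorem. Your packaging --- building the codimension-one subspace $W\subset U\cap V^{\perp}$ and doing the dimension count directly --- is a mild reorganization of the same linear algebra that the paper encodes in the commutative diagram with $\mathrm{proj}_T$ and $\mathrm{res}_T$.
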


The following is derived from Theorem \ref{T:CylECTwistSpace} in \cite[Corollary 1.6]{MirWri}. 

\begin{cor}[Mirzakhani-Wright]\label{C:ConstantRatio}
$\cM$-parallel cylinders in invariant subvarieties with no rel have a constant ratio of moduli. 
\end{cor}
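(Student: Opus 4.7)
The plan is to argue that both the circumferences and heights of $\cM$-parallel cylinders vary in locally constant ratios along $\cM$; since $m_i = h_i/c_i$, this yields that $m_i/m_j$ is locally constant on $\cM$, and hence constant on connected components.

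First I would handle circumferences. By the definition of $\cM$-parallelism, the ratio $\int_{\gamma_i}\omega / \int_{\gamma_j}\omega$ is real on $\cM$. This ratio is a holomorphic function of local period coordinates, and a real-valued holomorphic function is locally constant, so $c_i/c_j = |\int_{\gamma_i}\omega / \int_{\gamma_j}\omega|$ is locally constant.

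For heights I would apply Theorem \ref{T:CylECTwistSpace}. Since $\cM$ has no rel, $\ker(p) \cap T\cM = 0$, and the theorem says that $\mathrm{Twist}(\bfC)$ is the complex one-dimensional line spanned by $\sigma_\bfC = \sum_i h_i \gamma_i^*$. Now pass to a nearby $(X', \omega') \in \cM$ on which the cylinders of $\bfC$ persist with some new heights $h_i'$. Because $T\cM$ and the cohomology classes $\gamma_i^*$ are locally constant in period coordinates (by the flat Gauss--Manin connection, using that the cylinders and hence their core curves persist), the twist space at $(X',\omega')$ is still $\bC \sigma_\bfC$. Theorem \ref{T:CDT} puts the new standard shear $\sigma'_\bfC = \sum_i h_i' \gamma_i^*$ into this line, forcing $\sigma'_\bfC = \lambda \sigma_\bfC$ for some scalar $\lambda$; comparing real coefficients gives $\lambda \in \bR$ and $h_i' = \lambda h_i$ for every $i$. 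So $h_i/h_j$ is locally constant.

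Multiplying the two ratios yields that $m_i/m_j$ is locally constant on the open subset of $\cM$ where the cylinders persist, and therefore constant on connected components. I expect the main delicate point to be justifying that the twist space description transports to nearby surfaces via period coordinates; this reduces to local constancy of the Gauss--Manin connection together with the persistence of the cylinders. Once that identification is in place, the algebra is immediate from the one-dimensionality of $\mathrm{Twist}(\bfC)$.
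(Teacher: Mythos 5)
Your argument is on the right track and uses the same key mechanism as Mirzakhani--Wright's original proof: with no rel, Theorem~\ref{T:CylECTwistSpace} forces $\mathrm{Twist}(\bfC)=\bC\,\sigma_{\bfC}$, so any deformation of the height vector must be a rescaling. Your observation on circumferences (a real-valued holomorphic ratio is constant) is clean and in fact holds without the no-rel hypothesis, so it is only the height ratio that uses the assumption.

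The one step that needs more care is the assertion that Theorem~\ref{T:CDT} gives $\sigma'_{\bfC}=\sum_i h_i'\gamma_i^*\in T_{(X',\omega')}\cM$. Theorem~\ref{T:CDT} applies to $\cM$-equivalence classes, and on the nearby surface $(X',\omega')$ the set $\bfC$ need not remain a full equivalence class: new $\cM$-parallel cylinders $C_{n+1},\dots,C_m$ may appear (the paper flags this explicitly in Subsection~\ref{SS:CylinderDeformations}). In that case Theorem~\ref{T:CDT} only puts $\sigma_{\bfC'}=\sum_{i\le m}h_i'\gamma_i^*$ into $T\cM$, not the truncated sum over the original cylinders, and the identification of $\sigma'_{\bfC}$ with a multiple of $\sigma_{\bfC}$ does not directly follow.

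The gap is repairable using exactly the flatness you already invoke. Set $V:=\mathrm{span}_{\bC}(\gamma_1^*,\dots,\gamma_n^*)\cap T\cM$; both factors are Gauss--Manin flat, so $V$ is a constant subspace near $(X,\omega)$ and hence one-dimensional at $(X',\omega')$. Theorem~\ref{T:CylECTwistSpace} at $(X',\omega')$ gives $\mathrm{Twist}(\bfC')=\bC\,\sigma_{\bfC'}$, also one-dimensional, and $V\subseteq\mathrm{Twist}(\bfC')$, so $V=\bC\,\sigma_{\bfC'}$. Since the core-curve duals are linearly independent and all $h_i'>0$, the inclusion $\sigma_{\bfC'}\in\mathrm{span}(\gamma_1^*,\dots,\gamma_n^*)$ forces $m=n$ (so no new parallel cylinders appear in the no-rel case) and then $\sigma_{\bfC'}=\lambda\sigma_{\bfC}$, completing your argument as intended.
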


\begin{defn}\label{D:subeq} 
We will say that a collection of cylinders $\bfC = \{C_1, \hdots, C_n\}$ on $(X, \omega)$ is an $\cM$-\emph{subequivalence class} if all of the cylinders in $\bfC$ are $\cM$-parallel and if $\bfC$ is a minimal collection of cylinders such that the standard shear $\sigma_{\bfC}$ belongs to $T_{(X, \omega)} \cM$. 
\end{defn}

Every $\cM$-equivalence class is a union of $\cM$-subequivalence classes. A related but different definition of subequivalent was used in \cite{Apisa:Rank1}; our definition is better suited to the general study of invariant subvarieties. We illustrate the definition with the following lemma. 

\begin{lem}\label{L:RankTest}
If a surface in $\cM$ has two disjoint non-parallel subequivalence classes, then $\cM$ has rank at least two. 
\end{lem}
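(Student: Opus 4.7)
The strategy is to exhibit a two complex-dimensional isotropic subspace of $p(T_{(X,\omega)}\cM)$, spanned by the standard shears of $\bfC_1$ and $\bfC_2$. Since the complex symplectic form on $p(T_{(X,\omega)}\cM)$ from \cite{AEM} is non-degenerate, any isotropic subspace has complex dimension at most half of $\dim_\bC p(T_{(X,\omega)}\cM)$, so this will force $\dim_\bC p(T_{(X,\omega)}\cM) \geq 4$, which is exactly rank at least two.

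By the definition of subequivalence class (Definition \ref{D:subeq}), each $\sigma_{\bfC_i}$ lies in $T_{(X,\omega)}\cM$, so $v_i := p(\sigma_{\bfC_i})$ lies in $p(T_{(X,\omega)}\cM)$. For isotropy, the symplectic form on $H^1(X;\bC)$ is the cup product, which on classes $\gamma^*$ arising from core curves computes the signed intersection number $\gamma \cdot \delta$. The core curves of distinct cylinders within a single subequivalence class are disjoint because the cylinders have disjoint interiors, and core curves across $\bfC_1$ and $\bfC_2$ are disjoint by the hypothesis that $\bfC_1, \bfC_2$ are disjoint. Expanding $\sigma_{\bfC_i} = \sum_k h_{i,k}\gamma_{i,k}^*$ and applying bilinearity shows the pairings of $v_1, v_2$ with each other and with themselves all vanish.

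For linear independence, pair $v_i$ with the absolute class $[\omega] \in H^1(X;\bC)$. Using $\omega_{sym}(p(\gamma^*),[\omega]) = \int_\gamma \omega$ and the fact that each $\bfC_i$ has consistently oriented core curves with a common holonomy direction $u_i \in \bC$, we get $\omega_{sym}(v_i, [\omega]) = A_i u_i$ for some $A_i > 0$. If $v_1$ and $v_2$ were $\bR$-linearly dependent, this would force $u_1$ and $u_2$ to be real multiples of each other, contradicting the non-parallel hypothesis. Hence $v_1, v_2$ are $\bR$-independent; since they are real cohomology classes sitting inside $H^1(X;\bC)$, $\bR$-independence of real classes automatically upgrades to $\bC$-independence (writing a hypothetical $\bC$-linear relation in real and imaginary parts).

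Combining the previous two paragraphs, $\bC\{v_1, v_2\}$ is a two-complex-dimensional isotropic subspace of $p(T_{(X,\omega)}\cM)$, so $\dim_\bC p(T_{(X,\omega)}\cM) \geq 4$ and $\cM$ has rank at least two. The main subtlety in the argument is the real-versus-complex independence issue handled in the last step; everything else is a direct application of Theorem \ref{T:CDT}, Definition \ref{D:subeq}, and the definition of the symplectic form on $p(T_{(X,\omega)}\cM)$.
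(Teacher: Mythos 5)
Your proof is correct, but it takes a genuinely different route from the paper's. The paper argues by contradiction: assuming $\cM$ has rank $1$, it invokes \cite[Theorems 1.5 and 1.10]{Wcyl} (rank-1 orbit closures are completely periodic in cylinder directions, and all parallel cylinders are $\cM$-parallel) and observes that the standard dilation in $\bfC_2$ changes the holonomy of some cylinder parallel to $\bfC_1$ while leaving the $\bfC_1$ cylinders untouched, breaking parallelism and hence contradicting $\cM$-parallelism. You instead exhibit a two-complex-dimensional isotropic subspace of $p(T_{(X,\omega)}\cM)$ spanned by $p(\sigma_{\bfC_1})$ and $p(\sigma_{\bfC_2})$, and appeal to the symplectic structure from \cite{AEM}. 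Each step is sound: disjointness of core curves (within and across the subequivalence classes) kills the intersection pairings giving isotropy; pairing against $[\omega]$ reads off the cylinder direction scaled by total area, so $\bR$-dependence of the $v_i$ would force the directions to be real multiples of each other, contradicting non-parallelism; and since $v_1, v_2$ are real classes, $\bR$-independence upgrades to $\bC$-independence. Your argument is more self-contained, relying only on the definition of subequivalence class, elementary intersection theory, and \cite{AEM}, rather than the deeper rank-1 structure theory; the cost is a slightly longer write-up. It is also worth noting that the spirit of your argument is close to the paper's own alternate proof (Lemma \ref{L:PD}) of Theorem \ref{T:CylECTwistSpace}, so the authors clearly had the symplectic-linear-algebra toolkit in hand and simply opted for the shorter citation-based argument here.
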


The example of $\cM=\cH(0,0)$ shows that the non-parallel assumption is necessary. 

\begin{proof}
Suppose $\cM$ has rank 1. Suppose in order to find a contradiction that $(X,\omega)\in \cM$ has two disjoint non-parallel subequivalence classes, $\bfC_1$ and $\bfC_2$. 

Then \cite[Theorem 1.5]{Wcyl} gives in particular that $(X,\omega)$ is periodic in the $\bfC_1$ direction. Deforming $\bfC_2$ does not change the circumference of the cylinders in $\bfC_1$, but will change the circumference of some cylinders parallel to $\bfC_1$. This shows that not all parallel cylinders are $\cM$ parallel, which contradicts \cite[Theorem 1.10]{Wcyl}.
\end{proof}

As an example, on a surface $(X, \omega)$ in a component $\cH$ of a stratum of Abelian differentials, two cylinders are equivalent if and only if their core curves are homologous to each other. However, every cylinder on $(X, \omega)$ can be dilated while remaining in $\cH$. Therefore, each $\cH$-subequivalence class is a singleton.

\begin{defn}\label{D:GenericCyl}
A cylinder on a translation surface $(X, \omega)$ in $\cM$ is said to be \emph{generic} if all the saddle connections on its boundary remain parallel to the core curve of the cylinder on all nearby surfaces in $\cM$. 
\end{defn}

\begin{rem}\label{R:GenericCylInStrata}
It is not hard to see that if a cylinder is not generic on $(X, \omega)$ then it becomes generic on almost every surface in a neighborhood of $(X, \omega)$ in $\cM$. The generic cylinders in strata of Abelian differentials are simple cylinders, i.e. cylinders where each component of the boundary consists of a single saddle connection. 
\end{rem}

\begin{lem}\label{L:AbelianDoubleSEC}
Subequivalence classes of generic cylinders in Abelian doubles are either pairs of simple cylinders or a single complex cylinder.
\end{lem}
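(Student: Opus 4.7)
My plan is to transfer the classification of subequivalence classes from $\cM$ down to its base stratum $\cH$ via the degree two cover $\pi\colon (X,\omega) \to (Y,\eta)$ defining the Abelian double, and analyze the resulting possibilities using the deck involution $\tau$. Since $\pi^*\eta = \omega$, the map $\tau$ is a translation automorphism of $(X,\omega)$, so in particular it preserves directions, heights, and moduli of cylinders. Because $\cM$ is a full locus of covers and the defining condition of an Abelian double ensures that $\Sigma_X = \pi^{-1}(\Sigma_Y)$, pullback on cohomology gives the identification
$$T_{(X,\omega)}\cM \;=\; \pi^*\bigl(H^1(Y,\Sigma_Y;\bC)\bigr) \;=\; H^1(X,\Sigma_X;\bC)^{\tau},$$
the last equality being the standard fact that for a degree two Galois cover, $\pi^*$ identifies cohomology downstairs with the invariants upstairs.

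Next I would observe that the standard shear $\sigma_\bfC$ is tangent to $\cM$ if and only if it is $\tau$-invariant, and because $\tau$ acts by permutation on cylinders preserving heights while the individual cylinder shears are linearly independent cohomology classes, this is equivalent to $\bfC$ being $\tau$-invariant as a set. Given this, I would show $\pi(\bfC)$ consists of a single cylinder $D$ on $(Y,\eta)$: for any $D \in \pi(\bfC)$, the subset $\bfC \cap \pi^{-1}(D)$ is $\tau$-invariant (as the intersection of two $\tau$-invariant sets, using that fibers of $\pi$ are $\tau$-invariant), hence its shear is tangent to $\cM$; if $\pi(\bfC)$ contained more than one cylinder, this would be a nonempty proper subcollection of $\bfC$, contradicting the minimality built into the definition of a subequivalence class. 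Because $\cM$ is a full locus of covers, the genericity hypothesis on cylinders in $\bfC$ transfers down to $D$, and Remark \ref{R:GenericCylInStrata} then forces $D$ to be a simple cylinder.

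The final step applies Assumption CP, which Abelian doubles satisfy, to $\pi^{-1}(D)$. Since $\deg \pi = 2$, there are exactly two possibilities: either $\pi^{-1}(D) = \{C_1, C_2\}$ with $\tau$ swapping $C_1$ and $C_2$, in which case each $C_i$ is an isomorphic lift of the simple cylinder $D$ and hence itself simple, and $\tau$-invariance forces $\bfC = \{C_1,C_2\}$ --- the pair of simple cylinders case; or $\pi^{-1}(D) = \{C\}$ with $\tau(C)=C$ and $\pi|_C$ of degree two, giving $\bfC = \{C\}$ as the single complex cylinder case. The main technical hurdle I anticipate is making the identification $T_{(X,\omega)}\cM = H^1(X,\Sigma_X;\bC)^{\tau}$ precise, since this requires the Abelian double hypothesis that all preimages of marked points are singular or marked in order to guarantee $\Sigma_X = \pi^{-1}(\Sigma_Y)$; once that is in place, the minimality argument and the dichotomy coming from $\deg \pi = 2$ make the rest of the proof essentially mechanical.
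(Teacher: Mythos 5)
Your proof is correct and follows essentially the same route as the paper's: identify the tangent space of the Abelian double with the $\tau$-invariant (equivalently, pulled-back) subspace of relative cohomology, deduce that subequivalence classes upstairs are preimages of subequivalence classes downstairs (hence preimages of single simple cylinders, since subequivalence classes in strata of Abelian differentials are singletons), and then use the Abelian double stipulation on marked-point preimages (Assumption CP) to see that such a preimage is either two disjoint simple cylinders or one complex cylinder. The paper states step two in one clause; you expand it into the minimality argument via $\bfC\cap\pi^{-1}(D)$, which is a correct way to fill the gap, though you might note for completeness that in the single-cylinder case the boundary components of $C$ each contain two saddle connections (not one doubly-covered saddle connection), which is forced by the fact that branching can only occur over singular and marked points.
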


 \red Per Definition \ref{D:CylinderTypes}, a complex cylinder is one with two saddle connections of equal length on each boundary. \black 

\begin{proof}
We have already remarked that subequivalence classes of cylinders in strata of Abelian differentials are sets containing a single cylinder.  Therefore, on an Abelian double, subequivalence classes of generic cylinders consist of preimages of simple cylinders, which are either a pair of simple cylinders or a single complex cylinder. In the case that the stratum being covered has free marked points this uses the stipulation in the definition of ``Abelian double" that the preimage of every free marked point be marked.
\end{proof}

Finally, we will record the following definition for future use. 

\begin{defn}\label{D:CylinderAdjacent}
We will say that two cylinders are \emph{non-adjacent} if they are disjoint and share no boundary saddle connections. Similarly, we will say that a cylinder is \emph{not adjacent} to a saddle connection if the cylinder and saddle connection are disjoint and the saddle connection is not a boundary saddle connection of the cylinder.  
\end{defn}

\subsection{The boundary of an invariant subvariety}

We now discuss results from \cite{MirWri, ChenWright} about the boundary of an invariant subvariety $\cM$. 

Let $(X_t, \omega_t), t\in (0,1]$ be a continuous family of translation surfaces which degenerates as $t\to 0$. Suppose that it is possible to present $(X_1, \omega_1)$ via polygons in the plane in such a way that all $(X_t, \omega_t), t\in (0,1]$ can be presented by changing the edge vectors of these polygons. Suppose furthermore that, as $t\to 0$, the polygons converge to limit polygons in such a way that some edges reach 0 length, and all other edges converge to edges of the limiting polygons. Then 
$$\lim_{t\to 0} (X_t, \omega_t)$$
exists in the WYSIWYG partial compactification, and is equal to the surface given by the limit polygons \cite[Definition 2.2, Proposition 9,8]{MirWri}; for the specific example of cylinder degenerations see also \cite[Lemma 3.1]{MirWri}. This motivates the name What You See Is What You Get, since in such nice situations the limit is obtained naively from limiting polygons. 

The following is a special case of the main result \cite{MirWri} when the limit is connected, and in \cite{ChenWright} when it is disconnected. 

\begin{thm}[Mirzakhani-Wright, Chen-Wright]\label{T:BoundaryTangent}
\red Suppose \black a path $(X_t, \omega_t), t\in (0,1]$ as above is contained an orbit closure $\cM$. Then $\lim_{t\to 0}(X_t, \omega_t)$ is contained in a component $\cM'$ of the boundary \red of $\cM$, and \black $\cM'$  is locally defined as follows by linear equations. 

Using local coordinates consisting of edge vectors for the polygons defining $(X_1, \omega_1)$, consider the linear equations locally defining $\cM$. Delete (or replace by zero) all terms corresponding to edges that do not give rise to edges of the limit. These equations locally define $\cM'$. 
\end{thm}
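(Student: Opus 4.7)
The plan is to prove the theorem in two directions: containment of the limit in the linear vanishing locus $V$ cut out by the truncated equations, and local equality of $V$ with a $\GL$-invariant component $\cM'$ of the boundary.

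For containment, I would argue by pure continuity. By the Eskin-Mirzakhani-Mohammadi structure theorem, $\cM$ is cut out near $(X_1,\omega_1)$ by real-linear equations in period coordinates (edge vectors of the polygonal presentation). Since $(X_t,\omega_t)\in\cM$ for all $t\in(0,1]$, these equations hold along the entire path. Letting $t\to 0$, the coefficients multiplying surviving edges converge to finite limits while the coefficients multiplying vanishing edges are multiplied by a quantity going to zero. In the (possibly disconnected) boundary stratum, what survives is precisely the subsystem obtained by deleting the vanishing-edge terms, so the limit lies in $V$.

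For the reverse inclusion, I would show that every point of $V$ near the limit is itself the endpoint of a WYSIWYG degeneration inside $\cM$. Splitting period coordinates on the nearby region of $\cM$ into vanishing edges $v$ and surviving edges $u$, the linear equations of $\cM$ acquire the block form $A u + B v = 0$; setting $v=0$ yields exactly the equations $Au = 0$ defining $V$. Consequently the projection from $T\cM$ onto the $u$-coordinates surjects onto $TV$. An implicit-function argument then lifts any deformation in $V$ near the limit to a family of surfaces in $\cM$ with vanishing $v$-coordinates, realizing that deformation as a WYSIWYG limit. Because $V$ is $\GL$-invariant (the equations are real-linear and arise from the $\GL$-invariant $\cM$), Eskin-Mirzakhani-Mohammadi applied in the boundary stratum gives that each irreducible component of $V$ is itself an orbit closure, which we take as $\cM'$.

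The main obstacle is the disconnected case. When the limit splits into components, the ambient boundary stratum is a product of strata, and $V$ can impose cross-component relations (e.g.\ matching absolute periods or rel-type constraints tying different components together) that are not visible from any single component. The argument must therefore be carried out in the product stratum, tracking these cross-component linear relations inherited from the equations of $\cM$; this is precisely the additional content of Chen-Wright beyond the connected setting of Mirzakhani-Wright. A secondary technical point, needed even to state the theorem cleanly, is verifying that the WYSIWYG limit is well-defined independently of the chosen polygonal presentation of $(X_1,\omega_1)$, which requires checking that any change of polygonal decomposition that is valid on all $(X_t,\omega_t)$ extends continuously to the $t=0$ limit.
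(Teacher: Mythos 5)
This theorem is not proved in the paper: it is stated as a special case of the main results of \cite{MirWri} and \cite{ChenWright}, so there is no in-paper proof to compare against. The surrounding text even explains that the statement here is a simplified (``polygonal presentation'') paraphrase of those results, following the ``Other points of view'' discussion in \cite{ChenWright}, chosen precisely so the paper does not have to reprove them.

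Your sketch correctly identifies the two directions (limit lies in the truncated linear locus $V$; $V$ locally coincides with an orbit closure $\cM'$) and the principal technical aggravation in the disconnected case, but as written it glosses over exactly the steps that make \cite{MirWri, ChenWright} nontrivial. In the containment direction, ``pure continuity'' is not immediate: period coordinates near $(X_1,\omega_1)$ and period coordinates on the boundary stratum live on different spaces, some surviving edges may become identified or re-triangulated in the limit, and the collapse map can be multi-valued (cf.\ the discussion around Sublemma \ref{SL:DLkey}); one needs a precise comparison of the two coordinate systems, which is part of the content of the WYSIWYG construction. In the reverse direction, the appeal to Eskin--Mirzakhani--Mohammadi to conclude that $V$ is an orbit closure is circular as stated: EMM characterizes closed $\GL$-invariant sets as affine, so to apply it you must already know $V$ is the closure of an orbit, and in the disconnected case you also need the extension of the EMM picture to multi-component strata, which is one of the main points of \cite{ChenWright}. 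Finally, the implicit-function lift producing a path in $\cM$ whose $v$-coordinates tend to zero needs justification since the WYSIWYG partial compactification is not a manifold near the boundary; this is where the ``supports of cohomology classes'' machinery of \cite{MirWri} is actually used. So the proposal is a reasonable roadmap of the cited proof, not a replacement for it.
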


This again is a naively intuitive statement: After an edge reaches zero length, we should just plug in 0 for the corresponding variable in the equations defining $\cM$ in order to obtain the equations defining $\cM'$. 

Here we have stated Theorem \ref{T:BoundaryTangent} in a simpler way than it is first presented in \cite{MirWri, ChenWright}, since we have no need for the more complicated situation of limits of arbitrary sequences in $\cM$;  our presentation here follows ``Other points of view" in  \cite[Section 1]{ChenWright}. We have also chosen to use polygonal presentations, which places a small burden on the reader later in the paper to imagine polygonal presentations. Readers familiar with \cite{MirWri, ChenWright} will understand that polygonal presentations are in fact not necessary.

Note that connected surfaces may degenerate to disconnected surfaces. 

\begin{conv}\label{CV:connected} 
Unless otherwise specified, all surfaces in this paper will be connected. \red Abelian and quadratic \black doubles by definition consist of connected surfaces, so in Theorem \ref{T:DoubleIntro} the invariant subvarieties $\MOne$ and $\MTwo$ consist of connected surfaces; but $\MOneTwo$ might consist of disconnected surfaces. The assumption in Theorem  \ref{T:IntroFull} that the surfaces in $\MOneTwo$ are connected implies that the surfaces in $\MOne$ and $\MTwo$ are also connected, but as we indicate in Theorem  \ref{T:FullV2} we obtain the same conclusion in most cases when the surfaces in $\MOne$ and $\MTwo$ are possibly disconnected covers of connected surfaces (in which case it turns out that the surfaces in $\MOneTwo$ are also possibily disconnected covers of connected surfaces).
\end{conv}

We now illustrate some of the ways Theorem \ref{T:BoundaryTangent} will be applied throughout the paper. 

\begin{cor}\label{C:StillGeneric}
Suppose that $\bfC_1$ and $\bfC_2$ are disjoint subequivalence classes of cylinders on a surface $(X, \omega)$ in an invariant subvariety $\cM$, that $\bfC_1$ and $\bfC_2$ don't share boundary saddle connections, and that the cylinders in $\bfC_1$ are $\cM$-generic. If $\MTwo$ is the component of the boundary of $\cM$ containing $\ColTwoX$, then the cylinders in $\ColTwo(\bfC_1)$ are $\MTwo$-generic.
\end{cor}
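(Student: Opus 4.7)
The plan is to apply Theorem \ref{T:BoundaryTangent} after choosing a polygonal presentation of $(X,\omega)$ in which the cylinders in $\bfC_1 \cup \bfC_2$ appear as unions of rectangles whose horizontal sides are boundary saddle connections. Because $\bfC_1$ and $\bfC_2$ are disjoint and share no boundary saddle connections, the edges appearing on the boundaries of cylinders in $\bfC_1$, as well as their core curves, are disjoint from the edges that collapse to zero under $\ColTwo$. Thus these edges and cycles survive the degeneration and continue to represent, in $\ColTwoX$, the boundary saddle connections and core curves of the cylinders in $\ColTwo(\bfC_1)$.

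Next, I would express genericity of $\bfC_1$ in $\cM$ as a system of linear equations in period coordinates. For each cylinder $C \in \bfC_1$ with core curve $\gamma$ and boundary saddle connection $s$, genericity asserts that $s$ remains parallel to $\gamma$ on all nearby surfaces in $\cM$; after rotating so that $\omega(\gamma)$ is horizontal, this becomes a linear equation on $T_{(X,\omega)}\cM$ relating $\operatorname{Im}(v(s))$ and $\operatorname{Im}(v(\gamma))$. Since $C$ is $\cM$-generic, this equation holds identically on $T_{(X,\omega)}\cM$, i.e.\ it is implied by the local defining equations of $\cM$. Crucially, by the previous paragraph, neither $s$ nor $\gamma$ is among the collapsed edges, so the genericity equations for $\bfC_1$ involve only coordinates preserved by $\ColTwo$.

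Finally, Theorem \ref{T:BoundaryTangent} states that the local defining equations of $\MTwo$ at $\ColTwoX$ are obtained from those of $\cM$ by deleting the terms associated to collapsed edges. Since the genericity equations for $\bfC_1$ contain no collapsed-edge terms, they are transported verbatim to local defining equations of $\MTwo$, and hence hold on all of $T_{\ColTwoX}\MTwo$. This is exactly the assertion that each cylinder in $\ColTwo(\bfC_1)$ has every boundary saddle connection parallel to its core curve on all nearby surfaces in $\MTwo$, i.e.\ that the cylinders in $\ColTwo(\bfC_1)$ are $\MTwo$-generic. The only real work is the bookkeeping in the first paragraph, ensuring that the relevant saddle connections and core curves correspond to the same period coordinates before and after the collapse; the non-adjacency hypotheses on $\bfC_1$ and $\bfC_2$ are precisely what makes this identification transparent.
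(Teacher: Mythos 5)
Your argument is correct and takes essentially the same route as the paper's proof: both use a polygonal presentation subordinate to $\bfC_1\cup\bfC_2$, express genericity of $\bfC_1$ via linear equations in period coordinates that involve no collapsed edges, and invoke Theorem~\ref{T:BoundaryTangent} to transport these equations to $\MTwo$. One small imprecision: since $T_{(X,\omega)}\cM$ is a $\bC$-linear subspace, the generic-parallelism condition $v(s)\parallel v(\gamma)$ is actually equivalent to the $\bC$-linear equation $\xi(s)=\mu\,\xi(\gamma)$ (with $\mu\in\bR$ the ratio at the base point) rather than a real-linear condition on imaginary parts, and this is the cleaner formulation needed so that Theorem~\ref{T:BoundaryTangent} applies verbatim.
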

\begin{proof}
One can consider polygonal presentations where each cylinder in $\bfC_1$ and $\bfC_2$ is, for example, a union of triangles. There are linear equations defining $\cM$ that directly give that all the boundary saddle connections of $\bfC_1$ are generically parallel, and these give rise to corresponding equations for $\MTwo$. 
\end{proof}

\begin{cor}\label{C:ParallelSC}
Similarly, if $\bfC$ is a generic subequivalence class such that $\mathrm{Twist}(\bfC)$ is one dimensional, then the saddle connections in $\Col_{\bfC}(\bfC)$ are $\cM_{\bfC}$-parallel. 
\end{cor}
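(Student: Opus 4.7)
The plan is to use Theorem \ref{T:BoundaryTangent} to descend linear equations from $\cM$ to $\cM_{\bfC}$, turning the genericity of boundary saddle connections in $\cM$ into parallelism of the resulting saddle connections in $\cM_{\bfC}$. First, rotate so that the core curves of $\bfC$ are horizontal, and fix a polygonal presentation of $(X,\omega)$ in which every cylinder of $\bfC$ appears as a union of rectangles whose horizontal edges are (parts of) boundary saddle connections of $\bfC$ and whose vertical edges are height segments. Since $\bfC$ is a generic subequivalence class, every boundary saddle connection $s$ of $\bfC$ is parallel to the core curves on all nearby surfaces in $\cM$; in the chosen coordinates this is the linear equation $\operatorname{Im}(P(s)) = 0$, which involves no height variable.

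The collapse $\Col_{\bfC}$ is carried out by sending every height variable to zero. By Theorem \ref{T:BoundaryTangent}, the equations defining $\cM_{\bfC}$ near $\Col_{\bfC}(X,\omega)$ are obtained from those of $\cM$ by deleting all terms involving height variables, so the equations $\operatorname{Im}(P(s)) = 0$ persist verbatim, now viewed as equations for the boundary saddle connections of $\bfC$ regarded as saddle connections on $\Col_{\bfC}(X,\omega)$. Each saddle connection in $\Col_{\bfC}(\bfC)$ arises from the identification of the top and bottom boundary of a cylinder of $\bfC$ (cf.\ the proof of Lemma \ref{L:DefinitionCol(f)}), and its period is therefore a signed sum of periods of boundary saddle connections of $\bfC$. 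Consequently the descended equations force every saddle connection in $\Col_{\bfC}(\bfC)$ to be horizontal on nearby surfaces in $\cM_{\bfC}$, which is precisely what it means for them to be $\cM_{\bfC}$-parallel.

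The main (minor) obstacle is the bookkeeping in the previous paragraph, namely expressing each saddle connection in $\Col_{\bfC}(\bfC)$ as a signed combination of boundary saddle connections of $\bfC$; this amounts to tracking how the collapse identifies the top and bottom of each cylinder. The hypothesis $\dim \mathrm{Twist}(\bfC) = 1$ enters through Theorem \ref{T:CylECTwistSpace}: it rules out a rel class inside $\mathrm{Twist}(\bfC)$ and thereby guarantees that the standard shear and cylinder dilation used to implement the collapse really do live in $\cM$ without additional relative degeneracies, so that $\Col_{\bfC}(\bfC)$ is indeed a union of saddle connections of the expected shape on the collapsed surface.
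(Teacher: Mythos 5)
Your overall strategy is closely aligned with the paper's: express the saddle connections of $\Col_\bfC(\bfC)$ in terms of boundary saddle connections of $\bfC$, observe that the transverse (height) data vanishes in the limit, and descend the parallelism relations via the boundary theory. But two steps, as written, are incorrect.

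The equation $\operatorname{Im}(P(s)) = 0$ cannot be among the linear equations locally defining $\cM$, and this is not merely a cosmetic issue. Since $\cM$ is $\GL$-invariant, its local period-coordinate equations are preserved by the action of $\bC^\times$ (rotations and scalings); the condition $\operatorname{Im}(P(s)) = 0$ is not. Concretely, if $(X,\omega)\in\cM$ has $s$ horizontal and we rotate slightly, the rotated surface is still in $\cM$ but $\operatorname{Im}(P(s))\ne 0$. The correct relations encoding genericity of $\bfC$ are of the form $P(s) = \lambda\, P(\gamma)$ for a boundary saddle connection $s$, a core curve $\gamma$, and a fixed real constant $\lambda$ (equivalently $P(s_i) = \lambda_{ij}\,P(s_j)$ between boundary saddle connections). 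These are $\bC$-linear, survive Theorem \ref{T:BoundaryTangent} because neither side involves the degenerating cross-curve edges, and are what you must feed into the rest of your argument. As written, your proof concludes that the saddle connections in $\Col_\bfC(\bfC)$ remain \emph{horizontal} on nearby surfaces in $\cM_\bfC$, which is false (one can rotate within $\cM_\bfC$); what is true, and what the corrected relations give, is that they remain \emph{parallel to one another}.

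The explanation of where $\dim\mathrm{Twist}(\bfC)=1$ enters is also off. The standard shear and dilation are tangent to $\cM$ for any subequivalence class — that is built into Definition \ref{D:subeq} and the cylinder deformation theorem — and the collapse $\Col_\bfC$ is always well defined. What the one-dimensionality rules out, via Theorem \ref{T:CylECTwistSpace}, is a rel class in $\mathrm{Twist}(\bfC)$; the paper uses this to ensure that the holonomy of \emph{every} saddle connection in $\overline\bfC$ — not just the boundary ones, but also the diagonals of cylinders in $\bfC$ from which the saddle connections of $\Col_\bfC(\bfC)$ actually arise — is a fixed combination of the circumference and the perpendicular cross curve, with no extra free parameter supported on $\bfC$. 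Attributing it to "well-definedness of the collapse" misses this. Finally, the "signed sum" bookkeeping you flag as a minor obstacle is in fact where the substance of the argument lives: the paper phrases this as "every saddle connection in $\Col_\bfC(\bfC)$ arises from a saddle connection in $\overline\bfC$," and the identification of those limiting saddle connections is exactly the content your proof defers.
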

Recall that $\mathrm{Twist}(\bfC)$ is defined before Theorem \ref{T:CylECTwistSpace}. The condition that this space is one \red dimensional \black indicates that the only deformation of $\bfC$ that \red remains \black in $\cM$ is the standard deformation.
\begin{proof}
Every saddle connection in $\Col_{\bfC}(\bfC)$ arises from a saddle connection in $\overline\bfC$. 

We have assumed that there is a saddle connection in $\overline\bfC$ that is perpendicular to its core curves. Since the only deformation of $\bfC$ that remains in $\cM$ is the standard deformation, every saddle connection in $\overline{\bfC}$ has holonomy given as a linear combination of perpendicular cross curve and the circumference. Since the perpendicular cross curves collapses (has zero holonomy in the limit), this gives the result.
\end{proof}

\begin{lem}\label{L:codim1doubles}
Suppose that $\cM'$ consists of connected surfaces and is a codimension one boundary component of an Abelian (resp. quadratic) double $\cM$. Suppose too that $\cM'$ contains a surface of the form $\Col_{\bfC}(X, \omega)$ where $(X, \omega) \in \cM$ and $\bfC$ is a subequivalence class of cylinders on $(X, \omega)$. Then $\cM'$ is an Abelian (resp. quadratic) double. 
\end{lem}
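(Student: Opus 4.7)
The plan is to extend the covering defining the double structure of $\cM$ to its boundary, and then conclude by a dimension count followed by verification of the remaining defining conditions.

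After perturbing $(X,\omega)$ within $\cM$ (using Remark \ref{R:GenericCylInStrata} and Theorem \ref{T:BoundaryTangent} to work with a generic surface in $\cM'$), we may assume the cylinders in $\bfC$ are $\cM$-generic. Let $f : (X, \omega) \to (Y, \eta)$ be the degree two covering witnessing the double structure, with $(Y, \eta)$ in a stratum component $\cH_Y$ and deck involution $\iota$. By Lemma \ref{L:AbelianDoubleSEC} in the Abelian case (and an analogous statement for quadratic doubles, proved by the same tangent space argument using that $T\cM$ is the $\pm\iota$-eigenspace of the ambient tangent space), $\bfC$ is either a pair of simple cylinders forming a single $\iota$-orbit or a single $\iota$-fixed complex cylinder. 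In particular $\iota(\bfC) = \bfC$, so $\bfC = f^{-1}(f(\bfC))$ as unions of cylinders. The marked-point conditions in the definitions of Abelian and quadratic doubles then give $\overline{\bfC} = f^{-1}(f(\overline{\bfC}))$, so Lemma \ref{L:DefinitionCol(f)} produces a degree two covering
\[
\Col_\bfC(f) : \Col_\bfC(X, \omega) \to \Col_{f(\bfC)}(Y, \eta).
\]

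The key step is a dimension count. Since $\cM$ is a full locus of degree two covers, $\dim_\bC \cM = \dim_\bC \cH_Y$, and by hypothesis $\dim_\bC \cM' = \dim_\bC \cM - 1$. Applying the construction above to nearby surfaces in $\cM$ yields, via Theorem \ref{T:BoundaryTangent}, a local map $\cM' \to \cH_Y'$ with discrete fibers, where $\cH_Y'$ denotes the boundary component of $\cH_Y$ containing $\Col_{f(\bfC)}(Y, \eta)$. Therefore $\dim_\bC \cM' \leq \dim_\bC \cH_Y' \leq \dim_\bC \cH_Y - 1$, forcing equality throughout; hence $\cH_Y'$ has codimension one in $\cH_Y$, is itself a stratum component (boundary components of strata are unions of strata), and $\cM'$ is a full locus of degree two covers of $\cH_Y'$.

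It remains to verify the defining properties of Abelian (resp. quadratic) doubles. Connectedness of the covers is the hypothesis of the lemma. Preimages of marked points remain marked or become singularities under collapse, preserving that condition. In the quadratic case the identity $f^*\eta = \omega^2$ passes to the limit, giving $(\Col_\bfC(f))^*\eta' = (\omega')^2$; combined with connectedness this forces $\eta'$ to have non-trivial holonomy and $\Col_\bfC(f)$ to be the holonomy double cover. The main subtlety is in the careful verification of $\overline{\bfC} = f^{-1}(f(\overline{\bfC}))$ in the quadratic case when some preimages of poles of $\eta$ are unmarked regular points of $\omega$, where boundary saddle connections on the cover merge at such regular points.
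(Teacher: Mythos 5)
Your proof takes a genuinely different route from the paper's. The paper works directly with the defining equations: it chooses a $T$-invariant triangulation with $\bfC$ a union of triangles, so that $\cM$ is locally cut out by equations of the form $v = Tv$, and then invokes Theorem \ref{T:BoundaryTangent} to descend these equations to $\cM'$, which exhibits $\cM'$ as a locus of degree two covers; fullness then follows from the fact that a boundary component of a stratum component is again a stratum component (plus the same dimension count you make). Your proof instead goes through Lemma \ref{L:DefinitionCol(f)} to build the covering map on $\ColTwo(X,\omega)$ explicitly, then does an explicit dimension count. Both are valid; the paper's equations argument is shorter, while yours has the advantage that it makes the degree-two covering on $\cM'$ completely concrete (which may be what you want if you then need to say more about it), at the cost of the extra hypothesis checking for Lemma \ref{L:DefinitionCol(f)}.

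Two inaccuracies are worth flagging, though neither is fatal. First, the ``analogous statement for quadratic doubles'' you invoke in place of Lemma \ref{L:AbelianDoubleSEC} is false: on a quadratic double, the preimage of a simple or complex \emph{envelope} can be a subequivalence class that is a single ``doubled'' cylinder (if preimages of poles are unmarked), or a pair of \emph{adjacent} cylinders separated by marked preimages of poles (see Figure \ref{F:CylinderTypes}, bottom row, items 4 and 5). So $\bfC$ need not be a pair of simple cylinders or a single complex cylinder. Luckily, the only thing you actually use is $\iota(\bfC) = \bfC$, which follows immediately from the $\iota$-equivariance of the equations defining $\cM$ (the standard shear in $\bfC$ is $\iota$-invariant, hence so is $\bfC$), without any structure theorem for $\bfC$. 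Second, the ``main subtlety'' you flag at the end is not actually a subtlety: once $\iota(\bfC)=\bfC$, one has $f^{-1}(f(\overline\bfC)) = \overline\bfC \cup \iota(\overline\bfC) = \overline\bfC$ for the degree-two cover, and the phenomenon of boundary saddle connections on $(X,\omega)$ merging at unmarked preimages of poles does not interfere with this set-theoretic identity. If you remove the false intermediate claim about the shape of $\bfC$ and replace it with the one-line $\iota$-equivariance observation, the proof is sound.
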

\begin{proof}
For concreteness we prove this in the Abelian double case. 
One can pick a $T$-invariant triangulation of $(X,\omega)$, where $T$ is the translation involution, in such a way that $\bfC$ is a union of triangles. For each edge $v$, there is an equation $v=Tv$ locally defining $\cM$. These give rise to equations for $\cM'$ showing that $\cM'$ consists of degree two covers of Abelian differentials. 

Since $\cM$ is a full locus of covers, we get that $\cM'$ is also, since any boundary component of a component of stratum of Abelian differentials is again a component of stratum of Abelian differentials. The set of marked points on $\Col_{\bfC}(X, \omega)$ is $T$ invariant since the set of marked points and zeros on $(X,\omega)$ is $T$-invariant. 
\end{proof}

\begin{lem}\label{L:ExtendingPaths}
Suppose that $(X, \omega)$ is a translation surface in an invariant subvariety $\cM$, and that  $\bfC$ is a subequivalence class of cylinders on $(X,\omega)$. Let $\cM_{\bfC}$ be the component of the boundary containing $\Col_{\bfC}(X,\omega)$. 

Let $(X_t, \omega_t)$ be a path in the stratum containing $(X,\omega)$ with $(X_0, \omega_0)=(X,\omega)$ and along which $\bfC$ not only persists but, for each $t$, is a rotated and scaled copy of $\bfC$ on $(X,\omega)$  (including all saddle connections in the boundary of $\bfC$). 

Suppose that, in a continuous way depending on $t$, $\bfC$ can be collapsed using standard cylinder deformations to give a surface in  $\cM_{\bfC}$. Then the path lies in $\cM$. 
\end{lem}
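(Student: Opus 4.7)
The plan is to verify, in local period coordinates, that $(X_t,\omega_t)$ satisfies every linear equation locally defining $\cM$ near $(X,\omega)$, by relating those equations to the defining equations of $\cM_\bfC$ via Theorem \ref{T:BoundaryTangent}. Once this is established for $t$ in a neighborhood of $0$, the global conclusion follows by re-basing at each point of the path and invoking closedness of $\cM$.

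After rotating so that $\bfC$ is horizontal on $(X,\omega)$, I would choose a polygonal presentation in which each cylinder of $\bfC$ appears as a union of horizontal rectangles, so the edges of the presentation split into three disjoint types: (a) edges disjoint from $\overline\bfC$; (b) horizontal boundary saddle connections of $\bfC$; and (c) vertical interior cross-cuts of $\bfC$. By definition of the standard cylinder collapse, only edges of type (c) shrink to zero, while edges of types (a) and (b) pass to edges of $\Col_\bfC(X,\omega)$ with unchanged holonomy. The hypothesis that $\bfC$, together with its boundary saddle connections, is a rotated and scaled copy of itself along the path yields a complex scalar $\lambda_t$ with $\lambda_0=1$ such that $e_j(X_t,\omega_t) = \lambda_t \, e_j(X,\omega)$ for every edge $e_j$ of type (b) or (c); edges of type (a) then automatically agree on $(X_t,\omega_t)$ and $\Col_\bfC(X_t,\omega_t)$.

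For any local linear equation $L$ defining $\cM$, I would decompose $L = L_{ab} + L_c$, where $L_{ab}$ involves only type (a) and (b) edges and $L_c$ involves only type (c). By Theorem \ref{T:BoundaryTangent}, $L_{ab}$ is a local defining equation of $\cM_\bfC$. Since $\Col_\bfC(X_t,\omega_t)\in \cM_\bfC$ by hypothesis, and edges of types (a) and (b) take equal values on $(X_t,\omega_t)$ and $\Col_\bfC(X_t,\omega_t)$, one obtains $L_{ab}(X_t,\omega_t) = L_{ab}(\Col_\bfC(X_t,\omega_t)) = 0$. By linearity in the type (c) edges and the uniform scaling, $L_c(X_t,\omega_t) = \lambda_t\, L_c(X,\omega)$, and at $t=0$ one has $L(X,\omega)=0$ and $L_{ab}(X,\omega)=0$, forcing $L_c(X,\omega)=0$; hence $L_c(X_t,\omega_t)=0$ and $L(X_t,\omega_t)=0$ for $t$ in the local chart.

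The main delicacy is bookkeeping: ensuring that the polygonal presentation can be chosen with $\bfC$ appearing as rectangles, so that the edge types split as required and Theorem \ref{T:BoundaryTangent} applies cleanly to the chosen coordinates, and then propagating the local conclusion along the entire path. The latter is standard: $\cM$ is closed, and at any parameter value $s$ where $(X_s,\omega_s)\in\cM$ one can re-base the argument at $(X_s,\omega_s)$, using that $\bfC$ on $(X_t,\omega_t)$ remains a rotated and scaled copy of $\bfC$ on $(X_s,\omega_s)$ (by the factor $\lambda_t/\lambda_s$) and that $\Col_\bfC(X_t,\omega_t)$ continues to lie in the same boundary component $\cM_\bfC$. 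This shows $\{t : (X_t,\omega_t)\in\cM\}$ is both open and closed in the parameter domain, hence is everything.
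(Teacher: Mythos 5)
Your proof is correct and takes essentially the same approach as the paper: a polygonal presentation adapted to $\bfC$, a splitting of edges into those inside $\bfC$ and those outside, an appeal to Theorem~\ref{T:BoundaryTangent} to handle the part of each defining equation supported on surviving edges, and the rotation--scaling hypothesis to handle the part supported inside $\bfC$. The only organizational difference is that the paper asserts up front (via the Cylinder Deformation Theorem) that $\cM$ can be cut out by equations each of pure edge type, whereas you decompose an arbitrary defining equation $L = L_{ab} + L_c$ and show both pieces vanish separately, deducing $L_c(X,\omega)=0$ from $L(X,\omega)=0$ and $L_{ab}(X,\omega)=0$; these formulations are equivalent, and both need the same care in choosing the presentation so that Theorem~\ref{T:BoundaryTangent} applies cleanly to the standard cylinder collapse, a point the paper defers to the treatment in \cite{MirWri, ChenWright}.
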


\begin{proof}
Again consider a triangulation where each cylinder in $\bfC$ is a union of triangles. 

Since the standard deformation of $\bfC$ remains in $\cM$, one can locally define $\cM$ using the following two types of linear equations: those using only edges not in $\bfC$, and those using only edges in $\bfC$. \red (Here it is important that we view $\bfC$ as an open subset of the surface, which does not contain the boundary saddle connections.) \black

The first type of equations hold along the path because, for each $t$, the collapse of $\bfC$ lies in $\cM_{\bfC}$, and these equations correspond to equations in $\cM_{\bfC}$. The second type of equations hold along the path because $\bfC$ only gets rotated and scaled. 
\end{proof}

\subsection{Generic diamonds}\label{SS:GenericDiamonds}

In the sequel we will mainly use the following type of diamond, whose definition builds on Definitions \ref{D:subeq} and \ref{D:GenericCyl}: 

\begin{defn}\label{D:GenericDiamond}
A diamond will be called a \emph{generic} if 
\begin{enumerate}
\item\label{E:genericSE} each $\bfC_i$ is a subequivalence class of generic cylinders, and 
\item\label{E:one} $\cM_{\bfC_i}$ has dimension exactly one less than $\cM$ for each $i \in \{1, 2\}$.
\end{enumerate}
\end{defn} 


\red

In the remainder of this section we explain that given a generic diamond, there is no harm in assuming the surface has dense orbit, and that generic diamonds are abundant. We need a few lemmas for this. 

\begin{lem}\label{L:DensePerturbation}
Suppose that $\bfC_1$ and $\bfC_2$ are disjoint subequivalence classes of generic cylinders on $(X, \omega)$ in an invariant subvariety $\cM$. Then there is an arbitrarily nearby surface $(X'', \omega'')$ with dense orbit in $\cM$ on which $\bfC_1$ and $\bfC_2$ persist and remain subequivalence classes of generic cylinders. 
\end{lem}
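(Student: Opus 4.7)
The plan is to realize the conclusion as the intersection of two subsets of $\cM$ near $(X,\omega)$: the set $U$ of surfaces on which $\bfC_1$ and $\bfC_2$ persist as disjoint subequivalence classes of generic cylinders, and the set $D$ of surfaces with dense $\GL$-orbit in $\cM$. I will argue that $U$ is open and contains $(X,\omega)$, and that $D$ is dense in $\cM$; then $U\cap D$ contains surfaces arbitrarily close to $(X,\omega)$, any of which serves as the required $(X'',\omega'')$.

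For the openness of $U$, I will work in period coordinates, where the tangent space to $\cM$ is a locally constant subspace $V$. Persistence of the cylinders in $\bfC_i$, their disjointness, and the genericity of their boundary saddle connections are each preserved under small perturbations: persistence and disjointness are open geometric conditions, while genericity corresponds to linear equations defining $\cM$ that continue to hold throughout a neighborhood once they hold at $(X,\omega)$. For the subequivalence class property, minimality is an open condition because, for each proper subcollection $\bfC'\subsetneq\bfC_i$, the requirement $\sigma_{\bfC'}\notin V$ is the complement of a closed linear condition. The remaining condition $\sigma_{\bfC_i}\in V$ is handled by using that $\bfC_i$ lies inside an $\cM$-equivalence class $\bfD_i$ to which Theorem \ref{T:CDT} applies on an entire neighborhood, combined with Theorem \ref{T:CylECTwistSpace} describing the twist space of $\bfD_i$ inside $V$.

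Density of $D$ will follow from ergodicity of the $\GL$-action on $\cM$ with respect to its affine invariant measure \cite{EM,EMM} together with the pointwise ergodic theorem, which together imply that almost every surface in $\cM$ has dense $\GL$-orbit in $\cM$; such surfaces are in particular dense.

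The main obstacle is checking that $\sigma_{\bfC_i}\in V$ is preserved on a neighborhood when $\bfC_i$ is a \emph{proper} subset of its equivalence class $\bfD_i$, since this does not follow formally from $\sigma_{\bfD_i}\in V$: the heights of individual cylinders in $\bfC_i$ vary with the surface, so $\sigma_{\bfC_i}$ is a non-constant vector in period coordinates. Resolving this will require using the specific structure of $V$ to show that the heights of the cylinders in $\bfC_i$ change in a correlated fashion under perturbations within $\cM$, so that $\sigma_{\bfC_i}$ stays in $V$ rather than leaving it.
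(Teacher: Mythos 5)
Your proposal founders on exactly the obstacle you flag at the end, and the resolution you sketch ("the heights of the cylinders in $\bfC_i$ change in a correlated fashion...so that $\sigma_{\bfC_i}$ stays in $V$") is not true in general, so the openness of $U$ fails. In period coordinates the tangent space $V = T_{(X,\omega)}\cM$ is locally constant, the heights $h_j$ of the cylinders in $\bfC_i$ are (real-)linear functionals of the periods, and the Poincar\'e duals $\gamma_j^*$ of the core curves are locally constant under parallel transport; so $\sigma_{\bfC_i} = \sum_j h_j\gamma_j^*$ is a linear function of the period coordinates, and the condition $\sigma_{\bfC_i}\in V$ cuts out a \emph{linear} subvariety of $\cM$ near $(X,\omega)$. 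When $\bfC_i$ is a proper subcollection of its equivalence class $\bfD_i$, this condition is generically of positive codimension, hence closed and nowhere dense rather than open. (Moreover, after perturbation the equivalence class $\bfD_i$ may acquire new cylinders, further complicating what it means for $\bfC_i$ to remain a subequivalence class.) So your intersection $U\cap D$ may simply be empty: the generic perturbation kills the subequivalence-class property.

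The paper avoids trying to show openness. Instead, after choosing a nearby surface $(X',\omega')$ with dense orbit (as you do), it actively \emph{corrects the heights}: it forms the vector $w_j = \sigma_{\bfD_j} - \sigma_{\bfD_j'}$ comparing the standard shear of the original equivalence class to that of the (possibly enlarged) equivalence class on $(X',\omega')$, then deforms $(X',\omega')$ by $\alpha_1 w_1 + \alpha_2 w_2$ (for suitable unit phases $\alpha_j$) to restore the cylinders in $\bfC_1, \bfC_2$ to their original heights; the Cylinder Finiteness Theorem controls the new cylinders well enough to keep this deformation small. Finally, a further one-parameter twist in each $w_j$ combined with a Baire/invariance argument produces a surface with dense orbit on which $\bfC_1, \bfC_2$ are subequivalence classes. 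The key idea you are missing is that one should deform within $\cM$ to land back on the linear subvariety $\{\sigma_{\bfC_i}\in V\}$, rather than hope a generic point already lies on it.
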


This lemma confronts the possibility that a perturbation may cause a subequivalence class to cease being a subequivalence class; although one can transport the standard deformation of  the subequivalence class to the deformation, if the ratios of heights of the cylinders have changed it might no longer be standard. In all the applications in this paper and its sequels, we will know  a priori that this difficulty cannot occur. Since additionally the proof of the lemma, in general, is  a bit technical, some readers may wish to skip it. 

\begin{proof}
We will work entirely within a neighborhood $U$ of $(X, \omega)$ on which the cylinders in $\bfC_1 \cup \bfC_2$ persist and remain generic. (Since $\bfC_j$ consists of generic cylinders, this can be accomplished by letting $U$ be a neighborhood in which all of the saddle connections on the boundary of cylinders in $\bfC_1 \cup \bfC_2$ persist.) Let $(X', \omega') \in U$ be a surface close to $(X,\omega)$ with dense orbit in $\cM$. Let $\bfD_j$ be the equivalence class of $\bfC_j$. 

The cylinders in $\bfD_j$ persist on $(X', \omega')$ and remain equivalent to each other, but additional parallel cylinders may have been created in the passage from $(X,\omega)$ to $(X', \omega')$.  So we let $\bfD_j'$ denote the cylinders on $(X', \omega')$ equivalent to those persisting from $\bfD_j$, keeping in mind that $\bfD_j'$ may have more cylinders than $\bfD_j$. 

Set $w_j := \sigma_{\bfD_j} -\sigma_{\bfD_j'}$. (Here $\sigma_{\bfD_j'}$ is the standard shear of $\bfD_j'\subset (X', \omega')$ and $\sigma_{\bfD_j}$ is the standard shear of $\bfD_j\subset (X,\omega)$, and we identify the relative cohomology groups of $(X,\omega)$ and $(X',\omega')$ via parallel transport.) Let $\alpha_j$ be a unit modulus complex number that is perpendicular to the period of the core curves of $\bfD_j'$, so deforming $(X', \omega')$ in the $\alpha_j w_j$ direction corresponds to dilating the cylinders without shearing them and deforming in the $i \alpha_j w_j$  direction corresponds to twisting the cylinders. 

Picking the sign of $\alpha_j$ appropriately ensures that, on $(X', \omega') + \alpha_j w_j$, the cylinders in $\bfC_j$ have returned to their original heights. (One can think of this as subtracting off the appropriate multiple of $\sigma_{\bfD_j'}$ to collapse all the cylinders in $\bfD_j'$, and then adding the appropriate multiple of $\sigma_{\bfD_j}$ to restore the cylinders in $\bfD_j$ to their original heights.) 

Using the Cylinder Finiteness Theorem of \cite[Theorem 4.1]{MirWri} or \cite[Theorem 5.3]{ChenWright}, we see that the circumferences of cylinders in $\bfD_j'$ are bounded independently of the perturbation $(X', \omega')$. Since  $(X', \omega')$ is close to $(X, \omega)$, this implies moreover that any ``new" cylinders in $\bfD_j'$ that don't arise from $\bfD_j$ have small height. These observations can be used to show that  $(X', \omega') + \alpha_1 w_1 + \alpha_2 w_2$ is close to $(X', \omega')$ and hence also close to $(X,\omega)$. 

For all sufficiently small real numbers $a_1$ and $a_2$, 
\[ (X', \omega') + \alpha_1 w_1 + \alpha_2 w_2 + i(\alpha_1 a_1 w_1 + \alpha_2 a_2 w_2)\]
remains in $U$ and has the property that the cylinders in $\bfC_1$ and $\bfC_2$ have the same heights as on $(X,\omega)$ and hence continue to form a subequivalence class. If one of these surfaces has dense orbit in $\cM$, then we are done. 

Suppose therefore that this does not occur. Then there is a proper invariant subvariety $\cM'$ contained in $\cM$ such that $i\alpha_1 w_1$ and $i \alpha_2 w_2$ are tangent to $\cM'$ at $(X', \omega') + \alpha_1 w_1 + \alpha_2 w_2$. This in particular means that $(X', \omega')$ is contained in $\cM'$, which contradicts the assumption that it has dense orbit in $\cM$.
\end{proof}

\begin{lem}\label{L:ShearStillDense}
Suppose that $\bfC$ is a subequivalence class of cylinders on a surface $(X,\omega)$ whose orbit is dense in $\cM$. Then, for almost every $s$, every surface obtained via a standard shear in $\bfC$ from $a_s^{\bfC}(X,\omega)$ has dense orbit in $\cM$. 
\end{lem}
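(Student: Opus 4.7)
The plan is to package the two-parameter deformation family as a holomorphic disc in $\cM$ and reduce the lemma to a countability argument. In period coordinates near $(X,\omega)$, with period $p_0$, the composition $u_t^\bfC a_s^\bfC$ changes only the periods of cross-curves of cylinders in $\bfC$, so it sends $(X,\omega)$ to $p_0 + \lambda(s,t) \cdot \sigma_\bfC$, where $\sigma_\bfC$ is the standard shear of Theorem \ref{T:CDT} and
\[
\lambda(s,t) = e^s t + i(e^s - 1)
\]
is a real-analytic diffeomorphism from $\bR^2$ onto the open half-plane $H = \{\lambda \in \bC : \Im \lambda > -1\}$. Since $\sigma_\bfC$ is tangent to $\cM$, the map $\phi \colon H \to \cM$ defined by $\phi(\lambda) = p_0 + \lambda \sigma_\bfC$ is a connected holomorphic disc through $(X,\omega) = \phi(0)$ contained in $\cM$.

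Next I would invoke the countability of $\GL$-invariant subvarieties to enumerate the proper invariant subvarieties $\cN \subsetneq \cM$ as $\cN_1, \cN_2, \ldots$. Each $\cN_k$ is locally cut out by $\bC$-linear equations in period coordinates, so $L_k := \phi^{-1}(\cN_k)$ is a complex analytic subset of the connected complex $1$-manifold $H$. By the identity theorem, $L_k$ is either all of $H$ or discrete. The former case would put $(X,\omega) = \phi(0)$ in the proper subvariety $\cN_k$, contradicting the hypothesis that $(X,\omega)$ has dense $\GL$-orbit in $\cM$. Hence each $L_k$ is discrete and countable, and $\bigcup_k L_k \subset H$ is a countable set.

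Finally, under the diffeomorphism $(s,t) \leftrightarrow \lambda(s,t)$ the $s$-coordinate corresponds bijectively to $\Im \lambda = e^s - 1$, so the projection of $\bigcup_k L_k$ to the $s$-axis---the set of $s$ for which \emph{some} shear $u_t^\bfC a_s^\bfC(X,\omega)$ lies in a proper invariant subvariety---is countable, and in particular has measure zero. For every other $s$, every such shear has dense $\GL$-orbit in $\cM$. The one technical point I would verify carefully is that $\phi(H) \subset \cM$ globally and not merely inside a single period-coordinate chart; this follows from tangency of $\sigma_\bfC$ along the entire flow, which is routine given Theorem \ref{T:CDT}, but worth stating explicitly.
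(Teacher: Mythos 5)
Your proposal is correct, and it reaches the lemma by a genuinely different route from the paper. The paper's argument is dynamical: it chooses $s$ so that the $\bQ$-span of the moduli of the cylinders in $\bfC$ on $a_s^{\bfC}(X,\omega)$ meets the $\bQ$-span of the moduli of the remaining parallel cylinders trivially (a full-measure condition on $s$), and then invokes the horocycle-flow argument of \cite[Lemma 3.1]{Wcyl} (see also \cite[Lemma 4.6]{MirWri}) to conclude that the $\GL$-orbit closure of $u_t^{\bfC}a_s^{\bfC}(X,\omega)$ already contains $(X,\omega)$ itself, hence equals $\cM$. Your approach replaces this dynamical input with the countability of invariant subvarieties together with the complex-analytic observation that $\phi(\lambda)=p_0+\lambda\sigma_\bfC$ is a nonconstant holomorphic disc in $\cM$, so its preimage of any proper $\cN_k$ is discrete. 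The trade-off is that you lean on the deep finiteness/countability results cited as \cite{EFW} (or alternatively on the algebraicity of orbit closures over $\overline\bQ$), whereas the paper's proof is self-contained modulo the earlier cylinder-deformation machinery; in exchange, your argument gives the cleaner statement that the exceptional set of $s$ is actually countable, and it avoids any appeal to equidistribution. One remark on the point you flag: the fact that $\phi(H)\subset\cM$ globally, and not just in a single chart, follows precisely as in the paper's (implicit) claim ``Since $\bfC$ is a subequivalence class, $u_t^{\bfC}a_s^{\bfC}(X,\omega)\in\cM$,'' and can be made rigorous by the same identity-theorem style reasoning you use for the $\cN_k$: the set $\phi^{-1}(\cM)$ is an analytic subset of $H$ containing an open neighborhood of $0$ (since $\sigma_\bfC\in T_{(X,\omega)}\cM$ and $\cM$ is locally affine in period coordinates), hence all of $H$. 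So this is not a gap, merely a detail to write out.
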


Here $a_s^{\bfC}(X,\omega)$ is the result of dilating the cylinders in $\bfC$, as in the introduction. 

\begin{proof}
Pick any $s$ so that the $\bQ$-span of the moduli of the cylinders in $\bfC$ on $a_s^{\bfC}(X,\omega)$ has trivial intersection with the $\bQ$-span of the moduli of all other cylinders parallel to $\bfC$. 

Fix $t\in \bR$, and let $u_t^{\bfC} a_s^{\bfC}(X,\omega)$ be the result of shearing the cylinders in $\bfC$ on $a_s^{\bfC}(X,\omega)$ by $t$. 
Since $\bfC$ is a subequivalence class, $u_t^{\bfC} a_s^{\bfC}(X,\omega)\in \cM$.

As in \cite[Lemma 3.1]{Wcyl} or \cite[Lemma 4.6]{MirWri}, the orbit closure of $u_t^{\bfC} a_s^{\bfC}(X,\omega)$ contains $(X,\omega)$. (The proof of the first cited lemma shows that the shear in $\bfC$ remains in the orbit closure of $u_t^{\bfC} a_s^{\bfC}(X,\omega)$; the dilation is obtained as a complex multiple of the shear.) Hence $u_t^{\bfC} a_s^{\bfC}(X,\omega)$ has dense orbit in $\cM$, since $(X,\omega)$ has dense orbit. 
\end{proof}

\begin{cor}\label{C:wologDense}
Given a generic diamond $((X, \omega), \cM, \bfC_1, \bfC_2)$,  there is a perturbation $(X', \omega')$ of $(X,\omega)$ with subequivalence classes $\bfC_i'$ of generic cylinders arising from deforming $\bfC_i$ such that $((X', \omega'), \cM, \bfC_1', \bfC_2')$ is also a generic diamond and the surface $(X', \omega')$ has dense orbit in $\cM$. 

Furthermore,  $\Col_{\bfC_i'}(X',\omega')$ is a small perturbation of $\Col_{\bfC_i}(X,\omega)$, and we have $\cM_{\bfC_i}= \cM_{\bfC_i'}$.
\end{cor}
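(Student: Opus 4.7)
The plan is to obtain $(X', \omega')$ directly from Lemma \ref{L:DensePerturbation}, and then verify that the remaining conditions in the statement follow automatically once the perturbation is chosen sufficiently small. The substantive work---producing a dense-orbit perturbation that preserves the subequivalence-class structure---is already packaged in Lemma \ref{L:DensePerturbation}, so the task reduces to checking that each of the defining conditions of a generic diamond is locally stable.

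First, I would apply Lemma \ref{L:DensePerturbation} to the disjoint subequivalence classes $\bfC_1,\bfC_2$ of generic cylinders on $(X,\omega)$ to obtain an arbitrarily nearby surface $(X',\omega')\in\cM$ with dense orbit, on which the two families persist as subequivalence classes $\bfC_1',\bfC_2'$ of generic cylinders. Working in a neighborhood small enough that every boundary saddle connection of each cylinder in $\bfC_1\cup\bfC_2$ persists as a distinct saddle connection, $\bfC_1'$ and $\bfC_2'$ remain disjoint and share no boundary saddle connections. Because they are subequivalence classes, the standard dilations remain in $\cM$. Whether the collapse $\Col_{\bfC_i'}$ causes a degeneration is determined by the combinatorial data of the cylinder configuration (which boundary saddle connections are identified, which subsurface closes off, etc.), so it too persists under small perturbation. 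Hence $((X',\omega'),\cM,\bfC_1',\bfC_2')$ is a diamond and satisfies condition \eqref{E:genericSE} of Definition \ref{D:GenericDiamond}.

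Next, using a polygonal presentation of $(X,\omega)$ in which each cylinder of $\bfC_1\cup\bfC_2$ is a union of triangles, the map $\Col_{\bfC_i}$ corresponds to sending a prescribed set of edge vectors to zero. By continuity, $\Col_{\bfC_i'}(X',\omega')$ is therefore a small perturbation of $\Col_{\bfC_i}(X,\omega)$. By Theorem \ref{T:BoundaryTangent}, the boundary component containing such a collapsed surface is locally cut out by the linear equations obtained from those defining $\cM$ by deleting the terms in the collapsed edges; these equations do not change under small perturbation, so both $\Col_{\bfC_i}(X,\omega)$ and $\Col_{\bfC_i'}(X',\omega')$ lie in the same boundary component. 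Thus $\cM_{\bfC_i'}=\cM_{\bfC_i}$, and condition \eqref{E:one} of Definition \ref{D:GenericDiamond}---that this boundary component has codimension one in $\cM$---is inherited from the original diamond.

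The only real obstacle is bookkeeping: one must choose the perturbation small enough that all the combinatorial stability statements above hold simultaneously. Since each of these is an open condition on the surface (persistence of finitely many saddle connections, persistence of cylinders of a given combinatorial type, and equality of boundary components via Theorem \ref{T:BoundaryTangent}), a single sufficiently small neighborhood of $(X,\omega)$ suffices, and any $(X',\omega')$ provided by Lemma \ref{L:DensePerturbation} inside that neighborhood works.
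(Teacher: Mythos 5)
There is a genuine gap in the middle of your argument. You claim that ``whether the collapse $\Col_{\bfC_i'}$ causes a degeneration is determined by the combinatorial data of the cylinder configuration \dots\ so it too persists under small perturbation.'' This is not correct: the collapse $\Col_{\bfC_i}$ is the limit of the standard dilation $a_t^{\bfC_i}$ as $t\to-\infty$, and for this limit to be a degeneration (condition (3) in the definition of a diamond, and also the step where you later invoke Theorem~\ref{T:BoundaryTangent}) one needs a saddle connection in $\overline{\bfC}_i$ perpendicular to the core curves of $\bfC_i$---this is what shrinks to zero length under the dilation. That is a metric condition, not a combinatorial one, and it is \emph{destroyed} by a generic perturbation: a dense-orbit surface supplied by Lemma~\ref{L:DensePerturbation} will typically have no saddle connection in $\overline{\bfC}_i'$ perpendicular to its core curves. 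So, after applying Lemma~\ref{L:DensePerturbation} alone, $((X',\omega'),\cM,\bfC_1',\bfC_2')$ need not be a diamond at all, and the ``set of edges that go to zero'' that you feed into Theorem~\ref{T:BoundaryTangent} is not the same on the two sides.

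The paper fixes this by noting that, because the original $\bfC_i$ are generic, the perpendicular saddle connection $s_i$ survives the perturbation as a saddle connection in $\overline{\bfC}_i'$, just no longer perpendicular, and then invokes Lemma~\ref{L:ShearStillDense}: one can apply a standard shear in each $\bfC_i'$ to restore the perpendicularity of $s_i$, and for almost every choice of dilation parameter the resulting surface still has dense orbit. Only after this correction does the argument for $\cM_{\bfC_i'}=\cM_{\bfC_i}$ go through; the paper's route is to observe that (for small perturbations) all saddle connections in $\overline{\bfC}_i'$ parallel to $s_i$ remain generically parallel, so $\cM_{\bfC_i}$ and $\cM_{\bfC_i'}$ are both codimension-one boundary components that degenerate $s_i$ and are therefore equal, rather than the Theorem~\ref{T:BoundaryTangent}/polygonal argument you gave. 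You should add the shearing step via Lemma~\ref{L:ShearStillDense}, after which either identification argument is fine.
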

\begin{proof}
Let $s_i$ be a saddle connection in $\overline{\bfC}_i$ perpendicular to the core curves. 

By Lemma \ref{L:DensePerturbation}, there is a surface $(X', \omega')$ with dense orbit in $\cM$ that is arbitrarily close to $(X, \omega)$ and on which $\bfC_1$ and $\bfC_2$ remain generic subequivalence classes of cylinders. For clarity, when thinking of these subequivalence classes on $(X', \omega')$, we denote them  $\bfC_1'$ and $\bfC_2'$ respectively. 

Because the cylinders were generic, each $s_i$ is still a saddle connection on $(X', \omega')$, and it is contained in $\overline{\bfC}_i'$. However, it may no longer be perpendicular to the core curves. By applying Lemma \ref{L:ShearStillDense}, we may correct this by applying standard cylinder deformations to $\bfC_1'$ and $\bfC_2'$ while ensuring that the resulting surface still has dense orbit. Thus, without loss of generality, we assume that, actually, each $s_i$ is perpendicular to the core curves of $\bfC_i'$ on $(X', \omega')$. 

Since $\cM_{\bfC_i}$ is a codimension 1 degeneration, all saddle connections parallel to $s_i$ that are contained in $\overline{\bfC}_i$ are generically parallel to each other. Assuming the above perturbations are small, the corresponding statement holds on the perturbation. 

Since $\cM_{\bfC_i}$ and $\cM_{\bfC_i'}$ are both codimension 1 degenerations that degenerate $s_i$, they must be equal. 
%
%
\end{proof}

Almost every surface in $\cM$ does not have any saddle connections perpendicular to a cylinder, which is why Lemma \ref{L:ShearStillDense} is required above. Alternatively, one can avoid this issue by modifying the definition of diamonds, choosing, for each $i$, a choice of direction in which there is a saddle connection in $\overline\bfC_i$, and using cylinder degenerations that collapse these directions. See \cite{ApisaWrightGemini} for details, where we call the resulting notion \emph{skew diamonds}. \red Skew diamonds are just as good as diamonds, but more flexible, and the only reason we use regular diamonds in this paper is to avoid the notational annoyance of always having to specify the choice of direction. 

\begin{rem}\label{R:CouldBeDense}
Keeping Corollary \ref{C:wologDense} in mind, we will frequently assume without loss of generality that the surface in our generic diamonds has dense orbit. 
\end{rem}
\black

We close the section with a result that will not be used in this paper, but illustrates the ubiquity of diamonds. 

\begin{lem}\label{L:ManyDiamonds}
Let $\cM$ be an invariant subvariety of rank at least 2. Then there exists a surface $(X,\omega)\in \cM$ with collections $\bfC_1, \bfC_2$ of cylinders that form a generic diamond.  

Moreover, up to shearing the $\bfC_i$, this $(X,\omega)$ may be assumed to be any surface in $\cM$ on which all parallel saddle connections are $\cM$-parallel, and $\bfC_1$ may be any subequivalence class. 
\end{lem}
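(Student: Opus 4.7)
My plan is as follows.

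\medskip

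\noindent By Lemma~\ref{L:DensePerturbation}, I would first replace $(X,\omega)$ by an arbitrarily nearby surface of dense $\GL$-orbit in $\cM$ on which $\bfC_1$ persists as a subequivalence class of generic cylinders. Since the condition that all parallel saddle connections be $\cM$-parallel is open, it is preserved by a small enough perturbation; and any discrepancy with the originally given $(X,\omega)$ is absorbed into the ``up to shearing'' freedom in the conclusion (using Corollary~\ref{C:wologDense} and Lemma~\ref{L:ShearStillDense} to keep the orbit dense after later shears). From this point I would work under the assumption of dense orbit.

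\medskip

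\noindent To produce $\bfC_2$, I would exploit the rank hypothesis. Since $\mathrm{rank}(\cM)\ge 2$, the projection $p(T_{(X,\omega)}\cM)$ has complex dimension at least $4$, whereas by Theorem~\ref{T:CylECTwistSpace} the twist space of any single $\cM$-equivalence class contributes at most a complex line to $p(T_{(X,\omega)}\cM)$. Combined with the fact that on a surface with dense orbit and all parallel saddle connections $\cM$-parallel the image of the tangent space under $p$ is spanned by standard shears, this forces $(X,\omega)$ to support cylinders in several directions not parallel to $\bfC_1$. To ensure disjointness from $\bfC_1$, I would pass to the collapse $\Col_{\bfC_1}(X,\omega)\in\cM_{\bfC_1}$, a codimension one boundary component of $\cM$ with rank at least $1$. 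On this collapse the image of $\bfC_1$ is a finite union of saddle connections whose complement has positive area, and so supports a subequivalence class $\bfD$ of generic cylinders in that complement. Each cylinder of $\bfD$ lifts under the collapse map to a cylinder on $(X,\omega)$ of the same height and circumference, disjoint from $\bfC_1$. By Theorem~\ref{T:BoundaryTangent}, the tangency of $\sigma_{\bfD}$ to $\cM_{\bfC_1}$ lifts to tangency of the corresponding standard shear on $(X,\omega)$ to $\cM$, so I may take $\bfC_2$ to be a minimal subset of the lift whose standard shear is tangent; this is a subequivalence class by definition.

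\medskip

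\noindent The main obstacle I anticipate is ensuring that \emph{every} cylinder in $\bfC_2$, and not merely the initial lifted cylinders, avoids $\bfC_1$ and shares no boundary saddle connection with it. This amounts to choosing $\bfD\subset\Col_{\bfC_1}(X,\omega)$ so that its entire $\cM_{\bfC_1}$-subequivalence class lies in the complement of the image of $\bfC_1$, and to verifying that the resulting lift is itself an $\cM$-subequivalence class rather than a strict union of such. I would address this via the abundance of cylinders in $\cM_{\bfC_1}$ implied by its rank and dimension, together with the (open) condition that no cylinder of $\bfD$ crosses the image of $\bfC_1$. Once $\bfC_2$ is fixed, genericity of its cylinders follows from a further small perturbation (Lemma~\ref{L:DensePerturbation}); the codimension one property of $\cM_{\bfC_2}$ is automatic from the minimality in the definition of subequivalence class together with Theorem~\ref{T:CylECTwistSpace}; and the no-shared-boundary condition is generic, so can be arranged by a final small perturbation while preserving everything else via Lemmas~\ref{L:DensePerturbation} and~\ref{L:ShearStillDense}.
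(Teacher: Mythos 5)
Your approach diverges from the paper's and has several genuine gaps.

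The paper's proof works entirely on the original $(X,\omega)$: it first argues that, because $\cM$ has rank at least two and all parallel saddle connections are $\cM$-parallel, the equivalence class $\bfD_1$ containing $\bfC_1$ does not cover the surface; it then invokes the fact (from Smillie--Weiss) that the horocycle orbit closure of any surface contains a horizontally periodic surface, as in \cite[Section 8]{Wcyl}, to produce a cylinder $E$ on $(X,\omega)$ disjoint from $\bfD_1$; and disjointness of $\bfD_1$ from the equivalence class of $E$ follows from the Cylinder Deformation/Proportion theorem. Your proposal instead passes to the collapse $\Col_{\bfC_1}(X,\omega)$, asserts the existence of a subequivalence class $\bfD$ in the complement of $\Col_{\bfC_1}(\bfC_1)$, and tries to lift it. The existence of such a $\bfD$ disjoint (including its boundary) from the image of $\bfC_1$ is exactly the nontrivial point that the paper handles via the Smillie--Weiss input, and ``positive area'' does not supply it. Moreover, Theorem~\ref{T:BoundaryTangent} describes how to pass \emph{from} $\cM$ \emph{to} its boundary (by deleting terms from the defining equations); it does not say that tangency on the boundary lifts. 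The correct tool for lifting a deformation supported off $\bfC_1$ is Lemma~\ref{L:ExtendingPaths}, which you do not invoke, and even then one must verify that the lifted cylinders are $\cM$-parallel and form a subequivalence class rather than a strict union of subequivalence classes.

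Two further problems. First, the ``moreover'' clause of the lemma allows only shearing the $\bfC_i$ on the given $(X,\omega)$; perturbing to a nearby surface of dense orbit via Lemma~\ref{L:DensePerturbation} changes the underlying surface and is not covered by that freedom. The paper never needs dense orbit here. Second, your claim that $\cM_{\bfC_2}$ has codimension one ``automatic[ally] from the minimality in the definition of subequivalence class'' is wrong: minimality alone does not prevent a higher-codimension degeneration. The codimension-one property comes from the hypothesis that all parallel saddle connections are $\cM$-parallel, so that every saddle connection of $\overline{\bfC}_i$ parallel to the chosen perpendicular $s_i$ is $\cM$-parallel to $s_i$, and this is the input fed into Theorem~\ref{T:BoundaryTangent}.
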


\red The shearing is not necessary if one uses skew diamonds. Diamonds never exist when $\cM$ is rank 1 and has (complex) dimension 2 or 3. If $\cM$ is rank 1 and has dimension at least 4, they often but not always exist. 
\black

The lemma does not guarantee that the $\cM_{\bfC_i}$ consist of connected surfaces; even though we assume $(X,\omega)$ is connected, $\ColOneX$ and $\ColTwoX$ might have multiple components. 

\begin{proof}
Start with any $(X,\omega) \in \cM$ on which all parallel saddle connections are $\cM$-parallel. (Such surfaces are dense.) 


Let $\bfD_1$ be an equivalence class of $\cM$-parallel cylinders on $(X,\omega)$. \red Since $\cM$ is not rank 1 and parallel saddle  connections are $\cM$-parallel, the cylinders of $\bfD_1$ do not cover $(X,\omega)$. This follows from  the proof of \cite[Theorem 1.7]{Wcyl}, and can also be established by contradiction as follows. Assume that $\bfD_1$ is horizontal and covers $(X,\omega)$. By assumption all horizontal saddle connections are $\cM$-parallel. Therefore, any real deformation in $\cM$ that preserves the length of one horizontal saddle connection preserves the lengths of all horizontal saddle connections. Such deformations necessarily belong to $\mathrm{Twist}(\bfD_1)$, which projects to a one-dimensional subspace of absolute cohomology by Theorem \ref{T:CylECTwistSpace}. Since the collection of real deformations projects to a $2\cdot \mathrm{rank}(\cM)$ real-dimensional subspace of $p\left( T_{(X, \omega)} \cM \right)$, and since $\mathrm{Twist}(\bfD_1)$ is by assumption codimension one in the space of real deformations, we get that $\cM$ has rank one, which is a contradiction.\black

\red  Recall from \cite[Corollary 6]{SW2} that the horocycle flow orbit closure of any surface contains a surface covered by horizontal cylinders. Applying this fact as \black in \cite[Section 8]{Wcyl} or \red \cite[Lemma 8.3]{ApisaWrightHighRank}, we get the existence of \black  a cylinder $E$ disjoint from $\bfD_1$. Since we have assumed that all parallel saddle connections are generically parallel, $E$ cannot be parallel to $\bfD_1$. Let $\bfD_2$ be the equivalence class of $E$. It is easy to see using the Cylinder Deformation Theorem \cite[Theorem 1.1]{Wcyl} that no cylinder of $\bfD_1$ intersects a cylinder of $\bfD_2$; see also \cite[Proposition 3.2]{NW}, which is sometimes called the Cylinder Proportion Theorem. 

The Cylinder Deformation Theorem implies that the standard dilation in each $\bfD_i$ remains in $\cM$. For each $i$, let $\bfC_i$ be a minimal subset of $\bfD_i$ such that the standard dilation of $\bfC_i$ remains in $\cM$. (One expects $\bfC_i = \bfD_i$, since in general there is no reason to believe that the standard dilation of any strict subset of $\bfD_i$ remains in $\cM$.)

Now, shear each $\bfD_i$ so that $\overline\bfC_i$ contains a saddle connection $s_i$ perpendicular to core curves of cylinders in $\bfC_i$. Replace $(X,\omega)$ with this sheared surface. 

\red By definition, $\bfC_1$ and $\bfC_2$ are disjoint. Since they are not parallel, they cannot share boundary saddle connections. The existence of the $s_i$ imply that collapsing either $\bfC_i$ does indeed cause the surface to degenerate. So $((X,\omega), \cM, \bfC_1, \bfC_2)$ defines a diamond.

The $\bfC_i$ are subequivalence classes by definition, so to see that this diamond is generic it suffices to check that \black each $\cM_{\bfC_i}$ has dimension exactly one less than $\cM$. That follows from the main results of \cite{MirWri, ChenWright}, as recalled in Theorem \ref{T:BoundaryTangent}, using that all the saddle connections \red in $\bfC_i$ \black parallel to $s_i$ are $\cM$-parallel to $s_i$, \red which follows from our original assumption that parallel saddle connections are $\cM$-parallel. \black 
\end{proof}

\section{Preliminaries on strata}\label{S:PreQD}
Throughout this section $\cQ$ will denote a connected component of a stratum of quadratic differentials, possibly with marked points. 

\begin{conv}\label{CV:NoTrivHol}
For convenience,  we will insist that the term ``quadratic differential" will never apply to the square of a holomorphic 1-form. Thus, in our convention, quadratic differentials will never have trivial holonomy. We will however use ``half-translation surface" to mean either an Abelian or quadratic differential, so half-translation surface may have trivial or non-trivial holonomy. 
\end{conv}

By Lanneau \cite{LanneauHyp} and Chen-M\"oller \cite{CM}, aside from a finite explicit collection of strata, every stratum of quadratic differentials is either connected or has two components that are distinguished by hyperellipticity. 

\begin{defn}\label{D:HypLocus}
Given a component of a stratum of Abelian or quadratic differentials, the hyperelliptic locus therein is the collection of surfaces with a half-translation map to a genus zero quadratic differential, such that the associated hyperelliptic involution preserves the set of marked points. 

A component of a stratum is called hyperelliptic if it coincides with its hyperelliptic locus. 
\end{defn}

We will  study hyperelliptic components of strata in detail in Section \ref{S:Q-hyp}, but for now we recall the following from \cite[Lemma 4.5]{ApisaWright}. When $\cQ$ is a component of a stratum of quadratic differentials the following is due to Lanneau \cite[Theorem 1]{LanneauHyp}. \red Recall from Definition \ref{D:F}  that if $\cQ$ has marked points, then $\For(\cQ)$ denotes the same stratum with marked points forgotten. \black

\begin{lem}\label{L:InvolutionImpliesHyp-background}
The generic element of a component $\cQ$ of a stratum of
Abelian or quadratic differentials admits a non-bijective half-translation
cover to another translation or half-translation surface if and only if $\For(\cQ)$ is hyperelliptic, in which case the hyperelliptic involution
yields the only such map when $\cQ$ has rank at least two.
%
 
\end{lem}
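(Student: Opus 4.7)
The plan is to handle each direction of the equivalence separately, and then address the uniqueness claim.

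The implication that $\For(\cQ)$ being hyperelliptic forces the existence of a non-bijective half-translation cover is essentially Definition \ref{D:HypLocus}: the hyperelliptic involution on each surface in $\cQ$ descends to a degree two half-translation map onto a genus zero quadratic differential. Since the involution preserves the set of marked points by definition, one checks directly that the resulting map satisfies Definition \ref{D:Covering}.

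For the converse, I would proceed as follows. Suppose the generic $(X,\omega) \in \cQ$ admits a non-bijective half-translation cover. Theorem \ref{T:MinimalCover} gives a canonical minimal cover $\pi_{Q_{\min}}$, and this cover deforms continuously with the surface. The locus of surfaces admitting such a nontrivial minimal cover is $\GL$-invariant and closed in $\cQ$; since it contains the generic element, it must coincide with all of $\cQ$. Hence $\For(\cQ)$ is a full locus of nontrivial covers. In the Abelian case, $\For(\cQ)$ is full rank as a connected component of a stratum, so Theorem \ref{T:MirWriFullRank} applies; the alternative that $\For(\cQ)$ is merely a connected component of a stratum is incompatible with being covered by lower-complexity surfaces, so $\For(\cQ)$ must be the hyperelliptic locus. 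In the quadratic case, one invokes Lanneau's classification \cite[Theorem 1]{LanneauHyp}; alternatively, one lifts through the holonomy double cover and reduces to the Abelian situation, keeping track of the involution that commutes with the deck transformation.

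For uniqueness when $\cQ$ has rank at least two: every non-bijective half-translation cover of $(X,\omega)$ factors through $\pi_{Q_{\min}}$ by Theorem \ref{T:MinimalCover}. When $\For(\cQ)$ is hyperelliptic, $\pi_{Q_{\min}}$ is (up to isomorphism of the target) the quotient by the hyperelliptic involution; any additional, independent covering map would impose further relations on the absolute periods, producing a proper invariant subvariety strictly smaller than $\cQ$ inside the stratum, and in fact forcing it into a locus of torus covers, contradicting the rank at least two hypothesis. The principal obstacle is the quadratic differentials case: one cannot directly cite Theorem \ref{T:MirWriFullRank}, and one must either use Lanneau's analogue or pass to the holonomy double cover. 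A secondary subtlety is verifying compatibility of the marked-point conditions in Definition \ref{D:Covering} (items \ref{D:Covering:2}, \ref{D:Covering:3}, \ref{D:Covering:5}) throughout the reduction, which is the content distinguishing the version in \cite{ApisaWright} from the original in \cite{LanneauHyp}.
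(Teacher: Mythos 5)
The paper does not prove this lemma: it is recalled verbatim from \cite[Lemma~4.5]{ApisaWright}, with the quadratic case attributed to Lanneau \cite{LanneauHyp}, and then restated (again without proof) as Lemma \ref{L:InvolutionImpliesHyp} in Section \ref{S:Q-hyp}. So there is no paper argument to compare against, and your proposal has to stand on its own.

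Your ``if'' direction is essentially right: when $\For(\cQ)$ is hyperelliptic, the involution on $\For(X,\omega)$ gives the degree-two map, and one simply marks the images of the marked points on the codomain to satisfy items \ref{D:Covering:2}--\ref{D:Covering:5} of Definition \ref{D:Covering}. (Your phrasing that ``the involution preserves the set of marked points by definition'' is not quite right, because $\cQ$ may have marked points that the involution does not preserve; the fix is what I just described.)

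The converse direction has a genuine gap. Invoking Theorem \ref{T:MirWriFullRank} does nothing here: $\For(\cQ)$ is already, by hypothesis, a connected component of a stratum, so ``$\For(\cQ)$ is either a component of a stratum or a hyperelliptic locus therein'' is a tautology and gives no leverage. The sentence ``the alternative \ldots is incompatible with being covered by lower-complexity surfaces, so $\For(\cQ)$ must be the hyperelliptic locus'' is exactly the assertion to be proved, not a justification of it. The actual argument (in the Abelian case, say) goes via rank, not via the Mirzakhani--Wright theorem. First, if the generic $(X,\omega)\in\For(\cQ)$ admitted a degree-$d\geq 2$ \emph{translation} cover onto an Abelian differential of genus $g'\geq 1$, the absolute periods of $\omega$ would lie in a $2g'$-dimensional subspace of $H^1(X)$, so $\rk\For(\cQ)\leq g'$; but as a full stratum $\rk\For(\cQ)=g$, and Riemann--Hurwitz forces $g\geq 2g'-1$, giving $g'\leq 1$ and hence the torus-cover case, which only occurs in genus $1$ where everything is trivially hyperelliptic. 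So by Theorem \ref{T:MinimalCover} the generic $(X,\omega)$ is the holonomy double cover of some quadratic differential $(Y,q)$ of genus $h$ with $m_{odd}$ odd-order singularities. Then $\rk\For(\cQ)=\rk(\text{locus of covers})=h+m_{odd}/2-1$ by Lemma \ref{L:Q-rank}, while Riemann--Hurwitz gives $g=2h-1+m_{odd}/2$; equating and using $\rk\For(\cQ)=g$ forces $h=0$, i.e.\ the quotient is genus zero and the involution is hyperelliptic. Some such computation (or a direct appeal to Lanneau's classification of hyperelliptic components) is needed; your proposal skips it.

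The uniqueness claim is sketched too loosely. The correct route again goes through Theorem \ref{T:MinimalCover}: any two non-bijective half-translation covers factor through $\pi_{Q_{\min}}$, and if $\pi_{Q_{\min}}$ had degree $>2$ then iterating the rank comparison above would force $\rk\cQ\leq 1$, contradicting the rank-$\geq 2$ hypothesis. Saying that additional covers ``impose further relations on the absolute periods'' points in the right direction but does not by itself produce the contradiction.
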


In the sequel we will also need the rank and rel of a stratum of quadratic differentials, which we recall from \cite[Lemma 4.2]{ApisaWright}. 

\begin{lem}\label{L:Q-rank}
Let $\cQ(\kappa)$ where $\kappa = (k_1, \hdots, k_n)$ be a stratum of quadratic differentials. Let $m_{odd}$ be the number of odd numbers in $\kappa$ and $m_{even}$ the number of even numbers. Let $g$ be the genus. The rank and rel of the component is then
\[ \rk(\cQ) = g + \frac{m_{odd}}{2} - 1 \qquad \text{and} \qquad \mathrm{rel}(\cQ) = m_{even}. \]
\end{lem}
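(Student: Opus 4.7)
The plan is to reduce the computation to the holonomy double cover. For a generic $(Q,q)\in\cQ$, let $\pi:(X,\omega)\to(Q,q)$ be the holonomy double cover with covering involution $\sigma$ satisfying $\sigma^*\omega=-\omega$. The tangent space $T_{(Q,q)}\cQ$ is canonically identified with the $(-1)$-eigenspace $H^1(X,\Sigma;\bC)^-$ of $\sigma^*$, where $\Sigma\subset X$ is the preimage of the singular and marked points of $(Q,q)$. Thus by definition
\[ \rk(\cQ)=\tfrac12\dim H^1(X;\bC)^- \qquad\text{and}\qquad \rel(\cQ)=\dim\ker(p)^-, \]
and the whole problem reduces to computing these two eigenspace dimensions.

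First I would analyze the local structure of $\pi$. If $q=z^{k}(dz)^2$ near a singularity, then the substitution $z=w^2$ yields $\pi^*q=4w^{2k+2}(dw)^2$, so that $\omega=\pm 2w^{k+1}dw$. When $k$ is odd, $z^{k/2}$ is not single-valued and the point is a branch point of $\pi$ with a single preimage (at which $\omega$ has a zero of order $k+1$); when $k$ is even, the local square root $z^{k/2}dz$ of $q$ already exists, so $\pi$ is unramified there and the point has two preimages. Applied to all marked and singular points, this shows that $\sigma$ fixes exactly the $m_{odd}$ preimages of odd-order singularities, while the remaining points of $\Sigma$ form $m_{even}$ free $\sigma$-orbits.

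For the rank, Riemann-Hurwitz applied to the degree two cover branched over $m_{odd}$ points yields $g_X=2g-1+m_{odd}/2$. Since $\pi^* H^1(Q;\bC)=H^1(X;\bC)^+$ has complex dimension $2g$, we get
\[ \dim H^1(X;\bC)^-=2g_X-2g=2g-2+m_{odd}, \]
and hence $\rk(\cQ)=g-1+m_{odd}/2$. For the rel, use the $\sigma$-equivariant short exact sequence
\[ 0\to H^0(\Sigma)/H^0(X)\to H^1(X,\Sigma;\bC)\xrightarrow{p} H^1(X;\bC)\to 0. \]
Since $\sigma$ acts trivially on $H^0(X)=\bC$, taking $(-1)$-eigenspaces identifies $\ker(p)^-$ with $H^0(\Sigma)^-$, whose dimension equals the number of free $\sigma$-orbits in $\Sigma$, namely $m_{even}$.

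The main technical obstacle is nothing more than careful bookkeeping: one must verify that the parity dichotomy applies uniformly to marked points ($k_i=0$), simple poles ($k_i=-1$), and higher-order zeros, and that the conclusion is independent of the conventions of Definition \ref{D:Covering} about whether preimages of poles are marked (these conventions change the labelling on $X$ but not the count of $\sigma$-orbits in $\Sigma$, which is all that enters the formulas).
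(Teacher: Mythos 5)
Your proof is correct and is the standard argument for this lemma: pass to the holonomy double cover, identify $T_{(Q,q)}\cQ$ with the $(-1)$-eigenspace of the covering involution, and compute $\dim H^1(X;\bC)^-$ via Riemann--Hurwitz and $\dim\ker(p)^-$ via the $\sigma$-equivariant long exact sequence of the pair $(X,\Sigma)$. The paper itself does not prove this statement but cites it from \cite[Lemma 4.2]{ApisaWright}, whose argument is essentially the one you give.
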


Since the following result requires a more detailed understanding of hyperelliptic components, we defer its proof to Section \ref{S:Q-hyp} (see Lemma \ref{L:QHolonomy-redux}).

\begin{lem}\label{L:QHolonomy}
Let $(X, \omega)$ be a generic surface in a quadratic double of a component $\cQ$ of a stratum of quadratic differentials. If $\For(\cQ) \ne \cQ(-1^4)$, then there is a unique involution $J$ of derivative $-\mathrm{Id}$ such that $(X, \omega)/J$ is a generic surface in a component of a stratum of quadratic differentials. 

If $\For(\cQ) = \cQ(-1^4)$ and $(X, \omega)$ has at least one marked point, then there is a unique marked-point preserving involution $J$ of derivative $-\mathrm{Id}$ such that $(X, \omega)/J$ is a generic surface in a component of a stratum of quadratic differentials.
\end{lem}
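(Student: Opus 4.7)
The plan is to establish existence using the deck transformation of the holonomy double cover, and to prove uniqueness by combining a Torelli-style rigidity argument (for the main case) with a direct analysis of translation symmetries of marked points on a torus (for the $\cQ(-1^4)$ case).

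For existence, since $(X,\omega)$ lies in a quadratic double $\cM$ of $\cQ$, there is a holonomy double cover $(X,\omega)\to (Y,q)$ with $(Y,q)$ generic in $\cQ$. The deck transformation $J$ has derivative $-\mathrm{Id}$ by construction, and the stipulation in the definition of a quadratic double that preimages of marked points are marked guarantees $J$ preserves the set of marked points when $\For(\cQ)=\cQ(-1^4)$.

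For uniqueness, suppose $J'$ is another such involution and let $\cM'$ be the quadratic double of the stratum $\cQ'$ containing $(X,\omega)/J'$. Because $(X,\omega)/J'$ is generic in $\cQ'$, the surface $(X,\omega)$ is generic in $\cM'$; combined with the fact that $(X,\omega)$ is generic in $\cM$, uniqueness of the orbit closure forces $\cM=\cM'$. Hence $\cM$ carries both quadratic double structures. Since $T_{(X,\omega)}\cM$ is the $(-1)$-eigenspace of both $J^*$ and $(J')^*$ on $H^1(X,\Sigma;\bC)$, these two involutions of relative cohomology coincide, so $J^*=(J')^*$ on $H^1(X;\bC)$. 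The composition $T=JJ'$ has derivative $+\mathrm{Id}$ and is thus a translation automorphism of $(X,\omega)$. I will show $T=\mathrm{id}$ in the two cases.

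When $\For(\cQ)\ne\cQ(-1^4)$, Riemann–Hurwitz applied to the holonomy double cover gives $g(X)=2g(Y)-1+m_{odd}/2$, and the constraint that quadratic doubles admit only simple poles forces $\For(\cQ)=\cQ(-1^4)$ whenever $g(X)=1$; so $g(X)\ge 2$. By Torelli's theorem, $J$ and $J'$ either agree on $X$ or differ by the hyperelliptic involution $\tau$ (only possible when $X$ is hyperelliptic). In the latter case $T=JJ'=\tau$, but $\tau^*\omega=\omega$ would place $\omega$ in the $\tau$-invariant part of $H^0(X,\Omega^1)$, which is $H^0(X/\tau,\Omega^1)=0$ since $X/\tau\cong\bP^1$. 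So $T=\mathrm{id}$ and $J=J'$.

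When $\For(\cQ)=\cQ(-1^4)$ and $(X,\omega)$ has at least one marked point, $X$ is a torus $\bC/\Lambda$, every involution of derivative $-\mathrm{Id}$ has the form $z\mapsto -z+c$, and $T$ is a translation $z\mapsto z+v$ preserving the marked set. If some marked point $p_0$ is a marked pole preimage, then $p_0$ is fixed by $J$ and must remain a marked point under $T$, but a translation fixing any point is trivial, so $v=0$. Otherwise every marked point of $(X,\omega)$ is a preimage of a marked point of $(Y,q)$ and the marked points occur in $J$-orbits of size two; as $(Y,q)$ varies generically in $\cQ$ these orbits move independently, so no non-zero $v$ can preserve the full marked set on an open neighborhood in $\cM$, again forcing $v=0$. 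In both subcases $T=\mathrm{id}$ and $J=J'$.

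The main obstacle I expect is the rigorous verification in the $\cQ(-1^4)$ case that the independent motion of distinct pairs of preimages of marked points as $(Y,q)$ varies in $\cQ$ really does rule out every non-trivial translation symmetry; I plan to do this by writing the marked points explicitly as $p_i$ and $J(p_i)$, observing that a non-trivial translation preserving the set would impose an algebraic relation on the $p_i$'s that fails on a dense open subset of $\cM$.
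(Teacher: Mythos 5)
Your approach is genuinely different from the paper's. The paper's proof (given as Lemma \ref{L:QHolonomy-redux}) runs through the classification of hyperelliptic components: two such involutions $J,J'$ would force $\For(\cQ)$ to be hyperelliptic (Lemma \ref{L:InvolutionImpliesHyp}), then Lemma \ref{L:HypSymmetries} pins down the affine symmetry group as $\{\mathrm{id},J,T,JT\}$ and a computation with the $a^{(1)},b^{(1)}$ data forces $\For(\cQ)=\cQ(-1^4)$. Your route instead compares tangent spaces and uses a rigidity result for automorphisms on homology. Both are reasonable, but your proposal has two concrete gaps.

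First, in the case $\For(\cQ)\ne\cQ(-1^4)$: the sentence ``these two involutions of relative cohomology coincide'' does not follow from what you have established. Knowing that $J^*$ and $(J')^*$ share the $(-1)$-eigenspace $T_{(X,\omega)}\cM$ in $H^1(X,\Sigma;\bC)$ does not determine the $(+1)$-eigenspaces, because relative cohomology carries no canonical pairing. What you can say is that the projections of the $(-1)$-eigenspaces to $H^1(X;\bC)$ agree, and there the $(+1)$-eigenspace \emph{is} determined as the symplectic orthogonal of the $(-1)$-eigenspace; that is the step that gets you $J^*=(J')^*$ on absolute cohomology. Once this is fixed, the Torelli-type argument simplifies: the injectivity of $\operatorname{Aut}(X)\to\operatorname{Sp}(H_1(X;\bZ))$ for $g\ge 2$ holds for \emph{all} curves, including hyperelliptic ones (the hyperelliptic involution acts by $-\mathrm{Id}$ on $H_1$, not trivially), so $JJ'$ acting trivially on $H^1$ already gives $JJ'=\mathrm{id}$ and the detour through ``$T=\tau$'' is not needed. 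You should also explicitly justify that $T_{(X,\omega)}\cM$ equals, not merely lies in, the $(-1)$-eigenspace of $J^*$; this is true (a dimension count works out even when some pole preimages are unmarked, since unmarked pole preimages are $J$-fixed and contribute only to the $(+1)$-part) but it is doing real work in your argument.

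Second, and more seriously, the torus case has an unjustified step. You write that if a marked point $p_0$ is a pole preimage, then ``a translation fixing any point is trivial, so $v=0$,'' but you have only shown that $T$ preserves the marked \emph{set}, not that $T(p_0)=p_0$. If there are several marked points, $T$ could a priori permute them nontrivially. The correct way to close this sub-case is along the lines of the paper's argument: on a torus, distinct involutions $z\mapsto -z+c$ have disjoint fixed-point sets, so if $J\ne J'$ a $J$-fixed marked point $p_0$ is not $J'$-fixed and hence projects to a \emph{periodic} marked point on $(X,\omega)/J'$ (it is a $2$-torsion point, so cannot move with the generic deformation), contradicting that $(X,\omega)/J'$ is generic in a stratum of quadratic differentials, whose marked points must be free. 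Your sketch of the remaining sub-case (all marked points in $J$-pairs of free points) is morally right, but you should carry it out: a nontrivial translation $T=JJ'$ preserving $\{p,J(p)\}$ for a generically-positioned free pair would force $T(p)=J(p)$, i.e.\ $v=-2p$, which cannot be constant as $p$ varies in $\cM$, so $v=0$. As written, the proposal leaves these the two weakest points precisely where the subtlety lies.
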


\begin{defn}\label{D:holinv}
The involution in Lemma \ref{L:QHolonomy} will be called \emph{the holonomy involution}.
\end{defn}

\subsection{Cylinders}

We will need to understand what cylinders that are generic in the sense of Definition \ref{D:GenericCyl} look like in strata of quadratic differentials. Recall our conventions on cylinders, and the definition of multiplicity, from Definition \ref{D:CylAndBoundary}.

\begin{defn}\label{D:CylinderTypes}
A cylinder on a translation or half-translation surface is called a
\begin{enumerate}
\item \emph{simple cylinder} if each boundary consists of a single saddle connection,
\item \emph{half-simple} cylinder if one boundary is a single saddle connection, and the other is two distinct saddle connections of equal length, 
\item \emph{complex cylinder} if each boundary consists of two distinct saddle connections of equal length, 
\item \emph{simple envelope} if one boundary is a single saddle connection, and the other boundary is a single saddle connection with multiplicity two, 
\item \emph{complex envelope} if one boundary is two distinct saddle connections of equal length, and the other boundary is a single saddle connection with multiplicity two.
\end{enumerate}
See Figure \ref{F:CylinderTypes}
\end{defn}

\begin{figure}[h!]\centering
\includegraphics[width=1\linewidth]{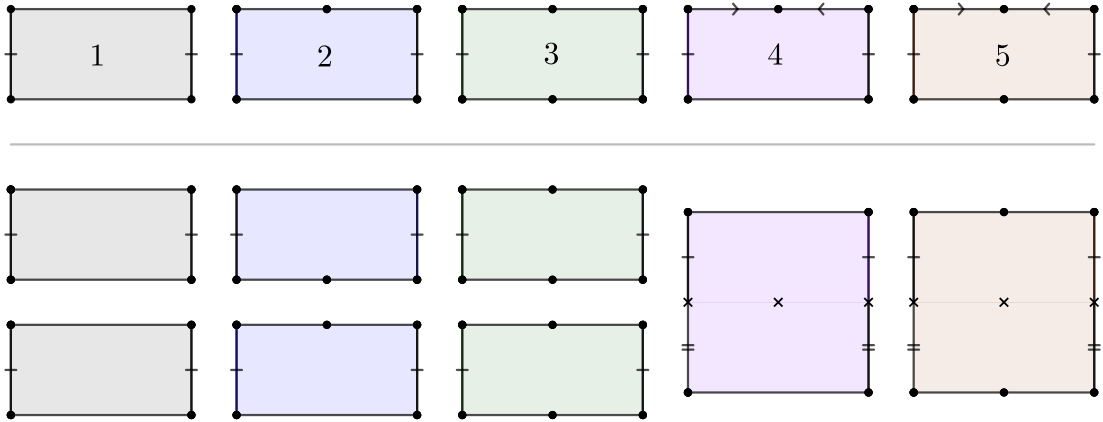}
\caption{Top: The five cylinder types of Definition \ref{D:CylinderTypes}, drawn as rectangles with left and right sides identified to give horizontal cylinders. Bottom: The preimage on the holonomy double cover, if these cylinders appear in a generic quadratic differential. In possibilities 4 and 5, the preimage can have 0, 1, or 2 marked points. The bottom right possibility looks different after a half Dehn twist (illustrated in Figure \ref{F:CylinderType5HalfTwist}).}
\label{F:CylinderTypes}
\end{figure}

The final two possibilities can of course only occur on half-translation surfaces. Using this language, the following foundational result is a consequence of \cite{MZ}, \red as we discuss in Remark \ref{R:MZguide}. \black

\begin{thm}[Masur-Zorich]\label{T:MZ}
Let $C$ be a generic cylinder on a quadratic differential in any stratum other than $\cQ(-1^4)$. Then:
\begin{enumerate}
\item\label{T:MZ:5} $C$ is one of the five possibilities in Definition \ref{D:CylinderTypes}.

\item\label{T:MZ:TrivHolComponent} If $C$ has two distinct saddle connections on one of its boundary components, then cutting those saddle connections disconnects the surface into two pieces, exactly one of which has trivial linear holonomy. The piece with trivial linear holonomy is the component that does not contain the interior of the original cylinder. 

\item\label{T:MZ:GenericAdjacency} If $C$ shares a boundary saddle connection with another generic  cylinder $C'$, and this saddle connection does not join a marked point to itself, then possibly after switching $C$ and $C'$ we have that $C'$ is simple and does not share a boundary saddle connection with any other cylinder, and $C$ has two saddle connections in the given boundary component that borders $C'$. 
\begin{figure}[h!]\centering
\includegraphics[width=.9\linewidth]{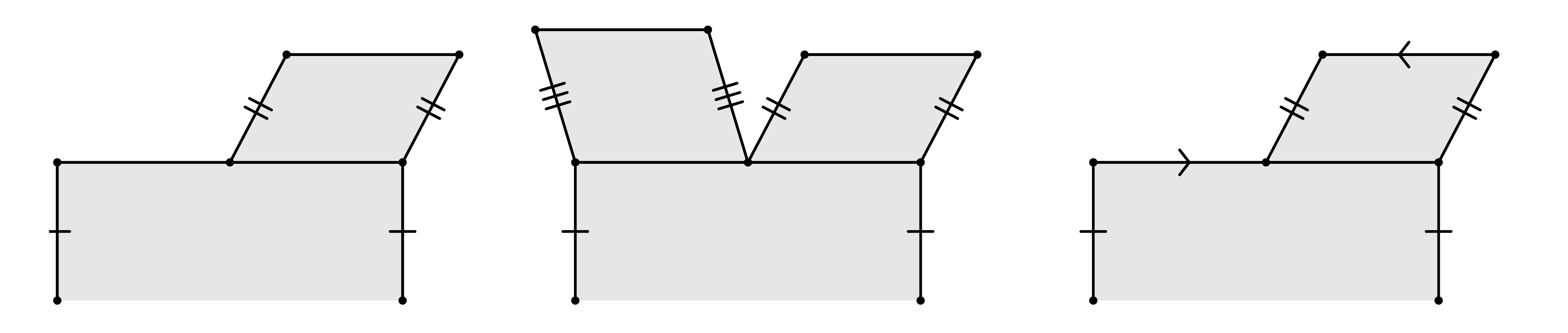}
\caption{The left and right images indicate the two possible configurations of $C$ and $C'$ in Theorem  \ref{T:MZ} (3). The middle images shows that there may also be another cylinder adjacent to $C$. }
\label{F:3hat}
\end{figure}
\end{enumerate}
\end{thm}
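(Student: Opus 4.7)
The plan is to lift $C$ to the holonomy double cover and use the fact (Remark~\ref{R:GenericCylInStrata}) that generic cylinders in full Abelian strata are simple. Let $\pi:(X,\omega) \to (Q,q)$ be the holonomy double cover with holonomy involution $J$ (Definition~\ref{D:holinv}), so that $(X,\omega)$ is a generic element of a quadratic double of $\cQ$. Since the core curve of $C$ lies in a cylinder with consistent translation structure, it has trivial holonomy, so $\pi^{-1}(C) = \tilde C_1 \sqcup \tilde C_2$ is a disjoint union of two cylinders swapped by $J$. A short orientation check shows that $\pi|_{\tilde C_1}$ respects the top/bottom labels of cylinders while $\pi|_{\tilde C_2}$ reverses them, so each boundary saddle connection of $C$ has precisely one preimage in $\partial\tilde C_1$ and one in $\partial \tilde C_2$, interchanged by $J$.

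For part 1, genericity of $C$ in $\cQ$ is equivalent to the saddle connections on $\partial\tilde C_1$ remaining parallel to its core curve under all $J$-anti-invariant deformations of $(X,\omega)$ in the ambient Abelian stratum, since these span the tangent space to the quadratic double. Remark~\ref{R:GenericCylInStrata} implies that generic cylinders in a full Abelian stratum are simple, so any failure of $\tilde C_1$ to be simple must be forced by the $J$-symmetry. I would enumerate the resulting $J$-equivariant possibilities for $\partial(\tilde C_1 \sqcup \tilde C_2)$: each boundary of $\tilde C_1$ consists of either a single saddle connection, or a pair of saddle connections whose parallelism is forced by $J$-matching with the analogous pair on $\partial\tilde C_2$. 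Descending via $\pi$, while tracking when two preimages of a single downstairs saddle connection happen to lie on the same component of $\partial(\tilde C_1 \sqcup \tilde C_2)$ (producing a multiplicity-two boundary saddle connection downstairs), yields exactly the five configurations in Definition~\ref{D:CylinderTypes}; the envelope cases arise from this last phenomenon.

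For part 2, suppose one boundary of $C$ consists of two distinct saddle connections $s_1, s_2$. Their concatenation is a simple closed curve $\gamma$ isotopic to the core curve of $C$, whose preimage is the corresponding boundary component of $\tilde C_1$ together with the opposite boundary component of $\tilde C_2$. Genericity combined with the part~1 classification forces $\pi^{-1}(\gamma)$ to separate $X$ into the side containing $\tilde C_1 \sqcup \tilde C_2$ and a far side; moreover, the far side splits as two $J$-conjugate components, for otherwise $J$-anti-invariant cohomology supported on a $J$-invariant connected far side would allow deformations breaking the parallelism of $\tilde s_1$ and $\tilde s_2$. Descending via $\pi$, the far side of $\gamma$ in $Q$ is thus a connected region with disconnected preimage in the holonomy double cover, so its linear holonomy is trivial.

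For part 3, suppose generic cylinders $C$ and $C'$ share a boundary saddle connection $s$ not joining a marked point to itself. Choose a lift $\tilde s$ to $X$; it is adjacent to some lift $\tilde C_i$ of $C$ and some lift $\tilde C_j'$ of $C'$. In the ambient Abelian stratum, $\tilde C_i$ and $\tilde C_j'$ are generic cylinders sharing a boundary saddle connection, so Remark~\ref{R:GenericCylInStrata} forces both to be simple. Descending via $\pi$ and consulting the part~1 classification, one of $C$ and $C'$ must itself be simple (with no other adjacency on the relevant boundary), while the other has exactly two saddle connections on the boundary meeting $s$. The main obstacle throughout is the enumeration in part~1: one must verify that the only $J$-equivariant boundary patterns on $\tilde C_1 \sqcup \tilde C_2$ descending to embedded cylinders in $Q$ are the five listed types, taking particular care with the envelope cases, where $J$ identifies two distinct saddle connections on a single boundary of $\tilde C_1 \sqcup \tilde C_2$. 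The exclusion of $\cQ(-1^4)$ is needed because in that stratum cylinders can carry exotic pole-bounded configurations not fitting the five-type list.
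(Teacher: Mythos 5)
Your proposal attempts a direct re-derivation of the Masur--Zorich classification via the holonomy double cover. The paper itself does not give a proof; in Remark~\ref{R:MZguide} it notes that genericity of $C$ forces its boundary saddle connections to all be hat-homologous, and then simply cites the classification of configurations of hat-homologous saddle connections from \cite[Theorem 2]{MZ} (acknowledging that ``it can also be obtained more directly,'' which is what you are attempting). So the comparison is: citation versus re-derivation.

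The re-derivation has a concrete gap. Your central structural claim --- that $\pi^{-1}(C) = \tilde C_1 \sqcup \tilde C_2$ with ``each boundary saddle connection of $C$ hav[ing] precisely one preimage in $\partial\tilde C_1$ and one in $\partial\tilde C_2$, interchanged by $J$'' --- is false precisely in the envelope cases. For a multiplicity-two boundary saddle connection $s$ joining poles (or a pole to itself), the two lifts $\tilde s_1, \tilde s_2$ form a \emph{single} boundary component that is shared by $\tilde C_1$ and $\tilde C_2$: the cylinder boundary downstairs traverses $s$ from both sides, and the corresponding lift passes from $\tilde s_1$ to $\tilde s_2$ through the regular preimages of the poles. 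Thus both $\tilde s_1$ and $\tilde s_2$ lie in $\partial\tilde C_1$, and also in $\partial\tilde C_2$. In fact $\tilde C_1$ and $\tilde C_2$ are then not maximal cylinders at all; they glue along this common boundary into a single maximal cylinder (with $0$, $1$, or $2$ marked points on its interior, depending on which preimages of poles are marked), exactly as illustrated in the bottom of Figure~\ref{F:CylinderTypes}. Your later parenthetical --- envelopes arise ``when two preimages of a single downstairs saddle connection happen to lie on the same component of $\partial(\tilde C_1 \sqcup \tilde C_2)$'' --- acknowledges the phenomenon but contradicts the earlier one-preimage-per-boundary claim, and the inconsistency is never resolved.

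Beyond that single error, the two steps that carry all the weight are not actually carried out: (a) the assertion that genericity of $C$ in $\cQ$ forces $\tilde C_1$ to be simple ``except where $J$-symmetry forces otherwise'' needs a precise statement and proof, since the tangent space of the quadratic double is a proper subspace of the Abelian tangent space and Remark~\ref{R:GenericCylInStrata} applies only to full strata; and (b) the promised enumeration of $J$-equivariant boundary patterns ``yielding exactly the five configurations'' is asserted but not given, and making it rigorous would amount to reproducing the relevant case analysis in \cite{MZ}. Parts~(2) and~(3) inherit these gaps, since they quote the unestablished part~(1) classification and add further sketched steps (e.g.\ the claim in part~(2) that a connected $J$-invariant far side ``would allow deformations breaking the parallelism'' is a plausible but unproven slogan). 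Your intuition about why $\cQ(-1^4)$ is excluded --- the cylinder there has two multiplicity-two boundaries --- is correct, but it is one more observation that the text asserts without detail.
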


Recall that two saddle connections on a quadratic differential are said to be \emph{hat-homologous} in a stratum $\cQ$ of quadratic differentials if they remain parallel at all nearby half-translation surfaces, or in other words if they are $\cQ$-parallel. A tool in the proof of Theorem \ref{T:MZ} is the following \cite[Theorem 1]{MZ}. 

\begin{thm}[Masur-Zorich]\label{T:HatHomologous}
Two saddle connections are hat-homologous if and only if a component of their complement has trivial holonomy. If such a component exists, it is unique.
\end{thm}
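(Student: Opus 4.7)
The plan is to translate the statement into linear algebra on the holonomy double cover $p : (\hat X, \hat\omega) \to (X, q)$, whose deck involution $\tau$ satisfies $\tau^*\hat\omega = -\hat\omega$. Under this correspondence the tangent space to $\cQ$ is identified with $H^1(\hat X, \hat\Sigma; \bC)^-$, the $-1$ eigenspace of $\tau^*$, and each saddle connection $s$ on $X$ gives rise to a well-defined class $[\hat s] \in H_1(\hat X, \hat\Sigma; \bC)^-$ built from the preimage of $s$ with signs chosen to land in the anti-invariant part. Hat-homology of $s_1$ and $s_2$ then becomes exactly the statement that $[\hat s_1]$ and $[\hat s_2]$ span a subspace of dimension at most one.

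For the easy direction ($\Leftarrow$), suppose a component $U$ of $X \setminus (s_1 \cup s_2)$ has trivial linear holonomy. Then $q|_U = \omega^2$ for a global holomorphic $1$-form $\omega$ on the closure of $U$, and Stokes' theorem applied to $\omega$ on $U$ yields $\int_{\partial U}\omega = 0$. Unpacking this gives an integer relation of the form $a_1\,\mathrm{hol}(s_1) \pm a_2\,\mathrm{hol}(s_2) = 0$, where $a_i$ is the multiplicity with which $s_i$ occurs in $\partial U$. This relation persists under all small deformations in $\cQ$ because the combinatorial structure of $U$ and the triviality of its holonomy both persist, so $s_1$ and $s_2$ are hat-homologous.

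For the converse ($\Rightarrow$), assume a nontrivial relation $a[\hat s_1] + b[\hat s_2] = 0$ in $H_1(\hat X,\hat\Sigma;\bC)^-$. Then there is a $\tau$-anti-invariant $2$-chain $T$ on $\hat X$ whose boundary is this combination, and $T$ is a signed integer combination of the connected components $\{V_\alpha\}$ of $\hat X \setminus p^{-1}(s_1 \cup s_2)$. Anti-invariance $\tau_* T = -T$ forces every $V_\alpha$ appearing in $T$ to satisfy $\tau(V_\alpha) \neq V_\alpha$, so $V_\alpha$ and $\tau(V_\alpha)$ are distinct components that both map homeomorphically onto the same component $U = p(V_\alpha)$ of $X \setminus (s_1 \cup s_2)$. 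By the defining property of the holonomy double cover, a component of the base with two disjoint preimages is precisely one with trivial linear holonomy, producing the desired $U$.

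The main obstacle is the bookkeeping of signs, orientations, and boundary multiplicities needed to pass cleanly between anti-invariant relative chains on $\hat X$ and components of the complement on $X$; the most delicate case is when a saddle connection appears in the boundary of a single component with multiplicity two, since then the naive local picture on the cover degenerates. Uniqueness of the trivial-holonomy component falls out of the same bookkeeping: each $s_i$ has total boundary multiplicity exactly $2$ across all components of $X \setminus (s_1 \cup s_2)$, and this constraint combined with the Stokes relations coming from putative distinct trivial-holonomy components forces a contradiction with the nonvanishing of $\mathrm{hol}(s_1)$ and $\mathrm{hol}(s_2)$.
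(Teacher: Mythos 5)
The paper does not prove this statement: Theorem~\ref{T:HatHomologous} is cited as \cite[Theorem 1]{MZ} with no internal argument. So there is no ``paper's own proof'' to compare against, and I am evaluating your sketch on its own terms, against what Masur--Zorich do.

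Your high-level approach --- pass to the orientation double cover $\hat X$, reinterpret everything in $H^1(\hat X,\hat\Sigma;\bC)^-$ and its dual, and translate the dichotomy into a statement about an anti-invariant $2$-chain --- is the right one, and it is essentially the route taken in \cite{MZ}. The $\Rightarrow$ direction is correctly structured: a nontrivial relation $a[\hat s_1]+b[\hat s_2]=0$ is bounded by a $2$-chain $T$ which (after anti-symmetrizing, replacing $T$ by $\tfrac12(T-\tau_*T)$) can be taken $\tau$-anti-invariant and locally constant on the components $V_\alpha$ of $\hat X\setminus p^{-1}(s_1\cup s_2)$; anti-invariance kills coefficients on $\tau$-fixed components, so some $V_\alpha$ with $\tau V_\alpha\neq V_\alpha$ occurs, and $p(V_\alpha)$ is the trivial-holonomy component. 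The only thing I would tighten is the remark that $a,b$ can both be assumed nonzero: if say $b=0$ then $[\hat s_1]$ would be torsion, contradicting $\mathrm{hol}(s_1)\neq 0$.

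The genuine soft spots are the two you half-acknowledge. First, the translation ``hat-homologous $\Longleftrightarrow$ $[\hat s_1],[\hat s_2]$ collinear in $H_1^-$'' is the pivot of the whole argument but is asserted rather than justified; it uses that period coordinates realize an open neighborhood of $H^1(\hat X,\hat\Sigma;\bC)^-$, so that a persistent proportionality of the two periods forces a linear dependence of the dual functionals, and integrality of the classes then lets you take $a,b$ rational. Second, in the $\Leftarrow$ direction your claim that the coefficient of $\mathrm{hol}(s_i)$ equals the multiplicity of $s_i$ in $\partial U$ is wrong in the multiplicity-two case: the two copies may cancel, exactly when the chosen square root $\omega$ of $q|_U$ extends continuously across $s_i$. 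You flag this as the ``most delicate case'' but do not resolve it. The resolution is short but necessary: if the signed contributions of \emph{both} $s_1$ and $s_2$ vanish, then $\omega$ glues to a square root of $q$ on $\overline U\cup s_1\cup s_2$, which by a connectivity count must be all of $X$, contradicting the standing Convention~\ref{CV:NoTrivHol} that $q$ has nontrivial holonomy; and if exactly one contribution vanishes, the surviving relation forces $\mathrm{hol}(s_j)=0$ for the other, again impossible. The uniqueness sketch is fine in outline; when made precise it amounts to adding the two Stokes relations from $U_1$ and $U_2$ and observing the sum forces $\mathrm{hol}(s_i)=0$, exactly as you indicate.

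So: correct strategy, matching the source, but the two places you yourself flag as ``the main obstacle'' are where the actual content of the proof lives and are left open.
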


\begin{rem}\label{R:MZguide}\red
Since we do not use exactly the same language as Masur and Zorich, we now explain how Theorem \ref{T:MZ} is contained in \cite{MZ}. The assumption that $C$ is generic implies that all the boundary saddle connections of $C$ are hat-homologous. Thus Theorem \ref{T:MZ} follows from the classification of configurations of hat homologous saddle connections in \cite[Theorem 2]{MZ}. (It can also be obtained more directly.) 
\black

Masur and Zorich do not consider marked points, but the statements for surfaces with marked points can be easily derived from the statements for unmarked surfaces.
\end{rem}

We conclude with the following basic observations.

\begin{lem}\label{L:StrataBasics}
Let $\cQ$ be a component of a stratum of quadratic differentials and let $\bfC$ be a generic cylinder on $(X, q) \in \cQ$ \red containing a saddle connection perpendicular to its core curve. \black 
\begin{enumerate}
\item\label{L:StrataBasics:codim1} We have $\dim \cQ_{\bfC} = \dim \cQ -1$, and $\cQ_{\bfC}$ is a component of a stratum of Abelian or quadratic differentials. 
\item\label{L:StrataBasics:TrivHolOrNot} If $\bfC$ is a simple cylinder, simple envelope, or half-simple cylinder, then $\cQ_{\bfC}$ consists of connected quadratic differentials, and if $\bfC$ is a complex cylinder, then  $\cQ_{\bfC}$ consists of connected Abelian differentials.
\item\label{L:StrataBasics:hyp} If $\cQ$ is hyperelliptic, then so is $\cQ_{\bfC}$. 
\end{enumerate}
\end{lem}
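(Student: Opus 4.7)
The three parts can be handled largely independently, using distinct tools.

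For Part \eqref{L:StrataBasics:codim1}, I would apply the Mirzakhani--Wright description of boundary components (Theorem \ref{T:BoundaryTangent}). Choose a polygonal presentation of $(X, q)$ in which $\bfC$ appears as a parallelogram, decomposed into triangles, with the perpendicular saddle connection $s$ as one of its vertical sides. Since $\cQ$ is a component of a stratum, in period coordinates it is locally cut out by no equations beyond the topological/stratum conditions. By Theorem \ref{T:BoundaryTangent}, the local equations for $\cQ_\bfC$ are obtained from those of $\cQ$ by setting to zero the period of $s$ and of every saddle connection hat-homologous to it (which, by the genericity of $\bfC$, is $\pm s$ in period coordinates). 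This yields a single complex linear equation and nothing more, so $\dim \cQ_\bfC = \dim \cQ - 1$ and $\cQ_\bfC$ is open in a boundary stratum; being $\GL$-invariant, it is in fact a component of that stratum.

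For Part \eqref{L:StrataBasics:TrivHolOrNot}, I would do a case analysis on the five cylinder types of Definition \ref{D:CylinderTypes}, producing in each case an explicit polygonal model for the collapse. For simple cylinders, simple envelopes, half-simple cylinders, and complex envelopes, one checks directly that the collapsed surface retains a loop along which the linear holonomy is $-1$, so $\cQ_\bfC$ is a stratum of quadratic differentials. For the complex cylinder, the essential input is Theorem \ref{T:MZ}\eqref{T:MZ:TrivHolComponent} applied to each of the two boundary components of $\bfC$: each pair of hat-homologous boundary saddle connections bounds a component of the complement of $\bfC$ with trivial linear holonomy. Collapsing $\bfC$ identifies the two boundary components and glues these two trivial-holonomy regions together across the vanished cylinder; the cyclic order of the boundary saddle connections forces the gluing pattern to be orientation-compatible, so the result is an Abelian differential. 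Connectivity in all five cases is immediate: $\overline{\bfC}$ is connected, so its collapse inside the connected surface $(X, q)$ yields a connected surface.

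For Part \eqref{L:StrataBasics:hyp}, I would argue by continuity. The hyperelliptic locus in any stratum is closed, since it is cut out by the existence of a degree-two cover to a genus-zero surface preserving the set of marked points. A surface in $\cQ_\bfC$ is a limit of surfaces in $\cQ$, each of which carries a hyperelliptic involution, and in the limit these involutions give a hyperelliptic involution on the collapsed surface. Hence $\cQ_\bfC$ is contained in the hyperelliptic locus of its ambient stratum, and by Part \eqref{L:StrataBasics:codim1} it is already a component of that stratum, so $\cQ_\bfC$ must be a hyperelliptic component.

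The main obstacle is the complex cylinder subcase of Part \eqref{L:StrataBasics:TrivHolOrNot}: one must show that collapsing a complex cylinder on a genuinely quadratic differential really produces an Abelian differential. This requires a careful local bookkeeping of the orientations with which the two boundary components are glued, and the Masur--Zorich structure theorem (Theorem \ref{T:MZ}) locating the trivial-holonomy regions on either side of $\bfC$ is what makes this tractable.
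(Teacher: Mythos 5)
Your proof of parts (1) and (3) tracks the paper's, and for the complex-cylinder case of part (2) you are, if anything, slightly more careful than the paper, since you address why the gluing of the two trivial-holonomy pieces is orientation-compatible. The real problem is your handling of complex envelopes. You include them in your list of cases that produce a quadratic differential, asserting that ``one checks directly that the collapsed surface retains a loop along which the linear holonomy is $-1$.'' This is false, and it is precisely why part \eqref{L:StrataBasics:TrivHolOrNot} of the lemma says nothing about complex envelopes. As explained in Remark \ref{R:TwoDegenerations} and exploited throughout Section \ref{S:ComplexEnvelope}, a complex envelope admits two inequivalent collapses: in one of them $\Col_{\bfC}(\bfC)$ is a single saddle connection and the two simple poles of the envelope are absorbed; by Theorem \ref{T:MZ}\eqref{T:MZ:TrivHolComponent}, the complement of the envelope's two-saddle-connection boundary already has trivial linear holonomy, so in that degeneration the limit is an \emph{Abelian} differential.

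For the three types the lemma does cover, the phrase ``one checks directly'' also deserves to be replaced by an argument. The paper supplies one: any loop on $(X,q)$ with linear holonomy $-1$ can be homotoped to be disjoint from the perpendicular saddle connection in $\overline{\bfC}$, and such a loop survives (with the same holonomy) on $\Col_{\bfC}(X,q)$. That homotopy argument visibly fails for the complex envelope: by Theorem \ref{T:MZ}\eqref{T:MZ:TrivHolComponent}, every loop of holonomy $-1$ must wind around one of the two poles inside the envelope, and the perpendicular saddle connections of $\bfC$ terminate at those poles, so no such loop can be pushed off them.
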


Here $\cQ_{\bfC}$ is defined to be the component of the boundary of $\cQ$ that contains $\Col_{\bfC}(X, q)$.

\begin{proof}
The first claim follows since deformations in $\cQ$ correspond to deformations in $\cQ_{\bfC}$ as well as shearing $\bfC$. 

\red We now discuss the second claim. If $\bfC$ is a simple cylinder, simple envelope, or half-simple cylinder, it is immediate that $\Col_\bfC(X,q)$ is connected. To see that $\Col_\bfC(X,q)$ has nontrivial holonomy, note that any loop on $(X,q)$ with nontrivial holonomy can be modified to still have nontrivial holonomy and be disjoint from the perpendicular saddle connection in $\bfC$, thus giving rise to a loop with non-trivial holonomy on $\Col_\bfC(X,q)$.  If $\bfC$ is a complex cylinder, then Theorem \ref{T:MZ} \eqref{T:MZ:TrivHolComponent} implies that cutting either boundary of $\bfC$ produces a translation surface with boundary. Therefore, $\Col_{\bfC}(X, q)$ consists of these two translation surfaces glued together along their boundary. \black

The final claim follows because the hyperelliptic involution on $(X,\omega)$ degenerates to a hyperelliptic involution on  $\Col_{\bfC}(X, q)$
\end{proof}

\begin{cor}\label{C:codim1}
Suppose $\cM\subset \cQ$, and $\bfC$ is an $\cM$-generic cylinder on $(X, q) \in \cQ$. Suppose that $\bfC$ is a simple cylinder,  a simple envelope, or a half-simple cylinder. Then if $\cM_{\bfC}$ is a component of a stratum of quadratic differentials, then $\cM=\cQ$. 
\end{cor}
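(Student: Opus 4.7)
The plan is a short dimension count combined with the fact that a stratum component is determined by any surface it contains. First I would invoke Lemma \ref{L:StrataBasics} parts \eqref{L:StrataBasics:codim1} and \eqref{L:StrataBasics:TrivHolOrNot}: since $\bfC$ is a simple cylinder, simple envelope, or half-simple cylinder, $\cQ_\bfC$ is a component of a stratum of quadratic differentials with $\dim \cQ_\bfC = \dim \cQ - 1$. The hypothesis on the cylinder type is used here precisely to ensure that $\cQ_\bfC$ stays in the world of quadratic differentials rather than opening up to an Abelian component.

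Next I would identify $\cM_\bfC$ with $\cQ_\bfC$. The inclusion $\cM \subset \cQ$ gives $\cM_\bfC \subset \cQ_\bfC$, and by hypothesis $\cM_\bfC$ is itself a component of a stratum. Both stratum components contain the common surface $\Col_\bfC(X,q)$, and any stratum component is the unique component of its stratum containing any of its points, so $\cM_\bfC = \cQ_\bfC$. In particular, $\dim \cM_\bfC = \dim \cQ - 1$.

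Finally I would compare dimensions in $\cM$. Applying Theorem \ref{T:BoundaryTangent} to a polygonal presentation in which a saddle connection of $\overline\bfC$ perpendicular to its core curve is an edge, the equations locally defining $\cM_\bfC$ are obtained from those defining $\cM$ by deleting the terms corresponding to edges that collapse; since the height of $\bfC$ is among those collapsing coordinates, we obtain $\dim \cM \geq \dim \cM_\bfC + 1 = \dim \cQ$. As $\cM \subset \cQ$ is an inclusion of irreducible invariant subvarieties of equal dimension, we conclude $\cM = \cQ$.

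The argument is essentially bookkeeping, so there is no serious obstacle, but the most delicate point to pin down carefully is the estimate $\dim \cM \geq \dim \cM_\bfC + 1$: one must verify that the collapse of $\bfC$ genuinely eliminates a tangent coordinate of $\cM$, which is what the existence of a perpendicular saddle connection inside $\overline\bfC$ (combined with Theorem \ref{T:BoundaryTangent}) provides.
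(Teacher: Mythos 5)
There is a gap. You invoke Lemma~\ref{L:StrataBasics} parts~\eqref{L:StrataBasics:codim1} and~\eqref{L:StrataBasics:TrivHolOrNot} at the outset, but that lemma is stated for a \emph{$\cQ$-generic} cylinder, i.e.\ generic with respect to the full stratum component $\cQ$ (see Definition~\ref{D:GenericCyl}, in which "generic" is relative to an ambient invariant subvariety). The corollary's hypothesis only gives that $\bfC$ is $\cM$-generic, and since $\cM \subset \cQ$, $\cM$-genericity is strictly weaker: the boundary saddle connections must only stay parallel on nearby surfaces \emph{in $\cM$}, not on all nearby surfaces in $\cQ$. So you are not entitled to apply Lemma~\ref{L:StrataBasics} without an additional argument.

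For a simple cylinder or a simple envelope the promotion from $\cM$-generic to $\cQ$-generic is automatic, because each boundary component consists of a single saddle connection, which is trivially parallel to itself. But for a half-simple cylinder the boundary component with two saddle connections $s_1, s_2$ must be shown to be $\cQ$-parallel (equivalently hat-homologous, since $\cQ$ is a stratum component), and this is not automatic from $\cM$-parallelism. This is exactly where the paper uses the hypothesis that $\cM_\bfC$ is a stratum component: by Theorem~\ref{T:BoundaryTangent}, $\Col_\bfC(s_1)$ and $\Col_\bfC(s_2)$ are $\cM_\bfC$-parallel; since $\cM_\bfC$ is a stratum component they are hat-homologous; by Theorem~\ref{T:HatHomologous} a component of their complement on $\Col_\bfC(X,q)$ has trivial holonomy; and this pulls back to show $s_1, s_2$ are hat-homologous on $(X,q)$, so $\bfC$ is $\cQ$-generic. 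Once that step is supplied, the remainder of your argument (identifying $\cM_\bfC = \cQ_\bfC$ and then comparing dimensions) matches the paper's proof.
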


Section \ref{S:ComplexEnvelope} is entirely devoted to understanding the extent to which this fails when $\bfC$ is a complex envelope. Section \ref{SS:OpenProblems} discusses the corresponding  unresolved problem when $\bfC$ is a complex cylinder.  

\begin{proof}
We first claim that $\bfC$ is in fact $\cQ$-generic. This is immediate if $\bfC$ is a simple cylinder or simple envelope. In the case that $\bfC$ is a half-simple cylinder, note that $\Col_\bfC(\bfC)$ consists of two saddle connections that, by Theorem \ref{T:BoundaryTangent}, are $\cM_{\bfC}$-parallel. Since $\cM_{\bfC}$ is a stratum, this means these two saddle connections are hat homologous. By Theorem \ref{T:HatHomologous}, a component of their complement has trivial holonomy. This implies that the saddle connections bounding $\bfC$ are hat-homologous, giving the claim. 

Since $\cM_{\bfC}$ is in the boundary of $\cM$, we have $\dim \cM_{\bfC} \leq \dim \cM-1$.  Lemma \ref{L:StrataBasics} gives $ \dim \cM_{\bfC} =\dim \cQ -1$, so we get that $\dim \cM = \dim \cQ$ and hence $\cM=\cQ$. 
\end{proof}

\subsection{Hyperelliptic components of strata}

Cylinder types are restricted in hyperelliptic components.

\begin{lem}\label{L:hyp-cyl}
Suppose that $\cQ \ne \cQ(-1^4)$ is hyperelliptic. Then every generic cylinder is a complex envelope, complex cylinder, or simple cylinder. 

The hyperelliptic involution acts by translations on complex envelopes and complex cylinders and by rotation on simple cylinders. 
\end{lem}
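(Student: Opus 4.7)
The plan is to combine Theorem \ref{T:MZ}, which enumerates the five possible types of generic cylinders since $\cQ\ne\cQ(-1^4)$, with an analysis of the hyperelliptic involution $\iota$ acting on such a cylinder $C$. I will first classify the possible actions of $\iota$ on a preserved $C$, and then handle the non-preserved case.

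Since $\iota^*q=q$, the derivative $d\iota$ is locally $\pm I$, and since every cylinder has trivial holonomy around its core curve, this sign is a well-defined constant on any cylinder preserved by $\iota$. In flat coordinates $(\mathbb{R}/c\mathbb{Z})\times(0,h)$ on such a $C$, the case $d\iota|_C=-I$ gives the \emph{rotation} $(x,y)\mapsto(-x+a,h-y)$, which swaps the top and bottom boundaries of $C$ and fixes two points at height $h/2$; the case $d\iota|_C=+I$ gives the fixed-point-free \emph{translation} $(x,y)\mapsto(x+c/2,y)$, which preserves each boundary of $C$ setwise. The rotation action forces the top and bottom boundaries to have matching combinatorial structure, ruling it out for half-simple cylinders, simple envelopes, and complex envelopes. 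The translation action forces each boundary to be invariant under the half-period shift; a vertex count argument shows this fails whenever a boundary consists of a single saddle connection that is a loop (so it has only one vertex, which cannot be sent to itself by a nontrivial shift). This rules translation out for simple cylinders, half-simple cylinders, and simple envelopes. So for $\iota(C)=C$: simple cylinders admit only rotation, complex envelopes admit only translation, half-simple cylinders and simple envelopes cannot occur, and complex cylinders could a priori admit either action.

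For the remaining case $\iota(C)\ne C$ with $C$ half-simple or a simple envelope, $\iota(C)$ is a parallel generic cylinder of the same type, so there are two such asymmetric generic cylinders in the same direction. I plan to invoke Theorem \ref{T:MZ}(3) applied to the single-saddle-connection boundary of $C$: after a small perturbation within $\cQ$ making the direction of $C$ completely periodic and all parallel cylinders generic, that saddle connection must be adjacent to another generic cylinder $C'$; Theorem \ref{T:MZ}(3) then forces one of $C,C'$ to be simple with the shared saddle connection as its only boundary and the other to have two saddle connections on the shared boundary, contradicting the fact that the single-sc side of $C$ has only one saddle connection.

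Finally, for complex cylinders, where both rotation and translation are combinatorially consistent on $C$ itself, I plan to rule out the rotation case as follows: by Theorem \ref{T:MZ}(2), each of the two boundaries of $C$ has a trivial-holonomy ``wing'' on the opposite side from $C$; if $\iota$ acted by rotation on $C$, it would swap the two wings (and thus swap the two odd-order zero configurations on the two sides), but lifting to the holonomy double cover (which is a hyperelliptic Abelian differential by the results of Section \ref{S:Q-hyp}) and tracking the lifts $C_1,C_2$ of $C$ under the hyperelliptic involution of the cover forces the wings to be individually $\iota$-invariant, yielding the translation action. The main obstacle will be this last argument for complex cylinders; it relies on the detailed structure of hyperelliptic components and their holonomy double covers established in Section \ref{S:Q-hyp}.
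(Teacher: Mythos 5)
Your classification of the possible actions of $\iota$ on a preserved cylinder $C$ (rotation versus translation, and which of the five Masur--Zorich types support which) is correct and is essentially the paper's first paragraph. But two issues remain. First, the entire case $\iota(C)\ne C$ is moot: a hyperelliptic Riemann surface's hyperelliptic involution acts as $-\mathrm{Id}$ on $H_1$, hence fixes every isotopy class of essential simple closed curve; since $\iota$ is an isometry of the flat metric and a cylinder is the unique maximal flat cylinder in its core curve's isotopy class, $\iota(C)=C$ for \emph{every} cylinder. The paper opens with exactly this observation (``The hyperelliptic involution fixes each cylinder''), and your proof needs it anyway: without it, your argument does not establish the second sentence of the lemma for simple cylinders, complex cylinders, or complex envelopes with $\iota(C)\ne C$, since you only address the non-preserved case for half-simple cylinders and simple envelopes. (Your proposed perturbation-plus-Theorem~\ref{T:MZ}(3) route for that case also has an untreated subcase where the single-saddle-connection boundary joins a marked point to itself, which is precisely excluded from Theorem~\ref{T:MZ}(3).)

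Second, and this is the gap you flag yourself, the complex-cylinder case is incomplete. You sketch a detour through the holonomy double cover, but the argument you would need to make precise is both simpler and different from what you propose. The paper observes (via Theorem~\ref{T:MZ}\eqref{T:MZ:TrivHolComponent}) that the core curve $\gamma$ of a complex cylinder separates $X$ into two pieces $\Sigma_1,\Sigma_2$, and then asserts that $\iota$ must fix each piece, which forces the translation action. The justification, again, is that $\iota=-\mathrm{Id}$ on $H_1(X)$: each $\Sigma_i$ carries a trivial-holonomy wing, which (being an Abelian differential after regluing its boundary) has positive genus, so $H_1(\Sigma_i)\ne 0$; if $\iota$ swapped the $\Sigma_i$, then $\iota_*$ would carry $H_1(\Sigma_1)$ onto the complementary summand $H_1(\Sigma_2)$, contradicting $\iota_*=-\mathrm{Id}$. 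This is the missing idea in your write-up; the fact that $\iota$ is $-\mathrm{Id}$ on homology does both jobs at once, rendering the case $\iota(C)\ne C$ vacuous and settling the complex-cylinder dichotomy in one line.
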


By Definition \ref{D:CylAndBoundary}, cylinders do not include their boundary, so all cylinders have trivial holonomy. A translation preserves holonomy on the cylinder, and a rotation negates it. 

\begin{proof}
The hyperelliptic involution fixes each cylinder, and either preserves holonomy in that cylinder or negates it. 
 Half-simple cylinders and simple envelopes have no such involution symmetry. A simple cylinder only has a nontrivial rotation involution; it does not have a translation involution. Conversely a complex envelope has only a translation involution. 
 
 Complex cylinders have both types of symmetry. Masur-Zorich (Theorem \ref{T:MZ}) implies that deleting the cylinder's core curve disconnects the surface into two components. The hyperelliptic involution must fix each component, so it must fix each boundary component of the complex cylinder. Hence the involution must act as a translation on complex cylinders. 
\end{proof}

By Kontsevich-Zorich \cite{KZ}, the only strata of Abelian differentials that have hyperelliptic components are $\cH(2g-2)$ and $\cH(g-1,g-1)$, where $g$ is any positive integer. We will denote these components by $\cH^{hyp}(2g-2)$ and $\cH^{hyp}(g-1,g-1)$. Given a surface in one of these strata, its quotient by the hyperelliptic involution is a genus zero quadratic differential with a single zero when $g > 1$; when $g=1$ the quotient belongs to $\cQ(-1^4)$ or $\cQ(-1^4, 0)$ in the case of $\cH(0)$ and $\cH(0,0)$ respectively.

\begin{lem}\label{L:HypParallelism}
In a hyperelliptic component of a stratum of Abelian differentials, two distinct saddle connections are generically parallel if and only if they are exchanged by the hyperelliptic involution.
\end{lem}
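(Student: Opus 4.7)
For the forward direction, since the hyperelliptic involution $J$ persists throughout $\cH^{hyp}$ and $J^*\omega = -\omega$, if $\beta = J(\alpha)$ then $\int_\beta \omega = -\int_\alpha \omega$ on every surface in $\cH^{hyp}$, giving generic parallelism.

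For the converse, I would first establish that $J^*$ acts as $-1$ on the full relative cohomology $H^1(X,\Sigma;\bC)$. On absolute cohomology this follows because $J^*\omega = -\omega$ forces $J^*$ to act as $-1$ on $H^{1,0}(X)$, and hence by conjugation on $H^{0,1}(X)$ and so on $H^1(X;\bC) = H^{1,0} \oplus H^{0,1}$. On the kernel $H^0(\Sigma)/H^0(X)$ of the restriction-to-absolute-cohomology map the claim is direct: the kernel is zero in the case of $\cH^{hyp}(2g-2)$ (where $|\Sigma| = 1$), and one-dimensional with $J$ acting by $-1$ in the case of $\cH^{hyp}(g-1,g-1)$ (where $J$ swaps the two zeros). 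Hence the tangent space $T_{(X,\omega)}\cH^{hyp}$ equals the full $H^1(X,\Sigma;\bC)$, so $\cH^{hyp}$ is open in its ambient stratum, and dually $J_* = -1$ on $H_1(X,\Sigma;\bR)$.

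The hypothesis of generic parallelism then translates to an algebraic identity $[\alpha] = c[\beta]$ in $H_1(X,\Sigma;\bR)$ for some real $c$. Since saddle-connection classes are integral and primitive (a simple arc intersects an appropriate transverse arc once), this forces $c \in \{\pm 1\}$ and $[\alpha] = \pm[\beta]$. Meanwhile $[J(\alpha)] = -[\alpha]$, so $\alpha + J(\alpha)$ is null-homologous, and $\{\alpha, J(\alpha)\}$ is exactly the preimage under the double cover $\pi \colon (X,\omega) \to (Y,q) := (X,\omega)/J$ of a single arc $\bar\alpha$ on $(Y,q)$; similarly $\{\beta, J(\beta)\}$ lies above a single arc $\bar\beta$.

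It remains to show that $\bar\alpha = \bar\beta$, from which $\{\alpha, J(\alpha)\} = \{\beta, J(\beta)\}$ and, since $\alpha \ne \beta$, one concludes $J(\alpha) = \beta$. Otherwise the generic parallelism descends to parallelism of distinct arcs $\bar\alpha, \bar\beta$ on the generic $(Y,q)$ in the genus-zero quotient stratum ($\cQ(2g-3, -1^{2g+1})$ or $\cQ(2g-2, -1^{2g+2})$). By Theorem \ref{T:HatHomologous}, this would force a component of $(Y,q) \setminus (\bar\alpha \cup \bar\beta)$ to have trivial holonomy, yielding a translation subsurface with boundary inside a genus-zero half-translation surface with the prescribed pole structure---a configuration ruled out by a short topological analysis of how a translation surface with boundary can be glued to a non-trivial-holonomy piece of genus zero.

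The main obstacle is the last step: ruling out hat-homologous pairs of distinct saddle connections on generic genus-zero quadratic differentials in the two relevant strata. The topology is simple, but one must treat separately the various possible endpoint configurations (zero-to-zero, zero-to-pole, pole-to-pole) and the possibility that $\bar\alpha$ and $\bar\beta$ share endpoints.
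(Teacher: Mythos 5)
Your proposal identifies the right reduction (project to the genus-zero quotient and invoke Masur--Zorich hat-homology), but it stops precisely at the crux of the argument. You flag the final step---ruling out hat-homologous pairs of distinct arcs on the generic genus-zero quadratic differential---as ``the main obstacle'' to be settled by ``a short topological analysis,'' and you do not supply that analysis. That step \emph{is} the content of the lemma; without it the proof is incomplete.

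The paper's proof dispatches it as follows. After reducing, for $g \geq 2$, to distinct generically parallel arcs $s/J$, $s'/J$ on a genus-zero quadratic differential with a \emph{single} zero of positive order (this is the key structural fact about $\cQ(2g-3,-1^{2g+1})$ and $\cQ(2g-2,-1^{2g+2})$), Theorem~\ref{T:HatHomologous} gives a component of the complement with trivial holonomy. On a genus-zero surface any simple closed curve separates, so this trivial-holonomy component is a cylinder whose core curve is nullhomologous and whose two boundary circles are built from $s/J$ and $s'/J$ respectively. But both arcs must terminate at the unique zero, which therefore lies on both sides of the separating core curve---a contradiction. Your sketch gestures at case-splitting over endpoint configurations; the paper avoids any case analysis by using the uniqueness of the zero as the punchline.

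Two smaller remarks. The cohomological preamble (that $J^*$ is $-\mathrm{Id}$ on $H^1(X,\Sigma;\bC)$, that $\cH^{hyp}$ is open in its stratum, that generic parallelism gives $[\alpha]=\pm[\beta]$ in $H_1(X,\Sigma;\bR)$) is correct but unnecessary for this lemma; the reduction to the quotient requires only that $J$ is defined on the whole component and that the quotient is generic in a genus-zero stratum. And the asserted primitivity of arbitrary saddle-connection classes in $H_1(X,\Sigma;\bZ)$ is not self-evident as stated (the parenthetical justification about transverse arcs does not cover, e.g., a saddle connection from a zero to itself); fortunately you don't actually need it if you instead follow the paper's route of working directly on the quotient.
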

\begin{proof}
Suppose that $s$ and $s'$ are distinct generically parallel saddle connections on a translation surface $(X, \omega)$ that belongs to a hyperelliptic component of a stratum. Let $J$ denote the hyperelliptic involution on $(X, \omega)$. Since the claim is obvious in $\cH(0)$ and $\cH(0,0)$, suppose for notational clarity that $(X,\omega)$ has genus at least two. Suppose in order to derive a contradiction that $J(s) \ne s'$. 

It follows that $s/J$ and $s'/J$ are distinct generically parallel saddle connections on $(X, \omega)/J$, which is a genus zero quadratic differential with a single zero. (There is a zero of positive order on $(X,\omega)/J$ since $(X,\omega)$ has genus at least 2.)

By Masur-Zorich (Theorem \ref{T:HatHomologous}), a component of their complement has trivial linear holonomy. Since $(X, \omega)/J$ has genus zero, this component is a cylinder whose core curve is necessarily nullhomologous. However, this contradicts the fact that the unique zero of $(X, \omega)/J$ lies on both $s/J$ and $s'/J$, because these saddle connections lie on opposite sides of the cylinder.
\end{proof}

\begin{lem}\label{L:HypCuttingS}
Suppose that $s$ is a saddle connection that is not fixed by the hyperelliptic involution $J$ on a translation surface in a hyperelliptic component of a stratum of Abelian differentials. Cutting along $s$ and $J(s)$ divides the translation surface into two components that are both fixed by $J$. 
\end{lem}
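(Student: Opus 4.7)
The plan is to work with the quotient map $\pi \colon X \to Y := X/J$, a degree two branched cover onto a topological sphere equipped with a quadratic differential $q$ satisfying $\pi^* q = \omega^2$, and to deduce the component structure of $X \setminus (s \cup J(s))$ from the structure of the projected saddle connection $\bar s := \pi(s) = \pi(J(s))$ on $Y$.

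First, I would verify that $\bar s$ is a loop on $Y$, i.e., that its two endpoints coincide at a single point $w_0 \in Y$. In $\cH^{hyp}(2g-2)$ the unique zero of $\omega$ is a Weierstrass point fixed by $J$; in $\cH^{hyp}(g-1, g-1)$ the two zeros of $\omega$ are generically exchanged by $J$. Either way, $\pi$ identifies the endpoints of $s$, and the preimage of $\bar s$ in $X$ is precisely $s \cup J(s)$. Since $Y$ is topologically a sphere, $Y \setminus \bar s$ consists of two open disks $V_1$ and $V_2$, so $X \setminus (s \cup J(s)) = \pi^{-1}(V_1) \sqcup \pi^{-1}(V_2)$, and each $\pi^{-1}(V_i)$ is $J$-invariant because it is a full preimage under $\pi$. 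It then remains to show that each $\pi^{-1}(V_i)$ is connected.

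The map $\pi$ is branched at Weierstrass points of $X$, whose images on $Y$ include simple poles of $q$, so each restriction $\pi^{-1}(V_i) \to V_i$ is a branched double cover of a disk and is connected if and only if $V_i$ contains at least one such branch point. I would establish this by applying Gauss-Bonnet to the flat disk $V_i$, whose boundary is the geodesic loop $\bar s$ with a single corner at $w_0$ of interior angle $\theta_i$: each interior simple pole of $q$ contributes cone angle defect $\pi$, so in the absence of any interior singularities the identity would reduce to $\pi - \theta_i = 2\pi$, forcing $\theta_i = -\pi$, which is absurd. Hence each $V_i$ contains a branch point of $\pi$, each $\pi^{-1}(V_i)$ is connected, and $X \setminus (s \cup J(s))$ has exactly two $J$-invariant components. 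The main obstacle is a careful treatment of the low-genus components where $\omega$ has only marked points (so that $\bar s$ need not be a loop), which can be treated separately by direct inspection as in the proof of Lemma \ref{L:HypParallelism}.
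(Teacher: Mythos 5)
Your proposal is correct but takes a genuinely different route from the paper's. The paper's own proof is very short: since $s$ and $J(s)$ are exchanged by $J$, they are generically parallel by the immediately preceding Lemma~\ref{L:HypParallelism} and hence homologous (generically parallel saddle connections on an Abelian differential are homologous); cutting two homologous saddle connections gives exactly two components, and the fact that $J$ negates holonomy then forces $J$ to preserve each component rather than swap them. You instead pass to the quotient sphere $Y = X/J$, observe that $\pi(s)$ is a simple loop, apply the Jordan curve theorem to get two disks, and use Gauss--Bonnet to force each disk to contain an interior simple pole, hence a branch point, so each preimage is a connected full fiber and therefore automatically $J$-invariant. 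Your route is self-contained (it never invokes Lemma~\ref{L:HypParallelism}, nor any homology considerations) and makes $J$-invariance fall out for free rather than resting on the ``negates holonomy'' step; the paper's route is shorter given that the preceding lemma is already in hand.

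One remark: the concern you flag at the end is not actually an obstacle. In $\cH^{hyp}(0)$ every saddle connection is fixed by $J$ (a loop $t\mapsto tv$ is sent to $t\mapsto -tv$, which is the same set), so the hypothesis of the lemma is never met. In $\cH^{hyp}(0,0)$ the two marked points are exchanged by $J$, so $\pi$ again identifies both endpoints of $s$ with the single marked point on $Y\in\cQ(-1^4,0)$, and $\bar s$ is still a simple loop; your Gauss--Bonnet argument applies verbatim, since the marked point sits on the boundary $\bar s$ and the interior singularities of each $V_i$ are simple poles. No separate treatment is needed.
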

\begin{proof}
Since $s$ and $J(s)$ are generically parallel, they are homologous, and so cutting them divides the translation surface into two components. The two components must be fixed by $J$ since $J$ negates holonomy.
\end{proof}

Recall that subequivalence classes are defined in Definition \ref{D:subeq}.

\begin{lem}\label{L:QBoundaryHolonomy}
Let $\bfC$ be a subequivalence class of generic cylinders on a generic surface $(X, \omega)$ in a quadratic double $\cM$. If $\Col_{\bfC}(X, \omega)$ is disconnected, then $\cM_{\bfC}$ is an antidiagonal embedding of a component $\cH$ of a stratum of Abelian differential into $\cH \times \cH$ \red defined by $(X,\omega)\mapsto ((X,\pm \omega), (X,\mp\omega))$. \black 

The marked-point preserving affine symmetry group of generic surfaces in $\cM_{\bfC}$ is  $\bZ/2\bZ$ if $\cH$ is not hyperelliptic and $\bZ/2\bZ \times \bZ/2\bZ$ if $\cH$ is  hyperelliptic. 
\end{lem}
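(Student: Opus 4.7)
The plan is to descend $\bfC$ to the quadratic differential $(X,\omega)/J$, determine its cylinder type via Masur--Zorich, and read off the structure of $\cM_{\bfC}$ from the holonomy double cover. Let $J$ be the holonomy involution of $(X,\omega)$ from Definition \ref{D:holinv}. Each cylinder of $\bfC$ either is $J$-fixed and maps bijectively to a downstairs cylinder, or is one of a $J$-swapped pair covering the same downstairs cylinder; in either case $J(\bfC)=\bfC$, and $\bfC$ descends to a collection $\bfD$ of parallel generic cylinders on $(X,\omega)/J\in\cQ$. Since each generic cylinder in a stratum of quadratic differentials is its own subequivalence class, and standard deformations of $\bfC$ in $\cM$ project to standard deformations of $\bfD$ in $\cQ$, the minimality of $\bfC$ forces $\bfD$ to be a single cylinder. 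The map $\Col_{\bfC}(X,\omega)\to\Col_{\bfD}((X,\omega)/J)$ is then the holonomy double cover of $\Col_{\bfD}((X,\omega)/J)\in\cQ_{\bfD}$.

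Because $\Col_{\bfC}(X,\omega)$ is disconnected by hypothesis, this double cover must be trivial, which forces $\cQ_{\bfD}$ to be a stratum of Abelian differentials. By Lemma \ref{L:StrataBasics}(\ref{L:StrataBasics:TrivHolOrNot}), together with a direct verification that the collapse of a complex envelope also has nontrivial holonomy, $\bfD$ must be a complex cylinder. Setting $\cH:=\cQ_{\bfD}$, we obtain $\Col_{\bfC}(X,\omega)=(Y,\eta)\sqcup(Y,-\eta)$ for some $(Y,\eta)\in\cH$, the two sign lifts of the quadratic differential $\eta^{2}$. The involution $J$ must then swap these two components, since otherwise the quotient would be disconnected, contradicting the connectedness of surfaces in $\cQ_{\bfD}$ from Lemma \ref{L:StrataBasics}(\ref{L:StrataBasics:TrivHolOrNot}). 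By Theorem \ref{T:BoundaryTangent} and the dimension equality $\dim\cM_{\bfC}=\dim\cM-1=\dim\cH$ (using Lemma \ref{L:StrataBasics}(\ref{L:StrataBasics:codim1}) applied to $\bfD$), $\cM_{\bfC}$ coincides with the antidiagonal image $\{((Y,\eta),(Y,-\eta)):(Y,\eta)\in\cH\}$ in $\cH\times\cH$.

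For the symmetry group, the swap $J$ is always a marked-point preserving affine automorphism of derivative $-\Id$, contributing a $\bZ/2\bZ$. When $\cH$ is hyperelliptic, applying the hyperelliptic involution $h$ simultaneously to each component yields a second commuting $\bZ/2\bZ$ of derivative $-\Id$, and the composition with $J$ swaps components with derivative $+\Id$, producing a copy of $\bZ/2\bZ\times\bZ/2\bZ$. When $\cH$ is not hyperelliptic, Lemma \ref{L:InvolutionImpliesHyp-background} implies that a generic $(Y,\eta)\in\cH$ has no non-trivial marked-point preserving affine automorphism, so no component-preserving non-identity symmetry exists, and any component-swapping symmetry is $J$ composed with a component-preserving one---hence the group is exactly $\bZ/2\bZ$. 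The main obstacle is the verification that complex envelopes genuinely produce nontrivial holonomy upon collapse (so that complex cylinders are the unique source of upstairs disconnection); this is routine but requires careful reading of Theorem \ref{T:MZ} and the envelope structure.
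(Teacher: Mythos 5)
Your approach is essentially the same as the paper's — descend to the quadratic differential downstairs via the holonomy involution, identify the cylinder type of $\bfD = \bfC/J$, and read off the antidiagonal structure — but there is one genuine error in the middle.

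You claim that "a direct verification that the collapse of a complex envelope also has nontrivial holonomy" forces $\bfD$ to be a complex cylinder. This is false. The paper's Remark~\ref{R:TwoDegenerations} explains that a complex envelope on a quadratic differential admits \emph{two} distinct degenerations: in one, $\Col_{\bfD}(\bfD)$ is a single saddle connection and the limit has \emph{trivial} holonomy (an Abelian differential); in the other, $\Col_{\bfD}(\bfD)$ is two saddle connections each ending at a pole, and the limit has nontrivial holonomy. In the first case the holonomy double cover disconnects exactly as it does for a complex cylinder. So both complex cylinders \emph{and} complex envelopes (collapsed the first way) produce disconnected $\Col_{\bfC}(X,\omega)$, and the paper's own proof correctly says "$\bfC/J$ is a complex cylinder or a complex envelope." Your use of Lemma~\ref{L:StrataBasics}\eqref{L:StrataBasics:TrivHolOrNot} is also slightly off at this point, since that item does not address complex envelopes at all (it lists only the other four types); the relevant input is that $\Col_{\bfD}((X,\omega)/J)$ is connected for any single cylinder $\bfD$, which is elementary.

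Fortunately the error does not propagate: the rest of your argument — that $\Col_{\bfC}(X,\omega) = (Y,\eta)\sqcup(Y,-\eta)$, that $\Col_{\bfC}(J)$ must swap the two components, the dimension count via Lemma~\ref{L:StrataBasics}\eqref{L:StrataBasics:codim1}, and the symmetry group computation via Lemma~\ref{L:InvolutionImpliesHyp-background} — uses only that $\cQ_{\bfD}$ is a component of a stratum of Abelian differentials with connected surfaces, which holds in the complex envelope case too. So you should simply replace the false assertion with "$\bfD$ is a complex cylinder or a complex envelope whose collapse has trivial holonomy" and proceed. Your commutation argument for $\bZ/2\bZ\times\bZ/2\bZ$ in the hyperelliptic case, via uniqueness of the hyperelliptic involution, is correct and is a small addition to what the paper spells out (the paper simply counts component-preserving symmetries).
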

\begin{rem}
In the case that $\cH$ is hyperelliptic, by Lemma \red \ref{L:HypParallelism}, \black two saddle connections \red on a surface in $\cM_\bfC$ \black are generically parallel if and only if they are exchanged by elements of the affine symmetry group. 
\end{rem}
\begin{proof}
Let $J$ denote the holonomy involution on $(X, \omega)$. By Lemma \ref{L:StrataBasics}, if $\Col_{\bfC}(X, \omega)$ is disconnected, then $\bfC/J$ is a complex cylinder or a complex envelope.  It follows that $\Col_{\bfC/J}\left( (X, \omega)/J \right)$ is generic in a component $\cH$ of a stratum of Abelian differentials. \red (See Remark \ref{R:TwoDegenerations} for illustrations of the two ways to degenerate a complex cylinder.) \black Notice that $\Col_{\bfC}(J)$ is an involution with derivative $-I$ on $\Col_{\bfC}(X, \omega)$. This involution necessarily exchanges the connected components of $\Col_{\bfC}(X, \omega)$ \red and has derivative \black $-I$, implying that $\cM_{\bfC}$ is an antidiagonal embedding of $\cH$ in $\cH \times \cH$.

Notice that each component of $\Col_{\bfC}(X, \omega)$ has a zero or marked point. By Lemma \ref{L:InvolutionImpliesHyp-background}, the generic surface in a stratum of Abelian differentials with a zero or marked point has a marked-point preserving affine symmetry group that is either trivial (if the stratum is not hyperelliptic) or $\bZ/2\bZ$ if the stratum is hyperelliptic. This establishes the final statement.  
\end{proof}

\subsection{$S$-paths}

\begin{defn}
Two saddle connections are said to be \emph{disjoint} if they have no intersection points in their interiors.
\end{defn}

\begin{defn}\label{D:Spath}
Given a collection of disjoint saddle connections $S$ on a translation surface $(X, \omega)$ belonging to a stratum $\cH$, an $S$-path is continuous map $\gamma: [0, 1] \rightarrow \cH$ such that $\gamma(0) = (X, \omega)$ and the saddle connections in $S$ remain disjoint saddle connections on each $\gamma(t)$. 
\end{defn}

Given a pair $S$ of homologous saddle connections on a connected translation surface $(X, \omega)$, cutting them produces two connected components each with two equal length boundary saddle connections. Gluing together the two boundary saddle connections of each component, we may obtain a surface $(X_S, \omega_S)$ with two connected components. On $(X_S, \omega_S)$, there is a pair $S'$ of saddle connections, one on each component, resulting from gluing the boundary saddle connections. 

Similarly, given a collection $S$ of $n$  pairwise disjoint pairs of homologous saddle connections, cutting and re-gluing gives a translation surface $(X_S, \omega_S)$ with $n+1$ connected components and with a collection $S'$ of $n+1$ pairs of saddle connections. 

\begin{lem}\label{L:ExtendingPerturbations}
Let $S$ be a collection of pairwise disjoint pairs of homologous saddle connections on a translation surface $(X, \omega)$, and let $\gamma'$ be a $S'$-path with $\gamma'(0)=(X_S, \omega_S)$. Then there is an associated $S$-path $\gamma$ such that 
\begin{enumerate}
\item $\gamma(0)=(X,\omega)$, 
\item for all $t$,  $\gamma(t)_S$ and $\gamma'(t)$ are equal up to individually rotating and scaling connected components. 
\end{enumerate}
\end{lem}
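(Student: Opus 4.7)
The plan is to define $\gamma(t)$, for each $t$, by first rotating and scaling the connected components of $\gamma'(t)$ so that the paired saddle connections in $S'$ acquire matching holonomies, and then re-gluing those pairs into the pairs of $S$ to recover a connected translation surface in $\cH$.

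The key structural observation is that the component graph $G$, whose vertices are the connected components of $(X_S, \omega_S)$ and whose edges record the pairs of $S'$ that join two such components, is a tree. Indeed, $G$ is connected because $(X,\omega)$ is, and since cutting a single pair in $S$ can increase the component count by at most one, the fact that we begin with one component and end with $n+1$ forces each pair in $S$ to contribute exactly one edge of $G$, giving $n+1$ vertices and $n$ edges.

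Given the tree structure, fix a root $Y_0$ of $G$ and, working outward from $Y_0$, inductively define for each non-root component $Y$ a scalar $\lambda_Y(t)\in \bC^*$ (with $\lambda_{Y_0}(t)=1$) so that, after multiplying $Y$ by $\lambda_Y(t)$ and the parent $\pi(Y)$ by $\lambda_{\pi(Y)}(t)$, the two saddle connections of the pair in $S'$ joining $Y$ to $\pi(Y)$ acquire equal holonomy on $\gamma'(t)$. The tree structure ensures no incompatible constraints arise; the fact that $\gamma'$ is an $S'$-path keeps the relevant holonomies nonzero, so the $\lambda_Y(t)$ are uniquely determined and continuous in $t$; and $\lambda_Y(0)=1$ for all $Y$, since the pairs of $S'$ already have matching holonomies on $(X_S,\omega_S)$.

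Let $\tilde\gamma(t)$ denote the resulting rescaled surface and define $\gamma(t)$ by cutting each pair in $S'$ on $\tilde\gamma(t)$ and re-gluing the cut segments across components into a single pair of homologous saddle connections in $S$. By construction $\gamma(t)$ is a connected surface in $\cH$, $\gamma(t)_S=\tilde\gamma(t)$ agrees with $\gamma'(t)$ up to component-wise rotations and scalings, $\gamma(0)=(X,\omega)$, continuity of $\gamma$ follows from continuity of $\gamma'$ and the $\lambda_Y$, and the saddle connections in $S$ remain disjoint on $\gamma(t)$ because their counterparts in $S'$ remain disjoint on $\gamma'(t)$. The main obstacle is establishing the tree property of $G$ and organising the inductive rescaling around it; once that is in place, the cut-and-reglue step is essentially formal.
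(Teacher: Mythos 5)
Your proof is correct and follows essentially the same approach as the paper's: rescale components so that the paired saddle connections of $S'$ acquire matching holonomy, then cut and re-glue. The paper compresses the bookkeeping into a one-line induction on the number of pairs (implicitly peeling off leaves of your tree one at a time), whereas you make the combinatorics explicit by proving the component graph is a tree and propagating the rescaling factors outward from a root --- the same argument, organized differently.
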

Here  $\gamma(t)_S$ is the result of cutting and regluing the saddle connections in $S$ on the translation surface $\gamma(t)$. 
\begin{proof}
By induction, it suffices to consider the case when $S$ consists of a single pair of homologous saddle connections. In this case, $\gamma(t)$ can be defined by first rotating and scaling one of the components of $\gamma'(t)$ so the two saddle connections in $S$ have the same holonomy, and then cutting and re-gluing to obtain a connected surface. 
\end{proof}

\section{Diamonds with a stratum of Abelian differentials}\label{S:DiamondEasy}

The purpose of this section is to prove the following, where we use the notation $\cF$ from Definition \ref{D:F} for forgetting marked points. 

\begin{prop}\label{P:DiamondWithH}
Let $((X,\omega), \cM, \bfC_1, \bfC_2)$ be a generic diamond of Abelian differentials. Suppose that $\MOne$ is a component of a stratum of connected Abelian differentials. Then: 
\begin{enumerate}
\item\label{P:DiamondWithH:NoAbelianDouble} $\MTwo$ cannot be an Abelian double. 
\item\label{P:DiamondWithH:HH} If $\MTwo$ is a component of a stratum of Abelian differentials, so is $\cM$. 
\item\label{P:DiamondWithH:HQ} If $\MTwo$ is a quadratic double, then $\cM$ has rank at least two, and either
\begin{enumerate}
    \item\label{P:DiamondWithH:HQ:NonHypStrat}  $\cM$ is a non-hyperelliptic component of a stratum of Abelian differentials with no marked points, or
    \item\label{P:DiamondWithH:HQ:HypStrat} $\For(\cM)$ is a hyperelliptic component and there is at most one free marked point on surfaces in $\cM$ with the remaining marked points \red the empty set or \black  a collection of  one marked point fixed or two marked points exchanged by the hyperelliptic involution, or 
    \item\label{P:DiamondWithH:HQ:Codim1} $\For(\cM)$ is a codimension one hyperelliptic locus and there is at most one marked point, which is free.
\end{enumerate}
\end{enumerate}
\end{prop}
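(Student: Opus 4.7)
The plan is to exploit that $\cM_{\bfC_1}$ is a stratum of connected Abelian differentials. Because generic cylinders in strata of Abelian differentials are simple (Remark \ref{R:GenericCylInStrata}) and subequivalence classes therein are singletons, the $\cM_{\bfC_1}$-subequivalence class $\Col_{\bfC_1}(\bfC_2)$, which is generic by Corollary \ref{C:StillGeneric}, must be a single simple cylinder. Since $\bfC_1$ and $\bfC_2$ are non-adjacent, collapsing $\bfC_1$ cannot merge cylinders in $\bfC_2$, so $\bfC_2$ itself is a single simple cylinder on $(X,\omega)$. This is the common starting point for all three parts.

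For part \eqref{P:DiamondWithH:HH}, the symmetric argument gives that $\bfC_1$ is also a single simple cylinder. Then by Theorem \ref{T:BoundaryTangent}, any linear equation locally defining $\cM$ in edge-vector coordinates must restrict to the trivial equation in each of the stratum boundaries $\cM_{\bfC_1}$ and $\cM_{\bfC_2}$. Collapsing a simple cylinder $\bfC_i$ only sets the single cross edge of $\bfC_i$ to zero, so such an equation must involve only the cross edge of $\bfC_1$ and simultaneously only the cross edge of $\bfC_2$; since these are distinct variables (as $\bfC_1$ and $\bfC_2$ are disjoint), the equation is trivial, so $\cM$ is a component of a stratum.

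For part \eqref{P:DiamondWithH:NoAbelianDouble}, assume toward contradiction that $\cM_{\bfC_2}$ is an Abelian double with involution $J$. By Lemma \ref{L:AbelianDoubleSEC}, $\Col_{\bfC_2}(\bfC_1)$, and hence $\bfC_1$ on $(X,\omega)$, is either a $J$-exchanged pair of simple cylinders or a single $J$-fixed complex cylinder. The common base $\cM_{\bfC_1,\bfC_2}$ is then simultaneously a codimension-one boundary stratum of the stratum $\cM_{\bfC_1}$ and a codimension-one boundary of the Abelian double $\cM_{\bfC_2}$. When the surfaces in $\cM_{\bfC_1,\bfC_2}$ are connected, Lemma \ref{L:codim1doubles} forces $\cM_{\bfC_1,\bfC_2}$ to be an Abelian double, while it is also a stratum component; this is ruled out by a Riemann-Hurwitz argument for the relevant genera, since a stratum component generically does not parametrize a full family of degree-two translation covers. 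In the disconnected case, $J$ forces the two components to be $J$-exchanged and hence translation-isomorphic, whereas the components arising from a simple-cylinder collapse in a stratum of connected Abelian differentials are generically non-isomorphic, yielding the contradiction.

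Part \eqref{P:DiamondWithH:HQ} is the most intricate. The rank bound follows because rank does not decrease under degeneration, so $\rk(\cM) \geq \rk(\cM_{\bfC_1}) = g \geq 1$, and $\rk(\cM) = 1$ is excluded by Lemma \ref{L:R1Arithmetic} since $\cM_{\bfC_1}$ is a genuine stratum, not a locus of torus covers. To identify $\cM$, one enumerates the possible subequivalence class structures of $\Col_{\bfC_2}(\bfC_1)$ in the quadratic double $\cM_{\bfC_2}$; these correspond via the holonomy double cover to the five Masur-Zorich cylinder types on the quadratic base (Theorem \ref{T:MZ} and Figure \ref{F:CylinderTypes}). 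For each case one tracks whether the holonomy involution of $\cM_{\bfC_2}$ descends to a global symmetry of $\cM$, whether any hyperellipticity of the quadratic base descends through Lemma \ref{L:QBoundaryHolonomy} and Lemma \ref{L:StrataBasics}\eqref{L:StrataBasics:hyp}, and what marked-point configurations Theorem \ref{T:StrataMarkedPoints} and the quadratic-double convention allow. The three listed possibilities emerge accordingly: no hyperelliptic descent gives case \eqref{P:DiamondWithH:HQ:NonHypStrat}, hyperellipticity descending with the constrained marked-point configurations gives case \eqref{P:DiamondWithH:HQ:HypStrat}, and a hyperelliptic symmetry that appears only along a codimension-one condition produces case \eqref{P:DiamondWithH:HQ:Codim1}. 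The main obstacle is this case analysis, and in particular distinguishing \eqref{P:DiamondWithH:HQ:HypStrat} from \eqref{P:DiamondWithH:HQ:Codim1}, where one must carefully track when the hyperelliptic involution acts generically on $\cM$ versus only on a proper subvariety.
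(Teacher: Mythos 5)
Parts \eqref{P:DiamondWithH:NoAbelianDouble} and \eqref{P:DiamondWithH:HH} are broadly in line with the paper's argument: you correctly observe that $\MOne$ being a stratum forces $\ColOne(\bfC_2)$, hence $\bfC_2$, to be a single simple cylinder, and your route for \eqref{P:DiamondWithH:HH} via the deletion of terms in Theorem \ref{T:BoundaryTangent} is a legitimate alternative to the paper's ``gluing in a simple cylinder to a generic stratum surface'' observation. One small point: your ``disconnected case'' in \eqref{P:DiamondWithH:NoAbelianDouble} is vacuous, since $\ColOneTwoX$ is obtained from the connected stratum surface $\ColOneX$ by collapsing a simple cylinder, which never disconnects; the paper notes this explicitly.

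The genuine gap is in the rank bound in part \eqref{P:DiamondWithH:HQ}. You must rule out $\rk(\cM)=1$, and your only argument is the appeal to Lemma \ref{L:R1Arithmetic} together with ``$\MOne$ is a genuine stratum, not a locus of torus covers.'' But if $\MOne$ has genus one --- i.e.\ is $\cH(0^k)$ for some $k$, which is exactly the case in which the rank bound is not automatic from your monotonicity claim --- then $\MOne$ \emph{is} a rank-one locus of torus covers, so the claimed contradiction evaporates; Lemma \ref{L:R1Arithmetic} gives you nothing here, and in any case $\rk(\cM)=1$ plus $\bk(\cM)=\bQ$ is needed for that lemma, and the latter is unestablished. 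The paper handles this differently (Sublemma \ref{SL:MIsNotRankOne}): if $\cM$ were rank one then so would $\MOneTwo$, forcing $\MOneTwo\in\{\cH(0),\cH(0,0)\}$; one then shows $\ColOneTwo(\bfC_1)$ and $\ColOneTwo(\bfC_2)$ cannot be generically parallel there, using that $\ColOneTwo(\bfC_1)$ is fixed by the hyperelliptic involution (because $\bfC_1$ is a subequivalence class in the quadratic double $\MTwo$, hence invariant under the holonomy involution), while a saddle connection from a marked point to itself in $\cH(0,0)$ is not so fixed; Lemma \ref{L:RankTest} then gives rank at least two. You need an argument of this shape. Finally, the remainder of your part \eqref{P:DiamondWithH:HQ} (``one enumerates...and tracks...'') is an outline, not a proof: the paper's actual argument needs to constrain $\bfC_1$ to one or two simple cylinders with at most one periodic marked point on a shared boundary (Sublemma \ref{SL:Periodic}), then invoke \cite[Theorem 1.5]{Apisa} and Theorem \ref{T:MirWriFullRank} in a codimension-one dichotomy (Sublemma \ref{SL:full-rank-gluing}), and finally count free marked points; none of these steps is present in your sketch, and in particular the distinction between cases \eqref{P:DiamondWithH:HQ:HypStrat} and \eqref{P:DiamondWithH:HQ:Codim1} rests precisely on these missing lemmas rather than on the informal ``when the hyperelliptic involution acts generically'' heuristic.
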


\begin{proof}[Proof of Proposition \ref{P:DiamondWithH} parts \eqref{P:DiamondWithH:NoAbelianDouble} and \eqref{P:DiamondWithH:HH}] 
Since the diamond is generic, $\ColOne(\bfC_2)$ is a subequivalence class of generic cylinders on $\ColOne(X, \omega)$. 
Since $\MOne$ is a component of a stratum of Abelian differentials it follows that $\ColOne(\bfC_2)$ consists of a single simple cylinder. Since collapsing a simple cylinder on a connected surface produces a connected surface, it follows that $\MOneTwo$ is a locus of connected surfaces. It is necessarily also a component of a stratum of Abelian differentials.

Since no locus is both a component of a stratum of Abelian differentials and an Abelian double, we see that $\MTwo$ is not an Abelian double. This establishes \eqref{P:DiamondWithH:NoAbelianDouble}. 

Similarly, if $\MTwo$ is also a component of a stratum of Abelian differentials, then both $\bfC_1$ and $\bfC_2$ are simple cylinders. Since gluing a simple cylinder into a generic surface in a stratum of Abelian differentials produces another generic surface in a stratum of Abelian differentials, this proves \eqref{P:DiamondWithH:HH}. 
\end{proof}

The rest of this section is devoted to the case when $\MTwo$ is a quadratic double. 
Recall that subequivalence classes are defined in Definition \ref{D:subeq}.

\begin{proof}[Proof of Proposition \ref{P:DiamondWithH} part \eqref{P:DiamondWithH:HQ}]
 Since $\MOneTwo$ is both a component of a stratum of Abelian differentials and a quadratic double, $\MOneTwo$ is a hyperelliptic component of a stratum of Abelian differentials by Lemma \ref{L:InvolutionImpliesHyp-background}. The marked points on surfaces in $\MOneTwo$ must be free (since $\MOneTwo$ is a component of a stratum) and fixed by the hyperelliptic involution (since $\MOneTwo$ is a quadratic double). This implies that the surfaces in $\MOneTwo$ have no marked points except when $\MOneTwo$ is $\cH(0)$ or $\cH(0,0)$. 
 
 It follows that the hyperelliptic involution is the unique marked point preserving involution on the generic surface in $\MOneTwo$. Letting $J_2$ denote the holonomy involution on $\ColTwoX$ we have that $\Col_{\ColTwo(\bfC_1)}(J_2)$ must be the hyperelliptic involution.

\begin{sublem}\label{SL:MIsNotRankOne}
$\cM$ has rank at least two. Moreover, if $\MOneTwo$ has rank one, then $\ColOneTwo(\bfC_1)$ and $\ColOneTwo(\bfC_2)$ are not generically parallel.  
\end{sublem}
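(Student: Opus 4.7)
The plan is to prove both parts by contradiction, first reducing the first part to the second. Suppose $\cM$ has rank one. Because $\bfC_1$ and $\bfC_2$ are disjoint subequivalence classes of generic cylinders, Lemma \ref{L:RankTest} forces them to be parallel on $(X,\omega)$, so the collapsed saddle connections $\ColOneTwo(\bfC_1)$ and $\ColOneTwo(\bfC_2)$ are parallel on $\ColOneTwoX$. Moreover, $\MOneTwo$ is contained in the boundary of $\cM$ and therefore has rank at most one; combined with the fact established earlier in the proof of Proposition \ref{P:DiamondWithH} \eqref{P:DiamondWithH:HQ} that $\MOneTwo$ is a hyperelliptic component of a stratum of Abelian differentials, this pins $\MOneTwo$ down to a genus one case. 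In particular, $\MOneTwo$ has rank one, so the hypothesis of the second claim holds and a contradiction provided by the second claim will handle both.

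To establish the second claim, I would assume for contradiction that $\MOneTwo$ has rank one and $\ColOneTwo(\bfC_1)$, $\ColOneTwo(\bfC_2)$ are generically parallel. Since surfaces in $\MOneTwo$ are tori (with at most a couple of marked points), generic parallelism of two saddle connections is equivalent to them being homologous up to sign. Uncollapsing $\bfC_1$ to move from $\ColOneTwoX$ back to $\ColTwoX$ reopens the saddle connections $\ColOneTwo(\bfC_1)$ into the subequivalence class of cylinders $\ColTwo(\bfC_1)$, and the parallelism on $\ColOneTwoX$ persists as parallelism between $\ColTwo(\bfC_1)$ and the collapsed saddle connections $\ColTwo(\bfC_2)$ on $\ColTwoX$. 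Since $\MTwo$ is a quadratic double, the holonomy involution $J_2$ on $\ColTwoX$ (provided by Lemma \ref{L:QHolonomy}) preserves both collections. Passing to the quotient $\ColTwoX/J_2$, I obtain a subequivalence class of cylinders parallel to a collection of collapsed saddle connections, sitting inside a genus zero quadratic differential (since $\ColOneTwoX/J_2$ is a sphere).

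The main step of the plan is to analyze this parallel configuration on the quadratic quotient using the Masur-Zorich classification of generic cylinder types (Theorem \ref{T:MZ}) and the description of hat-homologous saddle connections (Theorem \ref{T:HatHomologous}). The expectation is that each candidate cylinder type for $\ColTwo(\bfC_1)/J_2$ either violates the trivial-holonomy-component constraint of Theorem \ref{T:MZ} \eqref{T:MZ:TrivHolComponent}, or forces the cylinders $\bfC_1$ and $\bfC_2$ on $(X,\omega)$ to share a boundary saddle connection, contradicting the definition of a diamond.

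The hardest step will be the case analysis on the quotient: one must rule out complex cylinders and complex envelopes (whose trivial-holonomy complement is incompatible with the sphere quotient of a torus carrying marked point configurations coming from $\MOneTwo$), and separately handle simple cylinders, half-simple cylinders, and simple envelopes. Care is also needed in keeping track of the two possible forms of $\MOneTwo$ and how the elliptic involution permutes or fixes the marked points in each case, invoking Lemma \ref{L:InvolutionImpliesHyp-background} to ensure the covering involution is canonical.
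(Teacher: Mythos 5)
Your reduction of the first claim to the second is essentially the paper's argument (contrapositive of Lemma~\ref{L:RankTest}, then $\MOneTwo$ is a rank-one hyperelliptic component, hence $\cH(0)$ or $\cH(0,0)$). It is worth being careful that you need \emph{generic} parallelism of $\ColOneTwo(\bfC_1)$ and $\ColOneTwo(\bfC_2)$ to invoke the second claim, not just parallelism; this follows because $\cM$ has rank one, so parallel cylinders on $(X,\omega)$ are automatically $\cM$-parallel, but you should say so.

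Your plan for the second claim diverges substantially from the paper and, as written, I do not believe it closes. The paper's argument is a two-line observation on $\ColOneTwoX$ itself: in $\cH(0)$ no two distinct saddle connections are generically parallel, so $\MOneTwo=\cH(0,0)$ and each $\ColOneTwo(\bfC_i)$ is a single loop at one of the two marked points; but $\ColTwo(\bfC_1)$ is $J_2$-invariant, so $\ColOneTwo(\bfC_1)$ is fixed by the hyperelliptic involution of $\ColOneTwoX$, which is impossible because in $\cH(0,0)$ the involution exchanges the two marked points and hence never fixes such a loop. You instead want to push the problem up to $\ColTwoX$, quotient by $J_2$, and run a Masur--Zorich case analysis. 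Besides being far more elaborate, there are concrete gaps: (i) you never use the crucial fact that $\ColOneTwo(\bfC_1)$ is $J$-invariant, which is the whole reason the configuration is impossible; without it, there is no contradiction to be found, since rank-one diamonds with a cylinder parallel to a collapsed saddle connection on the quotient do exist; (ii) your justification that $\ColTwoX/J_2$ is genus zero is not correct --- you appeal to $\ColOneTwoX/J$ being a sphere, but gluing a cylinder into a sphere need not keep genus zero, so this premise of the case analysis is unverified; (iii) you never actually carry out the case analysis or say why the parallelism between $\ColTwo(\bfC_1)/J_2$ and $\ColTwo(\bfC_2)/J_2$ would force the claimed boundary-sharing in each case. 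So the plan is missing the single key observation that makes the statement true, and the replacement route is left as an expectation rather than an argument. I would recommend abandoning the quotient-and-case-analysis route and working directly with the hyperelliptic involution on $\ColOneTwoX$ as above.
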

\begin{proof}
We prove the second statement first. Assume $\MOneTwo$ has rank one. Since it is hyperelliptic, $\MOneTwo$ is either $\cH(0)$ or $\cH(0,0)$.

Suppose that $\ColOneTwo(\bfC_1)$ and $\ColOneTwo(\bfC_2)$ are generically parallel. Since $\cH(0)$ does not have pairs of generically parallel saddle connections, this implies that $\MOneTwo = \cH(0,0)$ and that each of $\ColOneTwo(\bfC_1)$ and $\ColOneTwo(\bfC_2)$ is a single saddle connection that joins a marked point to itself. However, $\ColOneTwo(\bfC_1)$ is fixed by the hyperelliptic involution on $\ColOneTwo(X, \omega)$ since $\ColTwo(\bfC_1)$ is fixed by the holonomy involution on $\ColTwo(X, \omega)$. This is a contradiction since in $\cH(0,0)$ a saddle connection joining a marked point to itself is not fixed by the hyperelliptic involution. 

We now use the second statement to prove the first statement. If $\cM$ is rank one, then so is $\MOneTwo$. Since $\bfC_1$ and $\bfC_2$ are not parallel, Lemma \ref{L:RankTest} gives a contradiction.  
\end{proof}

\begin{sublem}\label{SL:Periodic}
$\bfC_1$ consists of one or two simple cylinders. If the cylinders are adjacent, then the only singularity or marked point on their common boundary is a single periodic marked point. 

\end{sublem}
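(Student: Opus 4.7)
I will combine three ingredients: the hyperelliptic structure of $\MOneTwo$ (established earlier in the proof), the quadratic double structure of $\MTwo$, and a local analysis of the shared endpoint in the adjacent case. The hyperelliptic involution $\iota$ on $\MOneTwo$ is, as noted above, the descent $\Col_{\ColTwo(\bfC_1)}(J_2)$ of the holonomy involution $J_2$ on $\ColTwo(X,\omega)$.

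First, since $\bfC_1$ is a generic subequivalence class containing a saddle connection perpendicular to its core curves, Corollary \ref{C:ParallelSC} implies that all saddle connections in $\ColOneTwo(\bfC_1)$ are $\MOneTwo$-parallel. By Lemma \ref{L:HypParallelism} applied to the hyperelliptic stratum $\MOneTwo$, this set consists of at most two saddle connections, and because $\bfC_1 \cap \bfC_2 = \emptyset$ and they share no boundary saddle connections, collapsing $\ColOne(\bfC_2)$ does not change this count, so $|\ColOne(\bfC_1)| \le 2$. Independently, $\ColTwo(\bfC_1)$ is a subequivalence class in the quadratic double $\MTwo$, hence the preimage under the holonomy double cover of a single cylinder in the quadratic quotient; such a preimage consists of at most two cylinders, and collapsing $\bfC_2$ preserves cylinder counts, so $|\bfC_1| \le 2$. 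Each cylinder in $\bfC_1$ must be simple: a non-simple cylinder (complex, half-simple, or envelope) collapses generically to multiple saddle connections, which combined with any other cylinder in $\bfC_1$ would exceed the bound of two saddle connections in $\ColOne(\bfC_1)$.

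For the adjacent case, when $\bfC_1 = \{C_1, C_2\}$ share a boundary saddle connection $s$, then $s$ is a loop with a single endpoint $p$, since each boundary of a simple cylinder is a saddle-connection loop. The two cylinders jointly contribute cone angle $\pi + \pi = 2\pi$ at $p$, so if no other saddle connection meets $p$ then the cone angle at $p$ equals $2\pi$ and $p$ is a marked point. To rule out other saddle connections at $p$, I would pass to the image $\bar p = \ColOneTwo(p)$ in $\MOneTwo$: as the endpoint of the $\iota$-invariant loop $\ColOneTwo(s)$, it is an $\iota$-fixed point, hence a Weierstrass point of the hyperelliptic stratum $\MOneTwo$. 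The $\iota$-symmetry, together with the fact that the generic diamond realizes $\bar p$ as the endpoint of a loop arising precisely from the collapse of the two cylinders of $\bfC_1$, forces $\bar p$ to have no additional sheets of structure, so the cone angle at $p$ in $(X,\omega)$ is exactly $2\pi$. Finally, Theorem \ref{T:StrataMarkedPoints} gives that Weierstrass points of hyperelliptic components are periodic marked points of $\MOneTwo$, and this periodicity transfers back through the un-collapse to yield that $p$ is a periodic marked point of $\cM$.

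\textbf{Main obstacle.} The most delicate step is showing that $\bar p$ has cone angle exactly $2\pi$ in $\MOneTwo$ — equivalently, that $p$ is a marked point of $(X,\omega)$ rather than a zero of positive order. This requires carefully using the hyperelliptic involution and the generic diamond hypothesis to rule out all configurations in which additional saddle connections meet $p$.
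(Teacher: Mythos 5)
The overall strategy --- bound $\ColOneTwo(\bfC_1)$ by two saddle connections via Lemma~\ref{L:HypParallelism} and the hyperelliptic structure of $\MOneTwo$, then use the quadratic double structure of $\MTwo$ and Masur--Zorich to constrain $\bfC_1$ --- is the same as the paper's. However, there are two genuine gaps.

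First, your argument for simplicity of the cylinders in $\bfC_1$ says a non-simple cylinder ``collapses generically to multiple saddle connections, which combined with any other cylinder in $\bfC_1$ would exceed the bound of two.'' This fails exactly when $\bfC_1$ is a \emph{single} complex cylinder: then $\ColOneTwo(\bfC_1)$ is exactly two saddle connections, which does not exceed the bound. The paper rules this case out by a different argument: if $\For'(\bfC_1)$ were a complex cylinder, then $\ColTwo(\bfC_1)/J_2$ is a complex envelope whose collapse produces a single saddle connection (since the two saddle connections in $\ColOneTwo(\bfC_1)$ are $\iota$-exchanged), and by Remark~\ref{R:TwoDegenerations} this type of degeneration disconnects the holonomy double cover $\ColOneTwoX$ --- contradicting the hypothesis of Proposition~\ref{P:DiamondWithH}~\eqref{P:DiamondWithH:HQ} that $\MOneTwo$ is a component of a stratum of connected Abelian differentials.

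Second, the adjacent case is not actually proved. After correctly computing that the two simple cylinders contribute $\pi+\pi=2\pi$ at $p$, you write that the $\iota$-symmetry ``forces $\bar p$ to have no additional sheets of structure,'' but this is a restatement of the desired conclusion, not an argument: you need to exclude additional saddle connections emanating from $p$ (e.g., from the outer boundaries of $C_1, C_2$, or from elsewhere), and nothing in your write-up does this. The paper sidesteps this cone-angle bookkeeping entirely: from the Masur--Zorich classification (Theorem~\ref{T:MZ}, Figure~\ref{F:CylinderTypes}, case 4), the points on the common boundary of the cylinders in $\ColTwo(\bfC_1)$ are preimages of poles under the holonomy double cover, hence \emph{automatically} marked points (cone angle $\pi\mapsto 2\pi$); the remaining issue is uniqueness ($|P|\le 1$), which the paper handles by observing that if $|P|=2$ then $\ColOneTwo(P)$ would contain a periodic point that is not a zero, which strata do not admit. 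You also do not address this uniqueness question. In short, the structural classification from Masur--Zorich should be invoked up front to identify what the cylinders and their common-boundary points actually are; the local cone-angle argument you attempt is not the right tool here.
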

\begin{proof}
The largest number of homologous saddle connections on a surface in $\MOneTwo$ is two since in hyperelliptic components of strata of Abelian differentials two saddle connections are homologous if and only if they are exchanged by the hyperelliptic involution by Lemma \ref{L:HypParallelism}. Therefore, $\ColOneTwo(\bfC_i)$ is a collection of at most two saddle connections. 

$\ColTwo(\bfC_1)$ is a subequivalence class of generic cylinders in $\ColTwo(X, \omega)$, which is a surface in a quadratic double. Since $\ColOneTwo(\bfC_1)$ consists of at most two saddle connections, using Masur-Zorich (Theorem \ref{T:MZ}) and examining the bottom of Figure \ref{F:CylinderTypes}, it follows that $\ColTwo(\bfC_1)$ is either 
\begin{enumerate}
\item two simple nonadjacent cylinders or 
\item a simple cylinder or a complex cylinder that has possibly been divided into two cylinders by marking the preimage of $n \leq 2$ poles on its core curve. 
\end{enumerate} 
(Following our conventions, in the second case it is correct to say these $n$ points lie on the boundary of $\ColTwo(\bfC_1)$, but we emphasize they lie in between the two cylinders of $\ColTwo(\bfC_1)$ and on the interior of a cylinder in $\For(\ColTwoX)$.) 

If $\bfC_1$ consists of two simple nonadjacent cylinders there is nothing more to prove; so suppose that we are in the second case.

We now show that, when $\bfC_2$ is glued into $\ColTwo(\bfC_2)$ to obtain $(X,\omega)$ from $\ColTwoX$, the $n$ marked points  on the boundary of $\ColTwo(\bfC_1)$ remain marked points on $(X, \omega)$; so the $n$ marked points on $\ColTwoX$ arise from $n$ marked points on $(X,\omega)$.   To see this, notice that $\ColTwo(\bfC_2)$ does not contain a saddle connection that belongs to the boundary of $\bfC_1$ nor does it contain a saddle connection that intersects a cylinder in $\ColTwo(\bfC_1)$.

Let $P$ denote the $n$ marked points on the boundary of $\bfC_1$. If $n > 0$, then since the two cylinders in $\bfC_1$ have generically identical heights, it follows that the points in $P$ are periodic points. Let $\For'(X, \omega)$ be the translation surface $(X, \omega)$ once the points in $P$ have been forgotten. Let $\For'(\bfC_i)$ denote the images of the cylinders in $\bfC_i$ on this surface. 

We will now show that $\For'(\bfC_1)$ is not a complex cylinder. Since $\ColOneTwo(\bfC_1)$ consists of two saddle connections that are exchanged by the hyperelliptic involution we have that $\Col_{\ColTwo(\bfC_1)/J_2}\left( \ColTwo(\bfC_1)/J_2 \right)$ consists of a single saddle connection. As discussed in Remark \ref{R:TwoDegenerations}, since $\ColTwo(\bfC_1)/J_2$ is a generic complex envelope, this implies that $\ColOneTwoX$ is disconnected, which is a contradiction. 


It remains to show that $|P| \leq 1$ when $\For'(\bfC_1)$ is a simple cylinder. If not, then, letting $Z$ denote the zeros of $\omega$, $\ColOneTwo(P)$ contains a periodic point that is not contained in  $\ColOneTwo(Z)$ (see the cylinder labelled $4$ in Figure \ref{F:CylinderTypes}). Strata do not have such periodic points, so this is a contradiction.  
\end{proof}

We now verify the proposition up to determining the number of free marked points. \red Let $\cH$ be the connected component of the stratum containing $\cM$. \black

\begin{sublem}\label{SL:full-rank-gluing}
One of the following occurs:
\begin{enumerate}
    \item $\cM = \cH$ (if $\bfC_1$ is a single simple cylinder, then this case occurs).
    \item\label{I:HyperellipticMPs} $\For(\cM) = \For(\cH)$ is a hyperelliptic component and all marked points are free on surfaces in $\cM$ except for a collection of either one marked point fixed or two marked points exchanged by the hyperelliptic involution. 
    \item\label{I:Codim1} $\For(\cM)$ is a codimension one hyperelliptic locus in $\For(\cH)$ and all marked points in $\cM$ are free.  
\end{enumerate}
\end{sublem}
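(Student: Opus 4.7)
The plan is to case-split on the structure of $\bfC_1$ given by Sublemma \ref{SL:Periodic}, and in each case combine the fact that $\MOne$ is a full stratum with the hyperelliptic structure of $\MOneTwo$ established above. Let $\cH$ denote the component of the stratum containing $\cM$.

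First I would handle the case where $\bfC_1$ is a single simple cylinder. Here the boundary component of $\cH$ containing $\Col_{\bfC_1}(X,\omega)$ is itself a component of a stratum, and since $\MOne$ is a component of a stratum containing $\Col_{\bfC_1}(X,\omega)$ inside this boundary component, the two must coincide. A dimension count then gives $\dim \cM = \dim \MOne + 1 = \dim \cH$, so $\cM = \cH$ and conclusion (1) holds.

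For the two-cylinder case, write $\bfC_1 = \{C, C'\}$, possibly adjacent via a periodic marked point as described in Sublemma \ref{SL:Periodic}. I would separate into sub-cases based on whether $C$ and $C'$ are generically parallel in $\cH$. If they are, then Lemma \ref{L:HypParallelism} forces $\For(\cH)$ to be hyperelliptic with $C$ and $C'$ exchanged by the hyperelliptic involution $J$, and a dimension argument analogous to the single-cylinder case gives $\For(\cM) = \For(\cH)$. The admissible marked-point structure of $\cM$ is then constrained: by Theorem \ref{T:StrataMarkedPoints}, any non-free marked point in a rank at least two hyperelliptic component must be a Weierstrass point (fixed by $J$) or one of a pair exchanged by $J$. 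Combining this with the description of marked points on $\MOneTwo$ (empty unless $\MOneTwo$ is $\cH(0)$ or $\cH(0,0)$, and otherwise fixed by the hyperelliptic involution) together with the periodic marked point sitting between adjacent cylinders of $\bfC_1$ when present yields either conclusion (1) or conclusion (2).

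If $C$ and $C'$ are not generically $\cH$-parallel, then requiring them to be $\cM$-parallel imposes a genuine codimension-one condition inside $\cH$. I would identify the resulting locus using the holonomy involution $J_2$ on $\ColTwoX$, which swaps $\ColTwo(C)$ and $\ColTwo(C')$: extending $J_2$ across the cylinders $\bfC_2$ should provide a partial hyperelliptic symmetry witnessing that $\For(\cM)$ is a codimension-one hyperelliptic locus in $\For(\cH)$. Since $\For(\cH)$ is non-hyperelliptic in this sub-case, Theorem \ref{T:StrataMarkedPoints} rules out any non-free marked points, yielding conclusion (3). The main obstacle lies in this non-parallel case, namely verifying rigorously that the codimension-one locus produced is indeed hyperelliptic rather than some more exotic codimension-one invariant subvariety; this will require tracking the partial involution $J_2$ as $\bfC_2$ is glued back in, checking its compatibility with the subequivalence structure of $\bfC_1$, and invoking the classification of codimension-one loci from Section \ref{S:ComplexEnvelope} together with the rigidity provided by $\MOne$ being a full stratum.
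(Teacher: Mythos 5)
Your case split on whether $C$ and $C'$ are generically $\cH$-parallel is the wrong dichotomy, and it leads you astray. The paper shows that the two cylinders in $\bfC_1$ are \emph{always} homologous: since $\bfC_1$ is a subequivalence class and $\MOne$ is a stratum, the saddle connections in $\Col_{\bfC_1}(\bfC_1)$ are $\MOne$-parallel, and in a stratum of Abelian differentials generically parallel saddle connections must be homologous. Homologous cylinders in a stratum are $\cH$-parallel, so your ``not generically $\cH$-parallel'' branch is vacuous --- which means the whole burden falls on your first branch, but that branch only aims at conclusion~(2). You would therefore never produce conclusion~(3), which is a genuine possibility.

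Within your ``parallel'' branch the reasoning is also off. Lemma~\ref{L:HypParallelism} says that \emph{inside a hyperelliptic component}, generically parallel saddle connections are exactly those exchanged by the hyperelliptic involution; it does not say that the presence of generically parallel cylinders forces hyperellipticity, and indeed homologous cylinders occur in non-hyperelliptic strata. Moreover, the analogue of the single-cylinder dimension count does not give $\For(\cM)=\For(\cH)$: collapsing $\bfC_1$ drops the stratum dimension by two (one for each cylinder) but $\cM$ by only one, so the count gives $\cM$ codimension one in $\cH$. What the paper then does --- and what your proof is missing --- is a second case split, this time on whether $\For(\cM)=\For(\cH)$ or not. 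In the first case it invokes the marked-point rigidity result \cite[Theorem~1.5]{Apisa} (which is what actually forces hyperellipticity); in the second it shows all marked points are free, $\For(\cM)$ is a codimension-one locus, and then invokes Mirzakhani--Wright's full-rank classification (Theorem~\ref{T:MirWriFullRank}) to conclude it is hyperelliptic. Your proposed use of $J_2$ and Section~\ref{S:ComplexEnvelope} (which concerns complex envelopes on quadratic differentials) does not substitute for either of these inputs.
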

\begin{proof}
If $\bfC_1$ consists of a single simple cylinder, $\cM=\cH$, so suppose (by Sublemma \ref{SL:Periodic}), that $\bfC_1$ consists of two simple cylinders. 

Then $\Col_{\bfC_1}(\bfC_1)$ consists of one or two saddle connections and, if two, then two generically parallel ones. Since the only generically parallel saddle connections in a stratum of Abelian differentials are homologous ones it follows that the two cylinders in $\bfC_1$ are homologous on $(X, \omega)$. It follows that  $\cM$ is defined by a single equation relating the heights of the two cylinders in $\bfC_1$ and hence $\cM$ is codimension one in $\cH$.

Suppose first that $\For(\cM) = \For(\cH)$. Keeping in mind that $\cM$ has rank at least two (by Sublemma \ref{SL:MIsNotRankOne}) and that $\cM\subset \cH$ is codimension 1,  \cite[Theorem 1.5]{Apisa} gives that $\For(\cM)$ is a hyperelliptic component and the marked points are as described in (\ref{I:HyperellipticMPs}). 
 
 Suppose finally that $\For(\cM) \ne \For(\cH)$. Since $\cM$ is codimension 1 in $\cH$,  this implies that $\For(\cM)$ is codimension one in $\For(\cH)$ and that all the marked points in $\cM$ are free. By Mirzakhani-Wright \cite{MirWri2} (recalled in Theorem \ref{T:MirWriFullRank}), $\For(\cM)$ is a hyperelliptic locus of $\For(\cH)$. So $\cM$ is as described in \eqref{I:Codim1}.
\end{proof}

Let $Q$ denote the collection of free marked points on $(X, \omega)$. It remains to see that $Q$ contains no marked points when $\For(\cH)$ is non-hyperelliptic and at most one point otherwise. 

\begin{sublem}\label{SL:MarkedPointConclusion}
$\ColTwo(Q)$ contains no free marked points.
\end{sublem}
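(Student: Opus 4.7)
The plan is to show that $\MTwo$, being a quadratic double, admits no free marked points at all, from which the sublemma follows immediately for $\ColTwo(Q)$.

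First I would observe that since $\MTwo$ is a full locus of holonomy double covers over a component $\cQ$ of a stratum of quadratic differentials, every marked point $r$ on $\ColTwoX$ falls into one of two classes: (i) $J_2(r) \neq r$, in which case $J_2(r)$ is also a marked point on $\ColTwoX$ (being a preimage of the same marked regular point of $\ColTwoX/J_2$ as $r$, and all preimages of marked points of the base are marked by the definition of quadratic double); or (ii) $J_2(r) = r$, in which case $r$ is a marked preimage of a simple pole on the base, since the only regular points of the cover fixed by $J_2$ arise from simple poles of the base.

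In case (i), any deformation within $\MTwo$ must commute with $J_2$, so moving $r$ forces $J_2(r)$ to move correspondingly; hence $r$ cannot be deformed while $J_2(r)$ is held fixed, and $r$ is not free. In case (ii), the fixed points of $J_2$ are isolated and intrinsic to the covering structure, so repositioning $r$ requires altering the underlying cover (and in particular the base surface), and again $r$ is not free within $\MTwo$.

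Applying this to each $p \in Q$: if $\ColTwo(p)$ survives the collapse as a marked point on $\ColTwoX$, then it falls into case (i) or (ii) and is therefore not free on $\MTwo$; and if $\ColTwo(p)$ has been absorbed into a singularity under the collapse (an issue that can be sidestepped by first perturbing $p$ slightly off $\overline{\bfC_2}$, which is possible since $p$ is free on $\cM$), then it is not a marked point at all and so is vacuously not a free marked point. In either case, no element of $\ColTwo(Q)$ is a free marked point of $\MTwo$. The main conceptual step is the dichotomy between $J_2$-paired and $J_2$-fixed marked points on a quadratic double, which is a direct consequence of the definition; no deeper argument is required.
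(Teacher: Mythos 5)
Your central claim---that a quadratic double never admits free marked points---is false, and the paper's own proof explicitly handles the exceptions. Both $\cH(0)$ and $\cH(0,0)$ arise as quadratic doubles (of $\cQ(-1^4)$ with one preimage of a pole marked, and of $\cQ(-1^4,0)$ with neither preimage of a pole marked, respectively), and in both cases the marked points are free. The paper states exactly this in its proof of the sublemma: ``A quadratic double can only contain free marked points if it is $\cH(0)$ or $\cH(0,0)$.''

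Your two cases each miss this. In case (ii) you argue that repositioning a $J_2$-fixed marked point ``requires altering the underlying cover (and in particular the base surface),'' but altering the base is irrelevant to freeness: a free marked point is one that can be moved while fixing the \emph{unmarked top surface} and the positions of the \emph{other marked points on the top}. On a torus in $\cH(0)$ realized as a quadratic double of $\cQ(-1^4)$, moving the marked point leaves the unmarked torus unchanged (there are no other marked points), so it is free even though the involution $J_2$ and the base pillowcase both vary. In case (i) you treat $J_2$ as a fixed involution that the deformation ``must commute with,'' but $J_2$ is surface-dependent and varies along a path in $\MTwo$. For $\cH(0,0)$ one can move a single marked point while fixing the torus and the other marked point; the new pair is still exchanged by some involution of derivative $-\mathrm{Id}$, so the deformation stays in the quadratic double. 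Thus your argument fails for precisely the exceptional strata.

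The paper's proof instead accepts that free marked points can occur only in $\cH(0)$ or $\cH(0,0)$, and then uses the diamond structure to rule these out: genericity forces $\dim\MTwo=\dim\cM-1$, and since $\cM$ has rank at least two (by Sublemma \ref{SL:MIsNotRankOne}) with $\bfC_1$ a single simple cylinder (so $\cM=\cH$ by Sublemma \ref{SL:full-rank-gluing}), one concludes $\cM=\cH(2)$, which has no marked points, contradicting $Q\neq\emptyset$. Any correct proof of the sublemma must in some way invoke the dimension/rank constraints of the diamond, since the statement is false for an arbitrary quadratic double; a purely local argument about covers cannot work.
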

\begin{proof}
Suppose not in order to deduce a contradiction. A quadratic double can only contain free marked points if it is $\cH(0)$ or $\cH(0,0)$. If this occurs, then $\ColTwo(\bfC_1)$ and hence $\bfC_1$ consists of a single simple cylinder, implying that $\cM = \cH$ by Sublemma \ref{SL:full-rank-gluing}. 

Since $\cM$ has rank at least two, and $\MTwo$ coincides with $\cH(0,0)$ or $\cH(0)$ and has dimension exactly one less than $\cM$, it follows that $\cM = \cH(2)$. This is a contradiction since it implies that $Q$ is empty. 
\end{proof}

Since the marked points in $Q$ are free, whenever one collides with a zero it is a codimension one degeneration. Since $\MTwo$ has dimension exactly one less than $\cM$, at most one point in $Q$ can coincide with a zero on $\ColTwoX$. This shows that $Q$ contains at most one point (by Sublemma \ref{SL:MarkedPointConclusion}) and that when $Q$ contains one point, $\ColTwoX$ is formed by moving that point into a zero, which shows that $\For(\cM) = \For(\MTwo)$. When $\For(\cM)$ is nonhyperelliptic this cannot occur (by Lemma \ref{L:InvolutionImpliesHyp-background}) since $\MTwo$ is a quadratic double. So $Q$ is empty when $\For(\cM)$ is nonhyperelliptic. 
\end{proof}

\section{Gluing in a complex envelope }\label{S:ComplexEnvelope}

The main result of the section is the following. 

\begin{thm}\label{T:complex-gluing0}
Suppose that $(Z,\zeta)$ is contained in an invariant subvariety $\cM$ of quadratic differentials, and $\bfC$ is an $\cM$-generic cylinder on $(Z,\zeta)$ that is a complex envelope, and that the standard dilation of $\bfC$ remains in $\cM$. 

If $\cM_{\bfC}$ is a component of a stratum of Abelian or quadratic differentials, then one of the following occurs:
\begin{enumerate}
    \item $\cM$ is  a component of a stratum, \red or \black
    \item $\For(\cM)$ is a hyperelliptic component of a stratum and all marked points on $(X, q)$ are free except for a pair of points on the boundary of $\bfC$ that are exchanged by the hyperelliptic involution, or
    \item $\For(\cM)$ is a codimension one hyperelliptic locus in a non-hyperelliptic connected component of a stratum and all marked points are free. 
\end{enumerate}  
The latter two cases occurs if and only if \red $\cF\left(\Col_{\bfC}(Z,\zeta)\right)$ \black belongs to a hyperelliptic component of a stratum of quadratic differentials and $\Col_{\bfC}(\bfC)$ is a single saddle connection that is generically fixed by the hyperelliptic involution. 
\end{thm}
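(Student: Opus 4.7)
My strategy is to analyze the combinatorial structure of $\Col_{\bfC}(\bfC)$ and then use Theorem \ref{T:BoundaryTangent} to reconstruct $\cM$ from this data together with $\cM_{\bfC}$. Recall that a complex envelope has three boundary saddle connections: two, $s_1$ and $s_2$, on the top with $|s_1|=|s_2|$, and one, $s_3$, appearing with multiplicity two on the bottom, all of length equal to half the circumference. By Corollary \ref{C:ParallelSC}, the saddle connections comprising $\Col_{\bfC}(\bfC)$ are $\cM_{\bfC}$-parallel, and since $\cM_{\bfC}$ is a stratum they are hat-homologous by Theorem \ref{T:HatHomologous}. Combined with the observation that the boundary of $\bfC$ projects to at most two saddle connections under the collapse, this forces $\Col_{\bfC}(\bfC)$ to consist of either one or two saddle connections.

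If $\Col_{\bfC}(\bfC)$ is a pair of hat-homologous saddle connections, then by Theorem \ref{T:HatHomologous} a component of their complement in $\Col_{\bfC}(Z,\zeta)$ has trivial holonomy; regluing $\bfC$ at this pair recovers the generic surface in the ambient stratum containing $(Z,\zeta)$, and a dimension count via Theorem \ref{T:BoundaryTangent} yields conclusion~(1). If instead $\Col_{\bfC}(\bfC)$ is a single saddle connection $s$, then regluing $\bfC$ requires an involution of $\Col_{\bfC}(Z,\zeta)$ that reverses $s$ and extends the ``fold'' of the envelope across the surface. Such an involution has derivative $-\mathrm{Id}$ and preserves the generic surface of $\cM_{\bfC}$, so by Lemma \ref{L:InvolutionImpliesHyp-background} it is forced to be the hyperelliptic involution, and $\For(\Col_{\bfC}(Z,\zeta))$ lies in a hyperelliptic component of a stratum of quadratic differentials with $s$ fixed by the involution. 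This involution then extends across $\bfC$ to an involution on $(Z,\zeta)$, placing $\cM$ inside the associated hyperelliptic locus.

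The main obstacle will be distinguishing cases (2) and (3) and pinning down the precise marked-point structure. If the ambient stratum $\For(\cQ)$ is itself hyperelliptic, then $\For(\cM) = \For(\cQ)$ and the only non-free marked points on a generic surface in $\cM$ are those forced onto the boundary of $\bfC$ and exchanged by the hyperelliptic involution, giving case~(2). Otherwise $\For(\cM)$ is a proper codimension-one subvariety of a non-hyperelliptic $\For(\cQ)$; combining the Mirzakhani-Wright full rank theorem (Theorem \ref{T:MirWriFullRank}) with Apisa's periodic point theorem (Theorem \ref{T:StrataMarkedPoints}), this subvariety must be the hyperelliptic locus and all marked points must be free, yielding case~(3). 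The marked-point analysis will closely parallel Sublemma \ref{SL:full-rank-gluing} in the proof of Proposition \ref{P:DiamondWithH}. The converse direction of the ``if and only if'' follows by noting that a hyperelliptic-fixed saddle connection in $\Col_{\bfC}(Z,\zeta)$ admits a regluing of a complex envelope that respects the hyperelliptic symmetry, forcing $\cM$ into the hyperelliptic locus and thus into cases~(2) or~(3).
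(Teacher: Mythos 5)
Your proposal contains a serious gap in the single-saddle-connection case, which is precisely where the hard content of the theorem lies.

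You claim that when $\Col_{\bfC}(\bfC)$ is a single saddle connection $s$, ``regluing $\bfC$ requires an involution of $\Col_{\bfC}(Z,\zeta)$ that reverses $s$ and extends the fold of the envelope across the surface,'' and you then invoke Lemma \ref{L:InvolutionImpliesHyp-background} to conclude this involution must be the hyperelliptic involution. This is not correct. Gluing a complex envelope back into a slit at $s$ requires no symmetry of $\Col_{\bfC}(Z,\zeta)$ whatsoever: one cuts $s$ to obtain two boundary saddle connections $s_1, s_2$ of equal length and glues in a parallelogram, exactly as described around Figure \ref{F:GlueInComplex}. This always produces a quadratic differential, regardless of any involution on the base. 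What is genuinely at stake is whether the codimension-one condition $s_1 = s_2$ is cut out by an invariant subvariety. Your argument would force every single-saddle-connection degeneration into cases (2) or (3); but in fact case (1) can arise here too, and establishing this is the core difficulty. The hard direction — showing that when $\For(\cM_\bfC)$ is not hyperelliptic, or when $s$ is not generically fixed by the hyperelliptic involution, the locus $\{s_1=s_2\}$ is never an invariant subvariety, so that $\cM$ must be the full stratum — is not addressed at all. In the paper this is the content of Theorem \ref{T:complex-gluing}, whose proof occupies several pages and proceeds by an induction on dimension, using Proposition \ref{P:ArithmeticCylinder} to produce cylinder decompositions, Proposition \ref{P:Avoidance} to find disjoint cylinders far from $s$, the Hyperelliptic Diamond Lemma (Lemma \ref{L:hyperelliptic-diamond}), and explicit base cases in $\cQ(-1^4,0)$, $\cQ(-1^4,0^2)$, and $\cQ(2,-1^2)$. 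Your argument by contrast is circular at this point: the existence of the involution is what you need to prove (or rule out), yet you assume it.

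There is also a smaller confusion in your case analysis. Within the case that $\Col_\bfC(\bfC)$ is a single saddle connection, you do not separate the subcase where $\Col_\bfC(Z,\zeta)$ has trivial holonomy. In that subcase $s_1$ and $s_2$ are hat-homologous (Remark \ref{R:notHH}), the equation $s_1 = s_2$ is vacuous, and case (1) holds immediately — a scenario that does not fit your dichotomy ``one saddle connection $\Rightarrow$ hyperelliptic.'' Your two-saddle-connection case and the marked-point bookkeeping in cases (2) versus (3) are broadly reasonable, and the ``only if'' direction of the equivalence (the paper's Lemma \ref{L:complex-gluing-easy}) is as you describe, but without a substitute for the inductive proof of Theorem \ref{T:complex-gluing}, the proposal does not establish the statement.
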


Recall that generic cylinders were defined in Definition \ref{D:GenericCyl}, complex envelopes were defined in Definition \ref{D:CylinderTypes}, the notation $\cF$ for forgetting marked points was defined in Definition \ref{D:F}, and hyperelliptic loci were defined in Definition \ref{D:HypLocus}.

\begin{rem}\label{R:TwoDegenerations}
There are two ways to collapse a complex envelope, illustrated in Figure \ref{F:TwoWaysToCollapseComplexEnvelope}.
\begin{figure}[h]\centering
\includegraphics[width=.8\linewidth]{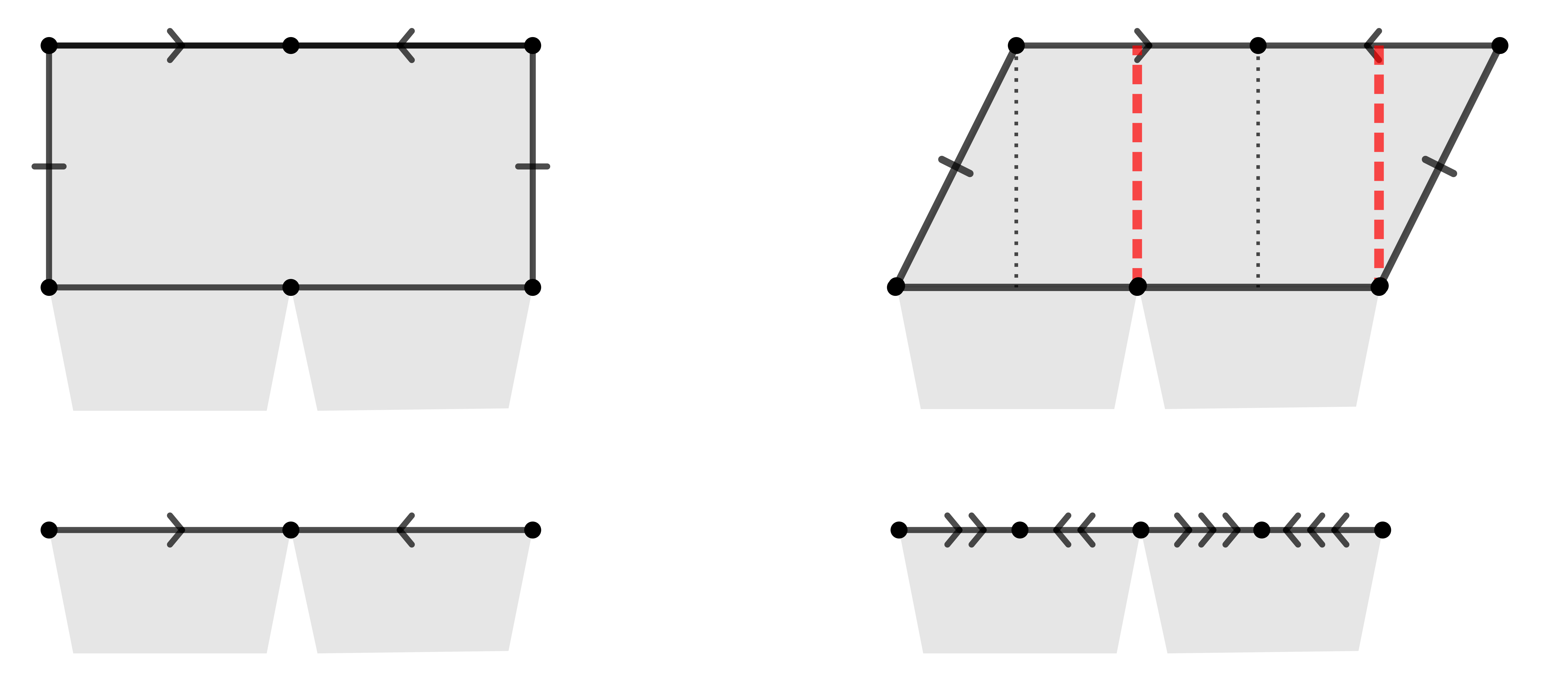}
\caption{The two ways of collapsing a complex envelope (left and right). The top illustrates the envelope before collapse, and the bottom illustrates after the collapse.}
\label{F:TwoWaysToCollapseComplexEnvelope}
\end{figure}
In one, $\Col_{\bfC}(\bfC)$ is a single saddle connection. In the other, $\Col_{\bfC}(\bfC)$ consists of two saddle connections that each connect a singularity or marked point to a pole. 

When $\cM$ is a component of a stratum of quadratic differentials, Theorem \ref{T:MZ} gives that the complement of $\bfC$ has trivial holonomy. In this case, the first degeneration has trivial holonomy. Of course the second degeneration has non-trivial holonomy because it has simple poles. Hence, when $\cM$ is a component of a stratum, the two degenerations of $\bfC$  can be distinguished by the corresponding degeneration on the holonomy double cover of $(X, q)$. The first degeneration \red disconnects \black the holonomy double cover, while the second \red leaves it connected. \black
\end{rem}

We will prove Theorem \ref{T:complex-gluing0} primarily by considering an operation that reverses one of the collapses of $\bfC$.  Given a quadratic differential $(Q, q)$ in a stratum $\cQ$ with a saddle connection $s$, cutting $s$ produces a surface with two parallel boundary saddle connections of equal length, $s_1$ and $s_2$. Now, consider a parallelogram with one edge parallel to and of the same length as $s$. Glue in the parallelogram as in Figure \ref{F:GlueInComplex} to obtain a new quadratic differential. We say that this quadratic differential is the result of gluing in a complex envelope to $s$, keeping in mind that the result depends on the choice of parallelogram. 

\begin{figure}[h]\centering
\includegraphics[width=.5\linewidth]{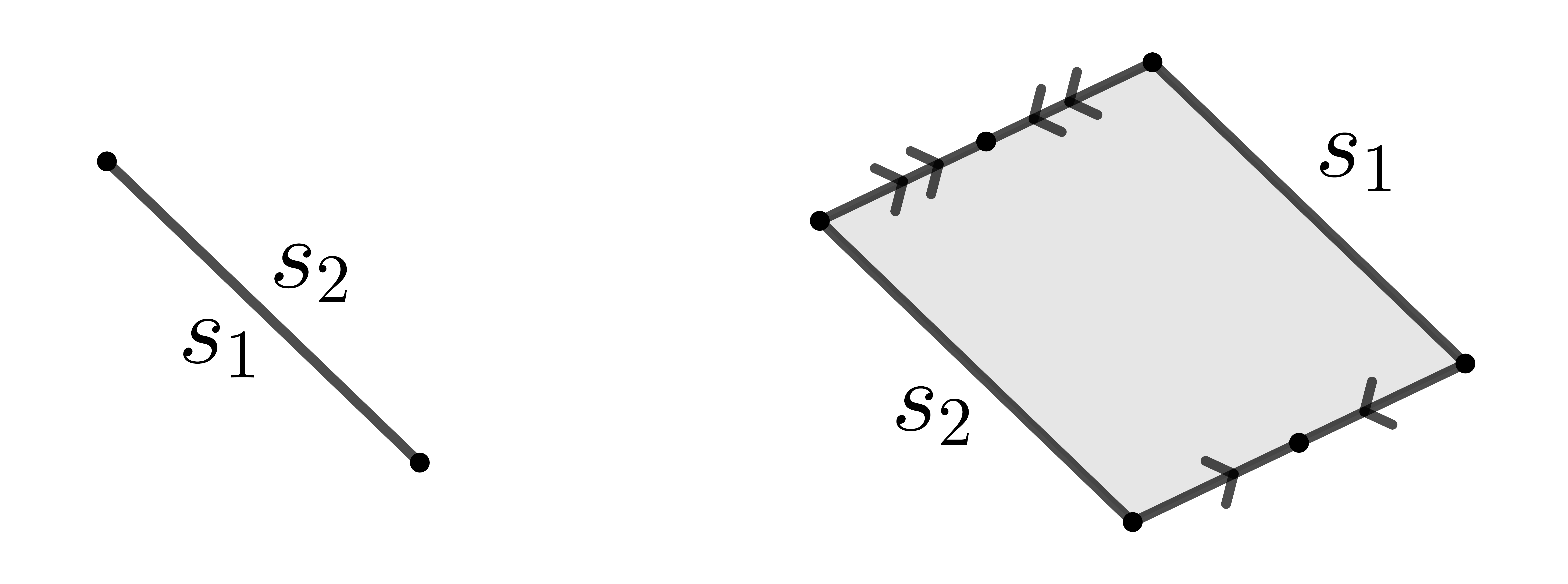}
\caption{}
\label{F:GlueInComplex}
\end{figure}

We now define an invariant subvariety $\cM_s$ as follows. If there is an invariant subvariety locally defined by the equation $s_1=s_2$, then we define $\cM_s$ to be this invariant subvariety. Otherwise, we define $\cM_s$ to be the connected component of the stratum that contains the result of gluing in a complex envelope to $s$.

Most of the section will involve proving the following, where, following Convention \ref{CV:NoTrivHol}, the quadratic differential is implicitly assumed to have non-trivial holonomy.

\begin{thm}\label{T:complex-gluing}
Let $s$ be a saddle connection on a quadratic differential $(Q,q)$, and let $\cQ$ be the connected component of a stratum containing $(Q,q)$. If $\For(\cQ)$ is hyperelliptic and $s$ is generically fixed by the hyperelliptic involution, then one of the following occurs:
\begin{enumerate}
    \item $\For(\cM_s)$ is a hyperelliptic component of a stratum and all marked points  are free except for a pair of points on the boundary of the glued-in complex envelope that are exchanged by the hyperelliptic involution, or
    \item $\For(\cM_s)$ is a codimension one hyperelliptic locus in a non-hyperelliptic connected component of a stratum and all marked points are free. 
\end{enumerate} 
\red Otherwise, $\cM_s$ is a connected component of a stratum. \black
%
\end{thm}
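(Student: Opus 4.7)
The plan is to analyze the glued surface $(Q', q') \in \cQ'$ (where $\cQ'$ denotes the connected component of the stratum containing $(Q', q')$) by studying the inserted complex envelope $\bfC$ and the generic behavior of the saddle connections $s_1, s_2$ in $\cQ'$. By construction, $\bfC$ has $s_1, s_2$ as its two distinct boundary saddle connections on one side and $s'$ with multiplicity two on the other. Theorem \ref{T:MZ} then implies that $\bfC$ is generically a complex envelope in $\cQ'$, and that $s_1, s_2$ are hat-homologous in $\cQ'$ (in particular, they remain parallel on all nearby surfaces in $\cQ'$).

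The central question is whether the equation $s_1 = s_2$ holds automatically on all of $\cQ'$ (so $\cM_s = \cQ'$ is a stratum component) or only on a proper codimension one subvariety. The equation is automatic if and only if there is an affine involution on the generic surface of $\cQ'$ swapping $s_1$ with $s_2$, and by Lemma \ref{L:InvolutionImpliesHyp-background} this is equivalent to $\For(\cQ')$ being hyperelliptic with its hyperelliptic involution $J'$ swapping $s_1$ with $s_2$. The next step would be to prove the equivalence: $\For(\cQ')$ is hyperelliptic with $J'$ swapping $s_1, s_2$ if and only if $\For(\cQ)$ is hyperelliptic with $s$ generically fixed by the hyperelliptic involution $J$. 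The forward direction uses Lemma \ref{L:hyp-cyl}, which says $J'$ acts as a translation on $\bfC$; collapsing $\bfC$ along this symmetry descends $J'$ to an involution on $(Q,q)$ that sends $s$ to itself with reversed orientation. The reverse direction extends $J$ across $\bfC$ by defining $J'|_{\bfC}$ to be the translation by half the circumference, which is the unique extension consistent with Lemma \ref{L:hyp-cyl} and forces $|s_1| = |s_2|$.

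Putting these together, I would conclude as follows. If $\For(\cQ)$ is not hyperelliptic or $s$ is not generically fixed, then by the equivalence $s_1 = s_2$ is not automatic on $\cQ'$, and $\cM_s = \cQ'$ is a connected component of a stratum, yielding the ``Otherwise'' case. If $\For(\cQ)$ is hyperelliptic and $s$ is fixed, then $s_1 = s_2$ holds throughout the hyperelliptic locus of $\cQ'$, and $\cM_s$ coincides with this locus. Two sub-cases remain depending on whether $\cQ'$ itself is a hyperelliptic component of a stratum or merely $\For(\cQ')$ is: in the former, $\cM_s = \cQ'$ is a hyperelliptic component (conclusion 1); in the latter, $\cM_s$ is a proper codimension one hyperelliptic locus in a non-hyperelliptic component (conclusion 2).

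The main obstacle will be the marked point analysis needed to separate conclusions 1 and 2 and to describe the marked points in each. In conclusion 1, the two vertices of $\bfC$ shared by $s_1$ and $s_2$ form a pair exchanged by $J'$, and I would need to verify that this pair is non-free in $\cM_s$ and compatible with the hyperelliptic component's marked-point structure; in conclusion 2, all marked points on $(Q', q')$ must be free, since $\For(\cQ')$ is non-hyperelliptic and the hyperelliptic constraint is imposed only on the codimension one locus. Determining which sub-case occurs from the original vertex and marked-point data on $(Q, q)$, and checking that every configuration lands in exactly one of the three listed outcomes, will rely on the hyperelliptic stratum classification discussed in Section \ref{S:PreQD}.
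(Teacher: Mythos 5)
Your plan contains a fundamental misconception that makes the hard direction of the theorem disappear. You assert that Theorem~\ref{T:MZ} implies ``$s_1, s_2$ are hat-homologous in $\cQ'$'' and frame the dichotomy as ``$s_1=s_2$ is automatic (so $\cM_s=\cQ'$) vs.\ defines a proper codimension one subvariety.'' But Remark~\ref{R:notHH} states the opposite: because $(Q,q)$ has non-trivial holonomy, $s_1$ and $s_2$ are \emph{never} hat-homologous on the glued surface, so $s_1 = s_2$ is \emph{never} a vacuous condition. The real dichotomy is different: the set $\{s_1 = s_2\}$ is always a codimension one locus, and the question is whether it is an \emph{invariant subvariety} (i.e., whether the orbit closure of a generic surface satisfying $s_1=s_2$ stays inside this locus) or whether that orbit closure is the whole stratum. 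Your proposal conflates ``$\cM_s = \cQ'$'' with ``the equation is vacuous,'' but in the ``Otherwise'' case $\cM_s=\cQ'$ holds precisely because the orbit closure of a surface with $s_1=s_2$ is dense in $\cQ'$ --- an a priori nonobvious statement that the equivalence you describe (existence of an affine involution swapping $s_1,s_2$) does nothing to establish.

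Consequently, the implication you draw --- ``if $\For(\cQ)$ is not hyperelliptic or $s$ is not fixed, then $s_1=s_2$ is not automatic, hence $\cM_s = \cQ'$'' --- has no content: the second step is exactly what must be proved, and it does not follow from the non-existence of an involution. This is the bulk of the paper's argument. In Proposition~\ref{P:complex-gluing1} (the case $\For(\cQ)$ hyperelliptic but $s$ not fixed), the paper argues by contradiction, taking a minimal counterexample and running a detailed case analysis using horizontally periodic surfaces, rel bounds from Lemma~\ref{L:Q-rank}, and marked-point results (Theorem~\ref{T:StrataMarkedPoints}, Lemma~\ref{L:ApisaWright}); the base cases are Lemmas~\ref{L:PillowcaseBase} and~\ref{L:NonPillowcaseBase}. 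In the non-hyperelliptic case, the paper again takes a minimal counterexample, uses Proposition~\ref{P:Avoidance} to produce disjoint cylinders forming a diamond, applies Sublemma~\ref{SL:CollapsingInduction} to show that collapsing either side lands in a hyperelliptic stratum with $\Col_\bfC(s)$ fixed, and then invokes the Hyperelliptic Diamond Lemma (Lemma~\ref{L:hyperelliptic-diamond}) together with the elimination of bad configurations (Sublemma~\ref{SL:hyperelliptic-diamond2}) to force $\For(\cQ)$ to be hyperelliptic, a contradiction. None of this machinery appears in your sketch; the argument you do give would only establish the easy direction (Lemma~\ref{L:complex-gluing-easy}), which the paper dispatches in a few lines by extending the hyperelliptic involution across the glued envelope.
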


\begin{rem}\label{R:notHH}
The analogue of Theorem \ref{T:complex-gluing} for Abelian differentials is trivial.
Indeed, if $(Q,q)$ had trivial linear holonomy, then $s_1$ and $s_2$ would be hat-homologous, so the equation $s_1=s_2$ would be vacuous and $\cM_s$ would be a whole component of a stratum. 

However, in this section, because of our assumption that $(Q,q)$ has non-trivial holonomy,  Masur-Zorich (Theorem \ref{T:HatHomologous}) gives that the $s_1$ and $s_2$ are not hat-homologous and hence that the equation $s_1=s_2$ is not vacuous. 
\end{rem}

\begin{rem}\label{R:MarkedEndpoint}
Being generically fixed by the hyperelliptic involution means that on all surfaces in a neighborhood of $(Q, q)$, $s$ is fixed by the hyperelliptic involution. This condition is not satisfied if an endpoint of $s$ is a marked point, so the theorem gives that in this case $\cM_s$ is a connected component of a stratum.
\end{rem}

\begin{rem}\label{R:GluingStratum}
Let $(Q', q')$ be a quadratic differential formed by gluing in a complex envelope to the saddle connection $s$ on a quadratic differential $(Q, q)$ in a stratum $\cQ$. If $s$ connects a singularity of order $a$ to itself, then write $\cQ = \cQ(a, \kappa)$. If $s$ connects two singularities of order $a$ and $b$, then write $\cQ = \cQ(a, b, \kappa)$. From Figure \ref{F:GlueInComplex}, it is clear that the surface $(Q', q')$ belongs to the stratum $\cQ(a+2, -1^2, \kappa)$ in the first case and $\cQ(a+1, b+1,-1^2, \kappa)$ in the second. This shows that gluing in a complex envelope increases the dimension of the stratum by two. 
\end{rem}

\begin{proof}[Proof of Theorem \ref{T:complex-gluing0} given Theorem \ref{T:complex-gluing}.]
Let us begin with the first type of collapse, illustrated on the left of Figure \ref{F:TwoWaysToCollapseComplexEnvelope}. 
If $\Col_{\bfC}(Z,\zeta)$ has trivial holonomy, then $\cM$ is a stratum of quadratic differentials by Remark \ref{R:notHH}. 

\red  To complete our analysis of the first type of collapse, we now consider the case when $\Col_{\bfC}(Z,\zeta)$ has nontrivial holonomy. Remark \ref{R:GluingStratum} gives that the stratum containing $(Z,\zeta)$ has dimension two greater than the stratum containing $\Col_{\bfC}(Z,\zeta)$. Since $\cM$ has dimension one greater than $\cM_\bfC$, we get that $\cM$ is codimension 1 in a component of a stratum. The equation $s_1=s_2$ holds on $\cM$ (because $\bfC$ is $\cM$-generic) and this equation is non-trivial by Remark \ref{R:notHH}. Hence  $\cM$ is locally defined by $s_1=s_2$. Theorem \ref{T:complex-gluing} now gives the result.  \black

Now we conclude by considering the second type of collapse, illustrated on the right of Figure \ref{F:TwoWaysToCollapseComplexEnvelope}. 
As in the proofs of Lemma \ref{L:StrataBasics} and Corollary \ref{C:codim1}, we get that $\cM$ is a connected component of a stratum. 
%
%
\end{proof}

We now give the proof of the easy part of Theorem \ref{T:complex-gluing}.

\begin{lem}\label{L:complex-gluing-easy}
Suppose $\For(\cQ)$ is hyperelliptic and $s$ is \red generically \black fixed by the hyperelliptic involution. Then one of the following occurs:
\begin{enumerate}
    \item $\For(\cM_s)$ is a hyperelliptic component of a stratum and all marked points  are free except for a pair of points on the boundary of $\bfC$ that are exchanged by the hyperelliptic involution, or
    \item $\For(\cM_s)$ is a codimension one hyperelliptic locus in a non-hyperelliptic connected component of a stratum and all marked points are free. 
\end{enumerate} 
\end{lem}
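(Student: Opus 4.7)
The plan is to construct an involution on $(Q', q')$ from the hyperelliptic involution $J$ of $(Q, q)$ together with the natural rotational symmetry of the complex envelope $\bfC$, and then classify which hyperelliptic locus the resulting surface lies in.

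First I would extend the involution. Because $s$ is generically fixed by $J$ and $J$ has derivative $-I$, the restriction $J|_s$ is the reflection of $s$ about its midpoint (swapping the two endpoints). The envelope $\bfC$ carries its own order-two symmetry $\iota$, namely rotation by $\pi$ about its centre: this also has derivative $-I$, swaps the two distinct saddle connections on the non-$s$ boundary component, and restricts to the same reflection on the $s$-boundary (the one whose single saddle connection has multiplicity two). Because $J$ and $\iota$ agree on $s$, they glue to a well-defined involution $J'$ on $(Q', q')$ of derivative $-I$, preserving the marked-point structure inherited from $(Q, q)$ and swapping the two new vertices that appear on the non-$s$ boundary of $\bfC$.

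With $J'$ in hand, let $\cQ'$ denote the connected component of the stratum containing $(Q', q')$. By Lemma \ref{L:InvolutionImpliesHyp-background}, the generic surface in $\cQ'$ admits a non-bijective half-translation involution if and only if $\For(\cQ')$ is hyperelliptic. If $\For(\cQ')$ is hyperelliptic, then $(Q', q')$ is generic in $\cQ'$, the equation $s_1 = s_2$ is automatically satisfied throughout $\cQ'$, and so by its definition $\cM_s = \cQ'$ is a hyperelliptic component; the two new vertices of $\bfC$ provide the pair of non-free marked points on the envelope's boundary that are exchanged by $J'$, matching conclusion (1). If instead $\For(\cQ')$ is not hyperelliptic, then $(Q', q')$ lies in a proper hyperelliptic locus of $\cQ'$. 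The only new local constraint imposed by $J'$ beyond those already satisfied on $\cQ$ is $s_1 = s_2$; by Remark \ref{R:notHH}, this is a nontrivial codimension-one condition. Hence the hyperelliptic locus near $(Q', q')$ coincides with the codimension-one subvariety locally defined by $s_1 = s_2$, so $\cM_s$ equals this locus, matching conclusion (2).

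To complete the proof I would verify the precise marked-point structure. In case (2), the absence of non-free marked points follows because $\For(\cQ')$ is non-hyperelliptic and so, by Theorem \ref{T:StrataMarkedPoints}, has no periodic points; hence any marked point on $\cM_s$ is free. In case (1), Theorem \ref{T:StrataMarkedPoints} applied to the hyperelliptic component $\For(\cQ')$ shows that the only periodic marked points are Weierstrass; the two new vertices on $\bfC$ are visibly non-Weierstrass (they are exchanged, not fixed, by $J'$) and arise as a constrained pair determined by the envelope, while all other marked points remain free. The main obstacle is exactly this marked-point bookkeeping, especially confirming in case (1) that no constrained marked points arise beyond the advertised envelope pair and verifying in case (2) that $J'$ imposes no independent local constraints on periods of $\cQ'$ beyond $s_1 = s_2$. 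Both reduce to the principle that gluing in the envelope adds a single new modulus, so the only new symmetry equation is the one corresponding to that modulus.
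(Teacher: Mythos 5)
Your construction of the extended involution $J'$ by gluing $J$ to the $\pi$-rotation of the parallelogram is the same idea the paper uses, and your analysis of case (2) is essentially correct. However, your treatment of case (1) has a genuine gap, and your own words expose it.

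You assert that if $\For(\cQ')$ is hyperelliptic then ``the equation $s_1 = s_2$ is automatically satisfied throughout $\cQ'$, and so by its definition $\cM_s = \cQ'$,'' and then in the very next clause you say the two new vertices on the boundary of $\bfC$ are \emph{non-free} marked points. These two claims contradict each other: if $\cM_s$ is the full component $\cQ'$ of a stratum, every marked point in $\cM_s$ is free. The resolution is that $s_1 = s_2$ is \emph{not} automatic in $\cQ'$. The case where $\For(\cQ')$ is hyperelliptic is exactly the case where $s$ joins two poles of $(Q,q)$ (the paper observes that if one endpoint of $s$ is a pole then both are, since $s$ is fixed by the hyperelliptic involution and no pole is), and in that case Remark \ref{R:GluingStratum} shows that gluing in the envelope turns those two poles into marked points of order zero, which are the endpoints of $s_1$ and $s_2$. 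On a generic surface in $\cQ'$ these marked points are free, and moving them independently violates $s_1 = s_2$. Thus $\cM_s$ is the proper codimension-one invariant subvariety locally cut out by $s_1 = s_2$, not the whole component. What \emph{is} hyperelliptic is $\For(\cM_s)$, which equals $\For(\cQ')$ after the constrained pair of marked points is forgotten; that is precisely what conclusion (1) asserts. Your case split on whether $\For(\cQ')$ is hyperelliptic obscures this point, whereas the paper's case split on whether $s$ has pole endpoints directly tracks the creation of marked points, which is the real mechanism distinguishing the two outcomes.

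A smaller point worth flagging: in your heuristic ``the only new local constraint imposed by $J'$ beyond those already satisfied on $\cQ$ is $s_1 = s_2$,'' the non-vacuity of that constraint rests on $s_1,s_2$ not being hat-homologous, which is exactly Remark \ref{R:notHH} and hinges on $(Q,q)$ having non-trivial holonomy; the paper uses this at the same spot, and you should cite it where you appeal to it rather than only at the end.
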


\begin{proof}
The involution extends to the result of gluing in a complex envelope, by taking the involution on the glued-in parallelogram to be rotation by $\pi$. The quotient can be seen explicitly to be genus 0 since it is the result of gluing in a simple envelope to a genus 0 surface. Thus, $\For(\cM_s)$ is contained in a hyperelliptic locus. 

If either endpoint of $s$ is a pole then both are since $s$ is fixed by the hyperelliptic involution and no pole is. 

If both endpoints of $s$ are poles, $\For(\cM_s) = \For(\cQ)$ is a hyperelliptic component of a stratum and all marked points on $(X, q)$ are free except for a pair of points on the boundary of $\bfC$ that are exchanged by the hyperelliptic involution.


If neither endpoint of $s$ is  a pole, then the saddle connections $s_1$ and $s_2$ do not have marked points as endpoints. The stratum containing $\For(\cM_s)$ cannot be hyperelliptic since, if that were the case, the equation $s_1=s_2$ would be vacuous, contradicting Remark \ref{R:notHH}. \red Since $\cM_s$ is defined by $s_1 = s_2$, $\For(\cM_s)$ is as well. Therefore $\For(\cM_s)$ is a codimension one hyperelliptic locus in $\For(\cQ)$. \black 
\end{proof}

\subsection{Preliminaries on horizontally periodic surfaces}

The purpose of this section is to establish the following variant of \cite[Theorem 1.10]{Wcyl}. It applies to Abelian or quadratic differentials, but we use the notation of Abelian differentials.

\begin{prop}\label{P:ArithmeticCylinder}
Suppose that $(X, \omega)$ is an Abelian or quadratic differential that is contained in an  invariant subvariety $\cM$ with field of definition $\bk(\cM) = \mathbb{Q}$. Let $s$ be a horizontal saddle connection on $(X, \omega)$. Then there are horizontally periodic surfaces $(X', \omega')$ arbitrarily close to $(X, \omega)$ on which $s$ persists and remains horizontal and for which the following hold:
\begin{enumerate}
    \item\label{I:FirstBound} The number of horizontal cylinders is at least $\rk(\cM) + \mathrm{rel}(\cM) - 1$.
    \item\label{I:TwistDim} The core curves of the horizontal cylinders span a subspace of $T_{(X', \omega')}(\cM))^*$ of dimension at least $\rk(\cM)$. 
    \item\label{I:SecondBound} The number of horizontal cylinders is at least $\mathrm{\rk}(\cM)$. (This is an improvement on the first bound if  $\mathrm{rel}(\cM) = 0$.) 
    \item\label{I:GenericallyParallelBound}  If $s$ is generically parallel to one of the horizontal cylinders, then there are at least $\rk(\cM) + \mathrm{rel}(\cM)$ many horizontal cylinders.
\end{enumerate}
\end{prop}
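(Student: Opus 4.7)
The approach combines two ingredients: finding horizontally periodic surfaces in $\cM$ arbitrarily close to $(X,\omega)$ on which $s$ persists horizontally, and counting cylinders on such surfaces via the twist-space dimension arguments of \cite{Wcyl} and \cite{MirWri}. For the first ingredient, $\bk(\cM) = \bQ$ combined with the fact that ``$\int_s \omega \in \bR$'' is a rational real-linear condition allows us to apply Lemma~\ref{L:DenseSquares} within the real codimension-one sublocus of $\cM$ where $s$ stays horizontal, yielding horizontally periodic approximations of $(X,\omega)$ that preserve $s$ horizontally. Alternatively, one may apply the Smillie--Weiss theorem to the horocycle orbit of a nearby square-tiled perturbation; the horocycle flow preserves horizontality of $s$, and its orbit closure always contains a horizontally periodic surface.

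For the counting, let $(X', \omega') \in \cM$ be horizontally periodic with horizontal core curves $\gamma_1, \dots, \gamma_N$. By the Cylinder Deformation Theorem (Theorem~\ref{T:CDT}), each $\cM$-equivalence class of horizontal cylinders contributes an independent real direction to the tangent space, and by Mirzakhani--Wright (Theorem~\ref{T:CylECTwistSpace}) the twist space of each equivalence class is generated by its standard shear modulo $\ker(p) \cap T\cM$. Since the $\gamma_i$ are pairwise disjoint, their span in $p(T\cM)$ is isotropic under the Avila--Eskin--M\"oller symplectic form, and the Cylinder Deformation Theorem forces this span to be Lagrangian of complex dimension $\rk(\cM)$, proving parts \eqref{I:TwistDim} and \eqref{I:SecondBound}. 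Together with the $\ker(p) \cap T\cM$ contribution of complex dimension $\mathrm{rel}(\cM)$, this recovers the Wright bound $N \geq \rk(\cM) + \mathrm{rel}(\cM)$.

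Requiring that $s$ remain horizontal imposes at most one additional real-linear constraint on the allowed deformations, so the Wright bound drops by at most one; this gives part \eqref{I:FirstBound}. Part \eqref{I:SecondBound} is unaffected, because the symplectic/Lagrangian count controlling the rank part of the twist space is not reduced by a single real constraint on deformations. For part \eqref{I:GenericallyParallelBound}, when $s$ is generically parallel to one of the horizontal cylinders $C_i$, the condition ``$s$ horizontal'' is implied by the condition ``$\gamma_i$ horizontal'', so the extra constraint is redundant and the full Wright bound $N \geq \rk(\cM) + \mathrm{rel}(\cM)$ is recovered with $s$ preserved.

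\textbf{Main obstacle.} The technical crux is the careful bookkeeping of when the ``$s$ horizontal'' constraint is independent of the existing horizontal-cylinder constraints and when it is implied by them, and verifying that the corresponding dimension drop occurs in the $\mathrm{rel}$-part of the twist space rather than the $\rk$-part (so that \eqref{I:SecondBound} continues to hold even when \eqref{I:FirstBound} loses one). The density of horizontally periodic surfaces in the codimension-one rational sublocus of $\cM$ where $s$ stays horizontal is relatively standard, but one must ensure that the perturbed surface still carries enough horizontal cylinders to realize the predicted bounds, which is where the Smillie--Weiss argument (or a direct rational-periods approximation) is used.
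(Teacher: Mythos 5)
Your first ingredient is fine: the rational real-linear condition argument for producing nearby horizontally periodic surfaces with $s$ horizontal works, and the paper does essentially the same thing. (The Smillie--Weiss alternative you sketch would need extra care, since a horizontally periodic surface in the horocycle orbit closure of a nearby square-tiled surface need not itself be near $(X,\omega)$.)

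The gap is in the counting, and you have in fact named it yourself as the ``main obstacle'' without then filling it. You assert that ``the Cylinder Deformation Theorem forces this span to be Lagrangian of complex dimension $\rk(\cM)$,'' but this is not automatic on an arbitrary horizontally periodic surface: the span $d$ of the core curves in $p(T_{(X',\omega')}\cM)^*$ a priori satisfies only $d\le\rk(\cM)$, and can be strictly smaller if the surface has too few cylinders. What the paper does, following Wright, is (i) pick $(X',\omega')$ with maximally many horizontal cylinders subject to $s$ staying horizontal, (ii) prove for that optimal choice that $\mathrm{Twist} = \mathrm{Pres}_s$ (Lemma~\ref{L:twist}, which runs the same shrinking argument as \cite[Sublemma 8.7]{Wcyl}), (iii) combine the formula $\dim\mathrm{Pres}=\dim\cM-d$ with the new Lemma~\ref{L:PD} giving $\dim p(\mathrm{Twist})=d$ and hence $\dim\mathrm{Twist}\le d+(\dim\cM-2\rk\cM)$, and then do a two-case dimension chase to extract $d=\rk(\cM)$. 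Your proposal invokes the conclusion $d=\rk(\cM)$ as though CDT hands it over for free, and your justification for part~\eqref{I:SecondBound} --- ``the symplectic/Lagrangian count is not reduced by a single real constraint'' --- is precisely the statement the inequality chase establishes, so the reasoning is circular: without Lemma~\ref{L:PD} and the resulting inequality there is nothing preventing the one-dimensional drop from occurring in the rank part rather than in $\ker(p)\cap T\cM$. The chain $\dim\mathrm{Twist}=\dim\cM-d-1$ together with $\dim\mathrm{Twist}\le d+\dim\cM-2\rk\cM$ is what forces $2d\ge 2\rk\cM-1$, hence $d=\rk\cM$, in the codimension-one case; that is not a soft observation.

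Your argument for part~\eqref{I:GenericallyParallelBound} (generic parallelism makes the $s$-horizontal constraint redundant on the cylinder-preserving space, collapsing to the case $\mathrm{Pres}_s=\mathrm{Pres}$) is correct and matches the paper. But the proposal as written would need to actually set up the cylinder-preserving space, the maximality choice, the identity $\mathrm{Twist}=\mathrm{Pres}_s$, and Lemma~\ref{L:PD} before any of the four conclusions follow.
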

Here $V^*$ denotes the dual of a vector space $V$. 

\begin{proof}
We will work entirely with Abelian differentials, but the proof will imply the same bounds for quadratic differentials by taking holonomy double covers. 

Let $(X,\omega)\in \cM$ be horizontally periodic. Define the \emph{twist space} of $(X,\omega)$, denoted $\mathrm{Twist}((X, \omega), \cM)$, to be the intersection of $T_{(X,\omega)}\cM$ with the span of the duals of the core curves of horizontal cylinders, using the duality between the homology of the punctured surface and relative cohomology, as in \cite[Section 4.1]{MirWri}. 

The dimension of the twist space is a lower bound for the number of cylinders on $(X,\omega)$. If $\cM$ consists of holonomy double covers of quadratic differentials, its dimension also gives a lower bound for the number of cylinders on the associated quadratic differential. 

The \emph{cylinder preserving space} of $(X,\omega)$, denoted $\mathrm{Pres}((X, \omega), \cM)$, is the subspace of $T_{(X,\omega)}\cM$ that evaluates to zero on the core curves of the horizontal cylinders. Note that 
$$ \mathrm{Twist}((X, \omega), \cM) \subset \mathrm{Pres}((X, \omega), \cM).$$
Consider also $\mathrm{Pres}_s((X, \omega), \cM)$, which we define to be the subspace of $\mathrm{Pres}((X, \omega), \cM)$ that evaluates to 0 on $s$, and note that 
$$\mathrm{Twist}((X, \omega), \cM) \subset \mathrm{Pres}_s((X, \omega), \cM).$$

Since $\bk(\cM) = \mathbb{Q}$, Lemma \ref{L:DenseSquares} implies that square-tiled surfaces are dense in $\cM$. So it is possible to deform $(X,\omega)$ to a nearby horizontally periodic surface $(X', \omega')$ on which $s$ persists and remains horizontal. Let $(X', \omega')$ be such a deformation with as many horizontal cylinders as possible. 

\begin{lem}\label{L:twist}
$\mathrm{Twist}((X', \omega'), \cM) = \mathrm{Pres}_s((X', \omega'), \cM).$
\end{lem}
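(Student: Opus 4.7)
The containment $\mathrm{Twist}((X',\omega'),\cM) \subset \mathrm{Pres}_s((X',\omega'),\cM)$ is immediate: any $\sigma = \sum c_i \gamma_i^*$ satisfies $\sigma(\gamma_j) = \gamma_i\cdot\gamma_j = 0$ and $\sigma(s) = \gamma_i \cdot s = 0$, since all of the core curves $\gamma_i$ and the saddle connection $s$ are horizontal, and parallel cycles have zero algebraic intersection number.

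For the reverse containment, I would argue by contradiction. Suppose $v \in \mathrm{Pres}_s \setminus \mathrm{Twist}$. The plan is to produce a horizontally periodic surface in $\cM$, arbitrarily close to $(X',\omega')$, on which $s$ persists and remains horizontal, but which has strictly more horizontal cylinders than $(X',\omega')$. This will contradict the maximality in the choice of $(X',\omega')$. Since $\bk(\cM) = \mathbb{Q}$, Lemma \ref{L:DenseSquares} gives that surfaces with periods in $\mathbb{Q}[i]$ are dense in $\cM$, so I would approximate $(X',\omega') + \varepsilon v$ (for small $\varepsilon > 0$) by a nearby surface $(X'',\omega'') \in \cM$ with periods in $\mathbb{Q}[i]$. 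Using the freedom to adjust the approximation by small elements of $\mathrm{Twist}$ (and, if needed, by a small multiple of the standard shear of the horizontal cylinders), one can arrange that the displacement $(X'',\omega'') - (X',\omega') \in T_{(X',\omega')}\cM$ lies in $\mathrm{Pres}_s$ and has nonzero image in $\mathrm{Pres}_s/\mathrm{Twist}$.

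Because all periods of $(X'',\omega'')$ lie in $\mathbb{Q}[i]$, its horizontal direction is completely periodic. Because the displacement lies in $\mathrm{Pres}_s$, each original horizontal core curve and also $s$ retain their real-horizontal periods; in particular all of the original horizontal cylinders persist on $(X'',\omega'')$, and $s$ remains a horizontal saddle connection there. The crux is that, since the displacement is nontrivial modulo $\mathrm{Twist}$, it actually changes the horizontal arrangement of singular points or horizontal saddle connections inside some original cylinder $C$: either a previously non-horizontal saddle connection inside $\overline{C}$ becomes horizontal, or the horizontal alignment of boundary cone points of $C$ is broken so that, on the $\mathbb{Q}[i]$-approximant, new horizontal saddle connections appear crossing $C$. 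In either case, $C$ is split into two or more horizontal cylinders on $(X'',\omega'')$, so $(X'',\omega'')$ is a horizontally periodic deformation of $(X,\omega)$ on which $s$ remains horizontal and which has strictly more horizontal cylinders than $(X',\omega')$, contradicting maximality.

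The main obstacle is the last step: rigorously verifying that a non-twist element of $\mathrm{Pres}_s$ forces a strict refinement of the horizontal cylinder decomposition after rational approximation. This is essentially the core of Wright's argument in \cite{Wcyl} and its refinement in \cite{MirWri}; the key conceptual point is that $\mathrm{Pres}/\mathrm{Twist}$ parametrizes exactly those deformations that modify the internal horizontal structure of cylinders while fixing cores, so any nonzero class in $\mathrm{Pres}/\mathrm{Twist}$ produces a genuinely new horizontal saddle connection on a sufficiently generic rational specialization, and this in turn subdivides a cylinder.
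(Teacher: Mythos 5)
Your proposal follows the same strategy as the paper: assume $v\in\mathrm{Pres}_s\setminus\mathrm{Twist}$, use $\bk(\cM)=\bQ$ to rationally approximate a deformation in the direction of $v$, and contradict the maximality of the horizontal cylinder count on $(X',\omega')$. The paper's proof is shorter because it delegates the entire technical heart to \cite[Sublemma 8.7]{Wcyl}, which produces a deformation on which the original horizontal cylinders persist, stay horizontal, and \emph{no longer cover the surface}, and then invokes density of square-tiled surfaces to get horizontal periodicity. You instead try to reconstruct that sublemma and explicitly flag it as the unfinished step, which is honest.

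Two points where your sketch is imprecise. First, your description of the mechanism is slightly off: a class in $\mathrm{Pres}_s$ that is not in $\mathrm{Twist}$ does not cause an old cylinder $C$ to ``split into two or more cylinders'' with the core curves fixed; rather, it moves some boundary saddle connection of $C$ out of the horizontal direction, so $C$ persists as a single (possibly smaller) horizontal cylinder but the cylinders collectively fail to cover the surface. The new cylinders appear on the rational approximant in the uncovered region, not by subdividing $C$. Second, your step of ``adjusting by Twist elements and a multiple of the standard shear'' to force the displacement to lie \emph{exactly} in $\mathrm{Pres}_s$ is problematic: after a $\bQ[i]$-approximation one cannot generally keep the periods of $s$ and the core curves unchanged (those affine conditions are not defined over $\bQ$ unless $(X',\omega')$ itself is rational). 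What is actually needed — and what is rational — is the weaker condition that $s$ and the core curves remain \emph{horizontal}; approximating within that rational linear subspace is what makes the argument go through.
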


\begin{proof}
Otherwise, as in \cite[Sublemma 8.7]{Wcyl}, it is possible to deform $(X', \omega')$ to some surface $(X'', \omega'')$ on which the horizontal cylinders of $(X', \omega')$ persist and stay horizontal but don't cover $(X'', \omega'')$, and where $s$ persists and is horizontal. Since square-tiled surfaces are dense, we may assume this $(X'', \omega'')$ is horizontally periodic. Since the horizontal cylinders from $(X', \omega')$ don't cover the surface, there must be more horizontal cylinders on $(X'', \omega'')$, which is a contradiction. 
\end{proof}

Let $d$ be the dimension of the span in $p(T_{(X',\omega')}\cM)^*$ (or equivalently in $(T_{(X',\omega')}\cM)^*$) of the core curves of the horizontal cylinders. As discussed in \cite[Proof of Theorem 1.10]{Wcyl}, we have $d \leq \rank \cM$.

Note that \cite[Lemma 8.8]{Wcyl} states
\begin{equation}\label{E:CPlower}
\dim \mathrm{Pres}((X', \omega'), \cM) = \dim \cM - d.
\end{equation}

We also have the following, \red which applies to any horizontally periodic surface. \black

\begin{lem}\label{L:PD}
$\dim p(\mathrm{Twist}((X', \omega'), \cM)) = d.$ 
\end{lem}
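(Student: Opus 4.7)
The strategy is to prove matching upper and lower bounds on $\dim p(\mathrm{Twist}((X', \omega'), \cM))$, using the Avila-Eskin-M\"oller symplectic structure on $V := p(T_{(X', \omega')}\cM) \subset H^1(X';\bC)$ as the main tool. Both bounds exploit the nondegenerate cup product pairing on $V$: the upper bound via orthogonality between $p(\mathrm{Twist})$ and $p(\mathrm{Pres})$, and the lower bound via an explicit count using the standard shears supplied by the Cylinder Deformation Theorem (Theorem \ref{T:CDT}).

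For the upper bound, the plan is to show $p(\mathrm{Twist}) \subset p(\mathrm{Pres})^{\perp}$, where $\perp$ denotes the symplectic complement in $V$. Given $v = \sum_i c_i \gamma_i^* \in \mathrm{Twist}$ and $u \in \mathrm{Pres}$, the cup product reduces via Poincar\'e duality to $p(v) \cup p(u) = \sum_i c_i \, p(u)(\gamma_i) = 0$ by definition of $\mathrm{Pres}$. To compute $\dim p(\mathrm{Pres})^{\perp}$, I use \eqref{E:CPlower} together with the observation that $\mathrm{Pres} \cap \ker p = T\cM \cap \ker p = \mathrm{rel}(\cM)$ (rel classes annihilate all absolute cycles), so $\dim p(\mathrm{Pres}) = (\dim \cM - d) - \mathrm{rel}(\cM) = \dim V - d$, and hence $\dim p(\mathrm{Pres})^{\perp} = d$ by symplectic nondegeneracy.

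For the lower bound, I produce $d$ linearly independent elements of $p(\mathrm{Twist})$ from standard shears of $\cM$-equivalence classes of horizontal cylinders. For each such class $\bfC_j$, Theorem \ref{T:CDT} gives that $\sigma_{\bfC_j} = \sum_{i \in \bfC_j} h_i \gamma_i^* \in T\cM$, hence in $\mathrm{Twist}$. Under the symplectic identification $V \cong V^*$, the element $p(\sigma_{\bfC_j})$ corresponds to the functional $w \mapsto \sum_{i \in \bfC_j} h_i w(\gamma_i)$ on $V$. By the definition of $\cM$-equivalence, within a single class the functionals $w \mapsto w(\gamma_i)$ for $i \in \bfC_j$ are all real scalar multiples of one common functional; since the heights $h_i$ are positive and consistent orientation forces all proportionality constants to have one sign, the common functional appears in $p(\sigma_{\bfC_j})$ with a nonzero coefficient. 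As $j$ ranges over equivalence classes, these common functionals collectively span $\mathrm{span}(\gamma_i) \subset V^*$, of dimension $d$ by definition, so $\{p(\sigma_{\bfC_j})\}$ spans a $d$-dimensional subspace of $p(\mathrm{Twist})$.

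The main obstacle is carefully tracking the interplay between the relative cohomology classes $\gamma_i^*$ defining $\mathrm{Twist}$ and the absolute cohomology classes that carry the symplectic structure on $V$. The crux is recognizing that $\cM$-equivalence is precisely the condition that forces $p(\sigma_{\bfC_j})$ to correspond symplectically to a nonzero scalar multiple of the single functional $\gamma_i$ for any $i \in \bfC_j$; once this correspondence is established, both dimension counts are short.
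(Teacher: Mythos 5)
Your proof is correct, and your lower bound is essentially the paper's: standard shears $\sigma_{\bfC_j}$ lie in $\mathrm{Twist}$ by Theorem~\ref{T:CDT}, and collinearity of $\cM$-equivalent core curves in $(T\cM)^*$ shows they span a $d$-dimensional image. (One small imprecision: the collinearity is not ``the definition of $\cM$-equivalence'' but a theorem, \cite[Lemma~4.7]{Wcyl}, which the paper cites explicitly; also worth noting that $c_i>0$ because one can test the collinearity on $w=\omega$ itself, where consistent orientation gives a positive ratio, so $\sum h_i c_i>0$ and the projection is genuinely nonzero.) Your upper bound, however, takes a genuinely different route. The paper proves $\dim p(\mathrm{Twist})\leq d$ by a soft linear-algebra argument: $p(\mathrm{Twist})\subset PD_H^{-1}(R)\cap T$, and Sublemma~\ref{SL:SymplecticLinearAlgebra} (the commutative square relating Poincar\'e duality on $H^1$, projection onto $T$, and restriction of functionals to $T$) together with injectivity of $\mathrm{proj}_T$ on $T$ gives the bound directly, without any dimension formula for $\mathrm{Pres}$. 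You instead show $p(\mathrm{Twist})\subset p(\mathrm{Pres})^{\perp}$, compute $\dim p(\mathrm{Pres}) = (\dim\cM - d) - \mathrm{rel}(\cM) = \dim V - d$ using Equation~\eqref{E:CPlower} (= \cite[Lemma~8.8]{Wcyl}), and invoke nondegeneracy of the AEM form. Both work, and your version is arguably shorter to write; what it costs is the import of Equation~\eqref{E:CPlower}, a nontrivial result, whereas the paper's argument is self-contained modulo AEM and the two formal linear-algebra facts. The paper's choice also makes transparent the remark that Lemma~\ref{L:PD} reproves Theorem~\ref{T:CylECTwistSpace}: the dependence structure is cleanest when the upper bound does not route through the cylinder-preserving-space dimension count.
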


The $d=1$ case of Lemma \ref{L:PD} is essentially equivalent to Theorem \ref{T:CylECTwistSpace}, and it is possible to extract from the proof of Lemma \ref{L:PD} a shorter proof of Theorem \ref{T:CylECTwistSpace} than was given in \cite{MirWri}. 

\begin{proof}
\red First we will show $\dim p(\mathrm{Twist}((X', \omega'), \cM)) \leq d.$ We start with a purely linear algebraic statement, where we use use notation that suggests how it will be applied. 

\begin{sublem}\label{SL:SymplecticLinearAlgebra}
Let $H^1$ be a symplectic vector space, and let $(H^1)^*$ be its dual space. Consider the isomorphism $PD_H: H^1 \to (H^1)^*$ given by
$$  \eta\mapsto \langle \eta, \cdot \rangle, $$
where $\langle \cdot, \cdot \rangle$ is the symplectic pairing on $H^1$. 
Let $T\subset H^1$ be a symplectic subspace, so, similarly, we have an isomorphism $PD_T: T \to T^*$ defined by the same formula.

Let $\operatorname{res}_T: (H^1)^* \to T^*$ be the restriction to $T$ map. Let $\operatorname{proj}_T$ be the projection onto $T$ whose 
kernel is the symplectic perp of $T$. Then the following diagram commutes. 
\begin{figure}[h]\centering
\begin{tikzcd}
  H^1 \arrow[r, "\mathrm{PD}_H"] \arrow[d, "\mathrm{proj}_T"] & (H^1)^* \arrow[d, "\mathrm{res}_T"] \\
 T \arrow[r,"\mathrm{PD}_T"] & T^*
\end{tikzcd}
\end{figure}
\end{sublem}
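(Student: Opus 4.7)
The plan is to chase an arbitrary element $\eta \in H^1$ through both composites and verify the resulting elements of $T^*$ agree when evaluated on an arbitrary $\tau \in T$. First I would invoke the hypothesis that $T$ is a symplectic subspace of $H^1$ to get the direct sum decomposition $H^1 = T \oplus T^{\perp}$, where $T^{\perp}$ denotes the symplectic perp of $T$; this decomposition is exactly what defines $\operatorname{proj}_T$. Write $\eta = \eta_T + \eta_\perp$ with $\eta_T \in T$ and $\eta_\perp \in T^{\perp}$, so $\operatorname{proj}_T(\eta) = \eta_T$.

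For the top-right path, unwinding definitions gives, for any $\tau \in T$,
\[
(\operatorname{res}_T \circ \mathrm{PD}_H)(\eta)(\tau) = \langle \eta, \tau \rangle = \langle \eta_T, \tau \rangle + \langle \eta_\perp, \tau \rangle.
\]
For the left-bottom path, using that the symplectic form on $T$ is by definition the restriction of the one on $H^1$,
\[
(\mathrm{PD}_T \circ \operatorname{proj}_T)(\eta)(\tau) = \mathrm{PD}_T(\eta_T)(\tau) = \langle \eta_T, \tau \rangle.
\]
The two expressions agree since $\langle \eta_\perp, \tau \rangle = 0$ by the very definition of $T^\perp$. This is the entire content of the commutativity.

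The only step requiring any care is the preliminary decomposition $H^1 = T \oplus T^{\perp}$. The hypothesis that $T$ is symplectic, i.e.\ that $\langle \cdot, \cdot \rangle|_T$ is nondegenerate, is what ensures both that this is an internal direct sum (so $\operatorname{proj}_T$ is well-defined) and that $\mathrm{PD}_T$ is an isomorphism (so the diagram makes sense in the first place). Beyond that, there is no obstacle: the verification is a single-line calculation using bilinearity of $\langle \cdot, \cdot \rangle$ and the definition of symplectic perp.
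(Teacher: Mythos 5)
Your proof is correct and is the obvious intended argument. Note that the paper actually leaves this sublemma as an exercise to the reader ("The proof of this statement is left to the reader"), so there is no proof in the paper to compare against; your diagram chase, using the symplectic direct sum $H^1 = T \oplus T^\perp$ and the vanishing $\langle \eta_\perp, \tau \rangle = 0$, is exactly the straightforward verification one expects the reader to supply.
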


The proof of this statement is left to the reader. We now apply this statement with $H^1=H^1(X', \bR)$ and $T= p(T_{(X', \omega')}(\cM)) \cap H^1(X', \bR)$. In this case  $(H^1)^*$ is $H_1(X,\bR)$ and $PD_H$ is the Poincare duality isomorphism.   

Let $R \subset H_1(X,\mathbb{R})$ be the subspace spanned by the core curves of horizontal cylinders. By definition, $d=\dim \mathrm{res}_T(R)$. 

Also by definition, $p(\mathrm{Twist}((X', \omega'), \cM))$ is contained in the (complexification of) $PD_H^{-1}(R)\cap T$. 

The commutative diagram above gives that 
$$ \mathrm{res}_T(R) \supset  PD_T \circ \operatorname{proj}_T (  PD_H^{-1}(R)\cap T).$$
Since $\operatorname{proj}_T$ is injective on $T$ and since $PD_T$ is an isomorphism, we get 

$$d = \dim R \geq \dim PD_H^{-1}(R)\cap T \geq \dim p(\mathrm{Twist}((X', \omega'), \cM)) ,$$
which is exactly the desired inequality.

 Next we will show that $\dim p(\mathrm{Twist}((X', \omega'), \cM)) \geq d.$ 
If two cylinders are $\cM$-equivalent then their core curves are collinear in $T^*$ (by  \cite[Lemma 4.7]{Wcyl}). Therefore, the span of the core curves of an $\cM$-equivalence class $\bfC$ in $T^*$ is precisely the line generated by $\mathrm{res}_T \circ PD_H\left( p(\sigma_{\bfC}) \right)$. Recall that $\sigma_{\bfC}$ is the standard shear in $\bfC$. Since $\sigma_{\bfC}$ belongs to $\mathrm{Twist}((X', \omega'), \cM)$ for every equivalence class of horizontal cylinders $\bfC$, we have shown that $\mathrm{PD}_T \circ \mathrm{proj}_T \left( p(\mathrm{Twist}((X', \omega'), \cM)) \right)$ contains $\mathrm{res}_T(R)$ (this uses Sublemma \ref{SL:SymplecticLinearAlgebra}). Since $\operatorname{proj}_T$ is injective on $T$ and since $PD_T$ is an isomorphism, we have the desired inequality.
\black
\end{proof}

A corollary of Lemma \ref{L:PD} is that
\begin{equation}\label{E:TwistUpper}
\dim \mathrm{Twist}((X', \omega'), \cM) \leq  d+ (\dim \cM - 2\rank \cM).
\end{equation}
Lemma \ref{L:PD} may be viewed as an improvement of \cite[Lemma 8.10]{Wcyl}, and inequality \eqref{E:TwistUpper} an improvement of \cite[Corollary 8.11]{Wcyl}.

If  $\mathrm{Pres}_s((X', \omega'), \cM)= \mathrm{Pres}((X', \omega'), \cM)$, then Lemma \ref{L:twist} and Equation \eqref{E:CPlower} give that 
$$\dim \mathrm{Twist}((X', \omega')) =  \dim \cM - d.$$
Since $d\leq \rank \cM$, this shows that there are at least $\dim \cM - \rank \cM$ cylinders. Conditional on $\mathrm{Pres}_s((X', \omega'), \cM)= \mathrm{Pres}((X', \omega'), \cM)$, this gives the first, third, and fourth claims; Inequality \eqref{E:TwistUpper} gives $d=\rank \cM$, which gives the second claim. 

So we may now assume that $\mathrm{Pres}_s((X', \omega')$ is codimension 1 in $\mathrm{Pres}((X', \omega'), \cM)$. Lemma \ref{L:twist} and Equation \eqref{E:CPlower} give that 
$$\dim \mathrm{Twist}((X', \omega') = \dim \cM - d -1.$$
Since $d\leq \rank \cM$, this gives the first claim.

Combining this with Inequality \eqref{E:TwistUpper}, we get $2d\geq 2\rank(\cM)-1$, which, since $d\leq \rank(\cM)$ is an integer, implies $d=\rank(\cM)$. This gives the second claim. Since there must be $d$ many horizontal cylinders, this also gives the third claim.  

For the fourth claim, note that if $s$ is $\cM$ is generically parallel to one of the horizontal cylinders, then $s$ is automatically zero on all of $\mathrm{Pres}((X', \omega'), \cM)$, so we are in the case of $\mathrm{Pres}_s((X', \omega'), \cM)= \mathrm{Pres}((X', \omega'), \cM)$ where we obtained the improved bound. 
\end{proof}

\subsection{Abundance of diamonds}
We now prove the following, which will power an inductive approach to the non-hyperelliptic case of Theorem \ref{T:complex-gluing}. Recall that multiplicity and boundary components of cylinders are defined in Definition \ref{D:CylAndBoundary}, and $\{s\}$-paths are defined in Definition \ref{D:Spath}.

\begin{prop}\label{P:Avoidance}
Let $(X, q)$ belong to a stratum $\cQ$ of quadratic differentials without marked points such that 
 $\cQ$ has rank at least two and is not $\cQ(1^2, -1^2)$ or $\cQ(2,1^2)$.
Then for any saddle connection $s$ on $(X, q)$ there is an $\{s\}$-path to another surface $(X', q')$ in $\cQ$ where there are two generic cylinders $C_1$ and $C_2$ such that the following statements about $C_1$ and $C_2$ hold:
\begin{enumerate}
    \item They do not intersect and do not have any boundary saddle connections in common.
    \item They are one of the following: a simple cylinder, a simple envelope, a half-simple cylinder.
    \item They do not intersect $s$.
\end{enumerate}
\end{prop}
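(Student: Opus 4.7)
The plan is to produce $(X', q')$ as a horizontally periodic surface reached by an $\{s\}$-path, with enough horizontal cylinders to extract the two desired cylinders $C_1, C_2$ from among them.

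First, since rotating $(X, q)$ is itself an $\{s\}$-path preserving each saddle connection, we may reduce to the case that $s$ is horizontal. Since $\cQ$ is a component of a stratum we have $\bk(\cQ) = \bQ$, so Proposition~\ref{P:ArithmeticCylinder} applies and yields an $\{s\}$-path to a nearby horizontally periodic surface $(X_1, q_1)$ on which $s$ persists as horizontal. Because $s$ remains horizontal along the path, it is generically parallel to the horizontal cylinders of $(X_1, q_1)$, so by item~(\ref{I:GenericallyParallelBound}) we obtain at least $\rk(\cQ) + \mathrm{rel}(\cQ)$ horizontal cylinders. By Lemma~\ref{L:Q-rank} this count equals $g + \tfrac{m_{\mathrm{odd}}}{2} + m_{\mathrm{even}} - 1$, which in particular is at least $2$ and is often significantly larger.

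Next, I would extract $C_1, C_2$ from the horizontal cylinder decomposition. By Masur--Zorich (Theorem~\ref{T:MZ}(\ref{T:MZ:5})), each horizontal cylinder falls into one of the five types of Definition~\ref{D:CylinderTypes}. Since $\cQ$ has no marked points, whenever two horizontal cylinders share a boundary saddle connection, Theorem~\ref{T:MZ}(\ref{T:MZ:GenericAdjacency}) forces one of them to be simple, disjoint from every other cylinder at its boundary. Moreover, by Theorem~\ref{T:MZ}(\ref{T:MZ:TrivHolComponent}), each complex cylinder or complex envelope carries along a trivially-holonomic piece of the surface in its complement, which limits how prevalent these types can be in a rank-at-least-two stratum with nontrivial linear holonomy. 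Combining these structural results with the cylinder count above, the aim is to exhibit two horizontal cylinders of type simple, simple envelope, or half-simple, that are mutually non-adjacent and have $s$ in neither of their boundary components.

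The main obstacle is the case analysis when the number of horizontal cylinders is small, when many of them are complex or complex envelopes, or when $s$ sits on the boundary of most of the simple-type cylinders. In such situations one can try to modify further along an $\{s\}$-path, for example by collapsing or shearing complex cylinders (using Theorem~\ref{T:CDT}) to rearrange the decomposition into a more favorable one. The excluded strata $\cQ(1^2,-1^2)$ and $\cQ(2,1^2)$ are exactly those where the combinatorics of the possible horizontal cylinder diagrams are rigid enough that no configuration supplies two good cylinders avoiding $s$; these cases would need to be ruled out by direct inspection, after which the remaining strata of rank at least two admit the argument above.
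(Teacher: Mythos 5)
Your proposal correctly identifies the right starting point (Proposition~\ref{P:ArithmeticCylinder} to produce a nearby horizontally periodic surface on which $s$ stays horizontal, plus the Masur--Zorich classification of generic cylinder types), but it has both a local error and a structural gap.

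The local error is the assertion that ``because $s$ remains horizontal along the path, it is generically parallel to the horizontal cylinders of $(X_1,q_1)$,'' which you use to invoke item~(\ref{I:GenericallyParallelBound}) of Proposition~\ref{P:ArithmeticCylinder}. Being horizontal at a given surface is not the same as being hat-homologous to the horizontal cylinders; generic parallelism is a robust, deformation-invariant condition, whereas ``horizontal at $(X_1,q_1)$'' is not. Indeed the paper only extracts the weaker count $\max(\rk(\cQ)+\mathrm{rel}(\cQ)-1,\,\rk(\cQ))$ from items~(\ref{I:FirstBound}) and~(\ref{I:SecondBound}), and later perturbs further to make all cylinders generic (a step that typically destroys horizontal periodicity). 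So the bound $\rk(\cQ)+\mathrm{rel}(\cQ)$ does not come for free.

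The deeper gap is that the proof is not actually carried out. You acknowledge that ``the main obstacle is the case analysis when the number of horizontal cylinders is small, when many of them are complex\ldots'' and that the excluded strata ``would need to be ruled out by direct inspection'' — but this case analysis \emph{is} the content of the proposition. The key idea in the paper's argument, which your sketch does not contain, is to handle a horizontal complex envelope or complex cylinder by \emph{collapsing} it: in the complex-envelope case, $\Col_C(X,q)$ lies in a stratum of Abelian differentials that has rank at least two (this is where the exclusions of rank-one strata and $\cQ(1^2,-1^2)$ enter), so one can apply Proposition~\ref{P:ArithmeticCylinder} there, find two disjoint simple cylinders on the collapsed surface avoiding $\Col_C(s)$, and pull them back; in the complex-cylinder case, Theorem~\ref{T:MZ}(\ref{T:MZ:TrivHolComponent}) splits the complement into two translation pieces and one argues on the piece $\Sigma_1$ not containing $s$, again using the rank (and the exclusion of $\cQ(2,1^2)$) to rule out the degenerate possibilities $\Sigma_1^{\mathrm{glue}}\in\cH(0)$ or $\cH(0,0)$. ``Shearing'' complex cylinders, which you mention as an alternative, does not reduce the problem. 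Without the collapsing mechanism and the explicit elimination of the two exceptional strata, the argument does not close.
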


\begin{proof}
Assume without loss of generality that $s$ is horizontal. By Proposition  \ref{P:ArithmeticCylinder}, after passing to nearby surface, we may suppose without loss of generality that $(X, q)$ is horizontally periodic with at least $$\max(\rk(\cQ) + \mathrm{rel}(\cQ) - 1, \rk(\cQ))$$ horizontal cylinders. Let $\cC$ denote the collection of horizontal cylinders.  Again without loss of generality, after passing to an arbitrarily close surface, we may suppose that every cylinder in $\cC$ is generic (outside of certain low dimensional strata, this will cause $(X, q)$ to no longer be horizontally periodic). 

\noindent \textbf{Case 1: $\cC$ contains a complex envelope $C$.}

Without loss of generality, $C$ is horizontal and contains a vertical saddle connection that joins a pole to a zero. By Masur-Zorich (Theorem \ref{T:MZ}), $\Col_{C}(X, q)$ has trivial holonomy and hence is contained in a stratum $\cH$ of Abelian differentials. 

If $\cH = \cH(0^n)$ then either the saddle connection in $\Col_{C}(C)$ connects a zero to itself, and hence $\cQ = \cQ(2, -1^2, 0^{n-1})$,  or it connects two distinct zeros, and hence $\cQ = \cQ(1^2, -1^2, 0^{n-2})$. Since $(X, q)$ does not have marked points and does not belong to a rank one stratum or to $\cQ(1^2, -1^2)$, we have a contradiction. 

Therefore, $\cH$ has rank at least two. So Proposition  \ref{P:ArithmeticCylinder} allows us to obtain a nearby surface with two simple cylinders that are disjoint from each other and from \red $\Col_C(s)$. \black These two cylinders remain cylinders at surfaces in $\cQ$ near $\Col_{C}(X, q)$. 

(Since the core curves of these cylinders are not nullhomologous on $\Col_{C}(X, q)$ the same is true on nearby surfaces in $\cQ$. By Masur-Zorich (Theorem \ref{T:MZ}), this implies that the two cylinders are generically simple as desired.)

Since the surface does not have marked points, two simple cylinders cannot share boundary saddle connections.

\noindent \textbf{Case 2: $\cC$ contains a complex cylinder $C$.}

By Masur-Zorich (Theorem \ref{T:MZ}), the surface $(X, q) - C$ consists of two disjoint translation surfaces with boundary. So all cylinders in $\cC$ other than $C$ are simple. Since there are no marked points, no two of the simple cylinders can share boundary saddle connections. If $\cM$ has rank at least 3, or has rank 2 and rel at least 2, $\cC$ has at least three cylinders, and we are done. So assume $\cM$ is rank 2 and rel 0 or 1. 

Denote the components of $(X, q) - C$ by $\Sigma_1$ and $\Sigma_2$, and suppose without loss of generality that $\Sigma_1$ does not contain $s$. Let $\Sigma_i^{glue}$ denote $\Sigma_i$ with its boundary saddle connections identified. If $\Sigma_1^{glue}$ belongs to a rank two stratum, then by Proposition  \ref{P:ArithmeticCylinder}, $\Sigma_1$ contains two disjoint simple cylinders, giving the result.

Therefore, using this fact and the assumption that $(X, q)$ has no marked points, $\Sigma_1^{glue}$ belongs to either $\cH(0)$ or $\cH(0,0)$.

\noindent \emph{Case 2a: $\Sigma_1^{glue}$ belongs to $\cH(0)$.}

In this case, $\Sigma_1$ is covered by a simple cylinder $C_1$ and hence $C_1$ and $C$ are generically parallel to each other (they are glued to each other as in the surface on the right of Figure \ref{F:3hat}). \red In particular, $\cQ$ has rel at least $1$ (this can be seen by applying either Theorem \ref{T:CylECTwistSpace} or Lemma \ref{L:Q-rank}). \black Since $\cQ$ has rank at least two, it follows from Proposition  \ref{P:ArithmeticCylinder} (\ref{I:TwistDim}) that $\cC$ has three cylinders.  

\noindent \emph{Case 2b: $\Sigma_1^{glue}$ belongs to $\cH(0,0)$.}

Since $(X, q)$ has no marked points, $C$ must be glued to $\Sigma_1$ along a saddle connection that joins the two distinct marked points of $\Sigma_1^{glue}$.


It follows that the $\Sigma_1$ side of the complex cylinder has two zeros of order $1$. We have that $\cQ = \cQ(1,1,\kappa)$ where $\kappa$ is a collection of positive integers (having a complex cylinder precludes the possibility of having poles and we have assumed that surfaces in $\cQ$ have no marked points). Since $\cQ$ has rank two it follows from Lemma \ref{L:Q-rank} that
\[ 2 = g + \frac{m_{odd}}{2} - 1, \]
where $g$ is the genus of $X$ and $m_{odd}$ is the number of odd order zeros of $q$. Because there is a complex cylinder, $g \geq 2$. Since we know $m_{odd} \geq 2$, actually $g = 2$ and $\kappa$ consists entirely of even integers. Since the sum of the orders of the zeros is $4g-4$, this implies  that $\kappa = (2)$, which contradicts our assumption that $\cQ \ne \cQ(2,1,1)$. 

\noindent \textbf{Case 3: $\cC$ does not contain complex cylinders or complex envelopes.}

Proposition  \ref{P:ArithmeticCylinder} (\ref{I:TwistDim}) gives that $\cC$ contains a pair of non-parallel cylinders. This pair proves the result.
\end{proof}

\subsection{Proof of Theorem \ref{T:complex-gluing} when $\cF(\cQ)$ is hyperelliptic}

The main result of the subsection will be the proof of Theorem \ref{T:complex-gluing} in the case that $\For(\cQ)$ is a hyperelliptic component of a stratum of quadratic differentials.

\begin{prop}\label{P:complex-gluing1}
Theorem \ref{T:complex-gluing} holds when $\For(\cQ)$ is a hyperelliptic component. 
\end{prop}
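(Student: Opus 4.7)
The plan is to split into two cases according to whether the saddle connection $s$ is generically fixed by the hyperelliptic involution $J$ on $\For(\cQ)$.

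When $s$ is generically fixed by $J$, the conclusion follows immediately from Lemma \ref{L:complex-gluing-easy}, which already establishes exactly the two cases of Theorem \ref{T:complex-gluing}: $\For(\cM_s)$ is either a hyperelliptic component (with a single hyperelliptically-paired pair of points on the glued envelope as the only non-free marked points), or a codimension one hyperelliptic locus inside a non-hyperelliptic component (with all marked points free). Hence no further argument is needed in this case.

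When $s$ is not fixed by $J$, the goal is to show that $\cM_s$ is a connected component of a stratum, i.e.\ that no proper invariant subvariety of the ambient stratum $\cQ'$ of $(Q',q')$ is locally defined by the equation $s_1=s_2$. The first step is to rule out the hyperelliptic possibility for $\cM_s$: the involution $J$ on $\For(Q,q)$ cannot extend to $\For(Q',q')$, since extending it would require a complex envelope glued in near $J(s)\neq s$ as well, but only the envelope at $s$ was glued. Therefore $\For(Q',q')$ lies outside every hyperelliptic locus, and $\cM_s$ cannot be a hyperelliptic locus.

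The main obstacle is then to rule out any other proper invariant subvariety locally defined by $s_1=s_2$ which fails to be a full stratum. The plan is to proceed by induction on $\dim\cQ$, with the base cases handled by direct analysis; these include the low-dimensional strata $\cQ(1^2,-1^2)$ and $\cQ(2,1^2)$ that are excluded from Proposition \ref{P:Avoidance}, together with small rank-one strata. For the inductive step, Proposition \ref{P:Avoidance} furnishes, after traveling along an $\{s\}$-path within $\cQ$, two disjoint non-adjacent generic cylinders of simple, simple envelope, or half-simple type, both disjoint from $s$. Collapsing each such cylinder gives a codimension one boundary degeneration of $\cQ$ on which the saddle connection $s$ persists; the corresponding degeneration of a hypothetical proper $\cM_s\subsetneq\cQ'$ must itself be locally defined by $s_1=s_2$ in a smaller ambient stratum. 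Combining the inductive hypothesis with Theorem \ref{T:BoundaryTangent} (which relates the linear equations defining $\cM_s$ to those of its degenerations) and Corollary \ref{C:codim1} forces $\cM_s$ to be a full stratum, completing the induction. The delicate step is to ensure that the degenerations land in strata to which the inductive hypothesis genuinely applies, in particular that the two cylinders in Proposition \ref{P:Avoidance} can be chosen to avoid creating extra hyperelliptic symmetries upon collapse.
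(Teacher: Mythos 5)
Your treatment of the case where $s$ \emph{is} generically fixed by the hyperelliptic involution is correct and identical to the paper's: it is just Lemma~\ref{L:complex-gluing-easy}. The difficulty lies entirely in the case where $s$ is not fixed, and here your proposal leaves the hard part undone.

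The heart of the problem is the step you yourself flag as ``delicate'': ensuring that after collapsing a cylinder $C$ disjoint from $s$, the degenerate saddle connection $\Col_C(s)$ is \emph{still} not fixed by the hyperelliptic involution on $\Col_C(X,q)$. Without this, the inductive hypothesis simply does not apply to the degeneration, and the induction does not close. This is not a side issue but the crux: the paper's Sublemma~\ref{SL:DisjointSimple} is devoted precisely to proving that, in a minimal counterexample, any simple cylinder disjoint from $s$ whose boundary avoids marked points must have boundary exactly $\{s, J(s)\}$ (so in fact collapsing it cannot be used naively), and the proof of that sublemma is where the real work of passing the hyperelliptic-involution condition through a collapse happens. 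Your sketch gives no argument for this and does not observe the constraint on disjoint simple cylinders that it yields. Note also that a priori $\For(\cQ)$ could degenerate to $\cQ(-1^4)$, where the hyperelliptic involution is not unique, and that case has to be excluded separately (this is what Lemma~\ref{L:NonPillowcaseBase} does in the paper).

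A second gap is marked points. Proposition~\ref{P:Avoidance} is stated only for strata \emph{without} marked points, and the minimality reduction does not immediately discard marked points at the endpoints of $s$. The paper's Sublemma~\ref{SL:PillowcaseMarkedPoints} does a three-case argument to show the minimal counterexample has no marked points at all; your proposal never addresses this, so even the hypothesis of Proposition~\ref{P:Avoidance} is not in force. Third, the base cases ($\cQ(-1^4,0)$, $\cQ(-1^4,0^2)$, $\cQ(2,-1^2)$, and the rank-one hyperelliptic strata) require genuinely different, low-dimensional arguments (periodic points, rel counting); ``handled by direct analysis'' is not a proof and these cases are nontrivial.

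Finally, a structural remark: the paper does not actually run a forward induction invoking Proposition~\ref{P:Avoidance} in the hyperelliptic case. (That proposition is the engine for the \emph{non}-hyperelliptic case, in the conclusion of Theorem~\ref{T:complex-gluing}.) For the hyperelliptic case, the argument is a minimal-counterexample reduction followed by a direct case analysis of horizontally periodic cylinder configurations using Masur--Zorich, which is what the restricted cylinder types in hyperelliptic components (Lemma~\ref{L:hyp-cyl}) make feasible. Once Sublemma~\ref{SL:DisjointSimple} is in hand, two disjoint non-adjacent simple cylinders both missing $s$ would each have boundary $\{s, J(s)\}$, which is impossible; this is closer to a contradiction than to an inductive step. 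Your proposal is in the right spirit but leaves precisely the hard structural lemma, the marked-point reduction, and the base cases to the reader.
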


We begin with some special cases. 

\begin{lem}\label{L:PillowcaseBase}
Let $(Q, q)$ be a surface in $\cQ = \cQ(-1^4, 0)$ or $\cQ = \cQ(-1^4, 0^2)$ and let $s$ be a saddle connection. Suppose that \red $\cM_s$ \black  is proper. Then $s$ must be fixed by a hyperelliptic involution.
\end{lem}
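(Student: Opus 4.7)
The plan is to argue by contraposition: assume $s$ is not fixed, as a set, by any hyperelliptic involution of $(Q,q)$, and deduce that the equation $s_1 = s_2$ holds automatically on the entire stratum component containing the glued surface $(Q',q')$, so that $\cM_s$ is not proper.

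Both $\cQ(-1^4, 0)$ and $\cQ(-1^4, 0^2)$ consist of pillowcases with one or two marked points. By Lemma \ref{L:Q-rank} these strata have rank one, and $\cF(Q,q) \in \cQ(-1^4)$ admits three commuting affine involutions of derivative $+\mathrm{Id}$, arising from translations by the nontrivial two-torsion points of its orientation double cover torus. A hyperelliptic involution of $(Q,q)$, in the sense of Definition \ref{D:HypLocus}, is one of these three that additionally preserves the set of marked points. By Remark \ref{R:notHH}, $\cM_s$ is proper precisely when the equation $s_1 = s_2$ is a non-trivial linear equation in period coordinates on the glued stratum, or equivalently when the class $[\tilde s_1 - \tilde s_2]$ is non-zero in the anti-invariant part of the relative cohomology of the holonomy double cover of $(Q',q')$.

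The proof is then a case analysis based on the types of the endpoints of $s$: pole-pole, pole-marked, marked-marked, or a saddle-connection loop at a pole or marked point. In each case, Remark \ref{R:GluingStratum} identifies the glued stratum containing $(Q',q')$ and Riemann-Hurwitz determines its orientation double cover; the difference $s_1 - s_2$ bounds the glued complex envelope, so its lift bounds the lifted envelope, which is an annulus by Riemann-Hurwitz applied to a disk with two interior branch points. When $s$ connects two poles the double cover is a torus and this annulus is forced to be null-homologous (the deck involution acts as $-1$ on absolute homology while the two boundary circles of the annulus are homologous in the cover), which shows $\cM_s$ is unconditionally not proper and that saddle connections of this type can never realize the lemma's hypothesis. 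For the remaining cases, the analogous vanishing uses the action of the three pillowcase involutions $T_1, T_2, T_3$ on the relative cohomology of the double cover to rule out contributions to the anti-invariant summand, and here the hypothesis that no hyperelliptic involution fixes $s$ is essential to produce enough independent relations.

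The main obstacle will be the case of $s$ connecting two distinct marked points in $\cQ(-1^4, 0^2)$: here the orientation double cover of $(Q', q')$ has genus three and the relative cohomology is richer. I would use Theorem \ref{T:HatHomologous} (Masur-Zorich) to track hat-homology among the saddle connections adjacent to the envelope's slit, together with a careful accounting of how each pillowcase involution permutes the marked points of $(Q', q')$ and the new pole endpoints of the slit, to complete the vanishing argument for $[\tilde s_1 - \tilde s_2]$ in the anti-invariant relative cohomology.
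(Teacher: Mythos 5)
Your plan hinges on a misreading of the framework that makes it unrecoverable. You write that ``$\cM_s$ is proper precisely when the equation $s_1 = s_2$ is a non-trivial linear equation in period coordinates,'' and you then try to show that, when $s$ is not fixed by any hyperelliptic involution, the anti-invariant class $[\tilde s_1 - \tilde s_2]$ vanishes. But Remark~\ref{R:notHH} says the opposite of what you need: since the base $(Q,q)$ has non-trivial holonomy, $s_1$ and $s_2$ are \emph{never} hat-homologous, so the equation $s_1=s_2$ is \emph{always} non-trivial as a linear condition, irrespective of how $s$ sits with respect to the hyperelliptic involutions. The definition of $\cM_s$ is not ``the locus cut out by $s_1=s_2$,'' but rather ``the invariant subvariety defined by $s_1=s_2$ \emph{if one exists}, else the whole stratum.'' Properness of $\cM_s$ is therefore a dynamical statement --- that the local analytic hypersurface $\{s_1=s_2\}$ happens to be $\GL$-invariant --- and a purely cohomological computation on a single surface cannot detect it. Your proposed ``main obstacle'' (tracking hat-homology across the genus-three double cover in the $\cQ(-1^4,0^2)$ case) is not the obstacle; the whole mechanism is pointed at the wrong target.

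This also explains why your intermediate claims clash with the paper's structure. You conclude that when $s$ joins two poles, ``$\cM_s$ is unconditionally not proper and saddle connections of this type can never realize the lemma's hypothesis.'' In fact the pole--pole case is exactly the case where $s$ \emph{is} fixed by a hyperelliptic involution (Remark~\ref{R:PolePoleFixed}), and Lemma~\ref{L:complex-gluing-easy} shows $\cM_s$ is then precisely the (proper) hyperelliptic codimension-one locus. Your derivation of its non-properness cannot be right. The paper's own proof avoids your framing entirely: it assumes $\cM_s$ is proper and $s$ is not fixed, then reaches a contradiction using rigidity results that see the $\GL$-invariance --- for $s$ a loop at a marked point, all three horizontal cylinders on the glued $\cQ(2,-1^6)$ surface become $\cM_s$-parallel, forcing $\mathrm{rel}(\cM_s)\geq 2$ via Theorem~\ref{T:CylECTwistSpace}, exceeding the rel of the ambient stratum (Lemma~\ref{L:Q-rank}); for $s$ joining a marked point and a pole, codimension-one inside $\cQ(1,0,-1^5)$ forces rank two rel zero, hence a periodic marked point in a non-hyperelliptic stratum, contradicting Theorem~\ref{T:StrataMarkedPoints}; and the remaining case (two distinct marked points) is reduced to the first by a boundary degeneration via Lemma~\ref{L:ExtendingPaths} and Theorem~\ref{T:BoundaryTangent}. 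These are dynamical tools; a fixed-surface homology computation cannot replace them.
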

\begin{rem}\label{R:PolePoleFixed}
Notice that $\cQ(-1^4)$ is the unique hyperelliptic stratum of quadratic differentials for which the hyperelliptic involution is not unique (see Lemma \ref{L:HypSymmetries}). Moreover, any saddle connection joining two poles is fixed by some hyperelliptic involution. 
\end{rem}
\begin{proof}
\red Suppose, in order to find a contradiction, that $s$ is a saddle connection that is not generically fixed by a hyperelliptic involution and that $\cM_s$ is proper. By Remark \ref{R:PolePoleFixed}, $s$ does not join two poles. Suppose without loss of generality that $s$ is horizontal. \black


Suppose first that $s$ is a saddle connection on a surface in $\cQ(-1^4, 0)$ joining the marked point to itself. Gluing in a complex envelope to $s$ produces a surface in $\cQ(2, -1^6)$. Suppose that $\cM_s$ is proper, which means that, letting $s_1$ and $s_2$ be the two saddle connections in the boundary of the complex envelope that do not end at a pole, $\cM_s$ is defined by the equation $s_1 = s_2$. This implies that all three horizontal cylinders are generically parallel in $\cM_s$; see Figure \ref{F3} (left). Since each of these cylinders forms its own subequivalence class,
it follows by Theorem \ref{T:CylECTwistSpace} that $\cM_s$ has rel at least $2$ since the twist space corresponding to the collection of horizontal cylinders has dimension at least $3$. This contradicts the fact that $\cM_s$ is contained in $\cQ(2, -1^6)$ which only has rel $1$ by Lemma \ref{L:Q-rank}. \red Note that this argument also handles the case where $s$ joins a marked point to itself on $\cQ(-1^4, 0^2)$. \black

Suppose next that $s$ is a saddle connection on a surface in $\cQ(-1^4, 0)$ joining a marked point to a pole. Gluing in a complex envelope to such a slit produces a surface in $\cQ(1, 0, -1^5)$; see Figure \ref{F3} (right). This stratum has rank $2$ and rel $1$ by Lemma \ref{L:Q-rank}. Since $\cM_s$ has codimension one, it is necessarily rank two rel zero, but this implies that if $(X, \omega)$ is a surface in $\cM_s$ with dense $\GL$-orbit, then the marked point on $(X, \omega)$ is a periodic point and $\For(X, \omega)$ has dense orbit in $\cQ(1, -1^5)$. This contradicts Theorem \ref{T:StrataMarkedPoints}, which states that non-hyperelliptic components of strata of rank at least 2 do not admit periodic points. Note that $\cQ(1, -1^5)$ is not hyperelliptic by the classification of hyperelliptic connected components \cite{LanneauHyp}, recalled in Section \ref{S:Q-hyp}. \red Note that this argument also handles the case where $s$ joins a marked point to a pole on $\cQ(-1^4, 0^2)$. \black

\begin{figure}[h]\centering
\includegraphics[width=.7\linewidth]{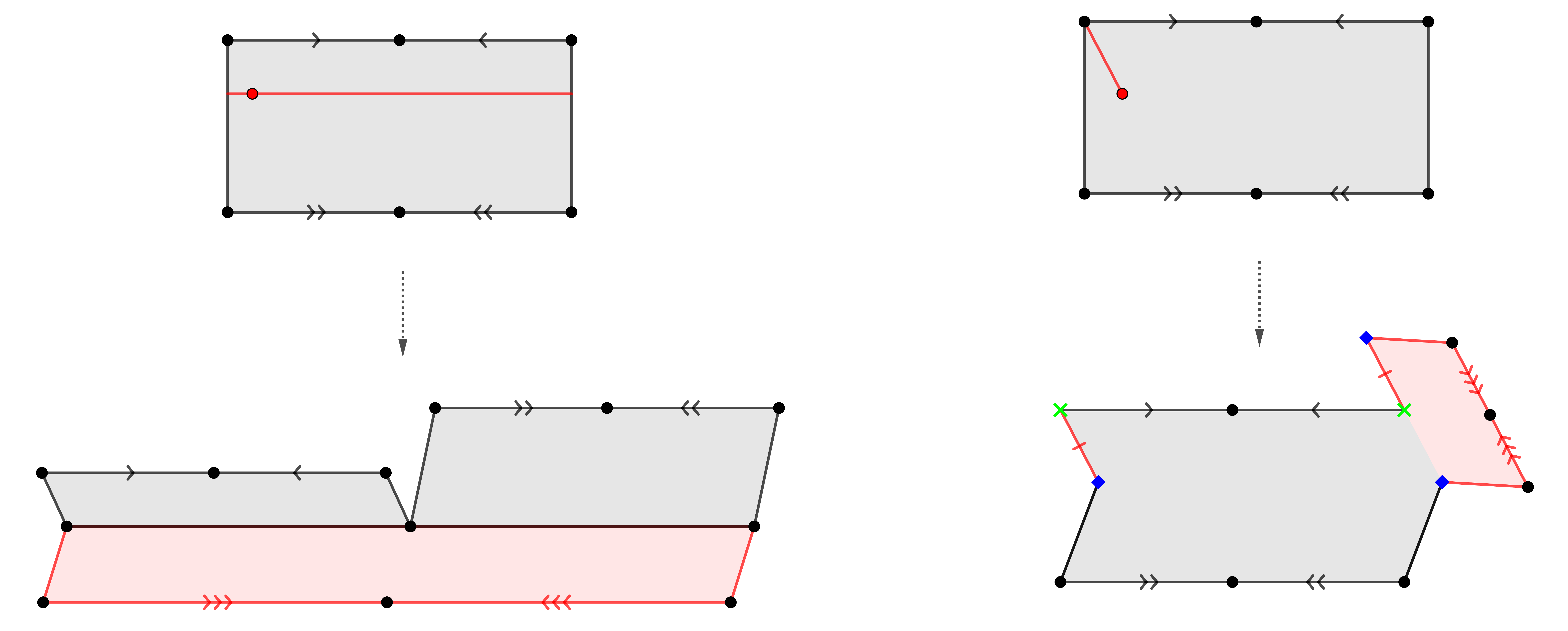}
\caption{Gluing in a complex envelope to surfaces in $\cQ = \cQ(-1^4, 0)$ and $\cQ = \cQ(-1^4, 0^2)$ }
\label{F3}
\end{figure}

Suppose finally that $s$ joins two \red distinct \black marked points together in $\cQ(-1^4, 0^2)$, begin by perturbing the surface so that $s$ lies in a periodic direction and not on a separatrix. Moving one endpoint in this periodic direction causes one endpoint to eventually collide with the other. Let $\cQ$ denote the component of the stratum that contains $\cM_s$. The previously described deformation is a degeneration of $\cQ$ to a codimension one boundary stratum $\cQ'$ and hence, by Lemma \ref{L:ExtendingPaths}, a degeneration of $\cM_s$ to boundary component $\cM_s'$. By Theorem \ref{T:BoundaryTangent}, $\cM_s'$ has dimension one less than $\cM_s$. It follows that $\cM_s'$ is a proper invariant subvariety that contains a surface formed by gluing in a complex envelope to a saddle connection $s$ on a surface in $\cQ(-1^4, 0)$ that joins a marked point to itself. This contradicts the results of the preceding paragraphs. 
\end{proof}

\begin{lem}\label{L:NonPillowcaseBase}
Let $(Q, q)$ be a surface in $\cQ = \cQ(2, -1^2)$ and let $s$ be a saddle connection. Suppose that $\cM_s$ is proper. Then $s$ must be fixed by the hyperelliptic involution.\end{lem}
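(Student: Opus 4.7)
The approach mirrors the proof of Lemma \ref{L:PillowcaseBase}: I would perform a case split on the endpoints of $s$, and in each case suppose that $s$ is not fixed by the hyperelliptic involution $J$ and derive a contradiction with $\cM_s$ being proper. First I would record the hyperelliptic structure of $\cQ = \cQ(2, -1^2)$. By the classification in \cite{LanneauHyp}, $\For(\cQ)$ is the hyperelliptic component of its stratum, and by Lemma \ref{L:Q-rank} we have $g=1$, $\rk(\cQ)=1$, $\mathrm{rel}(\cQ)=1$, so $\dim \cQ = 3$. Since branch points of a holonomy-preserving involution must carry even-order quadratic differentials, $J$ fixes the unique order-$2$ zero and swaps the two simple poles, and the quotient lies in $\cQ(0,-1^4)$. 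By Remark \ref{R:GluingStratum} and Lemma \ref{L:Q-rank}, the possible ambient strata for $\cM_s$ are $\cQ(4,-1^4)$ (rank $2$, rel $1$, dim $5$) if $s$ joins the zero to itself, $\cQ(3,0,-1^3)$ (rank $2$, rel $1$, dim $5$) if $s$ joins the zero to a pole, and $\cQ(2,0^2,-1^2)$ (rank $1$, rel $3$, dim $5$) if $s$ joins the two poles.

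The zero-to-pole case is parallel to Case 2 of Lemma \ref{L:PillowcaseBase}. A proper $\cM_s$ has dimension $4$, and the only codimension-one invariant subvariety of a rank-$2$, rel-$1$ stratum is one that is rank-$2$ rel-$0$ (rel cannot grow under inclusion). Consequently the new marked point of $\cQ(3,0,-1^3)$, which is the preimage of the zero-endpoint of $s$, is forced to be periodic on $\cM_s$. Forgetting it produces a periodic point in $\cQ(3,-1^3)$; but $\cQ(3,-1^3)$ is non-hyperelliptic by the classification in \cite{LanneauHyp} and has rank $2$, so Theorem \ref{T:StrataMarkedPoints} forbids this, a contradiction.

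The pole-to-pole case I would handle by degenerating to the zero-to-pole case. After perturbing $(Q,q)$ so that $s$ lies in a periodic direction and not on a separatrix, I would slide one pole endpoint of $s$ along the periodic direction until it collides with the order-$2$ zero. By Lemma \ref{L:ExtendingPaths} this path lifts to $\cM_s$, and Theorem \ref{T:BoundaryTangent} then exhibits a codimension-one boundary component of $\cM_s$ which is $\cM_{s'}$ for a zero-to-pole saddle connection $s'$ on a surface in $\cQ(2,-1^2)$. Since $J$ swaps the two poles and $s$ was not fixed by $J$, the collided connection $s'$ is not fixed by $J$ either, contradicting the already-resolved zero-to-pole case.

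The zero-to-zero case is the main obstacle. Here $\cM_s \subset \cQ(4,-1^4)$ would again be codimension one, hence rank $2$, rel $0$. The constraint $s_1 = s_2$ forces the two envelope-boundary saddle connections to be hat-homologous on a dense subset of $\cM_s$, and Theorem \ref{T:HatHomologous} then provides a component of their complement with trivial holonomy. I would argue that this trivial-holonomy component gives rise to a global affine involution with derivative $-\mathrm{Id}$ on generic $(Q',q') \in \cM_s$; by the uniqueness portion of Lemma \ref{L:InvolutionImpliesHyp-background} (applicable since $\cM_s$ has rank $\geq 2$), this involution is unique, so it must agree with the natural lift of $J$ from $(Q,q)$. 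But such a lift exists precisely when $s$ is fixed by $J$, giving the desired conclusion. The technical difficulty here is verifying that the linear equation $s_1 = s_2$ on $\cM_s$ genuinely arises from a global affine symmetry rather than an ad-hoc linear relation compatible with codimension one and rank two; this step is where the restriction to the specific small stratum $\cQ(2,-1^2)$ and the uniqueness of its hyperelliptic involution are essential.
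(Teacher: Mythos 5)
Your zero-to-pole case matches the paper's argument in essence (though note the marked point in $\cQ(3,0,-1^3)$ is the preimage of the \emph{pole}-endpoint of $s$, not the zero-endpoint). The other two cases have genuine problems.

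For the pole-to-pole case, you miss a much simpler fact: on $(Q,q)\in\cQ(2,-1^2)$ the hyperelliptic involution $J$ \emph{swaps} the two poles, so $J(s)$ is again a saddle connection between the same two poles, parallel to and of the same length as $s$; since there is at most one saddle connection in a given direction joining two given poles, $J(s)=s$ automatically. So in this case the hypothesis ``$s$ not fixed by $J$'' is vacuous, and no degeneration is needed. Your proposed degeneration is also off-target: here $\cM_s\subset\cQ(2,0^2,-1^2)$ (the old poles become the two marked points), and colliding a marked point with the zero yields a boundary component in $\cQ(2,0,-1^2)$, whereas gluing a complex envelope to a zero-to-pole saddle connection on a surface in $\cQ(2,-1^2)$ lands in $\cQ(3,0,-1^3)$ by Remark \ref{R:GluingStratum}. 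These strata do not match, so the boundary component is not $\cM_{s'}$ for such an $s'$.

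For the zero-to-zero case, your appeal to Theorem \ref{T:HatHomologous} is incorrect and in fact contradicts Remark \ref{R:notHH}: $s_1$ and $s_2$ are $\cM_s$-parallel by construction, but they are \emph{not} hat-homologous in $\cQ(4,-1^4)$; if they were, the equation $s_1=s_2$ would be vacuous and $\cM_s$ would be the whole stratum. So Theorem \ref{T:HatHomologous} gives you no trivial-holonomy component to work with, and the global affine involution you want to construct has no starting point (you acknowledge this gap yourself). The paper instead observes that if $s$ is not $J$-fixed, then $\phi(s)$ is a core curve of a cylinder on $(S^2,q')\in\cQ(-1^4,0)$, which is rank one, forcing $(Q,q)$ to be periodic in the direction of $s$. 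Gluing in the envelope with $s_1=s_2$ then makes all horizontal saddle connections $\cM_s$-parallel, so $\cM_s$ has rank one; combined with $\dim\cM_s=4$ this gives $\mathrm{rel}(\cM_s)=2$, exceeding $\mathrm{rel}(\cQ(4,-1^4))=1$, a contradiction. This numerological route avoids entirely the need to produce an involution on $\cM_s$.
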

\begin{proof}
Suppose not to a contradiction and suppose without loss of generality that $s$ is horizontal. There are three kinds of saddle connections on a surface in $\cQ(2, -1^2)$: ones that join two poles, ones that join a zero and a pole, and ones that join a zero to itself. 

Since $\cQ(2, -1^2)$ has one component, which is hyperelliptic, there is a half-translation double cover $\phi: (Q, q) \rightarrow (S^2, q')$ from $(Q, q)$ to an element $(S^2, q')$ of $\cQ(-1^4,0)$; this map is branched over three poles and one marked point (for a reference for these facts on hyperelliptic components see Section \ref{S:Q-hyp}).

There is at most one saddle connection in any given direction joining two given poles. Hence, any such saddle connection must be fixed by the hyperelliptic involution. Therefore, we may assume $s$ does not join two poles. 

Suppose now that $s$ joins a zero to itself and that $s$ is not fixed by the hyperelliptic involution. Since $s$ is not fixed by the hyperelliptic involution, $\phi(s)$ is a saddle connection joining the marked point to itself on $(S^2, q')$. Phrased differently, on $\For(S^2, q')$, $\phi(s)$ is a core curve of a cylinder. Since $\cQ(-1^4, 0)$ is rank one by Lemma \ref{L:Q-rank}, it follows from \cite[Theorem 1.5]{Wcyl} that $(S^2, q')$ is horizontally periodic. (This claim is also immediate from the fact that the holonomy double cover of a surface in $\cQ(-1^4, 0)$ is a flat torus with marked points).

It follows that $(Q, q)$ is horizontally periodic.  As in Lemma \ref{L:PillowcaseBase}, if a complex envelope is glued into $s$ with the stipulation that its boundary saddle connections are all generically parallel to each other, then all horizontal saddle connections on the resulting surface must be generically parallel to each other. This implies that $\cM_s$ is rank one. The dimension of $\cM_s$ is 4 (one greater than the dimension of $\cQ(2, -1^2)$), so $\cM_s$ must have rel 2. But, by Remark \ref{R:GluingStratum}, $\cM_s\subset \cQ(4, -1^4)$, which has rel 1, giving a contradiction. 

Suppose finally that $s$ joins a zero to a pole. By Remark \ref{R:GluingStratum}, gluing in a complex envelope to such a slit produces a surface in $\cQ(3, 0, -1^3)$, which is rank two rel one by Lemma \ref{L:Q-rank}. Since $\cM_s$ has codimension one, it is necessarily rank two rel zero, but this implies that the marked point is not free, which, as in the proof of Lemma \ref{L:PillowcaseBase}, is a contradiction to the main theorem of Apisa-Wright \cite{ApisaWright} (Theorem \ref{T:StrataMarkedPoints}). This uses the fact that $\cQ(3,-1^3)$ is not hyperelliptic (see for example Figure \ref{F:HyperellipticCheatSheet} and the surrounding discussion in Section \ref{S:Q-hyp}). For an illustration of the construction in this paragraph and the previous one see Figure \ref{F0}. 
\begin{figure}[h]\centering
\includegraphics[width=.7\linewidth]{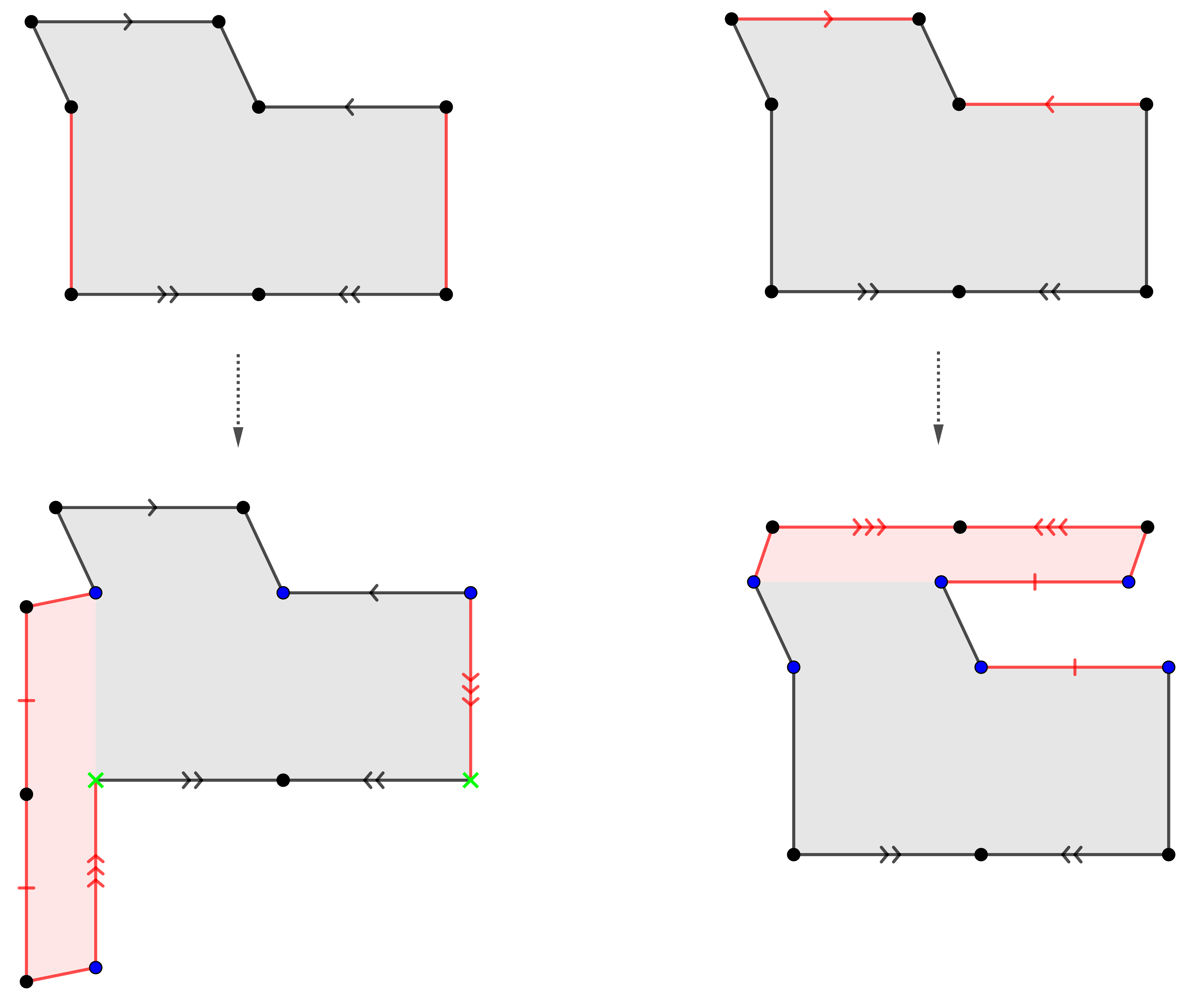}
\caption{Gluing in a complex envelope to surfaces in $\cQ = \cQ(2, -1^2)$. The saddle connection $s$ is labelled in red as is the complex envelope that is glued in.}
\label{F0}
\end{figure}
\end{proof}

\begin{proof}[Proof of Proposition \ref{P:complex-gluing1}]
\red By Lemma \ref{L:complex-gluing-easy}, if $s$ is generically fixed by the hyperelliptic involution then Proposition \ref{P:complex-gluing1} holds. \black  Therefore suppose \red in order to find \black a contradiction that $s$ is not fixed by the hyperelliptic involution but that $\cM_s$ is proper. Suppose moreover that  the component $\cQ$ of the stratum of quadratic differentials containing $(Q, q)$ is the smallest dimensional stratum for which this occurs. 

We begin by observing that this implies that if $\cQ$ has any marked points, they are at endpoints of $s$. Otherwise, \cite{MirWri} can be used to show that forgetting the marked points that aren't at endpoints of $s$ gives rise to a smaller example. In particular, we see that there are at most $2$ marked points.  

We also observe that, by Lemma \ref{L:PillowcaseBase}, $\For(\cQ) \ne \cQ(-1^4)$. By Lemma \ref{L:HypSymmetries}, this implies that on $\For(Q, q)$ there is a unique hyperelliptic involution, which we will denote $J$.

\begin{sublem}\label{SL:DisjointSimple}
Let $C$ be a simple cylinder on $(Q,q)$ \red whose boundary does not contain marked points. \black Then $C$ intersects $s$ or the boundary of $C$ consists precisely of $s$ and $J(s)$. 
\end{sublem}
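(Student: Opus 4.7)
The plan is to argue by contradiction using the minimality assumption on $\cQ$ from the surrounding proof. Suppose $C$ is a simple cylinder on $(Q,q)$ (with no marked points on $\partial C$) that is disjoint from $s$, and write $\partial C = \{c_1, c_2\}$. We will show $\{c_1, c_2\} = \{s, J(s)\}$.

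First, since $\For(\cQ) \ne \cQ(-1^4)$, Lemma \ref{L:HypSymmetries} gives that the hyperelliptic involution $J$ on $\For(Q,q)$ is unique, so $J$ must preserve $C$. By Lemma \ref{L:hyp-cyl}, $J$ acts on $C$ by rotation, hence swaps the two boundary saddle connections: $J(c_1) = c_2$. If $s$ lay on $\partial C$, say $s = c_1$, then $J(s) = c_2$ and we would immediately obtain $\partial C = \{s, J(s)\}$; hence, for the remainder of the argument, we may assume $s$ is disjoint from $\overline{C}$, which is the case we want to rule out.

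Next, collapse $C$ to obtain $\Col_C(Q,q) \in \cQ_C$. By Lemma \ref{L:StrataBasics}, $\cQ_C$ is a component of a stratum of dimension strictly smaller than $\cQ$ and $\For(\cQ_C)$ is hyperelliptic; since $C$ is disjoint from $s$, the saddle connection $s$ persists on $\Col_C(Q,q)$, and the hyperelliptic involution $J_C$ on $\cQ_C$ (the limit of $J$) continues not to fix $s$. By the minimality of $\cQ$ as a counterexample, combined with the base cases treated in Lemmas \ref{L:PillowcaseBase} and \ref{L:NonPillowcaseBase}, the conclusion of Proposition \ref{P:complex-gluing1} holds for the pair $(\Col_C(Q,q), s)$; since $s$ is not fixed by $J_C$, this forces $(\cQ_C)_s$ to equal the entire ambient stratum $\cQ'_C$, so the equation $s_1 = s_2$ is vacuous on $\cQ'_C$.

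Finally, I derive a dimension contradiction. By Theorem \ref{T:BoundaryTangent}, the boundary component $(\cM_s)_C$ of $\cM_s$ produced by collapsing $C$ is locally cut out by the equations defining $\cM_s$ restricted to $\cQ'_C$, so $(\cM_s)_C = (\cQ_C)_s = \cQ'_C$. On the other hand, since $C$ is disjoint from the glued-in envelope, the standard dilation of $C$ preserves $s_1 = s_2$ and therefore lies in $T\cM_s$, so the collapse of $C$ is a codimension-one degeneration of $\cM_s$, giving $\dim (\cM_s)_C = \dim \cM_s - 1 = \dim \cQ' - 2$. This contradicts $\dim \cQ'_C = \dim \cQ' - 1$. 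The main obstacle in executing this plan is ensuring the induction is correctly set up — in particular, verifying that all the ``smaller'' strata $\cQ_C$ that might arise are covered either by the inductive hypothesis or by the explicit base-case arguments already in place.
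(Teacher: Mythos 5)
Your proposed strategy---collapse $C$, invoke minimality of $\cQ$, and derive a dimension contradiction---is exactly the paper's argument, and the bulk of your write-up is sound (the reduction to $s$ disjoint from $\overline C$, the observation that $J(C)=C$ so $\Col_C(J)$ is defined, and the final codimension count all match the paper). However, the obstacle you flag at the end is real and is not yet overcome. You need to know that $\Col_C(s)$ is not fixed by \emph{any} hyperelliptic involution on $\Col_C(Q,q)$, but you only verify that it is not fixed by the specific involution $J_C = \Col_C(J)$. When $\For(\Col_C(Q,q)) \ne \cQ(-1^4)$, these coincide by the uniqueness in Lemma \ref{L:HypSymmetries} and your argument closes. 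But if $\For(\Col_C(Q,q)) = \cQ(-1^4)$, the hyperelliptic involution is no longer unique (cf.\ Remark \ref{R:PolePoleFixed}), and $\Col_C(s)$ could a priori be fixed by a \emph{different} hyperelliptic involution $J'$; in that case the inductive hypothesis, and Lemma \ref{L:PillowcaseBase} (which is stated in precisely this ``fixed by \emph{a} hyperelliptic involution'' form), only tell you that $(\cQ_C)_s$ is \emph{allowed} to be proper, not that it equals the full stratum, and your dimension contradiction evaporates.

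The paper closes this gap with a short explicit argument that you should add. Since $\Col_C(J)$ fixes the collapsed saddle connection $\Col_C(C)$, and the only saddle connections on surfaces in $\cQ(-1^4,0^n)$ that are generically fixed by a hyperelliptic involution are pole-to-pole ones, $\Col_C(C)$ is pole-to-pole. If some other hyperelliptic involution $J'$ fixed $\Col_C(s)$, then $\Col_C(s)$ would also be pole-to-pole; one then checks this forces $(Q,q)$ into $\cQ(2,-1^2)$, which is ruled out by Lemma \ref{L:NonPillowcaseBase}. With that case disposed of, your minimality-plus-dimension-count argument goes through.
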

\begin{proof}
Suppose not and let $\bfC = \{C\}$. We will show that collapsing $C$ gives rise to a smaller example, contradicting our minimality assumption. 

By Lemma \ref{L:StrataBasics}, $\cQ_{\bfC}$ is a stratum of connected quadratic differentials of dimension one less than $\cQ$ that contains $\Col_{\bfC}(Q,q)$. Since $\cQ_{\bfC}$ belongs to the boundary of $\cQ$, where $\For(\cQ)$ is a hyperelliptic component, it follows that $\For(\cQ_{\bfC})$ is a hyperelliptic component as well. 

By assumption, $s$ remains a saddle connection on $\Col_{\bfC}(Q,q)$, since it does not intersect $C$. \red Since the boundary of $C$ does not contain marked points, $J(C) = C$ and so $\Col_{\bfC}(J)$ is well-defined. The construction of $\Col_{\bfC}(J)$ shows, since $s \ne J(s)$ and since $s$ and $J(s)$ do not intersect $\overline{\bfC}$ (except possibly at endpoints), that $$\Col_{\bfC}(J)(\Col_{\bfC}(s)) \ne \Col_{\bfC}(s).$$ Note that if $J(C) \ne C$ (which can only happen if, contrary to our hypothesis, the boundary of $C$ contained a marked points) then there are examples where $\Col_{\bfC}(s)$ \emph{is} fixed by the hyperelliptic involution); see Figure \ref{F:sJs} for an illustration. 

\begin{figure}[h]\centering
\includegraphics[width=.25\linewidth]{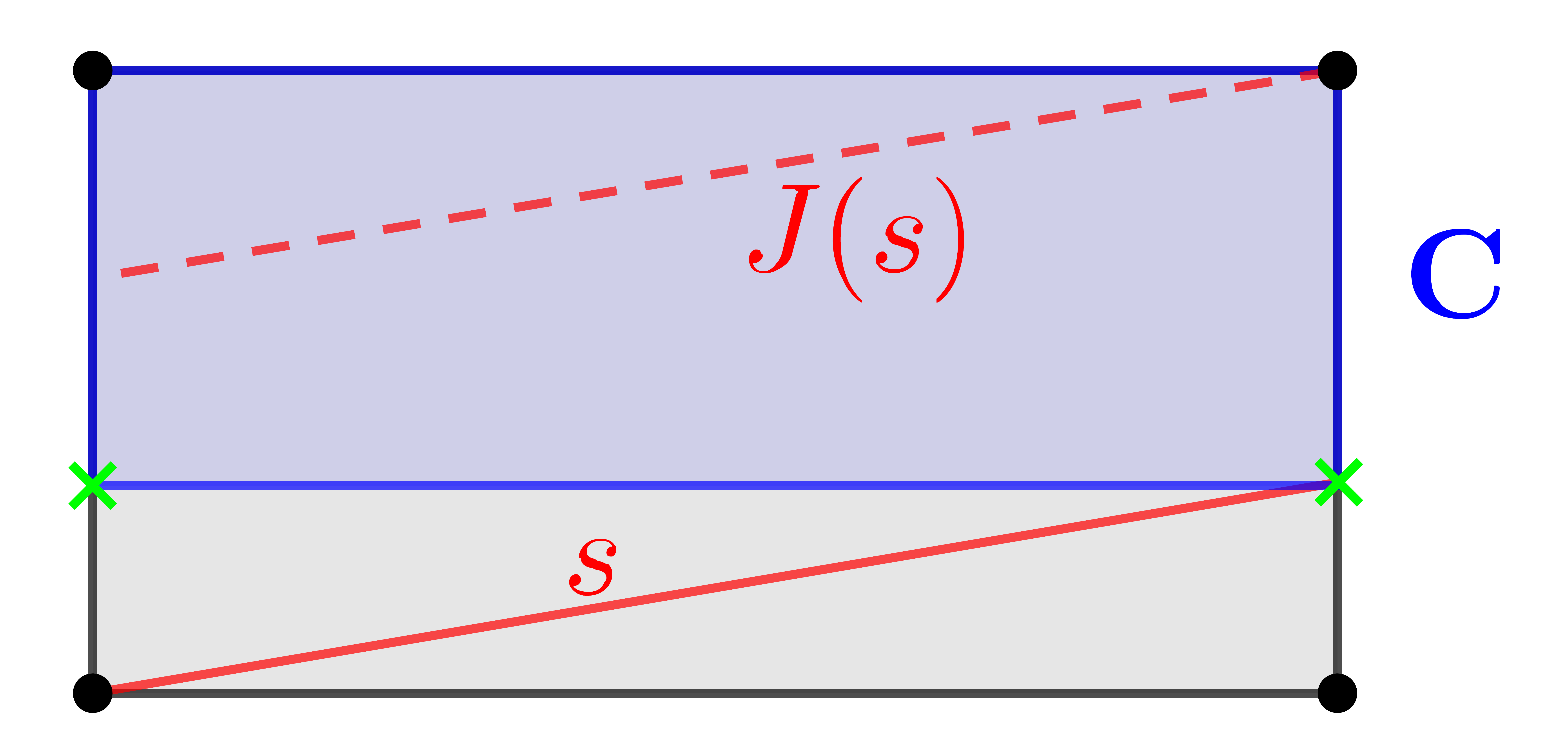}
\caption{In the case when $s$ is contained in a cylinder of $\For(Q,q)$ fixed by $J$ and $s$ has a marked point as an endpoint, if $\bfC$ is the part of the cylinder not intersecting $s$, then collapsing $\bfC$ can cause $s$ to become fixed by the hyperelliptic involution. Note that in this example, $J(s)$ is not a saddle connection since it has an unmarked endpoint.}
\label{F:sJs}
\end{figure}

It is clear that $\Col_{\bfC}(J)$ is a hyperelliptic involution. A priori, if $\For(\Col_{\bfC}(Q, q)) \in \cQ(-1^4)$, there could be a different hyperelliptic involution that fixed $\Col_{\bfC}(s)$, but we now breifly explain why this does not occur. Indeed, the only saddle connections on a surface in $\cQ(-1^4, 0^n)$ that are generically fixed by a hyperelliptic involution are those that go from a pole to another pole. Since $\Col_{\bfC}(J)$ fixes $\Col_{\bfC}(\bfC)$, this saddle connection is a pole-pole saddle connection. So, if a different hyperelliptic involution fixes the saddle connection $\Col_\bfC(s)$, then $\Col_\bfC(s)$ is also a pole-pole saddle connection. See Figure \ref{F:NotUniqueAnyMore}. In this case $(Q,q)$ belongs to $\cQ(2,-1^2)$ and the situation is ruled out by Lemma \ref{L:NonPillowcaseBase}. 


\begin{figure}[h]\centering
\includegraphics[width=.75\linewidth]{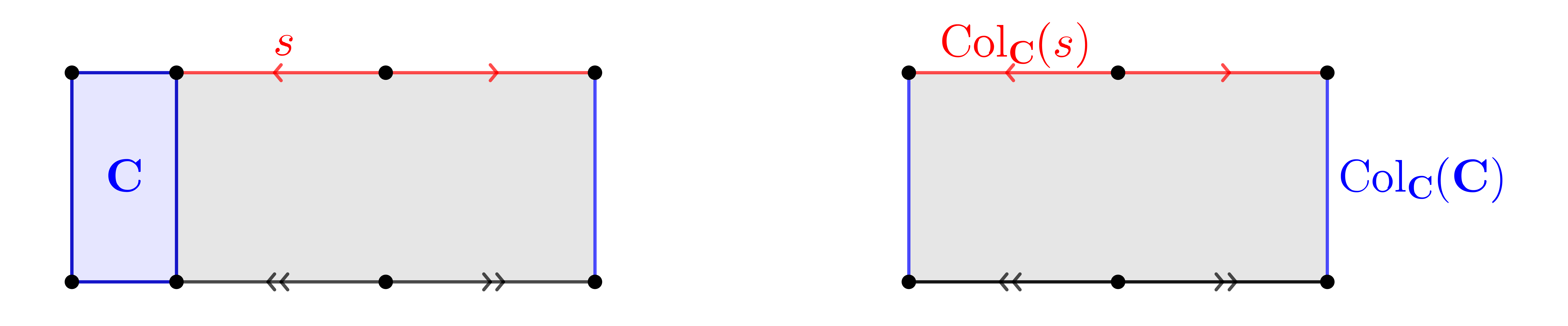}
\caption{A surface in $\cQ(2,-1^2)$ (left) degenerates to a surface in $\cQ(-1^4)$ (right). On the left surface, the hyperelliptic involution is unique and does not fix $s$. On the right surface, the hyperelliptic involution is not unique. }
\label{F:NotUniqueAnyMore}
\end{figure}


We now know that $\Col_\bfC(s)$ is not fixed by any hyperelliptic involution. 
Let $\mathbf{C}'$ denote the cylinder corresponding to $\bfC$ on the surface in $\cM_s$ formed by gluing in a complex envelope to $s$. The dimension of $\cM_s$ is one more than that of $\cQ$. Similarly, the dimension of $(\cM_s)_\mathbf{C'}$ is one greater than that of $\cQ_\mathbf{C}$. However, by Remark \ref{R:GluingStratum}, gluing in a complex envelope increases the dimension of a stratum of quadratic differentials by two; which shows that $(\cM_s)_\mathbf{C'}$ is codimension one in the stratum containing it. Since $(\cM_s)_\mathbf{C'} = (\cM_\bfC)_{\Col_{\bfC}(s)}$, we have a contradiction to the minimality assumption. \black
\end{proof}

\begin{sublem}\label{SL:PillowcaseMarkedPoints}
$\cQ$ has no marked points.
\end{sublem}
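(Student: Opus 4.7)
The plan is to argue by contradiction and exhibit a smaller counterexample on $\For(\cQ)$, contradicting the minimality of $\cQ$. Suppose $\cQ$ contains at least one marked point; by the observation immediately preceding the sublemma, each marked point is necessarily an endpoint of $s$, so $\cQ$ has at most two marked points. Since $\For(\cQ)$ is hyperelliptic and the hyperelliptic involution $J$ preserves the set of marked points, each marked point is either a Weierstrass point (fixed by $J$) or belongs to a swap-pair of marked points exchanged by $J$.

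Let $p$ be a marked point at an endpoint of $s$. On $\For(Q,q)$ the point $p$ becomes a regular point, so $s$ is no longer a saddle connection there. Define $\tilde s$ to be the maximal geodesic on $\For(Q,q)$ containing $s$, which is then a saddle connection of $\For(Q,q)$ strictly containing $s$. The two claims I will establish are: (a) $\tilde s$ is not generically fixed by $J$ on $\For(Q,q)$, and (b) $\cM_{\tilde s}$ is a proper invariant subvariety of the stratum of $\For(\cQ)$ obtained by gluing a complex envelope along $\tilde s$. Granted (a) and (b), the data $(\For(Q,q), \tilde s, \For(\cQ))$ is a counterexample in a stratum of strictly smaller dimension than $\cQ$, contradicting minimality.

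For (b) I will invoke Theorem \ref{T:HatHomologous}: since $\For(\cQ) \ne \cQ(-1^4)$, the surface $\For(Q,q)$ has non-trivial holonomy, so the two boundary saddle connections of the complex envelope glued along $\tilde s$ cannot be hat-homologous, and hence the defining equation of $\cM_{\tilde s}$ is non-vacuous. For (a) I will do a case analysis on the marked-point configuration (single Weierstrass; two Weierstrass; swap-pair): in each case $J$ acts on $\tilde s$ either as a reflection through a Weierstrass point or as a swap of endpoints, and at a generic surface in $\cQ$ the extensions of $s$ past its marked-point endpoints have unequal lengths, so that $J(\tilde s)\ne\tilde s$. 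Nongeneric surfaces can be perturbed away using a cylinder deformation in $\cQ$, whose existence is guaranteed by Sublemma \ref{SL:MIsNotRankOne} since $\cM$ (and hence $\cQ$) has rank at least two.

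The main obstacle will be the subcase where $p$ is a Weierstrass point that is forced by the structure of $\cQ$ to be the midpoint of $\tilde s$ on every surface in $\cQ$; there $\tilde s$ is $J$-fixed throughout $\cQ$ and the direct argument for (a) fails. In that event I will instead find an auxiliary simple cylinder adjacent to $p$ (by applying Proposition \ref{P:ArithmeticCylinder} to an arithmetic perturbation of $(Q,q)$) and collapse it. A variant of the argument in Sublemma \ref{SL:DisjointSimple}, adapted to allow a marked point on the boundary of the collapsed cylinder via the path-extension tool of Lemma \ref{L:ExtendingPaths} and the boundary tangent formula of Theorem \ref{T:BoundaryTangent}, will show that the image of $s$ remains a non-$J$-fixed saddle connection whose associated complex-envelope locus remains proper. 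This produces the desired smaller counterexample and completes the contradiction.
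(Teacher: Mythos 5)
The approach of forgetting the marked point $p$ and extending $s$ to a maximal geodesic $\tilde s$ on $\For(Q,q)$ has a structural flaw that comes directly from Remark \ref{R:GluingStratum}: an endpoint of $s$ that is a marked point on $(Q,q)$ becomes a zero of order one on the surface obtained by gluing a complex envelope along $s$. Thus there is no ``forget $p$'' operation on surfaces in $\cM_s$, and no mechanism by which the hypothesized properness of $\cM_s$ transfers to $\cM_{\tilde s}$. Your justification of (b) --- invoking Theorem \ref{T:HatHomologous} to show that the equation $s_1=s_2$ is non-vacuous --- establishes only that this equation cuts out a proper subset; the definition of $\cM_{\tilde s}$ requires that an \emph{invariant subvariety} is locally cut out by that equation, which is a much stronger statement and is exactly what the induction must supply. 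A separate gap is that the geodesic continuation of $s$ past the regular point $p$ on $\For(Q,q)$ need not terminate at a singularity, so $\tilde s$ may not be a saddle connection at all; forcing it to be one requires perturbing into a measure-zero locus, which then conflicts with the ``generic'' length-asymmetry argument you want to run for (a).

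The paper's route avoids both issues: it moves $p$ to collide with a zero or pole $z'\notin\{z,J(z)\}$ on $(Q,q)$. On the glued surfaces in $\cM_s$ this is a genuine WYSIWYG degeneration (the order-one zero at $p$ colliding with $z'$), so the boundary invariant subvariety exists by Theorem \ref{T:BoundaryTangent}, remains codimension one, and is still cut out by $s_1=s_2$; moreover $s$ stays a saddle connection, and the requirement $z'\ne z,J(z)$ (together with the uniqueness of $J$, for which Lemma \ref{L:PillowcaseBase} is needed first) keeps $s$ non-$J$-fixed. The case where both endpoints of $s$ coincide at a single marked point is handled not by extending $s$ but by observing that $s$ is then a core curve of a cylinder $C$ and either sliding $p$ to a boundary saddle connection of $C$ or invoking Sublemma \ref{SL:DisjointSimple}. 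Your proposed fallback for this case is too sketchy to assess, but in any event it cannot repair the two gaps above.
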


\begin{proof}
As observed above, any marked point is an endpoint of $s$. We will proceed by contradiction. 

\noindent\textbf{Case 1: Both endpoints of $s$ are distinct marked points.} In this case, we can move one endpoint to a zero to find a smaller counterexample, contradicting our assumption that $\cQ$ is the smallest counterexample. 

\noindent\textbf{Case 2: Exactly one endpoint of $s$ is a marked point.} Call the endpoint that is a marked point $p$, and the other end point $z$, so $z$ is a zero or pole. Perhaps perturbing the surface we can send $p$ into a zero or pole $z'$ not equal to $z$ or $J(z)$.
%
%
%
Such a zero always exists for hyperelliptic components (see Figure \ref{F:HyperellipticCheatSheet} in Section \ref{S:Q-hyp}).  Again we have reduced to a smaller counterexample where $s$ remains a saddle connection that is not fixed by the hyperelliptic involution.
(Note that this proof implicitly but crucially uses the uniqueness of the hyperelliptic involution, since we need to send $p$ into a zero or pole that does not equal to $z$ or its image under \emph{any} hyperelliptic involution.)

\noindent\textbf{Case 3: Both endpoints of $s$ are the same marked point.} In this case, $s$ is contained in a cylinder $C$. If $C$ has a boundary that consists of one multiplicity one saddle connection, we can move the marked point so $s$ becomes this saddle connection. Since $C$ is fixed by the hyperelliptic involution, this saddle connection cannot be fixed by the involution, so again we obtain the same contradiction. 

Otherwise, $C$ is a complex cylinder or complex envelope. Its complement has trivial holonomy by Masur-Zorich, so there is a simple cylinder in its complement, \red which contradicts Sublemma \ref{SL:DisjointSimple}. \black
\end{proof}

We now conclude the proof of Proposition \ref{P:complex-gluing1}.  

By Lemma \ref{L:DenseSquares}, since $\bk(\cQ) = \mathbb{Q}$, horizontally periodic surfaces are dense in $\cQ$. Let $U$ be a small neighborhood of $(Q, q)$ and let $(Q_1, q_1)\in U$ be a half-translation surface such that the following holds:
\begin{enumerate}
    \item $(Q_1, q_1)$ is horizontally periodic.
    \item $s$ remains horizontal on $(Q_1, q_1)$ and remains a saddle connection at all points in $U$. 
    \item\label{I:MaxCyl} There are maximally many horizontal cylinders on $(Q_1, q_1)$ subject to the condition that $s$ remain horizontal and that $(Q_1, q_1)$ belongs to $U$.
\end{enumerate}
Let $\cC$ denote the collection of horizontal cylinders on $(Q_1, q_1)$. Let $(Q_2, q_2)$ be a surface in $U$ where all the cylinders in $\cC$ persist and are generic. Recall that, by Lemma \ref{L:hyp-cyl}, a generic cylinder in a hyperelliptic component different from $\cQ(-1^4)$ is one of the following: a simple cylinder, complex cylinder, or complex envelope. 

By Sublemma \ref{SL:DisjointSimple}, if there is a simple cylinder disjoint from $s$ and $J(s)$, then its boundary consists of $s$ and $J(s)$. It follows that the only generic cylinders disjoint from $s$ and $J(s)$ (including their boundaries) are complex cylinders and complex envelopes. We will derive a contradiction.   

\noindent \emph{Case 1: One of the cylinders in $\cC$ is a complex cylinder on $(Q_2, q_2)$.}

By Masur-Zorich (Theorem \ref{T:MZ}), cutting along the two boundaries of the complex cylinder disconnects the surface into three surfaces with boundary, all of which have trivial linear holonomy. Since each component is fixed by the hyperelliptic involution, $s$ and $J(s)$ both lie in the same component. The other component that is not the interior of the original cylinder contains a simple cylinder that does not contain $s$ in its boundary, giving a contradiction. 

\noindent \emph{Case 2: One of the cylinders in $\cC$ is a complex envelope on $(Q_2, q_2)$.} 

Let $C$ denote the complex envelope. Let $D$ be any other cylinder in $\cC$. By assumption, for all surfaces in $U$, $D$ does not intersect $s$. 

By Masur-Zorich, specifically Theorem \ref{T:MZ} (3), $D$ must be simple on $(Q_2, q_2)$. Hence its boundary consists of $s$ and $J(s)$. This shows that $\cC$ contains only one cylinder apart from $C$ since otherwise there would be two simple cylinders $D$ and $D'$, both of which would have boundary $s$ and $J(s)$, 
implying that $\cQ = \cH(0,0)$ and contradicting the assumption that $\cQ$ is a stratum of quadratic differentials.

Therefore, $\cC$ contains exactly two cylinders, one of which (i.e. $D$) is generically parallel to $s$. Since there are maximally many horizontal cylinders on $(Q_1, q_1)$ subject to the condition that $s$ remain horizontal, it follows from Proposition  \ref{P:ArithmeticCylinder} that $\rk(\cQ) + \mathrm{rel}(\cQ) \leq 2$.

First suppose that $\rank(\cQ)=2$. Recalling the formula for rank and rel given in Lemma \ref{L:Q-rank}, we have $m_{even}=0$ and 
$$g+\frac{m_{odd}}2-1 = 2,$$
where $m_{odd}$ and $m_{even}$ are the number of odd and even order zeros (and poles) respectively. Since there are two poles (in the simple envelope), $m_{odd}\geq 2$. Since a quadratic differential cannot have 2 poles and no other zeros, we get $g=0, m_{odd}=6$ or $g=1, m_{odd}=4$. By the classification of hyperelliptic connected components \cite{LanneauHyp}, recalled in Section \ref{S:Q-hyp}, $\cQ = \cQ(1^2, -1^2)$. 

Since $(Q_1, q_1)$ is horizontally periodic and belongs to $\cQ(1^2, -1^2)$ it has four horizontal saddle connections. One joins two poles to each other and forms a boundary of $C$. Two more - $s$ and $J(s)$ - form the boundary of $D$. Therefore, one boundary of $C$ consists of a saddle connection $\sigma$ along with $s$ and $J(s)$. See Figure \ref{F:SimpleIntoComplexEnv} for a depiction of $(Q_2, q_2)$; the saddle connection with a single arrowhead on it corresponds to $\sigma$.

\begin{figure}[h]\centering
\includegraphics[width=.4\linewidth]{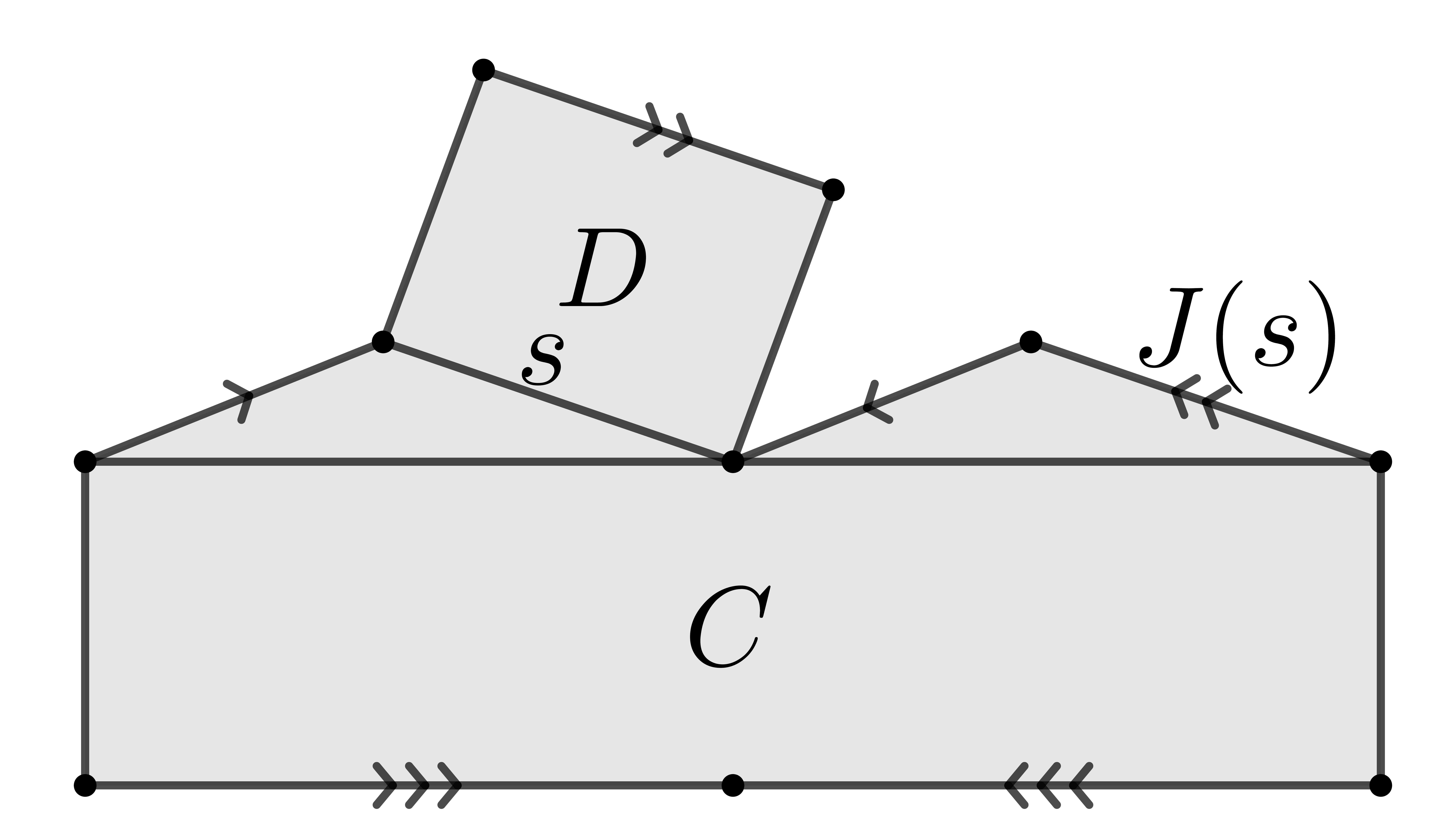}
\caption{The surface $(Q_2, q_2)$ }
\label{F:SimpleIntoComplexEnv}
\end{figure}

We may deform $(Q_1, q_1)$ through a continuum of horizontally periodic surfaces by lengthening $s$ and $J(s)$ (i.e. adding a positive real number $t$ to each of their periods) while shrinking $\sigma$ (i.e. subtracting $2t$ from its period). When the period of $\sigma$ reaches zero we have arrived at a surface in $\cQ(2, -1^2)$. As in Sublemma \ref{SL:DisjointSimple}, gluing in a complex envelope to $s$ on the surface we constructed in $\cQ(2, -1^2)$ would produce a proper orbit closure, contradicting our choice of $\cQ$ and also contradicting Lemma \ref{L:NonPillowcaseBase}. 

Next suppose $\rank(\cQ)=1$. In this case we can assume $(Q_2,q_2)=(Q_1, q_1)$, and this surface is a complex envelope (namely $C$) glued to a simple cylinder (namely $D$), so $\cQ = \cQ(2, -1^2)$. This cannot occur by Lemma \ref{L:NonPillowcaseBase}.

%
%





%
%
%

\noindent \emph{Case 3: All cylinders in $\cC$ are simple on $(Q_2, q_2)$.} 

By Sublemma \ref{SL:DisjointSimple}, since the cylinders in $\cC$ do not intersect $s$, the boundary of each cylinder in $\cC$ consists of $s$ and $J(s)$. Therefore, either $\cC$ contains one cylinder and $(Q_2, q_2)$ belongs to $\cH(0)$ or it contains two and $(Q_2, q_2)$ belongs to $\cH(0,0)$. In either case, we have contradicted the assumption that $\cQ$ is a stratum of quadratic differentials.
\end{proof}

\subsection{The Hyperelliptic Diamond Lemma}

We now turn to diamonds whose sides are hyperelliptic components of strata of quadratic differentials together with marked points, prove a result that will be applied in the proof of Sublemma \ref{SL:hyperelliptic-diamond2} below. Specifically we make the following assumption.

\begin{ass}\label{A:HypDiamond}
Let $\cM$ be an invariant subvariety in a stratum of quadratic differentials. Suppose that $(X, q)$ belongs to $\cM$ and that the collection of cylinders $\bfC_1$ and $\bfC_2$ forms a generic diamond satisfying the following conditions:
\begin{enumerate}
    \item\label{I:BigBoundary} $\MOne$ and $\MTwo$ are components of strata of connected quadratic differentials. 
    \item\label{I:HypBoundary} $\For(\MOne)$ and $\For(\MTwo)$ are hyperelliptic components of strata different from $\cQ(-1^4)$. 
    \item\label{I:HypAgreement} The hyperelliptic involutions on  $\For \left( \ColOneTwo (X,q) \right)$ obtained from those on  $\For \left( \ColOne (X,q) \right)$ and $\For \left( \ColTwo (X,q) \right)$ agree (this trivially holds unless $\MOneTwo = \cQ(-1^4, 0^n)$ for some integer $n$).
\end{enumerate}
\end{ass}
\begin{rem}
Since the diamond is generic condition $(\ref{I:BigBoundary})$ implies that $\bfC_1$ and $\bfC_2$ each consist of a single cylinder. We will abuse notation and allow $\bfC_i$ to denote these single cylinders. \end{rem}

At first glance this situation seems like one in which the Diamond Lemma can be immediately applied. After all, we have a diamond in which the sides admit hyperelliptic involutions (at least up to forgetting marked points) and where these involutions agree at the base of the diamond. However, we have not assumed the $\Col_{\bfC_i}(\bfC_j)$ are invariant under the hyperelliptic involution, so we must consider if the ``preimage of the image" assumption in the Diamond Lemma is satisfied. It turns out this would be automatically satisfied if there were no marked points, but the caveat ``up to forgetting marked points" prevents the Diamond Lemma from being used to conclude that $\For(\cM)$ is a hyperelliptic component of a stratum. To see what goes wrong, consider the example in Figure \ref{F13}. 

\begin{figure}[h]\centering
\includegraphics[width=.7\linewidth]{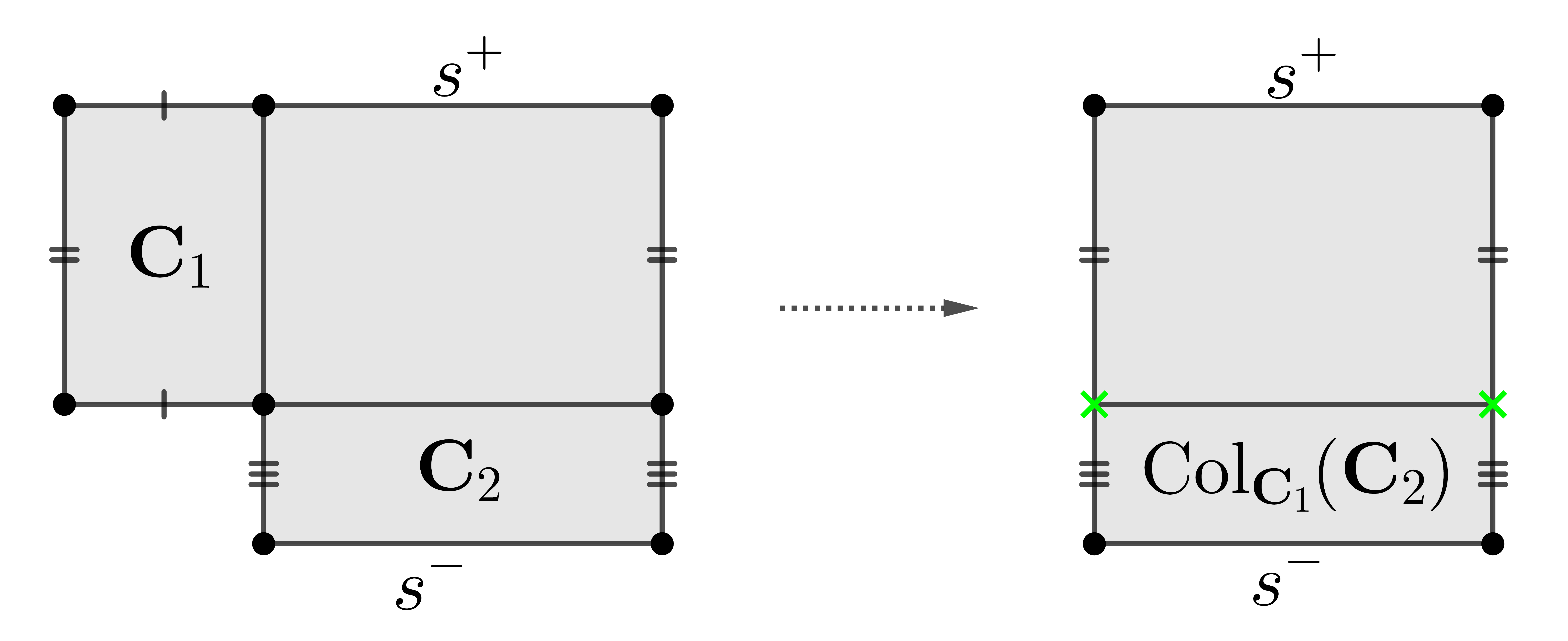}
\caption{Form a  half-translation surface $(X, q)$ as follows: take a quadratic differential $(X', q')$ in a hyperelliptic component, choose a saddle connection $s$ on $(X', q')$ that is fixed by the hyperelliptic involution, slit $s$, and glue in the flat subsurface on the left. The resulting surface is no longer hyperelliptic since the subsurface with boundary that was spliced into $(X', q')$ is not fixed by an involution. Nevertheless, degenerating $\bfC_1$ and $\bfC_2$ produce surfaces that, after forgetting marked points, belong to hyperelliptic components. Notice that degenerating $\bfC_1$ produces a simple cylinder (once marked points are forgotten) that contains $\ColOne(\bfC_2)$ but on which $\ColOne(\bfC_2)$ is not fixed by the hyperelliptic involution.}
\label{F13}
\end{figure}

We will now isolate the behavior that prevents $\For(\cM)$ from being a hyperelliptic component. 

\begin{defn}\label{D:BadConfiguration}
Given a cylinder $C$ on a half-translation surface $(Y, \eta)$ with marked points, we will let $\For(C)$ denote the cylinder on $\For(Y, \eta)$ that contains the image of $C$. Say that $\bfC_1$ and $\bfC_2$ are in a \emph{bad configuration} if, perhaps after relabeling the cylinders, on $\For( \ColOne (X,q) )$, $\For(\ColOne(\bfC_2))$ is a \red generically \black simple cylinder that strictly contains $\ColOne(\bfC_2)$ (see Figure \ref{F13}). Otherwise, say that $\bfC_1$ and $\bfC_2$ are in a \emph{good configuration}.  
\end{defn}

\begin{rem}
The definition implies that $\bfC_2$ is simple. It does not require that $\bfC_1$ is simple (although it is possible to show using Assumption \ref{A:HypDiamond} that it must be).
%
%
%
%
%
\end{rem}
 
 We show now that bad configurations are precisely the obstacle to $\For(\cM)$ being a hyperelliptic component. 

\begin{lem}\label{L:hyperelliptic-diamond}
Under Assumption \ref{A:HypDiamond}, if $\bfC_1$ and $\bfC_2$ are in a good configuration then $\For(\cM)$ is contained in a hyperelliptic locus. 
\end{lem}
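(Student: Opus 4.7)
The plan is to apply the Diamond Lemma (Lemma \ref{L:diamond}) in the forgetful setting to obtain a half-translation cover from $\For(X, q)$ to a genus-zero quadratic differential, placing $\For(\cM)$ in a hyperelliptic locus. By Remark \ref{R:CouldBeDense} we may assume $(X, q)$ has dense orbit in $\cM$. By Assumption \ref{A:HypDiamond} combined with Lemma \ref{L:HypSymmetries}, there exist unique hyperelliptic involutions $J_1$ on $\For(\ColOne(X, q))$ and $J_2$ on $\For(\ColTwo(X, q))$, with quotient maps $f_i : \For(\Col_{\bfC_i}(X, q)) \to (Y_i, q_i)$ to genus-zero quadratic differentials. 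The agreement condition for $f_1$ and $f_2$ at the base of the diamond is exactly part \eqref{I:HypAgreement} of Assumption \ref{A:HypDiamond}.

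The remaining hypothesis of the Diamond Lemma to verify is the preimage-of-image condition $\overline{\ColOne(\bfC_2)} = f_1^{-1}(f_1(\overline{\ColOne(\bfC_2)}))$, and its counterpart with indices swapped; this is equivalent to $J_1$-invariance of $\overline{\ColOne(\bfC_2)}$. By Corollary \ref{C:StillGeneric}, the cylinder $\ColOne(\bfC_2)$ is generic on $\ColOne(X, q) \in \MOne$, and hence the cylinder $\For(\ColOne(\bfC_2))$ on $\For(\ColOne(X, q))$ containing it is generic in $\For(\MOne)$, a hyperelliptic component different from $\cQ(-1^4)$. By Lemma \ref{L:hyp-cyl} this cylinder is simple, complex, or a complex envelope, and in all cases is fixed setwise by $J_1$, so $\overline{\For(\ColOne(\bfC_2))}$ is $J_1$-invariant. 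The main technical content of the proof is to deduce from the good configuration hypothesis that $\overline{\ColOne(\bfC_2)}$ itself, rather than merely the potentially larger $\overline{\For(\ColOne(\bfC_2))}$, is $J_1$-invariant. If $\ColOne(\bfC_2)$ and $\For(\ColOne(\bfC_2))$ coincide as open subsets of the surface, the invariance is immediate; otherwise the good configuration hypothesis forces $\For(\ColOne(\bfC_2))$ to be complex or a complex envelope rather than simple, and a case analysis using the two-saddle-connection boundary structure in Theorem \ref{T:MZ} shows that the translation action of $J_1$ preserves the sub-cylinder $\ColOne(\bfC_2)$. The bad configuration is precisely the situation (illustrated in Figure \ref{F13}) in which $J_1$ acts by rotation on a strictly larger simple cylinder and fails to preserve the asymmetric sub-cylinder, so the argument would break down there; this case analysis is the hard part of the proof.

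Once both hypotheses are verified, the Diamond Lemma produces a degree 2 half-translation cover $f : \For(X, q) \to (Y, \eta)$. Since $(Y, \eta)$ cylinder-collapses along $f(\For(\bfC_i))$ to the genus-zero surfaces $(Y_i, q_i)$, and cylinder collapse preserves the genus of smooth limits, $(Y, \eta)$ is itself of genus zero. Hence $\For(X, q)$, and by density of orbit $\For(\cM)$, is contained in a hyperelliptic locus.
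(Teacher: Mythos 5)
Your overall plan — apply the Diamond Lemma to obtain a degree two cover and then show that the base of the cover is genus zero — is the same strategy the paper uses, and your verification of the preimage-of-image condition via the good configuration hypothesis and Lemma \ref{L:hyp-cyl} is on the right track (it matches the paper's Sublemma \ref{SL:PreimageOfImage}, which additionally records that $\ColOne(\bfC_2)/T_1$ is a simple cylinder or simple envelope, a fact you will need).

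However, your final step contains a genuine gap. You claim that ``cylinder collapse preserves the genus of smooth limits,'' and deduce that $(Y,\eta)$ is genus zero because it collapses to the genus-zero surfaces $(Y_i,q_i)$. This principle is false: collapsing a simple cylinder on a connected surface can \emph{decrease} the genus. For example, a surface in $\cH(2)$ (genus 2) degenerates under a simple cylinder collapse to a surface in $\cH(0,0)$ (genus 1). Equivalently, gluing a simple cylinder into a saddle connection $s$ on a sphere produces a sphere only when $s$ is a closed loop (i.e., when cutting $s$ disconnects); if $s$ joins two distinct points, gluing in a simple cylinder produces a torus. So the fact that the two sides of the diamond collapse to genus zero does not by itself force the middle surface to be genus zero, and this is exactly where the diamond structure must be exploited. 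The paper handles this by noting that the glued-in piece $\bfC_1/T_0$ is a simple cylinder or simple envelope: the envelope case is immediate, and in the simple cylinder case they show that cutting $\ColOne(\bfC_1)/T_1$ disconnects $\ColOne(X,q)/T_1$ by transferring a disconnection statement from the base of the diamond (cutting $\ColTwo(\bfC_1)/T_2$, a cylinder on a sphere, disconnects; hence so does cutting $\ColOneTwo(\bfC_1)/T$; and gluing $\ColOne(\bfC_2)/T_1$ back in does not reconnect the two sides). This connectivity argument is the crux of the lemma and cannot be replaced by an appeal to genus-preservation.
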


\begin{proof}
Let $T_i$ denote the hyperelliptic involution on $\Col_{\bfC_i}(X, q)$ (this exists by Assumption \ref{A:HypDiamond} \eqref{I:HypBoundary}). We note that (by Lemma \ref{L:HypSymmetries}) this is uniquely defined since $\For(\cM_{\bfC_i}) \ne \cQ(-1^4)$.

\begin{sublem}\label{SL:PreimageOfImage}
$\ColOne(\bfC_2)$ is fixed by $T_1$. Moreover, $\ColOne(\bfC_2)/T_1$ is a simple cylinder or simple envelope. The analogous statement holds for $\ColTwo(\bfC_1)$. 
\end{sublem}
\begin{proof}
\red If $\For\left( \ColOne(\bfC_2) \right) = \ColOne(\bfC_2)$ then the result is immediate. Suppose therefore that $\For\left( \ColOne(\bfC_2) \right)$ strictly contains $\ColOne(\bfC_2)$. \black

Since $\bfC_1$ and $\bfC_2$ are in a good configuration, $\For\left( \ColOne(\bfC_2) \right)$ is not a \red generically \black simple cylinder. By Lemma \ref{L:hyp-cyl}, on an unmarked surface in a hyperelliptic component different from $\cQ(-1^4)$, the hyperelliptic involution acts by translation on all cylinders except for \red generically \black simple cylinders, on which it acts by rotation. Since (by Assumption \ref{A:HypDiamond} \eqref{I:HypBoundary}), $\For(\MOne) \ne \cQ(-1^4)$ we have that $T_1$ acts by translation on $\For(\ColOne(\bfC_2))$ and hence that it fixes $\ColOne(\bfC_2)$; see Figure \ref{F:HypDiamondTranslate} for an example.

\begin{figure}[h]\centering
\includegraphics[width=.3\linewidth]{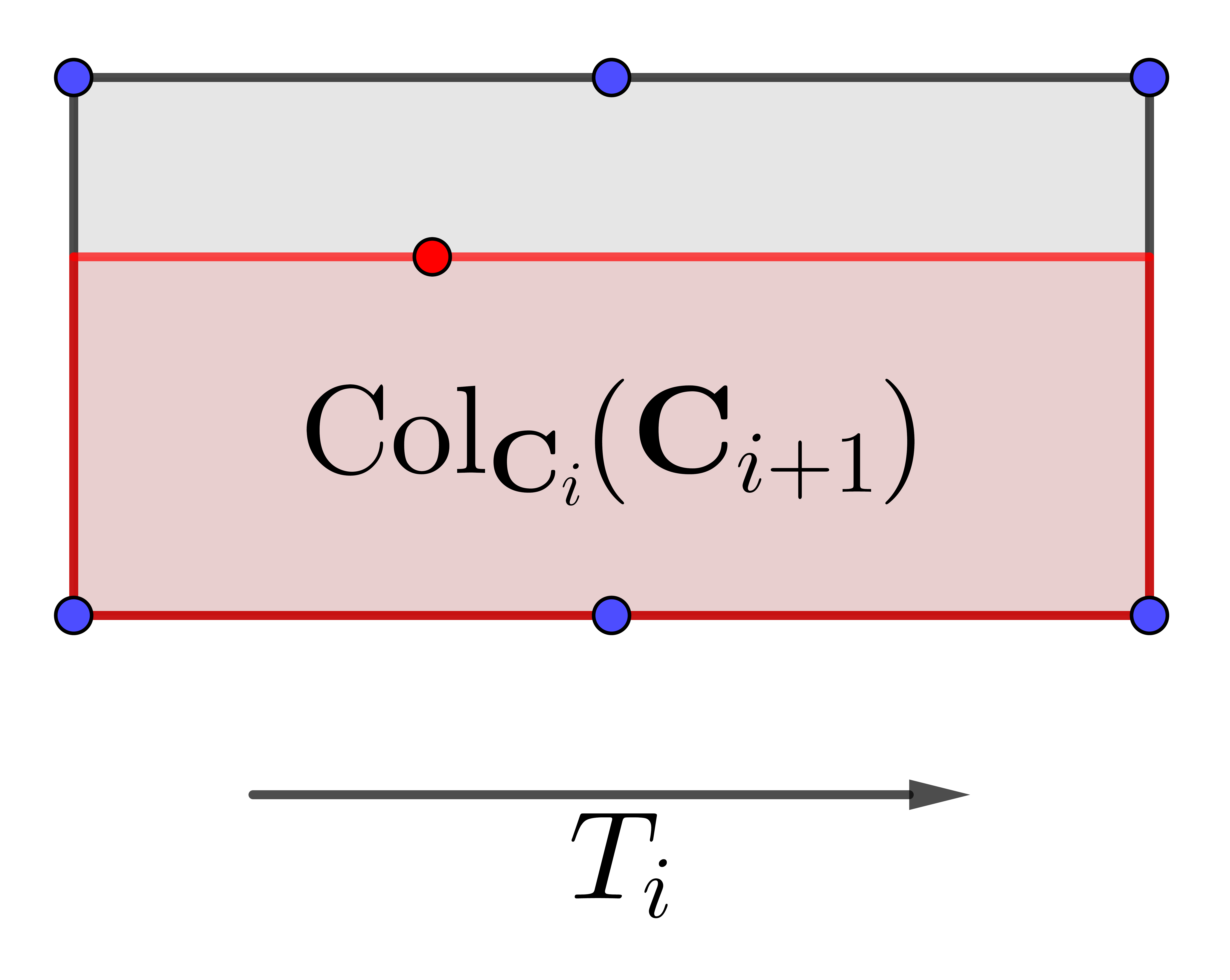}
\caption{The proof of Sublemma \ref{SL:PreimageOfImage}.}
\label{F:HypDiamondTranslate}
\end{figure}

Now we will turn to the second claim. Since the diamond is generic, all the saddle connections on the boundary of $\bfC_1$ and $\bfC_2$ are generically parallel to the cylinders they border. The same holds in $\MTwo$ and $\MOne$ respectively by Corollary \ref{C:StillGeneric}. By Assumption \ref{A:HypDiamond} \eqref{I:BigBoundary} and \eqref{I:HypBoundary}, $\For(\MOne)$ is a hyperelliptic component, which implies that $\ColOne(\bfC_2)/T_1$ is a generic cylinder in a genus zero stratum. This stratum is different from $\cQ(-1^4)$, so $\ColOne(\bfC_2)/T_1$ is a simple cylinder or simple envelope.
\end{proof}

Since $\ColOne(\bfC_2)$ is fixed by $T_1$ and $\ColTwo(\bfC_1)$ is fixed by $T_2$, the involutions $\Col_{\ColOne(\bfC_2)}(T_1)$ and $\Col_{\ColTwo(\bfC_1)}(T_2)$ are well-defined (by Lemma \ref{L:DefinitionCol(f)}) and equal (by Assumption \ref{A:HypDiamond} \eqref{I:HypAgreement}). Let $T$ denote this involution on $\ColOneTwo(X,q)$.

By the Diamond Lemma (Lemma \ref{L:diamond}) there is an involution $T_0$ on $(X, q)$ that preserves each $\bfC_i$ and such that $\Col_{\bfC_i}(T_0) = T_i$. Notice that the hypotheses of the Diamond Lemma are met since $\Col(T_1) = \Col(T_2)$ and by Sublemma \ref{SL:PreimageOfImage}.

By slightly nudging $(X, q)$, we may suppose that $(X, q)$ has dense orbit in $\cM$ and that $\bfC_1$ and $\bfC_2$ remain cylinders in a good configuration. Therefore, it suffices to show that $(X, q)/T_0$ is a quadratic differential on a sphere (rather than a higher genus surface).

By the proof of the Diamond Lemma, $(X, q)/T_0$ is formed by gluing the cylinder $\bfC_1/T_0$ into $\ColOne(\bfC_1)/T_1$ on $\ColOne(X, q)/T_1$. Since $\bfC_1/T_0$ is a simple cylinder or simple envelope (by Sublemma \ref{SL:PreimageOfImage}), $\ColOne(\bfC_1)/T_1$ is a single saddle connection on a sphere, $\ColOne(X, q)/T_1$. Gluing in an envelope to a sphere produces a sphere, so it remains to consider the case where $\bfC_1/T_0$ is a simple cylinder. 

Gluing in a simple cylinder to a sphere produces a sphere provided that the simple cylinder is glued into a closed loop. A saddle connection on a sphere is a closed loop if and only if cutting it disconnects the surface. Therefore, it suffices to show that cutting $\ColOne(\bfC_1)/T_1$ disconnects $\ColOne(X, q)/T_1$.

Since $\ColTwo(\bfC_1)/T_2$ is a cylinder on a sphere, we have that cutting it disconnects the surface. Hence cutting $\ColOneTwo(\bfC_1)/T$ also disconnects $\ColOneTwo(X, q)/T$. One of the resulting components contains $\ColOneTwo(\bfC_2)/T$. Since $\ColOne(X, q)/T_1$ is formed by gluing $\ColOne(\bfC_2)/T_1$ into $\ColOneTwo(\bfC_2)/T$ we see that cutting $\ColOne(\bfC_1)/T_1$ on $\ColOne(X, q)/T_1$ still disconnects the surface as desired.
\end{proof}

\subsection{Conclusion of the proof of Theorem \ref{T:complex-gluing}}

\begin{proof}[Proof of Theorem \ref{T:complex-gluing}:]
The result when $\cF(\cQ)$ is hyperelliptic has already been established in Proposition \ref{P:complex-gluing1}. Suppose in order to find a  contradiction that $\cQ$ is the smallest dimensional component of a stratum of quadratic differentials for which the statement fails, \red i.e. that there is a surface $(X, q) \in \cQ$ with a saddle connection $s$ so that $\cM_s$ is not a component of a stratum. \black Note that $\cF(\cQ)$ is not hyperelliptic. \red If $\cQ$ had marked points, then, as in the proof of Sublemma \ref{SL:PillowcaseMarkedPoints}, it would be possible to move them into the zeros (or poles) and hence contradict the minimality of $\cQ$. Hence \black $\cQ$ has no marked points, so $\cF(\cQ)=\cQ$. 


\begin{sublem}\label{SL:CollapsingInduction}
Suppose that $\bfC$ is a generic cylinder on $(X, q)$ that does not intersect $s$ and which is one of the following: a simple cylinder, a simple envelope, or a half-simple cylinder. Suppose also that if $s$ is a  multiplicity one boundary component of $\bfC$ then $\bfC$ is a simple cylinder. Then $\For(\Col_{\bfC}(X, q))$ belongs to a hyperelliptic component of a stratum of connected quadratic differentials and $\Col_{\bfC}(s)$  is generically fixed by the hyperelliptic involution on $\Col_{\bfC}(X, q)$.
\end{sublem}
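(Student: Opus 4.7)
The plan is to exploit the minimality of $\cQ$ via induction. Since $\cQ$ is assumed to be the smallest-dimensional component of a stratum for which Theorem \ref{T:complex-gluing} fails, the theorem holds for the boundary component $\cQ_{\bfC}$, which by Lemma \ref{L:StrataBasics} is a component of a stratum of connected quadratic differentials (with nontrivial holonomy, per Convention \ref{CV:NoTrivHol}) of dimension $\dim \cQ - 1$. My strategy is to produce, on $\Col_{\bfC}(X,q)$, a saddle connection $\Col_{\bfC}(s)$ such that $\cM_{\Col_{\bfC}(s)}$ is proper in its ambient stratum, thereby triggering the dichotomy of Theorem \ref{T:complex-gluing} for the smaller stratum $\cQ_{\bfC}$.

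First I would check that $\Col_{\bfC}(s)$ is a well-defined saddle connection on $\Col_{\bfC}(X,q)$. Since $\bfC$ does not intersect $s$, the saddle connection $s$ either lies off $\overline{\bfC}$ (in which case $\Col_{\bfC}(s)=s$) or lies on the boundary of $\bfC$. In the latter case, the hypothesis that $\bfC$ is a simple cylinder whenever $s$ has multiplicity one on its boundary, combined with the restricted list of allowed cylinder types, ensures that the collapse identifies $s$ with a single corresponding segment on the opposite boundary, so $\Col_{\bfC}(s)$ is unambiguously a saddle connection.

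Next I would relate the gluing and collapsing constructions. Form $(X',q') \in \cM_s$ by gluing a complex envelope into $s$; since $\bfC$ is disjoint from $s$, it persists on $(X',q')$ as a cylinder $\bfC'$ of the same type. Because the glued-in envelope and $\bfC'$ occupy disjoint regions of the surface, collapsing $\bfC'$ commutes with the envelope-gluing, so $\Col_{\bfC'}(X',q')$ equals the result of gluing a complex envelope into $\Col_{\bfC}(s)$ on $\Col_{\bfC}(X,q)$. Thus $(\cM_s)_{\bfC'} = \cM_{\Col_{\bfC}(s)}$. By hypothesis $\cM_s$ is proper, so it is locally cut out by the equation $s_1 = s_2$; applying Theorem \ref{T:BoundaryTangent}, this equation descends to cut out $(\cM_s)_{\bfC'}$ inside its ambient stratum. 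Since $\Col_{\bfC}(X,q)$ has nontrivial holonomy, Remark \ref{R:notHH} gives that $s_1 = s_2$ is a nontrivial equation, so $\cM_{\Col_{\bfC}(s)}$ is proper.

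Invoking the minimality of $\cQ$, Theorem \ref{T:complex-gluing} applies to $\cQ_{\bfC}$ and forces us into one of the hyperelliptic cases, yielding that $\For(\cQ_{\bfC})$ is a hyperelliptic component and $\Col_{\bfC}(s)$ is generically fixed by the hyperelliptic involution on $\Col_{\bfC}(X,q)$, as claimed. The main obstacle I anticipate is carefully justifying the commutation of collapsing $\bfC'$ with the envelope-gluing operation in the boundary cases where $s$ meets $\overline{\bfC}$; the combinatorial restrictions on cylinder type and multiplicity in the hypothesis are precisely designed to rule out the configurations in which this commutation could fail or produce an ill-defined $\Col_{\bfC}(s)$.
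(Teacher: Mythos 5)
Your proposal is correct and follows essentially the same route as the paper: glue a complex envelope into $s$ to land in $\cM_s$, observe that $\bfC$ persists as a cylinder $\bfC'$ disjoint from the glued-in envelope so that collapsing $\bfC'$ commutes with the gluing, establish that the resulting boundary $\cM_{\Col_\bfC(s)}$ is proper, and invoke minimality of $\cQ$. The only (minor) difference is in how you show $(\cM_s)_{\bfC'}$ is proper: you argue directly via Theorem~\ref{T:BoundaryTangent} (the equation $s_1=s_2$ descends) combined with Remark~\ref{R:notHH} (it is non-vacuous because $\Col_\bfC(X,q)$ has nontrivial holonomy), whereas the paper packages exactly this step into Corollary~\ref{C:codim1}; both are valid.
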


Recall that the condition that $s$ is a multiplicity one boundary component of $\bfC$ means that one boundary consists entirely of $s$, and that $s$ occurs only once in that boundary (ruling out the possibility that $s$ joins two simple poles). See Figure \ref{F:BadMultOne}. 

\begin{figure}[h]\centering
\includegraphics[width=.5\linewidth]{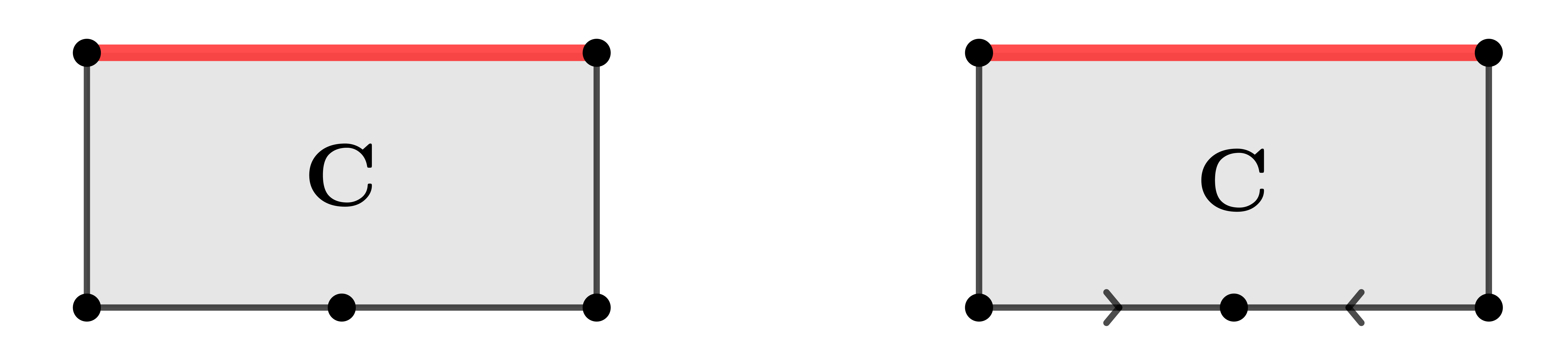}
\caption{When $\bfC$ is a half simple cylinder (left) or a simple envelope (right), the assumptions in Sublemma \ref{SL:CollapsingInduction} do not allow $s$ to be the highlighted saddle connection.}
\label{F:BadMultOne}
\end{figure}

\begin{proof}
The assumption that $\bfC$ is a simple cylinder, simple envelope, or half-simple cylinder is included to guarantee that $\Col_{\bfC}(X, q)$ has non-trivial holonomy. The assumption that if $s$ is a multiplicity one boundary component of $\bfC$ then $\bfC$ is a simple cylinder is included so $s$ gives a single saddle connection on $\Col_{\bfC}(X, q)$, instead of being split into two. See Figure \ref{F:BadMultOne}. 

Let $(Y, q_Y)$ be the surface belonging to $\cM_s$ formed by gluing in a complex envelope to $s$ on $(X, q)$. Let $\bfC_1$ be the cylinder corresponding to $\bfC$ on $(Y, q_Y)$ and let $\bfC_2$ denote the complex envelope that was glued in. By Corollary \ref{C:codim1}, $\left( \cM_s \right)_{\bfC_1}$ remains a proper invariant subvariety of the stratum of quadratic differentials containing it. Since it contains $\Col_{\bfC_1}(Y, q_Y)$, which is the result of gluing in the complex envelope $\Col_{\bfC_1}(\bfC_2)$ to $\Col_{\bfC}(X, q)$, the minimality assumption on the dimension of $\cQ$ implies that $\For(\Col_{\bfC}(X, q))$ belongs to a hyperelliptic component of a stratum of connected quadratic differentials and $s$ is generically fixed by the hyperelliptic involution on $\Col_{\bfC}(X, q)$. 
\end{proof}

We now show the following:

\begin{sublem}\label{SL:GCC}
There is an $\{s\}$-path from $(X, q)$ to a half-translation surface $(X', q')$ with cylinders $\bfC_1$ and $\bfC_2$ that form a generic diamond, where 
\begin{enumerate}
\item $\bfC_1$ and $\bfC_2$ are each a simple cylinder, a simple envelope, or a half-simple cylinder, 
\item  $s$ doesn't intersect $\bfC_i$, and 
\item if $s$ is a multiplicity one boundary component of $\bfC_i$ then $\bfC_i$ is a simple cylinder. 
\end{enumerate}
\end{sublem}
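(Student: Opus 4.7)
The strategy is to apply Proposition \ref{P:Avoidance} and then verify that its output yields a generic diamond with the extra properties required.

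The first task is to verify the hypotheses of Proposition \ref{P:Avoidance} for $(X,q) \in \cQ$. In the course of the proof of Theorem \ref{T:complex-gluing} we have already established that $\cQ$ has no marked points. Because $\cF(\cQ) = \cQ$ is not hyperelliptic, $\cQ$ cannot be the hyperelliptic stratum $\cQ(1^2,-1^2)$. The remaining exceptional stratum $\cQ = \cQ(2,1^2)$ is handled by a direct construction: one exhibits, by inspection of the combinatorics of this low-dimensional stratum, an $\{s\}$-path to a surface carrying two disjoint simple cylinders missing $s$. Finally, every rank-$1$ component of a stratum of quadratic differentials without marked points is hyperelliptic by the Lanneau–Chen–M\"oller classification together with the rank formula in Lemma \ref{L:Q-rank}, so the non-hyperellipticity of $\cQ$ forces $\rk(\cQ) \ge 2$.

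With the hypotheses in hand, Proposition \ref{P:Avoidance} produces an $\{s\}$-path from $(X,q)$ to some $(X',q') \in \cQ$ together with two generic cylinders $\bfC_1, \bfC_2$ on $(X',q')$ that are each a simple cylinder, a simple envelope, or a half-simple cylinder, are pairwise disjoint and share no boundary saddle connections, and are disjoint from $s$. The first two conditions of the sublemma are immediate from this.

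To conclude we verify that $((X',q'), \cQ, \bfC_1, \bfC_2)$ is a \emph{generic} diamond and that condition (3) holds. Because $\cM = \cQ$ is a component of a stratum, the standard shear of any single generic cylinder lies in $T_{(X',q')}\cQ$, so each $\bfC_i$ is its own subequivalence class of generic cylinders; and by Lemma \ref{L:StrataBasics}, each $\cQ_{\bfC_i}$ has dimension exactly one less than $\cQ$. Combined with the disjointness provided by Proposition \ref{P:Avoidance}, this verifies both clauses of Definition \ref{D:GenericDiamond}. Condition (3)—that if $s$ is a multiplicity-one boundary component of $\bfC_i$ then $\bfC_i$ is simple—holds vacuously since $s$ is disjoint from $\bfC_i$ and hence is not a boundary saddle connection of $\bfC_i$.

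The main obstacle I anticipate is precisely the verification of the hypotheses of Proposition \ref{P:Avoidance}: explicitly handling $\cQ(2,1^2)$ by an ad hoc construction, and carefully ruling out rank-one non-hyperelliptic strata via the Lanneau classification. The rest of the argument consists of routine checks that the output of Proposition \ref{P:Avoidance} satisfies each of the bookkeeping conditions built into the definition of a generic diamond.
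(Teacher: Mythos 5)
The main gap is in your treatment of condition (3). You claim it ``holds vacuously since $s$ is disjoint from $\bfC_i$ and hence is not a boundary saddle connection of $\bfC_i$.'' This is incorrect. By Definition \ref{D:CylAndBoundary}, cylinders in this paper do \emph{not} contain their boundary, so ``$\bfC_i$ does not intersect $s$'' (the guarantee from Proposition \ref{P:Avoidance}, item 3) only means $s$ avoids the open cylinder; $s$ can perfectly well be a boundary saddle connection of $\bfC_i$. Indeed Proposition \ref{P:Avoidance} explicitly guarantees that $\bfC_1$ and $\bfC_2$ do not share boundary saddle connections with \emph{each other} (its item 1), but says nothing about whether $s$ lies in the boundary of a $\bfC_i$. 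Condition (3) is precisely what must be proved, and it is the substantive part of the sublemma: the paper handles it by supposing $s$ is in the boundary of $\bfC_1$ (so not in the boundary of $\bfC_2$), applying Sublemma \ref{SL:CollapsingInduction} to $\bfC_2$ to learn that $\For(\ColTwoX)$ is hyperelliptic and that $\ColTwo(s)$ is generically fixed by the hyperelliptic involution (hence has no marked endpoints by Remark \ref{R:MarkedEndpoint}), and then using Lemma \ref{L:hyp-cyl} and the freeness of marked points to see that a non-simple $\ColTwo(\bfC_1)$ must sit inside a complex cylinder or envelope with a marked-point loop on one side, which forces $s$ onto the multiplicity-two side. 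Without an argument of this kind your proposal is incomplete.

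A secondary issue: your treatment of $\cQ(2,1^2)$ is based on a false premise. You assert it is not hyperelliptic and propose an unspecified ad hoc construction. In fact $\cQ(2,1,1)$ is connected (Lanneau), and the connected stratum is a hyperelliptic component ($\cQ^{hyp}(2,1,1)$, covering $\cQ(0,1,-1^5)$); the paper excludes it for exactly the same reason as $\cQ(1^2,-1^2)$ — since $\cQ$ was assumed non-hyperelliptic, neither of the two excluded strata of Proposition \ref{P:Avoidance} can occur. Your rank-one exclusion is fine, but the $\cQ(2,1^2)$ step of your argument would need to be replaced.
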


\begin{proof}
We will derive this from Proposition \ref{P:Avoidance}. That proposition excludes rank 1 strata, $\cQ(2,1,1)$, and $\cQ(1,1,-1,-1)$, but these strata need not be considered \red since they are connected (by Lanneau \cite[Theorem 1.2]{Lconn}) \black and hence hyperelliptic (see Section \ref{S:Q-hyp}).

Proposition \ref{P:Avoidance} thus gives $(X', q')$ together with cylinders $\bfC_1$ and $\bfC_2$ that are each a simple cylinder, a simple envelope, or a half-simple cylinder, such that the $\bfC_i$ don't share boundary saddle connections and don't intersect $s$. 

It suffices to show the final claim. Suppose without loss of generality that $s$ is in the boundary of $\bfC_1$. Hence $s$ is not in the boundary of $\bfC_2$. Applying Sublemma \ref{SL:CollapsingInduction} to $\bfC_2$, we see that $\For(\ColTwoX)$ is hyperelliptic and $\ColTwo(s)$ is generically fixed by the hyperelliptic involution. Since $\ColTwo(s)$ is generically fixed by the hyperelliptic involution, it cannot have marked points as endpoints (see Remark \ref{R:MarkedEndpoint}).  

In a hyperelliptic stratum without marked points, every cylinder is a simple cylinder, a complex cylinder, or a complex envelope (by Lemma \ref{L:hyp-cyl}). 
Since all marked points are free in a stratum, and $\bfC_1$ is generic, we see that if $\ColTwo(\bfC_1)$ is not simple then it must be contained in a complex cylinder or complex envelope and have one boundary component consisting of a loop from a marked point to itself; see Figure \ref{F:sGood}. 
\begin{figure}[h]\centering
\includegraphics[width=.7\linewidth]{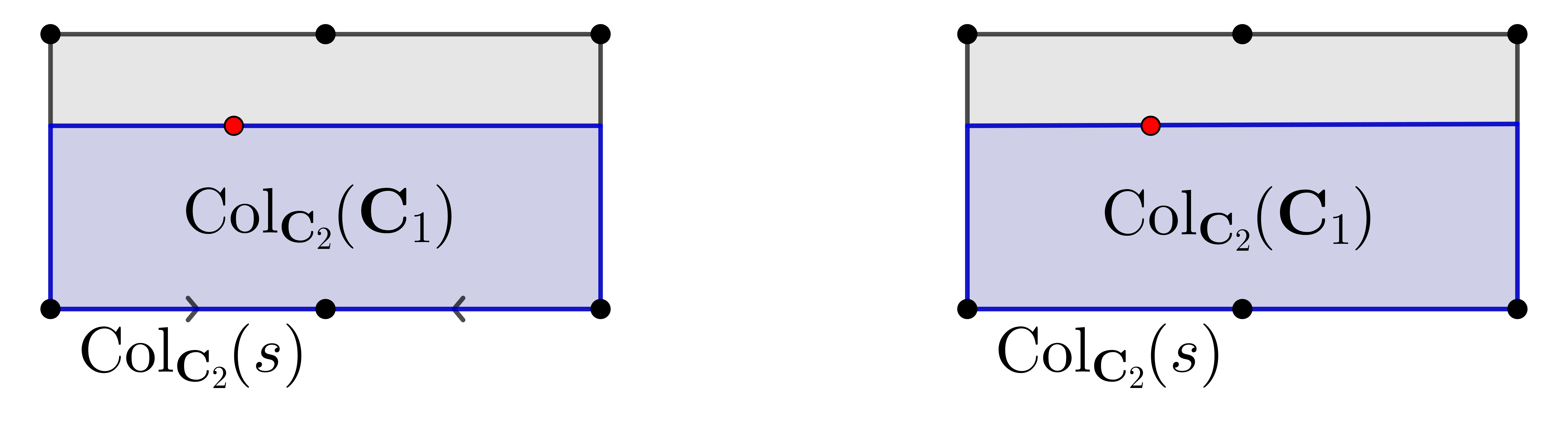}
\caption{Left: $\ColTwo(\bfC_1)$ contained in a complex envelope. Right: $\ColTwo(\bfC_1)$ contained in a complex cylinder. Both show a possibility for $\ColTwo(s)$ allowed for in the proof of Sublemma \ref{SL:GCC} (but one can show that the right possibility does not occur).}
\label{F:sGood}
\end{figure}
In this case $s$ must be on the other side of $\bfC_1$, so $s$ is not a multiplicity one boundary component. 
\end{proof}

Let $\bfC_1$ and $\bfC_2$ denote the cylinders produced by Sublemma \ref{SL:GCC}. These cylinders satisfy (\ref{I:BigBoundary}) of Assumption \ref{A:HypDiamond}. By Sublemma \ref{SL:CollapsingInduction}, $\For(\cQ_{\bfC_1})$ and $\For(\cQ_{\bfC_2})$ are hyperelliptic components and $\Col_{\bfC_i}(s)$ is fixed by a hyperelliptic involution $J_i$ on $\Col_{\bfC_i}(X, q)$ for $i\in \{1,2\}$.

We now show that $J_1$ and $J_2$ agree on $\ColOneTwo(X, q)$. Indeed,  the hypotheses on $\bfC_1$ and $\bfC_2$ imply that $\ColOneTwo(s)$ is a single saddle connection. Since $\Col_{\bfC_i}(s)$ is fixed by $J_i$, it follows that $\ColOneTwo(s)$ is fixed by both $\Col(J_1)$ and $\Col(J_2)$, implying that these two involutions are equal.

Hence part (\ref{I:HypAgreement}) of Assumption \ref{A:HypDiamond} holds. Therefore, Assumption \ref{A:HypDiamond} is  satisfied unless $\For(\cQ_{\bfC_1})$ or $\For(\cQ_{\bfC_2})$ is $\cQ(-1^4)$. By the Hyperelliptic Diamond Lemma (Lemma \ref{L:hyperelliptic-diamond}), since $\cQ$ is assumed to be a nonhyperelliptic component of a stratum, one of the two possibilities occur:
\begin{enumerate}
    \item at least one of $\For(\cQ_{\bfC_1})$ and $\For(\cQ_{\bfC_2})$ is $\cQ(-1^4)$, or
    \item the previous possibility does not occur, but $\bfC_1$ and $\bfC_2$ are in a bad configuration (see Definition \ref{D:BadConfiguration}). 
\end{enumerate}

We will now rule out the second case.

\begin{sublem}\label{SL:hyperelliptic-diamond2}
At least one of $\For(\cQ_{\bfC_1})$ and $\For(\cQ_{\bfC_2})$ is $\cQ(-1^4)$.
\end{sublem}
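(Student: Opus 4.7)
The plan is to argue by contradiction, ruling out the remaining case of the dichotomy established immediately above. Suppose that neither $\For(\cQ_{\bfC_1})$ nor $\For(\cQ_{\bfC_2})$ equals $\cQ(-1^4)$. Then $\bfC_1$ and $\bfC_2$ must be in a bad configuration, so, after relabeling if necessary, $D := \For(\ColOne(\bfC_2))$ is a generically simple cylinder on $\For(\ColOneX)$ that strictly contains $\ColOne(\bfC_2)$.

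The first step is to observe that the strict containment $\ColOne(\bfC_2) \subsetneq D$ forces the existence of marked points on $\ColOneX$ that vanish upon forgetting, and these marked points must subdivide $D$ so that $\ColOne(\bfC_2)$ is only one of the resulting pieces. Since we established earlier that $\cQ$ has no marked points, $(X,q)$ has no marked points either, and so any marked point on $\ColOneX$ must be produced by the collapse $\Col_{\bfC_1}$ itself. In the WYSIWYG compactification of \cite{MirWri, ChenWright}, this phenomenon happens exactly when the collapse identifies simple poles on opposite boundaries of $\bfC_1$ into a regular point, which is then recorded as a marked point in the limit. Using the restrictions on $\bfC_1$ from Sublemma~\ref{SL:GCC} (simple cylinder, simple envelope, or half-simple cylinder), I would enumerate the boundary configurations of $\bfC_1$ that can actually yield new marked points on $\ColOneX$.

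The second step is to combine the presence of these new marked points with the additional structural constraints on $\For(\cQ_{\bfC_1})$. By Sublemma~\ref{SL:CollapsingInduction}, $\For(\cQ_{\bfC_1})$ is a hyperelliptic component of a stratum, so its hyperelliptic involution $J_1$ acts on the generically simple cylinder $D$ by rotation (Lemma~\ref{L:hyp-cyl}) and must fix the saddle connection $\ColOne(s)$. Together with the fact that the new marked points on $\ColOneX$ arise from collapsed poles on $\partial \bfC_1$, this should force the stratum containing $\cQ_{\bfC_1}$ to be $\cQ(-1^4, 0^n)$ for some $n$, so that $\For(\cQ_{\bfC_1}) = \cQ(-1^4)$, contradicting our standing assumption. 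The compatibility between $\Col(J_1)$ and $\Col(J_2)$ on $\ColOneTwoX$ provided by part \eqref{I:HypAgreement} of Assumption~\ref{A:HypDiamond} supplies the additional rigidity needed to rule out ``asymmetric'' possibilities in which only a proper sub-stratum arises on one side.

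The main obstacle will be the case analysis in the second step: precisely tracking, for each allowed shape of $\bfC_1$, which combinations of simple poles on $\partial \bfC_1$ can collide under $\Col_{\bfC_1}$; verifying that the resulting configuration of marked points on $\ColOneX$ is strong enough to produce the bad configuration on $D$; and showing that in every such scenario the hyperelliptic constraint from Sublemma~\ref{SL:CollapsingInduction}, combined with the pole count, pins the underlying stratum of $\For(\cQ_{\bfC_1})$ down to $\cQ(-1^4)$. This pole-counting portion is where I would expect the argument to be most delicate, since the number of simple poles on $\partial \bfC_1$ compatible with $\cQ$ being a non-hyperelliptic stratum without marked points is quite restricted.
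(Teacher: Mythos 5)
You correctly identify that the sublemma is proved by contradiction, assuming a bad configuration persists, and you correctly observe that any marked points on $\ColOneX$ must be produced by the collapse $\Col_{\bfC_1}$ since $(X,q)$ itself has none. However, the route you propose for deriving a contradiction has a genuine gap.

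Your second step asserts that the presence of new marked points on $\ColOneX$, combined with the hyperelliptic constraint, should ``force the stratum containing $\cQ_{\bfC_1}$ to be $\cQ(-1^4,0^n)$.'' This is not a consequence of the data, and it is not what the paper's argument establishes. A hyperelliptic component $\cQ^{\mathrm{hyp}}(a^{(1)},b^{(1)})$ of arbitrary rank can contain simple cylinders with periodic (Weierstrass) points in their interiors; marking one of those points and splitting the cylinder produces exactly the bad-configuration picture with no constraint pinning down $a$ and $b$. Nothing about the pole count or the fixed saddle connection $\ColOne(s)$ drives the stratum to $\cQ(-1^4)$, so the contradiction cannot come from that direction. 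Your appeal to part~\eqref{I:HypAgreement} of Assumption~\ref{A:HypDiamond} is also misplaced: that hypothesis has already been consumed by the preceding application of Lemma~\ref{L:hyperelliptic-diamond}, and it plays no role inside the sublemma.

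What the paper does instead is extract precise structural information about $S_1:=\ColOne(\bfC_1)$: since $\mathrm{Twist}(\bfC_1)$ is one-dimensional, Corollary~\ref{C:ParallelSC} makes the saddle connections in $S_1$ generically parallel, and the bad configuration supplies a saddle connection in $S_1$ with a marked-point endpoint that is not a loop. Such a saddle connection has no nontrivial generic parallels, so $S_1$ is a single saddle connection, which rules out the half-simple possibility and restricts $\bfC_1$ to a simple cylinder or simple envelope. The contradiction then comes from a four-way case analysis on the position of $S_1$ relative to $\For(\ColOne(\bfC_2))$ (and whether $\ColOne(s)$ is a boundary component of it): in three of the cases one deforms the surface, via retracting $S_1$, producing an auxiliary cylinder $\bfC_3$, or sliding a marked point, to reach a good configuration and then contradicts the non-hyperellipticity of $\cQ$ via Lemma~\ref{L:hyperelliptic-diamond}; in the remaining case one forces $\ColOneX\in\cH(0,0)$, contradicting Convention~\ref{CV:NoTrivHol}.

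A secondary issue: your characterization that marked points arise ``exactly when the collapse identifies simple poles on opposite boundaries of $\bfC_1$'' is not correct. A simple pole has cone angle $\pi<2\pi$ and so cannot be the endpoint of a loop bounding a simple cylinder, and a simple envelope has a single pole on only one boundary; the ``opposite boundaries'' picture does not occur in the cases that survive. The paper avoids having to characterize the birth mechanism at all by arguing directly from the parallelism constraint on $S_1$.
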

\begin{proof}
Suppose not. The previous discussion implies that $\bfC_1$ and $\bfC_2$ are in a bad configuration. In particular, suppose that $\ColOne(X, q)$ contains a marked point and that  $\For(\ColOne(\bfC_2))$  is a simple cylinder (see Definition \ref{D:BadConfiguration} for the definition of $\For$ of a cylinder). Set $S_1 := \ColOne(\bfC_1)$. 

By Corollary \ref{C:ParallelSC}, the saddle connections in $S_1$ are generically parallel to each other (notice that since $\bfC_i$ consists of a single cylinder, $\mathrm{Twist}(\bfC_i)$ is one-dimensional). Moreover, there is a saddle connection in $S_1$ whose endpoint is a marked point and that is not a closed loop joining the marked point to itself (the marked point borders $\ColOne(\bfC_2)$ on one side and the saddle connections in $S_1$ do not intersect $\ColOne(\bfC_2)$  by assumption, and saddle connections in $S_1$ are not boundary saddle connections of $\ColOne(\bfC_2)$). A saddle connection that joins a marked point to a distinct point cannot be generically parallel to any other saddle connection. Therefore, $S_1$ is a single saddle connection, ruling out the possibility that $\bfC_1$ is a half-simple cylinder. So  $\bfC_1$ is a simple cylinder or simple envelope.

Since $\ColOne(s)$ is \red generically \black fixed by the hyperelliptic involution on $\ColOne(X, q)$ (see Sublemma \ref{SL:CollapsingInduction}) we have that $\ColOne(s)$ does not have any marked points as endpoints by Remark \ref{R:MarkedEndpoint}.

\noindent \textbf{Case 1: $S_1$ is not contained entirely in $\For(\ColOne(\bfC_2))$.}

In this case, it is possible to retract the saddle connection $S_1$ until it lies entirely outside of $\For(\ColOne(\bfC_2))$ (see Figure \ref{F:retraction}). This deformation does not change the fact that $\ColOne(s)$ is a saddle connection since $\ColOne(s)$ does not have marked points as endpoints. In this way, $\ColOne(\bfC_2)$ has now expanded to be the simple cylinder $\For(\ColOne(\bfC_2))$ that does not intersect $\ColOne(s)$.

\begin{figure}[h]\centering
\includegraphics[width=.6\linewidth]{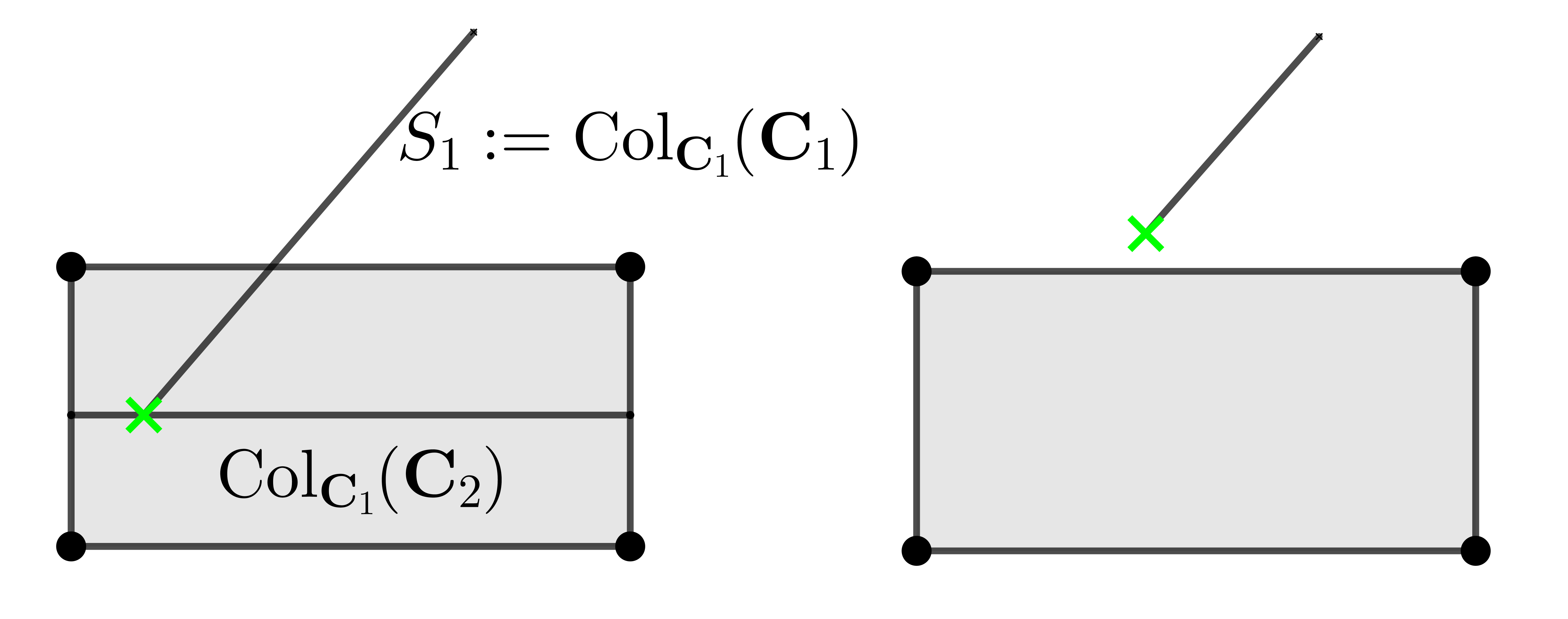}
\caption{Retracting $S_1$}
\label{F:retraction}
\end{figure}

 Notice that now $\bfC_1$ and $\bfC_2$ do not form a bad configuration. As in Figure \ref{F:NotBad}, we see that even after collapsing $\bfC_2$ there is a zero (not just a marked point) on either side of $\bfC_1$.  Lemma \ref{L:hyperelliptic-diamond} yields the contradiction that $\For(\cQ)$ is hyperelliptic.  
 
\begin{figure}[h]\centering
\includegraphics[width=.45\linewidth]{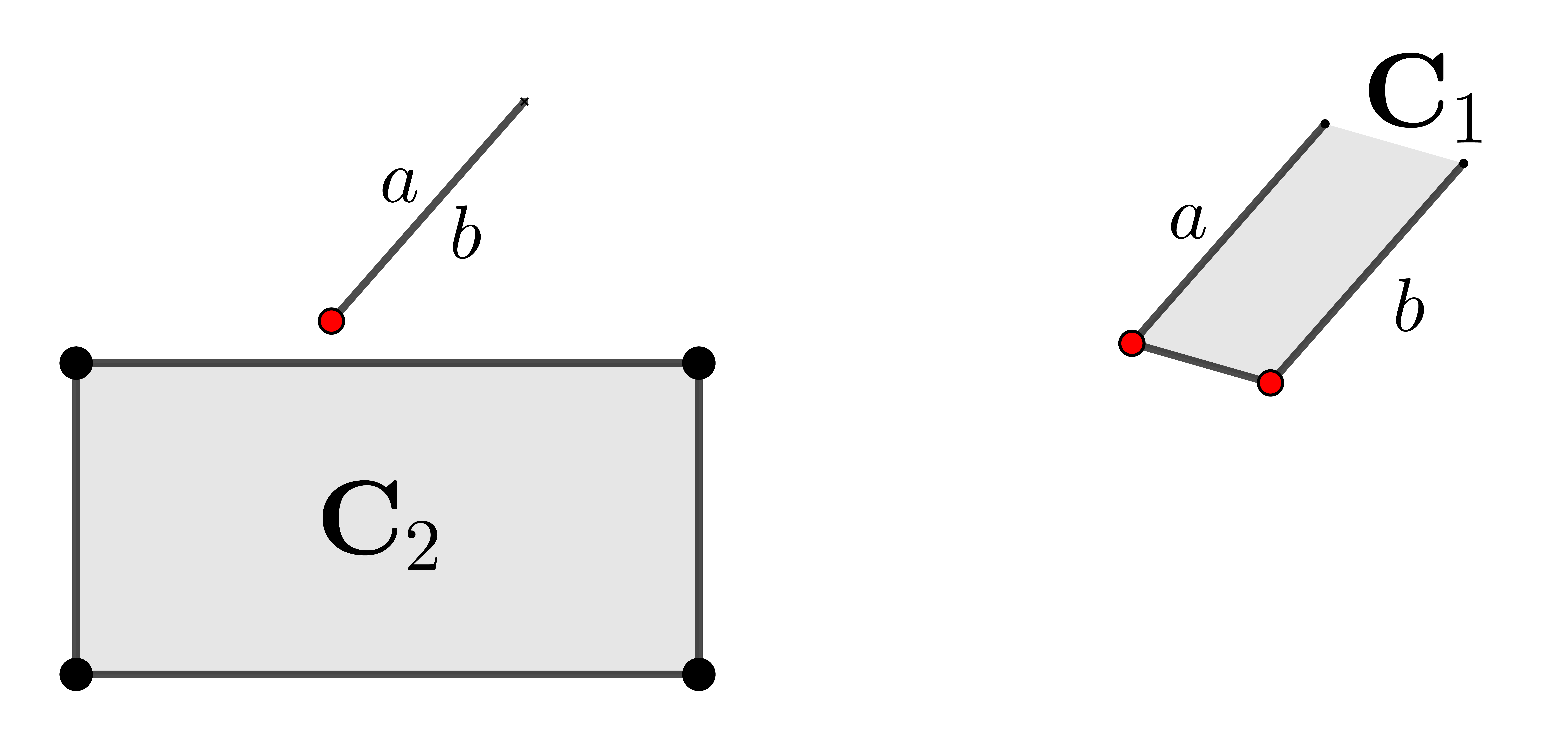}
\caption{Cylinders on the deformation of $(X, q)$ (illustrated when $\bfC_1$ is simple). }
\label{F:NotBad}
\end{figure}

\noindent \textbf{Case 2: $S_1$ is contained entirely in $\For(\ColOne(\bfC_2))$ and joins a marked point on the boundary of $\ColOne(\bfC_2)$ to another marked point in the interior of $\For(\ColOne(\bfC_2))$.}

In this case, there is a simple cylinder $\bfC_3$ on $(X, q)$ such that $\ColOne(\bfC_3)$ lies between the second marked point and the top boundary of $\For(\ColOne(\bfC_2))$ (see Figure \ref{F:TwoCylinders}). 

\begin{figure}[h]\centering
\includegraphics[width=.3\linewidth]{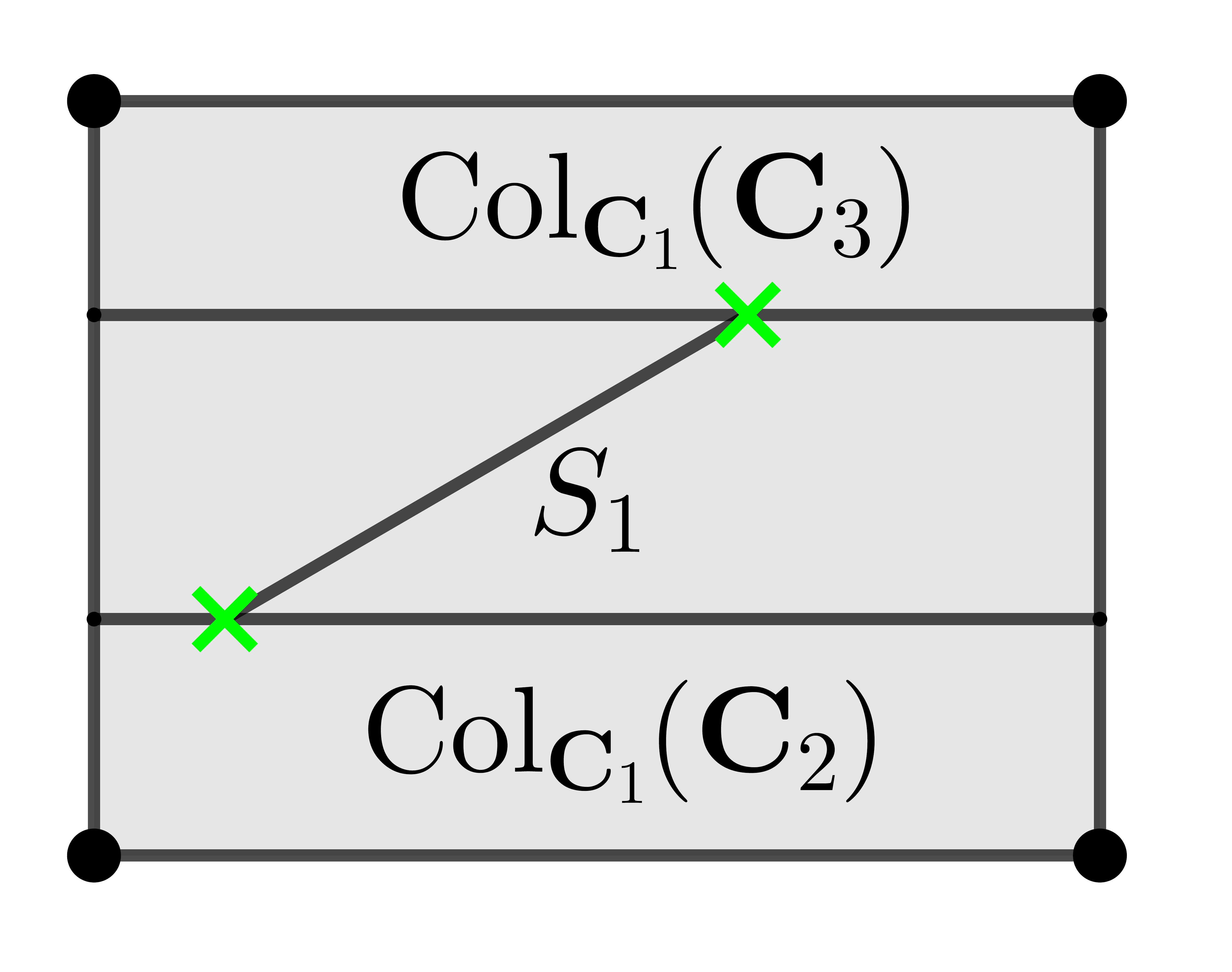}
\caption{An illustration of $\ColOne(\bfC_3)$ in $\For(\ColOne(\bfC_2))$.}
\label{F:TwoCylinders}
\end{figure}

Since the hyperelliptic involution acts on \red $\For(\ColOne(\bfC_2))$ \black by rotation, and since $\ColOne(s)$ is fixed by the hyperelliptic involution on $\ColOne(X, q)$ it follows that if $s$ intersects $\bfC_3$ then it would intersect $\bfC_2$, which is a contradiction. Therefore, $\bfC_3$ is a simple cylinder on $(X, q)$ that is disjoint from $s$. 

Since $\bfC_2$ and $\bfC_3$ are both simple they do not border each other in $(X, q)$, since $(X, q)$ has no marked points. It is easy to see that collapsing one or the other cannot cause the other to border a marked point so these cylinders do not form a bad configuration. 

Notice that neither $\Col_{\bfC_2}(X, q)$ nor $\Col_{\bfC_3}(X, q)$ belong to $\cQ(-1^4, 0^n)$ for any integer $n$. This is clear since $\For\left(\Col_{\bfC_1, \bfC_2, \bfC_3}(X, q) \right)$ belongs to the same stratum of quadratic differentials as $\For(\ColOne(X, q))$ which we have assumed is not $\cQ(-1^4)$. By the hyperelliptic Diamond Lemma (Lemma \ref{L:hyperelliptic-diamond}) using $\bfC_2$ and $\bfC_3$, we see that $\cQ$ is hyperelliptic, which is a contradiction.

\noindent \textbf{Case 3: $S_1$ is contained entirely in $\overline{\For(\ColOne(\bfC_2))}$ and joins a marked point on the boundary of $\ColOne(\bfC_2)$ to a singularity on the boundary of $\For(\ColOne(\bfC_2))$; $\ColOne(s)$ is not a boundary component of $\For(\ColOne(\bfC_2))$.} 

Since $\ColOne(s)$ is generically fixed by the hyperelliptic involution, if $\ColOne(s)$ intersects $\For(\ColOne(\bfC_2))$ then it crosses it and in particular, $s$ must intersect $\bfC_2$ which is a contradiction. Letting $p$ be the marked point that is an endpoint of $S_1$ we see that since $\ColOne(s)$ does not intersect $\For(\ColOne(\bfC_2))$ and since it is not a boundary saddle connection that it is possible to move $p$ out of $\For(\ColOne(\bfC_2))$ while keeping $S_1$ a saddle connection and ensuring that along this movement $S_1$ and $\ColOne(s)$ remain disjoint (see Figure \ref{F:Twirling}). We conclude that $\For(\cQ)$ is hyperelliptic as in Case 1.

\begin{figure}[h]\centering
\includegraphics[width=.7\linewidth]{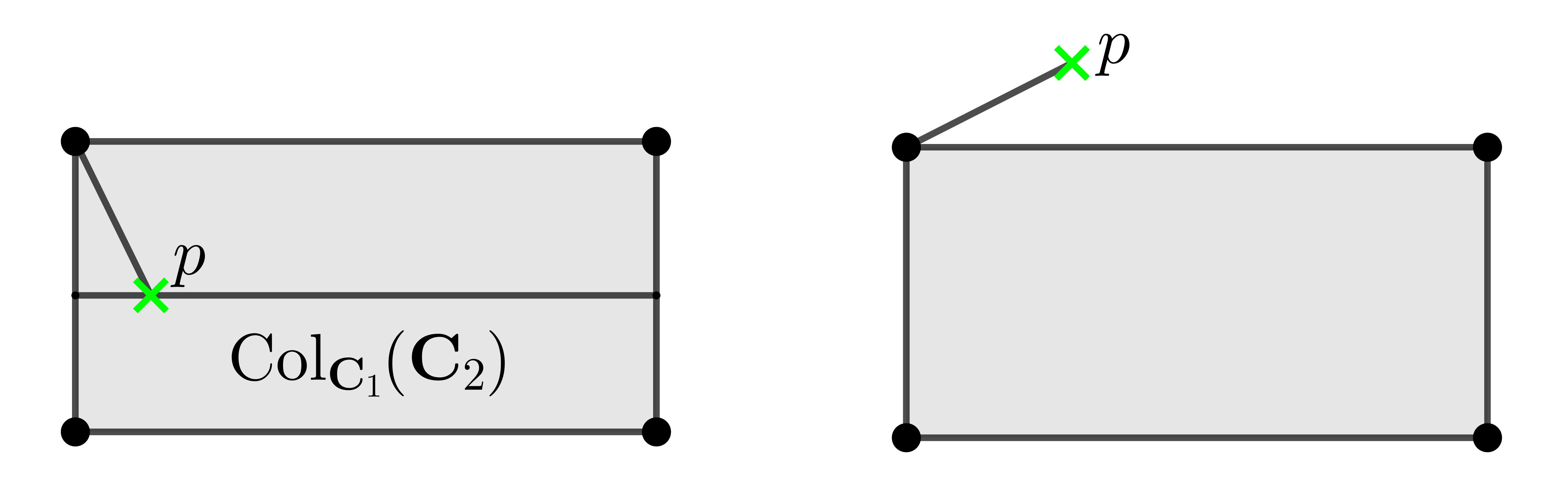}
\caption{Moving the marked point $p$ produces two cylinders that are no longer in a bad configuration.}
\label{F:Twirling}
\end{figure}

\noindent \textbf{Case 4: $S_1$ is contained entirely in $\overline{\For(\ColOne(\bfC_2))}$ and joins a marked point on the boundary of $\ColOne(\bfC_2)$ to a singularity on the boundary of $\For(\ColOne(\bfC_2))$; $\ColOne(s)$ is a boundary component of $\For(\ColOne(\bfC_2))$.} 

The hyperelliptic involution acts on $\For(\ColOne(\bfC_2))$ by rotation. Since $\ColOne(s)$ is a boundary component of $\For(\ColOne(\bfC_2))$ it follows that it is simultaneously the top and bottom boundary of $\For(\bfC_2)$ since $\ColOne(s)$ is fixed by the hyperelliptic involution. Therefore, $\ColOne(X, q)$ belongs to $\cH(0,0)$, contradicting our assumption that $\ColOne(X,q)$ has non-trivial holonomy.
%
%

The proof of Sublemma \ref{SL:hyperelliptic-diamond2} is now complete.
\end{proof}

It now suffices to suppose without loss of generality that $\ColOne(X, q)$ belongs to $\cQ(-1^4, 0^n)$ for some integer $n$. Since $\cQ_{\bfC_1}$ has dimension one less than $\cQ$ (Lemma \ref{L:StrataBasics}), which has rank at least two (since all rank 1 strata are hyperelliptic), it follows that $n \geq 1$. Since $\bfC_1$ is a simple cylinder, simple envelope, or half-simple cylinder and since $(X, q)$ has no marked points, $n \leq 2$. \red By Sublemma \ref{SL:CollapsingInduction}, $\ColOne(s)$ is generically fixed by a hyperelliptic involution and hence it joins two poles (by Remark \ref{R:MarkedEndpoint}). \black 


Set $S_1:= \ColOne(\bfC_1)$. The saddle connections in $S_1$ are all generically parallel to each other by Corollary \ref{C:ParallelSC}. We will now do a  case analysis to derive a contradiction.

\bold{Case 1: $S_1$ contains a saddle connection that goes from a marked point to itself.} See the right part of Figure \ref{F2}. This case does not occur, because it implies that on the original surface $\bfC_1$ and $\bfC_2$ share boundary saddle connections.

\begin{figure}[h]\centering
\includegraphics[width=.7\linewidth]{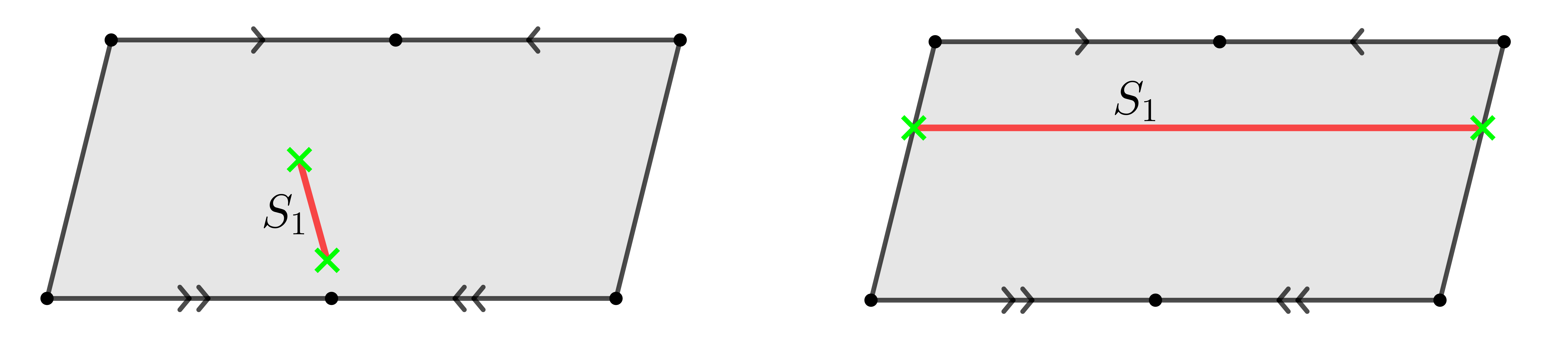}
\caption{Cases 1 and 3 in the final case analysis of the proof of Theorem \ref{T:complex-gluing}.}
\label{F2}
\end{figure}

At this point we may assume that $S_1$ contains a saddle connection that joins a marked point to a distinct point. Since no other saddle connection is generically parallel to such a saddle connection, it follows that $S_1$ is a singleton. This implies that $\bfC_1$ is a simple cylinder or simple envelope ($S_1$ would need to consist of two saddle connections if $\bfC_1$ were a half-simple cylinder).

\bold{Case 2: $S_1$ goes from a pole to a marked point.} See the top of Figure \ref{F5}.

\begin{figure}[h]\centering
\includegraphics[width=.8\linewidth]{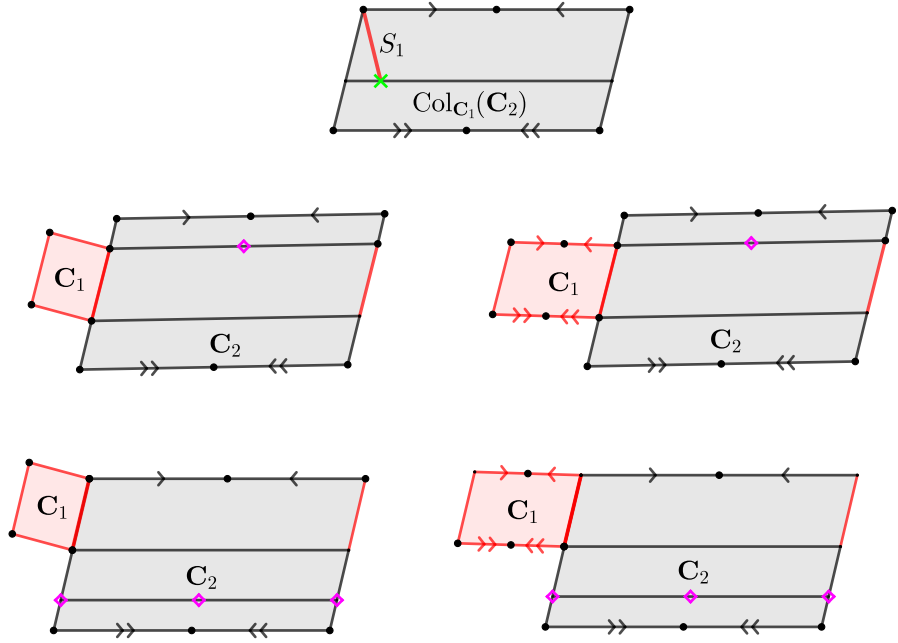}
\caption{Four possible surfaces in $\cM_s$. \red (In fact the top right picture isn't valid since $\bfC_1$ isn't a \emph{simple} envelope, but we treat it on equal footing with the other examples and derive a contradiction in all cases.) \black }
\label{F5}
\end{figure}

The four possibilities for the surfaces in $\cM_s$ that gluing in a complex envelope could produce  are recorded in Figure \ref{F5}. They are obtained via the following analysis: On the top of Figure \ref{F5}, the saddle connection $s$ could be the top saddle connection or the bottom one. Additionally, $\bfC_1$ can be a simple cylinder or a simple envelope. Hence we get four possibilities for the surface obtained by gluing in a complex envelope to $(X, q)$. 

Since $\cM_s$ is assumed to be a proper subvariety, it is defined in local period coordinates by the equation that states that the two boundary saddle connections of the complex horizontal envelope have the same periods. In the top two cases in Figure \ref{F5}, these equations imply that the marked point shown is a periodic point. In the bottom two cases, the equations imply that there is a irreducible pair of marked points. However, by Apisa-Wright \cite{ApisaWright} (restated as Lemma \ref{L:ApisaWright} and \ref{T:StrataMarkedPoints} in Section \ref{S:Pre}) there are no such point markings in non-hyperelliptic strata of quadratic differentials and so we have reached a contradiction.  

\bold{Case 3: $S_1$ joins distinct marked points.} See the left part of Figure \ref{F2}. Let $(X', q')$ denote the result of gluing in a complex envelope to $s$ on $(X, q)$. Let $\cQ'$ be the stratum containing $(X', q')$. Assume without loss of generality that $\ColOne(\bfC_2)$ is horizontal on $\ColOne(X, q)$. Since $s$ joins two poles and does not cross $\bfC_2$ on $\ColOne(X, q)$ it must also be a horizontal saddle connection. 

As in Figure \ref{F:s1EasyPart2}, it is possible to move one of the endpoints of $S_1$ into a pole while preserving the property that $S_1$ is a saddle connection that does not intersect $\ColOne(s)$. 
\begin{figure}[h]\centering
\includegraphics[width=.8\linewidth]{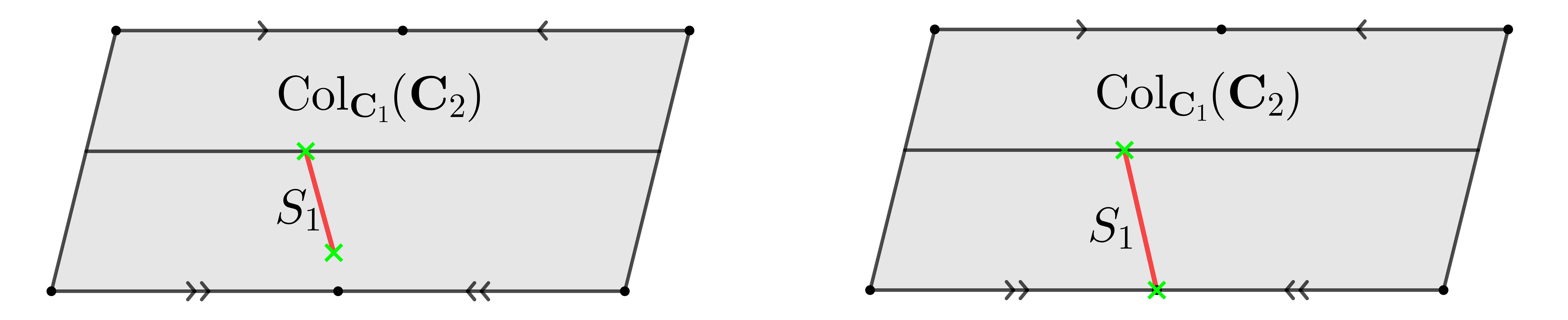}
\caption{The final argument in Case 3.}
\label{F:s1EasyPart2}
\end{figure}
This degeneration remains well-defined on $(X, q)$ and $(X', q')$, reducing to Case 2 and hence giving a contradiction. 
%
%
%
%
\end{proof}

\subsection{Open problems}\label{SS:OpenProblems}

Here are some possible next steps. 

\begin{prob}
Prove a version of Theorem \ref{T:complex-gluing} that considers the result of gluing in a complex cylinder (rather than an envelope) to a pair of hat homologous saddle connections (rather than a single saddle connection).
\end{prob}

Here one should keep in mind that there are in fact two ways to glue a complex envelope into a pair $\{s_1, s_2\}$ of equal length saddle connections.
The more natural way is to glue each boundary component to one boundary saddle connection arising from $s_1$ and one arising from $s_2$;  this can be done so that degenerating the complex cylinder gives the original surface.
The other way is to glue each boundary component of the complex cylinder to the pair of boundary saddle connections obtained by cutting either $s_1$ or $s_2$.  In either case, one is interested in a possible codimension one orbit closure defined by the condition that the four saddle connections bounding the complex cylinder have equal length. 

The following problems are more ambitious. 

\begin{prob}
Classify codimension one invariant subvarieties of components of quadratic differentials of rank at least 2. 
\end{prob}

\begin{prob}\label{P:HRQD}
Classify  invariant subvarieties $\cM$ of components of quadratic differentials $\cQ$ with $\rank(\cM)=\rank(\cQ)\geq 2$.  
\end{prob}

One of the invariant subvarieties $\cM$ constructed in \cite{EMMW} is contained in a locus of double covers of a component $\cQ$ of quadratic differentials with $\dim \cM = \dim \cQ -1$ and $\rank \cM = \rank \cQ$, so these last two problems are highly non-trivial. \red The main result of \cite{ApisaWrightHighRank} implies a solution to Problem \ref{P:HRQD} in the case when the stratum has at least six odd order zeros (or poles). \black

\section{Diamonds with quadratic doubles }\label{S:DiamondQ}

The following is the main result of the section. We use the notation $\cF$ from Definition \ref{D:F} for forgetting marked points. 

\begin{thm}\label{P1}
Let $((X,\omega), \cM, \bfC_1, \bfC_2)$ be a generic diamond of Abelian differentials.
Suppose that $\MOne$ and $\MTwo$ are quadratic doubles corresponding to components $\cQ_1$ and $\cQ_2$ of strata of quadratic differentials respectively. 

If $\ColOneTwoX$ is connected, then $\cM$ is a quadratic double.

More generally, making no assumption on the connectedness of $\ColOneTwoX$,  $\For(\cM)$ is either a quadratic double or the locus in a quadratic double that corresponds to the preimage of a codimension one hyperelliptic locus in the corresponding stratum of quadratic differentials. The marked points on $(X, \omega)$ consist of fixed points of the holonomy involution and pairs of points consisting of a non-periodic point and its image under the holonomy involution, with no further constraints.
\end{thm}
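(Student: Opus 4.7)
The plan is to apply the Diamond Lemma (Lemma \ref{L:diamond}) using the holonomy involutions on $\ColOneX$ and $\ColTwoX$. Since $\MOne$ and $\MTwo$ are quadratic doubles, Lemma \ref{L:QHolonomy} supplies holonomy involutions $J_1$ on $\ColOneX$ and $J_2$ on $\ColTwoX$, and the half-translation covers $f_i : \ColOne(X,\omega) \to \ColOne(X,\omega)/J_i$ are the degree-two quotient maps. To invoke the Diamond Lemma, I would first verify the ``preimage-of-image'' hypothesis $\overline{\Col_{\bfC_i}(\bfC_{i+1})} = f_i^{-1}(f_i(\overline{\Col_{\bfC_i}(\bfC_{i+1})}))$. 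Since the diamond is generic, $\ColOne(\bfC_2)$ is a subequivalence class of generic cylinders on the quadratic double $\MOne$; applying Masur--Zorich (Theorem \ref{T:MZ}) on the quotient shows that the preimage of each such cylinder is either a single $J_1$-invariant cylinder or a $J_1$-exchanged pair, so in all cases $\overline{\ColOne(\bfC_2)}$ is $J_1$-invariant. The symmetric statement handles $\ColTwo(\bfC_1)$.

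In the case where $\ColOneTwoX$ is connected, I would next argue that $\Col(f_1) = \Col(f_2)$. Both $\Col(f_i)$ are degree-two quotients by involutions of derivative $-\mathrm{Id}$ whose quotient is a half-translation surface in a stratum, so uniqueness of the holonomy involution (Lemma \ref{L:QHolonomy}) forces them to agree, provided the quotient does not lie in $\cQ(-1^4)$. The exceptional $\cQ(-1^4)$ case is handled using the marked-point uniqueness clause of Lemma \ref{L:QHolonomy} together with the fact that marked points persist through the diamond. The Diamond Lemma now produces a degree-two half-translation covering $f : (X,\omega)\to (Y,q)$ with $f_i = \Col_{\bfC_i}(f)$. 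A dimension count from genericity of the diamond (Definition \ref{D:GenericDiamond}) shows that $\cM$ has the dimension expected of a full locus of covers of the connected component of a stratum containing $(Y,q)$; so $\For(\cM)$ is a quadratic double, and checking that preimages of marked points are marked reduces to the corresponding fact for $\MOne$ and $\MTwo$.

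For the general case where $\ColOneTwoX$ may be disconnected, Lemma \ref{L:QBoundaryHolonomy} characterizes $\MOneTwo$ as an antidiagonal embedding of a component $\cH$ of a stratum of Abelian differentials, whose affine symmetry group on $\ColOneTwoX$ is $\mathbb{Z}/2\mathbb{Z}$ in general and $(\mathbb{Z}/2\mathbb{Z})^2$ when $\cH$ is hyperelliptic. The two induced involutions $\Col(f_1)$ and $\Col(f_2)$ are forced to agree (giving a quadratic double exactly as in the connected case) unless $\cH$ is hyperelliptic, in which case they may differ by the hyperelliptic involution. In this hyperelliptic subcase, the failure of $f_1$ and $f_2$ to glue is compensated by the extra symmetry: cutting along $\ColOneTwo(\bfC_1\cup\bfC_2)$ and recombining via the two choices of involution gives rise to a codimension-one constraint on $(Y,q)$ corresponding precisely to $(Y,q)$ lying in a codimension-one hyperelliptic locus of its ambient component of a stratum. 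I expect the main obstacle to be organizing this final case — carefully tracking when each induced involution lifts to an involution on $(X,\omega)$ via the inclusions of Figure \ref{F:SubsurfaceInclusions}, and verifying the claimed marked-point description. For the latter I would lean on Theorem \ref{T:StrataMarkedPoints} to exclude stray periodic points and on the orbits of the holonomy involution to classify the permitted marked-point configurations; the hypothesis that the marked points consist of $J$-fixed points and non-periodic $J$-orbits emerges naturally from this analysis together with the marked-point conventions in the definition of a quadratic double.
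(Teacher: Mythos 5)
The overall framework you describe — produce holonomy involutions $f_i$ on the two sides, verify the preimage-of-image hypothesis, apply the Diamond Lemma — matches the paper's opening moves. But there are two substantive errors that together make the proposal fall well short of the actual proof.

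First, your claim that ``the two induced involutions $\Col(f_1)$ and $\Col(f_2)$ \dots may differ by the hyperelliptic involution'' when $\cH$ is hyperelliptic is false. Among the four affine symmetries $\{\mathrm{id}, J, T, JT\}$ of a generic surface in the antidiagonal embedding (Lemma \ref{L:QBoundaryHolonomy}), only one is a candidate holonomy involution: it must exchange the two components \emph{and} have derivative $-\mathrm{Id}$. The hyperelliptic involution fixes each component, so it cannot be what $\Col(f_i)$ restricts to. Hence $f_1$ and $f_2$ agree at the base in all cases, connected or disconnected, hyperelliptic or not. Your proposed ``extra symmetry compensating for a failure to glue'' never materializes.

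Second, and more seriously, you misattribute the source of the codimension-one hyperelliptic loci. These do not arise from any failure of $f_1$ and $f_2$ to agree. The Diamond Lemma produces a genuine degree-two cover $f:(X,\omega)\to (Y,q)$ unconditionally; the codimension-one case occurs because the orbit closure $\cM'$ of $(Y,q)$ — which sits in a diamond whose sides $\cM'_{f(\bfC_i)}$ are honest strata of quadratic differentials — is not automatically the whole ambient stratum. Determining the possibilities for $\cM'$ is Theorem \ref{T:P1}, and it is the technical heart of the proof: a Masur--Zorich analysis of the cylinder types of $f(\bfC_i)$ (reducing via Theorem \ref{T:complex-gluing0} to the case that both are complex cylinders), a case analysis on the number $n\in\{3,4\}$ of components of the complement, an inductive cylinder-collapsing argument in the $n=3$ case, and a rank/rel numerology in the $n=4$ case. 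None of this is captured by a ``dimension count from genericity''; your proposal omits the crucial intermediate result entirely and, with $f_1=f_2$ forced, has no alternative mechanism to produce the hyperelliptic-locus conclusion. The marked-point description is also stated as ``emerging naturally'' rather than proved; the paper's argument requires the specific observation that a marked point $p$ cannot lie on the boundaries of both $\bfC_1$ and $\bfC_2$, so that one may analyze $p$ via whichever $\Col_{\bfC_i}$ leaves it intact.
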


See Figure \ref{F:QQnotQdiamond} for an example where $\cM$ is not a quadratic double. 

\begin{rem} 
When $\For(\cM)$ \red corresponds to \black a codimension one hyperelliptic locus, the generic surface in $\For(\cM)$ admits a four-to-one map to a generic surface in a genus zero stratum of quadratic differentials; moreover, $\cQ_1$ and $\cQ_2$ are  hyperelliptic components. 
\end{rem}

\begin{rem}
To recall our standing assumptions, $\cM$ is permitted to have marked points. So are $\cQ_1$ and $\cQ_2$ provided that the marked points are free. Since $\MOne$ and $\MTwo$ are quadratic doubles, our definition requires that all preimages of a free marked point be marked. The preimage of a pole may or may not be marked. 
\end{rem}

\begin{rem}
Recall that all surfaces are assumed to be connected unless otherwise stated. In particular, the surfaces in $\MOne$ and $\MTwo$ are connected. However, it is not necessarily the case that the surfaces in $\ColOneTwo(\cM)$ are connected.

Recall also that, following Convention \ref{CV:NoTrivHol}, quadratic differentials are assumed to have non-trivial holonomy.  
\end{rem}

We will defer the proof of Theorem \ref{P1} until after the proof of the following related result.

\begin{thm}\label{T:P1}
Suppose that $\cM$ is an invariant subvariety in a component $\cQ$ of a stratum of quadratic differentials. Let $((X,q), \cM, \bfC_1, \bfC_2)$ be a generic diamond where $\MOne$ and $\MTwo$ are components $\cQ_1$ and $\cQ_2$ of strata of quadratic differentials respectively. Then $\For(\cM)$ is either $\For(\cQ)$ or a codimension one hyperelliptic locus in $\For(\cQ)$. The marked points on surfaces in $\cM$ are free. 

Moreover, if $\ColOneTwo(X, q)$ has nontrivial linear holonomy then $\cM$ is a component of a stratum of quadratic differentials. 
\end{thm}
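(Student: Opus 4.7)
The plan is to classify the cylinders $\bfC_1$ and $\bfC_2$ using Masur-Zorich (Theorem \ref{T:MZ}) and then apply either Corollary \ref{C:codim1} or Theorem \ref{T:complex-gluing0}, with the presence of two cylinders in the generic diamond providing the rigidity needed to pin down $\cM$. Since $\cM_{\bfC_i} = \cQ_i$ is a component of a stratum of quadratic differentials (hence nontrivial holonomy), Lemma \ref{L:StrataBasics} rules out $\bfC_i$ being a complex cylinder, whose collapse would yield an Abelian differential. After perturbing so $(X,q)$ has dense $\GL$-orbit in $\cM$ via Corollary \ref{C:wologDense}, Masur-Zorich then forces each $\bfC_i$ to be a simple cylinder, half-simple cylinder, simple envelope, or complex envelope. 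If any $\bfC_i$ is one of the first three types, Corollary \ref{C:codim1} immediately yields $\cM = \cQ$, verifying all conclusions at once (strata have free marked points, and the moreover is automatic).

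Assume therefore that both $\bfC_1$ and $\bfC_2$ are complex envelopes, and apply Theorem \ref{T:complex-gluing0} to each. Each application produces one of three possibilities, restricted by whether $\For(\cQ)$ is hyperelliptic: (i) $\cM$ is a component of a stratum; (ii) $\For(\cM) = \For(\cQ)$ is a hyperelliptic component with a unique constrained pair of marked points on $\bfC_i$'s boundary; or (iii) $\For(\cM)$ is a codimension one hyperelliptic locus in a non-hyperelliptic $\For(\cQ)$, with all marked points free. If $\For(\cQ)$ is not hyperelliptic, only (i) and (iii) are possible, and the mixed pairing ``(i) for one, (iii) for the other'' is excluded since $\cM = \cQ$ is incompatible with $\For(\cM)$ being codimension one in $\For(\cQ)$. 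If $\For(\cQ)$ is hyperelliptic, only (i) and (ii) are possible; the mixed (i)-(ii) contradicts strata having no constrained pair, and the ``both (ii)'' possibility must be ruled out by showing that the unique constrained pair cannot lie simultaneously on the disjoint, non-adjacent boundaries of $\bfC_1$ and $\bfC_2$, which will use that the hyperelliptic involution acts on complex envelopes by translation (Lemma \ref{L:hyp-cyl}) and a careful tracking of the forced configuration of shared boundary vertices. Thus either $\cM = \cQ$, or $\For(\cQ)$ is non-hyperelliptic and $\For(\cM)$ is a codimension one hyperelliptic locus in $\For(\cQ)$, in both cases with marked points free.

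For the moreover, I would note that in case (iii) the iff characterization in Theorem \ref{T:complex-gluing0} forces each $\Col_{\bfC_i}(\bfC_i)$ to be a single saddle connection generically fixed by the hyperelliptic involution on $\For(\Col_{\bfC_i}(X,q))$, which is the ``first-type'' collapse of a complex envelope described in Remark \ref{R:TwoDegenerations}. Since $\bfC_2$ is disjoint from and non-adjacent to $\bfC_1$, the cylinder $\ColTwo(\bfC_1)$ retains its local combinatorial type on $\ColTwo(X,q) \in \cQ_2$; applying Remark \ref{R:TwoDegenerations} inside the stratum $\cQ_2$, its first-type collapse produces $\Col_{\ColTwo(\bfC_1)}(\ColTwo(X,q)) = \ColOneTwo(X,q)$ with trivial linear holonomy. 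Contrapositively, nontrivial holonomy of $\ColOneTwo(X,q)$ forces case (i), so $\cM = \cQ$ is a component of a stratum. The main obstacle will be ruling out the ``both (ii)'' possibility in the previous paragraph, which hinges on the uniqueness of the constrained pair together with the geometric incompatibility of its simultaneous placement on two disjoint, non-adjacent complex envelope boundaries.
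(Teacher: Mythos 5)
The first paragraph of your proposal contains a genuine gap. You assert that Lemma \ref{L:StrataBasics} rules out $\bfC_i$ being a complex cylinder, but that lemma applies only to $\cQ$-generic cylinders — i.e., cylinders whose boundary saddle connections are hat-homologous so that Masur--Zorich (Theorem \ref{T:MZ} \eqref{T:MZ:TrivHolComponent}) can be invoked. In a generic diamond, $\bfC_i$ is only $\cM$-generic, not $\cQ$-generic: its boundary saddle connections are $\cM$-parallel but need not be hat-homologous, so collapsing $\bfC_i$ need not produce an Abelian differential. This is exactly the situation in which the theorem is interesting: Figure \ref{F:QQnotQdiamond} depicts a diamond where both $\MOne$ and $\MTwo$ are strata of quadratic differentials yet $\cM$ is a codimension-one hyperelliptic locus, and the relevant cylinders there are $\cM$-generic complex cylinders that are not $\cQ$-generic.

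The paper's proof, after a reduction similar in spirit to yours (simple cylinders, simple envelopes, and half-simple cylinders are handled by Corollary \ref{C:codim1}, and complex envelopes by Theorem \ref{T:complex-gluing0}; see Sublemma \ref{SL:QQComplex}), treats the case where both $\bfC_1$ and $\bfC_2$ are complex cylinders as the main difficulty. In that case $\ColOneTwoX$ is automatically an Abelian differential (by applying Masur--Zorich to $\Col_{\bfC_2}(\bfC_1)$, which \emph{is} $\cQ_2$-generic), so the ``moreover'' is trivially satisfied, but the classification of $\cM$ requires a component count (Sublemma \ref{SL:Discussion}), an induction on dimension using iterated simple-cylinder collapses (Sublemma \ref{SSL}), and a rank/rel numerology argument to eliminate the $n=4$ configuration. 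None of this machinery appears in your proposal, so your argument does not close. Your handling of the complex-envelope case and the deduction of the ``moreover'' from Remark \ref{R:TwoDegenerations} is sound and closely parallels Sublemma \ref{SL:QQComplex}, but it covers only the easier half of the theorem.
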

\begin{proof}
We begin with the following reduction. 
Recall the types of cylinders from Definition \ref{D:CylinderTypes}.

\begin{sublem}\label{SL:MarkedPoints}
Suppose that $\bfC_1$ and $\bfC_2$ are both singletons containing either a complex cylinder or a complex envelope. Then the cylinder in $\bfC_i$ does not border a marked point in $(X, q)$, nor does $\Col_{\bfC_{i+1}}(\bfC_i)$ border one in $\Col_{\bfC_{i+1}}(X, q)$. Moreover, all marked points on $(X, q) \in \cM$ are free. 
\end{sublem}
\begin{proof}
Since the diamond is generic, specifically by Definition \ref{D:GenericDiamond} (\ref{E:genericSE}), the saddle connections on the boundary of $\Col_{\bfC_{i+1}}(\bfC_i)$ are generically parallel to $\Col_{\bfC_{i+1}}(\bfC_i)$. Since all marked points on $\Col_{\bfC_{i+1}}(X, q)$ are free and since each boundary of $\Col_{\bfC_{i+1}}(\bfC_i)$ consists of either two saddle connections or a single saddle connection joining two poles, it follows that the boundary of $\Col_{\bfC_{i+1}}(\bfC_i)$ does not contain any marked points.

We will now see that the boundary of $\bfC_i$ does not contain any marked points. This follows since if $z$ were such a point, then $\Col_{\bfC_{i+1}}(z)$ would be a marked point contained in the boundary of $\Col_{\bfC_{i+1}}(\bfC_i)$, which cannot occur.

Therefore, if $p$ is a marked point on $(X, q)$ it lies in the complement of the closures of the cylinders in $\bfC_1$ and $\bfC_2$. Therefore, $p$ is free since $\ColOne(p)$ is free on $\ColOne(X, q)$.
\end{proof}

We now  reduce to a special case of the situation considered in the previous sublemma. 

\begin{sublem}\label{SL:QQComplex}
If $\bfC_1$ or $\bfC_2$ is not a complex cylinder, then $\For(\cM)$ is either $\For(\cQ)$ or a codimension one hyperelliptic locus in $\For(\cQ)$ and the marked points on surfaces in $\cM$ are free. 

If additionally $\ColOneTwo(X, q)$ has nontrivial linear holonomy, then $\cM$ is a component of a stratum of quadratic differentials. 
\end{sublem}
\begin{proof}
By Definition \ref{D:GenericDiamond} (\ref{E:genericSE}), $\Col_{\bfC_{i+1}}(\bfC_i)$ is a subequivalence class of generic cylinders on $\Col_{\bfC_{i+1}}(X, q)$, and so, by Masur-Zorich (Theorem \ref{T:MZ}), $\Col_{\bfC_{i+1}}(\bfC_i)$ is one of the following: a simple cylinder, a simple envelope, a half-simple cylinder, a complex cylinder, or a complex envelope. The same is true for $\bfC_i$.

Suppose first, perhaps after re-indexing, that $\bfC_1$ is one of the first three types. Then $\cM = \cQ$ by Corollary \ref{C:codim1}.

If, again perhaps after re-indexing, $\bfC_1$ is a complex envelope, then $\For(\cM)$ is either $\For(\cQ)$ or a codimension one hyperelliptic locus in it by Theorem \ref{T:complex-gluing0}. Moreover, by Sublemma \ref{SL:MarkedPoints}, all the marked points on $(X, q)$ are free. 

Finally, we suppose $\ColOneTwo(X, q)$ has nontrivial holonomy and prove $\cM$ is a component of a stratum. It suffices to consider the case when $\bfC_1$ is a complex envelope. Applying Remark \ref{R:TwoDegenerations} to the complex envelope $\ColTwo(\bfC_1)$, we see that that $\ColOneTwo(\bfC_1)$ consists of two distinct saddle connections that have poles as endpoints, since otherwise $\ColOneTwoX$ has trivial holonomy. Hence we get that $\ColOne(\bfC_1)$ consists of two saddle connections. The final statement of Theorem \ref{T:complex-gluing0}, applied to the complex envelope $\bfC_1$, now gives that $\cM$ is a component of a stratum.
\end{proof}

Continuing with the proof of Theorem \ref{T:P1}, we may now assume that each $\bfC_i$ is a single complex cylinder. By Masur-Zorich (Theorem \ref{T:MZ} (\ref{T:MZ:TrivHolComponent})), if $\bfC$ is a generic complex cylinder on a quadratic differential $(Y, q_Y)$ then $\Col_{\bfC}(Y, q_Y)$ has trivial linear holonomy. Therefore, we are now exclusively in a situation where $\ColOneTwo(X, q)$ has trivial linear holonomy, which implies that we have established the final claim of Theorem \ref{T:P1}.

Suppose that Theorem \ref{T:P1} is false and assume that $((X,q), \cM, \bfC_1, \bfC_2)$ is a counterexample with the dimension of $\cM$ as small as possible.  
By Sublemma \ref{SL:MarkedPoints} we may suppose without loss of generality that $(X, q)$ has no marked points. We will get a contradiction in one of two ways: 
\begin{itemize}
\item in some cases, by considering the result of degenerating two different simple cylinders, we will argue that actually $\cM$ isn't a counterexample to Theorem \ref{T:P1} (this is the $n=3$ case below), or
\item in some cases the assumption that $\cM$ has smallest dimension will lead to a  precise description of surfaces in a degeneration of $\cM$, and an easy numerology argument will give a contradiction (this in the $n=4$ case below). 
\end{itemize} 

Since  $\Col_{\bfC_{i+1}}(\bfC_i)$ is a complex cylinder, for each $i$, $\ColOneTwo(\bfC_i)$ is a pair of saddle connections on $\ColOneTwo(X, q)$. Observe that 
$$(X, q) - \left( \bfC_1 \cup \bfC_2 \right) \quad\text{and}\quad  \ColOneTwo(X, q)- \left( \ColOneTwo(\bfC_1)\cup \ColOneTwo(\bfC_2)  \right) $$
are almost equal. (The left surface is the metric completion of the right surface; the right surface is ``missing its boundary saddle connections".) In particular, they have the same number of components. Let $n$ denote the number of components.  

\begin{sublem}\label{SL:Discussion}
Either $n=4$, or $n=3$ and there is one component bounded by $\ColOneTwo(\bfC_1)$, one component bounded by $\ColOneTwo(\bfC_2)$, and one component bounded by both.
\end{sublem}

\begin{proof}
Since $\ColOneTwo(X, q)$ has trivial holonomy, so must each of the $n$-components; so each of the $n$ components has at least two boundary saddle connections. After cutting four saddle connections in $\ColOneTwo(\bfC_1)\cup \ColOneTwo(\bfC_2)$, there are 8 boundary saddle connections on the resulting surface with boundary. So $n \leq 8/2$. 

We proceed in two cases. Suppose first that both saddle connections in $\ColOneTwo(\bfC_2)$ lie on the same component of $\ColOneTwo(X,q)- \ColOneTwo(\bfC_1)$. In this case, we see that there is a component of $\ColOneTwo(X, q)- \left( \ColOneTwo(\bfC_1)\cup \ColOneTwo(\bfC_2)  \right)$ that is bounded entirely by $\ColOneTwo(\bfC_1)$, and also a component that is bounded entirely by $\ColOneTwo(\bfC_2)$. Since there must be at least one other component, we get the result. 

Suppose next that $\ColOneTwo(\bfC_2)$ has one saddle connection on each component of $\ColOneTwo(X,q)- \ColOneTwo(\bfC_1)$. Since the two saddle connections in $\ColOneTwo(\bfC_2)$ are generically parallel, it follows that $\ColOneTwo(\bfC_1)$ and $\ColOneTwo(\bfC_2)$ are generically parallel to each other. (For example, if $\ColOneTwo(\bfC_1)$ is horizontal, one can shear each component of $\ColOneTwo(X,q)- \ColOneTwo(\bfC_1)$ individually to see that $\ColOneTwo(\bfC_2)$ must also be horizontal.) 

Hence the four saddle connections in $\ColOneTwo(\bfC_1)\cup \ColOneTwo(\bfC_2)$ are pairwise homologous (since $\MOneTwo$ is a stratum of Abelian differentials, generically parallel saddle connections are homologous). So cutting them disconnects the surface into four components. 
\end{proof}

Let $(\Sigma_j)_{j=1}^n$ denote the translation surfaces with boundary that comprise the components of $(X, q) - \left( \bfC_1 \cup \bfC_2 \right)$. 

For our next observation we will assume that on $(X,q)$, all parallel saddle connections are generically parallel (this assumption holds on a full measure set of $\cM$).

\begin{sublem}\label{SSL}
Whenever $\Sigma_j$ has two boundary saddle connections, it contains a simple cylinder $C$, and $\cM_{C}$ is a proper invariant subvariety in a stratum of connected quadratic differentials.

If either $C\neq \Sigma_j$, or $C= \Sigma_j$ and both boundary components of $C$ are boundary saddle connections of the same $\bfC_i$, then
$$(\Col_{C}(X,q), \cM_{C},  \Col_{C}(\bfC_1), \Col_{C}(\bfC_2)  )$$ 
is a generic diamond and both $(\cM_{C})_{\Col_{C}(\bfC_i)}$ are components of strata of quadratic differentials.  
\end{sublem}
\begin{proof}
Whenever $\Sigma_j$ has two boundary saddle connections, we can glue those two boundary saddle connections to get a single saddle connection on a translation surface $\Sigma_j^{glue}$ without boundary.

Every translation surface has a cylinder disjoint from any given saddle connection. Applying this to $\Sigma_j^{glue}$, we get a cylinder $C$. Our genericity assumption implies $C$ must be simple, since generically all cylinders on Abelian differentials are simple. Since $C$ is simple, if $\cM_{C}$ was a full component of a stratum of quadratic differentials, $\cM$ would be as well; hence $\cM_C$ is proper. 

Now assume  either that $C\neq \Sigma_j$ or that $C= \Sigma_j$ and both boundary components of $C$ are boundary saddle connections of  the same $\bfC_i$. This assumption is precisely what is required to ensure that $\Col_{C}(\bfC_1)$ and $\Col_{C}(\bfC_2)$ do not share boundary saddle connections, which is a requirement of diamonds. (If $C=\Sigma_j$ had one boundary component from $\bfC_1$ and one from $\bfC_2$, then $\Col_{C}(\bfC_1)$ and $\Col_{C}(\bfC_2)$ would both have $\Col_C(C)$ in their boundary.)

It is now easy to see that collapsing $C$ gives rise to another generic diamond. 
%
%
Since $\cM_{\bfC_i}$ is a component of a stratum, so is $(\cM_{C})_{\Col_{C}(\bfC_i)}= (\cM_{\bfC_i})_{\Col_{\bfC_i}(C)}$.
\end{proof}

\noindent \textbf{Case 1: $n = 3$.}

If $n = 3$, then by Sublemma \ref{SL:Discussion} there are two values of $j \in \{1, 2, 3\}$ for which $\Sigma_j$ has exactly two boundary saddle connections. Suppose that these subsurfaces are $\Sigma_1$ and $\Sigma_2$. See Figure \ref{TwoComplexCylsN3}.

\begin{figure}[h]\centering
\includegraphics[width=\linewidth]{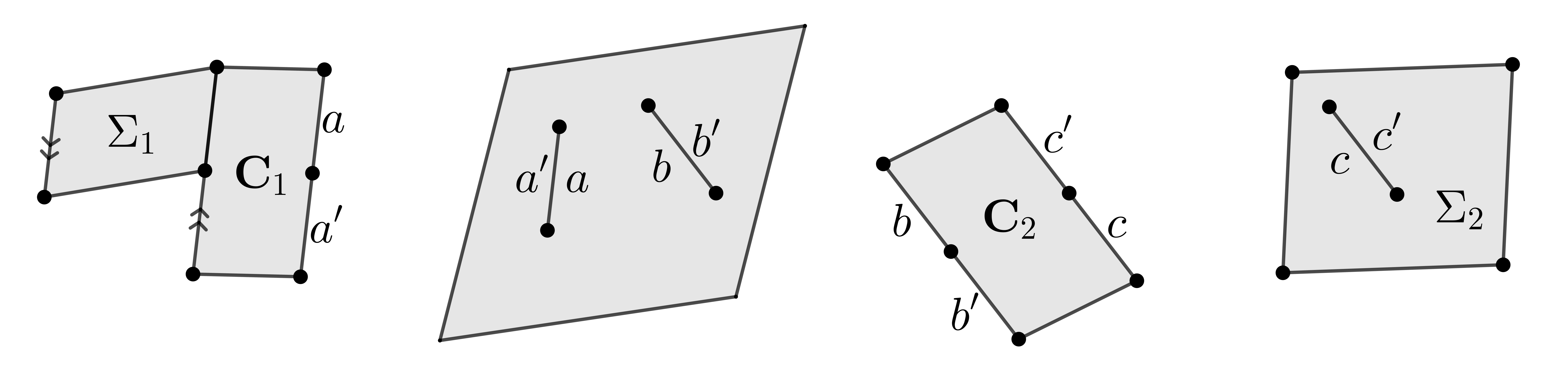}
\caption{An example with $n=3$ that must be ruled out. The equations $a=a'$ and $b=b'$ are all equivalent, and a priori might locally define a counterexample to Theorem \ref{T:P1}.}
\label{TwoComplexCylsN3}
\end{figure}

Applying Sublemma \ref{SSL} twice, we find simple cylinders $D_1$ and $D_2$ contained in $\Sigma_1$ and $\Sigma_2$ respectively. If necessary re-index so that the boundary saddle connections of $\Sigma_i$ belong to $\bfC_i$ for $i \in \{1, 2\}$.

\begin{sublem}
$\For(\cM_{D_i})$ is a codimension one hyperelliptic locus and, letting $J_i$ denote the hyperelliptic involution on $\For(\Col_{D_i}(X, q))$, $J_i$ fixes $\Col_{D_i}(\bfC_j)$ for $i$ and $j$ in $\{1, 2\}$.
\end{sublem}
\begin{proof}
By symmetry of hypotheses, it suffices to prove the statement for $i = 1$. By Sublemma \ref{SSL},
$$(\Col_{D_1}(X,q), \cM_{D_1},  \Col_{D_1}(\bfC_1), \Col_{D_1}(\bfC_2)  )$$
is a generic diamond. By our minimality assumption on the dimension of $\cM$, Theorem \ref{T:P1} holds for this generic diamond. So, since Sublemma \ref{SSL} gives that $\cM_{D_1}$ is proper, we get that $\For(\cM_{D_1})$  is a codimension one hyperelliptic locus and all marked points in $\For(\cQ_{D_1})$ are free. This proves the first claim. 

$\Col_{D_1}(\bfC_j)$ is a $\cM_{D_1}$-generic complex cylinder or complex envelope for both $j$, and since all marked points are free it cannot have marked points in its boundary. (The possibility of a complex envelope occurs when $D_1=\Sigma_1$.) In either case the hyperelliptic involution must fix both $\Col_{D_1}(\bfC_j)$, since generic genus zero quadratic differentials cannot have complex cylinders or envelopes.
\end{proof}


Since $J_i$ fixes $\Col_{D_i}(\bfC_j)$ it follows\footnote{This is because $\Col_{D_i}(\Sigma_j)$ can be identified with a component of $\Col_{D_i}(X, q) - \Col_{D_i}(\bfC_1 \cup \bfC_2)$. Each cylinder in $\Col_{D_i}(\bfC_1 \cup \bfC_2)$ is fixed by $J_i$ and each component of $\Col_{D_i}(X, q) - \Col_{D_i}(\bfC_1 \cup \bfC_2)$ is uniquely determined by which cylinders in $\bfC_1 \cup \bfC_2$ it borders.} that $J_i$ fixes $\Col_{D_i}(\Sigma_j)$ as well for all $i$ and $j$. Since $(X, q)$ has no marked points, it follows that $J_1$ is an involution on the subsurface $(X, q) - \Sigma_1 = \Col_{D_1}(X, q) - \Col_{D_1}(\Sigma_1)$.

  Notice that $\Col_{D_i}(\Sigma_j)$ is isometric to $\Sigma_j$ when $i \ne j$. Notice that the restriction of $J_1$ and $J_2$ to $\Col_{D_1, D_2}(\Sigma_3)$ are identical since both preserve this subsurface and exchange the two saddle connections on the boundary of this subsurface that lie in the boundary of $\bfC_1$. By defining an involution $J$ that acts by $J_1$ on $(X, q) - \Sigma_1$ and $J_2$ on $(X, q) - \Sigma_2$ we have a well-defined involution since we have already shown that $J_1$ and $J_2$ agree on the overlap (this proof is the same as the argument in the Diamond Lemma (Lemma \ref{L:diamond}), which we could also appeal to here).
  
  It is easy to see that the quotient of $(X, q)$ by this involution is a sphere (for instance since $(X, q)/J$ is a connected sum of three spheres since $\bfC_i/J$ is a simple cylinder and since $\Sigma_i/J$ is a sphere with boundary). This shows that $\cM$ is contained in a hyperelliptic locus. Since $\cM_{D_1}$ has codimension one in the stratum containing it, the same is true of $\cM$ since $(X, q)$ is formed by attaching a simple cylinder to $\Col_{D_1}(X, q)$. Since we have assumed that there are no marked points on $(X, q)$,  it follows that $\cM$ is a codimension one hyperelliptic locus in $\cQ$, which is a contradiction. 

\noindent \textbf{Case 2: $n = 4$.}

By successively applying Sublemma \ref{SSL} and then collapsing the simple cylinder thereby produced we may reduce to a surface $(X, q)$ on which every subsurface in $(\Sigma_j)_{j=1}^4$ is a simple cylinder. We will show that this is not possible. Unlike in the previous case we will not worry about whether collapsing a simple cylinder takes us to a hyperelliptic locus since, regardless of whether it does, we will rule out this case entirely.

Whenever a boundary component of a complex cylinder shares two boundary saddle connections with (not necessarily distinct) simple cylinders, that boundary contains a single singularity and the singularity has order two. This shows that $(X, q)$ belongs to $\cQ(2^4)$, which has rank two rel four by Lemma \ref{L:Q-rank}.


Since the boundary of every cylinder in $\left( \Sigma_j \right)_{j=1}^4$ is glued into the boundary saddle connections of either $\bfC_1$ or $\bfC_2$ we see that all six of these cylinders are parallel to each other. We suppose without loss of generality that they are all horizontal, see Figure \ref{F6}. 
\begin{figure}[h!]\centering
\includegraphics[width=.8\linewidth]{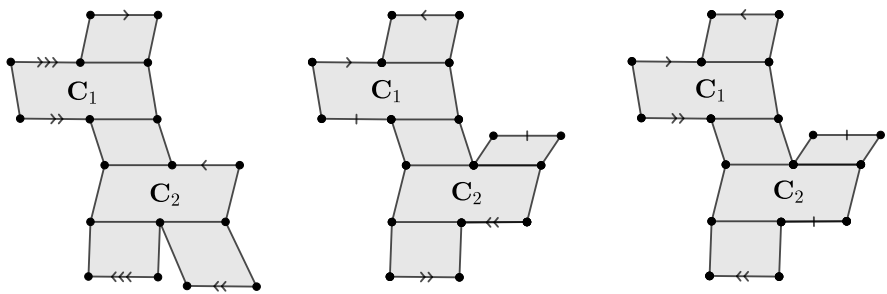}
\caption{Examples of surfaces that are ruled out in the $n=4$ case.}
\label{F6}
\end{figure}
Since $\MOne$ is a component of a stratum, it follows by Theorem \ref{T:BoundaryTangent} that the standard shear for every horizontal cylinder belongs to the tangent space of $\cM$. Under the projection $p$ to absolute cohomology\footnote{Tacitly we will work with the holonomy double cover of $\cM$ so that the tangent space is a subset of cohomology.}, the six dimensional subspace of the tangent space spanned by these shears projects to an isotropic subspace of $p\left( T_{(X, q)} \cM \right)$. This isotropic subspace has dimension at most two since $\cQ(2^4)$ has rank two. This shows that the dimension of $\ker(p) \cap T_{(X, q)} \cM$, which by definition is $\mathrm{rel}(\cM)$,  is at least four. Since $\cQ(2^4)$ has rel four, this shows that $\mathrm{rel}(\cM) = 4$. Therefore, the six dimensional subspace generated by standard shears in horizontal cylinders projects to a two dimensional Lagrangian subspace $p\left( T_{(X, q)} \cM \right)$, which shows, by definition of rank, that $\cM$ has rank at least two. Therefore, $\cM$ must have rank two rel four and hence coincide with $\cQ(2^4)$ contradicting our assumption that it is proper.  
\end{proof}

The idea of the proof of Theorem \ref{P1} is now to use the Diamond Lemma to reduce to Theorem \ref{T:P1}.

\begin{proof}[Proof of Theorem \ref{P1}:]
By perturbing, we may suppose without loss of generality that $(X, \omega)$ has dense orbit in $\cM$ (see Remark \ref{R:CouldBeDense}). Let $f_i: \Col_{\bfC_i}(X, \omega) \ra (Y_i, q_i)$ denote the quotient by the holonomy involution. 

\begin{lem}
$(X, \omega)$ is a double cover of a quadratic differential via a map $f$ satisfying $\Col_{\bfC_i}(f) = f_i$. 
\end{lem}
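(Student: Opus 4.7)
My plan is to apply the Diamond Lemma (Lemma \ref{L:diamond}) to $f_1$ and $f_2$. Its conclusion yields exactly what is needed: a covering map $f$ of $(X,\omega)$ with $\Col_{\bfC_i}(f)=f_i$, and this $f$ will have degree two because each $f_i$ does. I therefore need to verify the lemma's two hypotheses, namely the preimage-of-image condition $\overline{\Col_{\bfC_i}(\bfC_{i+1})} = f_i^{-1}(f_i(\overline{\Col_{\bfC_i}(\bfC_{i+1})}))$ for each $i$, and agreement of $f_1$ and $f_2$ at the base of the diamond.

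For the preimage-of-image condition, let $J_i$ denote the holonomy involution on $\Col_{\bfC_i}(X,\omega)$, so $f_i$ is the quotient by $J_i$ and the condition becomes $J_i$-invariance of $\overline{\Col_{\bfC_i}(\bfC_{i+1})}$. By Corollary \ref{C:StillGeneric} together with the genericity of the diamond, $\Col_{\bfC_i}(\bfC_{i+1})$ is a subequivalence class of $\MOne$-generic cylinders. The plan is to argue that every subequivalence class of generic cylinders in a quadratic double is $J_i$-invariant. The key calculation compares standard shears: $J_i^{*}$ acts by $-I$ on the tangent space of $\MOne$, while $J_i$ reverses the orientation of core curves, so $\sigma_{J_i(\bfE)}=\sigma_{\bfE}$ for every subequivalence class $\bfE$. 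Since $\sigma_{\bfE}$ generically determines $\bfE$, this forces $J_i(\bfE)=\bfE$, and applying this to $\bfE=\Col_{\bfC_i}(\bfC_{i+1})$ yields the preimage-of-image condition.

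For agreement at the base, both $\Col(f_1)$ and $\Col(f_2)$ factor through involutions of derivative $-I$ on $\ColOneTwoX$. If $\ColOneTwoX$ has non-trivial linear holonomy and does not lie in $\cQ(-1^4)$, then by Lemma \ref{L:QHolonomy} such an involution is unique, so the two quotient maps automatically have identical fibers and $\Col(f_1)=\Col(f_2)$. If $\ColOneTwoX$ has trivial holonomy, then by the preimage-of-image condition just verified, each $J_i$ descends to an involution $\Col(J_i)$ of $\ColOneTwoX$, and I will show $\Col(J_1)=\Col(J_2)$ by analyzing the two descents geometrically using the shared boundary position of $\ColOneTwoX$ in $\MOne$ and $\MTwo$ together with Masur--Zorich (Theorem \ref{T:MZ}) applied to the complex cylinders $\Col_{\bfC_i}(\bfC_{i+1})$ on each side. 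The $\cQ(-1^4)$ case is low-dimensional and can be handled by direct inspection.

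The main obstacle is the trivial-holonomy case of the agreement at the base: here there is no a priori uniqueness for the limiting involution, and the core of the work is showing that the two natural descents $\Col(J_1)$ and $\Col(J_2)$ yield the same equivalence relation on $\ColOneTwoX$. Once these two hypotheses are verified, the Diamond Lemma produces the desired map $f$.
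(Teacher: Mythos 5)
Your overall strategy matches the paper exactly: verify the two hypotheses of the Diamond Lemma and then invoke it. The paper even agrees with your identification of the crux. However, there are two issues worth flagging.

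First, your dichotomy for the agreement-at-the-base step is phrased in terms of ``trivial vs.\ non-trivial linear holonomy'' of $\ColOneTwoX$, which is not quite the right split (recall $\ColOneTwoX$ is an Abelian differential, so it has trivial linear holonomy by definition; what you likely mean is the holonomy type of the quotient, or equivalently the connectedness of $\ColOneTwoX$). The paper's case split is on whether $\ColOneTwoX$ is connected or disconnected. When connected, Lemma~\ref{L:codim1doubles} shows $\MOneTwo$ is a quadratic double, which automatically has at least one zero or marked point, and then Lemma~\ref{L:QHolonomy} gives uniqueness of the holonomy involution, covering both the generic situation and the $\cQ(-1^4)$-with-marked-points situation in one stroke. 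Your proposal treats $\cQ(-1^4)$ as a separate low-dimensional case to be checked by hand, which is unnecessary once you route through Lemma~\ref{L:codim1doubles}.

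Second, and more seriously, the hard case you correctly identify --- agreement at the base when $\ColOneTwoX$ is disconnected --- is not actually resolved in your proposal. You write that you ``will show $\Col(J_1)=\Col(J_2)$ by analyzing the two descents geometrically using \dots Masur--Zorich,'' but no argument is given; this is a statement of intent, not a proof, and it is precisely the place where a proof is needed. The paper resolves this case immediately by invoking Lemma~\ref{L:QBoundaryHolonomy}, which says that when a subequivalence class of a quadratic double degenerates to produce a disconnected surface, $\MOneTwo$ is the antidiagonal embedding of a component $\cH'$ of a stratum of Abelian differentials into $\cH'\times\cH'$, and the marked-point preserving affine symmetry group is pinned down; in particular the holonomy involution (the component swap with derivative $-\mathrm{Id}$) is still unique. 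You should use that lemma here rather than attempting a from-scratch geometric analysis via Masur--Zorich.

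A smaller remark: your shear-based argument for the preimage-of-image condition (that $J_i$-invariance of a subequivalence class $\bfE$ follows from $\sigma_{J_i(\bfE)}=\sigma_\bfE$) is more machinery than needed. The paper simply notes that since $f_i$ is the quotient by the holonomy involution and $\Col_{\bfC_i}(\bfC_{i+1})$ is a subequivalence class, invariance is automatic: a cylinder $C$ and $J_i(C)$ cover the same cylinder downstairs, so they must lie in the same subequivalence class of the quadratic double. Your version also needs some care about orientation conventions for $\gamma_i^*$ under pullback by a derivative-$(-I)$ map, which you gloss over; the direct argument avoids this.
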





\begin{proof}
By the Diamond Lemma (Lemma \ref{L:diamond}), it suffices to show that $f_1$ and $f_2$ agree on the base of the diamond and that $f_i^{-1}\left( f_i\left( \overline{\Col_{\bfC_i}(\bfC_{i+1})} \right) \right) = \overline{\Col_{\bfC_i}(\bfC_{i+1})}$. The second of these two conditions holds for any quadratic double since $\Col_{\bfC_i}(\bfC_{i+1})$ is a subequivalence class and $f_i$ is the quotient by the holonomy involution.

The first condition (agreement at the base of the diamond) is slightly more complicated. If $\ColOneTwo(\cM)$ is a locus of connected surfaces, then it is a quadratic double (by Lemma \ref{L:codim1doubles}) with at least one marked point or zero. In this case, by Lemma \ref{L:QHolonomy}, the holonomy involution is unique on $\ColOneTwo(\cM)$ and hence $f_1$ and $f_2$ agree at the base. Suppose therefore that the surfaces in $\ColOneTwo(\cM)$ are disconnected. Then by Lemma \ref{L:QBoundaryHolonomy}, $\ColOneTwo(\cM)$ is the anti-diagonal embedding of a component $\cH'$ of a stratum of Abelian differentials into $\cH' \times \cH'$ and there is still a unique holonomy involution. Hence, in either case, $f_1$ and $f_2$ agree at the base. 
\end{proof}


In order to deduce that $(X, \omega)$ belongs to a quadratic double we must check in particular that for any marked point on $f\left( X, \omega \right)$, both of its preimages under $f$ are marked on $(X, \omega)$. Say that $p$ is a marked point on $(X, \omega)$ and suppose that $p$ is not fixed by the holonomy involution on $(X, \omega)$. 

If $p$ does not belong to the closure of a cylinder in $\bfC_1$, then $p$ remains a marked point, which we denote $\ColOne(p)$, on $\ColOne(X, \omega)$. Since $\ColOne(f) = f_1$, $\ColOne(p)$ is not fixed by the holonomy involution on $\ColOne(X, \omega)$. By assumption, both marked points in $f_1^{-1}(f_1(\ColOne(p)))$ are marked on $\ColOne(X, \omega)$. Since $\ColOne(f) = f_1$, both points in $f^{-1}(f(p))$ are marked on $(X, \omega)$. 

Therefore, we must only consider marked points that belong to the boundary of a cylinder in $\bfC_1$ and to the boundary of a cylinder in $\bfC_2$. However, since the cylinder in $\bfC_1$ and $\bfC_2$ are disjoint, no such marked points exist. Note that a cone point of cone angle greater than $2\pi$ may lie on the boundary of two disjoint cylinders, but a marked point, which is a cone point of cone angle exactly equal to $2\pi$, cannot. This concludes the proof that $\cM$ is contained in a quadratic double (since we have assumed that $(X, \omega)$ has dense orbit this follows by Corollary \ref{C:DenseDiamond}).

Let $\cM'$ denote the $\GL$-orbit closure of $f(X, \omega)$. Phrased differently, $\cM'$ is the collection of surfaces in $\cM$ after modding out by the holonomy involution. Then 
\[ \left( f(X, \omega), \cM', f(\bfC_1), f(\bfC_2) \right) \] forms a generic diamond where $(\cM')_{f(\bfC_i)}$ are components of strata of quadratic differentials for $i \in \{1, 2\}$. The result now follows from Theorem \ref{T:P1}. Notice that $\ColOneTwoX$ is disconnected if and only if $\Col_{f(\bfC_1), f(\bfC_2)}\left( f(X, \omega) \right)$ has trivial linear holonomy. 
\end{proof}

\section{Diamonds of full loci of covers}\label{S:FullLoci}

The goal of this section is to prove Theorem \ref{T:IntroFull}, but we will work in a more general setup, not requiring that $\ColOneTwoX$ or even the $\Col_{\bfC_i}(X,\omega)$ are connected.  Theorem \ref{T:IntroFull} is a special case of the following result. We say that two maps have the same fibers if every fiber of one is a fiber of the other.

The following assumption will be in effect throughout this section. 

\begin{ass}\label{A:FullV2}
Suppose $((X, \omega), \cM, \bfC_1, \bfC_2)$ forms a generic diamond where $\cM_{\bfC_1}$ and $\cM_{\bfC_2}$ are full loci of covers satisfying Assumption CP, and where we allow the covers to be disconnected as long as the surfaces they cover are connected. Let $f_i$ denote the covering map defined on $\Col_{\bfC_i}(X,\omega)$.
\end{ass}

\begin{thm}\label{T:FullV2}
 Under Assumption \ref{A:FullV2},  $\cM$ is a full locus of covers of a stratum of Abelian or quadratic differentials, except possibly if $\ColOneTwoX$ is disconnected and one of the following occurs: 
\begin{enumerate}
\item The codomains of the $\Col(f_i)$ are equal and contained in a hyperelliptic stratum of Abelian or quadratic differentials without marked points or in $\cH(0)$ or $\cH(0,0)$;  the codomain of at least one $f_i$ has non-trivial holonomy; and $\Col(f_1)$ and $\Col(f_2)$ do not have the same fibers but their restrictions to each component do have the same fibers.  
\item  Up to re-indexing the codomain of $\Col(f_1)$ is $\cH(0)$; the codomain of $f_1$ has non-trivial holonomy; and the codomain of $\Col(f_2)$ is $\cQ(-1^4)$. 
\end{enumerate} 
\end{thm}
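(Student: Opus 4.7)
The plan is to apply the Diamond Lemma (Lemma \ref{L:diamond}) to covering maps coming from the full loci structure on $\cM_{\bfC_1}$ and $\cM_{\bfC_2}$. Since $\cM_{\bfC_i}$ is a full locus of covers, $\Col_{\bfC_i}(X,\omega)$ admits a half-translation covering map $f_i$ to a surface in a component of a stratum of Abelian or quadratic differentials. The two hypotheses of the Diamond Lemma to be verified are the preimage-of-image condition $\overline{\Col_{\bfC_i}(\bfC_{i+1})}=f_i^{-1}(f_i(\overline{\Col_{\bfC_i}(\bfC_{i+1})}))$, and agreement at the base. The preimage-of-image condition will follow from Assumption CP combined with genericity of the diamond: Assumption CP directly gives that the $f_i$-preimage of any union of cylinders is a union of cylinders, and any extra preimage cylinder would be $\cM_{\bfC_i}$-equivalent to cylinders in $\Col_{\bfC_i}(\bfC_{i+1})$; since $\Col_{\bfC_i}(\bfC_{i+1})$ is a subequivalence class whose standard dilation lifts from a single deformation direction on the codomain, I expect all such parallel preimage cylinders to deform in lockstep and hence lie in the subequivalence class, using Theorem \ref{T:CylECTwistSpace}.

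The main obstacle is verifying agreement at the base, i.e. that $\Col(f_1)$ and $\Col(f_2)$ have the same fibers on $\ColOneTwoX$. When $\ColOneTwoX$ is connected, I would combine Theorem \ref{T:MinimalCover} (uniqueness of the minimal quadratic differential quotient) with Lemma \ref{L:InvolutionImpliesHyp-background} (which limits the marked-point preserving half-translation covers to the holonomy involution outside of hyperelliptic cases and $\cQ(-1^4)$). Both $\Col(f_1)$ and $\Col(f_2)$ factor through the minimal quotient, and in the non-hyperelliptic setting the only ambiguity is by an automorphism of the codomain — of which there are essentially none except in the listed exceptional strata; a comparison argument using deformations of $\ColOneTwoX$ inside $\MOneTwo$ should then force the two maps to coincide.

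The delicate piece is the disconnected case, which is where the exceptional configurations listed in the theorem statement arise. When $\ColOneTwoX$ is disconnected, each $\Col(f_i)$ gives a way of pairing components into a connected cover of its codomain, and two different pairings give maps with different fibers even if the component-wise maps are isomorphic. I would pin down the pairing as follows: reversing the collapse of $\Col(\bfC_{i+1})$ along $\ColOne(\bfC_2)$ or $\ColTwo(\bfC_1)$ fuses components together in a manner dictated by Assumption CP and the full cover structure on $\cM_{\bfC_i}$, and outside the listed exceptions the combinatorics of this fusion forces the two pairings to match. The exceptional strata $\cH(0)$, $\cH(0,0)$, hyperelliptic strata, and $\cQ(-1^4)$ are precisely those where there is enough automorphism group on the codomain (by Lemma \ref{L:InvolutionImpliesHyp-background} and Lemma \ref{L:HypSymmetries}) to accommodate genuinely different pairings; a careful enumeration should show these cover exactly the exceptions in the statement.

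Once agreement at the base is verified, the Diamond Lemma produces a half-translation covering map $f: (X,\omega) \to (Y,q)$ refining both $f_1$ and $f_2$. To upgrade ``locus of covers'' to ``full locus of covers'' I would proceed by a dimension count: let $\cN$ denote the full locus of covers of the codomain stratum with the branch data determined by $f$. Then $\cM \subseteq \cN$, and by Theorem \ref{T:BoundaryTangent} the boundary component of $\cN$ containing $\Col_{\bfC_i}(X,\omega)$ is a full locus of covers of codimension one, matching $\cM_{\bfC_i}$ by the assumption that $\cM_{\bfC_i}$ is already a full locus. Since the diamond is generic each $\cM_{\bfC_i}$ has codimension one in $\cM$, so $\dim \cM = \dim \cN$ and hence $\cM = \cN$, giving the conclusion.
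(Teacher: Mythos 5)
Your overall framework is the right one (apply the Diamond Lemma and then reduce), and your treatment of the preimage-of-image condition and your identification of the hyperelliptic/$\cQ(-1^4)$/$\cH(0)$/$\cH(0,0)$ codomains as the source of ambiguity are correct. But there is a substantial gap: you assume that whenever the fibers of $\Col(f_1)$ and $\Col(f_2)$ disagree, the configuration must be one of the two listed exceptions. That is not true. The paper's analysis of the rank one base case (Proposition \ref{P:VS-One}) isolates a configuration where the codomain of one $\Col(f_i)$ is in $\cH(0)$ (via a degree-four map $g_1\circ g$) and the codomain of the other is in $\cQ(-1^4)$ (via a degree-two map $g_2\circ g$), with the codomain of $f_1$ having trivial holonomy. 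In that situation the fibers genuinely disagree, the Diamond Lemma is inapplicable, and yet the configuration is not excepted in the statement of Theorem \ref{T:FullV2}: the conclusion must still be proven. This is exactly Lemma \ref{VS-SpecialCase}, where the paper shows by a completely separate argument (examining the directed graph of horizontal cylinders, Sublemmas \ref{SL:DirectedLoops} and \ref{SL:H4PrymOvercollase}, and the overcollapse/``attack'' technique against Corollary \ref{C:ConstantRatio}) that $\cM$ is a full locus of covers of the Prym locus in $\cH(4)$. Your proposal would either miss this configuration entirely or incorrectly add it as a new exception.

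A second, related gap concerns the uniqueness step in the rank $\ge 2$ case. You invoke Theorem \ref{T:MinimalCover} and Lemma \ref{L:InvolutionImpliesHyp-background} to say the two maps must coincide ``outside of automorphism issues,'' but Example \ref{E:ManyCovers} shows that a single translation surface can admit arbitrarily many distinct maps to generic surfaces in hyperelliptic strata of quadratic differentials. The content that actually forces uniqueness is Assumption CP, and turning it into a uniqueness statement requires a careful determination of which poles and marked points of $(Q_{\min}, q_{\min})$ must lie in the branch locus (Sublemmas \ref{SL:PoleBranching} and \ref{SL:MPBranching} within Proposition \ref{P:AtMostOneCP}). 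That branching analysis is the real engine here, and your sketch does not supply it. Finally, your closing dimension count implicitly assumes that the orbit closure $\cM'$ of $f(X,\omega)$ is a full component of a stratum; the paper shows via Proposition \ref{P:DiamondWithH} and Theorem \ref{T:P1} that $\cM'$ may instead be a codimension-one hyperelliptic locus, in which case one must compose $f$ with the hyperelliptic involution before the ``full locus of covers'' conclusion follows. That extra composition is easy to overlook in a pure dimension count.
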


The following example clarifies the first exceptional case. (We will see that essentially the same phenomenon also gives rise to the second exceptional case.) Additionally, it might be helpful to compare to the discussion before the Diamond Lemma (Lemma \ref{L:diamond}) and keep in mind we do not have a canonical identification of the codomains of $\Col(f_1)$ and $\Col(f_2)$.

\begin{ex}
Let $Y$ be a surface, and let $\tau:Y \to Y$ be a bijection that isn't the identity. Let $Y'$ denote two different copies of $Y$. Let $w_1: Y' \to Y$ be the map that identifies the two components via the identity map from $Y$ to itself, and let $w_2: Y' \to Y$ be the map that identifies the two components via $\tau$. Then $w_1$ and $w_2$ have the same fibers when restricted to each connected component, since each fiber of the restriction is a single point; but $w_1$ and $w_2$ do not have the same fibers. 
\end{ex}

The two exceptional cases of Theorem \ref{T:FullV2} are especially interesting, and we hope they will be studied in the future. We do not have any reason to believe that the conclusion does not hold in the two exceptional cases, but until they are analyzed the possibility remains that they may be associated with surprising new invariant subvarieties.

\subsection{Preliminaries}\label{SS:FullPrelims}

\begin{rem}\label{R:LostCP}
Before beginning our analysis, we first briefly sketch an example of a diamond $\left( (X, \omega), \cM, \bfC_1, \bfC_2 \right)$ where Assumption CP fails for $\cM$, but holds for $\MOne$ and $\MTwo$.  Pick $\cM$ to be a quadratic double where all but exactly two preimages of poles are marked. Let $(X, \omega)$ be a generic surface in $\cM$.  The quotient by the holonomy involution does not satisfy Assumption CP, since an envelope containing these two poles will lift to a cylinder of twice the height. (Envelopes are defined in Definition \ref{D:CylinderTypes}.) Pick the $\bfC_i$ so that $\cM_{\bfC_1}$ and $\cM_{\bfC_2}$ are quadratic doubles with all but exactly one preimage of poles marked, which ensures that Assumption CP does hold for $\cM_{\bfC_1}$ and $\cM_{\bfC_2}$. 
%
%
\end{rem}

We will prove Theorem \ref{T:FullV2}  primarily by applying the following.

\begin{lem}\label{L:VS-Agree}
If $\Col(f_1) = \Col(f_2)$, then $\cM$ is a full locus of covers of a stratum of Abelian or quadratic differentials.
\end{lem}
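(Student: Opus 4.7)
The strategy is to apply the Diamond Lemma (Lemma~\ref{L:diamond}) to $f_1$ and $f_2$, then promote the resulting covering map on $(X,\omega)$ to a structural description of all of $\cM$.

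First, I verify the hypotheses of the Diamond Lemma. The agreement at the base, $\Col(f_1) = \Col(f_2)$, is given. The preimage-of-image condition
\[
\overline{\Col_{\bfC_i}(\bfC_{i+1})} = f_i^{-1}\bigl(f_i(\overline{\Col_{\bfC_i}(\bfC_{i+1})})\bigr)
\]
is established as follows. Since the diamond is generic, $\Col_{\bfC_i}(\bfC_{i+1})$ is an $\cM_{\bfC_i}$-subequivalence class of generic cylinders, so by Theorem~\ref{T:CDT} its standard shear $\sigma$ lies in $T\cM_{\bfC_i}$. Because $\cM_{\bfC_i}$ is a full locus of covers of the stratum containing $(Y_i,q_i)$, the tangent space $T\cM_{\bfC_i}$ is identified under $f_i^{*}$ with the tangent space of that stratum, so $\sigma$ is a pullback $f_i^{*} w$. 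Using Assumption CP together with the formula $f_i^{*}(D^{*}) = \sum_{D' \in f_i^{-1}(D)} (D')^{*}$ for duals of core curves, any such pullback shears every cylinder in the full $f_i$-preimage uniformly; comparing with $\sigma$, which by definition shears only the cylinders in $\Col_{\bfC_i}(\bfC_{i+1})$, forces $\Col_{\bfC_i}(\bfC_{i+1}) = f_i^{-1}(f_i(\Col_{\bfC_i}(\bfC_{i+1})))$, and the equality extends to closures.

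The Diamond Lemma then supplies a covering $f:(X,\omega)\to(Y,q)$ with $\Col_{\bfC_i}(f) = f_i$ and $\overline{\bfC_i} = f^{-1}(f(\overline{\bfC_i}))$. After perturbing via Corollary~\ref{C:wologDense} so that $(X,\omega)$ has dense orbit in $\cM$, Corollary~\ref{C:DenseDiamond} shows that every surface in $\cM$ lies in the locus of covers of this type. To upgrade this to the statement that $\cM$ is the \emph{full} locus, let $\cQ$ denote the stratum containing $(Y,q)$; by Theorem~\ref{T:BoundaryTangent} applied to the degenerations corresponding to collapsing $\bfC_i$, the strata of the $(Y_i,q_i)$ sit as codimension-one boundary components of $\cQ$. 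Then
\[
\dim\cM = \dim\cM_{\bfC_i}+1 = \dim\cQ_i+1 = \dim\cQ,
\]
matching the dimension of the full locus of covers of $\cQ$, and irreducibility of $\cM$ forces equality.

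The main delicate point---the source of the parenthetical remark about possible degree doubling---is bookkeeping of the covering degree when $\ColOneTwoX$ is disconnected. Un-collapsing $\bfC_i$ may identify two components of the codomain of $\Col(f_i)$ into a single component of $(Y,q)$, in which case the degree of $f$ is twice that of the $f_i$. This combinatorial subtlety does not affect the existence of $f$ or the identification of $\cM$ as a full locus of covers, but it means the exact degree of the covering has to be tracked from the combinatorial structure of the diamond rather than read off directly from the $f_i$.
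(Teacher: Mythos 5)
Your argument has a genuine gap in the final step, where you try to deduce that $\cM$ is a \emph{full} locus of covers by the dimension count
\[
\dim\cM = \dim\cM_{\bfC_i}+1 = \dim\cQ_i+1 = \dim\cQ .
\]
The third equality is unjustified and can fail. You are applying Theorem~\ref{T:BoundaryTangent} to the ambient stratum $\cQ$, but the degeneration $(Y,q)\rightsquigarrow (Y_i,q_i)$ is the collapse of the cylinder set $f(\bfC_i)$, which need not be a single generic cylinder of $\cQ$. If the orbit closure $\cM'$ of $(Y,q)=f(X,\omega)$ is a \emph{proper} subvariety of $\cQ$ (e.g.\ a codimension-one hyperelliptic locus), then $f(\bfC_i)$ can be a pair of cylinders that form a subequivalence class in $\cM'$ but are not $\cQ$-equivalent; collapsing them simultaneously is codimension one in $\cM'$ but codimension two in $\cQ$, so $\dim\cQ_i + 1 < \dim\cQ$ and your chain breaks. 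Nothing you have said rules out $\cM'\subsetneq\cQ$, and indeed this is exactly the subtlety the lemma must confront.

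The paper's proof handles this by forming the \emph{reduced diamond} $(\cM', f(X,\omega), f(\bfC_1), f(\bfC_2))$, observing that each $\cM'_{f(\bfC_i)}$ is a component of a stratum, and then invoking the previously established diamond classifications: Corollary~\ref{C:codim1} when one side is Abelian and the other quadratic, Proposition~\ref{P:DiamondWithH} when both sides are Abelian, and Theorem~\ref{T:P1} when both sides are quadratic. These show $\cM'$ is either a full stratum or a full locus of degree-two covers of a genus zero stratum. In the former case $f$ itself exhibits $\cM$ as a full locus of covers; in the latter case one must additionally compose $f$ with the degree-two quotient to reach a stratum. This second case is precisely what your dimension count overlooks, and it is not a degenerate edge case---it is the reason the proof leans on Section~\ref{S:ComplexEnvelope} and Theorem~\ref{T:P1}.

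Your first paragraph, verifying $\Col_{\bfC_i}(\bfC_{i+1}) = f_i^{-1}(f_i(\Col_{\bfC_i}(\bfC_{i+1})))$ via standard shears and pullbacks, is essentially correct; the paper states this as an immediate consequence of Assumption CP without spelling out the tangent-space mechanism, but the underlying reasoning (a stratum deformation must dilate the full fiber of any cylinder it dilates, so the minimal subequivalence class upstairs contains the full preimage) is the one you give. The remark about degree doubling is also accurate but not the issue; the dichotomy that matters is whether $\cM'$ is a stratum.
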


Recall that we write ``$\Col(f_1) = \Col(f_2)$" as a short form for ``$\Col(f_1)$ and  $\Col(f_2)$ agree at the base of the diamond", in the sense of Section \ref{S:DiamondLemma}.

In the proof of Theorem \ref{T:FullV2}, most of the work will be verifying the assumption in Lemma \ref{L:VS-Agree} that $\Col(f_1) = \Col(f_2)$. There is only one very special situation (the final case in the proof of Lemma \ref{VS-SpecialCase}) where we will have $\Col(f_1) \neq \Col(f_2)$ and hence will have to establish that $\cM$ is a full locus of covers by other arguments.

\begin{proof}[Proof of Lemma \ref{L:VS-Agree}]
By Assumption CP, $$f_i^{-1}\left(f_i\left( \Col_{\bfC_i}(\bfC_j) \right) \right) = \Col_{\bfC_i}(\bfC_j)$$ for any $i \ne j$. By hypothesis, $\Col(f_1) = \Col(f_2)$, so the Diamond Lemma (Lemma \ref{L:diamond}) implies that there is a (half)-translation cover $f: (X, \omega) \ra (Z, \zeta)$ such that $\Col_{\bfC_i}(f) = f_i$. 

Let $\cM'$ denote the orbit closure of $f(X, \omega)$. Then $(\cM', f(X, \omega), f(\bfC_1), f(\bfC_2))$ is a generic diamond where $\cM_{f(\bfC_1)}'$ and $\cM_{f(\bfC_2)}'$ are components of strata of Abelian or quadratic differentials. 

Suppose first that $\cM_{f(\bfC_1)}'$ is a component of a stratum of Abelian differentials and that $\cM_{f(\bfC_2)}'$ is a component of a stratum of quadratic differentials. Since subequivalence classes of cylinders in strata of Abelian differentials are singletons, $\Col_{f(\bfC_1)}(f(\bfC_2))$ is a single simple cylinder. Therefore, $\cM'$ is the result of gluing in a simple cylinder to a component of a stratum of quadratic differentials and so $\cM'$ is itself a component of a stratum of quadratic differentials by Corollary \ref{C:codim1}. This proves the result because $\cM$ is a full locus of covers of $\cM'$.

We may suppose now that $\cM_{f(\bfC_1)}'$ and $\cM_{f(\bfC_2)}'$ are both components of strata of Abelian differentials or both components of strata of quadratic differentials. By Proposition \ref{P:DiamondWithH} and Theorem \ref{T:P1}, $\cM'$ is either a stratum or a full locus of degree two covers of a genus zero stratum. In the first case the map $f$ proves that $\cM$ is a full locus of covers, and in the second case $f$ composed with the hyperelliptic involution proves that $\cM$ is a full locus of covers. 
\end{proof}

The following two basic lemmas are foundational to our analysis.  

\begin{lem}\label{L:ObviousCP}
Both $\Col(f_1)$ and $\Col(f_2)$, whose domains are the surface $\ColOneTwoX$ in $\MOneTwo$, satisfy Assumption CP. 
\end{lem}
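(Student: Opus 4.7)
The plan is to verify Assumption CP for each $\Col(f_i)$ by reducing directly to Assumption CP for $f_i$, using the key observation that cylinder interiors never contain singular points while collapsed saddle connections always do. I focus on $i=1$; the case $i=2$ is entirely symmetric.

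Write $\bfD := \Col_{\bfC_1}(\bfC_2)$ (a union of cylinders on $\Col_{\bfC_1}(X,\omega)$) and $\bfD' := f_1(\bfD)$ (a union of cylinders on $(Y_1, q_1)$). Let $C$ be a cylinder on the codomain $\Col_{\bfD'}(Y_1, q_1)$ of $\Col(f_1)$. By Definition \ref{D:CylAndBoundary} the interior of $C$ contains no singular points, whereas every point of $\Col_{\bfD'}(\bfD')$ lies on a saddle connection with singular endpoints. Hence $C$'s interior avoids $\Col_{\bfD'}(\bfD')$, and via the identification $\Col_{\bfD'}(Y_1, q_1) \setminus \Col_{\bfD'}(\bfD') = (Y_1, q_1) \setminus \overline{\bfD'}$ from Lemma \ref{L:DefinitionCol(f)}, the interior of $C$ corresponds to the interior of a cylinder $\hat C$ on $(Y_1, q_1)$ with the same circumference and height. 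The same singular-points argument shows that $\hat C$'s interior must also be disjoint from all of $\overline{\bfD'}$: any intersection with a boundary saddle connection of $\bfD'$ would place singular points in $\hat C$'s interior.

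I would then apply Assumption CP for $f_1$ to obtain a disjoint decomposition $f_1^{-1}(\hat C) = \tilde C_1 \sqcup \cdots \sqcup \tilde C_k$ into cylinders in $\Col_{\bfC_1}(X,\omega)$, each of the same circumference and height as $\hat C$. Because $f_1^{-1}(\overline{\bfD'}) \supseteq \overline{\bfD}$ and the interior of $\hat C$ is disjoint from $\overline{\bfD'}$, the interior of each $\tilde C_j$ is disjoint from $\overline{\bfD}$. The collapse $\Col_{\bfD}$ therefore acts as the identity on each $\tilde C_j$'s interior, producing a cylinder $\Col_{\bfD}(\tilde C_j) \subseteq \ColOneTwoX$ of the same dimensions as $\tilde C_j$ (hence as $C$).

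To conclude, I would verify that $\Col(f_1)^{-1}(C) = \bigsqcup_{j=1}^{k} \Col_{\bfD}(\tilde C_j)$. Off the finite exceptional set where $\Col(f_1)$ is defined by continuity, this follows from the commutativity $\Col_{\bfD'} \circ f_1 = \Col(f_1) \circ \Col_{\bfD}$ built into the construction of $\Col(f_1)$ in Lemma \ref{L:DefinitionCol(f)}, together with the observation that no point of $\Col_{\bfD}(\bfD)$ can map into $C$, since its image lies in $\Col_{\bfD'}(\bfD')$, which is disjoint from the interior of $C$. The most delicate step in the argument is the very first one --- seeing that $\hat C$ really exists as a cylinder on $(Y_1, q_1)$ of the same dimensions as $C$. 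The worry is the apparent ``merging'' scenario in which two parallel cylinders on $(Y_1, q_1)$ separated by a cylinder of $\bfD'$ would seem to fuse into a single larger cylinder on the collapsed surface; but the ``no singular points in interior'' convention forbids this, because the collapsed saddle connection would become a singular locus interior to the hypothetical merged cylinder. With this obstruction removed, the rest of the argument is routine.
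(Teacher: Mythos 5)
The crucial first step fails. You conclude that the interior of $C$ avoids $\Col_{\bfD'}(\bfD')$ on the grounds that cylinder interiors contain no singular points, while $\Col_{\bfD'}(\bfD')$ consists of saddle connections with singular endpoints. This is a non-sequitur: the \emph{interior} of a saddle connection consists of regular points, and a cylinder transverse to the direction of $\bfD'$ will generically cross the saddle connections in $\Col_{\bfD'}(\bfD')$. For such a $C$, the region of $(Y_1,q_1)$ corresponding to $C$ after un-collapsing is not a cylinder --- the re-opened cylinders of $\bfD'$ cut across it --- so $\hat C$ does not exist and the rest of your argument never gets off the ground. Your ``merging'' remark at the end correctly rules out the fusion of \emph{parallel} cylinders, which is a real phenomenon, but it is not the case you needed to worry about.

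For comparison, the paper's proof is a short continuity argument of a different flavor: if CP failed for $\Col(f_i)$ at $\ColOneTwoX$, then by continuity of cylinders and their heights it would fail for $f_i$ on nearby surfaces in $\cM_{\bfC_i}$ obtained by only partially collapsing $\Col_{\bfC_i}(\bfC_{i+1})$, contradicting the hypothesis that $\cM_{\bfC_i}$ is a full locus of covers satisfying CP. Your structural approach could perhaps be repaired by invoking the equivalent pointwise characterization of CP stated in the introduction (CP fails iff some marked point or pole has an unmarked non-singular preimage), since that reduces the verification to cylinders parallel to $\bfD'$, for which your disjointness claim does hold by exactly the reasoning in your merging paragraph; but as written the argument does not go through.
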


\begin{proof}
If Assumption CP fails for a surface in $\MOneTwo$ for $\Col(f_i)$, then it fails in $\cM_{\bfC_i}$ after almost completely collapsing $\Col_{\bfC_i}(\bfC_{i+1})$, since heights of cylinders vary continuously.
\end{proof}

\begin{lem}\label{L:ColConnected}
The codomain of each $\Col(f_i)$ is connected. 
\end{lem}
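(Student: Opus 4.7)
The plan is to show that the codomain of $\Col(f_i)$, which is the collapse of the connected surface $(Y_i, q_i)$ along a collection of cylinders, inherits connectedness via the quotient nature of the cylinder collapse.

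First I would observe that $(Y_i, q_i)$ is connected: Assumption \ref{A:FullV2} stipulates that $\cM_{\bfC_i}$ is a full locus of covers of connected surfaces in a component of a stratum, so even though the covers (the surfaces in $\cM_{\bfC_i}$) may be disconnected, the base surfaces they cover are connected.

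Writing $\bfD_i := f_i(\overline{\Col_{\bfC_i}(\bfC_{i+1})})$ for the collection of cylinders on $(Y_i, q_i)$ to be collapsed, the codomain of $\Col(f_i)$ is $\Col_{\bfD_i}(Y_i, q_i)$. I would realize this collapse explicitly as the natural quotient map $\phi_i : (Y_i, q_i) \to \Col_{\bfD_i}(Y_i, q_i)$ that is the identity on the complement of the interiors of the cylinders in $\bfD_i$ and that, on each cylinder of $\bfD_i$ identified with $(\bR/c\bZ) \times (0,h)$, sends every perpendicular segment $\{x\} \times (0,h)$ to the corresponding point of the limiting union of horizontal saddle connections. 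Taking a polygonal presentation of $(Y_i, q_i)$ in which each cylinder of $\bfD_i$ is a rectangle, $\phi_i$ visibly agrees with the polygonal description of the WYSIWYG limit recalled in Section \ref{SS:diamonds}, so it is continuous everywhere on $(Y_i, q_i)$, and its image is all of $\Col_{\bfD_i}(Y_i, q_i)$ apart from at most finitely many additional marked or singular points.

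Since $(Y_i, q_i)$ is connected and $\phi_i$ is continuous, the image of $\phi_i$ is connected; removing finitely many points from a surface does not disconnect it, so the whole codomain $\Col_{\bfD_i}(Y_i, q_i)$ is connected. The main point to verify carefully is that the quotient map $\phi_i$ genuinely realizes the WYSIWYG limit topologically, but this is built into the polygonal construction of the collapse, so there is essentially no obstacle.
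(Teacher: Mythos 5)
Your approach is genuinely different from the paper's. The paper argues by classification: $\bfD_i$ is a single generic cylinder on a surface that is generic in a stratum, so by the simplicity of generic cylinders in Abelian strata, or by Theorem \ref{T:MZ} in the quadratic case, it is one of an explicit list of cylinder types, and for each type one checks directly (as in Lemma \ref{L:StrataBasics}) that collapse preserves connectedness. You instead aim for a soft topological argument: realize the collapse as a continuous quotient, note that the continuous image of the connected $(Y_i,q_i)$ is connected, and conclude.

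The gap is in the step you describe as having ``essentially no obstacle''. You need the naive quotient (collapse vertical segments in the cylinders of $\bfD_i$) to coincide, up to finitely many points, with the WYSIWYG limit $\Col_{\bfD_i}(Y_i,q_i)$. But the paper itself flags exactly this subtlety: the remark following Lemma \ref{L:DefinitionCol(f)} explains that the collapse map is in general multi-valued, being the composition of an ``honest collapse'' with a normalization that \emph{deletes nodes and fills in punctures}. That normalization step can disconnect the surface, and this genuinely happens for cylinder collapses on covers --- see Lemma \ref{L:QBoundaryHolonomy} and Remark \ref{R:TwoDegenerations}, where collapsing a subequivalence class of cylinders on a connected surface in a quadratic double yields a disconnected limit. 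A naive quotient of a connected surface is always connected, so your argument, taken at face value, would also ``prove'' connectedness in those cases, which is false. Hence your argument must implicitly be using something special about a \emph{single generic cylinder on a base stratum surface} to rule out nodes, and that is precisely what the paper's classification supplies (and what your sketch does not). Your observation that Assumption \ref{A:FullV2} gives connectedness of $(Y_i,q_i)$ is correct, and the overall conclusion is of course true; the issue is that the justification for ``the quotient is the WYSIWYG limit'' is not a formality here.
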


\begin{proof}
By assumption, $f_i(\Col_{\bfC_i}(\bfC_j))$ is a generic cylinder with respect to the stratum in which $f_i(\Col_{\bfC_i}(X,\omega))$ lies. (Generic cylinders are defined in Definition \ref{D:GenericCyl}.) If it is a stratum of Abelian differentials, every generic cylinder is simple; if it is a stratum of quadratic differentials, the generic cylinders are given by Theorem \ref{T:MZ}. In either case, it is easy to see that collapsing a generic cylinder for a stratum does not disconnect the surface. Hence,  $\Col_{f_i(\Col_{\bfC_i}(\bfC_j))}(f_i(\Col_{\bfC_i}(X, \omega))$ is a connected surface. 
\end{proof}

We wish to apply Lemma \ref{L:VS-Agree}  in situations where we only know that $\Col(f_1)$ and $\Col(f_2)$ have the same fibers after restriction to each component of $\ColOneTwoX$, but first we must clarify a subtle issue with marked points. 

\begin{war} 
In general (without Assumption CP), the restriction of the $\Col(f_i)$ to a component of $\ColOneTwoX$ might not be a covering map in the sense of Definition \ref{D:Covering}, because there may be a marked point in the codomain all of whose preimages on a given component are non-singular unmarked points. (The preimage under $\Col(f_i)$ of a marked point must contain a singular point or a marked point, but possibly not on every connected component of $\ColOneTwoX$.) 
\end{war}

However, we now explain that this warning does not apply under the assumptions of Theorem \ref{T:FullV2}. 

\begin{lem}\label{L:RestrictionOK}
The restriction of $\Col(f_i)$ to each component satisfies Definition \ref{D:Covering}.
\end{lem}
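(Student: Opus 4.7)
The plan is to verify each of items 1--5 of Definition \ref{D:Covering} for the restriction $\Col(f_i)|_Z : Z \to W_i$, where $W_i$ denotes the connected codomain of $\Col(f_i)$. Items 1, 2, 4, and 5 hold globally for $\Col(f_i)$ and so are inherited by the restriction, so all content lies in verifying item 3, namely that every non-pole marked point $p \in W_i$ has at least one singular or marked preimage in $Z$. First I would note that $\Col(f_i)|_Z$ is surjective, since its image is clopen in the connected surface $W_i$ (closed by compactness, open by the local isometry property of half-translation covers).

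For item 3, the plan is to exploit the cylinder cover structure provided by Lemma \ref{L:ObviousCP}. Picking any periodic direction of $W_i$ through $p$, the point $p$ lies on the boundary of some cylinder $C \subset W_i$ in that direction. By Lemma \ref{L:ObviousCP}, $\Col(f_i)^{-1}(C)$ is a union of cylinders each of the same height as $C$, and by surjectivity of the restriction at least one such cylinder $\tilde{C}$ lies in $Z$. The induced map $\tilde{C} \to C$ of flat cylinders restricts on the relevant boundary circles to a cyclic cover of some positive degree $d$, so $p$ has exactly $d$ preimages on the corresponding boundary circle of $\tilde{C}$. Any cone point preimage of $p$ is automatically singular or marked: angle scaling under the local degree of the cover forces a preimage with local degree at least $2$ to be singular, while a cone point preimage of local degree $1$ has cone angle exactly $2\pi$ and so must be marked (in our convention cone points of angle $2\pi$ are precisely the marked points).

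The main obstacle is to show that at least one of the $d$ preimages of $p$ on $\partial \tilde{C}$ is in fact a vertex of $\partial \tilde{C}$, rather than lying in the interior of a boundary saddle connection of $\tilde{C}$ (which would make it a regular unmarked point of $Z$). To rule out the configuration where every preimage of $p$ on $\partial \tilde{C}$ is interior, I would trace the marked-point structure backwards through the second collapse using Assumption CP for $f_i$ together with the full locus structure of $\cM_{\bfC_i}$: by item 3 applied to $f_i$, the point $p' \in (Y_i, q_i)$ corresponding to $p$ has singular or marked preimages on $\Col_{\bfC_i}(X, \omega)$, and Assumption CP for $f_i$ forces these preimages to lie outside the cylinders in $\Col_{\bfC_i}(\bfC_{i+1})$, so they descend to singular or marked preimages of $p$ in $\ColOneTwoX$. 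A local analysis using the cyclic cover $\partial \tilde{C} \to \partial C$ from the previous paragraph, together with the fact that these descended preimages must be distributed consistently with the cylinder cover structure, then ensures at least one lies on the specified component $Z$, completing the verification of item 3.
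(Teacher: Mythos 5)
Your plan correctly reduces the lemma to item~3 of Definition~\ref{D:Covering}, and correctly invokes Assumption CP via Lemma~\ref{L:ObviousCP}, but the argument has a gap at its crux. You pick a cylinder $C$ on $W_i$ with $p$ on its boundary; however, that boundary component may have additional vertices besides $p$. In that case Assumption CP gives only that $\partial \tilde{C}$ is a union of saddle connections with singular or marked endpoints, and these endpoints may all lie over the \emph{other} vertices of $\partial C$, with all $d$ preimages of $p$ remaining regular unmarked points interior to boundary saddle connections. Nothing you write rules this out. The paper avoids the issue by taking $C$ to be a cylinder on $\cF(W_i)$ containing $p$ in its \emph{interior} (possible after perturbing to a generic surface); then the cylinder $C_1 \subset C$ on $W_i$ whose top boundary passes through $p$ has that boundary equal to a single saddle connection from $p$ to itself, so Assumption CP applied to $C_1$ forces every boundary vertex of every preimage cylinder of $C_1$ to be a singular or marked point mapping to $p$, in each component of the preimage of $C$.

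The back-tracing through $f_i$ does not repair this. It establishes only that $p$ has \emph{some} singular or marked preimage on $\ColOneTwoX$, which is already guaranteed by Lemma~\ref{L:DefinitionCol(f)}, and the concluding ``local analysis'' clause asserts rather than proves that such a preimage lies on the chosen component $Z$: as written, nothing prevents all singular or marked preimages of $p$ from lying in components other than $Z$. The paper's route makes the detour through $f_i$ unnecessary, since Assumption CP for $\Col(f_i)$ applied to $C_1$ already produces a singular or marked preimage of $p$ inside every component of the preimage of $C$, hence on $Z$ by surjectivity of the restriction.
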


\begin{proof}
By Lemma \ref{L:ObviousCP}, $\Col(f_i)$ satisfies Assumption CP. We may assume the codomain of $\Col(f_i)$ is a generic surface in its stratum, and hence if it contains a marked point $p$ then $p$ lies in the interior of some cylinder $C$ on $\cF\left(\Col(f_i)\left(\ColOneTwoX\right) \right)$. By Assumption CP, each component of the preimage of $C$ must contain a preimage of $p$ that is a marked point or singular point. 
\end{proof}

We can now explain how Lemma \ref{L:VS-Agree} can be applied when only the restrictions of the $\Col(f_i)$ to connected components are understood. 

\begin{lem}\label{L:FirstException}
If the restriction of $\Col(f_1)$ and $\Col(f_2)$ to each  connected component of $\ColOneTwoX$ have the same fibers, then $\cM$ is a full locus of covers  of a component of a stratum Abelian or quadratic differentials, except possibly in  first exceptional case of Theorem \ref{T:FullV2}.
\end{lem}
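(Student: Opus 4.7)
The plan is to reduce to Lemma \ref{L:VS-Agree} whenever possible, and to track the obstruction to doing so. If $\ColOneTwoX$ is connected, then the hypothesis that $\Col(f_1)$ and $\Col(f_2)$ share fibers on each component coincides with the global statement $\Col(f_1) = \Col(f_2)$, so Lemma \ref{L:VS-Agree} immediately gives that $\cM$ is a full locus of covers.

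Suppose then that $\ColOneTwoX$ has components $Z_1, \ldots, Z_k$ with $k \geq 2$, and write $W_i$ for the codomain of $\Col(f_i)$, which is connected by Lemma \ref{L:ColConnected}. By Lemma \ref{L:RestrictionOK}, each restriction $\Col(f_i)|_{Z_j} : Z_j \to W_i$ is a genuine half-translation covering map in the sense of Definition \ref{D:Covering}. The hypothesis that $\Col(f_1)|_{Z_j}$ and $\Col(f_2)|_{Z_j}$ share fibers gives, for each $j$, a unique isomorphism of marked half-translation surfaces $\psi_j : W_1 \to W_2$ with $\Col(f_2)|_{Z_j} = \psi_j \circ \Col(f_1)|_{Z_j}$. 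If all the $\psi_j$ coincide, then $\Col(f_1)$ and $\Col(f_2)$ agree globally, and Lemma \ref{L:VS-Agree} again yields the conclusion.

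Otherwise, there exist $j \neq j'$ with $\psi_j \neq \psi_{j'}$, so $\psi_{j'}^{-1} \circ \psi_j$ is a non-trivial marked-point-preserving affine automorphism of $W_1$. Invoking Lemma \ref{L:InvolutionImpliesHyp-background}, together with a direct inspection of the low-rank strata $\cH(0)$ and $\cH(0,0)$ where such automorphisms arise for other reasons, this forces $W_1$ (and hence also $W_2$) to lie either in a hyperelliptic component without marked points or in $\cH(0)$ or $\cH(0,0)$. Identifying $W_1$ with $W_2$ via any single $\psi_j$ makes the codomains of $\Col(f_1)$ and $\Col(f_2)$ equal, and the failure of global fiber agreement is precisely the assertion that $\Col(f_1)$ and $\Col(f_2)$ do not have the same fibers, matching the first exceptional case of Theorem \ref{T:FullV2}.

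The one remaining condition of the first exceptional case to check is that the codomain of at least one $f_i$ has non-trivial holonomy. The main obstacle of the proof is to rule out the possibility that both $(Y_i, q_i)$ are Abelian differentials: since collapsing a cylinder in an Abelian differential produces another Abelian differential, $W_1$ would then itself be Abelian, and I would need to argue that the non-trivial automorphism of $W_1$ produced above, combined with Assumption CP and the rigidity of the covers $f_i$, forces the $\psi_j$ to coincide after all, contradicting the case under consideration. This analysis is the technical heart of the argument and uses the fact that the automorphism group of a generic Abelian surface with its marked points is extremely restricted.
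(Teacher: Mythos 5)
Your structure mirrors the paper's: handle the connected case via Lemma \ref{L:VS-Agree}, obtain per-component identification maps $\psi_j$ between the codomains of $\Col(f_1)$ and $\Col(f_2)$, and note that if the $\psi_j$ disagree then $\psi_{j'}^{-1} \circ \psi_j$ is a non-trivial marked-point-preserving half-translation automorphism, forcing the codomain into a hyperelliptic stratum without marked points or $\cH(0)$ or $\cH(0,0)$. This part is fine, and matches the paper.

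However, you correctly flag that you have not finished the argument, and the route you sketch for closing it is wrong. You propose showing that when both $(Y_i,q_i)$ are Abelian differentials, ``Assumption CP and the rigidity of the covers $f_i$'' force the $\psi_j$ to coincide. This is not how the paper closes the gap, and it is not clear that such an argument works. The paper instead makes the following simple but crucial holonomy observation: if the codomain of $f_i$ has \emph{trivial} holonomy, then the codomain of $\Col(f_i)$ carries a canonical Abelian (not merely quadratic) differential structure, and $\Col(f_i)$ \emph{preserves} holonomy — it sends northward tangent vectors to northward tangent vectors. Consequently, when both $f_1$ and $f_2$ have Abelian codomains, each $\psi_j$ preserves holonomy, so $\psi_{j'}^{-1}\circ\psi_j$ is a \emph{translation} automorphism, not just a half-translation automorphism. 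A generic surface in a stratum of Abelian differentials (even a hyperelliptic stratum) has no non-trivial translation automorphism preserving marked points — the hyperelliptic involution negates the form — so the $\psi_j$ must all agree. This is exactly why the first exceptional case of Theorem \ref{T:FullV2} requires the codomain of at least one $f_i$ (not merely of one $\Col(f_i)$) to have non-trivial holonomy: the remark in the paper's proof points out that having trivial holonomy only for $\Col(f_i)$ is insufficient, because the choice of square root of the quadratic differential could vary between components of $\ColOneTwoX$. Your proposal does not contain this observation, and without it the argument does not close.
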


\begin{proof}
By Lemma \ref{L:ObviousCP}, it suffices to show that, except possibly in  first exceptional case of Theorem \ref{T:FullV2},  $\Col(f_1)=\Col(f_2)$.

The codomain of each $\Col(f_i)$ is a generic surface in a stratum. For each component of $\ColOneTwoX$ we obtain an half-translation map identifying the codomain of $\Col(f_1)$ with the codomain of $\Col(f_2)$, since both are quotients of the same component by the same equivalence relation (points are equivalent if they are in the same fiber). We now make two observations:

\begin{enumerate}
\item By Lemma \ref{L:RestrictionOK}, the identification map (which depends on the choice of a component of $\ColOneTwoX$) between the codomain of $\Col(f_1)$ and the codomain of $\Col(f_2)$ sends marked points to marked points.
\item If the codomain of $f_i$ has trivial holonomy, then the codomain of $\Col(f_i)$ is naturally an Abelian (rather than quadratic) differential, and $\Col(f_i)$ preserves holonomy (sends northward pointing tangent vectors to northward pointing tangent vectors). 
\end{enumerate}

\begin{rem}
In the second observation, it is not enough to assume the codomain of $\Col(f_i)$ has trivial holonomy; in this case we cannot guarantee that $\Col(f_i)$ preserves holonomy. There are two choices of square-roots of a quadratic differential with trivial holonomy. For each component of $\ColOneTwoX$, one can pick a choice of the square-root so that $\Col(f_i)$ restricted to this component preserves holonomy; but there is no guarantee that the choices corresponding to different components are the same.
\end{rem}

We must now show that either the identifications between the codomains of each $\Col(f_i)$ obtained from each component of $\ColOneTwoX$ agree, or that we are in  first exceptional case of Theorem \ref{T:FullV2}. Note that if the identifications do not agree, by composing them we obtain a non-trivial half-translation surface automorphism of the codomain. 

If the codomains of $f_1$ and $f_2$ both have trivial holonomy, then the identification is holonomy preserving by the second point above. The generic element of a stratum of Abelian differentials does not have any translation automorphisms preserving marked points (this follows from Lemma \ref{L:InvolutionImpliesHyp-background} in genus greater than 1, and a direct analysis in genus 1). Hence $\Col(f_1)=\Col(f_2)$. 

The only components of strata where the generic surface has a half-translation automorphism preserving marked points are hyperelliptic connected components of surfaces without marked points, $\cH(0)$, and $\cH(0,0)$. This follows when the rank of a stratum is at least two by Lemma \ref{L:InvolutionImpliesHyp-background}; when the stratum is Abelian and rank one, the stratum must be a quadratic double that coincides with $\cH(0^n)$ for some positive integer $n$, and these strata are precisely $\cH(0)$ and $\cH(0,0)$; when the stratum is quadratic and rank one this is essentially Lemma \ref{L:HypSymmetries}.
\end{proof}

The proof of Theorem \ref{T:FullV2} will now proceed in cases according to the size of $\MOneTwo$. 

\subsection{Higher rank}\label{SS:FullLociHigherRank}

The proof of Theorem \ref{T:FullV2} when $\MOneTwo$ has rank at least 2 will be easy once we have established the following. 

\begin{prop}\label{P:AtMostOneCP}
Let $(X_0, \omega_0)$ be a connected translation surface that isn't a torus cover. Then $(X_0, \omega_0)$ admits at most one map to a generic surface in a stratum of Abelian or quadratic differentials satisfying Assumption CP, in that any two such maps have the same fibers.
\end{prop}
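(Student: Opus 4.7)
The plan is to combine Theorem~\ref{T:MinimalCover} with Lemma~\ref{L:InvolutionImpliesHyp-background} and Assumption CP to pin down the uniqueness. I would let $f_i\colon (X_0,\omega_0) \to (Y_i,\eta_i)$ for $i=1,2$ denote two CP covers to generic surfaces in strata, with fiber relations $\sim_i$, and let $\sim$ be the equivalence relation on $X_0$ generated by $\sim_1 \cup \sim_2$. Because each $\sim_i$ comes from a half-translation cover and $X_0$ is connected, the quotient $(X_0,\omega_0)/\!{\sim} = (Z,\zeta)$ inherits the structure of a connected half-translation surface, and each $f_i$ factors as $f_i = h_i\circ\pi_Z$, where $\pi_Z\colon (X_0,\omega_0) \to (Z,\zeta)$ is the quotient map and $h_i\colon (Y_i,\eta_i)\to(Z,\zeta)$ is a half-translation cover.

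Next I would apply Lemma~\ref{L:InvolutionImpliesHyp-background} to each generic $(Y_i,\eta_i)$. Since $(X_0,\omega_0)$ is not a torus cover, neither are the $Y_i$, so the lemma gives that $h_i$ is either a bijection or, when $\cF$ of the stratum containing $Y_i$ is hyperelliptic, the quotient by the unique hyperelliptic involution on $Y_i$. When both $h_i$ are bijections, one immediately has $\sim_1=\sim=\sim_2$ and the proof is complete.

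The main obstacle will be the mixed case, in which one $h_i$ is a bijection while the other is a nontrivial hyperelliptic quotient; this is where CP does the essential work. Suppose, say, $h_1$ is a bijection (so $f_1=\pi_Z$) while $h_2$ is the quotient by the hyperelliptic involution $J_2$ on $Y_2$. The branch points of $h_2$ are the non-Weierstrass\,/\,non-singular fixed points of $J_2$, which for generic $(Y_2,\eta_2)$ in its stratum are unmarked non-singular points of $Y_2$ but map to marked points or poles of $Z = Y_1$. Since $f_1=\pi_Z$ satisfies CP, preimages in $X_0$ of such points of $Z$ must be marked or singular; since $f_2$ is a half-translation cover, however, the preimage in $X_0$ of an unmarked non-singular point of $Y_2$ must be unmarked non-singular. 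These preimages coincide, producing a contradiction. So the mixed case cannot occur.

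Finally, when both $h_i$ are hyperelliptic quotients, the uniqueness clause of Lemma~\ref{L:InvolutionImpliesHyp-background} gives an isomorphism $\phi\colon (Y_1,\eta_1)\to(Y_2,\eta_2)$ of double covers of $(Z,\zeta)$, unique up to composition with the hyperelliptic involution on $Y_2$. The maps $\phi\circ f_1$ and $f_2$ are both lifts of $\pi_Z$ to $Y_2$, hence differ by at most the deck involution; either way they induce the same partition of $X_0$ into fibers, giving $\sim_1=\sim_2$ in this case as well and completing the proof.
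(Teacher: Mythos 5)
Your decomposition into three cases according to whether the intermediate covers $h_i\colon Y_i\to Z$ are bijective is a sensible variant of the paper's setup (which works directly with M\"oller's $(Q_{min},q_{min})$ from Theorem~\ref{T:MinimalCover}), and your CP argument in the mixed case is the right idea. But the case where both $h_i$ are nontrivial hyperelliptic quotients has a genuine gap. You assert that the uniqueness in Lemma~\ref{L:InvolutionImpliesHyp-background} produces an isomorphism $\phi\colon Y_1\to Y_2$ over $Z$. It does not: that lemma only says that on each fixed $Y_i$ the hyperelliptic involution yields the unique non-bijective half-translation cover, and says nothing about whether two different surfaces $Y_1,Y_2$ that both double-cover the same genus-zero $Z$ (which in this case is $Q_{min}$) must be isomorphic as covers. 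In fact they need not be -- the branch loci of $h_1$ and $h_2$ on $Z$ can differ -- and this is exactly the phenomenon recorded in Example~\ref{E:ManyCovers}, where a single translation surface that is not a torus cover is shown to cover arbitrarily many pairwise non-isomorphic generic surfaces in hyperelliptic strata, all of which are degree-two covers of the same $Q_{min}$. So the existence of $\phi$ is precisely the content of the proposition in this case, and Assumption CP -- which your argument does not invoke at all here -- is what makes it true.

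The paper closes the gap by showing that CP pins down the branch locus of the map to $Q_{min}$ intrinsically from $\pi_{Q_{min}}$ alone: Sublemma~\ref{SL:PoleBranching} shows the double cover is branched over a pole $z$ exactly when $\pi_{Q_{min}}^{-1}(z)$ contains no singular or marked point, and Sublemma~\ref{SL:MPBranching} gives the analogous criterion at marked points. Both rely on CP through the envelope-lifting argument of Lemma~\ref{L:piMinCP}: an unbranched pole with unmarked preimages would force a lifted cylinder of doubled height. Since these criteria depend only on $\pi_{Q_{min}}$ and not on the choice of $f_i$, the branch loci of $h_1$ and $h_2$ agree, and only then does your lifting-by-deck-transformation argument produce $\phi$. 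Two smaller points: you tacitly need the strata of the $Y_i$ to have rank at least two for the uniqueness clause of Lemma~\ref{L:InvolutionImpliesHyp-background} to apply -- this does hold, since otherwise $Y_i$ (and hence $X_0$) would be a torus cover by Lemma~\ref{L:R1Arithmetic}, but it should be stated -- and the factorization should read $\pi_Z = h_i\circ f_i$, not $f_i = h_i\circ \pi_Z$.
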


The following example clarifies how crucial Assumption CP is in Proposition \ref{P:AtMostOneCP}. 

\begin{ex}\label{E:ManyCovers}
For any $N$, there is a translation surface that isn't a torus cover but has at least $N$ different maps to generic surfaces in strata of quadratic differentials. Indeed, consider $(Q, q)$ a generic surface in the genus zero stratum $\cQ(N-3, -1^{N+1})$.   Denote the poles by \red $z_1, \ldots, z_{N+1}$. \black For each $i \in  \{1, \ldots, N+1\}$, let $(Q_i, q_i)$ be the unique degree two cover of $(Q,q)$ branched over all of the poles except $z_i$; \red if $N$ is odd it is additionally branched over the zero of order $N-3$. \black As we recall in Section \ref{S:Q-hyp}, Lanneau showed that the $(Q_i, q_i)$ lie in hyperelliptic connected components defined in \cite{LanneauHyp}. Basic covering space theory gives the existence of a translation surface cover of $(Q,q)$ which is a common cover of all the $(Q_i, q_i)$.
\end{ex} 

\begin{proof}[Proof of Proposition \ref{P:AtMostOneCP}.]
By M\"oller (Theorem \ref{T:MinimalCover}), there is a unique half-translation cover $$\pi_{Q_{min}}: (X_0, \omega_0) \ra (Q_{min}, q_{min})$$ such that any other half-translation cover from $(X_0, \omega_0)$ to a half-translation surface is a factor of $\pi_{Q_{min}}$.  

By Lemma \ref{L:InvolutionImpliesHyp-background}, any quotient of $(X_0, \omega_0)$ that is generic in its stratum is either $(Q_{min}, q_{min})$ or a degree two cover of $(Q_{min}, q_{min})$ that lies in a hyperelliptic stratum after forgetting marked points.

\begin{lem}\label{L:piMinCP}
If $\pi_{Q_{min}}$  factors through a map to a surface $S$ \red that is generic in a stratum which, after forgetting marked points, is a hyperelliptic component of a stratum of Abelian or quadratic differentials, \black then $\pi_{Q_{min}}$ does not satisfy Assumption CP. 
\end{lem}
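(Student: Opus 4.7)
The plan is to exploit the hyperelliptic structure of $\cF(S)$ together with the universal property of $Q_{min}$ to produce a cylinder on $Q_{min}$ whose preimage on $X_0$ has the wrong height, thereby violating Assumption CP. First, Lemma \ref{L:InvolutionImpliesHyp-background} equips $\cF(S)$ with its hyperelliptic involution $\iota$, and the quotient $\cF(S)/\iota =: \bar S$ is a genus zero quadratic differential with $\cF(S)\to\bar S$ a degree two map. The composition $X_0 \to \cF(S) \to \bar S$ is a half-translation map to a quadratic differential, so by Theorem \ref{T:MinimalCover} it factors through $Q_{min}$, giving a map $\rho: Q_{min}\to\bar S$. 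Writing $\phi: X_0\to S$ and $\psi: S\to Q_{min}$ for the given factorization of $\pi_{Q_{min}}$, I would use degree multiplicativity in the induced diagram $\cF(S) \to \cF(Q_{min}) \to \bar S$ (whose composition is the hyperelliptic quotient of degree two) to conclude that $\cF(\psi)$ is itself the hyperelliptic quotient and $\cF(Q_{min}) = \bar S$; the alternative, that $\cF(\psi)$ is an isomorphism, corresponds to the trivial factorization $S = Q_{min}$ and is handled by applying the same argument to the further hyperelliptic quotient $\cF(Q_{min})\to\bar S$.

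Next, since $\bar S$ is genus zero quadratic, it has simple poles that are branch points of the hyperelliptic cover $\cF(\psi)$. I would pick such a pole $p$ on $Q_{min}$ and a simple envelope $C$ on $Q_{min}$ whose multiplicity-two boundary saddle connection passes through $p$. The preimage $\cF(\psi)^{-1}(C)$ on $\cF(S)$ is a cylinder of height $2h(C)$: branching at the unique preimage $\tilde p$ of $p$, which has cone angle $2\pi$ and is therefore a regular point on $\cF(S)$, folds the two sides of the envelope together, doubles the height, and produces a geodesic loop at the middle through $\tilde p$. Provided $\tilde p$ is unmarked on $S$, this geodesic loop is not a saddle connection of $S$, so the preimage of $C$ on $S$ is a single cylinder of height $2h(C)$; pulling back by $\phi$, which is unramified over the regular unmarked point $\tilde p$, the preimage of $C$ on $X_0$ is a union of cylinders of height $2h(C)$. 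Since Assumption CP for $\pi_{Q_{min}}$ would require preimages of cylinders of height $h(C)$, this height mismatch is the desired contradiction.

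The main obstacle, and the part I expect will require the most care, is guaranteeing the existence of a branch-point pole $p$ whose preimage $\tilde p$ is unmarked on $S$, since a priori the stratum of $S$ could mark every Weierstrass preimage. I plan to handle this by varying $p$ over the several branch points of $\cF(\psi)$ (there are at least $2g_S + 1$ simple-pole branch points on $\bar S$, plus possibly an odd-order zero branch point when $\cF(S)$ is of the form $\cH^{hyp}(2g_S - 2)$), and observing that for $S$ generic in its stratum not every such preimage can be marked unless the stratum is extremely restrictive; in that borderline substratum a small perturbation of $Q_{min}$ within its own stratum introduces an additional branch-point pole whose preimage on $S$ is unmarked, or else one applies the same height-doubling argument at an odd-order zero branch point. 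The careful bookkeeping of how markings on $Q_{min}$ lift to $S$ and then to $X_0$ under the two covers $\psi$ and $\phi$ is the delicate ingredient that closes the proof.
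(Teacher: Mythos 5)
Your high-level strategy (find a simple envelope on $Q_{min}$ whose preimage in $S$ is a single cylinder of doubled height, contradicting Assumption CP) is the same as the paper's, but as written the proposal has two substantive gaps.

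First, the multiplicity-two boundary of a simple envelope is a saddle connection joining two \emph{distinct} poles $p$ and $p'$ (a loop at a single pole would force cone angle at least $2\pi$ there, contradicting that it is a pole). The geodesic through the middle of the pulled-back cylinder therefore passes through both preimages $\tilde p$ and $\tilde p'$, and the preimage of the envelope on $S$ is a single maximal cylinder of height $2h(C)$ only if \emph{both} $\tilde p$ and $\tilde p'$ are regular and unmarked. If, say, $\tilde p'$ is marked, the middle geodesic is a saddle connection from $\tilde p'$ to itself, the two halves are honest cylinders of height $h(C)$, and Assumption CP is not violated. You only track $\tilde p$, so the claim that ``this geodesic loop is not a saddle connection of $S$'' is not justified.

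Second, the obstacle you flag as requiring ``the most care'' has a one-line resolution that your sketch does not find, and your proposed workaround (perturbing $Q_{min}$, or switching to odd-order zero branch points) is a hand-wave that will not close. A preimage of a pole is a fixed point of the deck involution, i.e.\ a Weierstrass point, and such a point can never be a free marked point; but $S$ is assumed generic in a component of a stratum, where every marked point is free. So none of these preimages can be marked, with no case analysis needed. The paper runs this backwards: two poles of $Q_{min}$ with regular preimages always exist (for Abelian $S$ any pole works; for quadratic $S$ this follows from Section \ref{S:Q-hyp}), generically two poles of a genus-zero quadratic differential lie in a common envelope, and then if Assumption CP held at least one of these two preimages would have to be marked, contradicting genericity.

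Two smaller points. The universal property of $Q_{min}$ (Theorem \ref{T:MinimalCover}) produces a map $\bar S \to Q_{min}$, not $\rho\colon Q_{min}\to\bar S$ as you wrote (one then compares degrees to identify $\cF(Q_{min})$ with $\bar S$). And the preimage of the envelope on $X_0$ is not ``a union of cylinders of height $2h(C)$'' --- it is the complement, inside such a union, of interior geodesic loops through regular unmarked points; this failure to be a union of cylinders is exactly the violation of Assumption CP, so the phrasing should be corrected.
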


\begin{proof}
Suppose not in order to derive a contradiction. Since $(Q_{min}, q_{min})$ has genus 0, it must have at least two poles. In fact it has at least two poles whose preimage in $S$ are non-singular points: when $S$ is an Abelian differential, we can pick any two poles; and when it is a quadratic differential this follows from the classification of hyperelliptic components reviewed in Section \ref{S:Q-hyp}. 

Deforming if necessary, we can assume that these two poles are contained in an envelope. Considering the preimage in $S$ of this envelope, Assumption CP implies that the preimage of at least one of these two poles is marked; otherwise the preimage of the envelope has at least twice the height of the envelope. However, the preimages of the poles are fixed by the involution, and in particular cannot be free marked points. This contradicts the fact that $S$ is generic, since all marked points must be free in strata.
\end{proof} 

\begin{lem}\label{L:AtMostOneHyp}
If $\pi_{Q_{min}}$  factors through a map $\pi$ satisfying Assumption CP and whose codomain is a surface $S$ that has dense orbit in a stratum and such that $\For(S)$ is contained in a hyperelliptic component of a stratum of Abelian or quadratic differentials, then $\pi$ is the unique such map. 
\end{lem}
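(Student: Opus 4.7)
The plan is to argue by contradiction: assume $\pi:(X_0,\omega_0)\to S$ and $\pi':(X_0,\omega_0)\to S'$ are two half-translation covers, both satisfying the hypotheses of the lemma, whose fibers differ, and derive a contradiction from Assumption CP.

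First I would set up the structural picture. Since $\pi_{Q_{min}}$ factors through both $\pi$ and $\pi'$, by Theorem \ref{T:MinimalCover} each of $S\to Q_{min}$ and $S'\to Q_{min}$ has degree at most two. The hypothesis that $\For(S)$ and $\For(S')$ lie in hyperelliptic components, combined with Lemma \ref{L:InvolutionImpliesHyp-background}, then forces these degree-two maps to coincide with the quotients by the hyperelliptic involutions $J_S$ on $\For(S)$ and $J_{S'}$ on $\For(S')$; a degree count shows $\For(Q_{min})$ has genus zero. Pulling back these involutions to $\For(X_0)$ yields two involutions of derivative $-1$, both of which commute with $\pi_{Q_{min}}$, and which determine distinct intermediate quotients $S$, $S'$ precisely when $\pi$ and $\pi'$ have different fibers.

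Next, since the map $S\to Q_{min}$ has degree two, it admits no non-trivial intermediate covers, so the join of the two fiber equivalence relations on $X_0$ (one from $\pi$, one from $\pi'$) must coincide with the fiber relation of $\pi_{Q_{min}}$. Consequently each fiber of $\pi_{Q_{min}}$ splits in two \emph{distinct} ways into pairs of fibers of $\pi$ and of $\pi'$; in particular, the branching loci of the two degree-two maps $S\to Q_{min}$ and $S'\to Q_{min}$ over $Q_{min}$ must differ, so there is a pole $z$ of $Q_{min}$ that is branched on exactly one of the two covers.

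For the main step, I would adapt the envelope trick used in the proof of Lemma \ref{L:piMinCP}. That proof produces an envelope $C$ in $Q_{min}$ containing two poles whose preimages in $S$ are unmarked non-singular points, causing the lifted cylinder in $S$ to have twice the height of $C$; Assumption CP for $\pi$ then forces the preimages in $X_0$ of these two poles to be marked or singular. Applying this to the pole $z$ above: deform so that $z$ lies in an envelope of $Q_{min}$, and apply Assumption CP to whichever of $\pi,\pi'$ corresponds to the cover where $z$ is unbranched. This forces specific preimages of $z$ on $(X_0,\omega_0)$ to be marked or singular; but on the other cover, where $z$ is branched, the corresponding preimages of the single branch point over $z$ lie in a cylinder which (by genericity of the other surface in its stratum) is required \emph{not} to contain marked points interior to itself, contradicting Assumption CP for that other cover.

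The main obstacle I anticipate is making the branching-discrepancy argument rigorous when $\For(S)=\For(S')$ as surfaces in the same hyperelliptic component: here one must carefully distinguish the two lifted hyperelliptic involutions of $\For(X_0)$ and show that they act differently on the preimages of some pole of $Q_{min}$, even though the images in the unmarked stratum look the same. Handling the special case when $\For(Q_{min})=\cQ(-1^4)$, where the hyperelliptic involution is not unique by Lemma \ref{L:HypSymmetries}, will likely require a separate argument using the hypothesis that $(X_0,\omega_0)$ is not a torus cover to rule out the extra symmetries.
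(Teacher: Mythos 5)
Your proposal is recognizably the contrapositive of the paper's argument, and the pole case via the envelope trick is essentially the paper's Sublemma proving that branching over a pole $z$ is equivalent to $\pi_{Q_{min}}^{-1}(z)$ containing no marked or singular point. However, there is a genuine gap in the reduction to the pole case. You assert that if the branching loci of the two degree-two covers $S\to Q_{min}$ and $S'\to Q_{min}$ differ, ``there is a pole $z$ of $Q_{min}$ that is branched on exactly one of the two covers.'' This does not follow. The branching data could a priori differ over a marked point of $Q_{min}$, and indeed this is forced in exactly the case you flag as the main obstacle: when $\For(S) = \For(S')$, the branching over poles and zeros (which are visible after forgetting marked points) must agree, so the entire discrepancy is concentrated on marked points, and the envelope trick does not address it.

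The paper's proof handles this by proving two separate sublemmas: one characterizing branching over poles (via the envelope argument you sketch) and one characterizing branching over marked points (via a different argument: if $z$ is unbranched, exactly one preimage on $S$ is marked, because two marked points exchanged by the involution cannot both be free on a generic surface; if $z$ is branched, the preimage is a single cone point and all preimages in $X_0$ are zeros). It then closes the zero case by a parity count on the number of branch points. Your proposal would need an analogue of the marked-point sublemma to be complete; without it, the contradiction-by-pole strategy misses the case your own discussion identifies as hardest. The $\cQ(-1^4)$ worry, by contrast, is handled cleanly: since $(X_0,\omega_0)$ is not a torus cover, $\For(Q_{min},q_{min})\notin\cQ(-1^4)$, so no ``separate argument'' is required there.
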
 
\begin{proof}
The map $S \to (Q_{min}, q_{min})$ is a degree two cover of a sphere, and is hence determined by its branched points. Our approach is to first determine which poles and marked points are branched, and then show that the remaining branching data can be recovered from the classification of hyperelliptic components. 

\begin{sublem}\label{SL:PoleBranching}
The map $S\to (Q_{min}, q_{min})$ is branched over a pole $z$ of $(Q_{min}, q_{min})$ if and only if $\pi_{Q_{min}}^{-1}(z)$ \red does not contain \black a singular point or a marked point. 
\end{sublem}
\begin{proof}
First suppose $S\to (Q_{min}, q_{min})$ is not branched over $z$. We want to show that the preimage of $z$ under $\pi_{Q_{min}}$ contains a singularity or marked point. To do this, as in the proof of Lemma \ref{L:piMinCP}, it suffices to show that the two preimages of $z$ on $S$ are contained in an envelope. This envelope can be constructed by lifting an envelope from $(Q_{min}, q_{min})$ to $S$ that contains $z$ and a branched pole. (Such an envelope exists since there are poles that are branch points for the quotient by the hyperelliptic involution $S \ra (Q_{min}, q_{min})$ and since, for a genus zero surface, it is particularly easy to see that any two poles belong to a single envelope for a generic surface).


Next suppose that $S\to (Q_{min}, q_{min})$ is branched over $z$. The preimage $p$ of $z$ on $S$ is therefore not a singularity of the flat metric. Since $p$ is a fixed point of an involution on $S$, it is a periodic point. 

Since $S$ has dense orbit in a stratum, the only marked points on $S$ are free points. The only stratum in which a free point is simultaneously a periodic point is  $\cH(0)$. Since we have assumed that $(X_0, \omega_0)$ is not a torus cover, we see that $p$ is not marked on $S$. 

By our conventions on (half)-translation covers (see Definition \ref{D:Covering}) we get that $\pi_{Q_{min}}^{-1}(z)$ consists entirely of non-singular unmarked points, as desired. 
\end{proof}

\begin{sublem}\label{SL:MPBranching}
The map $S\to (Q_{min}, q_{min})$ is branched over a marked point $z$ of $(Q_{min}, q_{min})$ if and only if $\pi_{Q_{min}}^{-1}(z)$ consists entirely of singularities (zeros of positive order). 
\end{sublem}

\begin{proof}
If the map $S\to (Q_{min}, q_{min})$ is branched over $z$, then $z$ has a single preimage on $S$, which is a cone point of angle $4\pi$. Since $S\to (Q_{min}, q_{min})$ is a factor of $\pi_{Q_{min}}$, we see that $\pi_{Q_{min}}^{-1}(z)$ consists entirely of cone points with cone angles that are multiples of $4\pi$.

So suppose the map $S\to (Q_{min}, q_{min})$ is not branched over $z$; hence the preimage of $z$ on $S$ consists of two non-singular points. We claim that exactly one preimage of $z$ is a marked point. Indeed, at least one preimage must be marked by Definition \ref{D:Covering}. And if both were marked, then since these two points are interchanged by the hyperelliptic involution, and since $S$ is generic in its stratum, and since all marked points are free in strata, this gives a contradiction. (This uses that the stratum of $S$ is not $\cH(0,0)$, which is true because $(X_0, \omega_0)$ is not a torus cover.) 

Hence exactly one preimage of $z$ is marked on $S$; let $z'$ be the preimage that is unmarked. The preimages on $(X_0, \omega_0)$ of $z'$ must consist of unmarked non-singular points by  Definition \ref{D:Covering}, giving the result. 
\end{proof}
 
The classification of hyperelliptic components (see Lemma \ref{L:HyperellipticCheatSheet} for the quadratic case) gives that $(Q_{min}, q_{min})$ has either one or two zeros (that aren't poles or marked points). The case where there are no zeros only occurs when $\For(Q_{min}, q_{min})$ belongs to $\cQ(-1^4)$, which cannot occur here since $(X_0, \omega_0)$ is not a torus cover. If $(Q_{min}, q_{min})$ has two zeros (that aren't marked points or poles), the classification gives that there is a unique degree two cover of $(Q_{min}, q_{min})$ that is contained in a hyperelliptic component after forgetting marked points. (The double covers of $(Q_{min}, q_{min})$ that belong to hyperelliptic components are uniquely specified, when there are two zeros, since, whether or not a zero is a branch point is determined by its parity, as in Lemma \ref{L:HyperellipticCheatSheet}; all the poles are branch points; and none of the marked points are branched points. When there are fewer than two zeros there may be choices for which poles and marked points belong to the branch locus as in Example \ref{E:ManyCovers}.)

So assume there is exactly one zero (that isn't a pole or marked point). Since the number of branch points of a degree two cover is even, Sublemmas \ref{SL:PoleBranching} and \ref{SL:MPBranching} exactly determine the branch points of $S \to (Q_{min}, q_{min})$. Since a degree two cover of a sphere is determined by its branch points, this gives the result. 
\end{proof}

We will now see that the proof of Proposition \ref{P:AtMostOneCP} is complete. Suppose first that $\pi_{Q_{min}}$ satisfies Assumption CP. By Lemma \ref{L:piMinCP}, there is no factor of $\pi_{Q_{min}}$ with image $S$ that has dense orbit in a stratum and such that $\For(S)$ belongs to a hyperelliptic component of a stratum of Abelian or quadratic differentials. However, any map from $(X_0, \omega_0)$ to a surface $S$ with dense orbit in a stratum is either $\pi_{Q_{min}}$ or has the property that  $\For(S)$ belongs to a hyperelliptic component of a stratum of Abelian or quadratic differentials. Hence, $\pi_{Q_{min}}$ is the unique (half)-translation cover defined on $(X_0, \omega_0)$ that satisfies Assumption CP and whose image has dense orbit in its stratum. 

Suppose now that $\pi_{Q_{min}}$ does not satisfy Assumption CP, but that there is some (half)-translation cover from $(X_0, \omega_0)$ to a surface $S$ satisfying Assumption CP such that $S$ has dense orbit in a stratum. As before, $\For(S)$ belongs to a hyperelliptic component of a stratum of Abelian or quadratic differentials, and $(Q_{min}, q_{min})$ is the quotient by the hyperelliptic involution. Hence $\pi$ is unique by Lemma \ref{L:AtMostOneHyp}.
%
\end{proof}

\begin{proof}[Proof of Theorem \ref{T:FullV2} when $\MOneTwo$ has rank at least 2]
The cover $\ColOneTwoX$ might not be connected, but each of its components covers the connected surface $$\Col_{f_i(\Col_{\bfC_i}(\bfC_j))}(f_i(\Col_{\bfC_i}(X, \omega)).$$

We first remark that rank is defined in exactly the same way for invariant subvarieties of multi-component surfaces. So the rank of $\MOneTwo$ is equal to the rank of the invariant subvariety of connected surfaces covered by surfaces in $\MOneTwo$. (One could take this as the definition of the rank of $\MOneTwo$ for the purposes of this section. Although we do not require this fact, we remark that rank is an integer even for invariant subvarieties of multicomponent surfaces by the results in \cite[Section 7]{ChenWright}.) 

 Hence we can assume that none of the components of $\ColOneTwoX$ are torus covers (by Lemma \ref{L:R1Arithmetic}).

Proposition \ref{P:AtMostOneCP} gives that the restrictions of $\Col(f_1)$ and $\Col(f_2)$ to any connected component of $\ColOneTwoX$ have the same fibers. Hence the result follows from Lemma \ref{L:FirstException}. 
\end{proof}

\subsection{Rank 1, not dimension 2}

\begin{prop}\label{P:VS-One}
If $\MOneTwo$ is rank one, then either  $\Col(f_1) = \Col(f_2)$ or, possibly after re-indexing, the codomain of $f_1$ is an Abelian differential, and we can write $\Col(f_i)=g_i \circ g$, where $$g: \ColOneTwoX \ra (Y, \eta)$$ is a translation surface covering map to a flat torus with three or four marked points that differ by two torsion and no other marked points, $g_1$ is the unique four-to-one translation surface covering map from $(Y,\eta)$ to a surface in $\cH(0)$, and $g_2$ is the quotient by an involution fixing the marked points (see Figure \ref{F:g1g2}), except possibly in the two exceptional cases of Theorem \ref{T:FullV2}. 
\end{prop}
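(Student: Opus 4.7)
My plan is to reduce to analyzing a single component of $\ColOneTwoX$ where the two covers differ, convert everything to translation covers between tori via the holonomy double cover construction, and then use the classification of torus covers by lattice inclusions to identify the common factor.

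First, by Lemma \ref{L:FirstException}, if the restrictions of $\Col(f_1)$ and $\Col(f_2)$ to every component of $\ColOneTwoX$ have the same fibers, then $\Col(f_1) = \Col(f_2)$ outside of the first exceptional case of Theorem \ref{T:FullV2}. Thus I may assume there is a component $(Z, \zeta)$ on which the two restrictions have different fibers. Since $\MOneTwo$ has rank one, Lemma \ref{L:R1Arithmetic} implies $(Z, \zeta)$ is a torus cover. By Lemma \ref{L:ColConnected}, the codomains of $\Col(f_i)|_Z$ are connected, and being generic in rank-one strata, each is either a torus in some $\cH(0^n)$ or a quadratic differential whose holonomy double cover is such a torus.

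Next, since $(Z, \zeta)$ is Abelian, any half-translation cover $Z \to Y_i'$ with quadratic codomain factors (after a choice of square root) through a translation cover of $Z$ to the holonomy double cover $T_i$ of $Y_i'$, which is a torus; if $Y_i'$ is Abelian I set $T_i = Y_i'$. This yields translation covers $Z \to T_i$ corresponding to lattice inclusions $\Lambda_Z \subset \Lambda_i$, where $\Lambda_Z$ is the absolute-period lattice of $Z$. Define the common factor $Y$ to be the torus with lattice $\Lambda_Y := \Lambda_1 \cap \Lambda_2$, so that $g : Z \to Y$ factors both translation covers, and post-composing with the $\pm 1$ quotient where applicable yields factorizations $\Col(f_i)|_Z = g_i \circ g$.

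The key step is to identify $g_1$ and $g_2$ explicitly using Assumption CP. Assumption CP on each $\Col(f_i)|_Z$ requires that all preimages in $Z$ of marked or singular points in the codomain are marked or singular, but it does not require Assumption CP on $g_1$ or $g_2$ separately: failures of Assumption CP at $Y$ may be ``repaired'' by marked or singular points upstairs in $Z$. Combined with the assumption that the two covers differ, I would argue that the only compatible configuration is $\Lambda_2 \subsetneq \Lambda_1$ with index four, so that $Y = T_2$ and $g_1 : Y \to T_1$ is the multiplication-by-two map onto a surface in $\cH(0)$, while $g_2 : Y \to Y_2'$ is the quotient by the hyperelliptic involution on $Y$, with image in $\cQ(-1^4)$. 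The marked points on $Y$ are forced to lie at the four $2$-torsion points; whether exactly three or all four are marked corresponds to whether the fourth $2$-torsion point is marked only on $Z$ via $g$ or directly on $Y$, matching the flexibility allowed in the statement and dovetailing with the second exceptional case of Theorem \ref{T:FullV2}.

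The main obstacle will be the rigidity argument in the third step: ruling out all other lattice configurations $(\Lambda_1, \Lambda_2)$ compatible with Assumption CP and with two distinct fiber structures, and ensuring the marked-point analysis matches exactly the statement. A case split on whether each $T_i$ is the codomain directly or a holonomy double cover, together with careful bookkeeping of the marked and unmarked pole preimages allowed by the definition of half-translation cover, will be needed.
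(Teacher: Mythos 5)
Your high-level plan---reduce to a single component, pass to tori, and compare lattices---captures part of the structure (the paper's Lemma \ref{L:VS-AgreeR1} does in fact compute lattices explicitly for the cases where the codomain lies in $\cH(0^{n+1})$ or $\cQ(-1^4, 0^n)$), but the second paragraph contains a factual error that breaks the argument for the remaining rank-one strata.

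You assert that for a quadratic codomain $Y_i'$, the holonomy double cover $T_i$ ``is a torus.'' This is true only when $Y_i'$ has genus zero, i.e., when the codomain lies in $\cQ(-1^4, 0^n)$. But the other rank-one strata of quadratic differentials that can occur as codomains are $\cQ(2, -1^2, 0^{n-1})$ (genus one; holonomy double cover has genus two, living in $\cH(1,1,0^{\bullet})$) and $\cQ(2^2, 0^{n-2})$ (genus two; holonomy double cover has genus three, living in $\cH(1^4,0^\bullet)$). In neither of these cases does the composite $Z \to T_i \to Y_i'$ put a torus in the middle. The holonomy double cover is a torus cover, but not a torus, so its absolute-period module is not a lattice in $\bC$ that you can intersect with $\Lambda_1$. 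Your lattice intersection $\Lambda_Y := \Lambda_1 \cap \Lambda_2$ therefore does not even make sense when either codomain lies in these strata, and these are precisely the cases the paper handles with an entirely different argument (Lemma \ref{L:HigherRelAgree}), which works directly with the combinatorics of the horizontal cylinder diagram rather than with lattices. Relatedly, you also need to rule out that the two codomains lie in \emph{different} rank-one strata when $n>0$; the paper does this with a count of ``self-adjacent subequivalence classes'' (Lemma \ref{L:HigherRel:SameStratum}), and nothing in your proposal substitutes for it.

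Beyond this, your third step---pinning down the only compatible lattice configuration via Assumption CP and inferring that $\Lambda_2$ has index four in $\Lambda_1$---is stated as ``I would argue,'' i.e., it is the conclusion you want rather than a derivation. That configuration is exactly the content of Lemma \ref{L:LowerRel:SameStratum} in the paper, which establishes it through the uniqueness statement of Sublemma \ref{SL:CPH0} (any Assumption-CP map from a connected surface to a surface in $\cH(0)$ is the quotient by the relative-period lattice). If you restrict attention to the genuinely lattice-theoretic case (codomains in $\cH(0^{n+1})$ or $\cQ(-1^4,0^n)$), your strategy is a reasonable rephrasing of the paper's, but the cases with genus-one or genus-two quadratic codomains require a separate geometric argument that your proposal does not supply, and cannot supply along the lattice route as written.
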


\begin{figure}[h]\centering
\includegraphics[width=.7\linewidth]{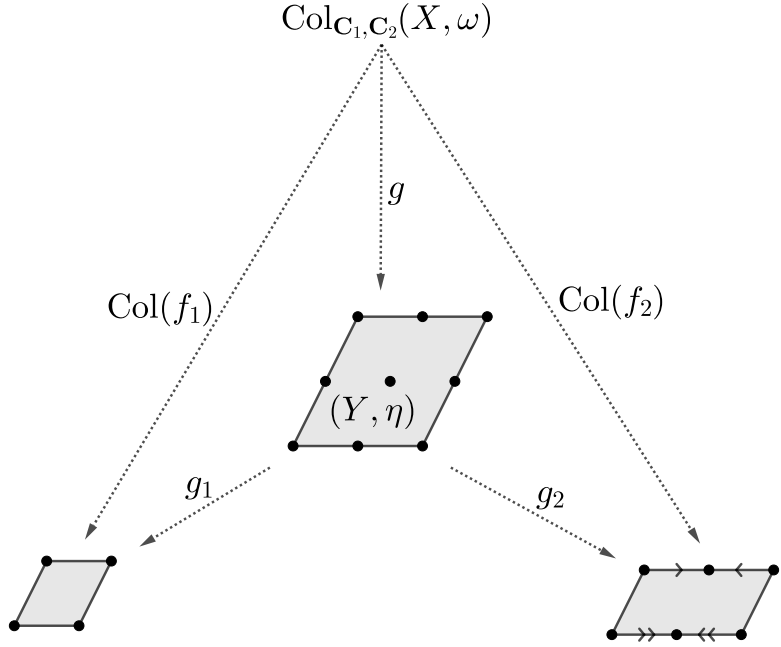}
\caption{The exceptional case in Proposition \ref{P:VS-One}. Up to reindexing, the codomain of $\Col(f_1)$ is in $\cH(0)$ and the codomain of $\Col(f_2)$ is in $\cQ(-1^4)$.}
\label{F:g1g2}
\end{figure}

Since the orbit closure of a flat torus with three or four two-torsion points marked is rank 1 and dimension 2, Proposition \ref{P:VS-One} and  Lemma \ref{L:VS-Agree} imply Theorem \ref{T:FullV2} when $\MOneTwo$ has rank 1 and dimension greater than 2. 

Before we begin the proof, it is helpful to recall that a connected surface $(Z, \eta)$ is a translation cover of a torus if and only if its $\bZ$-module $\Lambda_{abs}\subset \bC$ of absolute periods is a rank 2 lattice, in which case every translation map to a torus is given by $$p\mapsto \int_{\gamma_p} \omega + \Lambda\in \bC/\Lambda,$$ where $\Lambda$ is a lattice (rank two $\bZ$-submodule of $\bC$) containing $\Lambda_{abs}$, and $\gamma_p$ is a path from a fixed basepoint to $p$.

\begin{proof}
The rank 1 strata of dimension $2+n$ are
$$\cH(0^{n+1}), \cQ(-1^4, 0^{n}), \cQ(2, -1^2, 0^{n-1}), \text{ and } \cQ(2, 2, 0^{n-2}).$$
Here $n+1$ is also the maximum number of horizontal cylinders on a  horizontally periodic surface in these strata; and almost every surface in these strata has $n+1$ cylinders in every cylinder direction. Both $\Col(f_1)$ and $\Col(f_2)$ have a codomain that is one of these strata, where $2+n$ is the dimension of $\MOneTwo$. 

Without loss of generality we may assume that $\ColOneTwoX$ has the maximum number of horizontal cylinders of any surface contained in $\MOneTwo$. Let $(Y_i, \eta_i)$ denote the codomain of $\Col(f_i)$.

\begin{lem}\label{L:LowerRel:SameStratum}
If $n = 0$ and  $(Y_1, \eta_1)$ and $(Y_2, \eta_2)$ belong to different strata, then $\Col(f_1)$ and $\Col(f_2)$ both factor through a map $g$ as \red in the statement of Proposition \ref{P:VS-One}, \black except possibly in the second exceptional case of Theorem \ref{T:FullV2}.
\end{lem}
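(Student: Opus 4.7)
The plan is to split into two cases based on the holonomy of the codomain of $f_1$, showing that one case is precisely the second exceptional case of Theorem~\ref{T:FullV2} and constructing the map $g$ explicitly in the other.

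Up to re-indexing, assume $(Y_1, \eta_1) \in \cH(0)$ and $(Y_2, \eta_2) \in \cQ(-1^4)$. If the codomain of $f_1$ has nontrivial holonomy, then together with the codomain of $\Col(f_2)$ lying in $\cQ(-1^4)$ and that of $\Col(f_1)$ lying in $\cH(0)$, we land precisely in the second exceptional case of Theorem~\ref{T:FullV2} and are done. Otherwise $f_1$ has codomain an Abelian differential, so $\Col(f_1)$ is a translation cover of a flat torus and $\ColOneTwoX$ has trivial linear holonomy. Since the codomain of $\Col(f_2)$ has nontrivial holonomy while the domain has trivial holonomy, $\Col(f_2)$ factors through the holonomy double cover: $\Col(f_2) = p_2 \circ \tilde f_2$, where $p_2 : (\tilde Y_2, \tilde \eta_2) \to (Y_2, \eta_2)$ is the holonomy double cover (so $(\tilde Y_2, \tilde \eta_2)$ is a flat torus whose four preimages of poles are at the 2-torsion points, some subset of which is marked) and $\tilde f_2$ is a translation cover.

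Writing $(Y_1, \eta_1) = \bC/\Lambda_1$ and $(\tilde Y_2, \tilde \eta_2) = \bC/\Lambda_2$, with both $\Lambda_1, \Lambda_2$ containing the absolute period lattice $\Lambda_{\mathrm{abs}}$ of $\ColOneTwoX$, I would invoke Assumption CP for both $\Col(f_1)$ and $\Col(f_2)$ to deduce that the set of cone points (marked or singular) on $\ColOneTwoX$ coincides simultaneously with the preimage of the marked point on $(Y_1, \eta_1)$ and with the preimage of the four 2-torsion points of $(\tilde Y_2, \tilde \eta_2)$. Translated into an equality of subgroups of $\bC/\Lambda_{\mathrm{abs}}$, this gives $\Lambda_1/\Lambda_{\mathrm{abs}} = \tfrac{1}{2}\Lambda_2/\Lambda_{\mathrm{abs}}$, hence $\Lambda_2 = 2\Lambda_1$. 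Setting $(Y, \eta) := \bC/(2\Lambda_1) = (\tilde Y_2, \tilde \eta_2)$, $g := \tilde f_2$, $g_1$ the canonical $4$-to-$1$ projection onto $(Y_1, \eta_1)$, and $g_2 := p_2$ yields the required factorizations: $g_1$ is the unique $4$-to-$1$ translation cover from $(Y, \eta)$ to a surface in $\cH(0)$, and $g_2$ is the quotient by the involution $z \mapsto -z$. All marked points of $(Y, \eta)$ necessarily lie among the four 2-torsion points (as these are the preimages of the single marked point on $(Y_1, \eta_1)$ under $g_1$), and a short check using Definition~\ref{D:Covering}, distinguishing between marked and singular preimages on $\ColOneTwoX$, confirms that either three or four of these points are actually marked.

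The main obstacle will be establishing the lattice equality $\Lambda_2 = 2\Lambda_1$ rigorously from Assumption CP; extra care is needed when $\ColOneTwoX$ is disconnected, in which case the argument is applied component-by-component, after which one verifies that the resulting factorizations glue consistently to a single map $g$ defined on all of $\ColOneTwoX$.
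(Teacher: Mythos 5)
Your overall strategy is the same as the paper's: up to re-indexing take $(Y_1,\eta_1)\in\cH(0)$, $(Y_2,\eta_2)\in\cQ(-1^4)$, dispose of the case where the codomain of $f_1$ has nontrivial holonomy as the second exceptional case, factor $\Col(f_2) = p_2\circ g$ through the holonomy double cover $(Y,\eta)$ of $(Y_2,\eta_2)$, and then show $\Col(f_1)$ factors as $g_1\circ g$. Where you diverge is in how you close the final step, and that is where the gap is.

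You assert that ``the set of cone points (marked or singular) on $\ColOneTwoX$ coincides \dots{} with the preimage of the four 2-torsion points of $(\tilde Y_2,\tilde\eta_2)$.'' That is not what Assumption CP gives you. CP applied to $\Col(f_2)$ says that for \emph{each} pole of $(Y_2,\eta_2)$, either all its preimages in $\ColOneTwoX$ are cone points (singular or marked), or all are unmarked regular points; it does not rule out the latter option for some poles. If some 2-torsion point of $(Y,\eta)$ is unmarked (its pole has all-regular preimage), then the preimage under $g$ of the four 2-torsion points is strictly larger than the set of cone points, your identity of subsets fails, and with it the conclusion $\Lambda_2 = 2\Lambda_1$ (indeed one only gets $\Lambda_2\subset\Lambda_1\subset\tfrac12\Lambda_2$, with $[\Lambda_1:\Lambda_2]$ the number of ``marked'' poles, which could a priori be $1$ or $2$). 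So the lattice computation, as you anticipate in your last paragraph, is not established.

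The paper avoids this direct lattice count by first proving a uniqueness statement (Sublemma~\ref{SL:CPH0}): a translation map from a connected surface to a surface in $\cH(0)$ satisfying CP is unique, and is the quotient by the relative period lattice. It then observes that $g$ and $g_1$ (hence $g_1\circ g$) satisfy CP, so by uniqueness $\Col(f_1)$ and $g_1\circ g$ have the same fibers, with no need to compare lattices explicitly. (This still implicitly uses that all four 2-torsion points of $(Y,\eta)$ end up marked, since otherwise $g_1$ would have an unmarked regular preimage of the marked point of $\cH(0)$; the paper packages this into the sentence ``by Assumption CP, there are either three or four marked points on $(Y,\eta)$.'') If you want to salvage the direct lattice approach, you would need to separately prove that no pole of $(Y_2,\eta_2)$ can have all-regular preimage under $\Col(f_2)$ — precisely the content that Sublemma~\ref{SL:CPH0} lets the paper sidestep — before the identification $\Lambda_2 = 2\Lambda_1$ follows. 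As written, the proposal leaves this essential step open.
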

\begin{proof}
Up to reindexing suppose without loss of generality that $(Y_1, \eta_1) \in \cH(0)$ and $(Y_2, \eta_2) \in \cQ(-1^4)$. Keeping in mind the statement of the second exceptional case of Theorem \ref{T:FullV2}, we may assume that the codomain of $f_1$ has trivial holonomy. 

Let $(Y, \eta)$ be the holonomy double cover of $(Y_2, \eta_2)$.
So the map $$\Col(f_2): \ColOneTwoX\to  (Y_2, \eta_2)$$ factors through a translation surface covering map 
 $$g: \ColOneTwoX \ra (Y, \eta).$$  Thus, $(Y, \eta)$ is a torus, and we mark the image under $g$ of all the singularities and marked points of $\ColOneTwoX$. By Assumption CP, there are either three or four marked points on $(Y, \eta)$, and by construction the difference of any two of these is two torsion.

\begin{sublem}\label{SL:CPH0}
If a connected surface $(Z, \zeta)$ admits a translation map to a surface in $\cH(0)$ satisfying Assumption CP, then this map is unique. 
\end{sublem}
We emphasize that the map is assumed to preserve holonomy.
\begin{proof}
We will show that any such map must be the quotient by the lattice of \red relative \black periods. 

Every map to a torus is the quotient by some lattice $\Lambda\subset \bC$ containing the lattice of absolute periods. If the codomain is in $\cH(0)$, then (in total generality for any translation map to a  torus) $\Lambda$ must in fact contain the lattice of relative periods. 

If the map was the quotient by a lattice strictly containing the lattice of relative periods, the map would non-trivially factor through another map to a surface in $\cH(0)$, and  Assumption CP would not hold. This is because, for any non-identity map from a surface in $\cH(0)$ to a surface in $\cH(0)$, Assumption CP does not hold.
\end{proof}

Define $g_1$ to be the unique degree four translation covering map from $(Y, \eta)$ to a surface in $\cH(0)$. The map $g_1$ satisfies Assumption CP, and the map $g$ satisfies Assumption CP, so we conclude that the map $g_1 \circ g$ satisfies Assumption CP. Hence Sublemma \ref{SL:CPH0}, applied to the restriction of the various maps to the connected components of $\ColOneTwoX$, implies that $\Col(f_1)=g_1 \circ g$.
\end{proof}

\begin{lem}\label{L:HigherRel:SameStratum}
If $n>0$, the codomains of $\Col(f_1)$ and $\Col(f_2)$ lie in the same stratum.
\end{lem}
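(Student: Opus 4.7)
The rank $1$ strata of dimension $n+2$ are $\cH(0^{n+1})$, $\cQ(-1^4,0^n)$, $\cQ(2,-1^2,0^{n-1})$, and $\cQ(2,2,0^{n-2})$, and the three quadratic ones have holonomy double covers of distinct genera $1$, $2$, and $3$. The plan is to case-split on the holonomies of the codomains and rule out mismatches.

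If both codomains have trivial holonomy they both lie in $\cH(0^{n+1})$ and we are done. If both have nontrivial holonomy, then whichever of $\ColOneTwoX$ or its holonomy double cover has trivial holonomy provides a common translation cover of $\tilde Y_1$ and $\tilde Y_2$ (using that half-translation covers of surfaces with nontrivial holonomy lift to translation covers of the holonomy double covers), forcing $\tilde Y_1$ and $\tilde Y_2$ to have equal genus and hence both codomains to lie in the same quadratic stratum. In the remaining mixed case, say $(Y_1,\eta_1) \in \cH(0^{n+1})$ and $(Y_2,\eta_2)$ quadratic, $\ColOneTwoX$ has trivial holonomy (since it covers $Y_1$), so $\Col(f_2)$ factors through a translation cover $\ColOneTwoX \to \tilde Y_2$ that inherits Assumption CP from $\Col(f_2)$ by Lemma \ref{L:ObviousCP}, because preimages of cylinders on $\tilde Y_2$ are subcollections of the preimages on $\ColOneTwoX$ of their images on $Y_2$.

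Both translation covers $\Col(f_1) : \ColOneTwoX \to Y_1$ and $\ColOneTwoX \to \tilde Y_2$ then factor through the minimal torus cover $\bC/\Lambda_Z$ where $\Lambda_Z$ is the absolute period lattice of a component. The marked/singular points of $\ColOneTwoX$ descend to a single finite subset $\tilde M \subset \bC/\Lambda_Z$ whose images mod $\Lambda_1$ and mod $\Lambda_2$ are the respective marked/singular sets of $Y_1$ and $\tilde Y_2$. On $\tilde Y_2$ four of the marked or singular points are rigidly fixed by the holonomy involution, sitting at the $2$-torsion of the torus quotient (these are preimages of odd-order singularities of $Y_2$), while on $Y_1$ the $n+1$ marked points are generic. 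Counting preimages gives the compatibility equation $(n+1)[\Lambda_1:\Lambda_Z] = |\tilde M| = (2n+4)[\Lambda_2:\Lambda_Z]$ in the $\cQ(-1^4,0^n)$ case (with analogous relations in the other quadratic cases), and the additional constraint that the four $2$-torsion preimages on $\tilde Y_2$ be absorbed consistently into the generic positions of marked points on $Y_1$ should yield a contradiction when $n > 0$. The main obstacle is the combinatorial verification that no choice of sublattices $\Lambda_i \supseteq \Lambda_Z$ can simultaneously realize both marked-point configurations; the argument breaks down exactly at $n = 0$, where the single marked point of $Y_1 \in \cH(0)$ absorbs the entire $2$-torsion subgroup of $\tilde Y_2$ and produces the second exceptional case of Theorem \ref{T:FullV2}.
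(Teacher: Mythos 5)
There is a genuine gap, on two fronts, and the approach differs substantially from the paper's.

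First, in your case of ``both nontrivial holonomy," the claim that a common translation cover forces $\tilde Y_1$ and $\tilde Y_2$ to have equal genus is not justified as stated: a single translation surface can in general translation-cover surfaces of different genera. To run the argument you seem to have in mind, you would have to invoke M\"oller's minimal cover (Theorem~\ref{T:MinimalCover}) and show the two maps factor through the same minimal surface, but that theorem requires the domain \emph{not} to be a torus cover. Since $\MOneTwo$ has rank~$1$, the components of $\ColOneTwoX$ may well be torus covers (certainly they are whenever one codomain lies in $\cQ(-1^4,0^n)$), and then uniqueness of the factorization fails. So even the cleanest of your three subcases needs a more careful holonomy/lattice argument, and you have not supplied it.

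Second, the ``mixed case" ($Y_1\in\cH(0^{n+1})$, $Y_2$ quadratic) is not a proof but a plan: you yourself write that the compatibility equation ``should yield a contradiction" and that ``the main obstacle is the combinatorial verification." The counting relation $(n+1)[\Lambda_1:\Lambda_Z]=(2n+4)[\Lambda_2:\Lambda_Z]$ by itself has solutions for appropriate sublattices, so the contradiction, if it exists, must come from the additional marked-point positional constraints you gesture at; but that is exactly the part left unestablished.

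The paper's proof is entirely different and does not pass through holonomy double covers or period lattices. It exploits the fact (arranged earlier in the proof of Proposition~\ref{P:VS-One}) that $\ColOneTwoX$ has the maximal number of horizontal cylinders, and then reads off the codomain's stratum from the combinatorics of horizontal subequivalence classes. First, for $n>1$, a cover of a maximally cylindrical surface in $\cH(0^{n+1})$ satisfying Assumption CP never has a subequivalence class whose cylinders border another class both top-to-bottom and bottom-to-top on the same component, whereas covers of rank-$1$ quadratic strata always do (via the holonomy involution); this separates the Abelian case from the three quadratic cases. Second, the remaining quadratic cases $\cQ(2,2,0^{n-2})$, $\cQ(2,-1^2,0^{n-1})$, $\cQ(-1^4,0^n)$ are distinguished by having exactly $0$, $1$, or $2$ ``self-adjacent" subequivalence classes (a subequivalence class containing two distinct cylinders that share a boundary saddle connection), and an analogous count handles $n=1$ as well. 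This is a finite combinatorial invariant readable directly from $\ColOneTwoX$, so it is the same for both $\Col(f_1)$ and $\Col(f_2)$ and forces the two codomains into the same stratum. That argument is robust to whether components of $\ColOneTwoX$ are torus covers, which is precisely where your lattice-theoretic plan runs into trouble.
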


\begin{proof}
We first claim that if $n>1$ and one codomain lies in $\cH(0^{n+1})$, then so does the other. This will be true even if the codomains of one or both $f_i$ does not have trivial holonomy (in which case we do not know that $\Col(f_i)$ preserves holonomy).

If one codomain lies in $\cH(0^{n+1})$, then if $C$ and $D$ are horizontal cylinders on a component of $\ColOneTwoX$, and the top of $C$ shares a saddle connection with the bottom of $D$, then the top of every cylinder subequivalent to $C$ on the same component shares a saddle connection with the bottom of a cylinder subequivalent to $D$. (Recall that subequivalence classes are defined in Definition \ref{D:subeq}.) Since $n>1$, it is not possible that the bottom of a cylinder subequivalent to $C$ shares a saddle connection with the top of a cylinder subequivalent to $D$ on the same component. 

In contrast, for any pair of adjacent cylinders $C, D$ on the holonomy double cover of a quadratic differential, if the top of $C$ shares a saddle connection with the bottom of $D$, then the opposite is true for $J(C)$ and $J(D)$, where $J$ is the holonomy involution. It follows that for any locus of covers  satisfying Assumption CP of a rank 1 stratum of quadratic differentials, there are subequivalence classes that border each other both top-to-bottom and bottom-to-top on the same component. This proves the first claim. 

The remainder of the proof will proceed by considering the number of self-adjacent subequivalence classes, where we define a self-adjacent subequivalence class to be a subequivalence class that contains a pair of (here automatically distinct) cylinders that share a boundary saddle connection. 

When $n=1$, given a surface in $\cH(0^2)$ with as many horizontal cylinders as possible, every cover has 0 self-adjacent subequivalence classes of horizontal cylinders, whereas covers of such surfaces in  $\cQ(2, -1^2)$ or  $\cQ(-1^4, 0)$ have 1 or 2 respectively, as illustrated in Figure \ref{F:SelfAdjacent}. 
\begin{figure}[h]\centering
\includegraphics[width=.5\linewidth]{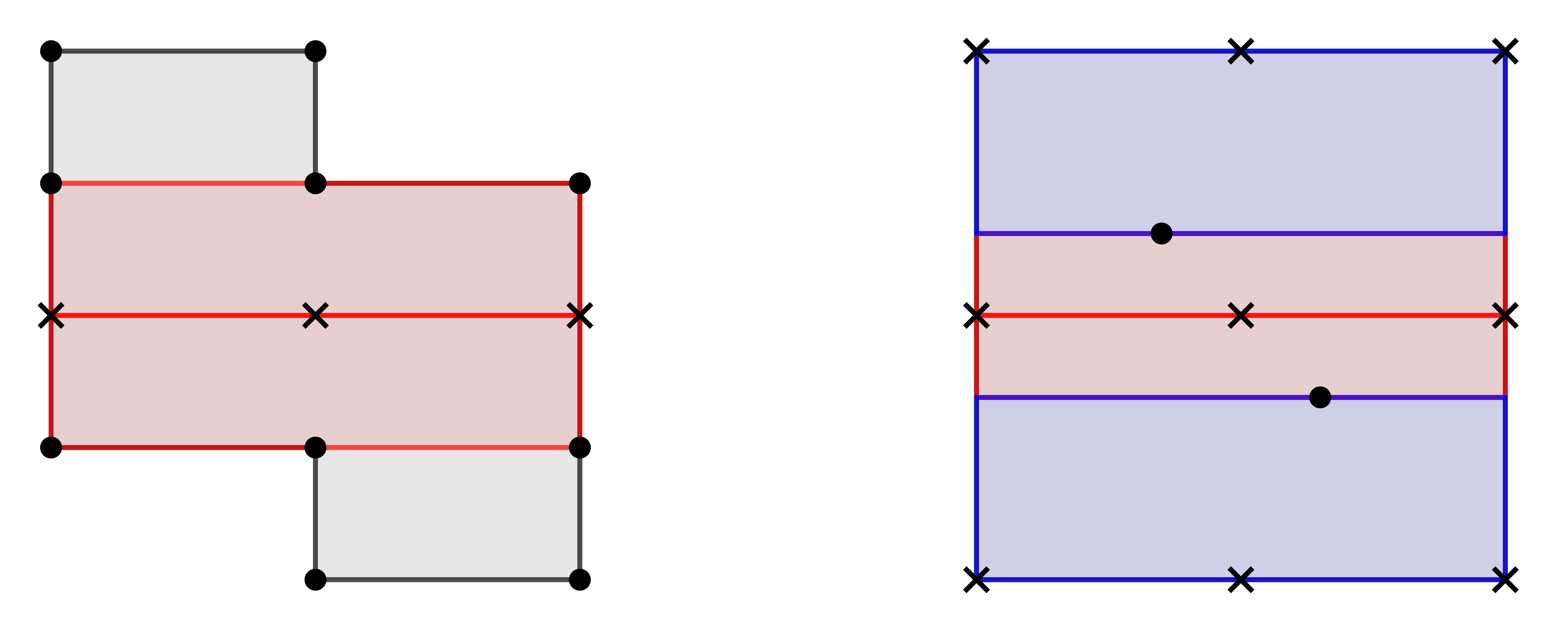}
\caption{Left: The holonomy double cover of a surface in $\cQ(2, -1^2)$, with one self-adjacent subequivalence class highlighted. (At least one of the two points labelled with an x should be marked, so the quotient map satisfies Assumption CP.) Right: The holonomy double cover of a surface in $\cQ(-1^4, 0)$, with two self-adjacent subequivalence classes highlighted. (At least three of the four points labelled with an $x$ should be marked, so the quotient map satisfies Assumption CP.) }
\label{F:SelfAdjacent}
\end{figure}

When $n>1$, covers of $\cQ(2,2,0^{n-2}), \cQ(2, -1^2, 0^{n-1})$ or $\cQ(-1^4, 0^n)$ have 0, 1, or 2 respectively.
\end{proof}

The previous two lemmas show that we can assume that $\Col(f_1)$ and $\Col(f_2)$ have codomains belonging to the same stratum. If we can show that these two maps have the same fibers on each component of $\ColOneTwoX$, then we may conclude with Lemma \ref{L:FirstException}.  We will break up this task according to which stratum the codomain of $\Col(f_i)$ belongs. 

\begin{lem}\label{L:VS-AgreeR1}
Suppose that $(Y_1, \eta_1)$ and $(Y_2, \eta_2)$ both belong to $\cH(0^{n+1})$ or both belong to $\cQ(-1^4, 0^n)$ for some  $n\geq 0$. Then $\Col(f_1)$ and $\Col(f_2)$ have the same fibers on each component of $\ColOneTwoX$.
\end{lem}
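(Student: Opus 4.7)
The plan is to establish, for each connected component $Z$ of $\ColOneTwoX$, that the restrictions $g_i := \Col(f_i)|_Z$ have the same fibers. I first address the case $(Y_i, \eta_i) \in \cH(0^{n+1})$. Each $g_i$ realizes $T_i := (Y_i, \eta_i)$ as $\bC/\Lambda_i$ for a lattice $\Lambda_i \subset \bC$ containing the absolute period lattice of $Z$, and $g_1, g_2$ have the same fibers precisely when $\Lambda_1 = \Lambda_2$. The crux is therefore to show that $\Lambda_i$ is intrinsically determined by $Z$, Assumption CP, and the stratum $\cH(0^{n+1})$ of the codomain.

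To establish this intrinsic determination, I would observe that Assumption CP forces the following: the horizontal circles in $Z$ that contain marked or singular points (equivalently, circles that bound a horizontal cylinder of $Z$) coincide exactly with the preimages of the at most $n+1$ horizontal circles on $T_i$ through marked points of $T_i$. Since the set of heights of marked/singular points on $Z$ is intrinsic, and the requirement that its projection modulo $\mathrm{Im}(\Lambda_i)\bZ$ have exactly the number of elements prescribed by $\cH(0^{n+1})$ is a rigid condition, this determines $\mathrm{Im}(\Lambda_i)$. A parallel argument within each horizontal subequivalence class of cylinders on $Z$, using the placement of boundary saddle connections and the fact that the partition of horizontal cylinders on $Z$ into $n+1$ subequivalence classes is intrinsic to $\MOneTwo$ and hence common to both $g_i$, determines the real part of $\Lambda_i$ modulo its horizontal piece.

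The case $(Y_i, \eta_i) \in \cQ(-1^4, 0^n)$ reduces to the Abelian case via holonomy double covers. The holonomy double cover of $(Y_i, \eta_i)$ is a torus lying in $\cH(0^{2n})$ (with the four poles lifting to unmarked regular points), and $g_i$ lifts to a translation cover $\tilde g_i$ defined on the holonomy double cover $\tilde Z$ of $Z$. Assumption CP for $g_i$ transfers to $\tilde g_i$. Applying the Abelian case to $\tilde g_1$ and $\tilde g_2$, they have the same fibers on $\tilde Z$, which descends to the same conclusion for $g_1$ and $g_2$ on $Z$. The primary obstacle will be the technical verification of lattice uniqueness in the Abelian case, in particular keeping track of the interplay between marked points, singular points, and boundary saddle connections, while allowing for the possibility of unmarked preimages of marked points permitted by Definition \ref{D:Covering}.
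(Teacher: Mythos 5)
Your overall strategy coincides with the paper's: for each component $Z$ of $\ColOneTwoX$, reduce the claim to the statement that the lattice $\Lambda_i$ for which $(Y_i,\eta_i)=\bC/\Lambda_i$ is intrinsically determined by $Z$, Assumption CP, and the codomain stratum. You also use the holonomy double cover to reduce the quadratic case to the Abelian one, which is again the paper's move. However, the way you propose to pin down $\Lambda_i$ is genuinely different from the paper's, and it contains a gap.

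You argue that $\mathrm{Im}(\Lambda_i)$ is determined by the cardinality of the projection of the set of heights of singular/marked points of $Z$, and then determine $\mathrm{Re}(\Lambda_i)$ by a ``parallel argument.'' Even granting both steps, knowing $\mathrm{Re}(\Lambda_i)$ and $\mathrm{Im}(\Lambda_i)$ is not enough to recover $\Lambda_i$: for fixed $\alpha'=\sum x_k$ and $\beta'=\sum y_k$, there can be several lattices $\Lambda\supseteq \Lambda_{\mathrm{abs}}(Z)$ with $\mathrm{Re}(\Lambda)=\alpha'\bZ$ and $\mathrm{Im}(\Lambda)=\beta'\bZ$ --- for instance the rectangular lattice $\langle\alpha', \beta' i\rangle$ and its index-two sublattice $\langle 2\alpha', \alpha'+\beta' i\rangle$ share the same projections to $\bR$. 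You flag ``lattice uniqueness'' as the technical obstacle, but the argument as sketched never actually supplies it. The paper's proof does not try to reconstruct $\Lambda_i$ from its coordinate projections; instead, after arranging that $\ColOneTwoX$ is horizontally and vertically periodic with $n+1$ subequivalence classes in each of the two directions, it uses Assumption CP to read off the cylinder heights $\{x_k\},\{y_k\}$ on $(Y_i,\eta_i)$ directly and identifies $(Y_i,\eta_i)$ as a specific square-tiled-like torus $\bC/\span_\bZ(\sum x_k, i\sum y_k)$, after which uniqueness of the quotient map (up to translations which cannot preserve all cylinders) gives $\Col(f_1)=\Col(f_2)$. That step --- simultaneously exploiting the cylinder structure in two transverse directions to identify the torus directly rather than its real and imaginary traces --- is what your sketch is missing.

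Two smaller omissions relative to the paper: (i) the paper first treats the case where both $\Col(f_i)$ preserve holonomy, and then separately handles the possibility that a $\Col(f_i)$ fails to preserve holonomy even when its codomain is Abelian (since one can post-compose by rotation by $\pi$ on a component), whereas your sketch implicitly assumes the maps preserve holonomy; and (ii) in the quadratic case the paper notes an extra wrinkle at $n=0$ (the holonomy double cover of $\cQ(-1^4)$ with preimages of poles marked carries translation involutions preserving subequivalence classes, which introduces ambiguity at the level of maps to the double cover that disappears only after quotienting), which your reduction via holonomy double covers does not address.
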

\begin{proof}
Without loss of generality assume that $\ColOneTwoX$ is horizontally and vertically periodic with $n+1$ subequivalence classes of cylinders in both those directions. Let $\{x_1, \hdots, x_{n+1} \}$ denote the heights of the cylinders in the vertical subequivalence classes and $\{y_1, \hdots, y_{n+1} \}$ the heights of the ones in the horizontal subequivalence classes.

\bold{Case 1: both $(Y_i, \eta_i)$ belong to  $\cH(0^{n+1})$.} First assume both $\Col(f_i)$ preserve holonomy.

Each subequivalence class of cylinders on one of the $(Y_i, \eta_i)$ must consist of a single cylinder, which by Assumption CP must have the same height as each of the cylinders in the corresponding subequivalence class on $\ColOneTwoX$. Hence, if 
$$\Lambda= \span_\bZ\left( \sum x_k, i \sum y_k\right) \subset \bC,$$
then $(Y_i, \eta_i)= \bC/\Lambda$, and the restriction of $\Col(f_i)$ to any component of $\ColOneTwoX$ must be the quotient by the lattice $\Lambda$. Such quotients are in general well defined up to post-composition with translations, but there are no translations preserving all cylinders on generic surfaces in $\cH(0^{n+1})$. Hence, $\Col(f_1)=\Col(f_2)$. 

Now we confront the possibility that the $\Col(f_i)$ might not preserve holonomy. Even in this case, the restriction of $\Col(f_i)$ to each component must be the above map up to post-composing with rotation by $\pi$, giving the result. 

\bold{Case 2: both $(Y_i, \eta_i)$ belong to  $\cQ(-1^4, 0^n)$.} In this case $\Col(f_i)$ can be written as $\phi_i \circ \tau_i$ where $\phi_i$ is a map from $\ColOneTwoX$ to a torus (the holonomy double cover of $(Y_i, \eta_i)$) and $\tau_i$ is the quotient by the holonomy involution, which is the unique involution that fixes the three or four two-torsion points that are the preimages of poles on $(Y_i, \eta_i)$. Since the $\phi_i$ are maps to holonomy double covers, they are holonomy preserving.

It suffices to show that the restriction of $\phi_1$ and $\phi_2$ to any component of $\ColOneTwoX$ have the same fibers. Since each subequivalence class on the torus (the holonomy double cover of $(Y_i, \eta_i)$) has two cylinders (interchanged by the holonomy involution) the torus is $\bC/2\Lambda$, where $$2\Lambda= \span_\bZ\left( 2\sum x_k, 2 i \sum y_k\right) \subset \bC.$$
This allows us to conclude as above. \red(It is worth noting that the $n=0$ case has an extra subtlety, since the holonomy double cover of surfaces in $\cQ(-1^4)$ with all preimages of poles marked does have translation involutions preserving subequivalence classes. The possibility of post-composing with such a translation involution creates some ambiguity for the map to the holononomy double cover; but after quotienting by the holonomy involution this does not change the fibers of the map.) \black
\end{proof}

\begin{lem}\label{L:HigherRelAgree}
Suppose that $(Y_1, \eta_1)$ and $(Y_2, \eta_2)$ both belong to  $\cQ(2, -1^2, 0^{n-1})$ for some $n > 0$ or both belong to $\cQ(2^2, 0^{n-2})$ for some $n > 1$. Then $\Col(f_1)$ and $\Col(f_2)$ have the same fibers on each component of $\ColOneTwoX$. 
\end{lem}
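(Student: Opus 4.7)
The plan is to prove Lemma \ref{L:HigherRelAgree} by adapting the strategy of Lemma \ref{L:VS-AgreeR1}, specifically Case 2, which factored the maps through the holonomy double cover to reduce to a lattice-uniqueness argument for torus covers.

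First I will observe that since $\cM_{\bfC_i}$ is a full locus of covers of a stratum of Abelian or quadratic differentials, its field of definition is $\bQ$, and hence the same holds for $\MOneTwo$. Combined with $\MOneTwo$ being rank one, Lemma \ref{L:R1Arithmetic} and its half-translation analogue imply that each component of $\ColOneTwoX$ is a torus cover (possibly after passing to its holonomy double cover if it is quadratic), and that $(Y_i,\eta_i)$ is a pillowcase cover via the composition of the hyperelliptic quotient (which exists by Lanneau's classification, since $\cF(\cQ(2,-1^2,0^{n-1}))$ and a component of $\cF(\cQ(2^2,0^{n-2}))$ are hyperelliptic after forgetting marked points) with the holonomy double cover of the pillowcase.

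Next I will factor $\Col(f_i) = \tau_i \circ \tilde\phi_i$, where $\tau_i: (\tilde Y_i,\tilde\eta_i) \to (Y_i,\eta_i)$ is the holonomy double cover of the codomain (which satisfies Assumption CP) and $\tilde\phi_i$ is the resulting translation (or half-translation) map. The surface $(\tilde Y_i,\tilde\eta_i)$ lies in a sub-locus of $\cH(1^2, 0^?)$ in the first case, or $\cH(1^4, 0^?)$ in the second, and by the previous paragraph it itself covers a torus $T$. In this way, $\ColOneTwoX$ becomes a translation cover of a torus, and we have reduced the situation to the framework of Case 1 of Lemma \ref{L:VS-AgreeR1}. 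In particular, by Assumption CP the heights and circumferences of horizontal and vertical subequivalence classes on $\ColOneTwoX$ determine the cylinder geometry on $(\tilde Y_i,\tilde\eta_i)$, which determines the relevant absolute period lattice and hence the torus $T$ (which is the same for both $i$).

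It then remains to rule out post-composition ambiguity by a translation automorphism of $(Y_i,\eta_i)$. For this I will invoke Lemma \ref{L:InvolutionImpliesHyp-background}: since $\cF(\cQ(2,-1^2,0^{n-1}))$ and the relevant component of $\cF(\cQ(2^2,0^{n-2}))$ are hyperelliptic, the only nontrivial half-translation automorphism of the generic surface is the hyperelliptic involution, which has derivative $-\mathrm{Id}$ and is not a translation. Moreover the distinguished order-two zero(s) together with the free marked points break any residual translation symmetry. This forces $\Col(f_1)$ and $\Col(f_2)$ to have the same fibers on each connected component of $\ColOneTwoX$, and the result follows by Lemma \ref{L:FirstException}.

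The main obstacle will be the bookkeeping of marked points through the chain of covers $\ColOneTwoX \to (\tilde Y_i,\tilde\eta_i) \to T$: one must verify that the heights determined by Assumption CP suffice to pin down $(\tilde Y_1,\tilde\eta_1)$ and $(\tilde Y_2,\tilde\eta_2)$ as the same surface inside a potentially delicate sub-locus (not a whole stratum), and handle the mild differences between the cases where $\ColOneTwoX$ is Abelian versus quadratic.
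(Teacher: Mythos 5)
The main gap is in the claimed reduction to Case~1 of Lemma~\ref{L:VS-AgreeR1}. In that case the codomain of $\Col(f_i)$ is \emph{itself} a torus, so Assumption~CP pins down the lattice and hence the map up to translation. Here the holonomy double cover $(\tilde Y_i,\tilde\eta_i)$ of $(Y_i,\eta_i)$ has genus~$2$ (for $\cQ(2,-1^2,0^{n-1})$) or genus~$3$ (for $\cQ(2^2,0^{n-2})$), not genus~$1$. The fact that $\tilde Y_i$ is itself a torus cover does not help: determining the composed map $\ColOneTwoX\to T$, or even just the torus $T$, does not determine the intermediate surface $\tilde Y_i$ or the factorization $\ColOneTwoX\to\tilde Y_i\to T$, and in particular it does not determine the fibers of $\Col(f_i)$, which are strictly finer than the fibers of the map to $T$. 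You flagged exactly this issue (``pin down $(\tilde Y_1,\tilde\eta_1)$ and $(\tilde Y_2,\tilde\eta_2)$ as the same surface inside a potentially delicate sub-locus'') as ``the main obstacle,'' but the proposal never supplies the argument.

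What the paper does instead is bypass the torus entirely and reconstruct $\For(Y_i,\eta_i)$ directly from cylinder data on $\ColOneTwoX$. The key steps you are missing are: (i)~for $\cQ(2,-1^2)$ and $\cQ(2^2)$ there is a \emph{unique} cylinder diagram with the maximal number of horizontal cylinders, so the combinatorics of $Y_i$ is forced; (ii)~subequivalence classes on $\ColOneTwoX$ split into ``short'' and ``long'' types whose total circumferences are $d_i\ell_i$ and $2d_i\ell_i$, and a boundary saddle connection shared between the two types has length $\ell_i$ for both $i$, forcing $\ell_1=\ell_2$ and hence $d_1=d_2$; (iii)~Assumption~CP together with shearing then pins down the heights and twists, giving $\For(Y_1,\eta_1)=\For(Y_2,\eta_2)$ as an honest surface. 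Only after this does one need a symmetry argument, and even there your invocation of the hyperelliptic involution is under-powered: the paper compares the images of a single tangent vector $\hat v$ pointing into a short subequivalence class, uses the hyperelliptic involution to normalize those images, and then concludes by analytic continuation on each component. So the hyperelliptic-involution idea is in the right direction, but the proposal omits the reconstruction of $Y_i$ that makes it applicable, and the torus-cover detour does not substitute for it.
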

\begin{proof}
Because we have assumed $\ColOneTwoX$ has $n+1$ horizontal subequivalence classes of cylinders, it follows that $(Y_i, \eta_i)$ and $\For(Y_i, \eta_i)$ also have the maximum number of horizontal cylinders for surfaces in their strata. 

A short argument using Theorem \ref{T:MZ} implies that, for each of the two strata $\cQ(2, -1^2)$ and $\cQ(2^2)$, there is only one possibility for how the horizontal cylinders  are connected to each other on a surface with as many horizontal cylinders as possible; see Figure \ref{F:C2onQandCover} for $\cQ(2, -1^2)$ and Figure \ref{F:Q22} for $\cQ(2^2)$. (In other words, for each of these two strata there is just one \emph{cylinder diagram} with as many cylinders as possible; see, for example, \cite[Section 3]{ANW} for a precise definition.)

\begin{figure}[h]\centering
\includegraphics[width=.25\linewidth]{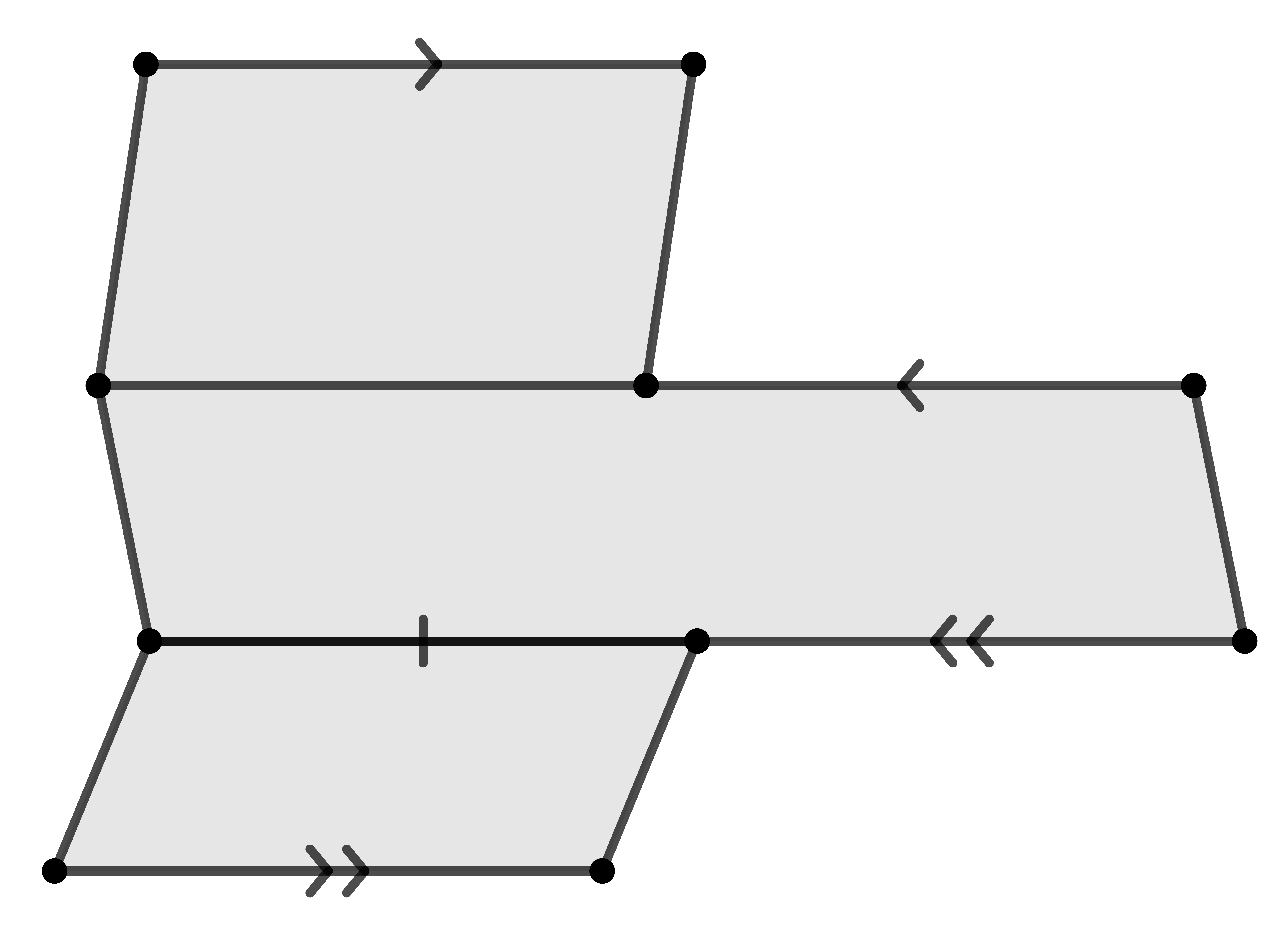}
\caption{A surface in $\cQ(2^2)$.}
\label{F:Q22}
\end{figure}

Notice that on $\For(Y_i, \eta_i)$ there is a unique longest horizontal cylinder; call its length $2\ell_i$. The other horizontal cylinders have length $\ell_i$. 

Let $d_i$ denote the degree of $\Col(f_i)$. It follows that there are two kinds of subequivalence class of horizontal cylinders on $\ColOneTwoX$: those where the sums of the lengths of the core curves of its elements is $d_i \ell_i$  and those where this sum is $2d_i \ell_i$. Call these two kinds of subequivalence classes ``short" and ``long" respectively. Notice that any saddle connection that lies in the boundary of cylinders in both a short and long subequivalence class has length $\ell_i$ for $i \in \{1, 2\}$. This shows that $\ell_1 = \ell_2$ and hence that $d_1 = d_2$.

By Assumption CP, subequivalent cylinders on $\ColOneTwoX$ have identical heights. If $\{h_1, \hdots, h_m\}$ is a list of heights of horizontal long subequivalence classes, then the unique longest horizontal cylinder on $\For(Y_i, \eta_i)$ has height $h_1 + \cdots + h_m$.   

When $(Y_1, \eta_1) \in \cQ(2, -1^2, 0^{n-1})$, if $\{h_{m+1}, \hdots, h_{n+1}\}$ is a list of heights of horizontal short subequivalence classes then the height of the unique shortest horizontal cylinder on $\For(Y_i, \eta_i)$ is $\sum_{j=m+1}^{n+1} h_j$. 

When $(Y_1, \eta_1) \in \cQ(2^2, 0^{n-2})$, there are three horizontal cylinders on $\For(Y_i, \eta_i)$, one of circumference $2\ell$ and two more of circumference $\ell$. The complement of the long cylinder on  $\For(Y_i, \eta_i)$ has two connected components, namely the two short cylinders.  Similarly, the short subequivalence classes of horizontal cylinders on $\ColOneTwoX$ are naturally partitioned into two ``clusters", with subequivalence classes belonging to the same cluster if they contain cylinders in the same component of the complement of the union of the long subequivalence classes. The height of a short cylinder on $\For(Y_i, \eta_i)$ is the sum of the heights of the subequivalence classes on $(Y_i, \eta_i)$ in the corresponding cluster. 

By shearing the horizontal subequivalence classes of cylinders on $\ColOneTwoX$ we can ensure that each subequivalence class contains a cylinder that contains a vertical cross curve, in which case the same is true on $(Y_i, \eta_i)$ and hence also $\For(Y_i, \eta_i)$. Since the circumference ($\ell$ or $2\ell$) and heights (computed as above) of the cylinders on $\For(Y_i, \eta_i)$ are known, we conclude that $\For(Y_1, \eta_1)=\For(Y_2, \eta_2)$. 

We now know that both $\Col(f_i)$ are branched covers of $\For(Y_1, \eta_1)=\For(Y_2, \eta_2)$, although we do not know yet that the branch locus is the same. 

Let $\ColOneTwoX'$ denote a connected component of $\ColOneTwoX$. Our goal is to show that $\Col(f_1)$ and $\Col(f_2)$ have the same fibers on $\ColOneTwoX'$. 

On $\For(Y_1, \eta_1)=\For(Y_2, \eta_2)$, consider a tangent vector $v$ based at a zero (of order 2) that points into a short subequivalence class in a direction parallel to the vertical foliation. For each \red short \black subequivalence class, there are two such $v$, which are exchanged by the hyperelliptic involution.

Let $\hat{v}$ be a preimage of $v$ on $\ColOneTwoX'$ \red(via either map). \black Its base is on the boundary of a long \red subequivalence \black class, and it points into a short \red subequivalence \black class in the cluster corresponding to the cylinder that $v$ points into. Consequently, both $\Col(f_1)$ and $\Col(f_2)$ map $\hat{v}$ to either $v$ or to its image under the hyperelliptic involution. Composing by this involution if necessary, we may assume that both map $\hat{v}$ to $v$. 

Since $\Col(f_1)$ and $\Col(f_2)$ are half-translation coverings, they are equal on a neighborhood of $\hat{v}$. By analytic continuation, they agree on the connected component of $\ColOneTwoX'$ containing $\hat{v}$.  
\end{proof}

\red By Lemmas \ref{L:LowerRel:SameStratum} and \ref{L:HigherRel:SameStratum}, the codomains of $\Col(f_1)$ and $\Col(f_2)$ are the same except possibly in the exceptional case mentioned in Proposition \ref{P:VS-One} or in the second exceptional case of Theorem \ref{T:FullV2}. Lemmas \ref{L:VS-AgreeR1} and \ref{L:HigherRelAgree} imply that when the codomains coincide the restrictions of $\Col(f_1)$ and $\Col(f_2)$ to any component of $\ColOneTwoX$ have the same fibers and hence $\Col(f_1) = \Col(f_2)$ except possibly in  first exceptional case of Theorem \ref{T:FullV2} (by Lemma \ref{L:FirstException}). This concludes the proof of Proposition \ref{P:VS-One}. \black
\end{proof}

\subsection{Rank 1, dimension 2}

The only remaining case is the following, since the other cases when $\MOneTwo$ has rank 1, dimension 2 follow from Proposition \ref{P:VS-One}.

\begin{lem}\label{VS-SpecialCase}
Suppose that the codomain of $f_1$ is an Abelian differential and that we can write $\Col(f_i)=g_i \circ g$, where $$g: \ColOneTwoX \ra (Y, \eta)$$ is a translation cover to a flat torus with three or four two-torsion points marked and no other marked points, $g_1$ is the unique four-to-one translation cover from $(Y,\eta)$ to a surface in in $\cH(0)$, and $g_2$ is the quotient by an involution fixing the marked points (see Figure \ref{F:g1g2}).  Then $\cM$ is a full locus of covers of a stratum of Abelian or quadratic differentials. In fact $\cM$ contains degree $2\deg(g)$ covers of the surfaces in the Prym locus of $\cH(4)$ illustrated in Figure \ref{F:H4Prym}. 
 
 Moreover, this statement holds even if the ``Assumption CP" restriction on $\MTwo$ is replaced by the slightly weaker assumption that $\MTwo$  is a full locus of covers and that $\overline{\ColTwo(\bfC_1)}$ is the full preimage of its image under $f_2$. 
\end{lem}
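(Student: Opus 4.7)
The plan is to apply the Diamond Lemma to finer maps $\tilde{g}_1$ and $\tilde{g}_2$ that do agree at the base, rather than to $f_1$ and $f_2$ themselves (which do not agree on the base in this exceptional case, since $\Col(f_1)$ and $\Col(f_2)$ both factor through $g$ but through different post-compositions $g_1$ and $g_2$). On the $\bfC_2$ side, the ``moreover'' hypothesis combined with $\MTwo$ being a full locus of covers allows $f_2$ to factor through a translation cover $\tilde{g}_2 \colon \Col_{\bfC_2}(X,\omega) \to (\tilde{Y}_2, \tilde{\eta}_2)$ to the holonomy double cover of $(Y_2, \eta_2)$. Collapsing $\Col_{\bfC_2}(\bfC_1)$ in $\tilde{g}_2$ recovers exactly $g$, and the preimage-of-image condition needed for the Diamond Lemma follows from the hypothesis on $f_2$, since $\tilde{g}_2$ differs from $f_2$ only by the quotient by the holonomy involution, which preserves subequivalence classes.

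On the $\bfC_1$ side, I would construct $\tilde{g}_1 \colon \Col_{\bfC_1}(X,\omega) \to (\tilde{Y}_1, \tilde{\eta}_1)$ by a period-lattice argument. Since $\MOne$ is rank one, the absolute period lattice $\Lambda_1$ of $\Col_{\bfC_1}(X,\omega)$ has rank two. After collapsing $\Col_{\bfC_1}(\bfC_2)$, the resulting surface $\ColOneTwoX$ covers $(Y,\eta)$ via $g$, so its absolute periods lie in $\Lambda_Y$ -- a proper sublattice of index four in the lattice $\Lambda_T$ of the $\cH(0)$-surface covered by $\Col(f_1)$. Re-attaching the cylinders $\Col_{\bfC_1}(\bfC_2)$ adds at most the periods of their cross-curves to the lattice, and these must already lie in $\Lambda_Y$ because $\MOne$ has rank one (any period direction outside $\Lambda_Y$ would raise the rank of $\Lambda_1$ past two). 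Hence $\Lambda_1 \subset \Lambda_Y$, and $\Col_{\bfC_1}(X,\omega)$ admits a translation cover to the torus $\bC/\Lambda_Y$; marking the preimages of the three or four two-torsion points as forced by Assumption CP on $f_1$ yields the desired $\tilde{g}_1$, whose collapse on $\Col_{\bfC_1}(\bfC_2)$ equals $g$ and which satisfies the Diamond Lemma's preimage condition as a consequence of Assumption CP.

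Applying the Diamond Lemma to the pair $\tilde{g}_1, \tilde{g}_2$ then produces a translation cover $\tilde{f} \colon (X,\omega) \to (\tilde{W}, \tilde{\omega})$ with $\Col_{\bfC_i}(\tilde{f}) = \tilde{g}_i$. The codomain $(\tilde{W}, \tilde{\omega})$ inherits from $\tilde{g}_2$ an involution $\tilde{T}$ with $\tilde{T}^* \tilde{\omega} = -\tilde{\omega}$ (the lift of the holonomy involution), making it a double cover of a surface that in turn covers $(Y_2, \eta_2)$, and from the comparison of $\tilde{g}_1$ with $f_1$ an additional degree-two structure relating it to $(Y_1, \eta_1)$. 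These commuting structures correspond on the base to the negation involution $T$ and the $2$-torsion translation group $G$ on $(Y, \eta)$. A Riemann--Hurwitz computation, together with a marked-point count using Lemma \ref{L:InvolutionImpliesHyp-background} and the classification of such double-cover towers, identifies $(\tilde{W}, \tilde{\omega})$ as a surface in a Prym locus in $\cH(4)$ of the form depicted in Figure \ref{F:H4Prym}, with $\tilde{f}$ of degree $2\deg(g)$. Since the argument applies to $(X,\omega)$ with dense orbit in $\cM$, Corollary \ref{C:DenseDiamond} gives that $\cM$ is a full locus of such covers.

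The main obstacle is the construction of $\tilde{g}_1$ in the second paragraph: this finer cover does not exist automatically from $f_1$, whose codomain $(Y_1, \eta_1) \in \cH(0^{n+1})$ has a strictly coarser lattice than $\Lambda_Y$. The argument depends crucially on the rank-one constraint for $\MOne$ forcing the heights of the $\Col_{\bfC_1}(\bfC_2)$ cylinders into the finer lattice $\Lambda_Y$; this is where the specific structure of the exceptional case (one side Abelian, the other a quadratic double, with $(Y, \eta)$ a double cover of the pillowcase) is essential. Once $\tilde{g}_1$ is in place, applying the Diamond Lemma and identifying $(\tilde{W}, \tilde{\omega})$ in the Prym locus reduces to a bookkeeping argument about lifted involutions and marked-point fibers.
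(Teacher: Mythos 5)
The paper's proof of Lemma \ref{VS-SpecialCase} does not go through the Diamond Lemma at all: it identifies the candidate covered surface explicitly (a surface in the Prym locus of $\cH(4)$, Figure \ref{F:H4Prym}), shows that $(X,\omega)$ ``almost'' covers it, and then argues by contradiction that the only gluings of $\bfC_1$ and $\bfC_2$ consistent with the rank/rel constraints of $\cM$ are the ones giving that cover. The contradiction is obtained via Sublemma \ref{SL:DirectedLoops} (a directed graph of cylinder adjacencies has no loops, using a rel deformation) together with Sublemma \ref{SL:H4PrymOvercollase} (the ``overcollapse attack'' using Corollary \ref{C:ConstantRatio}). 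So your route -- building finer maps $\tilde g_1,\tilde g_2$ that agree at the base and then invoking the Diamond Lemma -- is genuinely different, and, as you correctly flag, the whole weight of it rests on the construction of $\tilde g_1$.

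That step has a gap. The assertion that ``any period direction outside $\Lambda_Y$ would raise the rank of $\Lambda_1$ past two'' is not correct: adding a complex number lying in the real span of a rank-two lattice never changes the rank as a $\bZ$-module, so the rank-one hypothesis on $\MOne$ places no constraint of that form on the cross-curve heights. Those heights are rel parameters that vary freely in $\MOne$ while $(Y,\eta)$ and hence $\Lambda_Y$ stay fixed, so they cannot generically lie in $\Lambda_Y$ or any fixed shift of it. The real question is whether $\ColOneX$ admits a translation cover whose collapse is $g$, which amounts to the absolute/relative period data of $\ColOneX$ fitting into a lattice $\Lambda'\subset\Lambda_1$ of index four with the correct degeneration to $\Lambda_Y$; nothing in your argument forces this, and the fact that the paper needs a nontrivial geometric contradiction (ruling out horizontal cylinders bordering $\bfC_1$ on both boundary components after shearing) shows that the compatibility of the gluing is exactly the content of the lemma, not a formal consequence of the period lattice. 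A secondary point: even if both $\tilde g_i$ were produced, Corollary \ref{C:DenseDiamond} alone only gives that $\cM$ sits \emph{inside} a locus of covers; promoting this to a \emph{full} locus requires an argument in the spirit of the one in Lemma \ref{L:VS-Agree} (passing to the diamond downstairs and invoking Proposition \ref{P:DiamondWithH} or Theorem \ref{T:P1}), which is missing here.
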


The final statement is not used in this paper; it follows from the proof of the first statement and will be used in \cite{ApisaWrightGemini}. 

\begin{rem}
The quadratic double of $\cQ(-1^4)$ with three or four preimages of poles marked is the only quadratic double that is a full locus of covers of a stratum of Abelian differentials via maps of degree greater than two that satisfy Assumption CP. This explains the occurrence of this special case (Lemma \ref{VS-SpecialCase}). 
\end{rem}

\begin{proof}
Since it has rank 1 rel 1, $\MOne$ is a full locus of covers of $\cH(0,0)$. Similarly, $\MTwo$ is a full locus of covers of $\cQ(-1^4, 0)$ or $\cQ(2, -1^2)$.

\noindent \textbf{Case 1: $\MTwo$ is a full locus of covers of $\cQ(-1^4, 0)$.}

Since $\bfC_1$ and $\bfC_2$ are disjoint and non-adjacent and since $\overline{\ColTwo(\bfC_1)}$ is the full preimage of its image, $f_2(\ColTwo(\bfC_1))$ and $f_2(\ColTwo(\bfC_2))$ are disjoint and non-adjacent. Therefore, $f_2(\ColTwo(\bfC_2))$ either consists of a single saddle connection joining a pole to another pole or a single saddle connection joining the marked point to a pole. In either case, $f_2(\ColTwo(\bfC_2))$, and hence $\Col(f_2)(\ColOneTwo(\bfC_2))$ consists of exactly one saddle connection (see Figure \ref{F:TwoSC}). It follows that $g(\ColOneTwo(\bfC_2))$ consists of at most two saddle connections. 

\begin{figure}[h]\centering
\includegraphics[width=.7\linewidth]{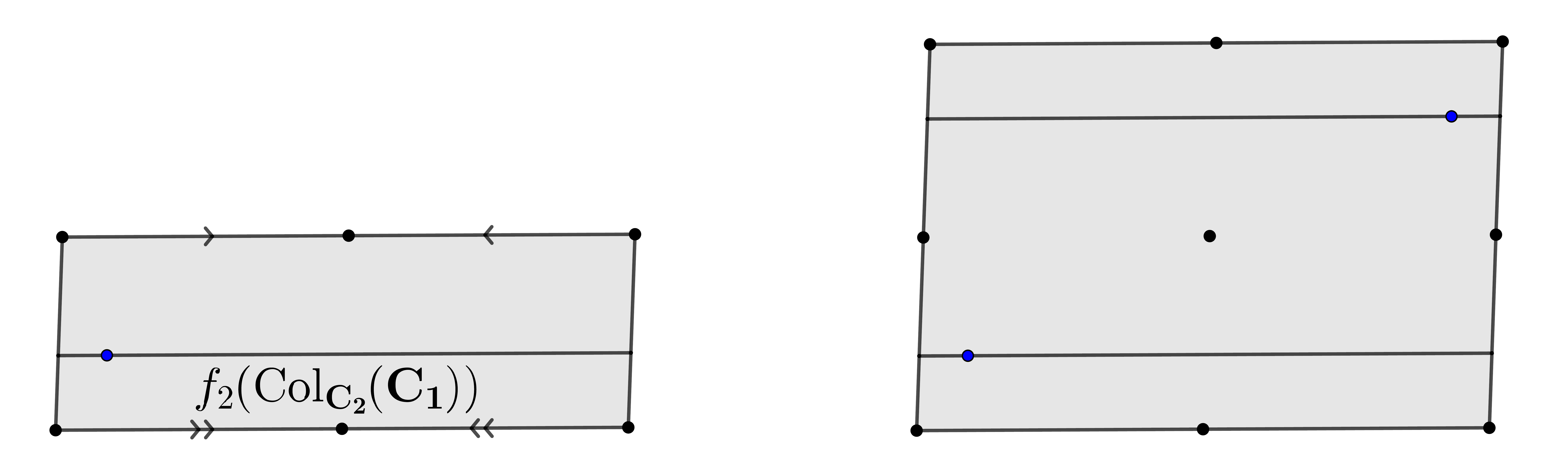}
\caption{On the left is the picture of the quadratic differential $f_2(\ColTwoX)$ and on the right is its holonomy double cover}
\label{F:TwoSC}
\end{figure}

The fact that $g(\ColOneTwo(\bfC_2))$ consists of at most two saddle connections  contradicts the fact that $\ColOne(\bfC_2)$ and hence $\ColOneTwo(\bfC_2)$ is the full preimage of its image under $f_1$ (resp. $\Col(f_1)$). Notice that the preimage of any saddle connection under $g_1$ must contain at least three saddle connections, whereas $g(\ColOneTwo(\bfC_2))$ consists of at most two saddle connections; see Figure \ref{F:ThreeOrFourSC}.

\begin{figure}[h]\centering
\includegraphics[width=.45\linewidth]{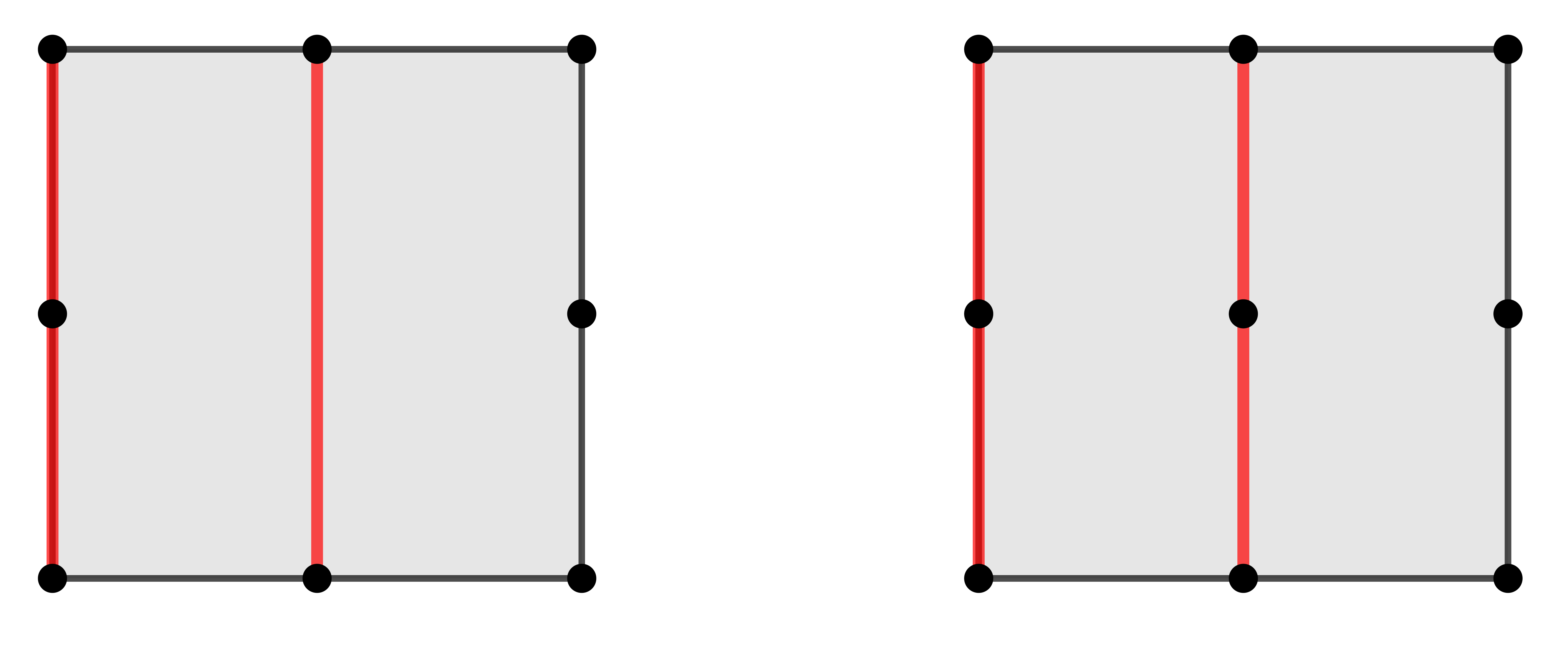}
\caption{The preimage of a saddle connection under $g_1$ has three saddle connections if the $(Y,\eta)\in\cH(0^3)$ (left) or four if $(Y,\eta)\in\cH(0^4)$ (right). }
\label{F:ThreeOrFourSC}
\end{figure}

\noindent \textbf{Case 2: $\MTwo$ is a full locus of covers of $\cQ(2, -1^2)$.}

In $\cQ(2, -1^2)$, every  subequivalence class of generic cylinders is either a single simple cylinder or a single complex envelope. Suppose first that $f_2(\ColTwo(\bfC_1))$ is a complex envelope, for example the larger horizontal cylinder in Figure \ref{F:C2onQandCover} (left). 

Because it is contained in the complement of $f_2(\ColTwo(\bfC_1))$, we see that $f_2(\ColTwo(\bfC_2))$ contains  one saddle collection. We now arrive at a contradiction as in the previous case. Indeed, it follows that $g(\ColOneTwo(\bfC_2))$ consists of at most two saddle connections, contradicting the fact that $\ColOne(\bfC_2)$ and hence $\ColOneTwo(\bfC_2)$ is the full preimage of its image under $f_1$.

\begin{figure}[h]\centering
\includegraphics[width=.7\linewidth]{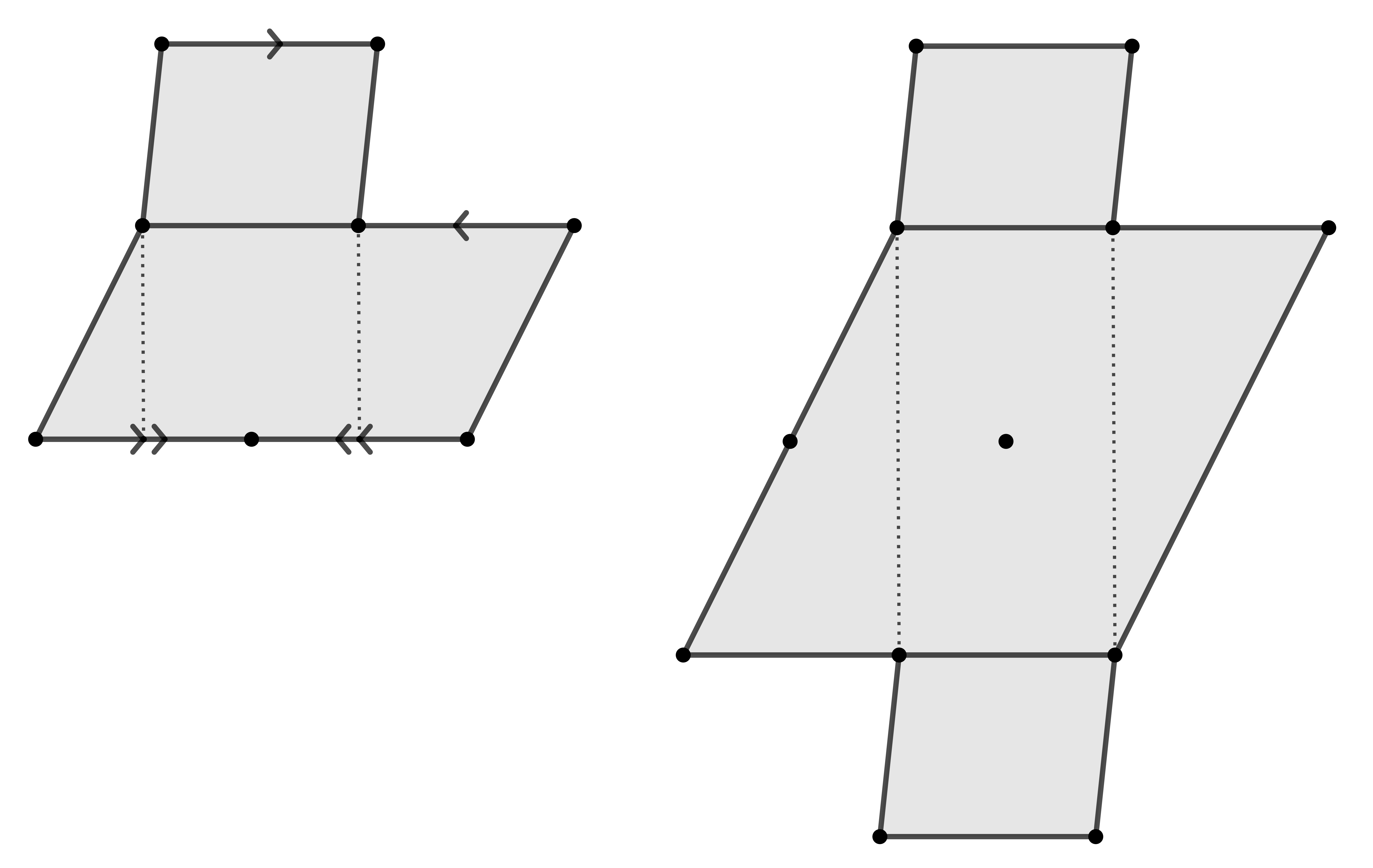}
\caption{On the left is the picture of the quadratic differential $f_2(\ColTwoX)$ and on the right its holonomy double cover. On the left surface, degenerating the longer horizontal cylinder gives a surface in $\cQ(-1^4)$. }
\label{F:C2onQandCover}
\end{figure}

Suppose therefore that $f_2(\ColTwo(\bfC_1))$ is a single simple cylinder, for example the smaller horizontal cylinder in Figure \ref{F:C2onQandCover}. Without loss of generality we suppose that it is horizontal. 

Because $\ColOneTwo(\bfC_2)$ is the preimage of its image under $\Col(f_1)$, it follows  that $g(\ColOneTwo(\bfC_2))$ is a collection of three or four saddle connections, as in Figure \ref{F:ThreeOrFourSC}. Without loss of generality we suppose that these saddle connections are vertical and that \red at least \black two of them have unit length. Moreover, since $\ColOne(\bfC_2)$ is a subequivalence class consisting of cylinders of the same height (by Assumption CP), assume also that the cylinders in $\bfC_2$ have height one.  

\begin{figure}[h]\centering
\includegraphics[width=.35\linewidth]{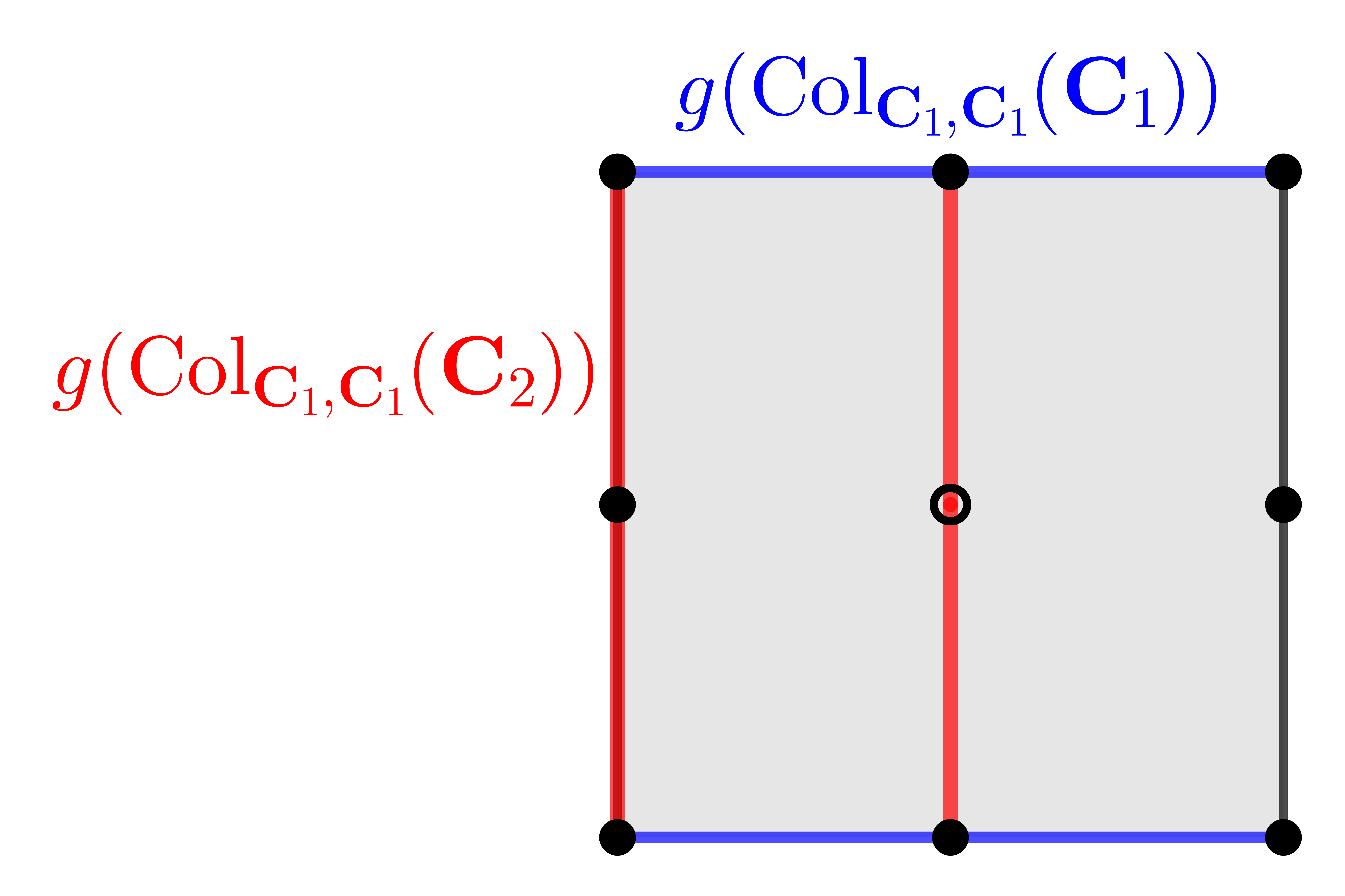}
\caption{$g(\ColOneTwoX)$. The middle point may or may not be marked.}
\label{F:PreGluing}
\end{figure}

At this point we understand that $\ColOneTwoX$ is a cover of the surface in Figure \ref{F:PreGluing} (via the map $g$), and we understand the effect of gluing in either the cylinders in $\bfC_1$ or $\bfC_2$ separately; see Figures \ref{F:H00Cover} and \ref{F:OtherCover}.
\begin{figure}[h]\centering
\includegraphics[width=.55\linewidth]{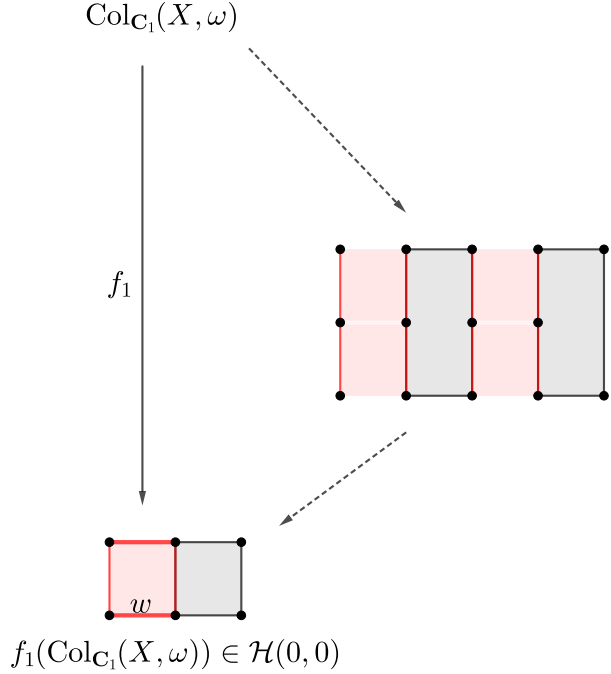}
\caption{If $w$ and its preimage are deleted, the map $f_1$ then factors through the right surface (which is not a closed surface); in this sense we can say $\ColOneX$ ``almost" looks like a cover of the right surface.}
\label{F:H00Cover}
\end{figure}
\begin{figure}[h]\centering
\includegraphics[width=.45\linewidth]{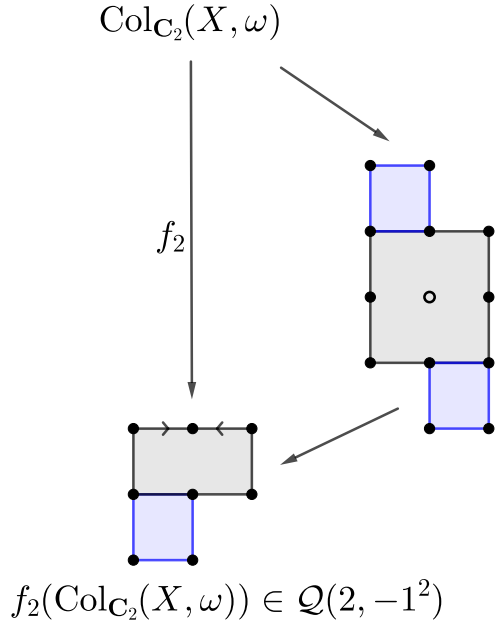}
\caption{The map $f_2$ factors through the holonomy double cover of its codomain.}
\label{F:OtherCover}
\end{figure}
Combining this understanding, we get that $(X,\omega)$ almost looks like a cover of the surface depicted in Figure \ref{F:H4Prym} (left), except that the missing horizontal edges may be glued in such a way that $(X,\omega)$ doesn't cover any surface.
\begin{figure}[h]\centering
\includegraphics[width=.75\linewidth]{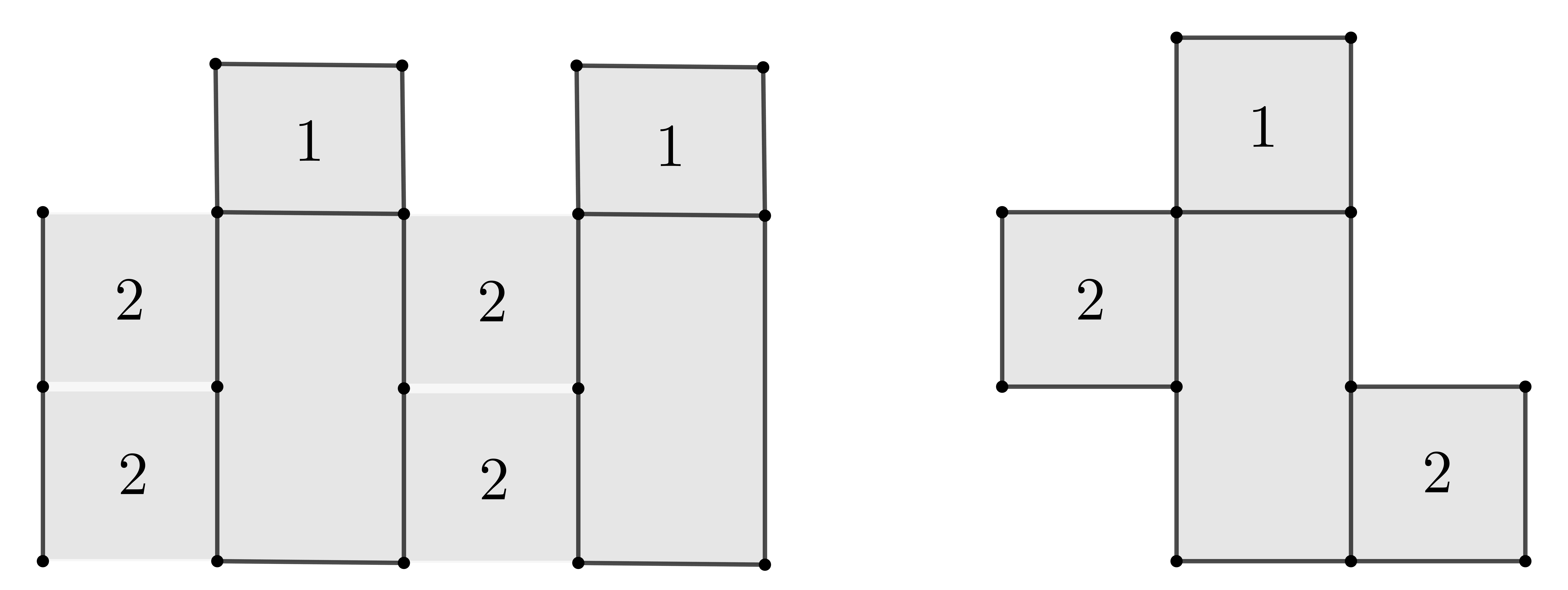}
\caption{Left: The cylinders labelled $1$ correspond to $\bfC_1$, and the rectangles labelled $2$ correspond to $\bfC_2$. Right: A surface in the Prym locus in $\cH(4)$.}
\label{F:H4Prym}
\end{figure}

\begin{sublem}
The surface $(X, \omega)$ is a cover of the surface in the Prym locus in $\cH(4)$ illustrated in Figure \ref{F:H4Prym} (right) if and only if after applying $u := \begin{pmatrix} 1 & 0 \\ 1 & 1 \end{pmatrix}$ to $\bfC_2$ there are no horizontal cylinders in the complement of $\bfC_1$ that border $\bfC_1$ on both their top and bottom boundary.
\end{sublem}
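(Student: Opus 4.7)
The plan is to verify each direction by analyzing the horizontal cylinder decomposition of $(X,\omega)$ in the complement of $\bfC_1$ after the shear $u$ is applied to $\bfC_2$.

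For the forward direction, I would suppose there is a translation covering map $\pi:(X,\omega)\to (P,p)$ onto the Prym surface of Figure \ref{F:H4Prym} (right). Applying $u$ to $\bfC_2$ on $(X,\omega)$ corresponds via $\pi$ to applying $u$ to $\pi(\bfC_2)$ on $(P,p)$, since $\pi$ is a half-translation covering and $\bfC_2$ is a union of preimages of cylinders on $(P,p)$. Because $\pi$ sends horizontal cylinders to horizontal cylinders and preserves $\bfC_1$ together with the distinction between its top and bottom boundaries, it suffices to verify the condition on the $u$-sheared Prym surface. A direct inspection there shows that every horizontal cylinder in the complement of $\pi(\bfC_1)$ borders $\pi(\bfC_1)$ on at most one of its two horizontal boundaries, which pulls back to the analogous statement on $(X,\omega)$.

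For the converse, I would use the description of $(X,\omega)$ provided by Figures \ref{F:H00Cover} and \ref{F:OtherCover}, which together determine $(X,\omega)$ except for the identification of the horizontal saddle connections on the boundary of $\bfC_1$. Applying $u$ to $\bfC_2$ turns each $\bfC_2$ rectangle into a parallelogram whose top slides relative to its bottom by a fixed amount determined by the height of the rectangle. I would then enumerate the possible gluings of the horizontal boundary saddle connections of $\bfC_1$ compatible with the partial description, and for each candidate gluing compute the horizontal cylinder decomposition in the complement of $\bfC_1$. The hypothesis rules out every gluing except the one that matches the Prym surface, because in any other gluing one of the complementary horizontal cylinders closes up across $\bfC_1$ after the shear. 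Once the correct gluing is in place, the covering map to the Prym surface is built directly by sending each $\bfC_1$-cylinder and each sheared $\bfC_2$-rectangle to its image on the Prym surface and checking continuity along the identifications, with the degree equal to $2\deg(g)$ by a fiber count using the factorizations through $g_1$ and $g_2$.

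The main obstacle will be the combinatorial enumeration for the converse: tracking, for each allowed gluing of the horizontal boundary saddle connections of $\bfC_1$, which pairs of top and bottom boundary saddle connections can be connected into a single horizontal cylinder through the sheared $\bfC_2$ parallelograms, and then verifying that the cylinder condition selects exactly the Prym gluing. Some care is also needed to handle the case distinction between three and four marked points on $(Y,\eta)$, which controls how many distinct identifications are a priori possible.
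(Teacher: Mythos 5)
Your proposal reaches the right conclusion but frames the argument differently from the paper. The paper's proof is organized around deleting the set $S$ of horizontal cross-saddle-connections inside the cylinders of $\bfC_2$. Once $S$ is deleted, $(X,\omega)-S$ is (unambiguously) a cover of the open surface $(W,w)$ of Figure \ref{F:H4Prym} (left); this lets one label each component of $\bfC_2 - S$ as ``top'' or ``bottom'' via its image on $(W,w)$, and reduces the whole sublemma to the purely combinatorial observation that the Prym cover extends exactly when tops glue to tops, which in turn is what applying $u$ detects. Your forward direction is a genuinely different route: assuming the cover $\pi$ exists, you push the shear $u$ down through $\pi$ (using that $\bfC_2$ is the full preimage of a cylinder of the right height on the Prym surface), verify the cylinder condition on the model Prym surface, and pull it back. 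This works, and in fact the paper's proof as written does not bother to establish this direction at all --- it only proves and only uses the implication ``not a cover $\Rightarrow$ after $u$ a bad cylinder exists,'' since that is all the ``attacking'' argument that follows requires. Your converse is, in content, the same observation the paper makes, but via an explicit enumeration of gluings rather than the tidier top/bottom labeling; both leave the key geometric step (a top-to-bottom gluing forces a horizontal cylinder wrapping around $\bfC_1$ after the shear) at the level of inspection.

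Two small cautions. First, you identify the unknown gluing data with the identification of the horizontal boundary saddle connections of $\bfC_1$, whereas the paper locates it at the top and bottom edges of the $\bfC_2$ squares; these agree only when those edges coincide, and using $\bfC_2$ is the more robust choice since the cross-curves $S$ live there. Second, with $\bfC_2$ consisting of vertical cylinders of height one, applying $u = \bigl(\begin{smallmatrix} 1 & 0 \\ 1 & 1 \end{smallmatrix}\bigr)$ slides one vertical boundary up by one unit relative to the other, not the top relative to the bottom; this is harmless for the logic but worth getting right when you run the enumeration.
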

\begin{proof}
\red Let $S$ be the union of the horizontal saddle connections contained in $\bfC_2$. After deleting  $S$, $(X, \omega)$ is a cover of the surface $(W,w)$ depicted in Figure \ref{F:H4Prym} (left). (We emphasize that $(W,w)$ is not a closed surface, since the horizontal segments on the top and bottom of the squares labeled 2 have been deleted.) This allows us to label the components $\bfC_2 - S$ as ``top" or ``bottom" according to their image on $(W,w)$. The desired cover exists if and only if the top components are glued to other top components and similarly for bottom components. \black


\red If this does not occur, then \black after applying $u$ to $\bfC_2$, there will be a horizontal cylinder bordering $\bfC_1$ on its top and bottom boundary. 
\end{proof}

Suppose in order to derive a contradiction that $(X, \omega)$ is not a cover of a surface in a Prym locus in $\cH(4)$. Let $\bfD$ denote the horizontal cylinders in the complement of $\bfC_1$. After perhaps replacing $(X, \omega)$ by a surface where $u$ has been applied to cylinders in $\bfC_2$, we may assume that there is a cylinder in $\bfD$ that borders $\bfC_1$ along its top and bottom boundary. 

We note that since $\bfC_2$ has been sheared, $\ColTwoX$ is now a different surface than the one previously given the same name and which belonged to a cover of a stratum of quadratic differentials. Moreover, the orbit closure $\MTwo$ of $\ColTwoX$ may also have changed; however, it is still a rank one rel one invariant subvariety. We begin with the following sublemma.

\begin{sublem}\label{SL:DIdenticalHeight}
All the cylinders in $\bfD$ have unit height. 
\end{sublem}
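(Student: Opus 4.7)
The plan is to derive the common height of cylinders in $\bfD$ from Assumption CP for the cover $f_1 : \ColOneX \to (Y_1, \eta_1) \in \cH(0,0)$, and then pin down this common height to be $1$ using the normalizations already established.

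First I would note that shearing $\bfC_2$ by $u$ moves points only horizontally and therefore leaves the heights of horizontal cylinders on $(X,\omega)$ unchanged, so without loss of generality we may compute heights in the original picture where $\bfC_2$ is vertical. Each cylinder $D \in \bfD$ persists as a horizontal cylinder on $\ColOneX$ with the same height, since the collapse of $\bfC_1$ only affects the cylinders in $\bfC_1$ themselves. Because $(Y_1, \eta_1)$ is a flat torus with two marked points, it has a single horizontal cylinder, of some height $H$; by Assumption CP for $f_1$, every horizontal cylinder on $\ColOneX$ — in particular every $D \in \bfD$ — has height exactly $H$.

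Next I would show $H = 1$. Using the factorization $\Col(f_1) = g_1 \circ g$, where $g : \ColOneTwoX \to (Y, \eta)$ is a translation cover of a flat torus whose marked points differ by $2$-torsion, and $g_1 : (Y, \eta) \to \cH(0)$ is the unique $4$-to-$1$ translation cover, I would track vertical lengths through the factors. The vertical saddle connections on $(Y, \eta)$ that arise as the image of $g(\ColOneTwo(\bfC_2))$ have been normalized to unit length, and since their endpoints are $2$-torsion points, the vertical circumference of $(Y,\eta)$ is $2$; the $4$-to-$1$ cover $g_1$ identifies the $2$-torsion classes so the vertical circumference of the codomain torus in $\cH(0)$ equals $1$. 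Since $\Col(f_1) = g_1 \circ g$ arises from $f_1$ by collapsing the cylinder $f_1(\ColOne(\bfC_2))$ on the torus $(Y_1, \eta_1)$, the vertical circumference of $(Y_1, \eta_1)$ is likewise $1$, and hence the height $H$ of its horizontal cylinder is $1$.

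The main obstacle I expect is rigorously tracking vertical lengths through the torus $(Y_1, \eta_1)$ in $\cH(0,0)$ versus the torus in $\cH(0)$ appearing as the codomain of $g_1 \circ g$, since the cover $g_1$ and the possible configurations of $3$ or $4$ marked points on $(Y,\eta)$ require a small case analysis. To sidestep this, an alternative approach is to use the cylinder $D \in \bfD$ bordering $\bfC_1$ on both its top and bottom boundary: a closed vertical trajectory on $\ColOneX$ passing through $D$ has length equal to $H$ plus the sum of heights of the traversed $\bfC_1$-cylinders, while its image under $f_1$ wraps some integer number of times around the vertical circumference of $(Y_1,\eta_1)$; comparing the two computations, together with the normalizations on $\bfC_2$ and on $(Y, \eta)$, yields $H = 1$.
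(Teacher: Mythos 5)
Your plan — deduce equal heights from Assumption CP for $f_1:\ColOneX\to(Y_1,\eta_1)\in\cH(0,0)$, then normalize — is the same route the paper takes. However, two steps in your execution are genuinely incorrect.

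First, the claim that ``shearing $\bfC_2$ by $u$ moves points only horizontally and therefore leaves the heights of horizontal cylinders unchanged'' is false. The cylinders in $\bfC_2$ have \emph{vertical} core curves, so $u=\begin{pmatrix}1&0\\1&1\end{pmatrix}$ is the standard (vertical) shear of a vertical cylinder: $(x,y)\mapsto(x,x+y)$ moves points vertically, and it absolutely can reassemble the horizontal cylinders. The paper's own remark after the sublemma says exactly this: ``a priori, shearing $\bfC_2$ may have created horizontal cylinders of height two.'' So the reduction ``without loss of generality we may compute heights in the original picture'' is not available, and luckily is not needed. The paper instead makes the observation that the invariant subvariety $\MOne$ is unchanged by the shear (shearing the subequivalence class $\ColOne(\bfC_2)$ stays in $\MOne$), hence $\ColOneX$ — the \emph{sheared} one — still admits a map $f_1$ to a surface in $\cH(0,0)$ satisfying Assumption CP. You should be stating and using this, rather than trying to undo the shear.

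Second, ``Because $(Y_1,\eta_1)$ is a flat torus with two marked points, it has a single horizontal cylinder'' is not a general fact about $\cH(0,0)$: if the two marked points lie on distinct horizontal leaves, the torus has two horizontal cylinders whose heights sum to the vertical circumference. What is true in this situation is that the two marked points of $(Y_1,\eta_1)$ do lie on the same horizontal leaf — but this requires an argument. It follows because the two marked points are the two boundary points of the vertical cylinder $f_1(\ColOne(\bfC_2))$ which collide when that cylinder is collapsed, and the cross-curve of that cylinder has vertical component equal to $0$ before the shear, and after shearing by $u$ it changes by the cylinder's height ($=1$), which equals the vertical circumference of $(Y_1,\eta_1)$, hence is $0$ modulo the lattice. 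Without this the claim ``height $H$'' rather than ``one of two heights summing to $1$'' is unjustified, and with two heights summing to $1$ neither would be $1$.

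The length-tracking in your second paragraph that identifies the vertical circumference of $(Y_1,\eta_1)$ with $1$ is sound and agrees with the normalizations in the paper; the alternative sketch in your final paragraph is confused, though, since on $\ColOneX$ the cylinders in $\bfC_1$ have already been collapsed and cannot be ``traversed.''
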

\begin{rem}
If all the preimage of all marked points under $f_2$ were marked this would be immediate, but, a priori, shearing $\bfC_2$ may have created horizontal cylinders of height two. Compare to Figure \ref{F:ThreeOrFourSC}.
\end{rem} 
\begin{proof}
Notice that while $\MTwo$ has changed by shearing $\bfC_2$, $\MOne$ has not. Therefore, $\ColOneX$ still admits a map to a surface in $\cH(0,0)$ that satisfies Assumption CP. This proves the claim.
\end{proof}

Let $\Gamma$ denote the directed graph formed as follows. The vertices of $\Gamma$ are cylinders in $\bfD$. There is a directed edge from cylinder $D_1$ to a cylinder $D_2$ if the top boundary of $\ColTwo(D_1)$ includes a saddle connection on the bottom boundary of $\ColTwo(D_2)$.

\begin{sublem}\label{SL:DirectedLoops}
There are no directed loops in $\Gamma$.
\end{sublem}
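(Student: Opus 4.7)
The plan is to argue by contradiction: assume that $\Gamma$ contains a directed loop $D_1 \to D_2 \to \cdots \to D_k \to D_1$, and for each index (mod $k$) select a saddle connection $s_i$ lying on the top boundary of $\ColTwo(D_i)$ and on the bottom boundary of $\ColTwo(D_{i+1})$. The aim is to derive a contradiction by projecting this combinatorial loop through the covering map $f_2 : \ColTwoX \to (Y', \eta')$, where $(Y',\eta')$ is a surface in $\cQ(2,-1^2)$. This covering map continues to exist after the shearing of $\bfC_2$ because $\MTwo$ is a full locus of covers of $\cQ(2,-1^2)$ and this property is a closed invariant condition on orbit closures, unaffected by the standard cylinder deformation that produced the new surface.

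First, I would argue that every $f_2(\ColTwo(D_i))$ lies in the large horizontal cylinder of $(Y',\eta')$, namely the complement of $f_2(\ColTwo(\bfC_1))$. This uses the weakened assumption of the lemma that $\overline{\ColTwo(\bfC_1)}$ is the full preimage under $f_2$ of its image: since each open cylinder $\ColTwo(D_i)$ has positive two-dimensional measure disjoint from $\ColTwo(\bfC_1)$, it cannot map into $f_2(\overline{\ColTwo(\bfC_1)})$, so its image is contained in the unique other horizontal cylinder of $(Y',\eta')$.

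Second, I would observe that each $f_2(s_i)$ must then simultaneously lie on the top boundary of the large horizontal cylinder (being the image under $f_2$ of a saddle connection on the top of $\ColTwo(D_i)$) and on the bottom boundary of the large horizontal cylinder (being the image under $f_2$ of a saddle connection on the bottom of $\ColTwo(D_{i+1})$). However, inspection of the flat structure of $\cQ(2,-1^2)$ shown in Figure \ref{F:C2onQandCover} reveals that the small horizontal cylinder $f_2(\ColTwo(\bfC_1))$ lies strictly between the top and bottom boundaries of the large horizontal cylinder, so the two sets of boundary saddle connections are disjoint. No saddle connection can lie in both, giving the desired contradiction.

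The main obstacle is tracking the covering structure through the shear. Because shearing $\bfC_2$ may move $\ColTwoX$ to a different point in $\MTwo$, the surface $(Y',\eta')$ and the explicit map $f_2$ on the post-shear $\ColTwoX$ are not the same as their pre-shear versions, and a priori the subequivalence class $\ColTwo(\bfC_1)$ could project to a different cylinder on the new base. To handle this, I would use that $\bfC_1$ and $\bfC_2$ are disjoint so that $\ColTwo(\bfC_1)$ persists unchanged as a cylinder on the post-shear $\ColTwoX$; combined with the full-preimage hypothesis for $\ColTwo(\bfC_1)$, this ensures $f_2(\ColTwo(\bfC_1))$ still corresponds to the small horizontal cylinder in the base. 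With this bookkeeping in place, the structural argument above closes the proof.
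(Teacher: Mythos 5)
Your proposal has a genuine gap that the paper's own proof is specifically designed to avoid. After $u$ is applied to $\bfC_2$ on $(X,\omega)$, the surface $\Col_{\bfC_2}(X,\omega)$ changes (the twist alters how the top and bottom boundaries of $\bfC_2$ get identified in the collapse), and the paper explicitly warns that ``the orbit closure $\MTwo$ of $\Col_{\bfC_2}(X,\omega)$ may also have changed.'' Your argument that the covering $f_2\colon \Col_{\bfC_2}(X,\omega) \to (Y',\eta') \in \cQ(2,-1^2)$ ``continues to exist after the shearing of $\bfC_2$ because $\MTwo$ is a full locus of covers and this property is a closed invariant condition'' is not correct: the shear is applied on $(X,\omega)\in\cM$, not on $\Col_{\bfC_2}(X,\omega)\in\MTwo$, and the new collapsed surface need not lie in the old $\MTwo$ at all, so there is no reason a cover of a surface in $\cQ(2,-1^2)$ survives. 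This is precisely why Sublemma \ref{SL:DIdenticalHeight} reasons via $\MOne$ rather than $\MTwo$ --- because $\bfC_1$ is disjoint from $\bfC_2$, the first collapse commutes with the shear and $\MOne$ is unaffected; no analogous fact holds for $\MTwo$.

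The paper's proof sidesteps the missing covering structure entirely: after recording that the post-shear $\MTwo$ is still rank one rel one, it picks a rel deformation of $\Col_{\bfC_2}(X,\omega)$ in $\MTwo$ (increasing heights in $\Col_{\bfC_2}(\bfC_1)$ while decreasing heights in $\Col_{\bfC_2}(\bfD)$), then constructs from any hypothetical directed loop in $\Gamma$ an absolute homology class whose period visibly changes under this rel deformation --- a contradiction, since rel deformations by definition fix absolute periods. If you want to salvage your geometric/covering argument, you would first need to independently establish that the post-shear $\Col_{\bfC_2}(X,\omega)$ still covers a quadratic differential in $\cQ(2,-1^2)$ with $\overline{\Col_{\bfC_2}(\bfC_1)}$ the full preimage of a simple cylinder; as written, that crucial step is assumed rather than proved.
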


\begin{proof}
Consider the rel deformation (for instance the linear combination of the standard shears in $\ColTwo(\bfC_1)$ and $\ColTwo(\bfD)$) in $\MTwo$ of $\ColTwoX$ that increases the heights of cylinders in $\ColTwo(\bfC_1)$ and decreases the height of those in $\ColTwo(\bfD)$. 

Given a directed loop of $\Gamma$, we can construct an absolute cycle $\gamma$ on $\ColTwoX$ as follows. Start with a cylinder corresponding to a vertex on the directed loop. Travel along the core curve (in either direction), and then vertically up into the cylinder that appears next in the directed loop. Continue in this way until returning to the starting cylinder, and then travel along the core curve to close up the path to a loop. See Figure \ref{F:H4PrymAbsoluteCycle} for an illustration in the simplest possible case. 

\begin{figure}[h]\centering
\includegraphics[width=.3\linewidth]{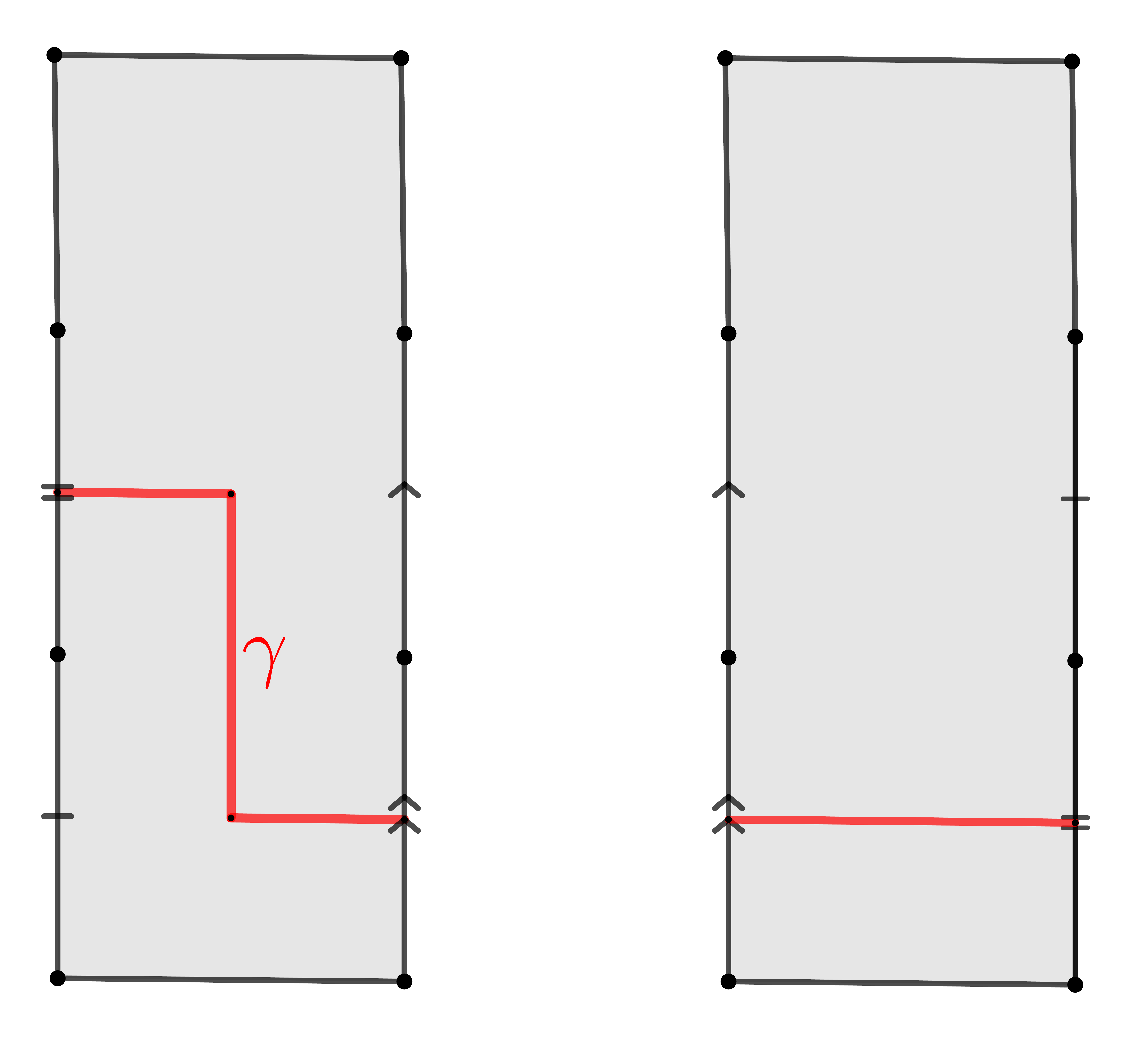}
\caption{The proof of Sublemma \ref{SL:DirectedLoops}, illustrated in a case where the map $g$ has degree 1. In this case $\Gamma$ has only one vertex.}
\label{F:H4PrymAbsoluteCycle}
\end{figure}

The period of the absolute homology class given by $\gamma$ changes under the above rel deformation, giving a contradiction. 
\end{proof}

Therefore, there are cylinders in $\bfD$ that are terminal in $\Gamma$, i.e. that have no outgoing directed edges. We will call these cylinders \emph{terminal cylinders}.

Note that a terminal cylinder only has its top boundary bordering $\bfC_1$. Indeed, if a saddle connection in its bottom boundary bordered $\bfC_1$, going vertically up from this saddle connection would show that the cylinder isn't terminal (this uses Sublemma \ref{SL:DIdenticalHeight}).

The following claim will immediately yield a contradiction. Recall that boundary components of cylinders are defined in Definition \ref{D:CylAndBoundary}.

\begin{sublem}\label{SL:H4PrymOvercollase}
All cylinders in $\bfD$ have exactly one boundary component that borders a cylinder in $\bfC_1$.
\end{sublem}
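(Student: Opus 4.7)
I will argue by contradiction using absolute-period preservation under rel deformations in $\MTwo$, in the same spirit as Sublemma \ref{SL:DirectedLoops}. The base case is the terminal cylinders of $\Gamma$, for which the claim is already essentially established in the preceding text: their top borders $\bfC_1$ and their bottom does not. The analogous argument applied to the opposite graph $\Gamma^{\mathrm{op}}$ (equivalently, reversing the vertical orientation so that incoming and outgoing edges in $\Gamma$ are swapped) shows that source cylinders of $\Gamma$ satisfy the claim with the roles of top and bottom swapped. It remains to address cylinders in $\bfD$ that are neither terminal nor source in $\Gamma$, and to rule out the possibility that some cylinder in $\bfD$ has both boundary components bordering $\bfC_1$ or has neither.

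For the rel deformation, $\MTwo$ is rank one and rel one (inherited from a cover of $\cQ(2,-1^2)$ via Lemma \ref{L:Q-rank}), so its rel space is one-dimensional; under a nonzero rel deformation the heights of cylinders in $\bfC_1$ and $\bfD$ (uniform across these classes by Sublemma \ref{SL:DIdenticalHeight}) are modified by constants $\alpha t$ and $\beta t$ respectively with $\alpha\beta<0$, since the underlying rel on the base $\cQ(2,-1^2)$ trades height between the simple cylinder and the complex envelope. Any closed path built by concatenating core curves of horizontal cylinders with vertical segments across them is an absolute cycle, whose vertical period $n_{\bfC_1} h_1 + n_{\bfD} h_D$ must be rel-invariant; this forces $\alpha n_{\bfC_1} + \beta n_{\bfD} = 0$, so the ratio $n_{\bfC_1}/n_{\bfD}$ is a fixed positive rational constant $c$, independent of the chosen cycle.

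Suppose for contradiction that some $D\in\bfD$ has both of its boundary components bordering $\bfC_1$. The rigidity provided by Assumption CP on $f_1$ (which forces every horizontal cylinder of $\ColOneX$ to have height equal to the unique torus-cylinder height, and this number is an integer by Sublemma \ref{SL:DIdenticalHeight}) tightly constrains how $\bfD$ cylinders merge on $\ColOneX$ after the collapse of $\bfC_1$: one can construct a short closed vertical cycle through $D$ and a specific number of $\bfC_1$ cylinders, producing a value of $n_{\bfC_1}/n_{\bfD}$ determined by the local configuration at $D$. Comparing this with a vertical cycle passing through a terminal cylinder of $\Gamma$ (which exists by the DAG structure) yields a different value of $n_{\bfC_1}/n_{\bfD}$, contradicting the fixed value of $c$. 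The analogous contradiction rules out $D$ having neither boundary bordering $\bfC_1$, by constructing cycles along chains of internal edges of $\Gamma$ before returning to $\bfC_1$.

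The main obstacle will be the rigorous construction of the short cycle in the hypothetical bad case and the verification that its $n_{\bfC_1}/n_{\bfD}$ ratio truly differs from the generic value of $c$. This requires carefully exploiting Assumption CP for $f_1$ to track the vertical structure above and below $D$ on $\ColOneX$, and confirming that after the shearing of $\bfC_2$ the orbit closure $\MTwo$ continues to provide a one-dimensional rel space acting on $h_1$ and $h_D$ with opposite signs, so that the $c$-constraint is genuinely restrictive.
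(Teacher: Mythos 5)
Your proposal takes a genuinely different route from the paper, and it has a real gap that you partly acknowledge. The paper's proof uses the ``overcollapse'' technique (one of the new techniques advertised in the introduction as ``attacking $\bfD$ with $\bfC_1$''): it first observes that $\cM$ is rank $2$ and rel $0$, then shears $\bfC_1$ to avoid vertical saddle connections and vertically collapses it past zero height, so the boundary singularities of $\bfC_1$ pass into the adjacent $\bfD$ cylinders. Along this path in $\cM$, a cylinder in $\bfD$ whose boundary meets $\bfC_1$ on one side changes height at one rate, while one meeting $\bfC_1$ on both sides changes at a different rate; since by Sublemma \ref{SL:DIdenticalHeight} all cylinders in $\bfD$ start at the same height, the ratio of moduli fails to stay constant, contradicting Corollary \ref{C:ConstantRatio} for the rel-zero variety $\cM$. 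The crucial structural fact exploited is that $\cM$ itself has no rel, so moduli ratios of $\cM$-parallel cylinders are rigid along \emph{any} path in $\cM$, including the overcollapse.

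Your approach instead stays in $\MTwo$ (rank one, rel one) and tries to derive the conclusion from the absolute-period rigidity of cycles under the one-dimensional rel deformation, in the style of Sublemma \ref{SL:DirectedLoops}. This does give the fixed ratio $n_{\bfC_1}/n_{\bfD}=c$ for every absolute cycle, but that constraint lives on cycles, not on the local adjacency pattern at a single cylinder. If $D\in\bfD$ borders $\bfC_1$ on both sides, a cycle passing through $D$ still has to close up somehow through the rest of the surface, and the total crossing numbers it accumulates are not determined by the local picture at $D$; there is no a priori reason the resulting $n_{\bfC_1}/n_{\bfD}$ differs from $c$. You acknowledge exactly this as the ``main obstacle,'' but it is not a detail to be cleaned up --- it is the heart of the matter, and I do not see how to close it within $\MTwo$. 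The rel deformation in $\MTwo$ is already fully used up by Sublemma \ref{SL:DirectedLoops} (ruling out directed loops); it does not by itself distinguish one-sided from two-sided adjacency. The step that actually does the work in the paper is passing to the ambient, rel-zero variety $\cM$ and moving $\bfC_1$ itself, so that adjacency to $\bfC_1$ is literally converted into a rate of height change.

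A smaller remark: your side observation that ``source cylinders'' handle the bottom-bordering case is unnecessary once one has the overcollapse argument, since the moduli-ratio contradiction does not care which boundary component meets $\bfC_1$. Also, the observation that Assumption CP forces all of $\bfD$ and all of $\bfC_1$ to have uniform heights is correct and is used in the paper, but it enters as the setup making the moduli-ratio comparison clean, not as the engine of the proof.
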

\begin{proof}
We can see this by ``overcollapsing $\bfC_1$ to attack cylinders in $\bfD$" as follows.

Note first that $\cM$ has rank at least 2 (by Lemma \ref{L:RankTest}) and that $\dim \cM=4$ (by definition of generic diamond, Definition \ref{D:GenericDiamond} \eqref{E:one}), so we see that $\cM$ has rank 2 and rel 0.

Shear $\bfC_1$ so that it does not contain a vertical saddle connection, then vertically collapse it to make the height of the cylinders in $\bfC_1$ zero. Since no zeros have collided at this point, we may continue this vertical collapse deformation, which moves the singularities on the boundary of $\bfC_1$ into the interiors of cylinders in $\bfD$. See Figure \ref{F:H4PrymOvercollapse}.

\begin{figure}[h]\centering
\includegraphics[width=.5\linewidth]{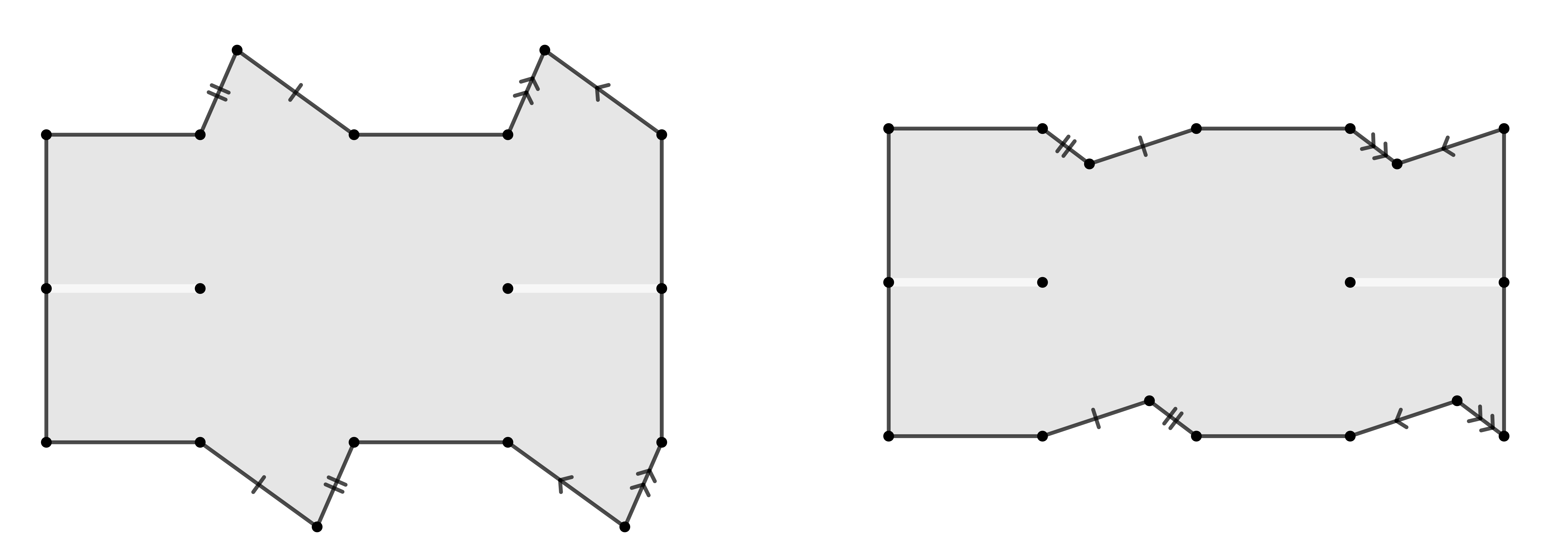}
\caption{The proof of Sublemma \ref{SL:H4PrymOvercollase}.}
\label{F:H4PrymOvercollapse}
\end{figure}

Each terminal cylinder in $\bfD$ borders $\bfC_1$ only along one boundary component. Suppose in order to find a contradiction that there is a cylinder in $\bfD$ that borders $\bfC_1$ along both its top and bottom boundary components. By Sublemma \ref{SL:DIdenticalHeight}, all cylinders in $\bfD$ have identical heights. So, along the the previously described overcollapse deformation, the ratio of moduli of two cylinders under consideration is not constant, which contradicts Lemma \ref{C:ConstantRatio}. 
\end{proof}
This contradicts our assumption that there is a cylinder in $\bfD$ that borders $\bfC_1$ along its top and bottom boundary.
\end{proof}

\subsection{A special case}\label{SS:FullSpecialCase}

We record a slight strengthening of Theorem \ref{T:IntroFull} in a special case where many of the difficulties of the proof do not occur.  

\begin{lem}\label{L:StrongTheorem1.1}
Suppose that $\left( (X, \omega), \cM, \bfC_1, \bfC_2 \right)$ is a generic diamond and that $\MOne$ and $\MTwo$  are full loci of covers of strata of Abelian differentials that satisfy Assumption CP. Then $\cM$ is a full locus of covers of a stratum of Abelian differentials. If $f$ (resp $f_i$) denotes the cover on $(X, \omega)$ (resp. $\Col_{\bfC_i}(X, \omega)$), then \red $\bfC_i = f^{-1}(f(\bfC_i))$ and \black  $\Col_{\bfC_i}(f) = f_i$.
\end{lem}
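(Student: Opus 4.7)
My plan is to deduce the lemma from Theorem \ref{T:FullV2} and then upgrade the conclusion so the target stratum is of Abelian differentials. First I would observe that both exceptional cases in Theorem \ref{T:FullV2} require the codomain of at least one $f_i$ to have non-trivial holonomy: Case 1 says so explicitly, and Case 2 stipulates that the codomain of $f_1$ has non-trivial holonomy (with the codomain of $\Col(f_2)$ equal to $\cQ(-1^4)$). In our setting each $f_i$ is a cover of a stratum of Abelian differentials, whose codomain has trivial holonomy by definition. Hence neither exceptional case can occur, and Theorem \ref{T:FullV2} directly gives that $\cM$ is a full locus of covers of a component of a stratum of Abelian or quadratic differentials.

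Next I would upgrade ``Abelian or quadratic'' to ``Abelian''. Unpacking the proof of Theorem \ref{T:FullV2}, the cover $f$ is produced by Lemma \ref{L:VS-Agree} via the Diamond Lemma, and the orbit closure $\cM'$ of $f(X,\omega)$ fits into a generic diamond $(\cM', f(X,\omega), f(\bfC_1), f(\bfC_2))$ in which each $\cM'_{f(\bfC_i)}$ coincides with the codomain of $f_i$, which is a component of a stratum of Abelian differentials. Proposition \ref{P:DiamondWithH}\eqref{P:DiamondWithH:HH} then forces $\cM'$ itself to be a component of a stratum of Abelian differentials, so $f$ exhibits $\cM$ as a full locus of covers of such a stratum.

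Finally I would address the auxiliary equalities. The Diamond Lemma already yields $\Col_{\bfC_i}(f) = f_i$ and $\overline{\bfC_i} = f^{-1}(f(\overline{\bfC_i}))$. To derive the slightly sharper statement $\bfC_i = f^{-1}(f(\bfC_i))$, I would argue as follows: $f^{-1}(f(\bfC_i))$ is open (as the continuous preimage of the open set $f(\bfC_i)$) and is contained in $\overline{\bfC_i}$ by the closure identity, so it lies in the interior of $\overline{\bfC_i}$. Since the boundary of $\bfC_i$ consists of saddle connections and is therefore nowhere dense, that interior equals $\bfC_i$; the reverse inclusion is immediate. The main point of the proof is just the observation that the trivial-holonomy hypothesis excludes both exceptional cases of Theorem \ref{T:FullV2}; no further substantive work is needed.
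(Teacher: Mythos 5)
Your route — invoke Theorem \ref{T:FullV2}, rule out its exceptions via trivial holonomy, then unpack — is a valid alternative to the paper's proof, which directly shows $\Col(f_1)=\Col(f_2)$ (using Theorem \ref{T:MinimalCover} and Lemma \ref{L:InvolutionImpliesHyp-background} in rank $\geq 2$ and Lemma \ref{L:VS-AgreeR1} in rank one) and then applies the Diamond Lemma. However, your unpacking step silently assumes that the cover $f$ is produced by Lemma \ref{L:VS-Agree}, which requires $\Col(f_1)=\Col(f_2)$; the proof of Theorem \ref{T:FullV2} does not always pass through Lemma \ref{L:VS-Agree}, since Proposition \ref{P:VS-One} allows a rank-one factorization scenario (treated in Lemma \ref{VS-SpecialCase}) in which $\Col(f_1)\neq\Col(f_2)$ and $f$ is built by a separate argument that does not come with the equalities $\Col_{\bfC_i}(f)=f_i$. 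You should add a sentence ruling this out: that scenario requires the codomain of $\Col(f_2)$ to lie in $\cQ(-1^4)$, hence the codomain of $f_2$ to have non-trivial holonomy, contrary to the hypothesis that $f_2$ covers an Abelian differential.

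The derivation of $\bfC_i=f^{-1}(f(\bfC_i))$ is also not correct as written. ``Nowhere dense boundary'' does not imply $\operatorname{int}(\overline{\bfC_i})=\bfC_i$: for instance $A=(0,1)\cup(1,2)$ has nowhere dense boundary while $\operatorname{int}(\overline A)=(0,2)\neq A$, and the analogous danger here is two cylinders of $\bfC_i$ sharing a boundary saddle connection, whose interior would then lie in $\operatorname{int}(\overline{\bfC_i})\setminus\bfC_i$. A correct argument sidesteps the topological claim: since the reduced diamond $(\cM', f(X,\omega), f(\bfC_1), f(\bfC_2))$ is generic with $\cM'$ a stratum of Abelian differentials, $D:=f(\bfC_i)$ is a single simple open cylinder, and every boundary saddle connection of $\bfC_i$ maps under $f$ into $\overline{D}\setminus D$ (approaching it from inside a cylinder of $\bfC_i$, one sees the image approach the boundary of $D$). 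Hence $f^{-1}(D)$ avoids $\overline{\bfC_i}\setminus\bfC_i$, and combined with $f^{-1}(D)\subset f^{-1}(\overline D)=\overline{\bfC_i}$ from the Diamond Lemma this yields $f^{-1}(f(\bfC_i))=\bfC_i$.
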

\begin{proof}
Let $f_i: \Col_{\bfC_i}(X, \omega) \ra (Y_i, \eta_i)$ be the covers that certify that $\cM_{\bfC_i}$ is a full locus of covers satisfying Assumption CP. As noted in Section \ref{SS:FullLociHigherRank}, the surfaces $\Col_{f_i(\Col_{\bfC_i}(\bfC_j))}(f_i(\Col_{\bfC_i}(X, \omega))$ are connected. 

If $\MOneTwo$ has rank at least two, then, working on each component of $\ColOneTwoX$, we see by Theorem \ref{T:MinimalCover} and Lemma \ref{L:InvolutionImpliesHyp-background} that there is a unique translation cover from $\ColOneTwoX$ to a generic surface in a stratum of Abelian differentials. This implies that $\Col(f_1) = \Col(f_2)$.

If $\MOneTwo$ has rank one, then the fibers of $\Col(f_1)$ and $\Col(f_2)$ are the same when restricted to each component of $\ColOneTwoX$ (by Lemma \ref{L:VS-AgreeR1}). Therefore, by a small subset of the proof of Lemma \ref{L:FirstException} (and using the fact that the codomains of $f_1$ and $f_2$ have trivial holonomy, which is not the case in the first exception to Theorem \ref{T:FullV2}) we have that $\Col(f_1) = \Col(f_2)$.

As in Lemma \ref{L:VS-Agree}, there is a translation cover $f: (X, \omega) \ra (Y, \eta)$ and the reduced diamond $\left( (Y, \eta), \cM', f(\bfC_1), f(\bfC_2) \right)$ is a generic diamond where both $\cM'_{f(\bfC_i)}$ are strata of Abelian differentials and hence  both $f(\bfC_i)$ are simple cylinders. This implies that $\cM'$ is a stratum of Abelian differentials where the degrees of the covers with domain $(X, \omega)$ and $\Col_{\bfC_i}(X, \omega)$ are all the same. Since $f$ is constructed using the Diamond Lemma (Lemma \ref{L:diamond}), it is immediate that $\Col_{\bfC_i}(f) = f_i$. 
\end{proof}

\red It seems likely that in this case $f$ satisfies Assumption CP, but we have not checked this. \black

\subsection{Open problems}\label{SS:OpenProblems2}

As previously indicated, we think the following problem is especially interesting: 

\begin{prob}
Determine if the conclusion of Theorem \ref{T:FullV2} holds even in the two exceptional cases. 
\end{prob}

It would be very interesting to prove a version of Theorem \ref{T:IntroFull} without Assumption CP. One of the strongest versions one could hope for would be the following, which replaces Assumption CP with one of the assumptions in Diamond Lemma. 

\begin{conj}[The Strong Diamond Conjecture]\label{C:StrongDiamond}
Suppose $((X, \omega), \cM, \bfC_1, \bfC_2)$ forms a generic diamond where $\cM_{\bfC_1}$ and $\cM_{\bfC_2}$ are full loci of covers, 
with covering maps $f_i$ (as in Section \ref{S:DiamondLemma}) such that $\overline{\Col_{\bfC_i}(\bfC_{i+1})}=f_i^{-1}(f_i(\overline{\Col_{\bfC_i}(\bfC_{i+1})}))$. Then $\cM$ is a full locus of covers.
\end{conj}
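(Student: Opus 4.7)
The plan is to deduce the conjecture from the Diamond Lemma (Lemma \ref{L:diamond}). The preimage hypothesis of that lemma is part of the conjecture's hypothesis, so the only remaining condition to verify is that $f_1$ and $f_2$ agree at the base of the diamond, i.e., that $\Col(f_1) = \Col(f_2)$. Once this is established, the Diamond Lemma produces a half-translation cover $f \colon (X,\omega) \to (Z, \zeta)$ with $\Col_{\bfC_i}(f) = f_i$, and an argument analogous to Lemma \ref{L:VS-Agree} then shows that $\cM$ is a full locus of covers of the orbit closure of $(Z, \zeta)$.

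To establish $\Col(f_1) = \Col(f_2)$, I would induct on the dimension of $\cM$, with base cases supplied by Theorems \ref{T:IntroFull}, \ref{T:DoubleIntro}, \ref{T:FullV2}, and Proposition \ref{P:DiamondWithH}. For the inductive step, suppose $(X,\omega)$ admits an $\cM$-subequivalence class $\bfD$ of generic cylinders that is non-adjacent to and disjoint from $\bfC_1 \cup \bfC_2$ (these should be abundant on a generic surface of $\cM$, by a cylinder-density argument in the spirit of Lemma \ref{L:ManyDiamonds}). Then $(\Col_\bfD(X,\omega), \cM_\bfD, \Col_\bfD(\bfC_1), \Col_\bfD(\bfC_2))$ is a smaller generic diamond inheriting the hypotheses of the conjecture, so the inductive hypothesis yields agreement of $\Col_\bfD(f_1)$ and $\Col_\bfD(f_2)$. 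An analog of Lemma \ref{L:ExtendingPaths} should then propagate this agreement back to $\Col(f_1) = \Col(f_2)$ on $\ColOneTwoX$. When no such $\bfD$ exists, $\cM$ has very low dimension, and direct analysis using Proposition \ref{P:ArithmeticCylinder} and the cylinder type classification of Theorem \ref{T:MZ} should handle these base cases.

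The main obstacle is the absence of Assumption CP, which was essential in the proof of Theorem \ref{T:IntroFull} for the uniqueness statement in Proposition \ref{P:AtMostOneCP}. Without CP, Example \ref{E:ManyCovers} shows that a single surface can admit many distinct covering maps to generic half-translation surfaces, so outright uniqueness on $\ColOneTwoX$ fails. The hypothesis $\overline{\Col_{\bfC_i}(\bfC_{i+1})} = f_i^{-1}(f_i(\overline{\Col_{\bfC_i}(\bfC_{i+1})}))$ is a form of local CP at precisely the relevant cylinders, and the bulk of the work will consist of extracting the right \emph{constrained uniqueness} statement from this hypothesis. I expect this will require a case analysis paralleling Sections \ref{SS:FullLociHigherRank}--\ref{SS:FullSpecialCase}, and it will likely produce additional exceptional configurations where $\Col(f_1) \neq \Col(f_2)$ but $\cM$ is nonetheless still a full locus of covers; these exceptions will have to be handled by constructing $f$ on $(X,\omega)$ directly rather than via the Diamond Lemma, in the spirit of Lemma \ref{VS-SpecialCase}.
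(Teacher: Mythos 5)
The statement you are attempting to prove is explicitly presented as an \emph{open conjecture} in the paper, not a theorem: the authors state that they cannot prove it, that they offer it ``to be provocative,'' and that ``it is plausible that the conjecture might be false.'' There is therefore no proof in the paper to compare against, and any proposed proof needs to be read with considerable skepticism. Your proposal is an outline of a plan of attack, and you are candid about its tentative nature (``should handle,'' ``I expect''), but the gaps are substantive rather than merely technical.

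The central gap is the one you yourself name and then defer: without Assumption CP, no uniqueness statement in the style of Proposition \ref{P:AtMostOneCP} is available, and the hypothesis $\overline{\Col_{\bfC_i}(\bfC_{i+1})}=f_i^{-1}(f_i(\overline{\Col_{\bfC_i}(\bfC_{i+1})}))$ controls only the fibers over the images of the collapsed cylinders, not the global branching behaviour of $f_i$. Example \ref{E:ManyCovers} shows that the ambiguity this leaves is not small. Your proposal to extract a ``constrained uniqueness'' statement from the local hypothesis is exactly the missing ingredient, and without it the outline is circular: you need control of the fibers of $\Col(f_1)$ and $\Col(f_2)$ to invoke the Diamond Lemma, but that control is the whole difficulty. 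There is also a real problem with the inductive step: the inductive hypothesis applied to the smaller diamond $(\Col_\bfD(X,\omega),\cM_\bfD,\Col_\bfD(\bfC_1),\Col_\bfD(\bfC_2))$ yields only that $\cM_\bfD$ is a full locus of covers, not that the induced maps $\Col_\bfD(f_1)$ and $\Col_\bfD(f_2)$ agree; an additional argument is needed to tie the covering map that witnesses ``full locus of covers'' to the collapses of the given $f_i$. Moreover, it is not clear that the preimage hypothesis of the conjecture is preserved under collapsing an unrelated $\bfD$, since $\bfD$ has not been required to be $f_i$-saturated, so the smaller diamond may not inherit the hypothesis at all. Given that the authors leave open the possibility that the conjecture is false and might be a route to new orbit closures, the honest conclusion is that the approach is incomplete at precisely the point where a genuinely new idea (or a counterexample) is required.
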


We state this as a conjecture to be provocative, and because Theorems \ref{T:IntroFull} and \ref{T:DoubleIntro}, as well as the general scarcity of known orbit closures, provide some evidence for it. But it is plausible that the conjecture might be false, and that efforts to prove it might lead to the discovery of new orbit closures. 

As in the proof of Lemma \ref{L:VS-Agree}, the results in this paper imply that the conjecture holds if  $f_1$ and $f_2$ agree at the base of the diamond. But this does not always hold even in the more restrictive situation of Theorem \ref{T:DoubleIntro}. 

A good first step towards Conjecture \ref{C:StrongDiamond}  would be the following: 
\begin{prob}
Determine if Conjecture \ref{C:StrongDiamond} is true when both $f_1$ and $f_2$ have degree 2. 
\end{prob}
In comparison to Theorem \ref{T:DoubleIntro}, the main difference would be that, in the new setting, fibers of $f_1$ or $f_2$ could consists of one marked point and one unmarked point. 

In a different direction, it would be interesting to \red analyze diamonds where the surfaces in $\MOne$ and $\MTwo$ cover disconnected surfaces. In this setting one might a priori arrive in a situation where the projection of $\cM_{\bfC_i}$ to any component is a full locus of covers (and hence is ``trivial") but $\cM_{\bfC_i}$ itself is not. 
This possibility is \black closely related to the following definition and conjecture, \red which indicate that in fact we do not believe this is possible. \black 

Fix $n>1$. For each $i\in \{1, \ldots, n\}$, let $\cS_i$ be a component of a stratum of Abelian or quadratic differentials. For simplicity we will assume there are no marked points, although one desires a statement with marked points allowed as well. Define a \emph{quasi-diagonal} in $\prod \cS_i$ to be a prime invariant subvariety 
$$\cM \subset \prod \cS_i$$
whose projection to each factor is dominant. Here prime means that $\cM$ is not a product; see \cite{ChenWright}. 

Say that a quasi-diagonal is \emph{trivial} if one of the following holds. 
\begin{enumerate}
\item For all $i,j$, we have $\cS_i = \cS_j$. At each point of $\cM$ all $n$ components are equal up to rotation by $\pi$ \red and rescaling. \black
\item For all $i,j$, either $\cS_i = \cS_j$, or one of $\{\cS_i, \cS_j\}$ is a hyperelliptic component, and the other is the associated genus zero stratum. At each point of $\cM$, all $n$ components are equal up to rotation by $\pi$, \red rescaling, \black and quotienting by the hyperelliptic involution. 
\end{enumerate}

\begin{conj}[The Quasi-diagonal Conjecture]\label{C:QuasiDiagonal}
All quasi-diagonals of rank at least two are trivial. 
\end{conj}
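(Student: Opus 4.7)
The plan is to proceed by induction on $\dim \cM$, reducing first to the case $n=2$. If $n > 2$, then each pairwise projection of $\cM$ to $\cS_i \times \cS_j$ has closure containing a quasi-diagonal of rank at least two (the rank drops by at most a controlled amount under projection when $\cM$ is prime with dominant projections to each factor). Triviality of these pairwise quasi-diagonals should yield compatible affine identifications between every pair of factors that assemble to exhibit $\cM$ itself as trivial. So assume from now on that $n = 2$, with $\cM \subset \cS_1 \times \cS_2$.

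Given such $\cM$ of rank at least two, pick a generic surface $(X,\omega) = ((X_1, \omega_1), (X_2, \omega_2)) \in \cM$ and apply Lemma \ref{L:ManyDiamonds} to obtain a generic diamond $((X,\omega), \cM, \bfC_1, \bfC_2)$. Since the projections are dominant, each subequivalence class $\bfC_i$ projects nontrivially to both factors, so collapsing $\bfC_i$ induces correlated cylinder collapses on each side. The boundary components $\cM_{\bfC_1}$, $\cM_{\bfC_2}$, and $\MOneTwo$ are themselves quasi-diagonals (possibly of multi-component surfaces) in the relevant boundary strata.

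If both $\cM_{\bfC_i}$ retain rank at least two, the inductive hypothesis produces, on each $\Col_{\bfC_i}(X,\omega)$, a covering map $f_i$ of one of the two trivial forms: an isomorphism up to rotation by $\pi$ and rescaling, or a hyperelliptic quotient composed with such. Assumption CP is automatic in both cases (the fibers consist of points that are singularities or forced to be marked by the involution), so one can apply Lemma \ref{L:StrongTheorem1.1} in the purely Abelian case, or Theorem \ref{T:FullV2} more generally, to assemble $f_1$ and $f_2$ into a covering map $f$ on $(X,\omega)$. Primeness of $\cM$ forces the image to be a single component, and comparing the stratum type of the image against the two trivial templates completes the induction.

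The principal obstacle is \emph{controlling rank along the induction}: collapsing $\bfC_i$ could drop the rank of $\cM_{\bfC_i}$ to one, forcing separate treatment of rank two base cases. Here one would aim to leverage Theorem \ref{T:MirWriFullRank} applied to an appropriate full-rank stratum obtained via the projection maps, showing that rank two quasi-diagonals can only arise from diagonal embeddings or hyperelliptic covers. A secondary obstacle is the possibility, highlighted by the exceptional cases of Theorem \ref{T:FullV2}, that $f_1$ and $f_2$ fail to agree at the base of the diamond; ruling this out for quasi-diagonals would likely require an ad hoc analysis in the spirit of Subsection \ref{SS:FullLociHigherRank}, showing that no exotic quasi-diagonals arise from the special torus covers and pillowcase covers that support those exceptions.
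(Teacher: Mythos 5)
This is not a theorem of the paper: Conjecture~\ref{C:QuasiDiagonal} is stated explicitly as an open conjecture, and the authors remark only that, if it could be proved as stated, a version with marked points would follow from \cite{ApisaWright}. There is therefore no proof in the paper to compare against, and your writeup is correctly read as a research plan rather than a proof. Indeed, you yourself flag two unresolved obstructions (rank loss under degeneration, and the exceptional cases of Theorem~\ref{T:FullV2}), so by your own account the argument is incomplete.

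Beyond those acknowledged gaps, several steps as stated would not go through without substantial extra work. First, Lemma~\ref{L:ManyDiamonds} is proved under Convention~\ref{CV:connected} for connected surfaces, whereas a quasi-diagonal in $\prod \cS_i$ consists of multi-component surfaces by definition; the cylinder structure theory used in its proof (e.g.\ the Cylinder Proportion Theorem, density of cylinder-covered surfaces in a horocycle orbit closure) does not automatically transfer component-by-component, since cylinders on different components can be $\cM$-parallel and a ``subequivalence class'' can straddle components. Second, the reduction from $n>2$ to $n=2$ asserts rank drops ``by a controlled amount'' under projection, but the rank of the closure of the image of $\cM$ in $\cS_i\times\cS_j$ is not a priori bounded below by $\rank(\cM)$; establishing this for prime quasi-diagonals with dominant factor projections is itself a nontrivial claim requiring the symplectic structure results of \cite{AEM} adapted to the multi-component setting (as in \cite[Section 7]{ChenWright}). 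Third, ``Assumption CP is automatic'' is not correct in the hyperelliptic-quotient form of triviality: the quotient map to the genus-zero stratum can have unmarked preimages of poles, which is exactly the failure mode of Assumption CP the paper is forced to handle separately (cf.\ Remark~\ref{R:LostCP} and the quadratic-double analysis in Section~\ref{S:DiamondQ}). Finally, Theorem~\ref{T:MirWriFullRank} applies to connected Abelian differentials without marked points, so invoking it for the rank-two base case of a quasi-diagonal requires an argument you have not supplied. None of these is obviously fatal, but each is a substantive missing step, consistent with the paper's decision to leave the statement as a conjecture.
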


If this conjecture could be proved as stated, a version with marked points would follow immediately from the results of \cite{ApisaWright}. The connection with diamonds is that $\MOne$ or $\MTwo$ might consist of covers of disconnected surfaces, where the set of disconnected surfaces being covered is a quasi-diagonal.    Conjecture \ref{C:QuasiDiagonal} is also related to joinings of Masur-Veech measures.

\section{Hyperelliptic components}\label{S:Q-hyp}

In this section we will summarize facts about hyperelliptic components of strata. To start, let us restate Lemma \ref{L:InvolutionImpliesHyp-background} for convenience.

\begin{lem}\label{L:InvolutionImpliesHyp}
The generic element of a component $\cQ$ of a stratum of
Abelian or quadratic differentials admits a non-bijective half-translation
cover to another translation or half-translation surface if and only if $\For(\cQ)$ is hyperelliptic, in which case the hyperelliptic involution
yields the only such map when $\cQ$ has rank at least two.
\end{lem}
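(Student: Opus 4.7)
The ``if'' direction is immediate: if $\For(\cQ)$ is hyperelliptic, then the hyperelliptic involution on a generic $(X,\omega) \in \cQ$ yields a degree two half-translation cover to a genus zero quadratic differential, which is certainly non-bijective. So the content is in the ``only if'' direction and in the uniqueness statement.

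For the ``only if'' direction, I would split into two cases based on whether $(X,\omega)$ is a torus cover. If the generic surface in $\cQ$ is a torus cover, then $\cQ$ is a rank one stratum, and among these the only possibilities in which the generic surface has no marked points imposing rigidity are (up to marked points) $\cH(0)$, $\cQ(-1^4)$, $\cQ(2,-1^2)$, and $\cQ(2,2)$; each of these satisfies $\For(\cQ)$ hyperelliptic, either trivially (genus zero) or by Lanneau's classification \cite{LanneauHyp}. If $(X,\omega)$ is not a torus cover, I would apply Theorem \ref{T:MinimalCover} to obtain the minimal half-translation surface $(Q_{min}, q_{min})$ together with the degree one or two map $(X_{min},\omega_{min}) \to (Q_{min},q_{min})$; the given cover $f$ factors through $\pi_{Q_{min}}$. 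Since $f$ is non-bijective, $\pi_{Q_{min}}$ is non-bijective, so either $\pi_{X_{min}}$ is non-trivial or the minimal quadratic map has degree two. In either case, one produces an involution $J$ on $(X,\omega)$ of derivative $\pm \mathrm{Id}$ whose quotient has strictly smaller dimension. A genus comparison using Riemann--Hurwitz (comparing the cone angles before and after quotienting) forces $(X,\omega)/J$ to have genus zero, and therefore $\For(\cQ)$ to be contained in a hyperelliptic locus; the fact that $\cQ$ is generic in this locus identifies $\For(\cQ)$ with a full hyperelliptic component.

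For the uniqueness statement, suppose $\cQ$ has rank at least two and that $(X,\omega) \in \cQ$ admits two distinct non-bijective half-translation covers $f_1, f_2$. By the argument above each factors through an involution $J_i$ of derivative $\pm \mathrm{Id}$, and distinctness of the covers means $J_1 \ne J_2$. The composition $J_1 \circ J_2$ then has derivative $+\mathrm{Id}$, i.e.\ it is a non-trivial translation automorphism of $(X,\omega)$. Quotienting by the cyclic group it generates, $(X,\omega)$ becomes a non-trivial translation cover of a lower-genus translation surface. Computing the rank of the stratum of this quotient via Lemma \ref{L:Q-rank} and using that rank is preserved under such covers forces $\rk(\cQ) = 1$, contradicting the hypothesis.

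The main obstacle will be the genus-zero conclusion in the non-torus-cover case: one must verify that the involution manufactured from $\pi_{Q_{min}}$ truly has a sphere as its quotient (rather than merely producing a lower-genus surface), which requires using genericity of $(X,\omega)$ in $\cQ$ together with the uniqueness properties in Theorem \ref{T:MinimalCover} to rule out intermediate quotients. Handling the rank one torus cover strata with marked points is routine but requires care to check the claim in each rank one family appearing in Section \ref{SS:FullLociHigherRank}.
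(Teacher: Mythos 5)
The paper does not actually prove this lemma: it cites it from \cite[Lemma~4.5]{ApisaWright} (and, for components of strata of quadratic differentials, from Lanneau \cite[Theorem~1]{LanneauHyp}), so there is no ``paper's proof'' to compare against. Judging your proposal on its own terms, there is a genuine gap in the ``only if'' direction.

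The problematic step is the sentence ``In either case, one produces an involution $J$ on $(X,\omega)$ of derivative $\pm\mathrm{Id}$.'' If $\pi_{X_{min}}$ is nontrivial, it is just some translation covering; it can have degree three or more, and such a cover need not be Galois, let alone yield an involution. Until you rule out this possibility, Riemann--Hurwitz has nothing to bite on, because you do not know the degree of the map. You do flag ``the genus-zero conclusion'' as the obstacle, but the issue is one step earlier: you cannot even manufacture the involution without first controlling the degree. The ingredient that closes both gaps at once is a dimension count in period coordinates: if the generic surface in $\cQ$ covers $(Y,\eta)$, then $\cQ$ is contained in a full locus of covers of the stratum $\cQ'$ of $(Y,\eta)$, and the tangent space of such a locus at $(X,\omega)$ is the pullback $f^*$ of the tangent space at $(Y,\eta)$, so $\dim\cQ\le\dim\cQ'$ (with equality since $\cQ$ is a component of a stratum and hence maximal). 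Only after writing that equality and combining it with Riemann--Hurwitz do you get that the cover has degree two and the quotient has genus at most one, at which point the torus-cover case and the hyperelliptic case fall out. Your appeal to ``genericity of $(X,\omega)$ in $\cQ$ together with the uniqueness properties in Theorem~\ref{T:MinimalCover}'' does not clearly supply this. The uniqueness argument inherits the same dependency: the step ``quotienting by the cyclic group it generates, $(X,\omega)$ becomes a non-trivial translation cover of a lower-genus surface'' forces $\rk(\cQ)=1$ only via the same dimension/period-coordinate identification, which should be stated. Finally, in the torus-cover case the phrasing ``no marked points imposing rigidity'' is unclear; what you mean (and what suffices) is that the rank-one strata are exactly $\cH(0^{n+1})$, $\cQ(-1^4,0^n)$, $\cQ(2,-1^2,0^{n-1})$, $\cQ(2,2,0^{n-2})$, whose $\cF$ are all hyperelliptic.
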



\subsection{Hyperelliptic components of quadratic differentials}\label{SS:Hyp}
 
 Fix $a, b \geq -1$, and consider a surface $(S, q)\in\cQ(a, b, -1^{a+b+4})$ with a zero of order $a$ labelled and a zero of order $b$ labelled. (This labelling is not required if $a$ and $b$ are distinct and not $-1$. We allow $a=0$ or $b=0$, which correspond to marked points.) 
 
The fundamental group of $S$ minus the set of singular points and marked points is generated by loops around the punctures. Let $\phi_{hyp}$ be the map from the fundamental group to $\bZ/2$ that maps the loops around the $a+b+4$ unlabelled poles to 1, maps the loop around $a$ to $a+1 \mod 2$, and maps the loop around $b$ to $b+1 \mod 2$. Let $\phi_{hol}$ be the the map that sends the loop around each singularity to the order of the zero mod 2; this is the holonomy representation of $(S,q)$. 

Let $(X,\omega)$ be the regular $\bZ/2 \times \bZ/2$ cover of $(S,q)$ corresponding to $\ker(\phi_{hyp})\cap \ker(\phi_{hol})$; it is easy to see that this cover has trivial holonomy. Let $J$ denote the involution of $(X,\omega)$ whose quotient is the cover of $(S,q)$ corresponding to $\ker(\phi_{hyp})$, and let $T$ denote the involution whose quotient corresponds to $\ker(\phi_{hol})$. By construction, $J$ and $T$ commute. 
We immediately obtain the following, where we define
$$a^{(1)} =
  \begin{cases}
                                  \{a, a\} & \text{ $a$ odd} \\
                                  \{2a+2\} & \text{ $a$ even} 
  \end{cases}, \quad
 a^{(2)} =
  \begin{cases}
                                  \{a+1\} & \text{ $a$ odd} \\
                                  \{\frac{a}{2}, \frac{a}{2}\} & \text{ $a$ even} 
  \end{cases}$$
for any integer $a\geq -1$. 

\begin{lem}\label{L:HyperellipticCheatSheet}
As indicated in Figure \ref{F:HyperellipticCheatSheet},
\begin{itemize}
\item $(X, \omega) \in \cH(a+1, a+1, b+1, b+1),$
\item $(X,\omega)/J\in \cQ(a^{(1)}, b^{(1)}),$
\item $(X, \omega)/JT \in \cQ(2a+2, 2b+2, -1^{2a+2b+8}),$
\item $(X, \omega)/T \in \cH(a^{(2)}, b^{(2)}).$
\end{itemize}
  \begin{figure}[h!]\centering
\begin{tikzpicture}[baseline= (a).base]
\node[scale=.9] (a) at (0,0){
\begin{tikzcd}
 & \cH(a+1, a+1, b+1, b+1) \arrow[dr, dash, "\operatorname{mod}  T"] \arrow[dl, dash, swap, "\operatorname{mod} J" ] \arrow[d, dash, "\operatorname{mod} JT"] &  \\
\cQ^{hyp}(a^{(1)}, b^{(1)})  \arrow[dr, dash] & \cQ(2a+2, 2b+2, -1^{2a+2b+8}) \arrow[d, dash] & \cH(a^{(2)}, b^{(2)}) \arrow[dl, dash]  \\
& \cQ(a, b, -1^{a+b+4}) &  
\end{tikzcd}
};
\end{tikzpicture}
\caption{
  The set $a^{(i)}$ gives the orders of the preimages of the zero of order $a$, and similarly for $b^{(i)}$. \textbf{Warning:} when $a=-1$ (resp. $b=-1$), then marking the points corresponding to $a+1$ and $2a+2$ (resp. $b+1$ and $2b+2$) is sometimes optional. }
  \label{F:HyperellipticCheatSheet}
\end{figure}
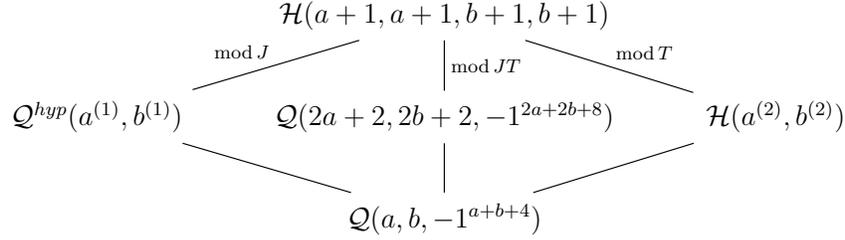
In particular, 
\begin{enumerate}
\item $(X,\omega)/T$ is the holonomy double cover of $(S,q)$, 
\item $(X,\omega)$ is the holonomy double cover of $(X,\omega)/J$, 
\item $(X, \omega)/JT$ has genus zero, so $JT$ is a hyperelliptic involution. 
\end{enumerate}
\end{lem}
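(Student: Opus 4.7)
The plan is to compute each of the four strata by a direct local analysis at every singularity of $(S,q)$, and then read off the three ``in particular'' claims as immediate consequences. The key general fact I will use is that for a double cover of $(S,q)$ associated to a character $\chi: \pi_1(S \setminus \Sigma) \to \bZ/2$, a singularity $p$ is a branch point exactly when $\chi$ sends a small loop around $p$ to $1$; in the local model $q = z^k\,dz^2$ near a point of order $k$, an unbranched double cover produces two separate points each of order $k$, while a branched double cover $w^2 = z$ pulls $q$ back to $4 w^{2k+2}\,dw^2$, i.e.\ a single point of order $2k+2$. If in addition the pulled-back differential has trivial local holonomy around the preimage, we can extract a square root $2 w^{k+1}\, dw$, showing that as an Abelian differential it has a zero of order $k+1$.

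Next, I would apply this to the three double quotients in turn. For $(X,\omega)/J$, the associated character is $\phi_{hyp}$, branched at all $a+b+4$ unlabelled poles (which become regular non-singular points, since the pullback of $dz^2/z$ under $w^2=z$ is $4\,dw^2$) and branched at each labelled zero exactly when its order is even; plugging into the local formulas produces exactly the multisets $a^{(1)}, b^{(1)}$, hence $(X,\omega)/J \in \cQ(a^{(1)}, b^{(1)})$. For $(X,\omega)/T$, the character is $\phi_{hol}$, which by construction produces the holonomy double cover of $(S,q)$; in particular the pulled-back quadratic form is globally a square of an Abelian form, and extracting the square root locally yields the multisets $a^{(2)}, b^{(2)}$. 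For $(X,\omega)/JT$, the character is $\phi_{hyp}+\phi_{hol}$, which vanishes on every unlabelled pole loop (both summands are $1$) and equals $1$ on each labelled zero (since $a$ and $a+1$ have opposite parities); so the unlabelled poles each split into two simple poles while the $a$- and $b$-zeros become single zeros of order $2a+2$ and $2b+2$, giving $\cQ(2a+2, 2b+2, -1^{2a+2b+8})$.

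For $(X,\omega)$ itself, the joint character $(\phi_{hyp}, \phi_{hol}): \pi_1 \to \bZ/2 \times \bZ/2$ sends each labelled zero loop to a nontrivial element of order $2$ (the two coordinates differ in parity), giving ramification index $2$ and two preimages, each of order $a+1$ (resp.\ $b+1$) as an Abelian zero by the square-root computation above; at each unlabelled pole the image is the diagonal $\{(0,0),(1,1)\}$, again of order $2$, giving two regular preimages. Hence $(X,\omega) \in \cH(a+1, a+1, b+1, b+1)$. The three ``in particular'' claims then follow immediately: (1) is the definition of $T$ as the quotient corresponding to $\ker(\phi_{hol})$; (2) holds because $(X,\omega)$ is Abelian while $(X,\omega)/J$ is quadratic and the map between them has degree two, and the holonomy double cover is unique; (3) is the genus computation $4g - 4 = (2a+2) + (2b+2) - (2a+2b+8) = -4$, forcing $g = 0$, so $JT$ is a hyperelliptic involution.

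The main point to be careful about is the degenerate cases $a = -1$ or $b = -1$, where the ``labelled zero'' is itself a pole and its preimages (which are of orders $a+1 = 0$ or $2a+2 = 0$, i.e.\ potential marked points) are allowed but not required to be marked; this is exactly the warning below Figure~\ref{F:HyperellipticCheatSheet}, and requires only a careful bookkeeping of which preimages get marked, not any new ideas beyond the local models above.
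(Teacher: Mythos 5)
Your proposal is correct and takes the same approach the paper has in mind; the paper states the lemma as something one ``immediately obtains'' from the construction of $\phi_{hyp}$ and $\phi_{hol}$, and your local ramification computations (branched iff the relevant character evaluates to $1$ on the small loop, with the local model $w^2 = z$ pulling $z^k\,dz^2$ back to $4w^{2k+2}\,dw^2$ and, when trivial holonomy allows, the Abelian square root $2w^{k+1}\,dw$) are exactly the details being suppressed. The degree-count argument for genus zero in item (3) and the uniqueness of the holonomy double cover in item (2) are likewise the intended justifications, so there is nothing to add.
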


As indicated in Figure \ref{F:HyperellipticCheatSheet}, $(X, \omega)/J$ is contained in a hyperelliptic connected component, and moreover  Lanneau \cite[Definition 1]{LanneauHyp} proved that all hyperelliptic components of strata of quadratic differentials are obtained in this way. More precisely, Lanneau proved that any surface in a hyperelliptic component of a stratum of quadratic differentials can be obtained as the cover of a surface in $\cQ(a, b, -1^{a+b+4})$ for some $a, b\geq -1$ corresponding to $\phi_{hyp}$. 

We now make some basic uniqueness observations, which are very intuitive but nonetheless require verification. 

\begin{lem}\label{L:HypSymmetries}
Suppose that $\cQ := \cQ^{hyp}(a^{(1)}, b^{(1)}) \ne \cQ(-1^4)$. Then the maps in $\{ \mathrm{id}, J, T, JT \}$ are the only affine self-maps of derivative $\pm \mathrm{Id}$ for a generic surface in a quadratic double of $\cQ$. In particular, a generic surface in a quadratic double of $\cQ$ has a unique hyperelliptic involution.
\end{lem}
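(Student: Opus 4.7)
The plan is to invoke Lemma \ref{L:InvolutionImpliesHyp} applied to the base $Q := (X,\omega)/J \in \cQ = \cQ^{hyp}(a^{(1)}, b^{(1)})$. Since $\cQ \neq \cQ(-1^4)$, the surface $(X,\omega) \in \cH(a+1,a+1,b+1,b+1)$ has genus $g = a + b + 3 \geq 2$, and in the main case $\cQ$ has rank at least two, so Lemma \ref{L:InvolutionImpliesHyp} gives that a generic $Q$ has only the identity and its hyperelliptic involution $H$ as affine self-maps of derivative $\pm\mathrm{Id}$.

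I would first handle the routine descent: if an affine self-map $f$ of $(X,\omega)$ of derivative $\pm\mathrm{Id}$ commutes with $J$, then $f$ descends to an affine self-map $\bar f$ of $Q$, forcing $\bar f \in \{\mathrm{id}, H\}$. The lifts of $\mathrm{id}$ to $(X,\omega)$ are the two deck transformations $\{\mathrm{id}, J\}$, and the lifts of $H$ are $\{T, JT\}$ (since $T$ descends to $H$ by construction). So $f \in \{\mathrm{id}, J, T, JT\}$.

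The main step, which I expect to be the main obstacle, is showing every such $f$ must commute with $J$. Supposing for contradiction that $J' := fJf^{-1} \neq J$, the conjugate $J'$ is a distinct involution of $(X,\omega)$ of derivative $-\mathrm{Id}$ whose quotient is isomorphic to $Q$ via $f$ and so also lies in $\cQ$. Note $J' \neq JT$, since $(X,\omega)/JT$ lies in the different stratum $\cQ(2a+2, 2b+2, -1^{2a+2b+8})$. Consequently $JJ'$ would be a translation automorphism of $(X,\omega)$ outside $\{\mathrm{id}, T\}$. To derive a contradiction, I plan to show that such an enlarged group of translation automorphisms exhibits $(X,\omega)$ as a cover of a surface in a stratum of strictly smaller dimension than $\cQ$, placing $(X,\omega)$ in a proper closed sub-locus of the quadratic double, contradicting genericity.

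For the rank-one exceptional cases (such as $\cQ^{hyp}(2,2)$ or $\cQ^{hyp}(2,-1,-1)$) where Lemma \ref{L:InvolutionImpliesHyp} cannot be applied directly to $Q$, I would instead invoke the classical uniqueness of the hyperelliptic involution $JT$ on $(X,\omega)$ (valid since $g \geq 2$) to deduce that $f$ commutes with $JT$, and then reduce to analyzing the Mobius symmetries of the genus-zero quotient $(X,\omega)/JT \in \cQ(2a+2, 2b+2, -1^{2a+2b+8})$; descending further to the common base $(S,q) \in \cQ(a, b, -1^{a+b+4})$, whose generic Mobius automorphism group preserving the labeled singularities is trivial, yields the analogous conclusion.
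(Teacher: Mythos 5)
Your opening idea is sound and close in spirit to the paper's: use Lemma \ref{L:InvolutionImpliesHyp} to rule out extra half-translation covers. But you apply it to $Q=(X,\omega)/J\in\cQ$, whereas the paper applies it to the genus-zero quotient $(X,\omega)/G$ with $G=\{\mathrm{id},J,T,JT\}$. The paper's choice is slicker: writing $H$ for the full group of affine maps of derivative $\pm\mathrm{Id}$, the cover $(X,\omega)/G\to(X,\omega)/H$ is non-bijective whenever $G\neq H$, so Lemma \ref{L:InvolutionImpliesHyp} forces the genus-zero stratum $\cQ(a,b,-1^{a+b+4})$ to have $\For=\cQ(-1^4)$, i.e.\ $a,b\in\{-1,0\}$. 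This isolates $\cQ(2,-1^2)$ and $\cQ(2,2)$ in one shot, for all ranks, with no case split on whether $f$ commutes with $J$.

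Your ``main step'' (forcing $fJf^{-1}=J$) is not actually carried out: you sketch a dimension-count/genericity plan that is vaguer than necessary, when in fact the argument you want is precisely another application of Lemma \ref{L:InvolutionImpliesHyp}. Namely, $(X,\omega)\to(X,\omega)/\langle J, J'\rangle$ factors through $Q$, so $Q\to(X,\omega)/\langle J,J'\rangle$ is a non-bijective half-translation cover; when $\cQ$ has rank $\geq 2$ this must be the hyperelliptic quotient of $Q$, giving $\langle J,J'\rangle=G$ and hence $J'\in\{J,JT\}$, and your stratum comparison kills $J'=JT$. That patch makes the rank-$\geq 2$ case work. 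The genuine unrepaired gap is the rank-one case: you observe that $f$ commutes with the central hyperelliptic involution $JT$, so $f$ descends to $\bar f$ on $(X,\omega)/JT$, but you then ``descend further to $(S,q)$.'' That second descent requires $\bar f$ to commute with the covering involution $(X,\omega)/JT\to(S,q)$ (equivalently, that $f$ normalizes $G$), which is exactly what needs to be proved and is not established. Note also that $(X,\omega)/JT$ is \emph{not} generic in $\cQ(2a+2,2b+2,-1^{2a+2b+8})$ (the quadratic double has smaller dimension than that stratum), so you cannot invoke generic triviality of symmetries there either. The paper handles $\cQ(2,-1^2)$ and $\cQ(2,2)$ by an elementary direct argument: perturb using the standard dilation on a subequivalence class $\bfC$ consisting of two isometric simple cylinders so that no other parallel isometric cylinders exist; then every element of $H$ preserves $\bfC$, and since a holomorphic self-map is determined by its action on $\bfC$, there are at most $4$ elements of $H$, forced to be $G$. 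Some such base-case argument is needed to complete your proof.
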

The hyperelliptic involution is $JT$, which was proven above to be a hyperelliptic involution. 

\begin{rem}\label{R:HypSymmetry-SpecialCase}
When $(X, \omega)$ belongs to quadratic double of $\cQ(-1^4)$ and precisely two preimages of poles are marked on $(X, \omega)$ the result continues to hold if the affine symmetries are required to preserve the collection of marked points. An affine symmetry has derivative $\pm \mathrm{Id}$ and may either fix or exchange the two marked points on $(X, \omega)$. These four possibilities each correspond to an affine symmetry.
\end{rem}

\begin{proof}
Let $(X, \omega)$ be a generic surface in a quadratic double of $\cQ$, let $G = \{\mathrm{id}, J, T, JT\}$, and let $H$ be the collection of all affine self-maps of derivative $\pm \mathrm{Id}$. 

As seen in Figure \ref{F:HyperellipticCheatSheet}, $(X, \omega)/G$ is a generic surface in a genus zero stratum $\cQ' = \cQ(a, b, -1^{a+b+4})$. If $G \ne H$, then there is a non-bijective half-translation cover $(X, \omega)/G \ra (X, \omega)/H$. By Lemma \ref{L:InvolutionImpliesHyp}, $\For(\cQ')$ is hyperelliptic and hence equal to $\cQ(-1^4)$, which is the only hyperelliptic genus zero stratum. This shows that $a, b \in \{0, -1\}$ and so $\cQ = \cQ(2,-1^2)$ or $\cQ = \cQ(2,2)$.

In both cases, $(X, \omega)$ contains a subequivalence class $\bfC$ consisting of a pair of isometric simple cylinders. By perturbing (using the standard dilation in $\bfC$) we may suppose without loss of generality that there are no cylinders on $(X, \omega)$ that are parallel and isometric to those in $\bfC$. Therefore, every element of $H$ preserves the cylinders in $\bfC$. Since every element of $H$ is a holomorphic self-map of $X$, two elements of $H$ are equal if their action on $\bfC$ is identical. The only possible actions of an element of $H$ on $\bfC$ is that it fixes or exchanges the two cylinders and has derivative $\pm \mathrm{Id}$. These four possibilities correspond to the four elements of $G$, showing that $G = H$ as desired. 
\end{proof}

We now provide a proof of Lemma \ref{L:QHolonomy} that we previously deferred. We restate the lemma here for convenience. 

\begin{lem}\label{L:QHolonomy-redux}
Let $(X, \omega)$ be a generic surface in a quadratic double of a component $\cQ$ of a stratum of quadratic differentials. If $\For(\cQ) \ne \cQ(-1^4)$, then there is a unique involution $J$ of derivative $-\mathrm{Id}$ such that $(X, \omega)/J$ is a generic surface in a component of a stratum of quadratic differentials. 

If $\For(\cQ) = \cQ(-1^4)$ and $(X, \omega)$ has at least one marked point, then there is a unique marked-point preserving involution $J$ of derivative $-\mathrm{Id}$ such that $(X, \omega)/J$ is a generic surface in a component of a stratum of quadratic differentials.
\end{lem}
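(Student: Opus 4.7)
The plan is to handle existence directly and prove uniqueness by case analysis on $\For(\cQ)$. Existence is immediate: the defining covering map of the quadratic double provides an involution $J$ of derivative $-\mathrm{Id}$ with $(X, \omega)/J = (X, q) \in \cQ$ generic, since $(X, \omega)$ is generic in its orbit closure. For uniqueness, suppose $J'$ is another candidate involution with $(X, \omega)/J' = (Y', \eta')$ generic in some stratum $\cQ'$ of quadratic differentials.

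In the hyperelliptic case with $\For(\cQ) \ne \cQ(-1^4)$, Lemma \ref{L:HypSymmetries} forces $J' \in \{J, JT\}$, where $T$ is the translation involution from Figure \ref{F:HyperellipticCheatSheet}. The choice $J' = JT$ is excluded by a dimension count: the cheat sheet identifies $(X, \omega)/JT$ as the holonomy double cover of $(X, \omega)/\langle J, T\rangle \in \cQ(a, b, -1^{a+b+4})$, so its orbit closure is a quadratic double of dimension $a+b+4$, strictly less than $\dim \cQ(2a+2, 2b+2, -1^{2a+2b+8}) = 2(a+b+4)$ since $a, b \ge -1$. In the non-hyperelliptic case with $(X, \omega)$ not a torus cover, I would invoke Theorem \ref{T:MinimalCover} together with Lemma \ref{L:InvolutionImpliesHyp}: genericity of $(X, q)$ in non-hyperelliptic $\cQ$ precludes any non-bijective half-translation cover from $(X, q)$, forcing $(X, q) = (Q_{min}, q_{min})$ with $\pi_J$ the degree-$2$ minimal quadratic quotient; any $\pi_{J'}$ factors through $\pi_{Q_{min}}$, a degree count then yields $(Y', \eta') \cong (X, q)$, and the relation $\pi_{J'}(J'x) = \pi_{J'}(x)$ translates into $J'x \in \{x, Jx\}$ for all $x$, hence $J' = J$.

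Finally, when $\For(\cQ) = \cQ(-1^4)$ (and, as a subcase of the previous paragraph, when $(X, \omega)$ is a torus cover, which can occur for rank-one non-hyperelliptic $\cQ$ such as $\cQ(1, -1, 0^m)$), the surface $(X, \omega)$ is a flat torus $\bC/\Lambda$ and every derivative $-\mathrm{Id}$ involution has the form $J_c : z \mapsto -z + c$ for some $c \in \bC/\Lambda$. The marked points on $(X, \omega)$ consist of a subset $S_0$ of the four $2$-torsion preimages of poles together with $J$-symmetric pairs of preimages of marked points of $(X, q)$, where $J = J_{c_0}$ corresponds to some fixed $c_0$. Preservation of the marked set under $J_c$ yields a combinatorial constraint on $c$, but several values of $c$ may satisfy it (for instance when $S_0$ has a large translation stabilizer); the main obstacle, and the reason the hypothesis of at least one marked point matters, is to show in each configuration that only $c = c_0$ yields a quotient $(X, \omega)/J_c$ generic in a stratum of quadratic differentials. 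I expect this to follow from dimension comparisons between the orbit closure of $(X, \omega)/J_c$, which will be a quadratic double of some lower-dimensional stratum, and the ambient stratum of its quotient.
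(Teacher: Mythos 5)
Your case analysis for $\For(\cQ)\ne\cQ(-1^4)$ diverges from the paper and has a genuine gap. The paper avoids a split into hyperelliptic/non-hyperelliptic by observing that the existence of two candidate involutions $J_1,J_2$ produces a non-bijective half-translation cover $(X,\omega)/J_1\to(X,\omega)/\langle J_1,J_2\rangle$ from a \emph{generic} surface in $\cQ$; Lemma~\ref{L:InvolutionImpliesHyp} (the ``only if'' direction, valid in all ranks) then forces $\For(\cQ)$ to be hyperelliptic, after which Lemma~\ref{L:HypSymmetries} and a short genus-zero argument finish. Your hyperelliptic-subcase dimension count (orbit closure of $(X,\omega)/JT$ sits inside a quadratic double of $\cQ(a,b,-1^{a+b+4})$, which has strictly smaller dimension than the ambient stratum of the quotient) is a legitimate alternative to the paper's application of Lemma~\ref{L:InvolutionImpliesHyp} at that step, and your minimal-cover argument for the non-hyperelliptic, non-torus-cover subcase is also fine. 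The problem is the non-hyperelliptic torus-cover subcase (e.g.\ $\cQ=\cQ(1,-1,0^m)$): you fold it into the final paragraph and then assert ``the surface $(X,\omega)$ is a flat torus $\bC/\Lambda$,'' but this is false --- a torus \emph{cover} need not be a torus, and here $(X,\omega)$ is the holonomy double cover of a genus-one surface and so has genus $2$. The subcase is in fact vacuous (no second involution $J'$ can exist there), but that conclusion requires exactly the Lemma~\ref{L:InvolutionImpliesHyp} argument above; your proposal neither proves vacuity nor gives a valid argument for that geometry.

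Separately, the $\For(\cQ)=\cQ(-1^4)$ case is left as a plan (``I expect this to follow from dimension comparisons'') rather than a proof, and the paper takes a different, more concrete route there: if $(X,\omega)$ contains a slope-$(-1)$ irreducible pair of marked points, deforming that pair while fixing the others pins down $J'$ on the pair and hence globally; otherwise all marked points are $J$-fixed, and if $J'\ne J$ its fixed locus is disjoint from $J$'s, so the $J$-fixed marked points descend to periodic (hence non-free) marked points on $(X,\omega)/J'$, contradicting genericity in a stratum. You would need to carry out your dimension comparison over all configurations of $S_0$ and check it actually isolates $c_0$; as written this is a sketch, not a proof.
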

\begin{proof}
Suppose in order to derive a contradiction that $(X, \omega)$ is a generic surface in a quadratic double of $\cQ$ where $\For(\cQ) \ne \cQ(-1^4)$ and where there are two involutions, $J_1$ and $J_2$, of derivative $-\mathrm{Id}$ such that $(X, \omega)/J_i$ is a generic surface in a component of a stratum of quadratic differentials. Suppose that $(X, \omega)/J_1$ is a generic surface in $\cQ$. It admits a non-bijective half-translation cover, $(X, \omega)/J_1 \ra (X, \omega)/\langle J_1, J_2 \rangle$ and so (by Lemma \ref{L:InvolutionImpliesHyp}) $\For(\cQ)$ is a hyperelliptic component. 
Since $\For(\cQ) \ne \cQ(-1^4)$ there are exactly two affine self-maps of $(X, \omega)$ of derivative $-\mathrm{Id}$ (by Lemma \ref{L:HypSymmetries}). Using the notation of Lemma \ref{L:HypSymmetries} we will denote these involutions by $J$ and $JT$ and note that there are integers $a$ and $b$ both greater than or equal to $-1$ such that $\For(X, \omega)/J$ belongs to $\cQ^{hyp}(a^{(1)}, b^{(1)})$ and $\For(X, \omega)/JT$ belongs to $\cQ(2a+2, 2b+2, -1^{2a+2b+8})$. Since $\For(X, \omega)/JT$ is a generic genus zero surface that admits a map to a surface in $\cQ(a, b, -1^{a+b+4})$ we have by Lemma \ref{L:InvolutionImpliesHyp} that $a = b = -1$ (the only hyperelliptic genus zero stratum is $\cQ(-1^4)$). This contradicts the assumption that $\For(\cQ) \ne \cQ(-1^4)$.

Suppose now that $(X, \omega)$ belongs to a quadratic double of $\cQ$ where $\For(\cQ) = \cQ(-1^4)$. This implies that $(X, \omega)$ is a flat torus equipped with an involution $J$ of derivative $-\mathrm{Id}$ that preserves the marked points on $(X, \omega)$ and where $J$-invariance is the only constraint on these marked points. Suppose that $J'$ is also an involution of derivative $-\mathrm{Id}$ that preserves the set of marked points and such that $(X, \omega)/J'$ is a generic surface in a component of a stratum of quadratic differentials. We wish to show that $J = J'$.

If $(X, \omega)$ contains a slope $-1$ irreducible pair of marked points then it is possible to move these marked points while fixing all others. This shows that $J'$ would have to preserve this pair of points and hence $J' = J$. If $(X, \omega)$ contains no slope $-1$ irreducible pair of marked points, then all of its marked points are fixed points of $J$. If $J \ne J'$ then the fixed points of $J'$ are disjoint from the fixed points of $J$ and so the fixed points of $J$ map to periodic marked points on $(X, \omega)/J'$ contradicting the fact that this surface is generic in a stratum of quadratic differentials.
\end{proof}

\subsection{Simultaneous quadratic and Abelian doubles}\label{SS:simul} 
\begin{lem}\label{L:QuadAbDouble}
An invariant subvariety $\cM$ is both a quadratic and an Abelian double if and only if it is a quadratic double of $\cQ^{hyp}(a^{(1)}, -1^2)$ where $a \geq -1$ is an integer; if $a=-1$ then either the surfaces in $\cM$ have no marked points or the preimages of both poles in $a^{(1)}$ are the only marked points. Otherwise, the surfaces in $\cM$ have no marked points.
\end{lem}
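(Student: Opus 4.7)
The forward direction is by construction. Given $\cM$ a quadratic double of $\cQ^{hyp}(a^{(1)},-1^2)$ with holonomy involution $J$, the hyperelliptic involution on the base preserves the quadratic differential, so it lifts through the holonomy double cover to an affine automorphism of $(X,\omega)$ whose derivative may be taken to be $+I$; call this involution $T$. Then $T$ and $J$ commute, the quotients by $J$, $T$, and $JT$ reproduce the four-fold structure of Figure~\ref{F:HyperellipticCheatSheet}, and $(X,\omega)/T$ lies in $\cH(a^{(2)},b^{(2)})=\cH(a^{(2)},0)$. The prescribed marked-point configuration is exactly what is needed to see $\cM$ as the Abelian double of $\cH(a^{(2)})$ (when the cover has no marked points) or of $\cH(a^{(2)},0)$ (in the extra $a=-1$ case where the two preimages of the poles in $a^{(1)}$ are marked).

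For the backward direction, let $T$ (derivative $+I$) and $J$ (derivative $-I$) denote the Abelian and quadratic deck involutions respectively. The first step is to show that $J$ and $T$ commute. When $\For(\cQ)\neq\cQ(-1^4)$, Lemma~\ref{L:HypSymmetries} states that the full affine group of derivative $\pm I$ on a generic surface is a Klein four-group $\{1,J,T',JT'\}$ containing a unique non-identity $+I$-involution, so $T=T'$. The case $\For(\cQ)=\cQ(-1^4)$, where the holonomy involution is not unique, requires a separate direct argument on torus covers, using Remark~\ref{R:HypSymmetry-SpecialCase} and Lemma~\ref{L:QHolonomy-redux} in place of Lemma~\ref{L:HypSymmetries}. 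Once $J$ and $T$ are known to commute, the Lanneau construction recalled in Lemma~\ref{L:HyperellipticCheatSheet} realizes $\cM$ at the top of a four-fold $\bZ/2\times\bZ/2$-cover, with $(X,\omega)/J\in\cQ^{hyp}(a^{(1)},b^{(1)})$ and $(X,\omega)/T\in\cH(a^{(2)},b^{(2)})$ for some integers $a,b\geq-1$.

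The main obstacle, and the step that forces $b=-1$, is a dimension match. A parity-by-parity computation yields $\dim\cQ^{hyp}(a^{(1)},b^{(1)})=a+b+4$ in every case, whereas $\dim\cH(a^{(2)},b^{(2)})=a+b+5$. Since $\cM$ has one dimension and must realize both structures, $\cM$ must be a full locus of covers of a stratum obtained from $\cH(a^{(2)},b^{(2)})$ by omitting exactly one marked point. This is only possible when $a^{(2)}$ or $b^{(2)}$ contains a $0$, i.e. $a=-1$ or $b=-1$; relabel so that $b=-1$. A final inventory of which preimages of poles of $(X,\omega)/J$ may be marked on surfaces of $\cM$, subject to $\GL$-equivariance (which forces markings to respect the $\bZ/2\times\bZ/2$-orbit structure on fibers), yields precisely the configurations in the statement: no marked points when $a>-1$, and either no marked points or exactly the two preimages of the poles in $a^{(1)}$ marked when $a=-1$.
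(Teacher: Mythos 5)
Your overall skeleton matches the paper's: establish $J$ and $T$ commute (quoting Lemma~\ref{L:HypSymmetries} away from $\cQ(-1^4)$ and a separate argument there), identify the Lanneau $\bZ/2\times\bZ/2$ picture of Figure~\ref{F:HyperellipticCheatSheet}, and then derive $b=-1$ and the allowed marked-point configurations. The route you take to $b=-1$ is genuinely different: you propose a dimension count $\dim\cQ^{hyp}(a^{(1)},b^{(1)})=a+b+4$ versus $\dim\cH(a^{(2)},b^{(2)})=a+b+5$, whereas the paper argues that the marked-point set of $(X,\omega)/T$ must be invariant under the hyperelliptic involution, and that the only strata whose generic surfaces are hyperelliptic with involution-invariant marked points are $\cH^{hyp}(2g-2)$ and $\cH^{hyp}(g-1,g-1)$; since $a^{(2)}$ already uses up that invariant set, $b^{(2)}$ cannot be marked, forcing $b=-1$.

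However, there is a genuine gap in your argument that the paper handles by a dedicated sublemma (its SL:QuadAbDouble:MarkedPoints). You write ``the Lanneau construction \ldots realizes $\cM$ at the top of a four-fold $\bZ/2\times\bZ/2$-cover, with $(X,\omega)/J\in\cQ^{hyp}(a^{(1)},b^{(1)})$,'' but a priori the quadratic double base $\cQ$ could carry free marked points (which would give slope $\pm1$ irreducible pairs on $(X,\omega)$), and $(X,\omega)/J$ could also have marked points at the regular preimages of the poles of $(S,q)$. If either of these occurs your dimension formulas no longer compute $\dim\cQ$ or the dimension of the Abelian double base. You therefore need the paper's observation that the set of marked points on $(X,\omega)$ is forced to be $T$-invariant (Abelian double side) while the quadratic double side would make any non-pole marked point part of a slope $-1$ irreducible pair; these constraints are incompatible, so the only marked points on $(X,\omega)$ are preimages of poles of $(X,\omega)/J$, and $\For(\cQ)=\cQ$. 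Only once this is in place are your formulas for the two dimensions valid. The phrase ``which forces markings to respect the $\bZ/2\times\bZ/2$-orbit structure on fibers'' is too vague to supply this; the actual constraint is the slope dichotomy for irreducible pairs (Lemma~\ref{L:ApisaWright}), which is precisely how the paper excludes non-pole marked points. Relatedly, ``omitting exactly one marked point'' needs more care: one must also explain why when $a$ or $b$ is $0$ the points in $a^{(2)}$ (resp.\ $b^{(2)}$) cannot be dropped --- they are ramification values of $(X,\omega)\to(X,\omega)/T$ and so must remain marked by Definition~\ref{D:Covering}, and the hyperelliptic involution exchanges them in pairs so you cannot omit just one.
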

\begin{rem}
By Figure \ref{F:HyperellipticCheatSheet}, when $a > -1$, $\cM$ is an Abelian double of $\cH^{hyp}(a^{(2)})$. When $a = -1$, if the surfaces in $\cM$ have marked points then $\cM$ is an Abelian double of $\cH(0)$; if not, then $\cM$ is an Abelian double of $\cH(\emptyset)$. 
\end{rem}

\begin{proof}
Suppose that $\cM$ is an Abelian double of $\cH$ and a quadratic double of $\cQ$. Let $(X, \omega)$ be a generic surface in $\cM$ with involutions $J$ and $T$ such that $(X, \omega)/T$ (resp. $(X, \omega)/J$) is a generic surface in $\cH$ (resp. $\cQ$). 

\begin{sublem}\label{SL:QuadAbDouble:MarkedPoints}
Any marked points on $(X, \omega)$ are preimages of poles on $(X, \omega)/J$. When $\For(\cQ) \ne \cQ(-1^4)$ there are no marked points at all. In all cases $\For(\cQ) = \cQ$.
\end{sublem}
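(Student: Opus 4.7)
The plan is to prove the three claims using a dimension count based on $\dim\cM=\dim\cH=\dim\cQ$, combined with the uniqueness of the holonomy involution (Lemma \ref{L:QHolonomy}) and the classification of involutions on generic surfaces (Lemma \ref{L:InvolutionImpliesHyp-background}).

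To establish (c), I would suppose for contradiction that $(Y,q)=(X,\omega)/J$ has a free marked point $\bar p$, with preimages $p,J(p)$ on $(X,\omega)$, both marked by the quadratic double condition. Since $T$ is a translation involution it has no fixed points, so $T(p)\neq p$, and the Abelian double condition forces $T(p)$ to be marked as well. First I would verify that $J$ and $T$ commute: $T^{-1}JT$ is another involution of $(X,\omega)$ of derivative $-\mathrm{Id}$ whose quotient is canonically identified with $(Y,q)$ via $T$, and by the uniqueness in Lemma \ref{L:QHolonomy} (or Remark \ref{R:HypSymmetry-SpecialCase} when $\For(\cQ)=\cQ(-1^4)$) this forces $T^{-1}JT=J$. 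The key step is then to show $T(p)=J(p)$: counting dimensional contributions to $\cH$ (one per $T$-orbit of marked points on $(X,\omega)$, giving a free marked point on $(Y,\eta)$) versus to $\cQ$ (one per $J$-orbit, giving a free marked point on $(Y,q)$), the equality $\dim\cH=\dim\cQ$ forces the numbers of $T$- and $J$-orbits among the $2n$ marked points of $(X,\omega)$ to coincide, and because these orbits have size two a quick pigeonhole (which also handles the boundary case $\cH_0=\cH(\emptyset)$ separately) makes each $T$-orbit coincide with a $J$-orbit. Since $J$ and $T$ commute, $T$ descends to an involution $\bar T$ on $(Y,q)$, and $T(p)=J(p)$ gives $\bar T(\bar p)=\bar p$. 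By Lemma \ref{L:InvolutionImpliesHyp-background} the existence of such a nontrivial $\bar T$ forces $\For(\cQ)$ to be hyperelliptic and $\bar T$ to be the hyperelliptic involution, so $\bar p$ is constrained to the finite Weierstrass set; this contradicts the freedom of $\bar p$ to vary in the stratum $\cQ$.

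Claim (a) is then immediate: marked points of $(X,\omega)$ map under the quotient $(X,\omega)\to(Y,q)$ to marked points or simple poles of $(Y,q)$ by items (\ref{D:Covering:2}) and (\ref{D:Covering:4}) of Definition \ref{D:Covering}, and by (c) they must lie over poles. For claim (b) I would assume $\For(\cQ)\neq\cQ(-1^4)$ and that $(X,\omega)$ has a marked point; by (a) and commutativity of $J$ and $T$, the marked points form $k\geq 1$ $T$-orbits of size two consisting of $J$-fixed pole preimages, each contributing one free marked point on $(Y,\eta)$. The hypothesis $\For(\cQ)\neq\cQ(-1^4)$ guarantees that the stratum of $(X,\omega)$ with the pole preimages forgotten still has at least one singular point, so the dimension formula $\dim\cH=\dim\cH_0+k$ applies; together with $\dim\cH_0=\dim\cQ$ (applied to the $\cM_0$ obtained by forgetting all pole-preimage markings) and $\dim\cH=\dim\cQ$, this forces $k=0$, a contradiction.

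The main obstacle will be carefully handling the pillowcase case $\For(\cQ)=\cQ(-1^4)$ in (c), where Lemma \ref{L:QHolonomy} no longer gives uniqueness of $J$ and the dimension formula for $\cH$ has an exception when $(X,\omega)$ is a torus with no zeros; both must be treated via Remark \ref{R:HypSymmetry-SpecialCase} and a direct pigeonhole on $T$-orbits among the preimages of the marked points of $(Y,q)$.
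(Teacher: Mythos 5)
Your proposal takes a genuinely different route from the paper's, and the key step contains a real gap.

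The paper's proof of part (c) is essentially a one-liner using slopes. If $(Y,q)$ had a free marked point, its two preimages $p,J(p)$ would form a slope $-1$ irreducible pair on $(X,\omega)$, which can be moved while fixing the unmarked surface and all other marked points. Since $T$ preserves the set of marked points, and along this deformation $T$ sends moving points to moving points (by continuity), $T$ must exchange $p$ and $J(p)$. But then $\{p,J(p)\}$ is exchanged by the translation $T$, hence has slope $+1$, contradicting slope $-1$. This also directly gives (a), since for a quadratic double the only irreducible components of the point marking are slope $-1$ pairs and pole-preimage singletons. Part (b) then follows because pole preimages are periodic, and Abelian doubles of non-torus strata cannot have periodic marked points (a free marked point downstairs cannot give a periodic point upstairs).

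Your argument has a genuine gap at the ``quick pigeonhole'' step. The dimension count $\dim\cH=\dim\cQ$, done carefully, does show that the number of $T$-orbits equals the number of $J$-orbits among the marked points of $(X,\omega)$, which (counting sizes) forces the number of $J$-fixed marked points to be zero. But this is a count, not a matching: two partitions of $2n$ points into $n$ pairs need not coincide (for instance $\{1,2\},\{3,4\}$ versus $\{1,3\},\{2,4\}$), and the group generated by the commuting involutions $T,J$ can a priori have orbits of size $4$. Nothing in a pigeonhole rules out such a size-$4$ orbit, and in that case $T(p)\neq J(p)$ and the rest of your argument does not get off the ground. To fix this you need an argument like the paper's: along the slope-$-1$ deformation the map $T$ must send the moving pair to itself. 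Equivalently, in a size-$4$ orbit $\{p,Tp,Jp,TJp\}$ the two free downstairs marked points $\overline{p}$ and $\overline{Jp}$ in $\cH$ would be constrained by $J$, contradicting that $\cM$ is a \emph{full} locus of covers of $\cH$; this is a freeness argument, not a pigeonhole. Finally, even granting $T(p)=J(p)$, the detour through Lemma~\ref{L:InvolutionImpliesHyp-background} to conclude $\bar{T}$ is the hyperelliptic involution requires $\cQ$ to have rank at least two and needs a separate argument in the low-rank and $\cQ(-1^4)$ cases, whereas the paper obtains the contradiction directly from the incompatibility of slope $+1$ and slope $-1$, with no case split.
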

\begin{proof}
If the marked points on $(X, \omega)$ include a slope $-1$ irreducible pair of points, then we may move these points while fixing all others. Since the collection of marked points must be fixed by $T$ (since $(X, \omega)$ belongs to an Abelian double), these two points must be exchanged by $T$ and hence form a slope $+1$ irreducible pair of marked points, a contradiction. Since $(X, \omega)$ belongs to a quadratic double and does not contain a slope $-1$ irreducible pair of points, the only marked points are preimages of poles. 

When $\For(\cQ) \ne \cQ(-1^4)$, $(X, \omega)$ is not a torus. Abelian doubles that are not loci of tori cannot contain periodic points as marked points. 
\end{proof}


Suppose first that $\cQ \ne \cQ(-1^4)$. The existence of the map $(X, \omega)/J \ra (X, \omega)/\langle J, T \rangle$ implies (by Lemma \ref{L:InvolutionImpliesHyp}) that $\cQ$ is hyperelliptic and hence that $\cQ = \cQ^{hyp}\left( a^{(1)}, b^{(1)} \right)$ for some integers $a \geq b \geq -1$. By Lemma \ref{L:HypSymmetries}, the affine symmetries of $(X, \omega)$ of derivative $\pm \mathrm{Id}$ are exactly those in $\{\mathrm{id}, J, T, JT \}$.

As illustrated in Figure \ref{F:HyperellipticCheatSheet}, $(X, \omega)/T$, which is a generic surface in $\cH$, is hyperelliptic and belongs to $\cH\left( a^{(2)}, b^{(2)} \right)$. Notice that when $a$ or $b$ is zero the points in $a^{(2)}$ (resp. $b^{(2)}$) must be marked since they are branch points of the map $(X, \omega) \ra (X, \omega)/T$ (see Figure \ref{F:HyperellipticCheatSheet}).

\begin{sublem}
The marked points on $(X, \omega)/T$ are invariant by the hyperelliptic involution. 
\end{sublem}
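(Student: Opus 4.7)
The plan is to reduce the statement to showing that every marked point of $(X,\omega)$ is already fixed by $J$; once this is in hand, the claim follows by pushing these $J$-fixed points to $(X,\omega)/T$ and checking that the descent of $J$ on $(X,\omega)/T$ is the hyperelliptic involution.

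First, I would verify that $J$ descends to an involution $\bar J$ on $(X,\omega)/T$: this is immediate because $J$ and $T$ commute (from the construction at the start of Section~\ref{SS:Hyp}, where both were built as deck transformations of a single regular $\bZ/2\times\bZ/2$-cover). Moreover, $\bar J$ is a hyperelliptic involution of $(X,\omega)/T$ because the further quotient $(X,\omega)/\langle J,T\rangle = (X,\omega)/\langle JT,T\rangle$ has genus zero (as recorded in Figure~\ref{F:HyperellipticCheatSheet}, and as used already in identifying $JT$ with the hyperelliptic involution of $(X,\omega)$). One could alternatively observe that $\bar J$ is nothing but the image of $JT$ under the quotient $(X,\omega)\to(X,\omega)/T$, so $\bar J$ is literally the descended hyperelliptic involution.

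Next, I would apply Sublemma~\ref{SL:QuadAbDouble:MarkedPoints}: every marked point of $(X,\omega)$ is a preimage of a pole of the quadratic differential $(X,\omega)/J\in\cQ$. Since $(X,\omega)\to(X,\omega)/J$ is the holonomy double cover, it is branched at every pole (poles always have nontrivial $\bZ/2$ holonomy in a quadratic differential). Therefore each pole has a unique preimage on $(X,\omega)$, and that preimage is automatically fixed by $J$. It follows that every marked point of $(X,\omega)$ is $J$-fixed, so its image on $(X,\omega)/T$ is fixed by $\bar J$, i.e.\ pointwise fixed by the hyperelliptic involution, and in particular the set of marked points is preserved.

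The only real subtlety is the identification of $\bar J$ with the hyperelliptic involution on $(X,\omega)/T$; this is handled by Figure~\ref{F:HyperellipticCheatSheet} and the commutativity of $J$ and $T$. The remaining input, that poles are branch points of the holonomy double cover, is standard. Everything else is bookkeeping, so no serious obstacle is expected.
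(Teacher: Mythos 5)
Your argument has a concrete gap: it implicitly assumes that the marked points on $(X,\omega)/T$ are exactly the images of marked points on $(X,\omega)$, which is not true here. The sublemma sits inside the case $\cQ\ne\cQ(-1^4)$ of the proof of Lemma \ref{L:QuadAbDouble}, where Sublemma \ref{SL:QuadAbDouble:MarkedPoints} already establishes that $(X,\omega)$ has \emph{no} marked points at all. So the chain ``every marked point of $(X,\omega)$ is a preimage of a pole, hence $J$-fixed, hence its image in $(X,\omega)/T$ is fixed by the induced involution'' is vacuous, and your concluding ``in particular the set of marked points is preserved'' does not follow: you have said nothing about the marked points on $(X,\omega)/T$ that actually occur.

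Those marked points come from ramification, not from marked points upstairs. As the paragraph preceding the sublemma observes, $(X,\omega)/T$ belongs to $\cH(a^{(2)},b^{(2)})$, and when $a=0$ (resp.\ $b=0$) the points in $a^{(2)}$ (resp.\ $b^{(2)}$) must be marked because they are branch points of $(X,\omega)\to(X,\omega)/T$: each is the image of an order-one zero of $(X,\omega)$ fixed by $T$. Moreover your pointwise conclusion is actually wrong for them: when $a=0$, the two order-one zeros of $(X,\omega)$ lying over the corresponding marked point of $(X,\omega)/\langle J,T\rangle$ are swapped (not fixed) by $J$, so the two resulting marked points of $(X,\omega)/T$ are swapped, not fixed, by the hyperelliptic involution. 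The argument that does close the gap, and the one the paper's proof is gesturing at, is to use that the full set of zeros and marked points of $(X,\omega)$ is $J$-invariant (since $J$ is a marked-point-preserving affine automorphism, as $\cM$ is a quadratic double); since $J$ and $T$ commute, the images under the $T$-quotient of the $T$-fixed members of this set having cone angle $2\pi$ --- which are precisely the marked points of $(X,\omega)/T$ --- form a set preserved by the involution induced by $J$, i.e.\ by the hyperelliptic involution.
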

\begin{proof}
As illustrated in Figure \ref{F:HyperellipticCheatSheet}, since $J$ and $T$ commute, $J$ induces an involution on $(X, \omega)/T$ that is the hyperelliptic involution. The set of marked points on $(X,\omega)$ are invariant by $J$ since $(X, \omega)$ belongs to a quadratic double. Therefore, the set of marked points on $(X, \omega)/T$ are invariant by the hyperelliptic involution.
\end{proof}

 The only strata of genus $g$ translation surfaces, including those with marked points, in which the generic surface is hyperelliptic and in which the marked points are invariant by the hyperelliptic involution are $\cH^{hyp}(2g-2)$ and $\cH^{hyp}(g-1,g-1)$. In the latter case the two singularities of the metric are exchanged by the hyperelliptic involution. Since the set points in $a^{(2)}$ is invariant by the hyperelliptic involution (see Figure \ref{F:HyperellipticCheatSheet}) the points in $b^{(2)}$ cannot be marked and so $b = -1$. This shows that $\cQ = \cQ(a^{(1)}, -1^2)$ with the preimages of poles unmarked.

Suppose now that $\cQ = \cQ(-1^4)$. By Sublemma \ref{SL:QuadAbDouble:MarkedPoints}, the only marked points on $(X, \omega)$ are preimages of poles and so $\cM$, and hence also $\cH$, has rank one rel zero. The only strata of Abelian differentials that have rank one rel zero are $\cH(\emptyset)$ and $\cH(0)$ implying that $(X, \omega)$ has either no marked points or exactly two preimages of poles on $(X, \omega)/J$ marked. 
\end{proof}

\begin{cor}\label{C:HypSymmetries}
Suppose that $\cM$ is a quadratic and Abelian double and that $\cM \ne \cH(\emptyset)$. When $\cM$ is not rank one rel zero, the generic surface in $\cM$ has exactly four affine self-maps of derivative $\pm \mathrm{Id}$ (they are the ones in Lemma \ref{L:HypSymmetries}). When $\cM$ is rank one rel zero, the same statement holds if the affine self-maps are required to preserve the set of marked points. 
\end{cor}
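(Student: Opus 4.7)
The plan is to reduce the corollary to a case analysis powered by Lemma \ref{L:QuadAbDouble}, Lemma \ref{L:HypSymmetries}, and Remark \ref{R:HypSymmetry-SpecialCase}. By Lemma \ref{L:QuadAbDouble}, any invariant subvariety $\cM$ that is simultaneously an Abelian and quadratic double must be a quadratic double of $\cQ^{hyp}(a^{(1)}, -1^2)$ for some integer $a \geq -1$, with the marked-point conventions given there: no marked points when $a \geq 0$, and when $a = -1$ either no marked points (forcing $\cM = \cH(\emptyset)$) or the two preimages of the poles marked.

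First I would handle the case $a \geq 0$. Here Lemma \ref{L:QuadAbDouble} says $(X,\omega)\in \cM$ has no marked points and the corresponding stratum $\cQ^{hyp}(a^{(1)}, -1^2)$ is different from $\cQ(-1^4)$, so Lemma \ref{L:HypSymmetries} applies directly and gives that the affine symmetries of derivative $\pm \mathrm{Id}$ are exactly $\{\mathrm{id}, J, T, JT\}$. I would then verify that this case corresponds to ``$\cM$ is not rank one rel zero'' by a quick rank/rel computation using Lemma \ref{L:Q-rank}: for $a\geq 0$, either $m_{odd} \geq 4$ (when $a$ is odd) forcing rank at least two, or $m_{even}\geq 1$ (when $a$ is even) forcing rel at least one. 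In neither subcase is $\cM$ rank one rel zero, since rank and rel are preserved by passing to a (holonomy) double cover.

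Next I would handle $a=-1$. Since $\cM\neq \cH(\emptyset)$ by hypothesis, Lemma \ref{L:QuadAbDouble} forces the two preimages of poles to be the marked points. Here $\cM$ is an Abelian double of $\cH(0)$, which is rank one rel zero, and Remark \ref{R:HypSymmetry-SpecialCase} gives exactly four marked-point preserving affine symmetries of derivative $\pm\mathrm{Id}$, namely $\{\mathrm{id}, J, T, JT\}$ distinguished by whether they fix or exchange the two marked points and by their derivative. The main point to check carefully is that the two cases of the corollary truly partition the possibilities produced by Lemma \ref{L:QuadAbDouble}, which is precisely the dichotomy $a\geq 0$ versus $a=-1$ above. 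I expect the only mild obstacle will be confirming the rank and rel computation in the $a\geq 0$ case to ensure ``$\cM$ is not rank one rel zero'' lines up with ``$a\geq 0$'', after which the two ingredient lemmas close the argument immediately.
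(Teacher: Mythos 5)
Your proposal is correct and takes the same route as the paper: both reduce immediately to the classification in Lemma \ref{L:QuadAbDouble} and then invoke Lemma \ref{L:HypSymmetries} when $\For(\cQ) \neq \cQ(-1^4)$ and Remark \ref{R:HypSymmetry-SpecialCase} when it is. The only thing you add is the explicit rank/rel verification via Lemma \ref{L:Q-rank} that the dichotomy ``$a \geq 0$ versus $a = -1$'' matches ``not rank one rel zero versus rank one rel zero,'' which the paper leaves implicit; that check is correct and is worth making.
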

\begin{proof}
This is an immediate application of Lemma \ref{L:QuadAbDouble} and \ref{L:HypSymmetries} with Remark \ref{R:HypSymmetry-SpecialCase} being used in the case that $\cM$ has rank one rel zero. 
\end{proof}

\subsection{Cylinders in hyperelliptic components of Abelian differentials}
Recall the definition of $S$-path from Definition \ref{D:Spath}. We conclude this section by showing the following.

\begin{lem}\label{L:MakingSigmaNice}
Let $(X,\omega)$ be a surface in $\cH^{hyp}(2g-2)$ or $\cH^{hyp}(g-1, g-1)$, excluding $\cH(0)$ but allowing $\cH(0,0)$. Let $S$ be a collection of one or two disjoint saddle connections, each of which is fixed by the hyperelliptic involution. Then there is an $S$-path $\gamma$ starting at $(X,\omega)$ such that on $\gamma(1)$ there is a simple cylinder that does not intersect a saddle connection in $S$ or contain one in its boundary.
\end{lem}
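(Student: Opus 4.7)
The plan is to pass to the quotient $Y := X/\iota$ under the hyperelliptic involution $\iota$, work on $Y$ (which is a genus zero quadratic differential) to produce a suitable cylinder disjoint from $\pi(S)$, and then lift back to $X$ via the double cover $\pi: X \to Y$. Concretely, $Y$ lies in $\cQ(2g-3, -1^{2g+1})$ when $X \in \cH^{hyp}(2g-2)$, in $\cQ(2g-4, -1^{2g+2})$ when $X \in \cH^{hyp}(g-1,g-1)$ with $g \geq 2$, and in $\cQ(-1^4,0)$ when $X \in \cH(0,0)$. The branch locus of $\pi$ is precisely the set of singularities of $Y$. Because each $s \in S$ is fixed by $\iota$, its image $\pi(s)$ is a saddle connection on $Y$ whose endpoints are both branch points, so $\pi(S)$ consists of one or two such saddle connections.

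I will then search, after a $\pi(S)$-path in the stratum of $Y$, for a cylinder $D$ on $Y$ satisfying three conditions: (i) $\bar D$ is disjoint from $\pi(S)$; (ii) the endpoints of the boundary saddle connections of $D$ are at singularities of $Y$ whose preimages on $X$ are singularities of $\omega$ (i.e., at the unique zero of $Y$, not at a pole); and (iii) the core curve of $D$ has trivial monodromy for the double cover $\pi$. For (i) and (ii), I will adapt the construction in Proposition \ref{P:Avoidance}, applied to $Y$, which in all cases with $g \geq 2$ lies in a stratum of rank at least two and is not one of the strata $\cQ(1^2,-1^2)$ or $\cQ(2,1^2)$ excluded there; the abundance of singularities on $Y$ and the fact that $\pi(S)$ involves at most four of them provides the flexibility needed. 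For (iii), I will use a parity argument: since the total number of branch points on $Y$ is even, among the cylinders produced, at least one can be arranged so that the number of branch points on one side of its core curve is even, yielding trivial monodromy.

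Once $D$ is found, condition (iii) implies that $\pi^{-1}(D)$ consists of two disjoint cylinders on $X$, each mapping homeomorphically onto $D$. Condition (ii) ensures that the lifts of the boundary saddle connections of $D$ are honest saddle connections on $X$ (with endpoints at the preimages of the zero of $Y$, which are singularities of $\omega$), so each component of $\pi^{-1}(D)$ is a simple cylinder on $X$. Condition (i) ensures these simple cylinders are disjoint from $S$. The $\pi(S)$-path on $Y$ lifts to an $S$-path on $X$ by taking hyperelliptic double covers continuously in $t$.

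The main obstacle is the simultaneous fulfillment of conditions (i), (ii), and (iii) on the cylinder $D$, especially in small strata where there is little room to maneuver. The most delicate case is $\cH(0,0)$, where $Y$ has only five singularities and lies in the rank one stratum $\cQ(-1^4,0)$ to which Proposition \ref{P:Avoidance} does not apply; here I will give a direct construction using that $X$ itself is just a flat torus with two marked points exchanged by $\iota$, so simple cylinders are readily exhibited by choosing an appropriate slope. A secondary technical issue is ensuring that cylinders of the wrong type on $Y$ (e.g., simple envelopes, whose core curves have non-trivial monodromy and whose lifts have multiple saddle connections per boundary, or cylinders with pole-to-pole boundary saddle connections, whose lifts pass smoothly through regular points) are avoided; this is resolved by the endpoint condition (ii).
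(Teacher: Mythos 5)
The approach of descending to the quotient $Y = X/\iota$ and trying to lift a good cylinder is reasonable in spirit, but the execution has genuine gaps, and the paper proceeds quite differently (it works directly on $X$, constructs an $\iota$-invariant triangulation containing $S$, finds a saddle connection $s$ not fixed by $\iota$ and disjoint from $S$, shows the $\iota$-invariant side $\Sigma$ of the cut along $s\cup\iota(s)$ with fewest triangles must be a simple cylinder, and inducts by cutting and regluing via Lemma \ref{L:ExtendingPerturbations} when $\Sigma$ still meets $S$). Concretely, the main gap in your plan is that Proposition \ref{P:Avoidance} gives no control over conditions (ii) and (iii). The cylinders it produces are generic (simple cylinders, simple envelopes, or half-simple cylinders), and nothing forces their boundary saddle connections to touch only the unique zero $z$ of $Y$; on a genus-zero quadratic differential with $2g{+}1$ or $2g{+}2$ poles, a typical boundary saddle connection has a pole as an endpoint. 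The ``parity argument'' for (iii) is also not an argument: the even total number of branch points only guarantees both sides of any separating core curve have the same parity, which could well be odd for every cylinder produced. Finally, Proposition \ref{P:Avoidance} avoids a single saddle connection, while $\pi(S)$ may have two, so you would need to avoid both, which is not addressed.

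There are also concrete errors in the geometry of the quotient. The core curve of a simple envelope encircles exactly two simple poles, so its linear holonomy is $(-1)^2 = +1$ and its monodromy in the double cover is \emph{trivial}; more to the point, an $\iota$-invariant simple cylinder on $X$ projects to a simple envelope on $Y$ (the involution acts on it by rotation by $\pi$, per Lemma \ref{L:hyp-cyl}), so simple envelopes with their length-$c$ boundary at $z$ are exactly what you want to find and lift, not avoid. You have the branch locus wrong in the $\cH^{hyp}(g-1,g-1)$ case: the two zeros of $X$ are exchanged by $\iota$, so the image $z$ is \emph{not} a branch point there, and the correct quotient stratum is $\cQ(2g-2, -1^{2g+2})$, not $\cQ(2g-4, -1^{2g+2})$. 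None of these errors is necessarily fatal to a quotient-based strategy, but together with the unaddressed control problem over which singularities the boundary saddle connections touch, the proposal as written does not give a proof. The paper's triangulation-plus-induction argument sidesteps all of these issues by never needing to classify or control cylinder types on $Y$ at all.
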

\begin{proof}
Let $\cH$ denote the stratum containing $(X, \omega)$. We begin with the following general observation.

\begin{sublem}\label{Slem1}
Suppose $(X, \omega)\notin \cH(0)$ has an involution negating $\omega$, and consider any triangulation of $(X,\omega)$ invariant under the involution. Then any triangle in this triangulation has at least one edge not fixed by the involution.
\end{sublem}

Here invariant means that the involution takes each triangle to another triangle. 

\begin{proof}[Proof of Sublemma \ref{Slem1}:]
Let $T$ be a triangle in an invariant triangulation, and let  $T'$ be its image under the involution. 
If all three edges are fixed, then $T$ and $T'$ share three edges and $(X,\omega)\in \cH(0)$.  
\end{proof}

We construct an invariant triangulation by iteratively picking saddle connections to add to the triangulation as follows, keeping in mind that any maximal collection of pairwise disjoint saddle connections is a triangulation. 

Begin with $S$. Pick any saddle connection disjoint from those already chosen. Add it to the collection of saddle connections already chosen, and, if the new saddle connection isn't fixed by the involution, add its image under the involution to the triangulation. 

Now, having constructed an invariant triangulation containing $S$, we appeal to Sublemma \ref{Slem1} to find a saddle connection $s$ disjoint from $S$, such that $s$ is not fixed by the involution. Let $s'$ be the image of $s$ under the involution. By Lemma \ref{L:HypCuttingS}, cutting $s\cup s'$ disconnects the surface into two components. One of the two components, call it $\Sigma$, contains at most one saddle connection in $S$.

Assume that we picked $s$ and $\Sigma$ as above to minimize the number of triangles in $\Sigma$. We will show that $\Sigma$ must be a simple cylinder. If $\Sigma$ consists of two triangles, these two triangles form the desired cylinder. Otherwise, we can find a triangle in $\Sigma$ not on the boundary of $\Sigma$. This triangle has an edge $t$ not fixed by the involution. Cutting $t$ and its image $t'$ again disconnects the surface. The component not containing $s\cup s'$ has fewer triangles than $\Sigma$ and still contains at most one of saddle connection in $S$, contradicting our choice of $s$. 

If $\Sigma$ contains neither saddle connection in $S$, then we are done (without having had to deform our original surface), by choosing $\Sigma$ to be the simple cylinder disjoint from the saddle connections in $S$. Therefore, we may suppose that $S$ contains two saddle connections, one of which is contained in $\Sigma$. Let $\Sigma'$ denote the translation surface with boundary that is the complement of $\Sigma$. 

Let $\cH\in \{\cH^{hyp}(2g-2), \cH^{hyp}(g-1, g-1)\}$ be the stratum containing $(X,\omega)$. We proceed by induction on the dimension of $\cH$.

\noindent \textbf{Base Case: $\cH = \cH(0,0)$}

As we have already seen, for $\cH(0,0)$ we have an invariant triangulation that contains two saddle connections $s$ and $s'$ that are exchanged by the hyperelliptic involution. Cutting along them produces two simple cylinders $\Sigma$ and $\Sigma'$, each of which contains exactly one saddle connection from $S$. 

The desired $S$-path can be obtained by individually twisting $\Sigma$ and $\Sigma'$ while fixing the unmarked surface $\cF(X,\omega)$ until both saddle connections in $S$ are nearly parallel to a cylinder direction on $\cF(X,\omega)$, so there is simple cylinder that is disjoint from (and nearly parallel to) both elements of $S$.  

\noindent \textbf{Inductive Step}

Let $\Sigma_{glue}'$ denote $\Sigma'$ with its boundary saddle connections glued together to form a translation surface without boundary. Let $\cH'$ be the component of the stratum of Abelian differentials containing $\Sigma'_{glue}$. Another way of forming $\Sigma'_{glue}$ is by collapsing $\Sigma$ on $(X, \omega)$. Since $\Sigma$ is a simple cylinder this shows that $\cH'$ is a hyperelliptic component that has dimension exactly one less than $\cH$. Since $\cH$ has complex dimension at least four, it follows that $\cH'$ has complex dimension at least three and therefore is not $\cH(0)$. Hence we can apply the induction hypothesis.

Let $S'$ denote the union of the saddle connection from $S$ contained in $\Sigma'$ and the saddle connection coming from the gluing of the boundary of $\Sigma'$. This is a set of at most two saddle connections fixed by the hyperelliptic involution. By hypothesis there is an $S'$-path $\gamma$ starting at $\Sigma'_{glue}$ such that $\gamma(1)$ contains a simple cylinder disjoint from $S'$.

Since our original surface $(X, \omega)$ can be formed by gluing together $\Sigma_{glue}$ and $\Sigma'_{glue}$, it follows by Lemma \ref{L:ExtendingPerturbations}, that there is a $S \cup \{s, s'\}$ path $\wt{\gamma}$ starting at $(X, \omega)$ and such that $\wt{\gamma}(1)$ contains a simple cylinder in $\Sigma'$ that does not intersect an element of $S$.
\end{proof}

\section{Diamonds with Abelian and quadratic doubles }\label{S:DiamondQDouble}

In this section we will classify the invariant subvarieties where one side is an Abelian double and the other is a quadratic double. In all figures in the sequel, all unlabelled sides of polygons will be tacitly identified with their opposites. We use the notation $\cF$ from Definition \ref{D:F} for forgetting marked points. 

\begin{thm}\label{TP2}
Let $((X,\omega), \cM, \bfC_1, \bfC_2)$ be a generic diamond of Abelian differentials. Suppose that $\MOne$ is an Abelian double and that $\MTwo$ is a quadratic double. Then $\cM$ is one of the following: 
\renewcommand\thesubfigure{\roman{subfigure}}
\begin{enumerate}
\item\label{I:AD} An Abelian double. See Figure \ref{F:AbDoubleCase}.
\item\label{I:QD} A quadratic double. See Figure \ref{F:QuadDoubleCase}.
\begin{figure}[h]\centering
    \centering
    \begin{subfigure}[t]{0.49\linewidth}
        \centering
        \includegraphics[width=\linewidth]{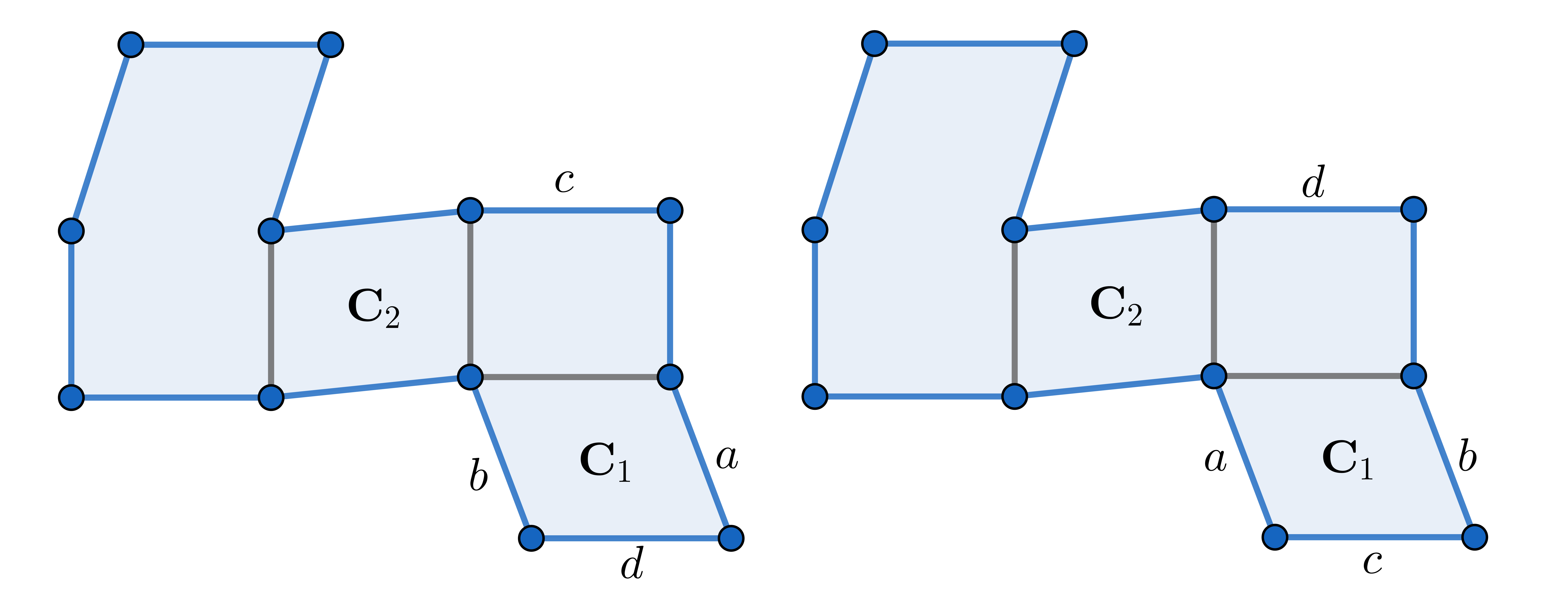}
        \caption{Abelian double case}
        \label{F:AbDoubleCase}
    \end{subfigure}
    \begin{subfigure}[t]{0.49\linewidth}
        \centering
        \includegraphics[width=\linewidth]{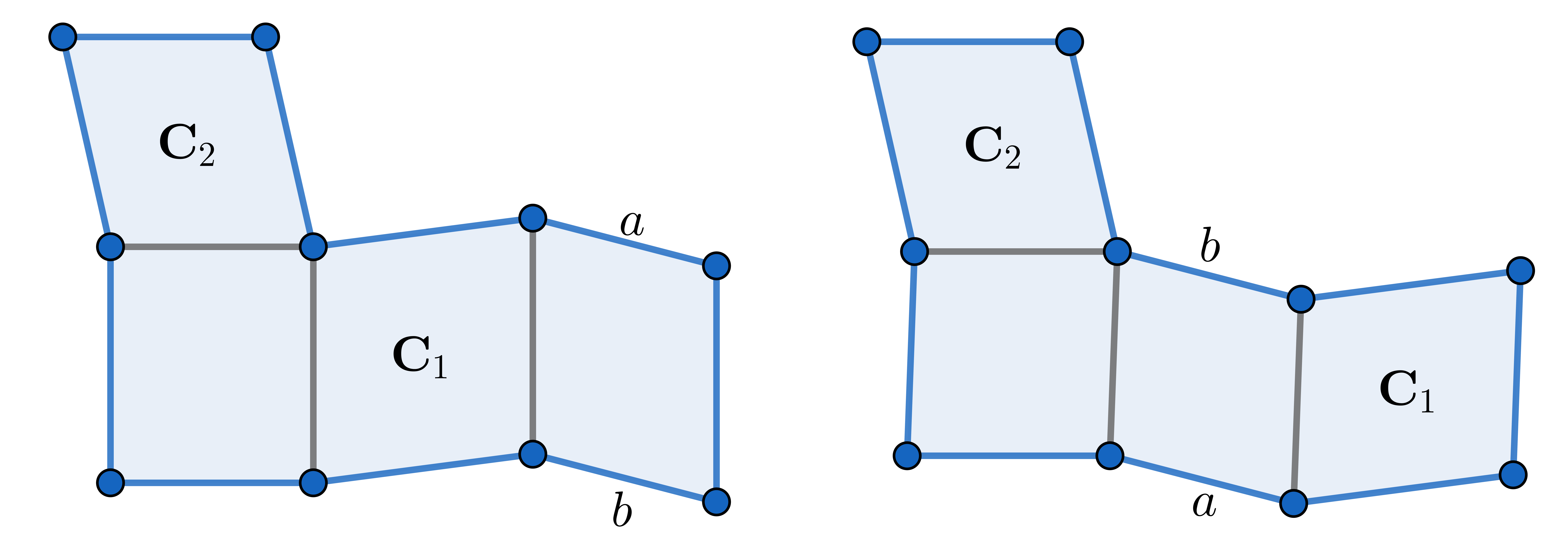}
        \caption{Quadratic double case}
        \label{F:QuadDoubleCase}
    \end{subfigure}
    \caption{An illustration of possibilities  (\ref{I:AD}) and (\ref{I:QD}) of Theorem \ref{TP2}. 
    }
\end{figure}
\item\label{I:BigExtraSymmetry} 
A full locus of \red double \black covers of a codimension one locus $\cN$ in a component of a stratum of Abelian differentials $\cH_0$. One of the following occurs:
\begin{enumerate}
    \item\label{I:ExtraSymmetry} $\For(\cN)$ is a codimension one hyperelliptic locus in $\For(\cH_0)$; in this case $\ColOneTwoX$ is disconnected and surfaces in $\cN$ either have no marked points or one free marked point. See Figure \ref{F:ExtraPossibility}. 
    \begin{figure}[h]\centering
\includegraphics[width=.7\linewidth]{ExtraPossibility.pdf}
\caption{An illustration of possibility (\ref{I:ExtraSymmetry}) of Theorem \ref{TP2}. Collapsing the pair of simple cylinders labelled $W$ gives an example of possibility (\ref{I:FinalPossibility}) with $\ColOneTwoX$ disconnected. If the examples in the previous two sentences are additionally modified by collapsing the horizontal cylinders below those labelled $\bfC_2$, new examples of (\ref{I:ExtraSymmetry}) and (\ref{I:FinalPossibility}) are formed where the surfaces in $\cN$ have no free marked points.  
}
\label{F:ExtraPossibility}
\end{figure}
    \item\label{I:FinalPossibility} $\For(\cN) = \For(\cH_0)$, $\For(\cH_0)$ is a hyperelliptic component of rank at least two, and surfaces in $\cN$ have at most three marked points at most one of which is free. See Figures \ref{F:ExtraPossibility}, \ref{F:NoExtraSymmetry}, \ref{F:EmptyP},  \ref{F:MC1notQdouble}, \ref{F:FinalPossibility}, and see Theorem \ref{T:3bSupplement} for additional information in this case.
\end{enumerate}
\end{enumerate}
\end{thm}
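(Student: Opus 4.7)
The plan is to analyze the involutions coming from $\MOne$ and $\MTwo$. Denote by $T_1$ the translation involution on $\ColOne(X,\omega)$ whose quotient realizes the Abelian double structure, and by $J_2$ the holonomy involution on $\ColTwo(X,\omega)$ realizing the quadratic double structure. These descend to involutions $\overline{T_1}$ and $\overline{J_2}$ on $\ColOneTwo(X,\omega)$ of derivatives $+I$ and $-I$ respectively, so they cannot have the same fibers and the Diamond Lemma does not apply off-the-shelf. However, whenever both descended involutions are nontrivial, the group they generate gives $\MOneTwo$ the structure of a simultaneous Abelian and quadratic double, which by Lemma \ref{L:QuadAbDouble} forces $\MOneTwo$ into a very restricted class of hyperelliptic-type invariant subvarieties --- the essential structural input for the classification.

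I would then establish that $\cM$ is a full locus of degree-two covers of a locus $\cN$ in a component $\cH_0$ of a stratum of Abelian differentials. Since the Abelian double $\MOne$ satisfies Assumption CP automatically while the quadratic double $\MTwo$ may fail it, Theorem \ref{T:FullV2} does not apply directly; instead I plan to apply the Diamond Lemma, pairing $T_1$ with a compatible translation involution on $\ColTwo(X,\omega)$ (which exists precisely when $\MTwo$ is also an Abelian double in the sense of Lemma \ref{L:QuadAbDouble}), or pairing $J_2$ with the holonomy involution that $\ColOne(X,\omega)$ automatically carries as a double cover of an Abelian differential, and then checking the preimage-of-image condition by hand using the genericity of the diamond. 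This produces an involution on $(X,\omega)$ whose quotient lies in $\cN$.

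To classify $\cN$, I would apply Proposition \ref{P:DiamondWithH}(3) to the reduced diamond obtained by quotienting $(X,\omega)$ by the full involution, since the $\MOne$-side of the reduced diamond is a component of a stratum of Abelian differentials. This forces $\cN$ to be one of three types corresponding to the cases in the theorem: if $\cN = \cH_0$ and the covering involution on $(X,\omega)$ is a translation, we obtain case (\ref{I:AD}); if it is a holonomy involution we obtain case (\ref{I:QD}); if $\For(\cN)$ is a codimension-one hyperelliptic locus we obtain case (\ref{I:ExtraSymmetry}); and if $\For(\cN) = \For(\cH_0)$ is a hyperelliptic component of rank at least two with the cover involution failing to extend to $(X,\omega)$ with the requisite derivative, we obtain case (\ref{I:FinalPossibility}). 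Whether $\ColOneTwo(X,\omega)$ is connected distinguishes the two subcases of (\ref{I:BigExtraSymmetry}), as stated.

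The main obstacle is the marked-point analysis in case (\ref{I:BigExtraSymmetry}). In case (\ref{I:ExtraSymmetry}), Theorem \ref{T:StrataMarkedPoints} rules out periodic points in the non-hyperelliptic ambient stratum, forcing at most one free marked point. In case (\ref{I:FinalPossibility}), the allowed configurations of up to three marked points (at most one free, the others on hyperelliptic fixed points or exchanged pairs) must be extracted from a careful interaction of the dimension counts for $\cM$, $\MOne$, $\MTwo$, and $\MOneTwo$ with the hyperelliptic structure from Section \ref{S:Q-hyp}; Lemma \ref{L:QuadAbDouble} and Corollary \ref{C:HypSymmetries} will be central to cataloging which marked-point configurations are compatible with lying simultaneously near an Abelian and a quadratic double.
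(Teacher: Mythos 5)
Your proposal correctly identifies the main tools — Lemma~\ref{L:QuadAbDouble} for the base, the Diamond Lemma for building a cover of $(X,\omega)$, and Proposition~\ref{P:DiamondWithH} for classifying the reduced diamond — and in fact the extra-symmetry branch of the paper (Proposition~\ref{PP2}, Lemma~\ref{L:JExtraSymmetry}, Sublemma~\ref{SL:Commuting}) follows essentially the route you outline. But there is a serious gap, and it is located exactly at the step you treat as automatic.

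You write that one should pair $J_2$ ``with the holonomy involution that $\ColOne(X,\omega)$ automatically carries as a double cover of an Abelian differential.'' This is false: $\ColOneX$ is a translation double cover, so it carries $T_1$ with derivative $+I$; it carries a holonomy involution only in special circumstances. Likewise, a translation involution $T_2$ on $\ColTwoX$ extending $T$ exists precisely when $\cM$ has what the paper calls \emph{extra symmetry} (Definition~\ref{D:ExtraSymmetry}), which can fail — Figure~\ref{F:NoExtraSymmetry} gives an explicit example, and the parenthetical ``which exists precisely when $\MTwo$ is also an Abelian double'' is both wrong in detail (the extension need not preserve marked points) and irrelevant since $\MTwo$ is assumed to be a quadratic double, not an Abelian one. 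Because $T_1$ and $J_2$ descend to involutions of derivatives $+I$ and $-I$ on $\ColOneTwoX$, they never agree at the base, so without a compatible partner involution on one side the Diamond Lemma simply does not engage. Your proposal has no mechanism for the no-extra-symmetry case, which is the core difficulty of the theorem.

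In the paper, when extra symmetry fails one must show that the boundary of $\bfC_1$ contains a slope $-1$ irreducible pair of marked points (Lemma~\ref{L:MarkedPointsFTW}, which then invokes Apisa--Wright to produce a cover), or reach a contradiction. Establishing this dichotomy occupies the bulk of the argument: a case analysis by the cylinder type of $\bfC_1$ (simple, pair of simple, half-simple, complex), the Collapsing Lemma (Corollary~\ref{C:Collapsing2}) to reduce to low-complexity base surfaces, the weak-translation-symmetry analysis of Section~\ref{S:ExtraSymmetryPrep} (Lemmas~\ref{L:C2TInvariant} and \ref{L:C1TInvariant}), explicit rank/rel numerology, and the ``attacking'' deformations used in Propositions~\ref{P:NoSingleComplexCylinder}, \ref{P:NoSimpleCylinder}, and \ref{P:NoPairSimple}. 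The marked-point constraints you flag in case~(\ref{I:FinalPossibility}) also come out of this machinery rather than from the reduced-diamond argument. As written, your proof covers roughly the content of Proposition~\ref{PP2} and skips the rest.
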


\begin{rem}
Recall that, by our definition of translation cover (Definition \ref{D:Covering}), every unbranched marked point must have a preimage that is marked.
\end{rem}

With Theorem \ref{TP2} in hand, the proof of Theorem \ref{T:DoubleIntro} is complete. 

\begin{proof}[Proof of Theorem \ref{T:DoubleIntro}]
If both $\MOne$ and $\MTwo$ are Abelian doubles, then Lemma \ref{L:StrongTheorem1.1} gives that $\cM$ is a full locus of double covers. We claim that moreover $\cM$ is an Abelian double; to show this we must show that for each marked point $p\in (X,\omega)$, the point $T(p)$ is also marked, where $T$ is the involution whose quotient is the double cover. Indeed, $p$ can be on the boundary of at most one of the $\bfC_i$, so without loss of generality assume it is not in the boundary of $\bfC_1$. Let $T_1$ be the involution on $\ColOneX$, and recall from the final claim of Lemma \ref{L:StrongTheorem1.1} that $\Col_{\bfC_1}(T)=T_1$. Because $\MOne$ is an Abelian double we have that $T_1(\Col_{\bfC_1}(p))$ is marked. Since $$T_1(\Col_{\bfC_1}(p))= \Col_{\bfC_1}(T(p)),$$  it follows that $T(p)$ is marked, because $p$ is not on the boundary of $\bfC_1$ and $(X,\omega)$ is obtained from $\ColOneX$ by gluing in $\bfC_1$. 


If both $\MOne$ and $\MTwo$ are quadratic doubles, then Theorem \ref{T:DoubleIntro} is an abbreviated form of Theorem \ref{P1}. In the remaining case, it is an abbreviated form of Theorem \ref{TP2}. In both cases one should keep in mind that a degree two cover of a hyperelliptic surfaces can be viewed as a degree four cover of a genus zero surface. 
\end{proof}

\begin{rem}
In Figure \ref{F:ExtraPossibility}, if we apply half a Dehn-twist to the cylinder $\bfC_2$, then we obtain a diamond with connected base surface. However, we  lose the property that $\MTwo$ is a quadratic double. This can be seen since, by Masur-Zorich (Theorem \ref{T:MZ}) if $\MTwo$ is a quadratic double, then cutting out a generically parallel pair of disjoint complex cylinders creates four connected components. However, if $\bfC_2$ is given half a Dehn-twisted before collapsing, then cutting out \red $\ColTwo(\bfC_1)$ \black produces only three connected components. 
\end{rem}

\begin{rem}\label{R:SpecialCase}
There is an interesting special case of case (\ref{I:ExtraSymmetry}), where $\cM$ is a locus that is simultaneously a holonomy double cover of codimension one hyperelliptic locus in a stratum of quadratic differentials and a translation double cover of a codimension one hyperelliptic locus in a stratum of Abelian differentials. In Figure \ref{F:ExtraPossibility},  collapsing the horizontal cylinder below the one labelled $\bfC_2$ yields an example. When the rank of $\cM$ is at least two, it is not hard to see that this case occurs precisely when the quotient by the translation involution does not have a free marked point. The proof of this claim is identical to that of Theorem \ref{T:3bSupplement} (\ref{I:ExtraSymmetry:NoP}). 
\end{rem}

We will now summarize the notation that we will adopt in the remainder of the section, and recall a few basic facts.

\begin{enumerate}
\item The surface $\ColOneX$ admits a translation involution $T_1$ such that $\ColOneX/T_1$ is a generic surface in a component $\cH_1$ of a stratum of connected Abelian differentials. 
\item The surface $\ColTwoX$ admits a half-translation involution $J_2$ such that $\ColTwoX/J_2$ is a generic surface in a component $\cQ_2$ of a stratum of connected quadratic differentials. 
\item On $\ColOneTwoX$, define $J:= \Col_{\ColTwo(\bfC_1)}(J_2)$ and $T := \Col_{\ColOne(\bfC_2)}(T_1)$. 
\item $\MOneTwo$ is a full locus of (possibly disconnected) covers of the stratum $$\cH := (\cH_1)_{\ColOne(\bfC_2)/T_1}$$ and a full locus of (possibly disconnected) covers of the stratum \red $$\cQ := (\cQ_2)_{\ColTwo(\bfC_1)/J_2}.$$ \black If $\ColOneTwoX$ is connected then $\MOneTwo$ is simultaneously an Abelian and quadratic double. 
\item We will presently show that $\cH$ is hyperelliptic; let $\tau$ denote the hyperelliptic involution on $\ColOneTwoX/T$.
\end{enumerate}

\begin{lem}\label{L:AllAboutThatBase}
The surface $\ColOneTwoX/T$ is connected and, letting $g$ denote its genus, $\cH = \cH^{hyp}(2g-2)$ or $\cH = \cH^{hyp}(g-1,g-1)$. 

When $g > 1$, for the generic surface in $\MOneTwo$, the only affine symmetries of derivative $\pm \mathrm{Id}$  are the four in $\{\mathrm{id}, J, T, JT\}$; the same statement holds when $g=1$ for marked point preserving symmetries. In particular, $J$ and $T$ commute. 

The involution $J$ descends to the hyperelliptic involution on $(X, \omega)/T$. 
\end{lem}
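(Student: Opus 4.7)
The plan is to establish the four assertions of Lemma \ref{L:AllAboutThatBase} in the order stated, using Lemma \ref{L:QuadAbDouble} and Corollary \ref{C:HypSymmetries} in one case and Lemma \ref{L:QBoundaryHolonomy} in the other.

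First I would show that $\ColOneTwoX/T$ is connected. The quotient factors as $\Col_{\ColOne(\bfC_2)/T_1}(\ColOneX/T_1)$, where the inner surface lies in the connected Abelian stratum $\cH_1$ and the cylinders $\ColOne(\bfC_2)/T_1$ form a subequivalence class of generic cylinders. Generic cylinders in a stratum of Abelian differentials are simple (Remark \ref{R:GenericCylInStrata}), and collapsing simple cylinders on a connected surface keeps it connected.

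Next, the argument splits on whether $\ColOneTwoX$ itself is connected. In the connected case, $\MOneTwo$ is simultaneously an Abelian double (via $T$) and a quadratic double (via $J$). Lemma \ref{L:QuadAbDouble} then identifies $\MOneTwo$ as a quadratic double of some $\cQ^{hyp}(a^{(1)}, -1^2)$, and reading off the Abelian quotient from Figure \ref{F:HyperellipticCheatSheet} shows that $\cH$ is $\cH^{hyp}(a^{(2)})$, $\cH(0)$, or $\cH(\emptyset)$, each of the required form $\cH^{hyp}(2g-2)$ or $\cH^{hyp}(g-1,g-1)$. Corollary \ref{C:HypSymmetries} then pins down the group of affine self-maps of derivative $\pm \mathrm{Id}$ as having exactly four elements (marked-point preserving when $g=1$), and since $\mathrm{id}, J, T, JT$ already constitute four distinct such elements, they exhaust the group. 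In the disconnected case, the connectedness of $\ColOneTwoX/T$ forces $T$ to exchange two components of $\ColOneTwoX$, and Lemma \ref{L:QBoundaryHolonomy} identifies $\MOneTwo$ as an antidiagonal embedding of some component $\cH$ of an Abelian stratum into $\cH \times \cH$, with affine symmetry group of order two or four according to whether $\cH$ is hyperelliptic. Since $J$ and $T$ are distinct non-trivial symmetries of different derivatives, they generate a group of order at least four, so $\cH$ must be hyperelliptic and $\{\mathrm{id}, J, T, JT\}$ exhausts the group.

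In both cases $J$ and $T$ commute, so $J$ descends to an involution $\bar{J}$ on $\ColOneTwoX/T$ of derivative $-\mathrm{Id}$. Since $\cH$ is a hyperelliptic component of a stratum of Abelian differentials, Lemma \ref{L:InvolutionImpliesHyp} gives that the hyperelliptic involution $\tau$ is the unique affine involution of derivative $-\mathrm{Id}$ on a generic surface (modulo marked-point preservation in the $g=1$ case), so $\bar{J}=\tau$. I expect the main obstacle to lie in the marked-point bookkeeping in the $g=1$ case, where several of the cited uniqueness statements hold only after restricting to marked-point preserving symmetries, and in verifying that $\MOneTwo$ does not run afoul of the $\cH(\emptyset)$ exclusion in Corollary \ref{C:HypSymmetries}.
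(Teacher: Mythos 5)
Your proof follows the same overall strategy as the paper: establish connectedness of $\ColOneTwoX/T$ by collapsing simple cylinders in $\cH_1$, split into cases according to whether $\ColOneTwoX$ is connected, invoke Lemma \ref{L:QuadAbDouble} and Corollary \ref{C:HypSymmetries} in the connected case and Lemma \ref{L:QBoundaryHolonomy} in the disconnected case, and deduce the hyperelliptic involution descent from uniqueness. The connected case and the descent argument are handled essentially as in the paper.

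There is, however, a genuine gap in the disconnected case. You show that the marked-point preserving affine symmetry group has order at least four, hence that $\cH$ is hyperelliptic, but you stop there. The lemma requires more: it asserts that $\cH$ is exactly $\cH^{hyp}(2g-2)$ or $\cH^{hyp}(g-1,g-1)$, which in particular means $\cH$ carries no marked points when $g>1$ (and is $\cH(0)$ or $\cH(0,0)$ when $g=1$). ``Hyperelliptic component'' alone does not immediately give this — you would need to argue that a full component of a stratum with $n\geq 1$ free marked points cannot be hyperelliptic when $g>1$, since preservation of a generic configuration by the unique hyperelliptic involution is a codimension-positive condition. The paper makes this step explicit with a separate argument: because $\ColOneTwoX$ lies on the boundary of an Abelian double, its marked points come in $T$-pairs and are therefore free on $\ColOneTwoX/T$; because it also lies on the boundary of a quadratic double, the set of marked points is $J$-invariant, hence $\tau$-invariant after quotienting by $T$; and a free marked point whose position is constrained to be $\tau$-invariant cannot exist when $g>1$. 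You should either reproduce this argument or explicitly prove and cite the implication ``hyperelliptic component of an Abelian stratum with a zero or marked point $\Rightarrow$ of the form $\cH^{hyp}(2g-2)$ or $\cH^{hyp}(g-1,g-1)$.'' Relatedly, you flag the $\cH(\emptyset)$ exclusion needed for Corollary \ref{C:HypSymmetries} as a potential concern but do not resolve it; in the connected case this requires observing that $\ColOneTwoX$ has at least one zero or marked point (e.g.\ at the endpoints of the saddle connections along which $\bfC_1$ was glued), and in the disconnected case it is supplied by the statement of Lemma \ref{L:QBoundaryHolonomy}, which guarantees each component has a zero or marked point.
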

\begin{proof}
Since $\ColOne(\bfC_2)$ is a subequivalence class of generic cylinders, $\ColOne(\bfC_2)/T_1$ is a subequivalence class of generic cylinders on a surface in $\cH_1$ and hence a single simple cylinder (see Remark \ref{R:GenericCylInStrata}). Collapsing simple cylinders cannot disconnect a surface, so the surfaces in $\cH$ are connected. Thus, $\ColOneTwoX/T$ is connected.



We note that once we know that $J$ and $T$ commute \red and that $\cH$ is a hyperelliptic component, \black we will have that $J$ induces a marked point preserving involution of derivative $-\mathrm{Id}$ on \red $\ColOneTwoX/T$, \black which must be the hyperelliptic involution (for instance, by the uniqueness of the holonomy involution, as described in Lemma \ref{L:QHolonomy}).

If $\ColOneTwoX$ is connected, then $\MOneTwo$ is an Abelian double of $\cH$ and a quadratic double of $\cQ$. By Lemma \ref{L:QuadAbDouble}, $\cH = \cH^{hyp}(a^{(2)})$ and $\cQ = \cQ^{hyp}(a^{(1)}, -1^2)$ where $a \geq -1$ is an integer. The claim about affine symmetries holds by Corollary \ref{C:HypSymmetries}.

If $\ColOneTwoX$ is disconnected, then, by Lemma \ref{L:QBoundaryHolonomy}, $\MOneTwo$ is the antidiagonal embedding of $\cH$ into $\cH \times \cH$. When $g > 1$, there is an involution on a generic surface in $\MOneTwo$ that fixes each component if and only if $\For(\cH)$ is hyperelliptic; when $g > 1$ this involution is unique (by Lemma \ref{L:InvolutionImpliesHyp-background}), which implies the claim about affine symmetries when $g > 1$. The corresponding claim when $g = 1$ follows directly from Lemma \ref{L:QBoundaryHolonomy}.

It remains to show that, when $\ColOneTwoX$ is disconnected, $\cH = \cH^{hyp}(2g-2)$ or $\cH^{hyp}(g-1,g-1)$. This amounts to showing that there are no marked points on $\ColOneTwoX/T$ when $g > 1$ and at most two when $g = 1$. 

Since $\ColOneTwoX$ belongs to the boundary of an Abelian double, the only constraint on the marked points is that they are partitioned into pairs of points exchanged by $T$. This implies that the marked points on $\ColOneTwoX/T$ are free. Since $\ColOneTwoX$ belongs to the boundary of a quadratic double, the set of marked points must be preserved by $J$. Hence, the set of marked points on $\ColOneTwoX/T$ is invariant under $\tau$. This implies that either $g > 1$ and there are no marked points or $g = 1$ and $\cH = \cH(0)$ or $\cH = \cH(0,0)$.
\end{proof}

\begin{lem}\label{L:GenericParallelism}
Two saddle connections on $\ColOneTwoX$ are generically parallel if and only if there is an element of $\{\mathrm{id}, J, T, JT \}$ taking one to the other. 
\end{lem}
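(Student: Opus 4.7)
The ``if'' direction is immediate from Lemma \ref{L:AllAboutThatBase}: each of $J$, $T$, and $JT$ is an affine symmetry of $\ColOneTwoX$ with derivative $\pm\mathrm{Id}$, and any such symmetry persists to nearby surfaces in $\MOneTwo$ (being part of the generic symmetry group), so it automatically sends any saddle connection to one that is generically parallel to it.

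For the ``only if'' direction, my plan is to descend to the hyperelliptic quotient. By Lemma \ref{L:AllAboutThatBase}, the quotient map $\pi \colon \ColOneTwoX \to \ColOneTwoX/T$ lands in a hyperelliptic component $\cH = \cH^{hyp}(2g-2)$ or $\cH^{hyp}(g-1,g-1)$, and the descent of $J$ equals the hyperelliptic involution $\tau$ on $\ColOneTwoX/T$. Given generically parallel saddle connections $s_1,s_2$ on $\ColOneTwoX$, their images $\pi(s_1), \pi(s_2)$ are generically parallel on $\ColOneTwoX/T$, since any deformation of $\MOneTwo$ induces a deformation of $\cH$. By Lemma \ref{L:HypParallelism}, either $\pi(s_1)=\pi(s_2)$ or $\pi(s_1)=\tau(\pi(s_2))$. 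In the first case, $s_2\in \pi^{-1}(\pi(s_1))=\{s_1,T(s_1)\}$. In the second case, since $\pi\circ J=\tau\circ \pi$, we get $\pi(J(s_1))=\tau(\pi(s_1))=\pi(s_2)$, so $s_2\in \pi^{-1}(\pi(J(s_1)))=\{J(s_1),TJ(s_1)\}=\{J(s_1),JT(s_1)\}$, where the last equality uses that $J$ and $T$ commute (Lemma \ref{L:AllAboutThatBase}).

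The main subtlety I anticipate is the case $g=1$, where $\cH$ is $\cH^{hyp}(0)$ or $\cH^{hyp}(0,0)$. Lemma \ref{L:HypParallelism} is stated for hyperelliptic components of strata, and its proof explicitly brackets out the genus-one cases as ``obvious''. In $\cH^{hyp}(0)=\cH(0)$, a direct check shows that saddle connections correspond to primitive integer homology classes (up to sign) and that no two distinct such classes are proportional over $\bR$ for generic periods, so the claim is vacuous. In $\cH^{hyp}(0,0)$, the marked points satisfy $p_2=-p_1$ generically, so the hyperelliptic involution does preserve the pair, and a parallel direct computation of holonomies of saddle connections on a marked torus shows the claim. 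These analyses are brief and of the same flavor as in the proof of Lemma \ref{L:HypParallelism}, so the argument above goes through uniformly in $g$.
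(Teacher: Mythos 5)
Your proof is correct and follows essentially the same route as the paper's: descend via the quotient by $T$ to the hyperelliptic surface $\ColOneTwoX/T$, apply Lemma~\ref{L:HypParallelism} there, and use Lemma~\ref{L:AllAboutThatBase} to identify the descent of $J$ with the hyperelliptic involution and to justify lifting back. The paper's proof is a single sentence packaging exactly this argument; your version spells out the two cases ($\pi(s_1)=\pi(s_2)$ versus $\pi(s_1)=\tau(\pi(s_2))$), the $\pi^{-1}$ fiber computation, and the genus-one edge cases, all of which are already implicitly covered (the genus-one case is declared ``obvious'' in the proof of Lemma~\ref{L:HypParallelism}).
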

\begin{proof}
Two saddle connections on a surface in a hyperelliptic stratum are generically parallel if and only if they are exchanged by the hyperelliptic involution (Lemma \ref{L:HypParallelism}). Since $\ColOneTwoX/T$ is such a surface, with $J$ descending to the hyperelliptic involution on it (by Lemma \ref{L:AllAboutThatBase}), the result follows. 
\end{proof}

\begin{defn}\label{D:ExtraSymmetry}
We will say that $\cM$ has \emph{extra symmetry} if at least one of the following occurs: 
\begin{enumerate}
\item There is a translation involution $T_2$ on $\For(\ColTwoX)$ such that $\Col_{\ColTwo(\bfC_1)}(T_2) = T$ and $T_2\left( \ColTwo(\bfC_1) \right) = \ColTwo(\bfC_1)$.
\item There is a half-translation involution $J_1$ on $\For(\ColOneX)$ such that $\Col_{\ColOne(\bfC_2)}(J_1) = J$ and $J_2\left( \ColOne(\bfC_2) \right) = \ColOne(\bfC_2)$.
\end{enumerate}
We will call $T_2$ (resp. $J_1$) an \emph{extension} of $T$ (resp. $J$). We do not require the extensions to preserve the collection of marked points; this is in contrast to the definition of Abelian and quadratic doubles, which requires $T_1$ and $J_2$ to preserve the set of marked points. 
\end{defn}

While the previous figures in this section all contain examples where $\cM$ has extra symmetry, Figure \ref{F:NoExtraSymmetry} shows a case where $\cM$ has no extra symmetry.

\begin{figure}[h]\centering
\includegraphics[width=.6\linewidth]{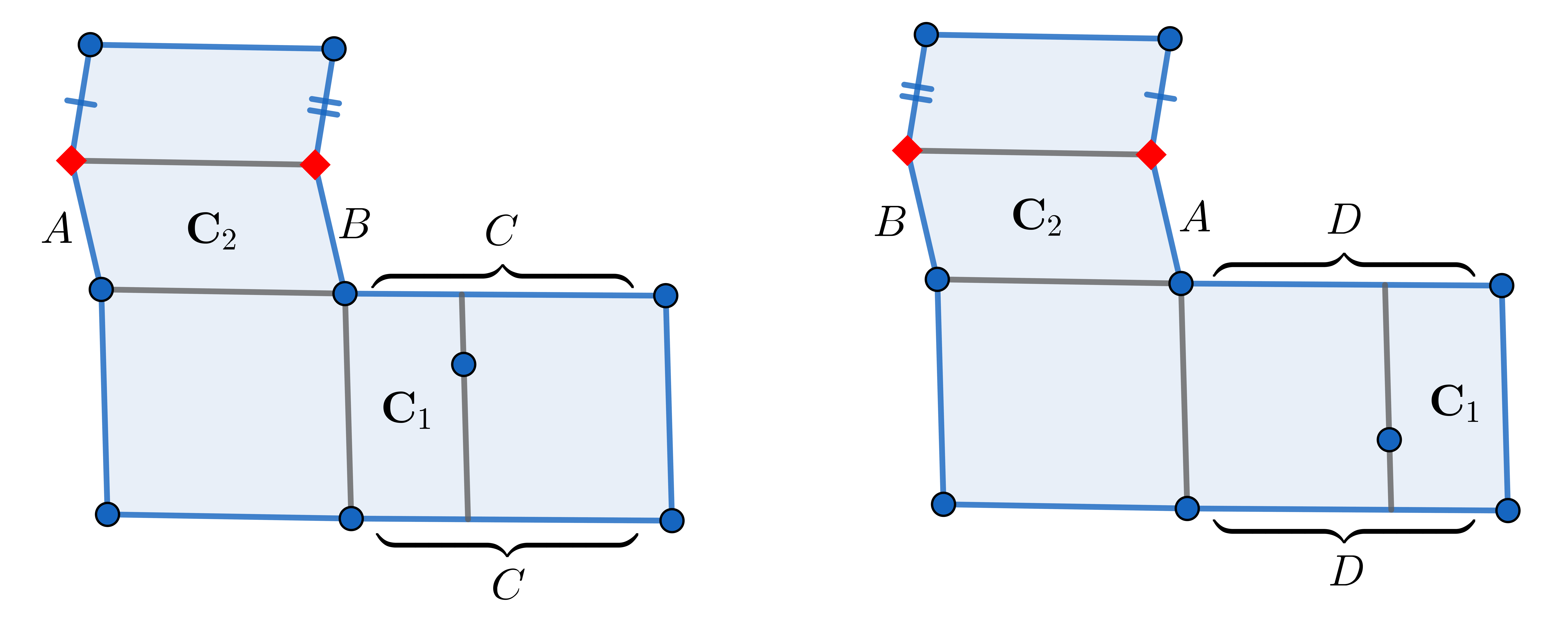}
\caption{Another illustration of Theorem \ref{TP2} (\ref{I:FinalPossibility}). Yet another example can be obtained by gluing the segments labelled $A$ and $B$ together in the other possible way. Notice that there is no extra symmetry in this diamond.}
\label{F:NoExtraSymmetry}
\end{figure}

\begin{rem}\label{R:NoExtraSymmetry}
The proof of Theorem \ref{TP2} will show that $\cM$ must have extra symmetry except sometimes in Case (\ref{I:FinalPossibility}). The examples with extra symmetry are analyzed in Section \ref{S:ExtraSymmetry}, and the remaining instances of Case (\ref{I:FinalPossibility}) involve marked points (see Lemma \ref{L:FinalCountdown} and Figure \ref{F:NoExtraSymmetry}). 
\end{rem}

In the sequel, we will make the following assumption (there is no loss of generality by Remark \ref{R:CouldBeDense}). 

\begin{ass}\label{A:DenseComplicated}
$(X, \omega)$ has dense $\GL$-orbit in $\cM$. 
\end{ass}

\subsection{Basic structural results}
\begin{lem}\label{L:StructureOfS}
$\bfC_2$ consists of either a pair of simple cylinders or a single complex cylinder. 

Moreover, if $\ColOneTwo(\bfC_2)$ is $J$-invariant, then there is an involution $J_1$ on $\ColOne(X, \omega)$ that is an extension of $J$ and that preserves the set of marked points; in particular, $\cM$ has extra symmetry. 
\end{lem}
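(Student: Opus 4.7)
My plan for the first claim is straightforward: since the diamond is generic, $\bfC_2$ is a subequivalence class of generic cylinders by Definition~\ref{D:GenericDiamond}, and its non-adjacency with $\bfC_1$ guarantees that $\ColOne(\bfC_2)$ remains a subequivalence class of generic cylinders on $\ColOneX \in \MOne$ with the same cylinder structure as $\bfC_2$ on $(X,\omega)$. Since $\MOne$ is an Abelian double, Lemma~\ref{L:AbelianDoubleSEC} immediately yields the dichotomy: either a pair of simple cylinders or a single complex cylinder.

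For the second claim, my approach is to construct $J_1$ by extending $J$ through the cylinders of $\ColOne(\bfC_2)$. Using the canonical identification
$$\ColOneX \setminus \ColOne(\bfC_2) \;=\; \ColOneTwoX \setminus \ColOneTwo(\bfC_2),$$
I would first set $J_1 := J$ off of $\ColOne(\bfC_2)$. The hypothesis that $\ColOneTwo(\bfC_2)$ is $J$-invariant says $J$ permutes the saddle connections of $\ColOneTwo(\bfC_2)$, and each of these arises as the collapse of a boundary component of some cylinder in $\ColOne(\bfC_2)$. In both cases supplied by the first claim, this boundary permutation extends uniquely to a derivative $-\mathrm{Id}$ involution on $\ColOne(\bfC_2)$: rotation by $\pi$ about the center of a cylinder whose scar is $J$-fixed setwise, or a half-turn composed with translation exchanging two simple cylinders whose scars are swapped by $J$. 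Since $J^2 = \mathrm{id}$, the uniqueness of each such extension forces $J_1^2 = \mathrm{id}$, and by construction $\Col_{\ColOne(\bfC_2)}(J_1) = J$ and $J_1(\ColOne(\bfC_2)) = \ColOne(\bfC_2)$.

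Finally, marked-point preservation will follow once the extension is built: Definition~\ref{D:CylAndBoundary} excludes marked points from cylinder interiors, so every marked point of $\ColOneX$ lies outside $\ColOne(\bfC_2)$, where $J_1 = J$; and $J = \Col_{\ColTwo(\bfC_1)}(J_2)$ preserves marked points since $J_2$ does (as $\MTwo$ is a quadratic double by hypothesis). Together this shows $J_1$ satisfies Definition~\ref{D:ExtraSymmetry}, yielding extra symmetry. The main obstacle I anticipate is the bookkeeping for the cylinder extension: verifying in each combinatorial sub-case that the prescribed boundary action glues consistently across the saddle connections bounding $\ColOne(\bfC_2)$ to give a well-defined involution on the interior. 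The limited cylinder types from the first claim, combined with the rigidity enforced by the derivative $-\mathrm{Id}$ condition, should reduce this to a finite case check rather than a serious difficulty.
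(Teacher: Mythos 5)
Your proof is correct and takes essentially the same route as the paper: the first claim follows from genericity of the diamond plus Lemma~\ref{L:AbelianDoubleSEC}, and the second follows because $\ColOneX$ is obtained by gluing a complex cylinder or pair of simple cylinders into the $J$-invariant set of saddle connections $\ColOneTwo(\bfC_2)$, so $J$ extends across the glued-in cylinders. One small imprecision: the identification you invoke should be $\ColOneX - \overline{\ColOne(\bfC_2)} = \ColOneTwoX - \ColOneTwo(\bfC_2)$, so marked points on the boundary saddle connections of $\ColOne(\bfC_2)$ are not covered by the naive ``$J_1 = J$ off the cylinders'' argument and instead need the compatibility of $J_1$ with $J$ under the collapse map — but this is a routine fix, and the paper's own proof is terser still, not addressing marked points explicitly at all.
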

\begin{proof}
By Definition \ref{D:GenericDiamond} (\ref{E:genericSE}), $\ColOne(\bfC_2)$ is a subequivalence class of generic cylinders (recall that subequivalence classes are defined in Definition \ref{D:subeq}). By Lemma \ref{L:AbelianDoubleSEC}, since $\MOne$ is an Abelian double, $\ColOne(\bfC_2)$ is either a single complex cylinder or two isometric simple cylinders with disjoint boundaries on $\ColOneX$. This establishes the first claim.

If $\ColOneTwo(\bfC_2)$ is $J$-invariant, then since $\ColOneX$ is formed by gluing in $\ColOne(\bfC_2)$ - a pair of simple cylinders or a single complex cylinder - to the pair of saddle connections comprising $\ColOneTwo(\bfC_2)$, the $J$-involution extends to $\ColOneX$.
\end{proof}

\begin{lem}\label{L:Cutting}
$\ColOneTwo(\bfC_2)/T$ and $\ColOneTwo(\bfC_1)/T$ are disjoint. 

Moreover, $\ColOneTwo(\bfC_1)/T$ is $\tau$-invariant and, in the case where $\cM$ has no extra symmetry, $\ColOneTwo(\bfC_2)/T$ is not. 
\end{lem}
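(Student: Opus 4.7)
The plan is to reduce all three claims to two basic setwise-invariance facts about the defining involutions. First, the Abelian-double involution $T_1$ on $\ColOneX$ preserves the subequivalence class $\ColOne(\bfC_2)$ setwise, since by Lemma \ref{L:AbelianDoubleSEC} such a class is either a pair of simple cylinders exchanged by $T_1$ or a single complex cylinder fixed setwise by $T_1$. Analogously, the quadratic-double involution $J_2$ on $\ColTwoX$ preserves $\ColTwo(\bfC_1)$ setwise: subequivalence classes in strata of quadratic differentials are singletons, and $\ColTwo(\bfC_1)$ is the preimage of such a singleton under the holonomy double cover, hence $J_2$-invariant. Descending through the collapse maps, $T = \Col_{\ColOne(\bfC_2)}(T_1)$ preserves $\ColOneTwo(\bfC_2)$ setwise and $J = \Col_{\ColTwo(\bfC_1)}(J_2)$ preserves $\ColOneTwo(\bfC_1)$ setwise.

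For the disjointness claim, first note that $\ColOneTwo(\bfC_1)$ and $\ColOneTwo(\bfC_2)$ are disjoint subsets of $\ColOneTwoX$, because $\bfC_1$ and $\bfC_2$ are disjoint and share no boundary saddle connections in $(X,\omega)$, a property preserved under both collapses. Letting $\pi \colon \ColOneTwoX \to \ColOneTwoX/T$ denote the quotient, the condition $\pi(\ColOneTwo(\bfC_1)) \cap \pi(\ColOneTwo(\bfC_2)) = \emptyset$ is equivalent to $\ColOneTwo(\bfC_1) \cap \bigl(\ColOneTwo(\bfC_2) \cup T(\ColOneTwo(\bfC_2))\bigr) = \emptyset$. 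Using setwise $T$-invariance of $\ColOneTwo(\bfC_2)$, this reduces to the disjointness just noted.

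For the $\tau$-invariance of $\ColOneTwo(\bfC_1)/T$, recall from Lemma \ref{L:AllAboutThatBase} that $J$ descends to the hyperelliptic involution $\tau$ on $\ColOneTwoX/T$. Combined with $J$-invariance of $\ColOneTwo(\bfC_1)$, this yields $\tau(\pi(\ColOneTwo(\bfC_1))) = \pi(J(\ColOneTwo(\bfC_1))) = \pi(\ColOneTwo(\bfC_1))$.

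For the final claim, prove the contrapositive: assume $\ColOneTwo(\bfC_2)/T$ is $\tau$-invariant, and deduce extra symmetry. Since $\tau$ is the image of $J$, $\tau$-invariance of $\pi(\ColOneTwo(\bfC_2))$ gives $J(\ColOneTwo(\bfC_2)) \subset \pi^{-1}(\pi(\ColOneTwo(\bfC_2))) = \ColOneTwo(\bfC_2) \cup T(\ColOneTwo(\bfC_2)) = \ColOneTwo(\bfC_2)$, where the last equality uses setwise $T$-invariance of $\ColOneTwo(\bfC_2)$. As $J$ is an involution, this forces $J(\ColOneTwo(\bfC_2)) = \ColOneTwo(\bfC_2)$, and then Lemma \ref{L:StructureOfS} produces an extension $J_1$ of $J$ to $\ColOneX$ preserving the marked points, which is precisely one of the two conditions defining extra symmetry. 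The only point requiring genuine care is establishing the two setwise-invariance facts at the outset; once they are in hand, each of the three claims follows by a direct set-theoretic computation.
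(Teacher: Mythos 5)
Your proof is correct and follows essentially the same route as the paper's. The paper's proof asserts the setwise $T$-invariance of $\ColOneTwo(\bfC_2)$ and $J$-invariance of $\ColOneTwo(\bfC_1)$ more tersely (in fact the paper contains the typo ``fixed by $J_1$'' where $J_2$ is meant), so your extra care in deriving these two invariance facts from Lemma \ref{L:AbelianDoubleSEC} and the structure of subequivalence classes in quadratic doubles, and in spelling out the quotient-map set theory, is a useful expansion but not a different argument.
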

\begin{proof}
Since $\ColOneTwo(\bfC_2)$ is $T$-invariant, it is the full preimage of $\ColOneTwo(\bfC_2)/T$. It follows that $\ColOneTwo(\bfC_2)/T$ and $\ColOneTwo(\bfC_1)/T$ are disjoint, since $\ColOneTwo(\bfC_2)$ and $\ColOneTwo(\bfC_1)$ are disjoint.

Since $\ColTwo(\bfC_1)$ is fixed by $J_1$, $\ColOneTwo(\bfC_1)$ is fixed by $J$. Since $J$ and $T$ commute on $\ColOneTwoX$, the $J$-involution descends to $\tau$ on $\ColOneTwoX/T$. Therefore, $\ColOneTwo(\bfC_1)/T$ is fixed by $\tau$. 

Similarly, if $\ColOneTwo(\bfC_2)/T$ were fixed by $\tau$ then $\ColOneTwo(\bfC_2)$ would be $J$-invariant, implying that $\cM$ has extra symmetry by  Lemma \ref{L:StructureOfS}.
\end{proof}

\begin{cor}\label{C:Cutting}
Each component of the following surface is fixed by $\tau$,
\[ \ColOneTwoX/T - \left( \left( \ColOneTwo(\bfC_1 \cup \bfC_2) \right)/T \cup \tau\left( \ColOneTwo(\bfC_2)/T \right) \right).\]
Moreover, when $\cM$ has no extra symmetry, there are $|\ColOneTwo(\bfC_1)/T|+1$ components where $|\ColOneTwo(\bfC_1)/T|\in \{1,2\}$ is the number of saddle connections in $\ColOneTwo(\bfC_1)/T$.
\end{cor}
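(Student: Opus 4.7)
The $\tau$-invariance of each component follows from Lemmas \ref{L:Cutting} and \ref{L:HypCuttingS}. By Lemma \ref{L:Cutting}, the sets $\ColOneTwo(\bfC_1)/T$ and $\ColOneTwo(\bfC_2)/T \cup \tau(\ColOneTwo(\bfC_2)/T)$ are each $\tau$-invariant, so $\tau$ acts on the complement. By Lemma \ref{L:AllAboutThatBase}, $\ColOneTwoX/T$ lies in a hyperelliptic component of a stratum of Abelian differentials, and $\tau$ acts as $-\mathrm{Id}$ on $H^1$, so any $\tau$-exchanged pair of saddle connections is homologous. Thus Lemma \ref{L:HypCuttingS} applies to $\ColOneTwo(\bfC_2)/T \cup \tau(\ColOneTwo(\bfC_2)/T)$: cutting along it yields two $\tau$-invariant pieces. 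Further removing the $\tau$-invariant set $\ColOneTwo(\bfC_1)/T$ preserves component-wise $\tau$-invariance.

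For the count I first establish that $|\ColOneTwo(\bfC_2)/T| = 1$ and $|\ColOneTwo(\bfC_1)/T| \in \{1, 2\}$. The saddle connections in each $\ColOneTwo(\bfC_i)/T$ are pairwise generically parallel (as they arise from a single subequivalence class $\bfC_i$), so by Lemma \ref{L:HypParallelism} each pair is $\tau$-exchanged and thus each set has size at most two. In the no-extra-symmetry case, $\ColOneTwo(\bfC_2)/T$ is not $\tau$-invariant (Lemma \ref{L:Cutting}), so it must be a singleton. Cutting $\ColOneTwoX/T$ along $\ColOneTwo(\bfC_2)/T \cup \tau(\ColOneTwo(\bfC_2)/T)$ produces two $\tau$-invariant components $A$ and $B$, and the $\tau$-invariant set $\ColOneTwo(\bfC_1)/T$, being disjoint from the previous cuts, lies in precisely one of them, say $A$. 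When $|\ColOneTwo(\bfC_1)/T| = 2$ its elements form a $\tau$-exchanged (hence homologous) pair, and Lemma \ref{L:HypCuttingS} shows that cutting them separates $\ColOneTwoX/T$; since $B$ is unaffected, the separation happens within $A$, yielding $3 = |\ColOneTwo(\bfC_1)/T| + 1$ components in total.

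The main obstacle is the case $|\ColOneTwo(\bfC_1)/T| = 1$: one must verify that the single $\tau$-fixed saddle connection does not further disconnect $A$, producing exactly $2 = |\ColOneTwo(\bfC_1)/T| + 1$ components. The plan is to pass to the genus-zero quotient $\ColOneTwoX/(T\tau)$, on which both cuts descend to disjoint simple arcs with interior endpoints; a direct Euler-characteristic computation shows that the complement of two such arcs on a sphere is a connected annulus. Lifting back through the degree-two branched cover and tracking the branching data at the Weierstrass endpoints of the $\tau$-fixed saddle connection should then yield exactly two components, consistent with the $\tau$-invariance established above. The technical subtlety is in verifying that the branching pattern at these endpoints cannot produce an unexpected additional disconnection, which is ruled out by the requirement that each lifted component be $\tau$-invariant.
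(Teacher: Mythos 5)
Your proof follows essentially the same strategy as the paper's: use Lemma \ref{L:HypParallelism} to bound $|\ColOneTwo(\bfC_i)/T| \le 2$, observe via Lemma \ref{L:Cutting} that in the no-extra-symmetry case $\ColOneTwo(\bfC_2)/T$ is a single saddle connection $s$ not fixed by $\tau$, apply Lemma \ref{L:HypCuttingS} to $\{s, \tau(s)\}$ to obtain two $\tau$-fixed pieces, and then analyze the effect of additionally removing $\ColOneTwo(\bfC_1)/T$. Your treatment of the $|\ColOneTwo(\bfC_1)/T| = 2$ subcase is complete. However, you explicitly flag $|\ColOneTwo(\bfC_1)/T| = 1$ as the remaining obstacle and offer only a plan -- passing to the genus-zero quotient $\ColOneTwoX/\langle T, J\rangle$, an Euler-characteristic count, and then lifting -- together with an unresolved ``technical subtlety'' at the branch locus. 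That is a genuine gap; the case is not established.

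The paper closes this case without leaving the surface $\ColOneTwoX/T$ at all. A single saddle connection $t$ fixed by $\tau$ is, by Lemma \ref{L:HypParallelism}, not homologous to any other saddle connection (in particular not to $s$ or $\tau(s)$), and $t$ is also not null-homologous on its own since a saddle connection has nonzero period; hence adjoining $t$ to the cut along $\{s, \tau(s)\}$ produces no new separating cycle and no further disconnection. This is observation~(4) in the paper's proof, stated alongside three companions that together also dispatch the preliminary $\tau$-invariance claim in every configuration (your first paragraph tacitly assumes $\ColOneTwo(\bfC_2)/T$ is not $\tau$-invariant, which is only guaranteed in the no-extra-symmetry case, so it does not by itself establish the unconditional first sentence of the corollary). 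Replacing your sphere-quotient plan with this homological observation closes the gap and aligns the whole argument with the paper's.
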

\begin{proof}
By Lemma \ref{L:AllAboutThatBase}, $\ColOneTwo(X, \omega)/T$ belongs to $\cH^{hyp}(2g-2)$ or $\cH^{hyp}(g-1, g-1)$ for some  $g$. 

By Lemma \ref{L:HypParallelism}, the generically parallel saddle connections in a hyperelliptic component of a stratum of Abelian differentials are precisely the ones exchanged by the hyperelliptic involution.  Therefore, each $\ColOneTwo(\bfC_i)/T$ is either a single saddle connection or a pair of saddle connections exchanged by the hyperelliptic involution.  

We now make some basic observations.
\begin{enumerate}
\item By Lemma \ref{L:HypCuttingS}, if $s$ is any saddle connection on $\ColOneTwo(X, \omega)/T$ that is not fixed by the hyperelliptic involution $\tau$  then cutting along $s$ and $\tau(s)$ disconnects the surface into two subsurfaces with boundary, each fixed by the hyperelliptic involution. 
\item Similarly, cutting two disjoint pairs of exchanged saddle connections disconnects the surface into three components, each fixed by the hyperelliptic involution. 
\item By Lemma \ref{L:HypParallelism}, cutting two saddle connections that are not exchanged does not disconnect the surface.
\item Similarly, after cutting one exchanged pair of saddle connections, additionally cutting another saddle connection does not further disconnect the surface. 
\end{enumerate}
This implies the first claim. 

Assume that $\cM$ has no extra symmetry. By Lemmas \ref{L:StructureOfS} and \ref{L:Cutting}, $\ColOneTwo(\bfC_2)/T$ is a single saddle connection not fixed by the hyperelliptic involution, so cutting it and its image under $\tau$ disconnects the surface into two components. 

The set $\ColOneTwo(\bfC_1)/T$ is fixed by $\tau$, so the final claim follows from the basic observations above. 
\end{proof}

\begin{defn}\label{D:Cutting}
When $|\ColOneTwo(\bfC_1)/T| = 2$ and $\ColOneTwo(\bfC_2)/T$ is not $\tau$-invariant, the surface in the statement of Corollary \ref{C:Cutting} has three components. We will label them as follows:
\begin{enumerate}
    \item $\Sigma_1$ is the subsurface whose boundary is $\ColOneTwo(\bfC_1)/T$.
    \item $\Sigma_2$ is the subsurface whose boundary is $\ColOneTwo(\bfC_2)/T \cup \tau\left( \ColOneTwo(\bfC_2)/T \right)$.
    \item $\Sigma_3$ is the remaining subsurface.
\end{enumerate}
The preimage of $\Sigma_i$ in $\ColOneTwoX$ will be denoted $\wt{\Sigma}_i$ for $i \in \{1, 2, 3\}$. In the sequel, we will use $\wt{\Sigma}_i^{top}$ to refer to the corresponding subsurface on $(X, \omega)$. 

An equivalent definition of $\Sigma_2$, which we will use even in the case where $|\ColOneTwo(\bfC_1)/T| = 1$ \red as long as we continue to have that \black $\ColOneTwo(\bfC_2)/T$ is not $\tau$-invariant,  is that it is the component of 
\[ \ColOneTwoX/T - \left(\ColOneTwo(\bfC_2)/T \cup \tau\left( \ColOneTwo(\bfC_2)/T \right) \right) \]
that does not contain $\ColOneTwo(\bfC_1)/T$. 
\end{defn}

\begin{lem}\label{L:QuickSummary}
If $\ColOneTwo(\bfC_2)/T$ is not $\tau$-invariant, then $\cM$ has rank at least two and $\ColOneTwoX/T$ has marked points if and only if it belongs to $\cH(0,0)$.

If $\cM$ has no extra symmetry, then the previous two conclusions hold and additionally $\ColOneTwoX$ has no marked points. 
\end{lem}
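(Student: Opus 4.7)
The plan is to proceed through four steps, each exploiting the hyperelliptic structure of $\ColOneTwoX/T$ established in Lemma \ref{L:AllAboutThatBase}. First I would analyze the structure of $\ColOneTwo(\bfC_2)/T$: since $\ColOneTwo(\bfC_2)$ is a subequivalence class of generic cylinders on $\ColOneTwoX$, Corollary \ref{C:ParallelSC} gives that its boundary saddle connections are generically parallel, and hence so are their images on $\ColOneTwoX/T \in \cH$. Since $\cH$ is a hyperelliptic component (Lemma \ref{L:AllAboutThatBase}), Lemma \ref{L:HypParallelism} would force any two generically parallel saddle connections to be $\tau$-exchanged; if $\ColOneTwo(\bfC_2)/T$ contained two or more saddle connections, it would thus be $\tau$-invariant, contrary to hypothesis. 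So I may conclude that $\ColOneTwo(\bfC_2)/T$ is a single saddle connection not fixed by $\tau$.

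Next I would apply Lemma \ref{L:RankTest} to the disjoint subequivalence classes $\bfC_1$ and $\bfC_2$ to conclude $\cM$ has rank at least two. Disjointness is part of the diamond hypotheses, so it remains to verify non-parallelism. If $\bfC_1$ and $\bfC_2$ were parallel, their collapses on $\ColOneTwoX$ would be parallel, and therefore the union $\ColOneTwo(\bfC_1)/T \cup \ColOneTwo(\bfC_2)/T$ would consist of pairwise parallel saddle connections on $\ColOneTwoX/T$. Recalling from Lemma \ref{L:Cutting} that $\ColOneTwo(\bfC_1)/T$ is $\tau$-invariant while the single saddle connection $s' = \ColOneTwo(\bfC_2)/T$ is not, I would derive a contradiction by noting that Lemma \ref{L:HypParallelism} would force $s'$ to be $\tau$-paired with some element of $\ColOneTwo(\bfC_1)/T$, incompatible with the internal $\tau$-structure of that set (each element is either $\tau$-fixed or already paired with another element within the set).

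For the marked points characterization, I would invoke Lemma \ref{L:AllAboutThatBase}, by which $\cH$ is $\cH^{hyp}(2g-2)$ or $\cH^{hyp}(g-1,g-1)$ with marked points possible only when $g = 1$, giving $\cH(0)$ or $\cH(0,0)$. The case $\cH = \cH(0)$ is excluded by observing that on a torus with a single marked point, the involution $\tau$ is rotation by $\pi$ fixing the marked point, and every saddle connection (necessarily a loop at the marked point) is setwise preserved by $\tau$ since $\tau$ merely reverses holonomy while fixing both endpoints; this contradicts Step 1. The reverse direction of the iff is immediate from the definition of $\cH(0,0)$.

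Finally, for the third claim, I would assume for contradiction that $\cM$ has no extra symmetry but $\ColOneTwoX$ carries a marked point $p$. Since $T = \Col_{\ColOne(\bfC_2)}(T_1)$ and $T_1$ preserves marked points by the Abelian double structure of $\MOne$, the point $p$ descends to a marked point on $\ColOneTwoX/T$, forcing $\cH = \cH(0,0)$ by Step 3. The plan is then to construct either an extension $T_2$ of $T$ to $\For(\ColTwoX)$ or an extension $J_1$ of $J$ to $\For(\ColOneX)$ preserving the relevant cylinder set, contradicting the no-extra-symmetry assumption. The main obstacle is precisely this construction: it requires combining the very restrictive torus geometry of $\ColOneTwoX/T$ (either the simultaneously Abelian-and-quadratic-double structure from Lemma \ref{L:QuadAbDouble} in the connected case, or the antidiagonal embedding of $\cH(0,0)$ from Lemma \ref{L:QBoundaryHolonomy} in the disconnected case) with the constrained shape of the glued-in cylinders in $\ColTwo(\bfC_1)$ and $\ColOne(\bfC_2)$ (controlled via Lemma \ref{L:AbelianDoubleSEC}), to show that one of the base involutions lifts consistently through the corresponding gluing. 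The torus symmetries are sufficiently rich that such an extension should exist once the marked points are present to pin down the branching data.
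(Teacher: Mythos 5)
Your first three steps are directionally correct and roughly match the paper's strategy, but the proposal has two genuine gaps, one of which is concretely fatal as written.

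In Step 2 (the rank claim), you reduce to showing $\bfC_1$ and $\bfC_2$ are non-parallel and then argue: if parallel, their collapses would be pairwise \emph{parallel} on $\ColOneTwoX/T$, and Lemma~\ref{L:HypParallelism} would then force $\tau$-pairing. But Lemma~\ref{L:HypParallelism} characterizes \emph{generically parallel} saddle connections, not merely parallel ones. Since $\bfC_1$ and $\bfC_2$ are distinct subequivalence classes, if they happened to point in the same direction on $(X,\omega)$ there is no reason their collapses would be $\MOneTwo$-parallel, so the appeal to Lemma~\ref{L:HypParallelism} is unjustified. The paper sidesteps this by splitting on $g$: for $g>1$, $\MOneTwo$ already has rank $g\geq 2$ so the claim is immediate; for $g=1$, the paper identifies $\ColOneTwo(\bfC_1)/T$ and $\ColOneTwo(\bfC_2)/T$ as a saddle connection joining the two marked points of $\cH(0,0)$ (fixed by $\tau$) and a saddle connection joining a marked point to itself (not fixed by $\tau$), respectively, and these cannot be parallel when disjoint. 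You should adopt this dichotomy rather than the abstract parallelism argument.

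In Step 4 (no marked points without extra symmetry) you acknowledge that the ``main obstacle'' — actually constructing the extension $T_2$ or $J_1$ — is not carried out, so the proof is simply incomplete there. Moreover your framing (assume a marked point exists and derive extra symmetry) is not how the paper proceeds, and it misses that the connected case is already settled without any construction: since $\cH\neq\cH(0)$, if $\ColOneTwoX$ is connected then $\MOneTwo$ is simultaneously an Abelian and quadratic double and Lemma~\ref{L:QuadAbDouble} directly forces no marked points. The paper then handles the disconnected case unconditionally (not merely under the assumption of a marked point): it shows $\ColOneTwo(\bfC_1)$ consists of exactly two saddle connections exchanged by $T$, deduces via Masur–Zorich (Theorem~\ref{T:MZ}) that $\ColTwo(\bfC_1)/J_2$ is a complex envelope, forgets the marked preimages of its poles, and then observes that gluing a complex cylinder into two $T$-exchanged saddle connections forces $T$ to extend — a concrete construction that is missing from your sketch. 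I would encourage you to structure Step 4 as a connected/disconnected dichotomy and then supply the Masur–Zorich complex-envelope argument for the disconnected side.

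One minor terminological slip: $\ColOneTwo(\bfC_2)$ is a collection of saddle connections, not ``a subequivalence class of generic cylinders on $\ColOneTwoX$''; what you want is that $\ColOne(\bfC_2)$ is a subequivalence class on $\ColOneX$ and Corollary~\ref{C:ParallelSC} applies to its collapse.
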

\begin{proof}
By Lemma \ref{L:AllAboutThatBase}, $\ColOneTwo(X, \omega)/T$ belongs to $\cH^{hyp}(2g-2)$ or $\cH^{hyp}(g-1, g-1)$ for some  $g$. All claims are immediate when $g > 1$. So suppose that $g = 1$. 

When $\cM$ has no extra symmetry, $\ColOneTwo(\bfC_2)/T$ is not $\tau$-invariant by Lemma \ref{L:Cutting}. So suppose that $\ColOneTwo(\bfC_2)/T$ is not $\tau$-invariant.

Suppose first that $\ColOneTwoX/T$ belongs to $\cH(0)$. Then every saddle connection on $\ColOneTwoX/T$ is fixed by $\tau$. This contradicts our assumption on $\ColOneTwo(\bfC_2)/T$. Therefore, $\ColOneTwoX/T$ belongs to $\cH(0,0)$. 


Since $\ColOneTwo(\bfC_2)/T$ is not $\tau$-invariant, it consists of a saddle connection $s$ that joins a marked point to itself. Since $\ColOneTwo(\bfC_1)/T$ is $\tau$-invariant and disjoint from $\ColOneTwo(\bfC_2)/T$ (by Lemma \ref{L:Cutting}), we see that $\ColOneTwo(\bfC_1)/T$ consists of saddle connections disjoint from $s$ and $\tau(s)$. Every such saddle connection joins the two marked points and is fixed by $\tau$. Since the saddle connections in $\ColOneTwo(\bfC_1)/T$ are all generically parallel, it follows that it is a single saddle connection fixed by $\tau$, and which is not generically parallel to $s$.

In particular, $\bfC_1$ and $\bfC_2$ are subequivalence classes of disjoint non-parallel cylinders, which implies that $\cM$ has rank at least two by Lemma \ref{L:RankTest}. We have established the first sentence of the claim, so we suppose now that $\cM$ has no extra symmetry. We wish to show that $\ColOneTwoX$ contains no marked points.

Because $\ColOneTwoX/T\notin\cH(0)$,  if $\ColOneTwoX$ is connected, then it has no marked points by Lemma \ref{L:QuadAbDouble}. 

Suppose therefore that $\ColOneTwoX$ is disconnected. In particular, this means, by Lemma \ref{L:AllAboutThatBase}, that $\ColOneTwoX$ consists of two disjoint copies of $\ColOneTwoX/T$ that are exchanged by both $T$ and $J$. This means that $\ColOneTwo(\bfC_1)$ consists of two saddle connections exchanged by $T$ on two disjoint copies of $\ColOneTwoX/T$ (this follows since $\ColOneTwo(\bfC_1)/T$ is a single saddle connection and since $\ColOneTwo(\bfC_1)$ must contain at least two saddle connections since it is $J$-invariant and since $\ColOneTwoX$ consists of two components that are exchanged by $J$). By Masur-Zorich (Theorem \ref{T:MZ}; see also Figure \ref{F:CylinderTypes}), $\ColTwo(\bfC_1)/J_2$ is a complex envelope, as we now explain. By Masur-Zorich (Theorem \ref{T:MZ}) since $\ColTwo(\bfC_1)$ is a subequivalence class of generic cylinders whose collapse disconnects the surface it must be the preimage of a complex envelope or complex cylinder. In the latter case, $\ColTwo(\bfC_1)$ contains four saddle connections. So we see that $\ColTwo(\bfC_1)/J_2$ must be a complex envelope.

Let $P$ be the preimages of the poles on this complex envelope that are marked on $\ColTwoX$. Let $\For'\left( \ColTwoX \right)$ denote $\ColTwoX$ with the points in $P$ forgotten and let $\For'(\ColTwo(\bfC_1))$ denote the image of $\ColTwo(\bfC_1)$ on this surface. Since $\For'\left( \ColTwoX \right)$ is formed by gluing in a complex cylinder - $\For'\left( \ColTwo(\bfC_1) \right)$ - to two saddle connections exchanged by $T$, it follows that the $T$ involution extends (in the sense of Definition \ref{D:ExtraSymmetry}) to a translation involution $T_2$ on $\ColTwoX$ such that $T_2(\ColTwo(\bfC_1)) = \ColTwo(\bfC_1)$ and so $\Col_{\ColTwo(\bfC_1)}(T_2) = T$. This is the definition of having extra symmetry, which contradicts the assumption that $\cM$ does not have extra symmetry. \end{proof}

\subsection{The Collapsing Lemma}
In this section we will prove the main technical lemma for the sequel. Recall that cylinder adjacency was defined in Definition \ref{D:CylinderAdjacent}. 

\begin{lem}\label{L:Collapsing}
Let $C\subset \ColOneTwoX/T$ be a simple cylinder that is not adjacent to the saddle connections in $\ColOneTwo(\bfC_1 \cup \bfC_2)/T$. Let $\wt{C}$ be the preimage of $C$ on $\ColOneTwoX$, and let $\wt{C}_{top}$ be the corresponding cylinders on $(X, \omega)$. 
Then, after perhaps performing a half Dehn-twist in $\wt{C}_{top}$, 
$$\left(\Col_{\wt{C}_{top}}(X, \omega), \cM_{\wt{C}_{top}}, \Col_{\wt{C}_{top}}(\bfC_1), \Col_{\wt{C}_{top}}(\bfC_2)\right)$$
is a generic diamond where $\cM_{\wt{C}_{top}, \bfC_1}$ (resp. $\cM_{\wt{C}_{top}, \bfC_2}$) is an Abelian (resp. quadratic) double.  
\end{lem}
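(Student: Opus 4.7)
The plan is as follows. First I would identify $\wt{C}_{top}$ concretely. Since $C$ is a simple cylinder on $\ColOneTwoX/T$ that is disjoint from and non-adjacent to $\ColOneTwo(\bfC_1 \cup \bfC_2)/T$, its preimage $\wt{C}$ on the double cover $\ColOneTwoX$ consists of either a single connected cylinder of twice the circumference (when the restriction of the cover over $C$ is connected) or a $T$-invariant pair of isometric simple cylinders. In either case $\wt{C}$ lies in the complement of $\ColOneTwo(\bfC_1 \cup \bfC_2)$, so by inverting each of the two collapse steps $(X,\omega)\to \ColOneX \to \ColOneTwoX$ (and symmetrically via $\ColTwoX$), the cylinders in $\wt{C}$ persist to a collection $\wt{C}_{top}$ on $(X,\omega)$. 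The non-adjacency hypothesis guarantees that $\wt{C}_{top}$ is in fact \emph{disjoint} from and shares no boundary saddle connections with $\bfC_1 \cup \bfC_2$.

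Next I would argue that $\wt{C}_{top}$ is a subequivalence class of $\cM$. Because $\MOne$ is an Abelian double and $\ColOne(\wt{C}_{top})$ is the preimage under $T_1$ of a simple cylinder on $\ColOneX/T_1$, Lemma \ref{L:AbelianDoubleSEC} implies that $\ColOne(\wt{C}_{top})$ is a subequivalence class in $\MOne$. Pulling this back through the cylinder-gluing that reconstructs $(X,\omega)$ from $\ColOneX$ (which introduces a new subequivalence class $\bfC_1$ disjoint from $\wt{C}_{top}$) shows $\wt{C}_{top}$ is itself a subequivalence class in $\cM$. Since $\wt{C}_{top}$ is disjoint from both $\bfC_1$ and $\bfC_2$, the collapses commute, and we obtain
$$\Col_{\Col_{\wt{C}_{top}}(\bfC_i)}\Col_{\wt{C}_{top}}(X,\omega) = \Col_{\wt{C}_{top}}\Col_{\bfC_i}(X,\omega),$$
so the four invariant subvarieties in the proposed diamond can equivalently be described by applying $\Col_{\wt{C}_{top}}$ to the original diamond's subvarieties after they land in a side.

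I would then verify the diamond is generic. The non-adjacency gives that $\Col_{\wt{C}_{top}}(\bfC_i)$ remains a subequivalence class of generic cylinders in $\cM_{\wt{C}_{top}}$ (using Corollary \ref{C:StillGeneric} for genericity), and the dimension drop of exactly one at each side follows because the defining equations carry over via Theorem \ref{T:BoundaryTangent}. For the ``doubles'' conclusion, I would apply Lemma \ref{L:codim1doubles}: since $\MOne$ is an Abelian double and $\ColOne(\wt{C}_{top})$ is a subequivalence class whose collapse produces a connected surface (because a simple cylinder's collapse is connected and $\ColOne(\wt{C}_{top})$ is a preimage of one such under $T_1$), Lemma \ref{L:codim1doubles} gives that $\cM_{\wt{C}_{top}, \bfC_1}$ is an Abelian double; the quadratic case is symmetric, using that $\ColTwo(\wt{C}_{top})/J_2$ is a simple cylinder on the base.

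The half Dehn-twist addresses the case when $\wt{C}$ is a single connected cylinder double-covering $C$: the identification along the core of $\wt{C}_{top}$ involves a choice which, on the uncollapsed side, can cause $\Col_{\wt{C}_{top}}(\bfC_2)$ to lose the correct parallelism/subequivalence structure with respect to the holonomy involution (roughly, the two boundary strata of $\wt{C}_{top}$ may be glued out of phase with the $J$-action), and performing half a Dehn twist in $\wt{C}_{top}$ before collapsing flips this gluing into the correct one. I expect this to be the main obstacle: one must check explicitly, by polygonal bookkeeping in the spirit of Figures \ref{F:CylinderTypes} and \ref{F:TwoWaysToCollapseComplexEnvelope}, that exactly one of the two choices (twist or no twist) yields the two-sided quadratic double structure, and that this choice does not affect the Abelian-double conclusion on the other side. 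Once this combinatorial check is carried out, all four items (subequivalence of $\wt{C}_{top}$, commuting collapses, genericity, and doubles on both sides) combine to give the lemma.
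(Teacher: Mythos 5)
Your overall skeleton matches the paper's: non-adjacency gives disjointness of the collapsed $\bfC_i$, a dimension count shows the drop of exactly one at each side, and Lemma \ref{L:codim1doubles} upgrades the boundary strata to doubles. But two of your claims are incorrect, and they are exactly where the real content of this lemma lies.

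First, you assert that the collapse of $\ColOne(\wt{C}_{top})$ on $\ColOneX$ is automatically connected ``because a simple cylinder's collapse is connected and $\ColOne(\wt{C}_{top})$ is a preimage of one such under $T_1$.'' This implication does not hold. If the degree-two cover restricted over $C$ is connected, then $\ColOne(\wt{C}_{top})$ is a complex cylinder, and collapsing a complex cylinder on the double cover may very well disconnect the surface even though collapsing the quotient simple cylinder on the base does not: the question is whether the core curve of the complex cylinder is separating upstairs, which is independent of what happens downstairs. So connectivity of $\Col_{\wt{C}_{top},\bfC_1}(X,\omega)$ --- which is precisely what is needed to invoke Lemma \ref{L:codim1doubles} --- is \emph{not} automatic, and the paper is forced to argue for it case by case (treating $\ColOneTwoX$ disconnected and connected separately).

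Second, this is exactly what the half Dehn twist is for, and your explanation of it is off target. You attribute it to some potential mismatch in ``parallelism/subequivalence structure with respect to the holonomy involution,'' but no such issue arises: the collapsed $\Col_{\wt{C}_{top}}(\bfC_i)$ remain subequivalence classes of generic cylinders regardless. The actual issue is topological. When $\ColOneTwoX$ is connected and $\wt{C}$ is a complex cylinder, collapsing $\wt{C}_{top}$ can disconnect the bottom of the new diamond. Performing a half Dehn twist in $\wt{C}_{top}$ (which swaps which pairs of boundary saddle connections get identified after collapse) repairs this, making $\Col_{\bfC_1,\bfC_2,\wt{C}_{top}}(X,\omega)$ connected, whence the sides $\Col_{\wt{C}_{top},\bfC_i}(X,\omega)$ (obtained by gluing a cylinder back in) are also connected and Lemma \ref{L:codim1doubles} applies. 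You correctly noticed that the half twist is only relevant when $\wt{C}$ is a single complex cylinder, but the reason you offer would not survive an attempt to write it out, while the connectivity reason is a one-line check.

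A smaller point: your argument that $\wt{C}_{top}$ is a subequivalence class of $\cM$ goes in the wrong direction. You verify $\ColOne(\wt{C}_{top})$ is a subequivalence class in $\MOne$ and then ``pull back,'' but in general a subequivalence class in a boundary component does not pull back to one in the interior; the natural direction of argument is the reverse. This can be fixed (the genericity of the diamond and of $(X,\omega)$ do the work), but as written it is not a valid inference.
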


\begin{proof}
We begin by showing that the indicated $4$-tuple is a generic diamond. First, since $C$ is not adjacent to the saddle connections in $\ColOneTwo(\bfC_1 \cup \bfC_2)/T$, the saddle connections in $\ColOneTwo(\bfC_1 \cup \bfC_2)/T$ remain distinct \red when $C$ is collapsed, \black that is no two merge, and so $\Col_{\wt{C}_{top}}(\bfC_1)$ and $\Col_{\wt{C}_{top}}(\bfC_2)$ continue to neither intersect nor share boundary saddle connections. Moreover, $\Col_{\wt{C}_{top}}(\bfC_i)$ remains a subequivalence class, which verifies Definition \ref{D:GenericDiamond} \eqref{E:genericSE}. The following sublemma will verify Definition \ref{D:GenericDiamond} \eqref{E:one}; which completes the verification that we have a generic diamond.

\begin{sublem}
$\left( \cM_{\wt{C}_{top}} \right)_{\Col_{\wt{C}_{top}}(\bfC_i)}$ has dimension exactly one less than $\cM_{\wt{C}_{top}}$.
\end{sublem}
\begin{proof}
Since $C$ is a simple cylinder on $\ColOneTwoX/T$,  the dimension of $\cM_{\bfC_1, \bfC_2, \wt{C}_{top}}$ is exactly one less than that of $\MOneTwo$. Since $\cM_{\bfC_1, \bfC_2, \wt{C}_{top}}$ belongs to the boundary of $\cM_{\wt{C}_{top}, \bfC_1}$ and $\cM_{\wt{C}_{top}, \bfC_2}$, it follows that these two invariant subvarieties have dimension exactly one less than $\MOne$ and $\MTwo$ respectively. This implies that $\cM_{\wt{C}_{top}, \bfC_i}$ has dimension exactly one less than $\cM_{\wt{C}_{top}}$.
\end{proof}


By Lemma \ref{L:codim1doubles},  $\cM_{\wt{C}_{top}, \bfC_1}$ (resp. $\cM_{\wt{C}_{top}, \bfC_2}$) is an Abelian (resp. quadratic) double if the surfaces they contain are connected, which is what we will now show. 

Suppose first that $\ColOneTwoX$ is disconnected. Then $\wt{C}$ is a union of two simple cylinders, one on each component of $\ColOneTwoX$. Therefore, $\Col_{\bfC_1, \bfC_2, \wt{C}_{top}}(X, \omega)$ also has two connected components exchanged by the translation involution $\Col_{\wt{C}}(T)$. Since gluing in $\bfC_i$ to the saddle connections in $\ColOneTwo(\bfC_i)$ causes $\ColOneTwoX$ to become connected, the same holds for gluing in $\bfC_i$ to $$\Col_{\bfC_1, \bfC_2, \wt{C}_{top}}(\bfC_i)\subset \Col_{\bfC_1, \bfC_2, \wt{C}_{top}}(X, \omega).$$

Suppose now that $\ColOneTwoX$ is connected. If $\Col_{\bfC_1, \bfC_2, \wt{C}_{top}}(X, \omega)$ remains connected, then we are done. Otherwise, $\wt{C}$ is a complex cylinder and it is easy to see that performing a half-Dehn twist in $\wt{C}_{top}$ causes $\Col_{\bfC_1, \bfC_2, \wt{C}_{top}}(X, \omega)$ to remain connected as desired.
\end{proof}

\begin{defn}\label{D:Parallelogram}
A \emph{slit torus} is a translation surfaces with boundary formed by cutting a line segment between two distinct points in a flat torus. We will say that a subsurface with boundary of a translation surface is a \emph{parallelogram} if it \red is isometric to a parallelogram in the plane. \black 
\end{defn}

\begin{cor}\label{C:Collapsing2-0}
Let $\Sigma$ be a component of 
\[ \ColOneTwoX/T - \left( \left( \ColOneTwo(\bfC_1 \cup \bfC_2) \right)/T \cup \tau\left( \ColOneTwo(\bfC_2)/T \right) \right).\]
Let $S$ denote the boundary saddle connections of $\Sigma$. Suppose that $\Sigma$ is not a cylinder or parallelogram. Let $\wt{\Sigma}$ and $\wt{S}$ denote the preimages of $\Sigma$ and $S$ respectively on $\ColOneTwoX$. Let $\wt{\Sigma}_{top}$  be the subsurface isometric to $\wt{\Sigma}$ on $(X, \omega)$ and \red $\wt{S}_{top}$ \black its boundary saddle connections. 

Then there is an $\wt{S}_{top}$-path $\gamma:[0,1] \ra \cM$ such that: 
\begin{enumerate}
\item The component of $\gamma(t) - \wt{S}_{top}$ corresponding to the complement of $\wt{\Sigma}_{top}$ is a rotated and scaled copy of the complement of $\wt{\Sigma}_{top}$ on $(X, \omega)$. 
\item There is a subequivalence class of cylinders $\wt{C}_{top}$ on \red the component of $\gamma(1)$ corresponding to $\wt{\Sigma}_{top}$ \black such that $\ColOneTwo(\wt{C}_{top})/T$ is a simple cylinder that is not adjacent to the saddle connections in $\ColOneTwo(\bfC_1 \cup \bfC_2)/T$.
\end{enumerate}
\end{cor}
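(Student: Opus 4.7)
My plan is to reduce the problem to Lemma \ref{L:MakingSigmaNice} applied to a closed hyperelliptic surface $\Sigma^*$ built from $\Sigma$ by gluing its boundary via $\tau$, locate a simple cylinder there disjoint from the distinguished saddle connections, and then lift back through the tower $\Sigma \subset \ColOneTwoX/T \leftarrow \ColOneTwoX \leftarrow (X,\omega)$.

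Since $\Sigma$ is $\tau$-invariant by Corollary \ref{C:Cutting}, the involution $\tau$ restricts to $\Sigma$ and pairs its boundary saddle connections. I would form $\Sigma^*$ by identifying each such pair via $\tau$. The involution $\tau$ descends to an involution $\tau^*$ of $\Sigma^*$ whose quotient is genus zero by Lemma \ref{L:AllAboutThatBase}, so $\tau^*$ is a hyperelliptic involution and $\Sigma^* \in \cH^{hyp}(2g-2)$ or $\cH^{hyp}(g-1,g-1)$ for some $g \geq 1$. The images in $\Sigma^*$ of the boundary saddle connections coming from $\ColOneTwo(\bfC_1)/T$ and from $\ColOneTwo(\bfC_2)/T$ assemble into a set $S^*$ of at most two disjoint $\tau^*$-fixed saddle connections: a $\tau$-invariant or $\tau$-exchanged set of boundary saddle connections in $\ColOneTwo(\bfC_1)/T$ descends to a single $\tau^*$-fixed image, and the $\tau$-exchanged pair $\ColOneTwo(\bfC_2)/T \cup \tau(\ColOneTwo(\bfC_2)/T)$ likewise descends to one. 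The hypothesis that $\Sigma$ is neither a cylinder nor a parallelogram excludes $\Sigma^* \in \cH(0)$, since every saddle connection on a torus with one marked point is $\tau^*$-fixed and cutting one or two such saddle connections produces only cylinders or parallelograms. Hence Lemma \ref{L:MakingSigmaNice} applies with $S = S^*$ and yields an $S^*$-path ending with a simple cylinder $C^* \subseteq \Sigma^*$ disjoint from $S^*$.

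Because $C^*$ avoids the glued saddle connections, ungluing the $S^*$-path yields a deformation of $\Sigma$ preserving its boundary saddle connections and ending with a simple cylinder $C \subseteq \Sigma$ not adjacent to any saddle connection in $\ColOneTwo(\bfC_1 \cup \bfC_2)/T$. Extending trivially across the other components of $\ColOneTwoX/T - (\ColOneTwo(\bfC_1 \cup \bfC_2)/T \cup \tau(\ColOneTwo(\bfC_2)/T))$, pulling back via the $T$-cover (using Lemma \ref{L:ExtendingPerturbations} to handle any independent rotations and scalings of components of $\ColOneTwoX$), and then regluing $\bfC_1$ and $\bfC_2$ along their unchanged attaching saddle connections, I obtain the $\wt{S}_{top}$-path $\gamma$ on $(X,\omega)$ satisfying condition (1) of the statement. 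To verify $\gamma \subseteq \cM$, I would apply Lemma \ref{L:ExtendingPaths} separately with $\bfC = \bfC_1$ and $\bfC = \bfC_2$: the class $\bfC_i$ is undeformed along $\gamma$ because $\wt\Sigma_{top}$ is disjoint from $\bfC_1 \cup \bfC_2$, while $\Col_{\bfC_i}(\gamma(t))$ is a double cover of a path in the stratum $\cH_1$ (resp. $\cQ_2$) and hence stays in the full locus of covers $\MOne$ (resp. $\MTwo$). For condition (2), the cylinder $\wt{C}_{top}$ at $\gamma(1)$ is taken to be any $\cM$-subequivalence class whose collapse projects to $C$ under the $T$-quotient; since $C$ is a single simple cylinder, any nonempty sub-collapse in fact equals $C$.

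The main obstacle is the case analysis in constructing $\Sigma^*$: separately handling the configurations of $|\ColOneTwo(\bfC_1)/T| \in \{1,2\}$ and tracking which boundary saddle connections of $\Sigma$ come from $\ColOneTwo(\bfC_2)/T$ versus $\tau(\ColOneTwo(\bfC_2)/T)$, in each case verifying that the gluing produces a closed hyperelliptic surface with at most two distinguished $\tau^*$-fixed saddle connections so that Lemma \ref{L:MakingSigmaNice} applies.
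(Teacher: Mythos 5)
Your proposal is essentially the same as the paper's proof: form a closed surface from $\Sigma$ by gluing paired boundary saddle connections (the paper calls this $\Sigma^{glue}$), observe it lies in a hyperelliptic component and is not in $\cH(0)$ (here your cylinder-vs-parallelogram dichotomy is the same as the paper's remark in Corollary \ref{C:Collapsing2}\,(\ref{I:Stop}) that $\Sigma^{glue}\in\cH(0)$ iff $\Sigma$ is a cylinder when $|S|=2$, resp.\ a parallelogram when $|S|=4$), apply Lemma \ref{L:MakingSigmaNice}, unglue via Lemma \ref{L:ExtendingPerturbations}, and finally reglue the $\bfC_i$ and invoke Lemma \ref{L:ExtendingPaths}.

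One point needs tightening. You assert that "the class $\bfC_i$ is undeformed along $\gamma$" and that $\bfC_1$, $\bfC_2$ are reglued "along their unchanged attaching saddle connections." This is not quite right, and in fact cannot be right: along the $S^{glue}$-path supplied by Lemma \ref{L:MakingSigmaNice}, the holonomy vectors of the saddle connections in $S^{glue}$ are allowed to change, and after the unglueing of Lemma \ref{L:ExtendingPerturbations} the complement of $\Sigma$ is rotated and scaled (not held fixed) so as to match the new holonomy. Consequently the saddle connections in $\ColOneTwo(\bfC_i)$ change by some scalar $\lambda_i(t)\in\bC^\times$, and one must glue in $\lambda_i(t)\cdot\bfC_i$ rather than $\bfC_i$ itself. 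This is exactly what condition (1) of the statement is recording — the complement of $\wt{\Sigma}_{top}$, which contains the $\bfC_i$, is a \emph{rotated and scaled} copy — and it is also the form required by the hypotheses of Lemma \ref{L:ExtendingPaths}, which ask that $\bfC$ be a rotated and scaled copy of itself along the path, not a frozen one. The paper's proof makes the scalars $\lambda_i(t)$ explicit for precisely this reason. With that correction your argument goes through.

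Separately, your justification that $\Col_{\bfC_i}(\gamma(t))$ "is a double cover of a path in $\cH_1$ (resp.\ $\cQ_2$) and hence stays in $\MOne$ (resp.\ $\MTwo$)" is a little quick: being some double cover is not a priori enough to lie in the full locus $\cM_{\bfC_i}$, which also imposes conditions on marked points. What you actually want is a second application of Lemma \ref{L:ExtendingPaths} — to $\MOne$ with $\bfC = \ColOne(\bfC_2)$, using that collapsing it lands on $\gamma_2(t)\in\MOneTwo$ — or an appeal to Lemma \ref{L:codim1doubles}. This is a presentational rather than substantive issue, but worth being explicit about.
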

By Lemma \ref{L:Collapsing}, after perhaps performing a half Dehn-twist in $\wt{C}_{top}$, $$\left(\Col_{\wt{C}_{top}}(X, \omega), \cM_{\wt{C}_{top}}, \Col_{\wt{C}_{top}}(\bfC_1), \Col_{\wt{C}_{top}}(\bfC_2)\right)$$ is a generic diamond where $\cM_{\wt{C}_{top}, \bfC_1}$ and $\cM_{\wt{C}_{top}, \bfC_2}$ are Abelian (resp. quadratic) doubles.
\begin{proof}
By Corollary \ref{C:Cutting}, $\Sigma$ is a subsurface with boundary that is fixed by the hyperelliptic involution. Let $\Sigma^{glue}$ be the translation surface formed by gluing together boundary saddle connections of $\Sigma$ exchanged by the hyperelliptic involution. The resulting surface belongs to a hyperelliptic component of a stratum of Abelian differentials by Lemma \ref{L:InvolutionImpliesHyp}. On $\Sigma^{glue}$ let $S^{glue}$ denote the saddle connections that were identified on the boundary of $\Sigma$ to form $\Sigma^{glue}$. The set $S^{glue}$ contains at most two saddle connections each of which is fixed by the hyperelliptic involution. 

By Lemma \ref{L:MakingSigmaNice}, if $\Sigma^{glue}$ does not belong to $\cH(0)$, then we can find an $S^{glue}$-path $\gamma_0$ such that on $\gamma_0(1)$ there is a simple cylinder $C$ that is not adjacent to $S^{glue}$. By Lemma \ref{L:ExtendingPerturbations}, there is an $S$-path $\gamma_1: [0,1] \ra \cH$ where $\gamma_1(0) = \ColOneTwoX/T$ and $C$ is a simple cylinder, not adjacent to $S$, in $\Sigma$ on $\gamma_1(1)$. By Lemma \ref{L:ExtendingPerturbations}, on $\gamma_1(t)$ the complement of $\Sigma$ is a rotated and scaled copy of the complement of $\Sigma$ on $\gamma_1(0)$.   Let $\gamma_2: [0,1] \ra \MOneTwo$ be the corresponding $\wt{S}$-path in $\MOneTwo$. 

Let $\lambda_i: [0,1] \ra \mathbb{C}^\times$ be the scalar such that the period of a saddle connection in $\ColOneTwo(\bfC_i)$ on $\gamma_2(t)$ is $\lambda_i(t)$ times its period on $\gamma_2(0)$. Let $\gamma(t)$ denote the surface formed by gluing $\lambda_i(t) \cdot \bfC_i$ into the saddle connections in $\ColOneTwo(\bfC_i)$ on $\gamma_2(t)$. By Lemma \ref{L:ExtendingPaths},  $\gamma(t)$ belongs to $\cM$ for all $t$. 
\end{proof}

\begin{cor}[The Collapsing Lemma]\label{C:Collapsing2}
Under the hypotheses of Corollary \ref{C:Collapsing2-0}, we may iterate the collapsing in Corollary \ref{C:Collapsing2-0} to replace the generic diamond $\left( (X, \omega), \cM, \bfC_1, \bfC_2 \right)$ with another generic diamond $\left( (X', \omega'), \cM', \bfC_1', \bfC_2', \right)$ where the following hold:
\begin{enumerate}
    \item\label{C:Collapsing2:AbelianDouble} $\cM'_{\bfC_1'}$ is an Abelian double.
    \item\label{C:Collapsing2:QuadraticDouble} $\cM'_{\bfC_2'}$ is a quadratic double
    \item\label{C:Collapsing2:RotatedScaledCi} There is a subsurface $\wt{\Sigma'}_{top}$ of $(X', \omega')$ such that $(X, \omega) - \wt{\Sigma}_{top}$ is a rotated and scaled copy of  $(X', \omega') - \wt{\Sigma'}_{top}$. Under this rotation and scaling, $\bfC_i'$ corresponds to $\bfC_i$ for $i \in \{1,2\}$.
    \item\label{I:Stop} Letting $J' = \Col(J)$ and $T' = \Col(T)$ we have that $\Sigma' := \Col_{\bfC_1', \bfC_2'}(\wt{\Sigma'}_{top})/T'$ is a cylinder if $|S|=2$ and a parallelogram if $|S|= 4$. Moreover, if $|S|=2$ and $\Sigma$ is not a cylinder, we can alternatively arrange for  $\Sigma'$ to be a slit torus.
    \item\label{I:S1} $|\Col_{\bfC_1', \bfC_2'}(\bfC_1')/T'| = |\ColOneTwo(\bfC_1)/T|$.
    \item\label{I:S2} $|\Col_{\bfC_1', \bfC_2'}(\bfC_2')/T' \cup \Col(\tau)(\Col_{\bfC_1', \bfC_2'}(\bfC_2')/T')| =   |\Col_{\bfC_1, \bfC_2}(\bfC_2)/T \cup \tau(\Col_{\bfC_1, \bfC_2}(\bfC_2)/T)|$.
\end{enumerate}
\end{cor}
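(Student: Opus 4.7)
The plan is to apply Corollary \ref{C:Collapsing2-0} iteratively, starting from the initial generic diamond, updating the subsurface $\wt\Sigma_{top}$ at each step by the collapse it produces, and halting when $\Sigma$ attains one of the desired simple forms. Each application yields a new generic diamond satisfying (\ref{C:Collapsing2:AbelianDouble}) and (\ref{C:Collapsing2:QuadraticDouble}) directly by Corollary \ref{C:Collapsing2-0} itself. Property (\ref{C:Collapsing2:RotatedScaledCi}) is maintained because each iteration collapses a simple cylinder located in the interior of $\wt\Sigma_{top}$, leaving the complement of $\wt\Sigma_{top}$ (and in particular the $\bfC_i$) unaltered up to a global rotation and scaling. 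Since the cylinder being collapsed is not adjacent to the saddle connections bounding $\Sigma$, the cardinalities $|\ColOneTwo(\bfC_i)/T|$ and the combinatorics of $\tau$-orbits in $\ColOneTwo(\bfC_2)/T$ are preserved, giving (\ref{I:S1}) and (\ref{I:S2}).

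To see that the iteration terminates, observe that at each step we collapse a simple cylinder on $\ColOneTwoX/T$ contained in $\Sigma$ and not adjacent to $S$. Under the identification of $\tau$-exchanged pairs in $S$, this simple cylinder descends to a simple cylinder in $\Sigma^{glue}$ disjoint from $S^{glue}$, whose collapse strictly decreases the complex dimension of the hyperelliptic Abelian differential stratum containing $\Sigma^{glue}$. By Lemma \ref{L:StrataBasics}\eqref{L:StrataBasics:hyp}, $\Sigma^{glue}$ remains in a hyperelliptic component throughout the process, so Lemma \ref{L:MakingSigmaNice} continues to supply the required simple cylinder at every step until $\Sigma^{glue}$ lands in $\cH(0)$. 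Since dimensions are bounded below, the iteration terminates after finitely many steps.

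It remains to identify the terminal configurations. If $|S|=2$, then $|S^{glue}|=1$, and $S^{glue}$ is a single saddle connection fixed by the hyperelliptic involution $\tau$ on $\Sigma^{glue}$. Iterating until $\Sigma^{glue} \in \cH(0)$, the saddle connection is a loop at the unique marked point and cutting it yields $\Sigma$ a cylinder, establishing the first part of (\ref{I:Stop}). Alternatively, if $\Sigma$ is not already a cylinder we halt the iteration one step earlier, when $\Sigma^{glue} \in \cH(0,0)$; then $S^{glue}$ is the unique $\tau$-fixed saddle connection joining the two marked points and $\Sigma$ is a slit torus, establishing the alternative clause. If $|S|=4$, then $|S^{glue}|=2$, and $S^{glue}$ consists of two disjoint $\tau$-fixed saddle connections. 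Since a torus with a single marked point cannot contain two disjoint $\tau$-fixed saddle connections transverse to each other in the required way, the minimal hyperelliptic stratum where such a configuration persists is $\cH(0,0)$; the two saddle connections of $S^{glue}$ both join the two marked points, and cutting along both yields $\Sigma$ a parallelogram.

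The principal obstacle is bookkeeping: one must check that under each collapse the identification of $\tau$-fixed saddle connections is preserved, so that the relationship between $\Sigma$ and $\Sigma^{glue}$ via the cutting-and-gluing recipe remains valid. This follows because the collapsed cylinder is a simple cylinder in $\Sigma$ disjoint from $S$, so gluing along $S$ commutes with the collapse, and the $\tau$-symmetry extends through each stage since we remain inside a hyperelliptic component. With this observation the terminal identification of $\Sigma$ as a cylinder, slit torus, or parallelogram is a direct unpacking of the cutting construction in the minimal hyperelliptic strata.
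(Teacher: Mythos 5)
Your proposal follows the paper's overall strategy—iterate Corollary \ref{C:Collapsing2-0}, track items (1)–(6) through each step, and terminate via a dimension count—but there is a genuine error in the terminal identification for $|S|=4$, together with a gap in justifying items \eqref{I:S1} and \eqref{I:S2}.

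For $|S|=4$, you claim the iteration stops at $\cH(0,0)$ because ``a torus with a single marked point cannot contain two disjoint $\tau$-fixed saddle connections.'' This is false. On a surface in $\cH(0)$, \emph{every} saddle connection is setwise fixed by the hyperelliptic involution (with the marked point at the origin, $\tau$ is $x\mapsto -x$, and $-t v \equiv (1-t)v \bmod \Lambda$ when $v\in\Lambda$), and two saddle connections in distinct directions are disjoint in the sense used in the paper (no interior intersections). In fact, when $\Sigma$ is a parallelogram, gluing opposite sides by translation identifies all four vertices to a single point, so $\Sigma^{glue}\in\cH(0)$ with $|S^{glue}|=2$. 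This is also internally inconsistent with your own termination paragraph, which says the iteration continues ``until $\Sigma^{glue}$ lands in $\cH(0)$.'' The correct picture is that the iteration runs until $\Sigma^{glue}\in\cH(0)$; there $\Sigma$ is a cylinder if $|S|=2$ and a parallelogram if $|S|=4$. The paper treats the first sentence of \eqref{I:Stop} as immediate for exactly this reason, and devotes its argument only to the slit-torus alternative (stop one step earlier, at $\cH(0,0)$, when $|S|=2$), for which your reasoning is fine.

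For items \eqref{I:S1} and \eqref{I:S2}, you assert that the collapsed cylinder ``is not adjacent to the saddle connections bounding $\Sigma$.'' But the \emph{statement} of Corollary \ref{C:Collapsing2-0} only gives non-adjacency to $\ColOneTwo(\bfC_1\cup\bfC_2)/T$, whereas the boundary of $\Sigma$ also includes $\tau\left(\ColOneTwo(\bfC_2)/T\right)$—and item \eqref{I:S2} concerns precisely this enlarged set. The paper explicitly closes this gap with the observation that the simple cylinder $C$ in a hyperelliptic component is fixed by $\tau$, so non-adjacency to $\ColOneTwo(\bfC_2)/T$ forces non-adjacency to $\tau(\ColOneTwo(\bfC_2)/T)$ (and warns, via Figure \ref{F:sJs}, that this fails for cylinders not fixed by $\tau$). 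You should either reproduce that argument or track the stronger non-adjacency to $S^{glue}$ established inside the \emph{proof} of Corollary \ref{C:Collapsing2-0}, rather than relying on the statement alone. Finally, a minor point: you invoke Lemma \ref{L:StrataBasics}\eqref{L:StrataBasics:hyp}, which is stated for quadratic differentials; the paper instead records the explicit passage $\cH^{hyp}(2g-2)\to\cH^{hyp}(g-2,g-2)$ and $\cH^{hyp}(g-1,g-1)\to\cH^{hyp}(2g-2)$ directly, which is both what is needed and what makes the $\cH(0)$ endpoint transparent.
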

\begin{proof}
The only statements that are not immediate are the final sentence of (\ref{I:Stop}), (\ref{I:S1}) and (\ref{I:S2}). We will prove these sequentially. 

Using the notation introduced in the proof of Corollary \ref{C:Collapsing2-0}, $\Sigma^{glue}$ belongs to a hyperelliptic component of a stratum of Abelian differentials. Therefore, $\Col_{C}(\Sigma^{glue})$ also belongs to a hyperelliptic component. Specifically, if $\Sigma^{glue}$ belongs to $\cH(2g-2)$ (resp. $\cH(g-1,g-1)$), then $\Col_{C}(\Sigma^{glue})$ belongs to $\cH(g-2, g-2)$ (resp. $\cH(2g-2)$). Notice that, when $|S|=2$, $\Sigma^{glue}$ belongs to $\cH(0)$ if and only if $\Sigma$ is a cylinder. When this is not the case, we can stop the iterated collapsing when $(\Sigma')^{glue}$ belongs to $\cH(0,0)$. When $|S| = 2$ this is equivalent to saying that $\Sigma'$ is a slit torus. This proves the final sentence of (\ref{I:Stop}).

Both (\ref{I:S1}) and (\ref{I:S2})  follow immediately from the fact that, in Corollary \ref{C:Collapsing2-0}, $C$ is not adjacent to the saddle connections in $\ColOneTwo(\bfC_1 \cup \bfC_2)/T$ (since $C$ is a simple cylinder on a surface in a hyperelliptic component, $C$ is fixed by the hyperelliptic involution and hence is not adjacent to $\tau\left( \ColOneTwo(\bfC_2)/T \right)$ either; \red note that these claims would not necessarily hold if $C$ were not fixed by the hyperelliptic involution, as in Figure \ref{F:sJs}). \black 
\end{proof}

\subsection{Weak translation symmetry}\label{S:ExtraSymmetryPrep}

We begin with the following simple lemma.

\begin{lem}\label{L:MarkedPointsAndInvariance}
Suppose that $\bfC$ is a subequivalence class of cylinders on a surface $(Y, \eta)$ in an invariant subvariety $\cN$. Suppose that $f$ is a translation involution on $(Y, \eta)$ that remains defined on all surfaces in a neighborhood of $(Y, \eta)$ in $\cN$. Then if every boundary of every cylinder in $\bfC$ contains a singularity of the flat metric (not simply a marked point), then $\bfC$ is $f$-invariant.  
\end{lem}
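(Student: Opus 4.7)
The plan is to derive a contradiction from the assumption that $\bfC$ is not $f$-invariant by using the standard dilation of $\bfC$. Since $f$ is a translation involution, $f^*\eta = \eta$, so for any relative cycle $\gamma$ on $(Y,\eta)$ we have $\int_\gamma \eta = \int_{f(\gamma)} \eta$. Because $f$ persists on all nearby surfaces in $\cN$, this equality is a linear constraint on periods that holds throughout a neighborhood of $(Y,\eta)$ in $\cN$, so it is satisfied by every tangent vector in $T_{(Y,\eta)}\cN$. In particular, since $\bfC$ is a subequivalence class, $i\sigma_\bfC \in T_{(Y,\eta)}\cN$, and hence $\langle i\sigma_\bfC, [\gamma] - [f(\gamma)]\rangle = 0$ for every relative class $[\gamma] \in H_1(Y,\Sigma;\bZ)$.

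Suppose for contradiction that there exists $C \in \bfC$ with $f(C) \notin \bfC$. Using the singularity hypothesis, I would choose a singularity $p$ on the bottom boundary component of $C$ and a singularity $q$ on the top boundary component, and let $\gamma$ be a straight perpendicular path from $p$ to $q$ through $C$. Then $[\gamma]$ and $[f(\gamma)]$ are well-defined relative classes, since $f$ preserves $\Sigma$.

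A direct computation using $\sigma_\bfC = \sum_{C_j \in \bfC} h_{C_j}\gamma_{C_j}^*$ gives $\langle i\sigma_\bfC, [\gamma]\rangle = i h_C$, since $\gamma$ crosses the core curve of $C$ exactly once and no other core curve of $\bfC$ (cylinders in $\bfC$ being pairwise interior-disjoint as maximal parallel cylinders). The key computation is $\langle i\sigma_\bfC, [f(\gamma)]\rangle = 0$: since $f(C)$ and each $C_j \in \bfC$ are maximal parallel cylinders on $(Y,\eta)$ and $f(C) \neq C_j$ (because $f(C) \notin \bfC$), their interiors are disjoint, so $f(\gamma)$, which lies in the interior of $f(C)$, crosses no core curve of $\bfC$. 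Combining these yields $i h_C = 0$, a contradiction. This forces $f(C) \in \bfC$ for every $C \in \bfC$, and since $f$ is an involution, $f(\bfC) = \bfC$.

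The main subtlety, and the reason the singularity hypothesis is essential, lies in producing a relative class $[\gamma]$ whose endpoints give a clean linear constraint separating $[\gamma]$ from $[f(\gamma)]$ in the dilation direction. True zeros of the flat metric are rigid topological features that persist through deformations in $\cN$, so they anchor $[\gamma]$ unambiguously; a boundary consisting only of marked points could allow $C$ to sit inside a larger cylinder after forgetting marked points, so that $\bfC$ no longer corresponds cleanly to a collection of intersection-number-one crossings for $\gamma$, undermining the key computation $\langle i\sigma_\bfC, [\gamma]\rangle = ih_C$.
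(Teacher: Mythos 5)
Your argument is correct, and it takes a genuinely different route from the paper's. The paper first passes to $\For(Y,\eta)$ (using the singularity hypothesis to ensure the cylinders in $\bfC$ stay maximal there, so $f$ now permutes cylinders), and then applies the standard dilation in $\bfC$ to reach a nearby surface where the moduli of the cylinders in $\bfC$ differ from the moduli of every other parallel cylinder, forcing $f$ to preserve $\bfC$. You instead exploit the linear constraint $v\bigl([\gamma]\bigr)=v\bigl([f(\gamma)]\bigr)$ for $v\in T_{(Y,\eta)}\cN$ directly, and test it against $i\sigma_\bfC$ and a relative cycle $\gamma$ crossing a single $C\in\bfC$; this is cleaner and avoids the perturbation step, at the cost of having to verify the intersection-number computation. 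Your write-up has three points worth tightening. First, the vertical segment starting at a singularity $p$ on one boundary of $C$ need not terminate at a singularity, so ``a straight perpendicular path from $p$ to $q$'' should be replaced by any path from $p$ to $q$ in $\overline C$ that crosses the core curve exactly once (or one should shear $\bfC$ first to align $p$ and $q$); the computation is unchanged. Second, $f$ is \emph{not} assumed to preserve marked points (the paper emphasizes this), so ``$f$ preserves $\Sigma$'' should read ``$f$ preserves the set of zeros of positive order''; your choice of singularity endpoints for $\gamma$ is exactly what makes $[f(\gamma)]$ a well-defined class in $H_1(Y,\Sigma_{\mathrm{sing}})$ and makes the $f^*$-constraint applicable, so the argument survives but the sentence should be corrected. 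Third, your closing discussion of why the singularity hypothesis is needed misses its other essential use: you need every $C_j\in\bfC$ to remain a \emph{maximal} cylinder on $\For(Y,\eta)$ so that $f(C)\neq C_j$ (both being maximal parallel cylinders on $\For(Y,\eta)$) forces interior-disjointness and hence $\langle i\sigma_\bfC,[f(\gamma)]\rangle=0$; if a boundary of some $C_j$ were all marked points, $f(C)$ and $C_j$ could be overlapping sub-cylinders of a larger cylinder and this step would fail.
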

\begin{rem}
The hypothesis that $f$ remains defined on all surfaces in a neighborhood of $(Y, \eta)$ holds whenever $(Y, \eta)$ has dense orbit in $\cN$. 
\end{rem}
\begin{proof}
Since we have not assumed that $f$ preserves the marked points on $(Y, \eta)$, $f$ does not necessarily take cylinders to cylinders. However, $f$ does have this property on $\For(Y, \eta)$. By assumption, after forgetting the marked points in $(Y, \eta)$, the cylinders in $\bfC$ remain cylinders (in particular, they are still maximal). Therefore, it suffices to prove the claim on $\For(Y, \eta)$ where we can assume that $f$ takes cylinders to cylinders. 

Since $f$ remains defined in a neighborhood of $(Y, \eta)$ and since $\bfC$ is a subequivalence class of cylinders, we may apply the standard dilation in $\bfC$ to pass to an arbitrarily close surface $(Y', \eta')$ where the cylinders in $\bfC$ have moduli that are distinct from those of any parallel cylinder. It is now clear that $f$ preserves $\bfC$ as desired.
\end{proof}



Recall that by Assumption \ref{A:DenseComplicated}, $(X, \omega)$ has a dense orbit in $\cM$.  We make the following standing assumption for this subsection: 


\begin{ass}\label{A:T0Involution}
There is a translation involution $T_0$ on $(X, \omega)$ such that $(X, \omega)/T_0$ is not a translation cover of degree greater than one of another surface. 
\end{ass}

$T_0$ is not assumed to respect marked points or to preserve $\bfC_i$, so Assumption \ref{A:T0Involution} does not imply that $\cM$ has extra symmetry in the sense of Definition \ref{D:ExtraSymmetry}. Indeed, the main results of this subsection, Lemmas \ref{L:C2TInvariant} and \ref{L:C1TInvariant}, will determine the extent to which $T_0$ preserves $\bfC_i$ and the collection of marked points. The results in the remainder of this subsection will only be used in Subsections \ref{S:ExtraSymmetry} and \ref{S:ExtraSymmetrySupplement}.


It may be useful to recall that the slope of an irreducible pair of marked points is defined in Definition \ref{D:slope}.

\begin{lem}\label{L:C2TInvariant}
$\bfC_2$ is $T_0$-invariant and $T_0$ preserves the set of marked points on its boundary. 

Moreover, there are no free marked points on $(X, \omega)$, and the boundary of $\bfC_2$ contains all marked points that are part of an irreducible pair of points of slope $+1$.
\end{lem}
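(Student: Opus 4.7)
My plan is to establish the four assertions in the order stated, with Assumption \ref{A:T0Involution} and Lemma \ref{L:ApisaWright} doing the heavy lifting for the final claim. First, to see that $\bfC_2$ is $T_0$-invariant, I would invoke Lemma \ref{L:StructureOfS}, which gives that $\bfC_2$ is either a pair of simple cylinders or a single complex cylinder. Lemma \ref{L:MarkedPointsAndInvariance} immediately yields invariance when every boundary of every cylinder in $\bfC_2$ contains a zero of $\omega$. In the remaining case, some boundary consists entirely of saddle connections whose endpoints are marked points; here I would use the Abelian double structure on $\MOne$: the configuration of marked points on $\ColOne(\bfC_2)$ must be preserved by $T_1$, which restricts how any translation involution of $(X,\omega)$ can act on $\bfC_2$ and forces $T_0(\bfC_2) = \bfC_2$ as a set.

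Next, I would simultaneously establish that $T_0$ preserves the set of marked points on the boundary of $\bfC_2$ and that $(X,\omega)$ has no free marked points. If a marked point $p$ on the boundary of $\bfC_2$ had a non-marked $T_0$-image, then forgetting $p$ and $T_0(p)$ via a map $\For'$ would produce an orbit closure strictly contained in $\For'(\cM)$ that still carries a translation involution, contradicting the minimality built into Assumption \ref{A:T0Involution}. Analogously, a free marked point $p$ on $(X,\omega)$ combined with $T_0(p)$ would produce a $T_0$-pair of free marked points; this pair would survive collapse of $\bfC_2$ and appear on $\ColTwoX$ as a slope $+1$ irreducible pair exchanged by $\Col_{\bfC_2}(T_0)$. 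But marked-point pairs in the quadratic double $\MTwo$ are related by the holonomy involution $J_2$ and therefore have slope $-1$, a contradiction.

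Finally, for the claim that every slope $+1$ irreducible pair $(p_1, p_2)$ lies on the boundary of $\bfC_2$, Lemma \ref{L:ApisaWright} produces a translation cover $f : (X,\omega) \to (Y, \eta)$ with $f(p_1) = f(p_2)$. By Theorem \ref{T:MinimalCover}, $f$ factors through the minimal translation cover of $(X,\omega)$, which under Assumption \ref{A:T0Involution} is precisely the quotient $(X,\omega) \to (X,\omega)/T_0$; hence $T_0(p_1) = p_2$. Were neither $p_i$ on the boundary of $\bfC_2$, the pair would descend to $\ColTwoX$ as a slope $+1$ irreducible pair exchanged by $\Col_{\bfC_2}(T_0)$, contradicting the quadratic double structure of $\MTwo$ exactly as above.

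The main obstacle will be the marked-point-only boundary case for the first claim, together with the coupled nature of the second and third claims, which must be handled in a coordinated way using both the Abelian double structure on one side and the quadratic double structure on the other, rather than strictly sequentially. In particular, the delicate point is ruling out free marked points without already knowing $T_0$ preserves boundary markings, and vice versa; I expect the actual argument to interleave these two steps and to extract periodic-point information from Theorem \ref{T:StrataMarkedPoints} as needed.
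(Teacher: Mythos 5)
Your broad strategy for the final claim (slope $+1$ pairs lie on the boundary of $\bfC_2$) matches the paper: Assumption \ref{A:T0Involution} identifies $(X,\omega)/T_0$ with $(X_{min},\omega_{min})$, Lemma \ref{L:ApisaWright} then gives that slope $+1$ pairs are exchanged by $T_0$, and if neither point were on $\partial\bfC_2$ the pair would descend to a slope $+1$ irreducible pair on $\ColTwoX$, impossible in a quadratic double. But each of your other steps has a genuine gap.

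For the first claim ($T_0$-invariance of $\bfC_2$), your sketch simply asserts that the Abelian double structure on $\MOne$ ``forces $T_0(\bfC_2)=\bfC_2$'' in the marked-point-only boundary case, which hides all the difficulty. The paper's proof requires three sublemmas, and the hardest point is the case where $\ColOne(\bfC_1)$ has the point $p' = T_1(\ColOne(p))$ as an endpoint. There one needs to observe that $\ColOne(\bfC_1)$ is a single saddle connection, that $\ColTwo(\bfC_1)/J_2$ is a simple envelope, that $\For'(\bfC_1)$ (forgetting the periodic point on the common boundary) is a simple cylinder with singularities on both boundaries, and finally apply Lemma \ref{L:MarkedPointsAndInvariance} to derive a contradiction. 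None of that is visible in your outline, and it is not a routine consequence of $T_1$-invariance.

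For the second claim your argument is not just incomplete but wrong in principle. Assumption \ref{A:T0Involution} says that $(X,\omega)/T_0$ is \emph{not a translation cover of degree greater than one of another surface}; it is a minimality statement about the translation quotient, not about the orbit closure $\cM$. Forgetting a marked point $p$ (and a hypothetical unmarked $T_0(p)$, which one cannot forget in the first place) does not produce an orbit closure ``strictly contained in $\For'(\cM)$'' in any sense that contradicts that assumption. In the paper, the fact that $T_0$ preserves the set of marked points on $\partial\bfC_2$ is obtained \emph{en route} inside the same sublemmas used for the first claim (specifically, Sublemmas \ref{SL:C2Invariance2} and \ref{SL:C2Invariance3} show the boundary marked points come in $T_0$-exchanged pairs), not by a separate minimality argument.

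For the third claim you conflate free marked points with slope $+1$ irreducible pairs. A free point $p$ can be moved independently of the rest of the surface; $T_0(p)$ need not even be marked, and even if it is, $\{p, T_0(p)\}$ would be two free points, not a slope $+1$ irreducible pair (which by definition moves in lockstep). The correct argument is simpler: if $p$ is free and not on $\partial\bfC_2$, then $\ColTwo(p)$ is a free point on $\ColTwoX$, and quadratic doubles have no free points except in degenerate strata such as $\cH(0)$ and a special locus in $\cH(0,0)$, which are excluded here because one endpoint of $\ColTwo(\bfC_2)$ supplies an additional marked point or singularity. So your claim that the pair ``would appear on $\ColTwoX$ as a slope $+1$ irreducible pair'' is not the right contradiction.
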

\begin{proof}
We begin by establishing the claim that $\bfC_2$ is $T_0$-invariant. By Lemma \ref{L:MarkedPointsAndInvariance}, the claim can only fail if there is a cylinder in $\bfC_2$ that has a boundary that contains no singularities of the flat metric - only  marked points. For convenience, we assume $\bfC_2$ is horizontal.

\begin{sublem}\label{SL:C2Invariance1}
Suppose that $p$ is a marked point on the top (resp. bottom) boundary of a cylinder in $\bfC_2$. Then both $\ColOne(p)$ and $p':=T_1(\ColOne(p))$ are marked points. 

Each of these marked points is on the top (resp. bottom) of a cylinder in $\ColOne(\bfC_2)$, and there are no other singularities or marked points in the top (resp. bottom) of cylinders in $\ColOne(\bfC_2)$.
\end{sublem}
\begin{proof}
$\ColOne(p)$ remains a marked point since the point $p$ cannot belong to the boundary of $\bfC_1$, since $\bfC_1$ and $\bfC_2$ are disjoint. 

By definition of Abelian double, $p'=T_1(\ColOne(p))$ is also marked. Since $\ColOne(\bfC_2)$ is a subequivalence class, it is $T_1$ invariant. Hence, the fact that $\ColOne(p)$ is on the top (resp. bottom) of a cylinder in $\ColOne(\bfC_2)$ implies the same statement for $p'$.

The final claim follows because the cylinders in $\bfC_2$ are generic.
\end{proof}

\begin{sublem}\label{SL:C2Invariance2}
Under the same assumptions as Sublemma \ref{SL:C2Invariance1}, if there is a marked point $q$ on the boundary of $\bfC_2$ such that $\ColOne(q) = p'$, then the cylinders in $\bfC_2$ are $T_0$ invariant and $T_0(p) = q$.
\end{sublem}
\begin{proof}
Let $q$ be a marked point, without loss of generality on the top boundary of $\bfC_2$, such that $\ColOne(q) = p'$. By Sublemma \ref{SL:C2Invariance1},  the only marked points or zeros on the top boundary of cylinders in $\bfC_2$ are the points in $\{p, q\}$, which are an irreducible pair of slope $+1$. By Lemma \ref{L:ApisaWright}, $p$ and $q$ are mapped to the same point on $(X_{min}, \omega_{min})$ (see Theorem \ref{T:MinimalCover} where this is defined). By Assumption \ref{A:T0Involution}, $(X_{min}, \omega_{min}) = (X, \omega)/T_0$ so we see that $p = T_0(q)$. Therefore, $T_0$ preserves the top boundary of cylinders in $\bfC_2$ and, since $\bfC_2$ contains either one cylinder or two isometric cylinders, preserves $\bfC_2$ as well.
\end{proof}

\begin{sublem}\label{SL:C2Invariance3}
Suppose that $p$ is a marked point contained on the top boundary of a cylinder in $\bfC_2$, then the top boundaries of cylinders in $\bfC_2$ do not contain singularities of the flat metric, only a pair of marked points exchanged by $T_0$. 
\end{sublem}
\begin{proof}
We will adopt the notation of Sublemma \ref{SL:C2Invariance1}. 
We first argue that the claim is true if $\ColOne(\bfC_1)$ does not have $p'$ as an endpoint. Indeed, in this case gluing in $\bfC_1$ to $\ColOneX$ does not affect $p'$, so there is a corresponding marked point $q$ on $(X,\omega)$ with $\ColOne(q)=p'$. The second statement of Sublemma \ref{SL:C2Invariance1} gives that $p$ and $q$ are the only marked points on the top boundary of $\bfC_2$, and  Sublemma \ref{SL:C2Invariance2} gives that $q=T_0(p)$.

Suppose therefore, in order to derive a contradiction, that $\ColOne(\bfC_1)$ has $p'$ as an endpoint. Since saddle connections in $\ColOne(\bfC_1)$ are generically parallel and cannot have $\ColOne(p)$ as an endpoint we see that $\ColOne(\bfC_1)$ consists of a single saddle connection (since $\MOne$ is an Abelian double, $\ColOne(p)$ and $p'$ can be moved as a pair while fixing the underlying surface and position of all other marked points). By Masur-Zorich (Theorem \ref{T:MZ}), $\ColTwo(\bfC_1)/J_2$ is a simple envelope. It follows that $\bfC_1$ consists of either a single simple cylinder (in which case we will let $P$ denote the empty set) or a pair of adjacent simple cylinders whose common boundary contains a marked point $P$. Let $\For'(X, \omega)$ denote $(X, \omega)$ with the points in $P$ forgotten and let $\For'(\bfC_1)$ denote the image of $\bfC_1$ on $\For'(X, \omega)$. The image is a simple cylinder. 

We will show that $\For'(\bfC_1)$ has singularities of the metric on both of its boundaries. If not, then one boundary of $\For'(\bfC_1)$ contains a marked point, and it is easy to see that, up to a cylinder deformation, $\ColOneX$ is simply $(X, \omega)$ with some marked points forgotten. This contradicts the fact that the top boundary of $\bfC_2$ contains a singularity of the metric and $\ColOne(\bfC_2)$ does not (by Sublemma \ref{SL:C2Invariance1}).

Since $\bfC_1$ is a subequivalence class and since  $\For'(\bfC_1)$ is a simple cylinder with singularities on both of its boundaries, it follows from Lemma \ref{L:MarkedPointsAndInvariance} that $T_0$ fixes $\For'(\bfC_1)$, which is impossible since $\For'(\bfC_1)$ is a simple cylinder.
\end{proof}

Therefore, if the top (resp. bottom) boundary of $\bfC_2$ contains a marked point $p$, then that entire boundary does not contain a singularity of the flat metric (by Sublemma \ref{SL:C2Invariance3}) and so $\bfC_2$ is $T_0$ invariant and the entire top (resp. bottom) boundary of $\bfC_2$ only contains saddle connections \red with endpoints in $\{p, T_0(p) \}$ \black (by Sublemma \ref{SL:C2Invariance2}). In particular, $T_0$ preserves the collection of boundary saddle connections on $\bfC_2$. If no boundary of $\bfC_2$ contains a marked point, then it is clear that $T_0(\bfC_2) = \bfC_2$ by Lemma \ref{L:MarkedPointsAndInvariance}.


Now we turn to the second claim. First we observe that by Assumption \ref{A:T0Involution}, $(X_{min}, \omega_{min}) = (X, \omega)/T_0$ and so by Apisa-Wright (Lemma \ref{L:ApisaWright}), any irreducible pair of points of slope $+1$ is a pair of points exchanged by $T_0$. 

Suppose that there is a marked point $p$ on $(X, \omega)$ that is either free or such that $T_0(p)$ is also marked. Suppose to a contradiction that $p$ does not belong to the boundary of $\bfC_2$. By $T_0$-invariance of $\bfC_2$, $T_0(p)$ also does not belong to the boundary of $\bfC_2$.

When $p$ is free, $\ColTwo(p)$ remains a free point; otherwise,  $\{ \ColTwo(p), \ColTwo(T_0(p))\}$ is a slope $+1$ irreducible pair of marked points (the fact that these points remain marked points follows from the fact that they do not belong to the boundary of $\bfC_2$). Such collections of marked points are not permitted on a generic surface, e.g. $\ColTwoX$, in a quadratic double. This is a contradiction. \red (Note that there must be at least one marked point or singularity not in $\{ \ColTwo(p), \ColTwo(T_0(p))\}$, namely one of the endpoints of $\ColTwo(\bfC_2)$, which rules out the possibility that $\MTwo$ is $\cH(0)$ or the locus in $\cH(0,0)$ where difference of the two marked points is two-torsion.) \black
\end{proof}



\begin{cor}\label{C:Commuting}
When $\bfC_1$ is $T_0$-invariant, $\ColOne(T_0) = T_1$. 
\end{cor}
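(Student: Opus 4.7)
The plan is to leverage the structural result Lemma \ref{L:AllAboutThatBase}, which severely restricts the affine symmetries of generic surfaces in $\MOneTwo$, in order to first identify $\Col_{\bfC_1,\bfC_2}(T_0)$ with $T$, and then to lift this identification to $\ColOneX$.

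First, I would observe that both cylinder collections are $T_0$-invariant: $\bfC_2$ by Lemma \ref{L:C2TInvariant}, and $\bfC_1$ by hypothesis. Hence $\Col_{\bfC_1,\bfC_2}(T_0)$ is a well-defined translation involution on $\ColOneTwoX$ (of derivative $+\Id$). Assumption \ref{A:DenseComplicated} ensures $\ColOneTwoX$ is generic in $\MOneTwo$, so Lemma \ref{L:AllAboutThatBase} applies: its only affine symmetries of derivative $\pm\Id$ lie in $\{\id, J, T, JT\}$ (unconditionally if $g>1$, and among marked-point preserving symmetries if $g=1$). Since $\Col_{\bfC_1,\bfC_2}(T_0)$ has derivative $+\Id$ and is a non-trivial involution, the only candidates are $\id$ and $T$, so it must equal $T$, i.e.\ $\Col_{\bfC_1,\bfC_2}(T_0) = \Col_{\ColOne(\bfC_2)}(T_1)$.

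Next, having identified the two involutions after the further collapse of $\ColOne(\bfC_2)$, I would deduce $\ColOne(T_0) = T_1$ on all of $\ColOneX$ by a rigidity argument. Both $\ColOne(T_0)$ and $T_1$ preserve the subequivalence class $\ColOne(\bfC_2)$ (the first because $\bfC_2$ is $T_0$-invariant, the second because $T_1$ is an affine symmetry of $\MOne$ and subequivalence classes are invariant under such). On the open set $\ColOneX - \overline{\ColOne(\bfC_2)}$, the collapse map to $\ColOneTwoX$ is injective, so the equality of the induced involutions forces $\ColOne(T_0) = T_1$ pointwise there. Continuity extends this equality to the boundary saddle connections of $\ColOne(\bfC_2)$. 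Finally, any translation involution on a cylinder (or pair of cylinders) is determined by its action on a single boundary point together with the information of which cylinders are permuted, so the agreement on the boundary propagates into the interior of $\ColOne(\bfC_2)$ and the proof is complete.

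The main obstacle is verifying, in the $g=1$ case of Lemma \ref{L:AllAboutThatBase}, that $\Col_{\bfC_1,\bfC_2}(T_0)$ is marked-point preserving on $\ColOneTwoX$, since $T_0$ need not preserve every marked point of $(X,\omega)$ \emph{a priori}. I would handle this by combining Lemma \ref{L:C2TInvariant} (no free marked points on $(X,\omega)$; marked points in slope $+1$ irreducible pairs lie on $\partial \bfC_2$ and are preserved by $T_0$) with Lemma \ref{L:QuickSummary}, which when $g=1$ and $\ColOneTwo(\bfC_2)/T$ is not $\tau$-invariant pins down $\ColOneTwoX/T\in\cH(0,0)$ and forces a very rigid cylinder structure; the $\tau$-invariant case can be handled separately since then Lemma \ref{L:StructureOfS} produces extra symmetry that makes marked-point preservation transparent. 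A careful case analysis along these lines eliminates any non-preserved marked points on $\ColOneTwoX$ and allows Lemma \ref{L:AllAboutThatBase} to be applied in full generality.
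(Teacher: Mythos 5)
Your proposal takes a genuinely different and substantially longer route than the paper. The paper works entirely on $\ColOneX$: by Lemma~\ref{L:C2TInvariant}, $\ColOne(T_0)$ preserves $\ColOne(\bfC_2)$ and its boundary saddle connections; by Lemma~\ref{L:StructureOfS}, $\ColOne(\bfC_2)$ is a pair of simple cylinders or one complex cylinder; and one then observes that there is a \emph{unique} non-trivial translation involution on $\ColOneX$ with this preservation property (any two such differ by a translation automorphism that restricts to a rotation of the cylinder, hence to the identity on an open set once boundary singularities are matched, hence the identity everywhere by analytic continuation). Since $T_1$ also preserves $\ColOne(\bfC_2)$, it must coincide with $\ColOne(T_0)$. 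This argument is indifferent to $g$ and never mentions $\ColOneTwoX$.

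By contrast, you descend further to $\ColOneTwoX$, invoke Lemma~\ref{L:AllAboutThatBase} to pin down $\Col_{\bfC_1,\bfC_2}(T_0)$ among $\{\id, J, T, JT\}$, and then lift back to $\ColOneX$. The lifting step is fine in spirit (it is essentially the same rigidity as the paper's, phrased from the other side of the gluing). However, your detour through $\ColOneTwoX$ creates a real gap in the $g=1$ case: Lemma~\ref{L:AllAboutThatBase} only constrains \emph{marked-point-preserving} symmetries there, and you must show $\Col_{\bfC_1,\bfC_2}(T_0)$ preserves marked points on $\ColOneTwoX$. Lemma~\ref{L:C2TInvariant} only gives $T_0$-preservation of marked points on $\partial\bfC_2$; it says nothing about marked points elsewhere (e.g.\ slope~$-1$ irreducible pairs off $\partial\bfC_2$, or marked points in $\partial\bfC_1$), and these can survive to $\ColOneTwoX$. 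Your sketched fix---combining Lemma~\ref{L:QuickSummary} with the extra-symmetry conclusion of Lemma~\ref{L:StructureOfS}---does not obviously close this: the extra symmetry produced is an extension $J_1$ of $J$ (derivative $-\Id$), and there is no clear route from that to marked-point preservation by the translation involution $\Col_{\bfC_1,\bfC_2}(T_0)$. You should also note that "Assumption~\ref{A:DenseComplicated} ensures $\ColOneTwoX$ is generic in $\MOneTwo$" is itself a claim requiring justification. All of these complications evaporate in the paper's direct argument, which I would recommend adopting: the uniqueness of a translation involution preserving a pair of simple cylinders (respectively a complex cylinder) and its boundary is a one-line local rigidity observation, and it eliminates any need to track marked points at the base of the diamond.
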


\red The $T_0$-invariance of $\bfC_1$ is required to define $\ColOne(T_0)$. \black

\begin{proof}


By Lemma \ref{L:C2TInvariant}, $\ColOne(T_0)$ preserves $\ColOne(\bfC_2)$ and the saddle connections on its boundary. By Lemma \ref{L:StructureOfS}, $\ColOne(\bfC_2)$ is either a pair of simple cylinders or a complex cylinder, so there is a unique translation involution on $\ColOneX$ with this property. Since $T_1$ also has this property we have that $\ColOne(T_0) = T_1$. 
%
%
\end{proof}

\begin{lem}\label{L:C1TInvariant}
$\bfC_1$ is either $T_0$-invariant or contains a slope $-1$ irreducible pair of marked points in its boundary.
\end{lem}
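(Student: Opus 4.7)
The plan is to proceed by analogy with Lemma \ref{L:C2TInvariant}, reversing the roles of the two sides of the diamond: use the quadratic double structure on $\MTwo$ and the holonomy involution $J_2$ where that proof used the Abelian double structure on $\MOne$ and $T_1$, and conclude with a slope $-1$ pair instead of a slope $+1$ pair. Suppose $\bfC_1$ is not $T_0$-invariant. Applying Lemma \ref{L:MarkedPointsAndInvariance} to $\For(X,\omega)$, some cylinder in $\bfC_1$ has a boundary, say the top, with no cone points of the flat metric --- only marked points. Fix such a marked point $p$.

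The first step is to transport $p$ across the diamond. Since $p$ has cone angle $2\pi$ and a boundary cone point of $\bfC_2$ contributes at least $\pi$ to the cone angle, $p$ cannot lie on the boundary of $\bfC_2$. Hence $\ColTwo(p)$ is a well-defined marked point on the top boundary of $\ColTwo(\bfC_1) \subset \ColTwoX$, and by genericity of $\bfC_1$ this top boundary contains no metric singularities. Set $p' := J_2(\ColTwo(p))$. Since $\MTwo$ is a quadratic double, $\ColTwo(\bfC_1)$ is a $J_2$-invariant subequivalence class, so $p'$ lies on its boundary. Moreover, the $J_2$-fixed points are preimages of simple poles (cone angle $\pi$), while $\ColTwo(p)$ has angle $2\pi$; thus $p' \ne \ColTwo(p)$, and since the definition of quadratic double forces preimages of non-pole marked points to be marked, $p'$ is itself a marked point.

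The main step is to show that $p'$ lifts to a marked point $q$ on the top boundary of $\bfC_1$ in $(X,\omega)$. The only obstruction is that $p'$ could lie on the boundary of $\ColTwo(\bfC_2)$, so that gluing $\bfC_2$ back in identifies $p'$ with an endpoint of a boundary saddle connection of $\bfC_2$. I would handle this by a case analysis mirroring Sublemma \ref{SL:C2Invariance3}: using Lemma \ref{L:StructureOfS} (so $\bfC_2$ is either a pair of simple cylinders or a single complex cylinder) together with Masur--Zorich (Theorem \ref{T:MZ}) to enumerate possible local configurations, and then showing that in each configuration forgetting the marked points $P$ on the boundary of $\bfC_2$ makes $\For'(\bfC_1)$ into a cylinder whose boundary carries a metric singularity on both sides, so that Lemma \ref{L:MarkedPointsAndInvariance} forces $T_0$-invariance of $\bfC_1$, contrary to assumption.

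Finally, I would verify that $\{p,q\}$ is an irreducible slope $-1$ pair of marked points in the top boundary of $\bfC_1$. The slope is immediate: since $J_2$ has derivative $-\mathrm{Id}$, a deformation moving $\ColTwo(p)$ by $v \in \bC$ moves $\ColTwo(q) = J_2(\ColTwo(p))$ by $-v$, and by Lemma \ref{L:ExtendingPaths} this lifts through the gluing of $\bfC_2$ to a slope $-1$ deformation of $\{p,q\}$ in $\cM$. Irreducibility follows because Lemma \ref{L:C2TInvariant} rules out free marked points on $(X,\omega)$, and the nontrivial deformation just produced rules out $p$ or $q$ being periodic. The main obstacle of the proof is the case analysis ruling out $p'$ as an endpoint of $\ColTwo(\bfC_2)$, which as in Sublemma \ref{SL:C2Invariance3} requires a careful enumeration of the possible structures of $\bfC_2$ and $\ColTwo(\bfC_1)$ near $p'$.
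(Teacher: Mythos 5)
Your argument has a genuine gap at the step where you claim $p' := J_2(\ColTwo(p)) \ne \ColTwo(p)$. You justify this by comparing cone angles, asserting that $J_2$-fixed points have cone angle $\pi$ while $\ColTwo(p)$ has cone angle $2\pi$. But the fixed points of $J_2$ on the holonomy double cover $\ColTwoX$ are the ramification points, which sit above the odd-order singularities of $\ColTwoX/J_2$; above a simple pole (cone angle $\pi$ downstairs), the preimage is a single branch point with cone angle $2\pi$. Since the definition of quadratic double permits (and in many cases requires) such preimages of poles to be marked, $\ColTwo(p)$ may well be a marked $J_2$-fixed point. When it is, your construction produces no partner point $q$ and the argument stalls. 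This is precisely the case where $p$ is a periodic marked point on $(X,\omega)$, and it is not hypothetical: such boundaries of $\bfC_1$ occur whenever $\ColTwo(\bfC_1)/J_2$ is an envelope with marked preimages of poles.

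The paper handles this differently and more economically. Rather than directly constructing the slope $-1$ partner by transporting through $\MTwo$, it invokes the classification of marked points (\cite[Theorem 1.3]{ApisaWright}): $p$ is free, periodic, or part of an irreducible pair whose members map to the same point under a half-translation cover. Free points and slope $+1$ pairs are ruled out by Lemma \ref{L:C2TInvariant}; a slope $-1$ partner $p'$ must already lie on $\bfC_1$'s boundary, since otherwise $\ColOne(p')$ would survive to $\ColOneX$ as a slope $-1$ pair member or periodic point, and Abelian doubles admit neither. The remaining case --- all boundary marked points of $\bfC_1$ periodic --- is exactly the one your argument misses, and there the paper shows $\bfC_1$ \emph{is} $T_0$-invariant by forgetting those periodic points (which are preimages of poles on the boundary of the envelope $\ColTwo(\bfC_1)/J_2$) and reapplying Lemma \ref{L:MarkedPointsAndInvariance} to the resulting single cylinder. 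Your proposal also never addresses the possibility of $p$ belonging to an irreducible $n$-point marking with $n>2$, which the cited classification theorem disposes of but a hands-on construction must confront. To repair your approach you would need, at minimum, to split off the case $\ColTwo(p) = J_2(\ColTwo(p))$ and supply the forgetting argument in that branch.
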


For an example of the second possibility, see Figure \ref{F:NoExtraSymmetry}. 

\begin{proof}
If the boundary of $\bfC_1$ contained no marked points then $\bfC_1$ would be $T_0$ invariant by Lemma \ref{L:MarkedPointsAndInvariance}. Suppose therefore that $p$ is a marked point contained in the boundary of a cylinder in $\bfC_1$. By Lemma \ref{L:C2TInvariant}, $p$ cannot be a free point or part of a slope $+1$ irreducible pair of marked points. By \cite[Theorem 1.3]{ApisaWright}, if $p$ is not free, then either $p$ is periodic or there is a marked point $p'$ such that $\{p, p'\}$ is a slope $\pm 1$ irreducible pair (in fact the cited result says that there is another point $p'$ that has the same image as $p$ in a map to a quadratic differential).


We begin by showing that, 
\red if $p$ and $p'$ are a slope $-1$ pair with $p$ on the boundary of $\bfC_1$, then \black $p'$ also belongs to the boundary of $\bfC_1$. If not, then $\ColOne(p')$ remains a marked point on $\ColOneX$ and either $\{\ColOne(p), \ColOne(p') \}$ is a slope $-1$ irreducible pair of marked points or $\ColOne(p')$ is a periodic point. However, surfaces in Abelian doubles, such as $\ColOneX$, do not have such collections of marked points, \red provided that the Abelian double is not a double of $\cH(0)$ (which is not the case here since $\MOne$ contains $\MOneTwo$ in its boundary, whereas the boundary of a rank one rel zero invariant subvariety contains no finite area translation surfaces). \black So we have a contradiction. Therefore, the slope $-1$ irreducible pair $\{p, p'\}$ is contained in the boundary of $\bfC_1$. 

By the preceding paragraph \red it remains only to consider the case \black that the only marked points on the boundary of $\bfC_1$, the collection of which we denote by $R$, are periodic points. We will now show that $\bfC_1$ is $T_0$-invariant. Since $\ColTwo(\bfC_1)$ is a subequivalence class of generic cylinders in a quadratic double with a periodic point in its boundary, it follows that $\ColTwo(\bfC_1)/J_2$ is a generic envelope and the periodic points, including those in $\ColTwo(R)$,  are preimages of poles. Let $\For'(X, \omega)$ denote $(X, \omega)$ with the points in $R$ forgotten and let $\For'(\bfC_1)$ denote the image of $\bfC_1$ under this map. It is then clear that $\For'(\bfC_1)$ is a single cylinder whose boundary does not contain marked points and hence which is fixed by $T_0$ by Lemma \ref{L:MarkedPointsAndInvariance}.
\end{proof}

\subsection{Extra symmetry case}\label{S:ExtraSymmetry}
The goal of this subsection is to show that if $\cM$ has extra symmetry, then $\cM$ is as described in Theorem \ref{TP2}. We begin with the following special case, which can occur even when $\cM$ does not have extra symmetry (see Figures \ref{F:NoExtraSymmetry} and \ref{F:FinalPossibility}).


\begin{lem}\label{L:MarkedPointsFTW}
If $\cM$ has rank at least two and the boundary of $\bfC_1$ contains a slope $-1$ irreducible pair of marked points, then $\cM$ is as in Theorem \ref{TP2} (\ref{I:FinalPossibility}).
\end{lem}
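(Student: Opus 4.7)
The plan is to construct a translation involution $T_0$ on $(X,\omega)$ and use it to exhibit $\cM$ as a full locus of double covers of an invariant subvariety $\cN$ whose forgetful image lies in a hyperelliptic component of rank at least two.

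First, I would apply Lemma \ref{L:ApisaWright} to the slope $-1$ irreducible pair $\{p,p'\}$. Since $\cM$ has rank at least two, and therefore is not a locus of torus covers, this produces a half-translation involution $J_0$ on $(X,\omega)$ of derivative $-\mathrm{Id}$, with $J_0(p)=p'$, whose quotient is a generic surface in a component of a stratum of quadratic differentials. Because $\{p,p'\}\subset\partial\bfC_1$ and $\bfC_1$ is a subequivalence class of generic cylinders in the Abelian double $\MOne$ (so, by Lemma \ref{L:AbelianDoubleSEC}, is either a pair of isometric simple cylinders or a single complex cylinder), one checks that $J_0(\bfC_1)=\bfC_1$, and more precisely that $J_0$ either swaps the two simple cylinders or acts by a rotation on the single complex cylinder.

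Second, I would construct a translation involution $T_0$ on $(X,\omega)$. The Abelian double $\MOne$ furnishes a translation involution $T_1$ on $\ColOneX$, but the presence of the slope $-1$ pair in $\partial\bfC_1$ means that $T_1$ cannot extend naively to $(X,\omega)$; instead, the extension must be ``twisted'' by the action of $J_0$ on the cylinders of $\bfC_1$. Concretely, splitting by the two cases for $\bfC_1$ from Lemma \ref{L:StructureOfS}, one builds $T_0$ by taking $T_1$ on $(X,\omega)-\bfC_1$ and gluing it across $\bfC_1$ using the involution of $\bfC_1$ induced by $J_0$; one verifies that the resulting map is a well-defined translation involution on $(X,\omega)$ commuting with $J_0$, e.g.\ by running a variant of the Diamond Lemma (Lemma \ref{L:diamond}) with the piecewise data $(T_1$ on $\ColOneX$, swap of cylinders on $\bfC_1)$ agreeing at the base.

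Third, I would analyze the quotient $(Y_0,\eta_0):=(X,\omega)/T_0$ and let $\cN$ be its $\GL$-orbit closure inside a component $\cH_0$ of a stratum of Abelian differentials. Since $T_0$ and $J_0$ commute, $J_0$ descends to a half-translation involution on $(Y_0,\eta_0)$ of derivative $-\mathrm{Id}$; because $\cM$ has rank at least two, the same holds for $\cN$, so by Lemma \ref{L:InvolutionImpliesHyp-background} the descent of $J_0$ is the (unique) hyperelliptic involution on $(Y_0,\eta_0)$, and $\For(\cH_0)$ is a hyperelliptic component of rank at least two. By construction $p$ and $p'$ project to a single marked point on $(Y_0,\eta_0)$ that is fixed by the hyperelliptic involution; the only further marked points are those forced by the original marked points on $(X,\omega)$, which, by Theorem \ref{T:StrataMarkedPoints} (there are no periodic points in non-hyperelliptic strata of rank $\geq 2$, and the only periodic points in hyperelliptic ones are Weierstrass points) together with the constraints from $J_0$ and $T_0$, amount to at most one additional free marked point and/or one pair exchanged by the hyperelliptic involution, giving at most three marked points on $\cN$ with at most one free, i.e.\ exactly the situation of case \eqref{I:FinalPossibility}. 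Once this structure is identified, the fact that $\cM$ is a \emph{full} locus of double covers of $\cN$ follows because the double cover $(X,\omega)\to (Y_0,\eta_0)$ persists on all deformations of $(Y_0,\eta_0)$ in $\cN$ and because $\dim\cM = \dim\cN$ (using that $T_0$ is the deck transformation of a degree two cover).

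The main obstacle will be the second step: producing $T_0$ globally on $(X,\omega)$ and verifying that it commutes with $J_0$. The difficulty is that neither $T_1$ nor the quadratic double involution $J_2$ of $\MTwo$ extends to $(X,\omega)$ a priori, so $T_0$ must be built from the twisted data contributed by the slope $-1$ pair in $\partial\bfC_1$. The cylinder type dichotomy for $\bfC_1$ given by Lemma \ref{L:AbelianDoubleSEC}, combined with the explicit description of how $J_0$ permutes the boundary components, should make this bookkeeping tractable, and the Diamond Lemma will then package everything into the global involution $T_0$.
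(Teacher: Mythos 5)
Your proposal takes a genuinely different route from the paper, and the second step — constructing $T_0$ by gluing — has a real gap as written. You propose to glue $T_1$ (derivative $+\mathrm{Id}$) on $\ColOneX - \ColOne(\bfC_1)$ with ``the involution of $\bfC_1$ induced by $J_0$'', but $J_0$ has derivative $-\mathrm{Id}$, so its restriction to $\bfC_1$ is a rotation rather than a translation and cannot be matched with $T_1$. You would have to single out a specific translation isometry between the cylinders of $\bfC_1$ and then verify that the resulting patchwork is well-defined along $\ColOne(\bfC_1)$; that verification is exactly the hard part, and it is not addressed. Invoking ``a variant of the Diamond Lemma'' does not help here: Lemma \ref{L:diamond} requires two degenerations with covering maps agreeing at the base, whereas you have only the single degeneration $\ColOneX$ carrying $T_1$, so there is no diamond to run.

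The paper's proof bypasses the gluing entirely via one structural observation: because $\ColTwo(\bfC_1)$ is a generic subequivalence class in a quadratic double whose boundary contains the slope $-1$ pair, collapsing $\bfC_1$ is, up to a cylinder deformation, the same as \emph{forgetting} the slope $-1$ pair. Hence $\ColOneX$ is (up to a cylinder deformation) just $\cF'(X,\omega)$, so the Abelian-double structure on $\MOne$ is transported directly to $\cM$, exhibiting $\cM$ as a full locus of double covers of a codimension-one $2$-point marking $\cN$ over $\cH_1$. Hyperellipticity of $\For(\cH_1)$ then follows from Theorem \ref{T:MinimalCover} together with Lemma \ref{L:InvolutionImpliesHyp-background}, since the quadratic-differential quotient of $(X,\omega)$ descends to $\ColOneX/T_1$; and the bound on free marked points comes from $\ColOneTwoX/T$ lying in a hyperelliptic component, not from a direct appeal to Theorem \ref{T:StrataMarkedPoints}. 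The observation that ``collapsing equals forgetting'' is precisely the ingredient your argument is missing.
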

\begin{proof}

Recall that by Assumption \ref{A:DenseComplicated}, $(X, \omega)$ has a dense orbit in $\cM$.  By Apisa-Wright (Lemma \ref{L:ApisaWright}) since $\cM$ has rank at least two and since $(X, \omega)$ contains a slope $-1$ irreducible pair of marked points, $(X, \omega)$ admits a map to a quadratic differential. 

\red Since $\ColTwo(\bfC_1)$ is a generic subequivalence class with a slope $-1$ irreducible pair of points on its boundary in a quadratic double, our explicit understanding of such subequivalence classes implies that collapsing $\ColTwo(\bfC_1)$ is, up to a cylinder deformation, the same as forgetting the pair of marked points. The same holds when collapsing $\bfC_1$ on $(X, \omega)$. \black



We will now establish the three claims made in Theorem \ref{TP2} (\ref{I:FinalPossibility}). 
\begin{itemize}
    \item By Theorem \ref{T:MinimalCover}, the generic surface in $\cH_1$ - the stratum containing $\ColOneX/T_1$ - also admits a map to a quadratic differential and hence $\For(\cH_1)$ is hyperelliptic by Lemma \ref{L:InvolutionImpliesHyp-background}.
    \item Since $\ColOneX$ is, up to a cylinder deformation, simply $(X, \omega)$ with the slope $-1$ pair of marked points forgotten, it follows that $\cM$ is a full locus of covers of a locus $\cN$ in a component of a  stratum of Abelian differentials $\cH_0$ where $\For(\cN) = \For(\cH_0) = \For(\cH_1)$ is hyperelliptic. The slope $-1$ irreducible pair must correspond to a pair of points on surfaces in $\cN$ that are exchanged by the hyperelliptic involution, and in fact $\cN$ must be exactly the locus in $\cH_0$ where this pair of marked points is exchanged by the involution. In particular, $\cN$ has codimension 1. 
    \item Since $\ColOneTwoX/T$ belongs to $\cH^{hyp}(2g-2)$ or $\cH^{hyp}(g-1, g-1)$ for some $g \geq 1$ this shows that there is at most one free marked point on $\ColOneX/T_1$ and hence on surfaces in $\cN$.\qedhere
\end{itemize}
%
\end{proof}

\begin{lem}\label{L:JExtraSymmetry}
Suppose that $J$ extends to $\ColOneX$ in the sense of Definition \ref{D:ExtraSymmetry}. Then one of the following holds:
\begin{enumerate}
    \item $\cM$ is a quadratic double, 
    \item $\cM$ is as in Theorem \ref{TP2} (\ref{I:FinalPossibility}), or
    \item $T$ extends to $\ColTwoX$ in the sense of Definition \ref{D:ExtraSymmetry}.
\end{enumerate}
\end{lem}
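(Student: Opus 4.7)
The plan is to extend $J$ upward to an involution $J_0$ on $(X,\omega)$ via the Diamond Lemma (Lemma \ref{L:diamond}), and then separate into the three outcomes based on how $J_0$ interacts with marked points. I take as covering maps the degree two quotients $f_i : \Col_{\bfC_i}(X,\omega) \to \Col_{\bfC_i}(X,\omega)/J_i$, where $J_1$ is given by the extra symmetry hypothesis and $J_2$ is the holonomy involution on $\ColTwoX$ from the quadratic double structure of $\MTwo$. The hypothesis $\Col(f_1) = \Col(f_2)$ of the Diamond Lemma holds since both $\Col(f_i)$ realize the quotient of $\ColOneTwoX$ by the common involution $J$. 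The preimage condition $\overline{\Col_{\bfC_i}(\bfC_{i+1})} = f_i^{-1}(f_i(\overline{\Col_{\bfC_i}(\bfC_{i+1})}))$ holds for $i=1$ because $J_1$ fixes $\ColOne(\bfC_2)$ by Definition \ref{D:ExtraSymmetry}, and for $i=2$ because $\ColTwo(\bfC_1)$ is a subequivalence class on a surface in a quadratic double and so is preserved by the holonomy involution $J_2$. The Diamond Lemma therefore produces an involution $J_0$ of derivative $-\mathrm{Id}$ on $(X,\omega)$ with $\Col_{\bfC_i}(J_0) = J_i$, and by Corollary \ref{C:DenseDiamond} together with Assumption \ref{A:DenseComplicated}, $\cM$ is a full locus of degree two covers of the orbit closure $\cN$ of $(X,\omega)/J_0$.

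If $\cN$ is a full component of a stratum of quadratic differentials and every preimage of a marked point of $(X,\omega)/J_0$ is marked on $(X,\omega)$, then $\cM$ is a quadratic double, giving outcome (1). Suppose outcome (1) fails. First I consider the case that there is a marked point $p$ on $(X,\omega)$ with $J_0(p)$ unmarked. Since $J_2 = \Col_{\bfC_2}(J_0)$ preserves marked points on $\ColTwoX$ (by the quadratic double structure of $\MTwo$), such a $p$ must lie in $\overline{\bfC_2}$; similarly, since $T_1$ preserves marked points on $\ColOneX$, pushing the obstruction through $\Col_{\bfC_1}$ forces the offending marked points to sit on $\overline{\bfC_1}$ as a slope $-1$ irreducible pair. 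Since $\bfC_1$ and $\bfC_2$ are disjoint non-parallel subequivalence classes, Lemma \ref{L:RankTest} gives $\mathrm{rank}(\cM) \geq 2$, so Lemma \ref{L:MarkedPointsFTW} places $\cM$ in outcome (2).

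In the remaining case $J_0$ preserves marked points but $\cN$ is cut out from its ambient stratum by an extra codimension-one condition. The plan here is to build a translation involution $T_2$ on $\ColTwoX$ extending $T$ and preserving $\ColTwo(\bfC_1)$, which is precisely outcome (3). The main obstacle will be producing this $T_2$: I expect to exploit the $\bZ/2\times\bZ/2$ symmetry $\langle T_1, J_1 \rangle$ on $\For(\ColOneX)$ (these commute because the descended involutions $T$ and $J$ on $\ColOneTwoX$ commute, by Lemma \ref{L:AllAboutThatBase}) to translate the extra codimension-one defining equation into a hidden translation symmetry on $\ColTwoX$. Once $T_2$ is produced, a second application of the Diamond Lemma with the pair $(T_1, T_2)$ gives a translation involution $T_0$ on $(X,\omega)$, and $T_0$ commutes with $J_0$, so altogether $\cM$ acquires a simultaneous Abelian and quadratic double structure along the lines of Section \ref{SS:simul}. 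The finer bookkeeping separating outcomes (2) and (3) — in particular verifying that $T_2$ indeed stabilizes $\ColTwo(\bfC_1)$ — should then follow from the marked-point tracking machinery developed in Section \ref{S:ExtraSymmetryPrep}.
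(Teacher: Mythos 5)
Your plan is sound at the start: apply the Diamond Lemma with $J_1$ and $J_2$ to produce $J_0$ on $(X,\omega)$, and your verification of the preimage condition is correct. But the remainder of the argument has two substantive gaps.

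First, your second case — a marked point $p$ with $J_0(p)$ unmarked — cannot occur, and so cannot serve as a route to outcome (2). Your own first step shows $p$ would have to lie in $\overline{\bfC_2}$; but you then switch to reasoning about $T_1$ and $\Col_{\bfC_1}$ in a way that does not land anywhere, since $p\in\overline{\bfC_2}$ is disjoint from $\overline{\bfC_1}$. What actually happens in that situation is: since $J_1$ preserves the boundary saddle connections of $\ColOne(\bfC_2)$ (via Lemma \ref{L:StructureOfS}), $J_0$ preserves $\partial\bfC_2$, so $J_0(p)$ \emph{is} marked and moreover $\{\ColOne(p),J_1(\ColOne(p))\}$ would be a slope $-1$ irreducible pair of marked points on $\ColOneX$ — impossible in an Abelian double. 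So one shows instead that $J_0$ preserves the marked points on $(X,\omega)$ and none lie in $\partial\bfC_2$. The actual path to outcome (2) comes later, via Lemma \ref{L:C1TInvariant}, and only after a translation involution $T_0$ has been constructed (since Lemma \ref{L:C1TInvariant} requires Assumption \ref{A:T0Involution}); the marked points in question sit on $\partial\bfC_1$, not $\partial\bfC_2$.

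Second, and more seriously, you skip the hardest case entirely. By Lemma \ref{L:StructureOfS}, $\ColOne(\bfC_2)$ is either a pair of simple cylinders or a complex cylinder, and since $J_1$ preserves its boundary saddle connections, $\bfC_2/J_0$ can be a simple cylinder, a complex envelope, \emph{or a pair of isometric simple envelopes}. The first two cases feed into Corollary \ref{C:codim1} and Theorem \ref{T:complex-gluing0} to understand $\cM'$, which leads to your third scenario. But the pair-of-simple-envelopes case does not fit that template at all: the paper needs a self-contained argument (Sublemma \ref{SL:PairIsometricEnvelopes} in Section \ref{S:ExtraSymmetry}) involving the Collapsing Lemma and the ``attack'' technique to rule out, or reduce, this configuration. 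Your third case is in any event only a plan (``I expect to exploit\dots'', ``should then follow\dots''), but even granting that, it would not cover the envelope case, and so the proof is incomplete.
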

\begin{proof}
Let $J_1$ denote the extension of $J$ to $\ColOneX$. By the Diamond Lemma (Lemma \ref{L:diamond}), there is an involution $J_0$ on $\For(X, \omega)$ such that $(X, \omega)/J_0$ is a surface in a component $\cQ'$ of a stratum of quadratic differentials. 

To proceed, one might be tempted to try to show that $\MOne$ is a quadratic double. However, as illustrated in Figure \ref{F:EmptyP} \red (after applying a half-Dehn twist to the cylinders labelled $\bfC_1$) \black this is not necessarily the case, which requires us to use a slightly more involved argument.  

By Lemma \ref{L:StructureOfS}, $\ColOne(\bfC_2)$ either contains one complex cylinder or two simple cylinders; and $J_1$ preserves the collection of saddle connections on the boundary of $\ColOne(\bfC_2)$. Therefore, $\ColOne(\bfC_2)/J_1$ is one of the following: a complex envelope, a single simple cylinder, or a pair of isometric simple envelopes. We will single out the case where $\ColOne(\bfC_2)/J_1$ is a pair of isometric simple envelopes for special attention.

\begin{sublem}\label{SL:PairIsometricEnvelopes}
If $\ColOne(\bfC_2)/J_1$ is a pair of simple envelopes, then either $\cM$ is as in Theorem \ref{TP2} (\ref{I:FinalPossibility}) or $T$ extends to $\ColTwoX$ in the sense of Definition \ref{D:ExtraSymmetry}.
\end{sublem}
For an illustration of a diamond where this phenomenon occurs, see Figure \ref{F:NoExtraSymmetry}. It is necessary to modify the figure by collapsing the horizontal cylinder above $\bfC_2$ after performing a half Dehn twist in it.

\begin{proof}
The assumption on $\bfC_2$ implies that $\ColTwo(\bfC_2)/J_2$ is a pair of generically parallel saddle connections, each with one endpoint at a pole. By Masur-Zorich (Theorem \ref{T:HatHomologous}), the complement of $\ColTwo(\bfC_2)/J_2$ is a connected translation surface. The main consequences of this that will be used are the following:
\begin{enumerate}
    \item $\ColTwoX - \ColTwo(\bfC_2)$ is disconnected.
    \item\label{I:Simple} $\ColTwo(\bfC_1)/J_2$ is a simple cylinder. So $\ColTwo(\bfC_1)$, and hence also $\bfC_1$, consists of a pair of simple cylinders.
    \item\label{I:TwoComponents} The two components of $\ColOneTwoX - \ColOneTwo(\bfC_2)$ are exchanged by $T$, since $\ColOneTwo(\bfC_2)$ is a pair of saddle connections exchanged by $T$.
\end{enumerate}  



If $\ColOneTwo(\bfC_1)$ consists of two saddle connections exchanged by $T$, then since $\ColTwo(\bfC_1)$ is a pair of simple cylinders, there is an involution $T_2$ on $\ColTwoX$ such that $\Col(T_2) = T$ and $T_2(\ColTwo(\bfC_1)) = \ColTwo(\bfC_1)$. Therefore we may assume that $\ColOneTwo(\bfC_1)$ consists of two saddle connections that are $J$-invariant but not $T$-invariant. 

Denote the image of $\ColOneTwo(\bfC_1)$ on $\ColOneTwoX/T$ by $\ColOneTwo(\bfC_1)/T$, even though $\ColOneTwo(\bfC_1)$ is not $T$-invariant. This image consists of two saddle connections, which by Lemma \ref{L:HypParallelism}, must be exchanged by the hyperelliptic involution. Cutting these saddle connections disconnects $\ColOneTwoX/T$ into two components. Let $\Sigma$ denote the component that does not contain $\ColOneTwo(\bfC_2)/T$, keeping in mind that $\ColOneTwo(\bfC_2)/T$ is a single saddle connection. The preimage $\wt{\Sigma}\subset \ColOneTwoX$ of $\Sigma$  consists of two components each isometric to $\Sigma$, by the observation \eqref{I:TwoComponents} above.


Before proceeding we will prove that $\cM$ has rank at least two. If this were not the case, then $\ColOneTwo(\bfC_1)$ and $\ColOneTwo(\bfC_2)$ would be generically parallel to each other (for instance by Lemma \ref{L:RankTest}). Hence the saddle connections in $\ColOneTwo(\bfC_1)/T$ and $\ColOneTwo(\bfC_2)/T$ would all be generically parallel to each other. Since $\ColOneTwo(\bfC_1)/T$ is a set of two saddle connections exchanged by the hyperelliptic involution,  Lemma \ref{L:HypParallelism} gives that there cannot be any other saddle connection generically parallel to $\ColOneTwo(\bfC_1)/T$, so we get a contradiction. 

Our strategy is to show that, when $\Sigma$ is a simple cylinder, there is a slope $-1$ irreducible pair of marked points on the boundary of $\bfC_1$ (and so we can conclude by Lemma \ref{L:MarkedPointsFTW}), and otherwise to deduce a contradiction.




\bold{Case 1: $\Sigma$ is a simple cylinder.} Then $\wt{\Sigma}$ is a pair of simple cylinders. Let $\bfC_3$ denote the corresponding pair of simple cylinders on $\ColOneX$, so $\Col_{\ColOne(\bfC_2)}(\bfC_3) = \wt{\Sigma}$. It is possible to find such cylinders since the subsurface $\Sigma$ does not contain the saddle connection in $\ColOneTwo(\bfC_2)/T$.

Therefore, $(X, \omega)$ can be formed by gluing in a pair of simple cylinders, those in $\bfC_1$, to the boundary of the pair of simple cylinders in $\bfC_3$. This produces a slope $-1$ irreducible pair of marked points on the boundary of $\bfC_1$, so we are done by Lemma \ref{L:MarkedPointsFTW}.

\bold{Case 2: $\Sigma$ is not a simple cylinder.} We will derive a contradiction. 

$\ColOneTwoX/T-\ColOneTwo(\bfC_1)/T$ has two components, and $\ColOneTwo(\bfC_2)/T$ is a single saddle connection. Hence  
$\ColOneTwoX/T - \ColOneTwo(\bfC_1 \cup \bfC_2)/T$ 
also has two components. Let $\Sigma'$ denote the one that is not $\Sigma$. 

By the Collapsing Lemma (Corollary \ref{C:Collapsing2}), we may assume that $\Sigma$ is a slit torus and $\Sigma'$ is a parallelogram (these were defined in Definition \ref{D:Parallelogram}). 

We will now prove that the surface $\ColOneTwoX$ is as depicted in Figure \ref{F:SecondAttackPrep} (left). To start, since $\ColOneTwoX/T = \overline{\Sigma \cup \Sigma'}$ is obtained by gluing a parallelogram into a slit torus, we see that $\ColOneTwoX/T\in \cH(2)$. This surface is depicted in Figure \ref{F:SecondAttackPrep} (right). 
\begin{figure}[h!]\centering
\includegraphics[width=\linewidth]{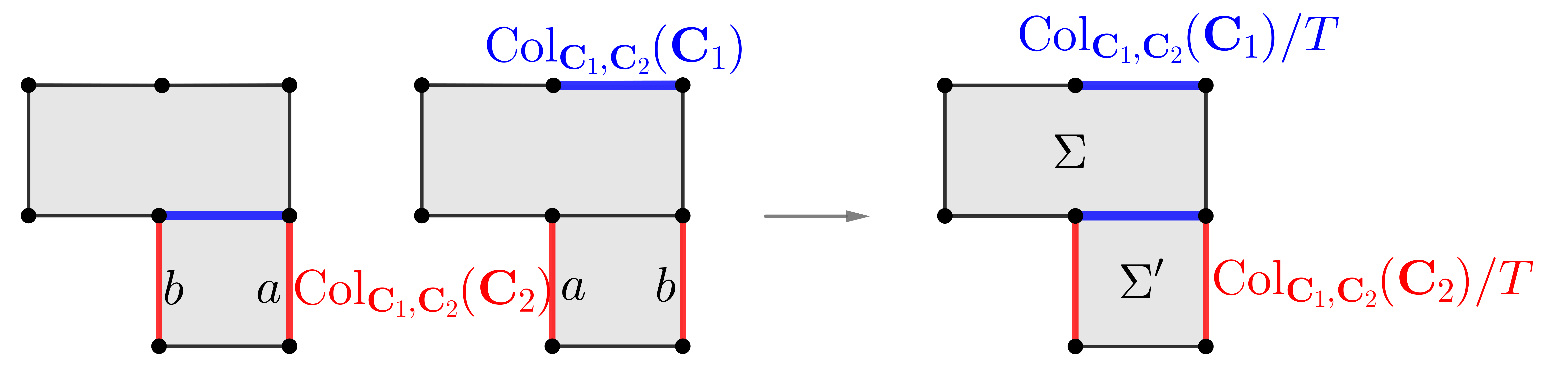}
\caption{An illustration of $\ColOneTwoX$ in Case 2.}
\label{F:SecondAttackPrep}
\end{figure}

The parallelogram $\Sigma'$ has its boundary comprised of the saddle connections in $\ColOneTwo(\bfC_1)/T$ and $\ColOneTwo(\bfC_2)/T$, as in Figure \ref{F:SecondAttackPrep}. The saddle connections in $\ColOneTwo(\bfC_2)$ are exchanged by $T$ and cutting them disconnects the surface into two surfaces with boundary that are exchanged by $T$ and isometric to $\ColOneTwoX/T - \ColOneTwo(\bfC_2)/T$. Since $\bfC_1$ and $\bfC_2$ both consist of simple cylinders (for $\bfC_1$ this follows by \eqref{I:Simple} and for $\bfC_2$ it follows by assumption), it follows that $(X, \omega)$ is the surface shown in Figure \ref{F:SecondAttack} (top).

\begin{figure}[h!]\centering
\includegraphics[width=.7\linewidth]{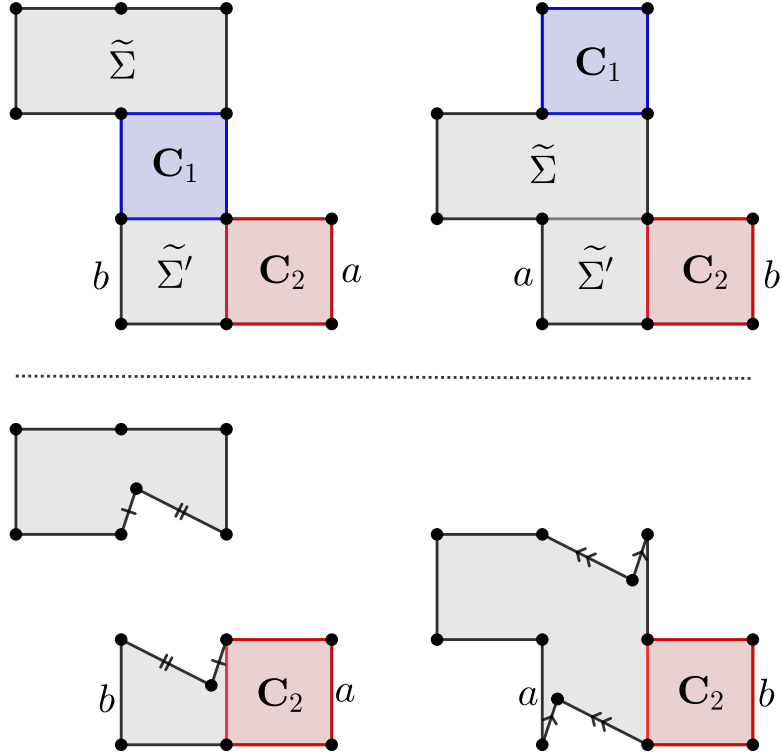}
\caption{An illustration of overcollapsing $\bfC_1$ to derive a contradiction in Sublemma \ref{SL:PairIsometricEnvelopes}. The upper part of the figure is $(X, \omega)$, and the lower part is the ``attack" deformation in the final paragraph of the proof.}
\label{F:SecondAttack}
\end{figure}

We now claim that $\cM$ has no rel. Using Figure \ref{F:SecondAttack}, one may check that $\cM$ is contained in the quadratic double of $\cQ(8, -1^4)$, which has rank three rel one (by Lemma \ref{L:Q-rank}). Since $\MOneTwo$ is an Abelian double of $\cH(2)$, $\MOneTwo$ has complex dimension four and so, by definition of generic diamond (Definition \ref{D:GenericDiamond} \eqref{E:one}), $\cM$ has complex dimension six. Since $\cM$ is contained in an invariant subvariety of rank three rel one, $\cM$ has rank three rel zero, as desired.

We're now going to describe a deformation of the surface in which we use the cylinders in $\bfC_1$ to ``attack" those in $\bfC_2$, resulting in a family of surfaces where the modulus of one of the two cylinders in $\bfC_2$ changes but not other. This will be a contradiction since $\cM$ has rel zero (see for instance Theorem \ref{T:CylECTwistSpace}).

We will describe this deformation by making reference to Figure \ref{F:SecondAttack}. While keeping all other edges constant, take the corner of the cylinders in $\bfC_1$ and move it into the subsurface labelled $\wt{\Sigma'}$ as shown in Figure \ref{F:BigAttack}. Continue moving these corners into $\bfC_2$. It is clear that two corners enter one cylinder in $\bfC_2$, changing its modulus, but not the other. This is the desired deformation, which produces the desired contradiction.
\end{proof}

Letting $\cM'$ denote the orbit closure of $(X, \omega)/J_0$ we have that $$\left( (X, \omega)/J_0, \cM', \bfC_1/J_0, \bfC_2/J_0 \right)$$ is a generic diamond where $\bfC_2/J_0$ is either a simple cylinder or complex envelope. (The final possibility for $\bfC_2/J_0$ - a pair of isometric simple envelopes - has been dealt with in Sublemma \ref{SL:PairIsometricEnvelopes}.)

When $\bfC_2/J_0$ is a simple cylinder, since $\ColTwoX/J_2$ is a generic surface in $\cQ_2$, Corollary \ref{C:codim1} gives that $\cM' = \cQ'$. When $\bfC_2/J_0$ is a complex envelope,  Theorem \ref{T:complex-gluing0}  gives that one of the following occurs: $\cM' = \cQ'$, $\For(\cM') = \For(\cQ')$ is a hyperelliptic component and all marked points on surfaces in $\cM'$ are free except for one pair of points in the boundary of $\bfC_2/J_0$ exchanged by the hyperelliptic involution, or $\For(\cM')$ is a codimension one hyperelliptic locus in $\For(\cQ')$ (which is non-hyperelliptic) and all marked points are free. 


\begin{sublem}\label{SL:MarkedPointReduction}
The collection of marked points on $(X, \omega)$ is invariant under $J_0$ and no marked points are contained in the boundary of $\bfC_2$. 

\end{sublem}
\begin{proof}
Let $p$ be a marked point on $(X, \omega)$. Suppose first that $p$ does not belong to $\overline{\bfC}_2$. Then $\ColTwo(p)$ does not belong to $\ColTwo(\bfC_2)$. Notice that $\ColTwo(\bfC_2)$ is $J_2$-invariant since $\bfC_2$ is $J_0$-invariant (which in turn follows from $\ColOne(\bfC_2)$ being $J_1$-invariant). So $J_2(\ColTwo(p))$ is disjoint from $\ColTwo(\bfC_2)$ and marked since $\ColTwoX$ is a quadratic double. It follows that $J_0(p)$ is also a marked point on $(X, \omega)$. 

Now suppose to a contradiction that $p$ does belong to the closure of $\bfC_2$. By our conventions, $p$ lies in the boundary of $\bfC_2$; a marked point by definition cannot be on the interior of a cylinder. 

Since $\bfC_2$ is fixed by $J_0$ it follows that $J_0(p)$ is marked since, by Lemma \ref{L:StructureOfS}, $J_1$ and hence $J_0$ preserves the collection of saddle connections on the boundary of $\ColOne(\bfC_2)$ (resp. $\bfC_2$). But then $\{ \ColOne(p), J_1\left( \ColOne(p) \right) \}$ is a slope $-1$ irreducible pair of marked points on $\ColOneX$. Such collections of marked points do not exist on surfaces in Abelian doubles, so we have a contradiction. 
\end{proof}

By Sublemma \ref{SL:MarkedPointReduction}, if $\For(\cM') = \For(\cQ')$ then $\cM' = \cQ'$ and $\cM$ is a quadratic double.  We will assume for the remainder of the proof that $\For(\cM')$ is a codimension one hyperelliptic locus in $\For(\cQ')$, which implies that there is a translation involution $T_0$ on $(X, \omega)$ (since half translation maps between quadratic differentials induce maps between their holonomy double covers).

Since any component of a rank one stratum of quadratic differentials is hyperelliptic, our assumption implies that $\For(\cQ')$ has rank at least two. Since $\For(\cM')$ is codimension one in $\For(\cQ')$ it follows that $\For(\cM')$, $\cM'$, and $\cM$ all have rank at least two. Therefore, $(X, \omega)/T_0$ belongs to a quadratic double of a genus zero stratum of rank at least two. Since no genus zero stratum of rank at least two is hyperelliptic it follows from Lemma \ref{L:InvolutionImpliesHyp-background} that $(X, \omega)/T_0$ is not a translation cover of degree greater than one of another surface. We have shown that Assumption \ref{A:T0Involution} is satisfied. 

By Lemma \ref{L:C1TInvariant}, either $\bfC_1$ is $T_0$-invariant or $(X, \omega)$ contains a slope $-1$ irreducible pair of marked points contained in the boundary of $\bfC_1$. In the latter case, we are done by Lemma \ref{L:MarkedPointsFTW}. In the former case, $\ColOne(T_0) = T_1$ (by Corollary \ref{C:Commuting}) and $\bfC_2$ is $T_0$-invariant (by Lemma \ref{L:C2TInvariant}), implying that $T_2 := \ColTwo(T_0)$ is defined, $\Col(T_2) = T$, and $T_2(\ColTwo(\bfC_1)) = \ColTwo(\bfC_1)$.
\end{proof}

\begin{prop}\label{PP2}
If $\cM$ has extra symmetry, then it is as in Theorem \ref{TP2}.
\end{prop}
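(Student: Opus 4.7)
By Definition \ref{D:ExtraSymmetry}, extra symmetry of $\cM$ means that either $J$ extends to an involution $J_1$ of $\For(\ColOneX)$ preserving $\ColOne(\bfC_2)$, or $T$ extends to a translation involution $T_2$ of $\For(\ColTwoX)$ preserving $\ColTwo(\bfC_1)$. In the first case, Lemma \ref{L:JExtraSymmetry} directly concludes that $\cM$ is a quadratic double (case (ii) of Theorem \ref{TP2}), or is as in case (iii.b), or else the extension $T_2$ also exists. Thus we may assume throughout that $T$ extends via some $T_2$.

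Next, the plan is to apply the Diamond Lemma (Lemma \ref{L:diamond}) to the translation involutions $T_1$ and $T_2$, viewed as degree two covering maps on $\ColOneX$ and $\ColTwoX$ respectively. Each $T_i$ preserves the relevant collapsed cylinders by hypothesis, supplying the ``preimage of the image'' condition; and by the definition of extension both $T_1$ and $T_2$ descend to $T$ on $\ColOneTwoX$, so they agree at the base of the diamond. This produces a translation involution $T_0$ of $\For(X,\omega)$ with $\Col_{\bfC_i}(T_0) = T_i$. In particular, $T_0$ preserves each $\bfC_i$ and makes $(X,\omega)$ a degree two cover of the translation surface $(X,\omega)/T_0$.

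To invoke the marked point lemmas of Section \ref{S:ExtraSymmetryPrep}, I would then verify Assumption \ref{A:T0Involution}, if necessary factoring $T_0$ through the minimal translation cover of $(X,\omega)$ supplied by Theorem \ref{T:MinimalCover} (the torus cover case being handled separately via direct inspection). Lemmas \ref{L:C2TInvariant} and \ref{L:C1TInvariant} then yield that $\bfC_2$ is $T_0$-invariant, that $(X,\omega)$ has no free marked points, and that $\bfC_1$ is either $T_0$-invariant or contains a slope $-1$ irreducible pair of marked points in its boundary. In the slope $-1$ case, one first verifies that $\cM$ has rank at least two (via Lemma \ref{L:RankTest}, using that $\bfC_1$ and $\bfC_2$ are disjoint non-parallel subequivalence classes) and then applies Lemma \ref{L:MarkedPointsFTW} to place $\cM$ in case (iii.b).

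In the remaining case, both $\bfC_i$ are $T_0$-invariant and a careful bookkeeping using Lemmas \ref{L:C2TInvariant} and \ref{L:C1TInvariant} shows that $T_0$ preserves the full set of marked points on $(X,\omega)$. Hence $\cM$ is a full locus of double covers of the orbit closure $\cN$ of $(X,\omega)/T_0$ inside some stratum $\cH_0$. Comparing dimensions through the diamond and applying Theorem \ref{T:MirWriFullRank}, either $\cN$ is a full component of a stratum, in which case $\cM$ is an Abelian double (case (i)), or $\For(\cN)$ is a codimension one hyperelliptic locus in $\For(\cH_0)$ and $\cM$ falls into case (iii.a) or (iii.b) depending on the marked point configuration. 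The main obstacle I expect is this final step: teasing apart the sub-cases of (iii) by a meticulous analysis of which marked points on $(X,\omega)/T_0$ are free versus periodic versus members of irreducible pairs, and verifying that the codimension one constraints imposed by the pair $\bfC_1,\bfC_2$ line up with those characterizing a hyperelliptic locus.
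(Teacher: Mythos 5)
Your first two steps match the paper exactly: invoking Lemma \ref{L:JExtraSymmetry} to reduce to the case where $T$ extends to $T_2$ on $\For(\ColTwoX)$, and then applying the Diamond Lemma to obtain a translation involution $T_0$ on $\For(X,\omega)$. After that your route genuinely diverges, and the divergence introduces two concrete gaps.

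First, your plan to verify Assumption \ref{A:T0Involution} so that Lemmas \ref{L:C2TInvariant} and \ref{L:C1TInvariant} can be invoked is circular in spirit and does not go through as sketched. The assumption requires that $(X,\omega)/T_0$ is not a translation cover of degree greater than one; you propose to ensure this by factoring $T_0$ through the minimal cover from Theorem \ref{T:MinimalCover}, but the minimal cover may have degree strictly greater than two, in which case there is no replacement involution $T_0'$ with $(X,\omega)/T_0' = (X_{min}, \omega_{min})$, and the machinery of Section \ref{S:ExtraSymmetryPrep} doesn't apply. The paper only verifies that assumption in Section \ref{S:ExtraSymmetrySupplement}, where it already knows (as part of Assumption \ref{A:TP2-3-b}) that $\For(\cH_0)$ is a hyperelliptic component of rank at least two, so Lemma \ref{L:InvolutionImpliesHyp-background} supplies the uniqueness; here you cannot assume that, because it is what you are trying to prove. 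The paper avoids Assumption \ref{A:T0Involution} entirely in this proof by instead forming the quotient diamond $\bigl( (X,\omega)/T_0, \cM', \bfC_1/T_0, \bfC_2/T_0 \bigr)$ and showing, via Sublemma \ref{SL:Commuting}, that $\cM'_{\bfC_2/T_0}$ is a quadratic double while $\cM'_{\bfC_1/T_0}=\cH_1$ is a stratum of Abelian differentials, so the whole analysis is inherited from Proposition \ref{P:DiamondWithH}.

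Second, your final paragraph is not an argument. Invoking Theorem \ref{T:MirWriFullRank} only tells you what a \emph{full rank} orbit closure of Abelian differentials looks like; it does not on its own give the three-way case analysis, the precise marked point configurations, or the conclusion that $\For(\cN)$ must be a codimension one hyperelliptic locus when $\cM$ is not an Abelian or quadratic double. That analysis is precisely the content of Proposition \ref{P:DiamondWithH} part \eqref{P:DiamondWithH:HQ}, and the paper's decision to package the quotient as a new diamond is exactly what lets it reuse that result rather than re-derive it. To make your proposal complete you would effectively be re-proving Proposition \ref{P:DiamondWithH}(3) from scratch, including Sublemmas \ref{SL:MIsNotRankOne} through \ref{SL:MarkedPointConclusion}, and you would also need the analogues of the paper's Sublemmas \ref{SL:FreePoints} and \ref{SL:Case3} to connect the three cases of $\cM'$ back to the three cases of Theorem \ref{TP2}. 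As written, these steps are asserted but not proved.
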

\begin{proof}
By Lemma \ref{L:JExtraSymmetry}, it remains to study the case where there is an involution $T_2$ on $\For(\ColTwoX)$ such that $\Col(T_2) = T$ and $T_2(\ColTwo(\bfC_1)) = \ColTwo(\bfC_1)$.

By the Diamond Lemma (Lemma \ref{L:diamond}), $\cM$ is a locus of translation double covers. Let $T_0$ denote the translation involution on $\For(X, \omega)$. (Even though we do not by default assume that involutions preserve marked points, we use $\For$ here to emphasize that marked points need not be preserved.) 

Let $\cM'$ denote the orbit closure of $(X, \omega)/T_0$ and let $\cH'$ denote the component of the stratum of Abelian differentials containing it. Then $\left( (X, \omega)/T_0, \cM', \bfC_1/T_0, \bfC_2/T_0 \right)$ is a generic diamond where $\cM'_{\bfC_1/T_0} = \cH_1$. 

\begin{sublem}\label{SL:Commuting}
$\cM'_{\bfC_2/T_0}$ is a quadratic double.
\end{sublem}

\begin{proof}
We begin by showing that $J_2$ and $T_2$ commute. Notice that $\ColTwoX - \ColTwo(\bfC_1)$ is isometric to $\ColOneTwoX - \ColOneTwo(\bfC_1)$ and that under this identification the restriction of $J_2$ and $T_2$ agree with the restriction of $J$ and $T$ respectively. This shows that $J_2$ and $T_2$ commute on an open set (since this is true of $J$ and $T$ by Lemma \ref{L:AllAboutThatBase}) and hence they must commute since they are holomorphic maps.

Since the set of marked points (and zeros) on $\ColTwoX$ is $J_2$ invariant, and since $J_2$ and $T_2$ commute, the set of marked points is also invariant on $\ColTwoX/T_2$ by the involution induced by $J_2$. Since $\ColTwoX/T_2$ has a degree two map to the quadratic differential $\ColTwoX/\langle J_2, T_2\rangle$, we get that $\ColTwoX/T_2$ is contained in a quadratic double. 

By perturbing we may assume that $\ColTwoX$ is generic in $\MTwo$. This implies that $\ColTwoX/J_2$ has dense $\GL$-orbit in its stratum. The same must hold for $\ColTwoX/\langle J_2, T_2 \rangle$ since it is the image of $\ColTwoX/J_2$. Since $\ColTwoX/T_2$ is the holonomy double cover of a quadratic differential with dense orbit in its stratum, i.e. $\ColTwoX/\langle J_2, T_2 \rangle$, \red $\cM'_{\bfC_2/T_0}$ \black is a quadratic double. 
%
%
\end{proof}



By Proposition \ref{P:DiamondWithH}, $\cM'$ (and hence also $\cM$) has rank at least two and one of the following occurs:
\begin{enumerate}
    \item $\cM' = \cH'$ and there is at most one marked point on surfaces in $\cM'$.
    \item $\For(\cM') = \For(\cH')$ is a hyperelliptic component and there is at most one free marked point on surfaces in \red $\cM'$ \black with the remaining marked points being a collection of either one marked point fixed or two marked points exchanged by the hyperelliptic involution. 
    \item $\For(\cM')$ is a codimension one hyperelliptic locus in $\For(\cH')$ and there is at most one marked point, which is free.
\end{enumerate}

\noindent In the second possibility listed above, $\cM$ is described by Theorem \ref{TP2} (\ref{I:FinalPossibility}).

\begin{sublem}\label{SL:FreePoints}
All preimages of any free marked point on $(X, \omega)/T_0$ are marked on $(X, \omega)$. Thus, when $\cM' = \cH'$, $\cM$ is an Abelian double. 
\end{sublem}
\begin{proof}
Suppose that there is a marked point on $(X, \omega)/T_0$ and that its preimages contain marked points. Suppose too that exactly one preimage is marked and call this point $p$. 


\red Suppose first that $p$ does not lie on the boundary of $\bfC_2$. Since $p$ is free, $\ColTwo(p)$ is a free marked point in a quadratic double. Such a point only exists in $\cH(0,0)$ - the quadratic double coming from $\cQ(-1^4,0)$ with no preimages of poles marked. Therefore, $\MTwo = \cH(0,0)$ and $\Col_{\bfC_2}(\bfC_2)$ would be a saddle connection starting and ending at the marked point that isn't $p$. This implies that $\Col_{\bfC_2}(\bfC_2)$ is a boundary saddle connection of $\ColTwo(\bfC_1)$, which is a contradiction. Therefore, $p$ must lie on the boundary of $\bfC_2$.\black 

However, $p$ must also lie on the boundary of $\bfC_1$ since otherwise $\ColOne(p)$ would be a free marked point in an Abelian double. However, no marked point lies on the boundaries of two disjoint cylinders that don't share boundary saddle connections.
\end{proof}


\begin{sublem}\label{SL:Case3}
When $\For(\cM')$ is a codimension one hyperelliptic locus, $\cM$ is as in (\ref{I:ExtraSymmetry}) of Theorem \ref{TP2}.
\end{sublem}
\begin{proof}
By Sublemma \ref{SL:FreePoints}, it remains to show that $\ColOneTwoX$ is disconnected. If $(X, \omega)/T_0$ contains a marked point $p$ (which would necessarily be free), then it cannot belong to the boundary of $\bfC_1/T_0$ since if it did then $\ColOneX/T_1$ would be formed by colliding the marked point with a singularity and hence would be a surface in a codimension one hyperelliptic locus and hence not a generic surface in $\cH_1$ as required.

Since the generic cylinders on a genus zero half-translation surface,  are simple cylinders and simple envelopes, it follows that $\bfC_1/T_0$ is either a simple cylinder or two simple cylinders with disjoint boundaries. Since $\For(\cM') \ne \cH'$ it follows that $\bfC_1/T_0$ is not a simple cylinder (gluing in a simple cylinder to a generic surface in a component of a stratum of Abelian differentials produces a component of a stratum of Abelian differentials). 

Thus, $\bfC_1/T_0$ consists of two simple cylinders with disjoint boundary. Since $\bfC_1/T_0$ does not contain a marked point in its boundary and since $\bfC_1$ consists of at most two cylinders, $\bfC_1$ is a pair of complex cylinders with disjoint boundary. The same is true of $\ColTwo(\bfC_1)$ and so by Masur-Zorich (Theorem \ref{T:MZ}), $\ColOneTwoX$ is disconnected. 
\end{proof}
The proof of Proposition \ref{PP2} is now complete.
\end{proof}

\subsection{When $\bfC_1$ consists of half-simple or complex cylinders}
The goal of this subsection is to show that Theorem \ref{TP2} holds when $\bfC_1$ consists of half-simple or complex cylinders. Lemmas \ref{L:FinalCountdown}, \ref{L:ForgettingMarkedPoints}, and \ref{L:P(S1)} will each be used once in a later subsection. 

Before we begin, we point out that when $\bfC_1$ consists of two adjacent complex cylinders, $\ColOneTwo(\bfC_1)$ might consist of 2, 3, or 4 saddle connections; see Figure \ref{F:CylinderType5HalfTwist}. 
\begin{figure}[h]\centering
\includegraphics[width=0.2\linewidth]{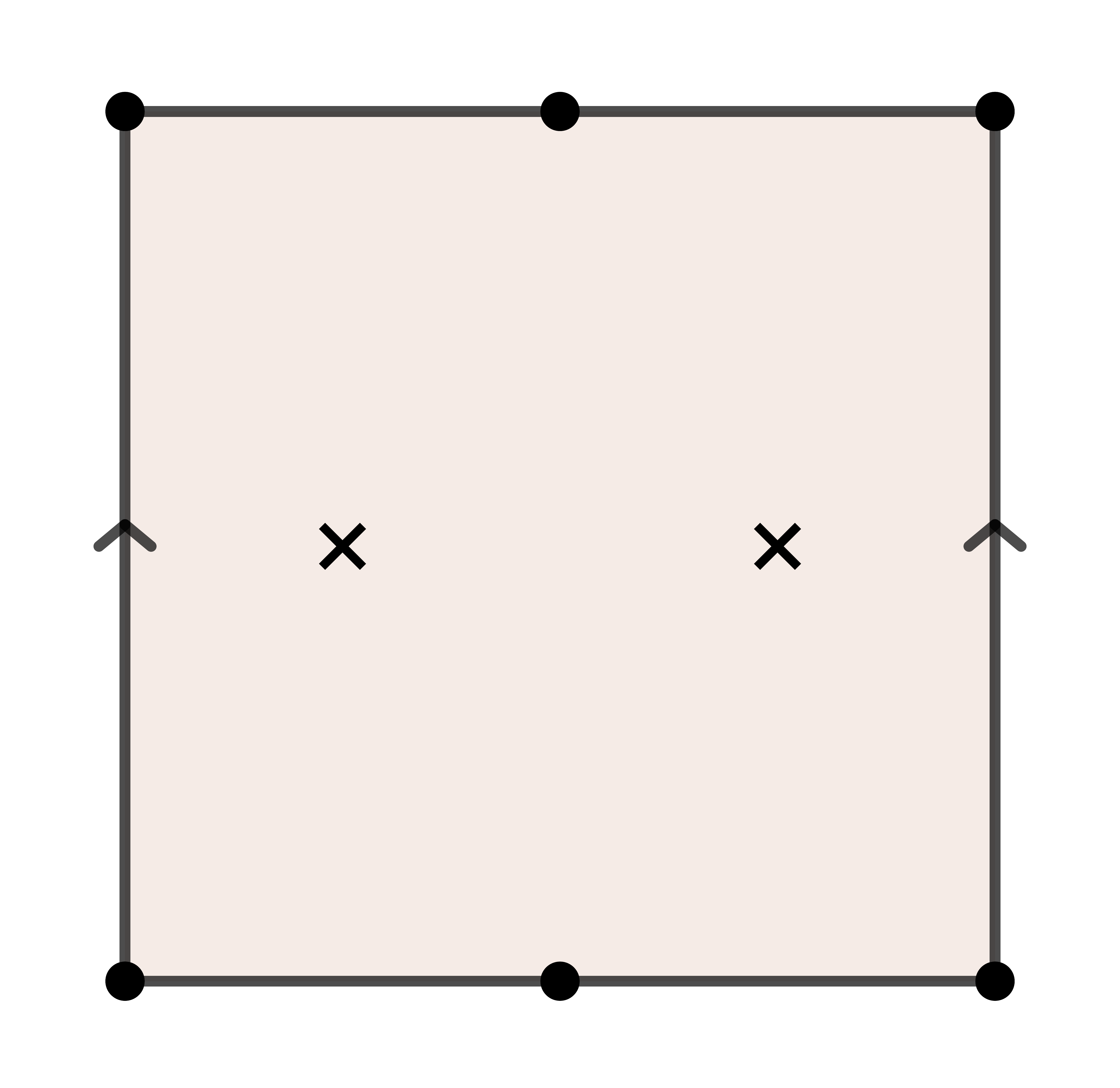}
\caption{A half Dehn twist applied to the bottom right possibility in Figure \ref{F:CylinderTypes} moves the two preimages of poles, which can be marked or unmarked.}
\label{F:CylinderType5HalfTwist}
\end{figure}

\begin{lem}\label{L:NoPairComplexCylinder}
If $\bfC_1$ is a pair of non-adjacent complex cylinders, then $\cM$ has extra symmetry. 
\end{lem}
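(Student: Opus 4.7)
By Definition \ref{D:ExtraSymmetry}, it suffices to produce a translation involution $T_2$ on $\ColTwoX$ that extends $T$ (i.e.\ $\Col(T_2)=T$) and satisfies $T_2(\ColTwo(\bfC_1))=\ColTwo(\bfC_1)$. My plan is to construct $T_2$ explicitly by extending $T$ over the two non-adjacent complex cylinders $D_1,D_2$ that constitute $\ColTwo(\bfC_1)$.

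The first step is to show that $J_2$ swaps $D_1$ and $D_2$. Since $\MTwo$ is a quadratic double, any cylinder deformation lying in $T\MTwo$ must respect the $J_2$-symmetry, so $J_2$ must permute $\{D_1,D_2\}$; if $J_2$ fixed each setwise, then $\{D_1\}$ alone would already be a subequivalence class, contradicting the minimality of $\ColTwo(\bfC_1)$. The second step is to show that $T$ preserves the set $\ColOneTwo(\bfC_1)=\{s_1,s_2,s_3,s_4\}$, where $\{s_1,s_2\}$ arises from collapsing $D_1$ and $\{s_3,s_4\}$ from $D_2$. By Lemma \ref{L:AllAboutThatBase}, $\ColOneTwoX/T$ lies in a hyperelliptic component, so by Lemma \ref{L:HypParallelism} every generic-parallelism class of saddle connections there has size at most two; pulling back through the degree-two cover $\ColOneTwoX\to\ColOneTwoX/T$ then bounds generic-parallelism classes on $\ColOneTwoX$ by four. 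Since the four $s_i$ are all $\MOneTwo$-parallel, $\ColOneTwo(\bfC_1)$ is a complete generic-parallelism class and is therefore $T$-invariant. Using that $T$ commutes with $J$ (Lemma \ref{L:AllAboutThatBase}) and that $J$ swaps $\{s_1,s_2\}\leftrightarrow\{s_3,s_4\}$, I conclude that $T$ is either $(s_1 s_2)(s_3 s_4)$ (case a: preserves the $D_i$-partition) or $(s_1 s_4)(s_2 s_3)$ (case b: swaps the partition). Finally, I will extend $T$ to $T_2$ on $\ColTwoX$: in case (a), by defining $T_2|_{D_i}$ to be the half-twist of $D_i$ (translation by half its circumference) that swaps its two collapsed saddle connections; in case (b), by defining $T_2|_{D_1\cup D_2}$ to be the unique translation isomorphism $D_1\leftrightarrow D_2$ extending $T$'s action on boundary saddle connections, which exists because $D_1,D_2$ are isometric. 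Both extensions give translation involutions of $\ColTwoX$ that preserve $\ColTwo(\bfC_1)$ setwise, yielding extra symmetry.

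The main obstacle is justifying that $T$ acts freely on $\{s_1,s_2,s_3,s_4\}$, which the case analysis tacitly assumes. A translation involution on a connected translation surface has no pointwise fixed points, but could a priori preserve some $s_i$ setwise by translating along $s_i$ by half its length; ruling this out will require a genericity argument (leveraging Assumption \ref{A:DenseComplicated}) to exclude such accidental coincidences. A secondary, largely mechanical check is that the extensions in the final step indeed glue into a bona fide translation involution of $\ColTwoX$, which reduces to verifying that the combinatorics of $T$'s boundary action on each $D_i$ is compatible with the gluing data encoded in the cylinders' widths and heights.
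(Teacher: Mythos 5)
Your argument is essentially the same as the paper's: reduce to the action of $T$ on the four saddle connections of $\ColOneTwo(\bfC_1)$, split into the two cases where $T$ does or does not respect the partition into collapse-pairs, and in each case exhibit the explicit extension of $T$ across $\ColTwo(\bfC_1)$ (the paper relegates these two gluing pictures to Figure \ref{F:ComplexCyl}). One correction: your ``main obstacle'' is not an obstacle at all. Lemma \ref{L:GenericParallelism} says $\{\mathrm{id}, J, T, JT\}$ acts transitively on the generic-parallelism class $\ColOneTwo(\bfC_1)$, which consists of four distinct saddle connections; a transitive action of a four-element group on a four-element set is automatically free, so $T$ cannot stabilize any $s_i$ and no appeal to Assumption \ref{A:DenseComplicated} is needed.
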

\begin{proof}
Since $\ColOneTwo(\bfC_1)$ consists of four generically parallel saddle connections, by Lemma \ref{L:GenericParallelism} there is a saddle connection $s$ such that $\ColOneTwo(\bfC_1) = \{s, Ts, Js, JTs\}$. Since $J_2$ exchanges the two cylinders in $\ColTwo(\bfC_1)$, there are only two possibilities for how the two complex cylinders in $\bfC_1$ could be glued into the four saddle connections of $\ColOneTwo(\bfC_1)$ to form $\ColTwoX$. These are illustrated in Figure \ref{F:ComplexCyl}, and we observe that in both cases $\cM$ has extra symmetry since the $T$ involution extends to $\ColTwoX$. (In the right subfigure, the extension of the $T$-involution exchanges the two cylinders in $\ColTwo(\bfC_1)$; in the left subfigure, it fixes each individual cylinder in $\ColTwo(\bfC_1)$.) 
\begin{figure}[h]\centering
\includegraphics[width=0.9\linewidth]{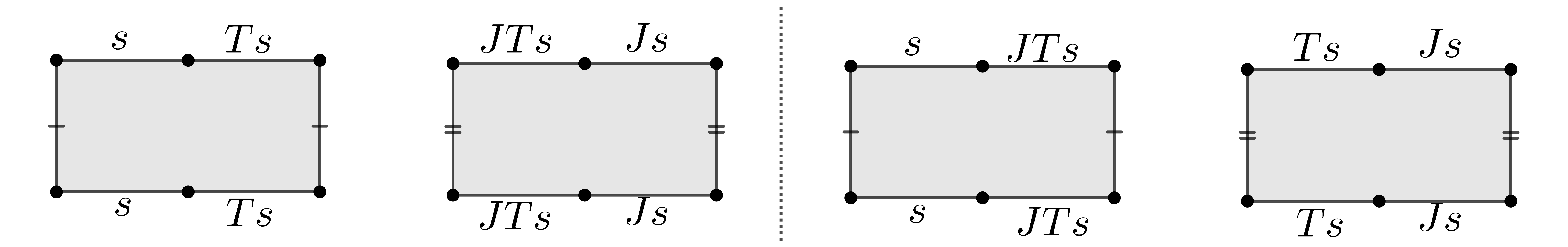}
\caption{Gluing in a pair of non-adjacent complex cylinders produces extra symmetry.}
\label{F:ComplexCyl}
\end{figure}
\end{proof}

\begin{lem}\label{L:FinalCountdown}
Suppose that  $\ColTwo(\bfC_1)$ consists of two non-adjacent cylinders each of which contains a boundary component that is made up of a single saddle connection joining a marked point to itself. Then either $\cM$ has extra symmetry or is as in Theorem \ref{TP2} (\ref{I:FinalPossibility}). 
\end{lem}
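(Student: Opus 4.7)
The plan is to show that if $\cM$ does not have extra symmetry, then it falls into case \eqref{I:FinalPossibility} of Theorem \ref{TP2}. I will accomplish this by producing a slope $-1$ irreducible pair of marked points on the boundary of $\bfC_1$ and appealing to Lemma \ref{L:MarkedPointsFTW}. First, I would verify that $\cM$ has rank at least two: since $\bfC_1$ and $\bfC_2$ are disjoint non-parallel subequivalence classes (they cannot be parallel, because collapsing $\bfC_2$ does not identify their cylinders), Lemma \ref{L:RankTest} immediately gives this.

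Next, I would classify the two marked points appearing on the boundary loops of the cylinders in $\ColTwo(\bfC_1)$. Each such marked point persists as a marked point on the boundary of $\bfC_1$ on $(X,\omega)$, since $\bfC_1$ and $\bfC_2$ are disjoint and share no boundary saddle connections. The key question is how the holonomy involution $J_2$ on $\ColTwoX$ acts on these two marked points. Since the two cylinders of $\ColTwo(\bfC_1)$ form a single subequivalence class and are non-adjacent, $J_2$ either swaps the two cylinders or fixes each of them individually; I would treat these two subcases separately, and a short analysis of the local picture near the loops (each boundary component is a single saddle connection at a marked point, so $J_2$ acts on each loop as a whole) should rule out mixed behavior.

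In the subcase where $J_2$ swaps the two cylinders, the two marked points on the boundary loops are swapped by $J_2$. Pulling back to $(X,\omega)$ and using that $\MTwo$ is a quadratic double (so $J_2$ is realized by a slope $-1$ deformation on $\ColTwo(X,\omega)$), I would show, with the help of Apisa-Wright (Lemma \ref{L:ApisaWright}) and Theorem \ref{T:StrataMarkedPoints}, that the two corresponding marked points on the boundary of $\bfC_1$ form a slope $-1$ irreducible pair. Lemma \ref{L:MarkedPointsFTW} then delivers the conclusion. In the opposite subcase, each marked point is fixed by $J_2$, forcing it to be a preimage of a pole in $\cQ_2$; since both cylinders of $\ColTwo(\bfC_1)$ are individually preserved by $J_2$ together with their boundary structure, $\ColTwo(\bfC_1)$ is $J_2$-invariant in a strong sense. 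I would then run the Diamond Lemma (Lemma \ref{L:diamond}) in reverse to extend the $J$ involution from $\ColOneTwoX$ to an involution $J_1$ on $\ColOneX$ with $\Col_{\ColOne(\bfC_2)}(J_1) = J$ and $J_1(\ColOne(\bfC_2)) = \ColOne(\bfC_2)$. This furnishes extra symmetry in the sense of Definition \ref{D:ExtraSymmetry}.

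The main obstacle will be bookkeeping the marked points that are preimages of poles (which are allowed to be marked or unmarked in a quadratic double), and verifying the ``preimage of the image'' condition needed to apply the Diamond Lemma when extending $J$. I expect to handle this by exploiting the hypothesis that the loops are at marked points (not just poles or unmarked singularities) together with the second claim of Lemma \ref{L:StructureOfS}, which was tailored for precisely this kind of $J$-invariance-to-extra-symmetry argument; a direct application of that claim, once the $J_2$-invariance of $\ColTwo(\bfC_1)$ has been upgraded to $J$-invariance of $\ColOneTwo(\bfC_2)$, should close the argument.
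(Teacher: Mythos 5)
Your overall strategy is the same as the paper's: assume no extra symmetry and produce a slope $-1$ irreducible pair of marked points on the boundary of $\bfC_1$, then invoke Lemma \ref{L:MarkedPointsFTW}. However, there are two genuine gaps.

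First, your dichotomy on how $J_2$ acts on the two cylinders of $\ColTwo(\bfC_1)$ has a spurious branch. Since $\ColTwo(\bfC_1)$ is a subequivalence class of generic cylinders on a surface in a quadratic double, and it consists of two non-adjacent cylinders, those two cylinders must be exchanged by $J_2$: they arise as the two-sheeted preimage of a single generic cylinder on the quotient quadratic differential, and a fixed-point-free involution of that preimage cannot preserve each sheet while giving a double cover. So the ``$J_2$ fixes each cylinder'' case does not occur, and the extra-symmetry conclusion you want to draw from it has nothing to attach to. (The paper makes essentially this observation inside its argument, where it notes that if the auxiliary cylinders bordering $\ColTwo(\bfC_1)$ came in a $J_2$-exchanged pair, then they would descend to a pair of cylinders on $\ColOneTwoX/T$ exchanged by the hyperelliptic involution, which is impossible since the hyperelliptic involution fixes every cylinder.)

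Second, and more seriously, in the remaining case you jump from ``the two marked points are exchanged by $J_2$ on $\ColTwoX$'' to ``they form a slope $-1$ irreducible pair on $(X,\omega)$'' by ``pulling back,'' citing Lemma \ref{L:ApisaWright} and Theorem \ref{T:StrataMarkedPoints}. Neither result establishes this. The delicate point is that gluing $\bfC_2$ into $\ColTwoX$ could in principle change the nature of the marked points (e.g.\ decouple them into free points or a free plus periodic pair), and one must show it does not. The paper's proof is structured entirely around this: it introduces the auxiliary cylinder(s) $\bfC$ bordering $\ColTwo(\bfC_1)$ at the marked-point loops, shows that $\bfC' = \Col_{\ColTwo(\bfC_1)}(\bfC)$ descends to a single simple cylinder $\bfC'/T$ on $\ColOneTwoX/T$ whose boundary is exactly $\ColOneTwo(\bfC_1)/T$, and then uses that $\ColOneTwo(\bfC_2)/T$ is a non-$\tau$-invariant saddle connection (by Lemma \ref{L:Cutting}, using the no-extra-symmetry hypothesis) to conclude it cannot lie in $\bfC'/T$ and hence that $\ColTwo(\bfC_2)$ is disjoint from both $\ColTwo(\bfC_1)$ and $\bfC$. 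Only then can one conclude the slope $-1$ pair survives on $(X,\omega)$. Your proposal does not contain this disjointness argument, and without it the persistence of the slope $-1$ pair is unjustified. Relatedly, the paper gets rank $\geq 2$ from Lemma \ref{L:QuickSummary} (which is exactly where the no-extra-symmetry hypothesis is consumed); your appeal to Lemma \ref{L:RankTest} needs the $\bfC_i$ to be non-parallel, which you assert but do not prove.
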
 
\begin{proof}
Suppose that $\cM$ has no extra symmetry. This implies by Lemma \ref{L:QuickSummary} that $\ColOneTwoX$ has no marked points. Therefore, for each cylinder in $\ColTwo(\bfC_1)$ there is a unique boundary component that is made up of a single saddle connection joining a marked point to itself, since if there were two then there would be a marked point on $\ColOneTwoX$. Let $\bfC$ be the (one or two) cylinders bordering $\ColTwo(\bfC_1)$ along these unique boundary components. Since $\ColTwo(\bfC_1)$ is $J_2$-invariant, so is $\bfC$.

Define $\bfC' := \Col_{\ColTwo(\bfC_1)}(\bfC)$. We will show that $\bfC'/T$ consists of a single cylinder. (Even though $\bfC'$ need not be $T$-invariant, we will use the notation $\bfC'/T$ to denote its image in the quotient.) If not, then $\bfC$ consists of two cylinders exchanged by $J_2$ (this follows since $J_2$ must exchange the unique boundary components of the cylinders in $\ColTwo(\bfC_1)$ and hence exchange the two cylinders bordering them). Therefore, the cylinders in $\bfC'$ consist of two cylinders exchanged by $J$. Thus, the hyperelliptic involution exchanges the two cylinders in $\bfC'/T$, which contradicts the fact that the hyperelliptic involution fixes all cylinders. 

Therefore, $\bfC'/T$ is a single cylinder with one component of its boundary consisting of a single saddle connection. Since $\bfC'/T$ and $\ColOneTwo(\bfC_1)/T$ are fixed by the hyperelliptic involution (which exchanges the two boundaries of $\bfC'/T$), it follows that the boundary saddle connections of $\bfC'/T$ are exactly the saddle connections in $\ColOneTwo(\bfC_1)/T$. Therefore, $\bfC'/T$ consists of one simple cylinder. 

Since, by Lemma \ref{L:Cutting}, $\ColOneTwo(\bfC_2)/T$ consists of a single saddle connection that is not fixed by the hyperelliptic involution and that is disjoint from $\ColOneTwo(\bfC_1)/T$ it follows that $\ColOneTwo(\bfC_2)/T$ does not intersect $\bfC'/T$. To see this, notice that any saddle connection contained in a simple cylinder must be fixed by the hyperelliptic involution. 

Since $\ColTwo(\bfC_2)$ does not intersect $\ColTwo(\bfC_1)$ or $\bfC$, it follows that $\bfC_1$ contains a slope $-1$ irreducible pair of marked points in its boundary, because we have assumed that $\ColTwo(\bfC_1)$ has such a pair, and since $(X,\omega)$ is formed by gluing the cylinders in $\bfC_2$ into $\ColTwo(\bfC_2)$. Noting that $\cM$ has rank at least two since it has no extra symmetry (by Lemma \ref{L:QuickSummary}), Lemma \ref{L:MarkedPointsFTW} implies that $\cM$ is as in Theorem \ref{TP2} (\ref{I:FinalPossibility}).
\end{proof}

\begin{lem}\label{L:NoHalfSimpleCylinders}
If $\bfC_1$ is a pair of non-adjacent half-simple cylinders, then either $\cM$ has extra symmetry or is as in Theorem \ref{TP2} (\ref{I:FinalPossibility}).
\end{lem}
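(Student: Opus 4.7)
The plan is to reduce to Lemma \ref{L:FinalCountdown} by showing that the single-saddle-connection boundary of each half-simple cylinder in $\ColTwo(\bfC_1)$ is a loop at a marked point, and then invoking that lemma's conclusion. Assume throughout that $\cM$ has no extra symmetry; by Lemma \ref{L:QuickSummary}, $\cM$ has rank at least two, and $\ColOneTwoX$ has no marked points unless $\ColOneTwoX/T\in\cH(0,0)$ (which is a small case that can be analyzed directly). The main object of study is $H':=\ColTwo(\bfC_1)/J_2$, a generic half-simple cylinder on the generic quadratic differential $\ColTwoX/J_2\in\cQ_2$, with single-sc boundary $a'$ and two-sc boundary $\{b_1', b_2'\}$.

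First I would observe that because $\ColTwo(\bfC_1)$ is a pair of \emph{non-adjacent} half-simple cylinders, each of $a',b_1',b_2'$ lifts to two disjoint saddle connections on $\ColTwoX$, so none of them is fixed by the holonomy involution $J_2$; in particular, their endpoints on $\ColTwoX/J_2$ are not branch points of the double cover. Next I would apply Theorem \ref{T:MZ}~\eqref{T:MZ:GenericAdjacency} to $H'$. A quick case analysis rules out the possibility that $a'$ borders another generic cylinder while not joining a marked point to itself: the theorem would then force either $H'$ itself to be simple (it is not) or the boundary of $H'$ containing $a'$ to contain two saddle connections (it does not). So either $a'$ is a loop at a marked point, or $a'$ is shared with no other cylinder on its other side.

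The decisive step will be ruling out the second alternative. I would attempt this by using the Collapsing Lemma (Corollary \ref{C:Collapsing2}) to collapse simple cylinders away from the boundary saddle connections in $\ColOneTwo(\bfC_1\cup\bfC_2)/T$, reducing the diamond to a minimal one in which $H'$ together with its neighboring cylinders occupies almost all of $\ColTwoX/J_2$. In this reduced setup the hypothesis that the other side of $a'$ is not a cylinder places very strong numerical restrictions on the stratum $\cQ_2$, and in each of the finitely many surviving configurations one can show by directly examining the resulting covering structure --- and in particular by using Lemma \ref{L:StructureOfS} to produce an extension of $J$ to $\ColOneX$ --- that $\cM$ actually has extra symmetry, contradicting our standing assumption. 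This forces $a'$ to be a loop at a marked point.

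Once $a'$ is known to be a loop at a marked point, its preimage on $\ColTwoX$ consists of two single saddle connections, each forming the single-sc boundary of one of the two half-simple cylinders in $\ColTwo(\bfC_1)$ and each joining a marked point to itself. This is exactly the hypothesis of Lemma \ref{L:FinalCountdown}, whose conclusion gives either extra symmetry (contradicting our assumption) or that $\cM$ is as in Theorem \ref{TP2}~\eqref{I:FinalPossibility}, completing the proof. The main obstacle will be the case analysis in the second alternative above, since the non-cylinder region on the far side of $a'$ can have a variety of shapes and must be excluded uniformly; this is where the Collapsing Lemma together with the structural information about $\MOne$ being an Abelian double and $\MTwo$ being a quadratic double is indispensable.
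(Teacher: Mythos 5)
Your plan is correct up to a point, but the ``decisive step'' --- ruling out the possibility that the single-saddle-connection boundary $a'$ of $H'$ is neither a loop at a marked point nor shared with another cylinder --- is merely described, not proved. That possibility is very much alive for a generic saddle connection, and the sketch you give (collapse to a minimal diamond, enumerate cases, produce an extension of $J$) is not a proof; you yourself flag it as ``the main obstacle.'' The dichotomy from Theorem~\ref{T:MZ}~\eqref{T:MZ:GenericAdjacency} distinguishes by whether $a'$ borders a cylinder, which is not a tractable dichotomy: the non-cylinder alternative does not immediately constrain $\cQ_2$ or lead to extra symmetry, and I don't see how the suggested numerology gets off the ground.

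The paper's proof avoids this entirely by working one level further down the diamond, on $\ColOneTwoX$, where there is a finite symmetry group to exploit. It takes the two saddle connections $s,s'$ of $\ColOneTwo(\bfC_1)$ coming from one half-simple cylinder in $\bfC_1$ and observes they meet at a point $p$ with angle $\pi$ on one side. Since $s,s'$ are $\MOneTwo$-generically parallel, Lemma~\ref{L:GenericParallelism} gives $s'\in\{Js,JTs,Ts\}$. The cases $s'=Js$ and $s'=JTs$ force $p$ to be a fixed point of an involution acting by $-I$, and together with the angle-$\pi$ condition this makes $p$ a marked point, contradicting Lemma~\ref{L:QuickSummary}. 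Hence $s'=Ts$; then the angle-$\pi$ condition holds at both $p$ and $Tp$, so $\{s,s'\}$ is the full boundary of a cylinder, and re-inserting the half-simple cylinder into $\ColTwoX$ necessarily creates the marked point needed for Lemma~\ref{L:FinalCountdown}. The key move you're missing is precisely this reduction to the trichotomy $s'\in\{Js,JTs,Ts\}$, which replaces your open-ended case analysis with three explicit alternatives that each resolve in one or two lines.
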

\begin{proof}
Suppose that $\cM$ does not have extra symmetry. Let $s$ and $s'$ be two saddle connections in $\ColOneTwo(\bfC_1)$ corresponding to the boundary of one of the cylinders in $\bfC_1$. These saddle connections must meet at a point $p$ such that the angle between them is $\pi$ (either measured clockwise or counterclockwise but not necessarily both; without loss of generality suppose that the angle measured clockwise is $\pi$). Since $s$ and $s'$ are $\MOneTwo$-generically parallel, by Lemma \ref{L:GenericParallelism} it follows that $s' \in \{Js, JTs, Ts\}$. If $s' = Js$ or $s'= JTs$, then $p$ is a fixed point of $J$ or $JT$ respectively. Since $J$ and $JT$ act by $-I$ in a neighborhood of $p$, and since the clockwise angle between $s$ and $s'$ is $\pi$, it would follow that $p$ is a marked point, contradicting Lemma \ref{L:QuickSummary}.

Therefore, $s = Ts'$. This implies that $s$ and $s'$ also meet at $Tp$ and that the angle between them there measured clockwise is also $\pi$. This implies that $s$ and $s'$ form the bottom boundary of a cylinder. Passing from $\ColOneTwoX$ to $\ColTwoX$, which involves cutting along $s$ and $s'$ and gluing in a half-simple cylinder, necessarily produces marked points on $\ColTwoX$ (see Figure \ref{F:HalfSimpleMarkedPoint}) and we are done by Lemma \ref{L:FinalCountdown}.
\begin{figure}[h]\centering
\includegraphics[width=.6\linewidth]{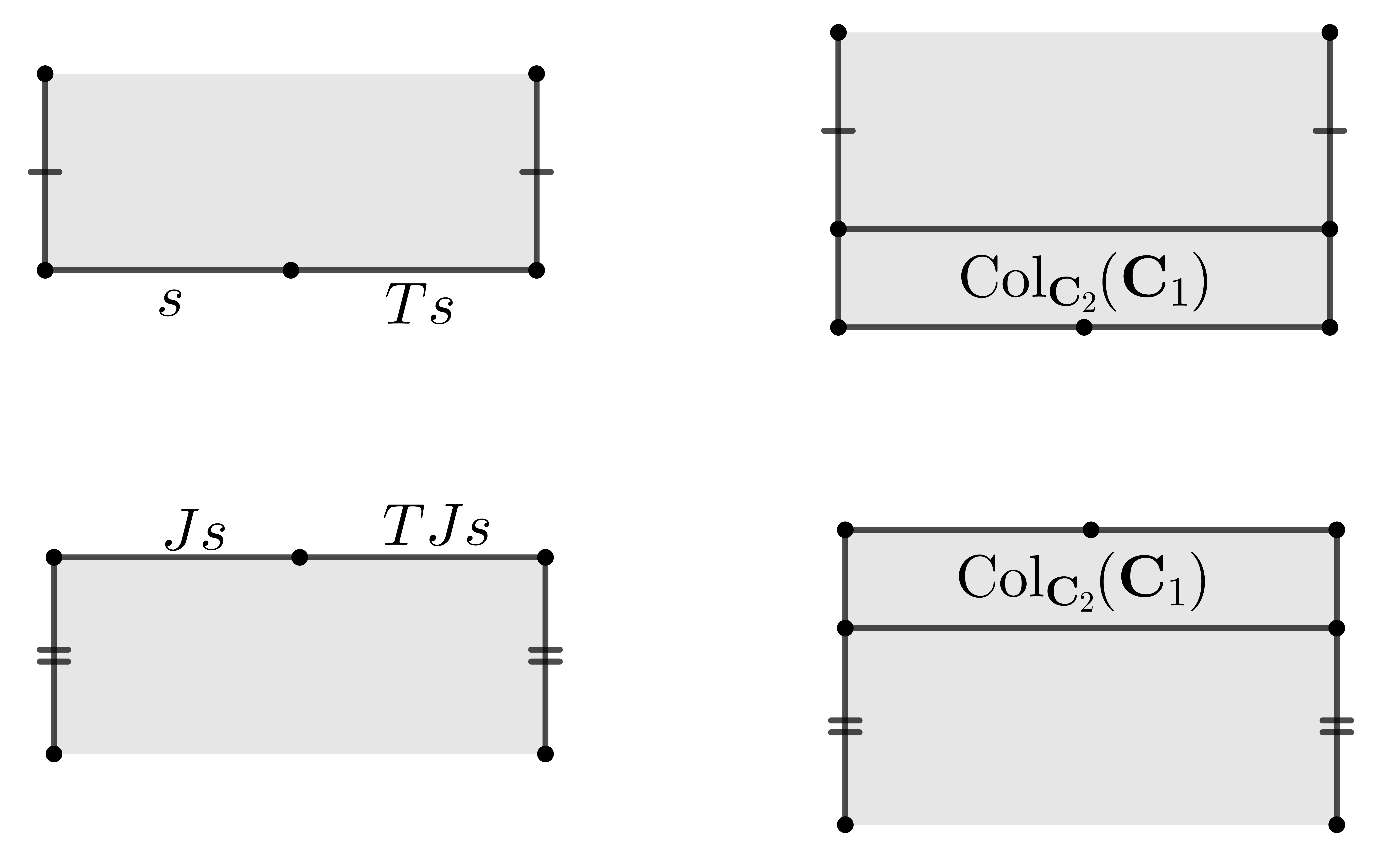}
\caption{Gluing in a half-simple cylinder to the boundary of a cylinder produces marked points.}
\label{F:HalfSimpleMarkedPoint}
\end{figure}
\end{proof}

\begin{lem}\label{L:ForgettingMarkedPoints}
If the cylinders in $\bfC_1$ are adjacent, then the zeros on their common boundary are a collection $P$ of marked points. 

Let $\For'(X, \omega)$ and $\For'(\bfC_1)$ denote $(X, \omega)$ and $\bfC_1$ respectively  once the marked points in $P$ are forgotten. Let $\cM'$ denote its orbit closure. The collection of marked points $P$ are periodic on $\For'(X, \omega)$. 

If $\cM$ has no extra symmetry, then collapsing $\bfC_1$ causes $P$ to merge with other zeros or marked points and  $\left(\For'(X, \omega), \cM', \For'(\bfC_1), \bfC_2 \right)$ is a generic diamond with $\cM'_{\For'(\bfC_1)}$ and $\cM'_{\bfC_2}$ an Abelian (resp. quadratic) double.
\end{lem}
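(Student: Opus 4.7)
The first task is to identify the points of $P$ as marked points of cone angle $2\pi$, not higher-order zeros. Since $\bfC_2$ is disjoint from $\bfC_1$, the cylinders in $\bfC_1$ descend to a generic subequivalence class on $\ColTwoX$, and under $J_2$ to a single generic cylinder on $\ColTwoX/J_2 \in \cQ_2$. By Masur--Zorich (Theorem~\ref{T:MZ}) and Figure~\ref{F:CylinderTypes}, the only configurations on a quadratic differential whose preimages on the holonomy double cover contain adjacent cylinders are simple or complex envelopes with marked preimages of the pole; these preimages are cone points of angle $2\pi$ fixed by $J_2$, giving a $J_2$-invariant collection of marked points that descends to $P$ on the common boundary of $\bfC_1$ in $(X, \omega)$. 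After forgetting $P$, the positions of its points on $\For'(X, \omega)$ are determined by the cylinder geometry of $\For'(\bfC_1)$ together with the envelope structure appearing in the $J_2$-quotient of $\For'(\ColTwoX)$; since these structures vary continuously with small deformations in $\cM'$, we have $\dim \cM' = \dim \cM$ and $P$ is a collection of periodic marked points on $\For'(X,\omega)$.

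The main technical step is the forward implication: when $\cM$ has no extra symmetry, the collapse of $\For'(\bfC_1)$ must merge $P$ with other zeros or marked points. I would argue by contradiction. Suppose $P$ does not merge; then $\Col_{\For'(\bfC_1)}(\For'(X,\omega))$ equals $\For'(\ColOneX)$, i.e.\ $\ColOneX$ with $\Col_{\bfC_1}(P)$ forgotten, and $\cM'_{\For'(\bfC_1)}$ equals the Abelian double $\For'(\MOne)$. Consider $\For'(\ColTwoX)$, obtained by gluing the merged cylinder $\For'(\ColTwo(\bfC_1))$ into $\ColOneTwoX$ along $\ColOneTwo(\bfC_1)$. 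Because $P$ does not merge, this cylinder has no interior singularity that would obstruct extending a translation involution across it. Hence the action of $T$ on $\ColOneTwo(\bfC_1)$ extends uniquely to a translation involution $T_2$ on $\For(\ColTwoX)$ with $T_2(\ColTwo(\bfC_1)) = \ColTwo(\bfC_1)$ and $\Col_{\ColTwo(\bfC_1)}(T_2) = T$. This is extra symmetry of type 1 in Definition~\ref{D:ExtraSymmetry}, contradicting the hypothesis.

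Finally, the generic diamond structure of $(\For'(X,\omega), \cM', \For'(\bfC_1), \bfC_2)$ and the double structures are verified routinely. Disjointness and non-adjacency of $\For'(\bfC_1)$ and $\bfC_2$ descend from the corresponding property for $\bfC_1, \bfC_2$, and each remains a subequivalence class because $\dim \cM' = \dim \cM$. The codimension-one degeneration conditions survive because the generic diamond for $\cM$ satisfies them and the merging of $P$ with other singularities does not increase codimensions unexpectedly. The Abelian double structure of $\cM'_{\For'(\bfC_1)}$ follows from the $T_1$-invariance of $\Col_{\bfC_1}(P)$ on $\ColOneX$, which makes the merging $T_1$-equivariant and preserves the translation double cover; likewise the $J_2$-fixedness of the points of $P$ on $\ColTwoX$ preserves the quadratic double structure of $\cM'_{\bfC_2}$. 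The main obstacle will be the construction of $T_2$ in the merging argument: one must carefully verify that the non-merging hypothesis truly removes every singularity-based obstruction to extending $T$ across $\For'(\ColTwo(\bfC_1))$ as a genuine translation involution with derivative $+I$, and that this extension is globally consistent on $\For(\ColTwoX)$.
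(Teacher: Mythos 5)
Your argument for the crucial third claim (no extra symmetry implies $P$ merges) is genuinely different from the paper's and has a real gap. You try to show the contrapositive by directly constructing a translation involution $T_2$ on $\For(\ColTwoX)$ extending $T$, and you gesture at the complex cylinder $\For'(\ColTwo(\bfC_1))$ having "no interior singularity that would obstruct" the extension. But the obstruction isn't interior singularities — it's whether the set of boundary saddle connections $\ColOneTwo(\bfC_1)$ is $T$-invariant, so that the gluing identifications are $T$-equivariant. That is not automatic from $P$ failing to merge. In fact, since $\ColTwo(\bfC_1)$ is $J_2$-invariant, $\ColOneTwo(\bfC_1)$ is $J$-invariant, hence of the form $\{s, Js\}$; and when $\cM$ has no extra symmetry, Lemma~\ref{L:P(S1)} forces $\ColOneTwo(\bfC_1)/T$ to still have two saddle connections, i.e.\ $Ts \notin \{s, Js\}$, so $\ColOneTwo(\bfC_1)$ is precisely \emph{not} $T$-invariant. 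You flag this as "the main obstacle" at the end, but it is not a loose end to tighten — the construction of $T_2$ as you envision it cannot go through, so your contrapositive derivation collapses.

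The paper's actual argument is both simpler and structurally different: it does not attempt to construct extra symmetry. Instead it observes that if $P$ does not merge during the collapse, then $\ColOneTwo(P)$ survives as a nonempty collection of marked points on $\ColOneTwoX$ (since $\bfC_2$ is disjoint from $P$), which directly contradicts Lemma~\ref{L:QuickSummary} (which says that, in the absence of extra symmetry, $\ColOneTwoX$ carries no marked points). This is the missing key idea. Your first two claims are essentially in the spirit of the paper, though your periodicity argument ("these structures vary continuously") is vaguer than the paper's, which pins down $P$ via the observation that the two cylinders in $\bfC_1$ have $\cM$-generically equal heights (inherited from the corresponding fact in $\MTwo$), so forgetting $P$ does not change the dimension of the orbit closure.
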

\begin{rem}
Notice that $\For'(X, \omega)$ may still have marked points. The statement that the collection of points $P$ is periodic on $\For'(X, \omega)$ means that $\cM$ and $\cM'$ have the same dimension. For instance, if $\For'(X, \omega)$ is a generic surface in a hyperelliptic component with a point $p$ marked, then the image of $p$ under the hyperelliptic involution would be a periodic point for $\For'(X, \omega)$.
\end{rem}
\begin{proof}
By Masur-Zorich (Theorem \ref{T:MZ}), if the cylinders in $\ColTwo(\bfC_1)$ are adjacent, then the zeros on their common boundary are a collection of at most two preimages of poles. Since $\ColTwo(\bfC_2)$ is disjoint from $\ColTwo(\bfC_1)$, it follows that the zeros on the common boundaries of the cylinders in $\bfC_1$ remain a collection $P$ of marked points on $(X,\omega)$. Since the two cylinders in $\ColTwo(\bfC_1)$ have $\MTwo$-generically identical heights, the cylinders in $\bfC_1$ have $\cM$-generically identical heights, from which it follows that the marked points in $P$ are periodic on $\For'(X, \omega)$. 

Suppose now that $\cM$ has no extra symmetry. It is easy to see $\ColTwo\left(\For'(X, \omega)\right)$ remains a generic surface in a quadratic double (the preimages of poles that were marked before are simply not marked now). We will now show that $\Col_{\For'(\bfC_1)}\left(\For'(X, \omega)\right) = \ColOneX$. It suffices to show that all the points in $\ColOne(P)$ are marked points or singularities on $\ColOne\left(\For'(X, \omega)\right)$. If this is not the case, then $\ColOneTwo(P)$ is a collection of marked points on $\ColOneTwoX$, which contradicts Lemma \ref{L:QuickSummary}.
\end{proof}

\begin{lem}\label{L:P(S1)}
If $\cM$ has no extra symmetry and $\ColOneTwo(\bfC_1)/T$ is a single saddle connection then $\ColOneTwo(\bfC_1)$ is also a single saddle connection. 
\end{lem}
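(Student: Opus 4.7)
The plan is to prove the contrapositive: assume $\ColOneTwo(\bfC_1)/T$ is a single saddle connection but $\ColOneTwo(\bfC_1)$ consists of more than one saddle connection, and deduce that $\cM$ has extra symmetry, contradicting the hypothesis. First, observe that $\ColOneTwo(\bfC_1)$ is $T$-invariant, because $\ColOne(\bfC_1)$ is $T_1$-invariant in the Abelian double $\ColOneX$ and $T = \Col_{\ColOne(\bfC_2)}(T_1)$; similarly it is $J$-invariant because the subequivalence class of generic cylinders $\ColTwo(\bfC_1)$ is $J_2$-invariant in the quadratic double $\ColTwoX$. Therefore if $|\ColOneTwo(\bfC_1)/T|=1$ and $|\ColOneTwo(\bfC_1)|>1$, we must have $\ColOneTwo(\bfC_1) = \{s, Ts\}$ for some saddle connection $s\ne Ts$. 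Since $J$ and $T$ commute (Lemma \ref{L:AllAboutThatBase}), $Js \in \{s, Ts\}$, and we split into the two subcases $Js = s$ and $Js = Ts$.

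Second, construct an involution $T_2$ on $\ColTwoX$ extending $T$ in the sense of Definition \ref{D:ExtraSymmetry}. On $\ColTwoX - \ColTwo(\bfC_1)$, which is naturally identified with $\ColOneTwoX - \ColOneTwo(\bfC_1)$, set $T_2 := T$. To extend $T_2$ over the cylinders $\ColTwo(\bfC_1)$, use that these cylinders form a subequivalence class in the quadratic double, so they are all of equal height and $J_2$-paired; moreover, the constraint $|\ColOneTwo(\bfC_1)| = 2$ pins down the combinatorics of the boundary, partitioning the boundary saddle connections of $\ColTwo(\bfC_1)$ into two sets that collapse to $s$ and $Ts$ respectively. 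There is then a unique isometric involution of $\ColTwo(\bfC_1)$ that swaps these two boundary sets and matches the orientation and identification data coming from $T$'s action on $\{s, Ts\}$: in the subcase $Js = Ts$, this involution exchanges the cylinders of $\ColTwo(\bfC_1)$ in the same way as $J_2$ does; in the subcase $Js = s$, it preserves each cylinder setwise while exchanging its top and bottom boundaries via an appropriate half-translation.

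Finally, verify that the resulting $T_2$ is a globally well-defined affine involution of $\ColTwoX$ with $T_2(\ColTwo(\bfC_1)) = \ColTwo(\bfC_1)$ and $\Col_{\ColTwo(\bfC_1)}(T_2) = T$. This is precisely the first clause of Definition \ref{D:ExtraSymmetry}, hence $\cM$ has extra symmetry, contradicting the hypothesis. The main obstacle is the case analysis in the second step: one must run through each Masur--Zorich type (Theorem \ref{T:MZ}) for $\ColTwo(\bfC_1)/J_2$ and its lift to the Abelian double, and check in each configuration that the constraint $|\ColOneTwo(\bfC_1)| = 2$ forces the gluing pattern of $\ColTwo(\bfC_1)$ to admit the desired isometric swap—and that this swap is consistent with the two possibilities for $Js$ identified in the first step.
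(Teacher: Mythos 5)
Your overall strategy is the same as the paper's (prove the contrapositive by constructing an extension $T_2$ witnessing extra symmetry), but the proposal has a genuine error and a genuine gap.

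The error is in your first observation. You claim that $\ColOne(\bfC_1)$ is $T_1$-invariant in $\ColOneX$ and deduce from this that $\ColOneTwo(\bfC_1)$ is $T$-invariant. This is false in general, and the paper explicitly flags it: the parenthetical in the actual proof reads ``even though $\ColOneTwo(\bfC_1)$ is not assumed to be $T$-invariant, we use $\ColOneTwo(\bfC_1)/T$ to denote its image.'' Indeed, a translation involution cannot preserve a single saddle connection (the derivative is the identity, so it can neither reverse a saddle connection nor fix it pointwise without being the identity globally), so whenever $\ColOne(\bfC_1)$ is a single saddle connection it cannot be $T_1$-invariant. The set $\ColOne(\bfC_2)$ is $T_1$-invariant because it is a subequivalence class of cylinders, but $\ColOne(\bfC_1)$ is a set of saddle connections that arose from the other degeneration and carries no such automatic invariance. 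The correct route to the conclusion you want is trivial and does not need this false lemma: if $\ColOneTwo(\bfC_1)$ has more than one element and all of them have the same image under the quotient by the involution $T$, then $\ColOneTwo(\bfC_1)$ must be a single $T$-orbit $\{s, Ts\}$ with $s \neq Ts$.

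The gap is in the second step. You assert that the constraint $|\ColOneTwo(\bfC_1)| = 2$ ``pins down the combinatorics'' and that there is then a ``unique isometric involution'' of $\ColTwo(\bfC_1)$ doing what you want, but you never establish that $\ColTwo(\bfC_1)$ has a shape that admits such an involution. The paper first invokes Lemma~\ref{L:QuickSummary} (which crucially uses the hypothesis that $\cM$ has no extra symmetry) to conclude that $\ColOneTwoX$ has no marked points, and only then applies Masur--Zorich (Theorem~\ref{T:MZ}, Figure~\ref{F:CylinderTypes}) to conclude that $\ColTwo(\bfC_1)$ is, up to forgetting preimages of poles, either a pair of simple cylinders or a single complex cylinder. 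It is precisely this restricted shape that makes it visibly possible to extend $T$ over the glued-in cylinders: a pair of simple cylinders glued into $s$ and $Ts$ can be swapped, and a complex cylinder admits a translation by half its circumference. Without ruling out marked points on $\ColOneTwoX$ first, your classification is incomplete (the paper notes that with marked points $\bfC_1$ could be a single simple cylinder split by two interior marked points, a case your sketch does not address). Also, your description of the extension in the subcase $Js = s$ --- ``exchanging its top and bottom boundaries via an appropriate half-translation'' --- cannot be right as stated: $T_2$ must be a translation (derivative $+\mathrm{Id}$), so it cannot exchange top and bottom boundaries of a cylinder; the correct model is the translation by half the circumference along the core, which exchanges the two saddle connections within each boundary component.
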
 
\begin{proof}
Suppose that $\ColOneTwo(\bfC_1)/T$ is a single saddle connection and that $\ColOneTwo(\bfC_1)$ has more than one saddle connection. (Again, even though $\ColOneTwo(\bfC_1)$ is not assumed to be $T$-invariant, we use $\ColOneTwo(\bfC_1)/T$ to denote its image in $\ColOneTwoX$.) It follows that $\ColOneTwo(\bfC_1)$ consists of two saddle connections exchanged by $T$. 

By Lemma \ref{L:QuickSummary}, $\ColOneTwoX$ does not contain marked points. Therefore, by Theorem \ref{T:MZ} (see Figure \ref{F:CylinderTypes}), $\bfC_1$ consists of either a pair of simple cylinders or a single complex cylinder with at most $n$ preimages of poles marked along its core curve where $n \in \{0, 1, 2\}$. (If $\ColOneTwoX$ had had marked points, it might have also been the case that $\bfC_1$ was a single simple cylinder with a \red pair of marked points \black in its interior.) 

Since $\ColTwoX$ is formed by gluing in $\ColTwo(\bfC_1)$ - which is, up to forgetting marked points, a pair of simple cylinders or a single complex cylinder - to the pair of saddle connections comprising $\ColOneTwo(\bfC_2)$, which are exchanged by $T$; it follows that there is an involution $T_2$ on $\For(\ColTwoX)$ such that $T_2(\ColTwo(\bfC_1)) = \ColTwo(\bfC_1)$ and $\Col_{\ColTwo(\bfC_1)}(T_2) = T$. This implies that $\cM$ has extra symmetry.
\end{proof}

\begin{prop}\label{P:NoSingleComplexCylinder}
If $\bfC_1$ consists of half-simple or complex cylinders, then Theorem \ref{TP2} holds. 
\end{prop}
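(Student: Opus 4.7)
The plan is to reduce to Proposition \ref{PP2} (which handles all cases where $\cM$ has extra symmetry) or directly to case (\ref{I:FinalPossibility}) of Theorem \ref{TP2}. So I would assume throughout that $\cM$ has no extra symmetry; Lemma \ref{L:QuickSummary} then gives that $\cM$ has rank at least two, $\ColOneTwoX$ has no marked points, and $\ColOneTwo(\bfC_2)/T$ is a single saddle connection not fixed by $\tau$. The argument then splits on whether the cylinders of $\bfC_1$ are adjacent and on how many cylinders $\bfC_1$ contains.

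First I would dispose of non-adjacent pairs. If $\bfC_1$ is a non-adjacent pair of complex cylinders, Lemma \ref{L:NoPairComplexCylinder} forces extra symmetry, contradicting our assumption. If $\bfC_1$ is a non-adjacent pair of half-simple cylinders, Lemma \ref{L:NoHalfSimpleCylinders} gives either extra symmetry or case (\ref{I:FinalPossibility}). A non-adjacent pair mixing one half-simple and one complex cylinder cannot occur because $\ColTwo(\bfC_1)$ is (after quotienting by $J_2$) one of the five Masur-Zorich types listed in Theorem \ref{T:MZ}, and its lift on $\ColTwoX$ must have all cylinders of the same type.

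Next, for adjacent pairs, I would apply Lemma \ref{L:ForgettingMarkedPoints}: the marked points on the common boundary are periodic, and after forgetting them we obtain a generic diamond $(\For'(X,\omega),\cM',\For'(\bfC_1),\bfC_2)$ in which $\For'(\bfC_1)$ is a single cylinder and $\cM'_{\For'(\bfC_1)}$, $\cM'_{\bfC_2}$ remain an Abelian and quadratic double respectively. This reduces the adjacent case to the single-cylinder case, which I would then treat uniformly. The residual situation where $\ColTwo(\bfC_1)$ has a boundary component that is a single saddle connection joining a marked point to itself is exactly the hypothesis of Lemma \ref{L:FinalCountdown}, and would be handled there.

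It remains to treat the case when $\bfC_1$ (possibly after forgetting periodic points) is a single complex or half-simple cylinder. Then $\ColOneTwo(\bfC_1)$ consists of two generically parallel saddle connections. By Lemma \ref{L:P(S1)}, since $\cM$ has no extra symmetry, $\ColOneTwo(\bfC_1)/T$ cannot be a single saddle connection, so $T$ fixes each of the two saddle connections of $\ColOneTwo(\bfC_1)$ individually; by Lemma \ref{L:GenericParallelism} they must therefore be exchanged by $J$. Since $\bfC_1$ is a single cylinder glued into the two $T$-fixed, $J$-exchanged saddle connections $\{s,Js\}$ of $\ColOneTwo(\bfC_1)$, I would then argue that the $T$-action on $\ColOneTwoX$ extends over $\ColTwo(\bfC_1)$ to an involution $T_2$ on $\For(\ColTwoX)$ with $T_2(\ColTwo(\bfC_1))=\ColTwo(\bfC_1)$ and $\Col_{\ColTwo(\bfC_1)}(T_2)=T$, producing extra symmetry—a contradiction. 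The hardest part will be this final endgame: controlling the $J$- and $T$-actions on the boundary of a single half-simple cylinder, whose two boundary components have different combinatorics, and in particular verifying that the extension $T_2$ is well defined at the (possibly marked) singularity shared by two boundary saddle connections of $\ColTwo(\bfC_1)$, which is where the auxiliary Lemma \ref{L:FinalCountdown} becomes indispensable.
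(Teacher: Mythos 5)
Your initial reduction is sound and matches the paper closely: Lemma \ref{L:NoPairComplexCylinder} and Lemma \ref{L:NoHalfSimpleCylinders} dispatch the non-adjacent cases, Lemma \ref{L:ForgettingMarkedPoints} handles the adjacent ones, and your observation that mixed non-adjacent pairs cannot arise is correct (the preimages of a single Masur--Zorich type are uniform). After forgetting periodic points, $\For'(\ColTwo(\bfC_1))$ is always a single \emph{complex} cylinder --- not a half-simple one as your final case analysis suggests --- but this is a minor slip.

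The fatal gap is in your endgame. You infer from Lemma \ref{L:P(S1)} that ``$T$ fixes each of the two saddle connections of $\ColOneTwo(\bfC_1)$ individually.'' That is not what the lemma gives and is in fact impossible. Lemma \ref{L:P(S1)} (contrapositive) says that $T$ does not \emph{swap} the two saddle connections $s, s'$ of $\ColOneTwo(\bfC_1)$; it does not say $T$ stabilizes them. Indeed $T$ cannot: a nontrivial translation involution (derivative $+\mathrm{Id}$) that preserves a saddle connection set-wise must either fix it pointwise (forcing $T=\mathrm{id}$) or reverse its orientation (impossible for an orientation-preserving map), and if $\ColOneTwoX$ is disconnected then $T$ swaps components. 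The correct conclusion is that $Ts$ and $TJs$ are two \emph{new} generically parallel saddle connections lying outside $\ColOneTwo(\bfC_1)$, so $\ColOneTwo(\bfC_1)$ is $J$-invariant but \emph{not} $T$-invariant (this is precisely how Lemma \ref{L:Cutting} sets things up). Consequently your proposed extension of $T$ to an involution $T_2$ on $\ColTwoX$ with $T_2(\ColTwo(\bfC_1))=\ColTwo(\bfC_1)$ has no chance of being well-defined: $T$ does not even preserve the boundary data of the glued-in cylinder. There is no shortcut to extra symmetry here, and the no-extra-symmetry case is genuinely realized by an explicit configuration.

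What the paper does instead is substantive and cannot be avoided: by Corollary \ref{C:Cutting} the quotient surface $\ColOneTwoX/T$ decomposes into three pieces under cutting along $\ColOneTwo(\bfC_1)/T$, $\ColOneTwo(\bfC_2)/T$ and its $\tau$-image; the Collapsing Lemma (Corollary \ref{C:Collapsing2}) then reduces to an explicit low-complexity surface in $\cH(7,1)$ (Figure \ref{F:DisconnectedSurface}); and a final rank/rel computation produces the contradiction --- the core curve of $\bfC_2''$ is homologous to the sum of core curves of $\wt{\Sigma}_2^{top}$, forcing nonzero rel against a rel-zero constraint. You will need that rel argument (or an equivalent explicit obstruction); the extra-symmetry contradiction you were aiming for simply does not materialize.
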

\begin{proof}
Suppose that $\cM$ does not have extra symmetry. Note that by Masur-Zorich (Theorem \ref{T:MZ}) $\bfC_1$ consists of two half-simple cylinders, one complex cylinder, or two complex cylinders.

By Lemmas \ref{L:NoPairComplexCylinder} and \ref{L:NoHalfSimpleCylinders}, if $\bfC_1$ consists of half-simple cylinders or a pair of complex cylinders we may suppose that they are adjacent and we will let $P$ denote the collection of marked points on their common boundary. If $\bfC_1$ consists of two adjacent half-simple cylinders coming from marking two points on the core curve of a simple cylinder, then $\ColOneTwoX$ would have a marked point contradicting Lemma \ref{L:QuickSummary}. By Masur-Zorich (Theorem \ref{T:MZ}, see Figure \ref{F:CylinderTypes}), it follows that $\For'\left(\ColTwo(\bfC_1) \right)$ is a single complex cylinder ($\For'$ was defined in Lemma \ref{L:ForgettingMarkedPoints}). 


\begin{rem}
Recall that, according to Definition \ref{D:CylAndBoundary}, the boundary of a collection of cylinders $\bfC$ is the  union of the boundary of the cylinders in $\bfC$. This convention might be confusing in the case that $\bfC$ contains two cylinders that are adjacent and separated by marked points. In that case we say that the marked points are on the boundary even though they are in the interior of a cylinder after forgetting marked points.
\end{rem}

By Lemma \ref{L:ForgettingMarkedPoints}, the collection $P$ of marked points on the boundary of $\bfC_1$ are periodic on $\For'(X, \omega)$ and $\left(\For'(X, \omega), \cM', \For'(\bfC_1), \bfC_2 \right)$ is a generic diamond with $\cM'_{\For'(\bfC_1)}$ and $\cM'_{\bfC_2}$ an Abelian (resp. quadratic) double. We will show that this is not possible. 



Notice that there are two saddle connections in $\ColOneTwo(\For'(\bfC_1))$. By Lemma \ref{L:P(S1)}, $\Col_{\For'(\bfC_1), \bfC_2}(\For'(\bfC_1))/T$ also contains at least two saddle connections. By Corollary \ref{C:Cutting}, 
\[ \Col_{\For'(\bfC_1), \bfC_2}\left( \For'(X, \omega) \right)/T - \left( \left( \Col_{\For'(\bfC_1), \bfC_2}(\For'(\bfC_1) \cup \bfC_2) \right)/T \cup \tau\left( \Col_{\For'(\bfC_1), \bfC_2}(\bfC_2)/T \right) \right)\]
consists of three subsurfaces fixed by the hyperelliptic involution. Applying the Collapsing Lemma (Corollary \ref{C:Collapsing2}) we may reduce to a generic diamond $$\left( (X'', \omega''), \cM'', \bfC_1'', \bfC_2'' \right)$$ where two of the previously mentioned subsurfaces (the ones denoted $\Sigma_1$ and $\Sigma_2$ in the notation of Definition \ref{D:Cutting}) are simple cylinders and the third (denoted $\Sigma_3$) is a parallelogram. Let $\{ \mathrm{id}, J'', T'', J''T'' \}$ denote the affine symmetries of $\Col_{\bfC_1'', \bfC_2''}(X'', \omega'')$ as in Lemma \ref{L:AllAboutThatBase}.

\begin{sublem}
$(X'', \omega'')$ is the surface depicted in Figure \ref{F:DisconnectedSurface}.
\end{sublem}


\begin{figure}[h]\centering
\includegraphics[width=.8\linewidth]{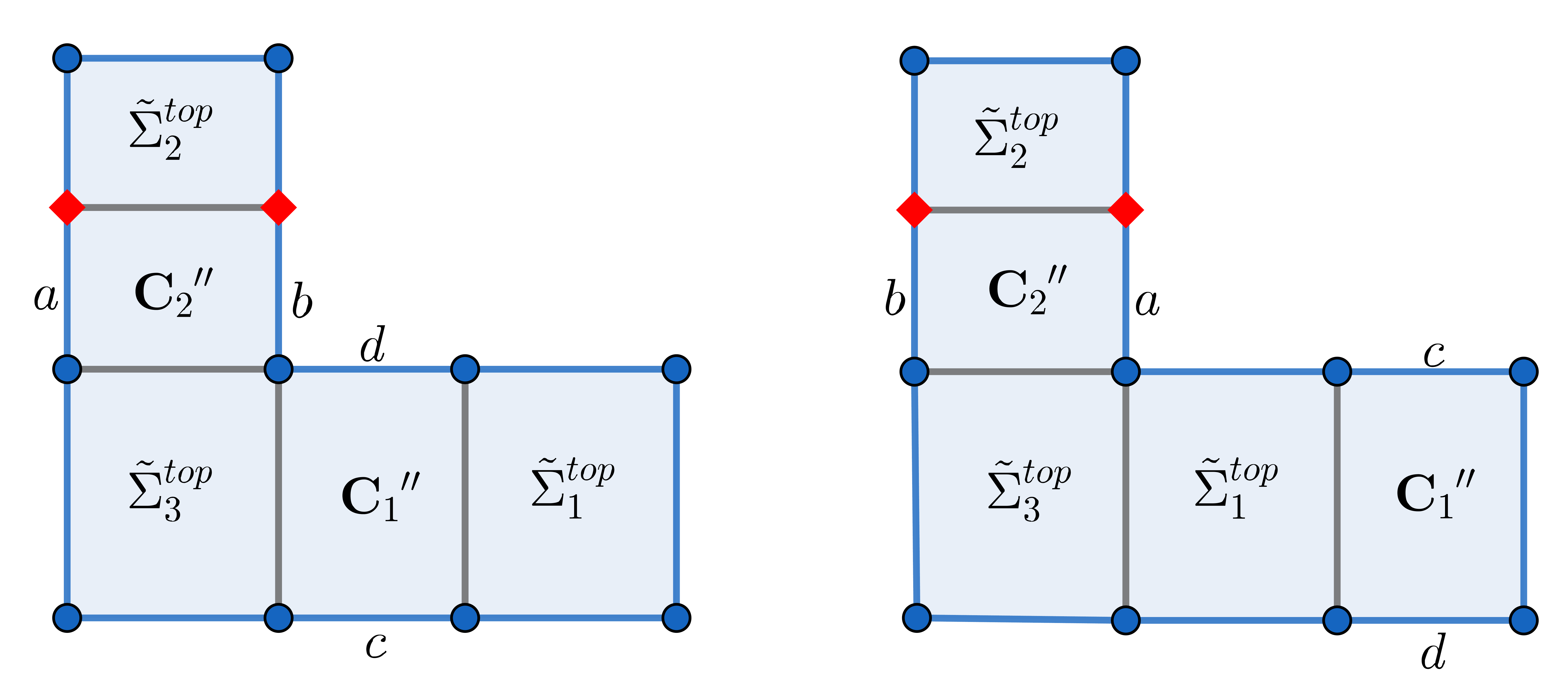}
\caption{A surface in $\cH(7, 1)$. Recall the convention that any unlabelled side is glued to the opposite side of the polygon containing it.}
\label{F:DisconnectedSurface}
\end{figure}

\begin{proof}
By Lemma \ref{L:AllAboutThatBase}, $\Col_{\bfC_1'', \bfC_2''}(X'', \omega'')/T''$ is a connected surface that (by Definition \ref{D:Cutting}) is formed by gluing together $\Sigma_1, \Sigma_2$ and $\Sigma_3$. Since these subsurfaces are two simple cylinders and a parallelogram respectively, it follows that $\Col_{\bfC_1'', \bfC_2''}(X'', \omega'')/T''$ belongs to $\cH(2)$. We have seen that $\Col_{\bfC_1'', \bfC_2''}(\bfC_1'')/T''$ consists of two saddle connections, which, by Definition \ref{D:Cutting}, form the boundary of $\Sigma_1$. Similarly, $\Col_{\bfC_1'', \bfC_2''}(\bfC_2'') \cup \tau\left( \Col_{\bfC_1'', \bfC_2''}(\bfC_2'') \right)$ forms the boundary of $\Sigma_2$. 

Since $\bfC_1''$ is a complex cylinder and $\bfC_2''$ is either a pair of simple cylinders or a single complex cylinder, we see that $(X'', \omega'')$ can be presented as the two polygons in Figure \ref{F:DisconnectedSurface} with some edge identifications that will be deduced now.

First notice that $\Col_{\bfC_2''}(\bfC_1'')$ is the preimage of a complex envelope on a generic surface in a stratum of quadratic differentials. By Masur-Zorich (Theorem \ref{T:MZ} \eqref{T:MZ:TrivHolComponent}), $\Col_{\bfC_2''}(X'', \omega'') - \Col_{\bfC_2''}(\bfC_1'')$ has two connected components. This is only possible if the unlabelled opposite edges in Figure \ref{F:DisconnectedSurface} are glued to each other (if not it is clear that $\Col_{\bfC_2''}(X'', \omega'') - \Col_{\bfC_2''}(\bfC_1'')$ is connected).

Finally, $\bfC_2''$ is a complex cylinder since otherwise it would be a pair of simple cylinders (by Lemma \ref{L:StructureOfS}) and $\Col_{\bfC_1''}(X'', \omega'')$ would be disconnected, contradicting our assumption that it belongs to an Abelian double (which by definition must contain connected surfaces).
\end{proof}

Since $\cM''_{\bfC_1'', \bfC_2''}$ is an Abelian double of $\cH(2)$, it follows that $\cM''$ has rank three rel zero (it cannot be rank two rel two since it is contained in $\cH(7,1)$ which is rel one; and these are the only two possibilities for $\cM''$ since $\cM''_{\bfC_1'', \bfC_2''}$ has dimension exactly two less than $\cM''$). However, the core curve of the cylinder $\bfC_2''$ is homologous to the sum of core curves of the cylinders labelled $\wt{\Sigma}_2^{top}$. Since $\wt{\Sigma}_2^{top}$ and $\bfC_2''$ are both subequivalence classes of cylinders, it follows that $\cM''$ has rel greater than zero, which is a contradiction. 
\end{proof}

\subsection{When $\bfC_1$ is a simple cylinder}
The goal of this section is to show that Theorem \ref{TP2} holds when $\bfC_1$ is a simple cylinder. For convenience we isolate the following several lemmas and show how they imply the main result of the section. These lemmas will also be used in the next subsection. The hypotheses of these lemmas are more general than just assuming that $\bfC_1$ is a simple cylinder. 

\begin{lem}\label{L:ColOneTwoXConnected}
When $\bfC_1$ is a collection of simple cylinders, $\MOneTwo$ is an Abelian and quadratic double. 

\end{lem}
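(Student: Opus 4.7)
The plan is to reduce the problem to showing that $\ColOneTwoX$ is connected, after which two applications of Lemma \ref{L:codim1doubles} finish the job.

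First, I would identify $\ColTwo(\bfC_1)$ as a pair of simple cylinders exchanged by the holonomy involution $J_2$ on $\ColTwoX$. Because $\bfC_1$ and $\bfC_2$ are disjoint and non-adjacent, collapsing $\bfC_2$ does not alter $\bfC_1$, so $\ColTwo(\bfC_1)$ is still a collection of simple cylinders on $\ColTwoX$ and forms an $\MTwo$-subequivalence class of generic cylinders (this last point follows from the genericity of the diamond, which ensures $\MOneTwo$ is codimension one inside $\MTwo$). Since $\MTwo$ is a quadratic double, any such subequivalence class is the full $J_2$-preimage of a single generic cylinder on $\ColTwoX/J_2 \in \cQ_2$ of one of the five types in Definition \ref{D:CylinderTypes}. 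Inspecting the bottom row of Figure \ref{F:CylinderTypes}, the only generic cylinder type on the quadratic base whose preimage is a collection of simple cylinders is the simple cylinder with trivial monodromy, whose preimage is a pair of simple cylinders swapped by $J_2$. Thus $C := \ColTwo(\bfC_1)/J_2$ is a single $\cQ_2$-generic simple cylinder.

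Next, I would invoke Lemma \ref{L:StrataBasics}\eqref{L:StrataBasics:TrivHolOrNot}: collapsing the simple cylinder $C$ on the connected quadratic differential $\ColTwoX/J_2$ produces a connected quadratic differential $S' \in \cQ$, which by Convention \ref{CV:NoTrivHol} has non-trivial linear holonomy. I would then deduce that $\ColOneTwoX$ is connected. Since $\ColTwo(\bfC_1)$ is $J_2$-invariant, $J := \Col_{\ColTwo(\bfC_1)}(J_2)$ is well-defined on $\ColOneTwoX$ by Lemma \ref{L:DefinitionCol(f)}, with quotient $S'$. If $\ColOneTwoX$ were a disjoint union of two components, $J$ would either swap them---forcing $S' = \ColOneTwoX/J$ to be an Abelian differential, contradicting that $S'$ is a quadratic differential with non-trivial holonomy---or preserve each one, in which case each component would be a double cover of $S'$ and the full map $\ColOneTwoX \to S'$ would have degree four rather than two.

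Finally, I would apply Lemma \ref{L:codim1doubles} on each side of the diamond. The invariant subvariety $\MOneTwo$ is a codimension-one boundary component of the Abelian double $\MOne$ obtained by collapsing the subequivalence class $\ColOne(\bfC_2)$ and, by the connectedness just established, consists of connected surfaces; hence it is an Abelian double. The symmetric argument, applied with $\MTwo$ and $\ColTwo(\bfC_1)$, shows $\MOneTwo$ is a quadratic double.

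The main step that requires a little care is the very first one, namely pinning down the $J_2$-orbit structure of $\ColTwo(\bfC_1)$; once that is in hand, the remaining deductions are essentially bookkeeping with the results cited above.
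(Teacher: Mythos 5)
Your overall plan---establish connectivity of $\ColOneTwoX$, then invoke Lemma \ref{L:codim1doubles} once on each side of the diamond---is exactly right, and the two applications of Lemma \ref{L:codim1doubles} at the end are what the paper does. But you take a long detour to prove connectivity. The paper's proof is a single observation: $\ColOneTwoX=\Col_{\ColTwo(\bfC_1)}(\ColTwoX)$ is connected because $\ColTwoX$ is connected and collapsing simple cylinders never disconnects a connected surface (removing the open cylinder produces at most two components, which are re-joined when its two boundary saddle connections are identified; iterate over the simple cylinders of $\ColTwo(\bfC_1)$). Nothing about $J_2$, $\cQ_2$, Lemma \ref{L:StrataBasics}, or the degree of the covering is needed for this.

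Your detour also contains a false classification claim. You assert that the only generic cylinder on $\ColTwoX/J_2$ whose holonomy double-cover preimage is a collection of simple cylinders is a simple cylinder with trivial monodromy. A simple envelope also qualifies: if neither preimage of its two poles is marked, the preimage is a single simple cylinder of twice the height; if exactly one is marked, the preimage is a pair of \emph{adjacent} simple cylinders exchanged by $J_2$. (This is precisely why Proposition \ref{P:NoPairSimple}, which invokes the present lemma, must separately handle adjacent cylinders in $\bfC_1$ via Lemma \ref{L:ForgettingMarkedPoints}.) You also tacitly assume $\bfC_1$ is a pair, whereas the hypothesis allows a single simple cylinder.

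That gap turns out to be harmless in your argument, since in every case that can occur $\ColTwo(\bfC_1)/J_2$ is a simple cylinder or a simple envelope, and Lemma \ref{L:StrataBasics}\eqref{L:StrataBasics:TrivHolOrNot} gives the same conclusion---that $S'$ is a connected quadratic differential---for both; your degree argument then goes through. So the proof can be patched, but it remains a roundabout route to a fact the paper dispatches in one sentence.
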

\begin{proof}
It is only necessary to check that $\ColOneTwoX$ is connected, which holds since $\ColTwoX$ is connected and $\ColTwo(\bfC_1)$ is a collection of simple cylinders. \qedhere 
%
%
\end{proof}

\begin{lem}\label{L:Connectedness-New}
Suppose that $\bfC_1$ is a collection of simple cylinders and that $\ColOneTwo(\bfC_2)/T$ is not $\tau$-invariant, then $\ColOneTwoX - \ColOneTwo(\bfC_2)$ is connected. 

Moreover, if $s$ is a $J$-invariant saddle connection, then $$\ColOneTwoX - \{s, Ts\}$$ 
has two connected components exchanged by $T$, each isometric to $\ColOneTwoX/T - s/T$.
\end{lem}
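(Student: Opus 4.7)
The plan is to work with the $T$-quotient $\ColOneTwoX/T$, which by Lemmas~\ref{L:ColOneTwoXConnected} and~\ref{L:AllAboutThatBase} is a connected hyperelliptic Abelian differential in $\cH^{\mathrm{hyp}}(2g-2)$ or $\cH^{\mathrm{hyp}}(g-1,g-1)$, with hyperelliptic involution $\tau$ induced by $J$. Write $q:\ColOneTwoX\to\ColOneTwoX/T$ for the (unramified, connected) quotient double cover, and let $\phi\in H^1(\ColOneTwoX/T;\mathbb{Z}/2)$ be its classifying class.

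For the first claim, I would first use Lemma~\ref{L:AbelianDoubleSEC} on the subequivalence class $\ColOne(\bfC_2)$ inside the Abelian double $\MOne$: it is either a pair of simple cylinders exchanged by $T_1$ or a single complex cylinder preserved by $T_1$. In both cases $A:=\ColOneTwo(\bfC_2)$ consists of two saddle connections exchanged by $T$, projecting to a single saddle connection $s':=A/T$ in $\ColOneTwoX/T$ which, by hypothesis, is not $\tau$-invariant. By Lemma~\ref{L:HypCuttingS}, cutting $s'\cup\tau(s')$ separates $\ColOneTwoX/T$ into two $\tau$-exchanged components, so cutting only $s'$ (and keeping $\tau(s')$) leaves $\ColOneTwoX/T - s'$ connected. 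Hence $q$ restricts to an unramified double cover over a connected base, and $\ColOneTwoX - A$ has one or two components.

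The key step for claim~(1) is ruling out two components, i.e.\ showing that $\phi$ restricts non-trivially to $H^1(\ColOneTwoX/T - s';\mathbb{Z}/2)$. By the long exact sequence of the pair $(\ColOneTwoX/T,\ColOneTwoX/T - s')$ together with excision, the kernel of this restriction is trivial when $s'$ has distinct endpoints and is generated by the Poincar\'e dual of $[s']$ when $s'$ is a loop. It therefore suffices in the loop case to exhibit a loop $\gamma$ in $\ColOneTwoX/T - s'$ whose $q$-monodromy is non-trivial. I would construct $\gamma$ from the core curve of a cylinder in $\ColOneTwo(\bfC_1)/T$ (perturbed slightly to avoid $s'$ if needed): the hypothesis that $\bfC_1$ consists of \emph{simple} cylinders means that $\ColOneTwo(\bfC_1)$ consists of at least two distinct saddle connections in $\ColOneTwoX$, so the $q$-preimage of such a core curve is a disconnected union of paths swapped by $T$, giving the required non-trivial monodromy.

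For the second claim, since $s$ is $J$-invariant, so is $Ts$, and their common image $s/T$ is a $\tau$-invariant saddle connection in $\ColOneTwoX/T$. The same cohomological framework applies to the restriction $q:\ColOneTwoX - \{s,Ts\}\to\ColOneTwoX/T - s/T$, and here the goal is precisely the opposite: to show that $\phi$ \emph{does} lie in the kernel, so that the restricted cover is trivial and therefore splits into two components exchanged by $T$, each mapping isomorphically (hence isometrically, since $q$ is a local isometry) to $\ColOneTwoX/T - s/T$. I would argue this directly by constructing a section: the $J$-invariance of $\{s,Ts\}$ means that in the cut surface $\ColOneTwoX$ slit along $\{s,Ts\}$, one can identify the two copies of $\ColOneTwoX/T - s/T$ as the two ``sides'' of the slit $s/T$ in the quotient pulled back via $q$, and the $T$-action exchanges these two sides. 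The main obstacle throughout is the cohomological bookkeeping --- in particular, in claim~(1) verifying that the monodromy loop coming from $\ColOneTwo(\bfC_1)/T$ does evaluate non-trivially on $\phi$, which genuinely uses the simple-cylinder hypothesis (a complex cylinder in $\bfC_1$ would give a connected preimage and hence trivial monodromy along its core).
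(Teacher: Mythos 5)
Your cohomological framing (classifying the double cover $q:\ColOneTwoX\to\ColOneTwoX/T$ by a class $\phi\in H^1(\ColOneTwoX/T;\bZ/2)$ and studying its restrictions) is a genuinely different route from the paper, which proves the second claim first using Masur--Zorich on the quadratic differential and then bootstraps the first claim from the second. Unfortunately, there are three real gaps in your argument.

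First, you have misidentified what the hypothesis that $\bfC_1$ consists of simple cylinders is for. You claim it guarantees that ``$\ColOneTwo(\bfC_1)$ consists of at least two distinct saddle connections,'' but this is false in general: a single simple cylinder in $\bfC_1$ collapses to a single saddle connection. The actual role of the hypothesis, via Lemma~\ref{L:ColOneTwoXConnected}, is to ensure $\ColOneTwoX$ is connected, so that $\MOneTwo$ is simultaneously an Abelian and a quadratic double (and hence Lemmas~\ref{L:QuadAbDouble} and~\ref{L:AllAboutThatBase} apply, providing in particular the existence of non-singular fixed points of $J$).

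Second, your proposed monodromy witness for claim~(1) --- ``the core curve of a cylinder in $\ColOneTwo(\bfC_1)/T$'' --- is not available: the cylinders of $\bfC_1$ have been collapsed in passing to $\ColOneTwoX$, so $\ColOneTwo(\bfC_1)/T$ is a set of saddle connections, not cylinders. So even granting your cohomological reduction (that one must show $\phi\neq PD[s']$ when $s'$ is a loop), you have not actually produced a loop in $\ColOneTwoX/T - s'$ with non-trivial monodromy, and the contrast you draw with complex cylinders does not apply.

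Third, your argument for claim~(2) is not a proof. You assert that $J$-invariance of $\{s,Ts\}$ lets one ``identify the two copies of $\ColOneTwoX/T - s/T$ as the two sides of the slit,'' but this is precisely the conclusion you need, not a justification of it. $J$-invariance of $s$ alone does not obviously force the restricted cover to split. The paper's proof supplies the missing ingredient: $s$ and $Ts$ are generically parallel and $J$-fixed, so $\{s,Ts\}/J$ is a pair of hat-homologous saddle connections on the quadratic differential $\ColOneTwoX/J$; Masur--Zorich (Theorem~\ref{T:HatHomologous}) then yields a component of the complement with trivial holonomy, which is what actually forces $\ColOneTwoX - \{s,Ts\}$ to be disconnected. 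Since $\ColOneTwoX/T - s/T$ is connected, the cover must then split into exactly two components exchanged by $T$. Without some version of this holonomy input, I do not see how to make your ``section'' argument go through.
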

\begin{proof}
We will prove the second claim first. Let $s$ be a $J$-invariant saddle connection. By Lemma \ref{L:ColOneTwoXConnected}, $\MOneTwo$ is an Abelian and quadratic double. The saddle connections in $S := \{s, Ts\}$ are generically parallel and fixed by $J$. Since $S/J$ consists of two hat-homologous saddle connections, it follows, by Masur-Zorich (Theorem \ref{T:HatHomologous}), that $\ColOneTwoX/J - S/J$ has trivial linear holonomy and hence that $\ColOneTwoX - S$ is disconnected. The second claim now follows.


We now turn to the first claim. Notice that $\ColOneTwo(\bfC_2)/T$ is a single saddle connection that is not fixed by the hyperelliptic involution. Let $w$ be any fixed point of $J$ that is not a singularity (such points exist by Lemma \ref{L:QuadAbDouble}). Since $J$ descends to $\tau$ on $\ColOneTwoX/T$ (by Lemma \ref{L:AllAboutThatBase}), $w/T$ is a fixed point of $\tau$ that is not a singularity. Let $\sigma$ be any saddle connection on $\ColOneTwoX/T$ that is disjoint from $\ColOneTwo(\bfC_2)/T$ and that passes through $w/T$. Let $s$ be any saddle connection in the preimage of $\sigma$ on $\ColOneTwoX$. We have already shown (by the second part of the claim) that $\ColOneTwoX - \{s, Ts\}$ consists of two isometric copies of $\ColOneTwoX/T - \sigma$.

Notice that $\ColOneTwoX/T - (\ColOneTwo(\bfC_2)/T \cup \sigma)$ is not disconnected since $\sigma$ is a saddle connection fixed by the hyperelliptic involution and $\ColOneTwo(\bfC_2)/T$ is a saddle connection that is not (by Lemma \ref{L:HypParallelism}, cutting two saddle connections that are not exchanged by $\tau$ does not disconnect the surface). Therefore, $\ColOneTwoX - \{s, Ts, \ColOneTwo(\bfC_2)\}$ consists of two isometric copies of the connected surface $\ColOneTwoX/T - (\ColOneTwo(\bfC_2)/T \cup \sigma)$. Since adding $s$ back in joins the two components together, $\ColOneTwoX - \ColOneTwo(\bfC_2)$ is connected.
\end{proof}

Recall from Definition \ref{D:Cutting}, that when $|\ColOneTwo(\bfC_1)/T| = 2$, $\Sigma_1$ denotes the component of $$\ColOneTwoX/T -  \ColOneTwo(\bfC_1)/T$$ that does not contain $\ColOneTwo(\bfC_2)/T$. Its preimage on $\ColOneTwoX$ is denoted $\wt{\Sigma}_1$. The following three lemmas will make use of the following assumption.

\begin{ass}\label{A:NoTInvolution}
Suppose that $\ColOneTwo(\bfC_2)/T$ is not $\tau$-invariant and that one of the following holds: 
\begin{enumerate}
    \item $\bfC_1$ is a simple cylinder;
    \item $\bfC_1$ is a pair simple cylinders, $|\ColOneTwo(\bfC_1)/T| = 2$, and $\wt{\Sigma}_1$ is not a pair of non-adjacent simple cylinders. (Notice that the cylinders in $\bfC_1$ are non-adjacent since if the cylinders in $\bfC_1$ were adjacent then $\ColOneTwo(\bfC_1)$ would have only one saddle connection.)
\end{enumerate}
\end{ass}
\begin{rem}\label{R:NoTInvolution}
A crucial point is that whether or not $\ColOneTwo(\bfC_2)/T$ is $\tau$-invariant is unchanged by applying the Collapsing Lemma (Corollary \ref{C:Collapsing2}, see \eqref{I:S2}). However, a priori, whether $\cM$ has extra symmetry might change after applying the Collapsing Lemma. 

Similarly, we note that the assumptions that $\bfC_1$ is a simple cylinder, a pair of simple cylinders, or that $|\ColOneTwo(\bfC_1)/T| = 2$ all continue to hold after applying the Collapsing Lemma by Corollary \ref{C:Collapsing2} \eqref{C:Collapsing2:RotatedScaledCi} and \eqref{I:S1}.

If the Collapsing Lemma is applied to a subsurface other than $\wt{\Sigma}_1$, then the Collapsing Lemma only alters $\wt{\Sigma}_1$ by rotating and scaling it (see Corollary \ref{C:Collapsing2} \eqref{C:Collapsing2:RotatedScaledCi}), and so Assumption \ref{A:NoTInvolution} continues to hold.
\end{rem}

\begin{lem}\label{L:NoTInvolution}
If Assumption \ref{A:NoTInvolution} holds, then a generic perturbation of $\For(\ColTwoX)$ in $\MTwo$  does not admit a translation involution. 
\end{lem}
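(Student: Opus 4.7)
I argue by contradiction: suppose the generic surface of $\For(\MTwo)$ admits a translation involution, so we may take $T_2$ defined on $\For(\ColTwoX)$ itself and persisting on a neighborhood in $\For(\MTwo)$.

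The first step is to show that $T_2$ preserves the subequivalence class $\ColTwo(\bfC_1)$ and that the descent $\Col_{\ColTwo(\bfC_1)}(T_2)$ equals $T$ on $\For(\ColOneTwoX)$. Preservation of $\ColTwo(\bfC_1)$ follows from Lemma \ref{L:MarkedPointsAndInvariance}, provided each boundary of each cylinder in $\ColTwo(\bfC_1)$ contains a singularity of the metric. In case (1), $\ColTwo(\bfC_1)$ is a single simple cylinder and this is immediate because the boundary is a single saddle connection emanating from a singularity; in case (2), where $\wt{\Sigma}_1$ is not a pair of non-adjacent simple cylinders, a similar check applies to the pair of simple cylinders or the complex cylinder comprising $\ColTwo(\bfC_1)$ (Lemma \ref{L:StructureOfS}). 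The induced involution $\Col_{\ColTwo(\bfC_1)}(T_2)$ is then a translation involution of $\For(\ColOneTwoX)$, which by Lemma \ref{L:AllAboutThatBase} must lie in $\{\mathrm{id}, T\}$; the identity is ruled out because $T_2$ necessarily acts on a cylinder of $\ColTwo(\bfC_1)$ as translation by half the core curve, and this non-trivial translation survives the collapse. The case $g=1$ of Lemma \ref{L:AllAboutThatBase} is handled separately by a direct analysis using the explicit description of $\cH(0)$ and $\cH(0,0)$ as quotients of $\bC$ by a rank two lattice.

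The second step promotes $T_2$ and $J_2$ to a commuting pair. Both $J_2 T_2$ and $T_2 J_2$ are half-translation involutions of $\For(\ColTwoX)$ that descend to $JT = TJ$ on $\For(\ColOneTwoX)$, and by Lemma \ref{L:AllAboutThatBase} two affine involutions of derivative $-\mathrm{Id}$ with the same descent must coincide. Hence $J_2$ and $T_2$ commute, so $\For(\MTwo)$ is simultaneously an Abelian and a quadratic double. By Lemma \ref{L:QuadAbDouble} this forces $\For(\cQ_2) = \cQ^{hyp}(a^{(1)}, -1^2)$ for some $a \geq -1$, and Corollary \ref{C:HypSymmetries} then identifies the full group of affine symmetries of $\For(\ColTwoX)$ of derivative $\pm\mathrm{Id}$ as exactly $\{\mathrm{id}, J_2, T_2, J_2 T_2\}$.

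The third step derives the contradiction. Since $\bfC_1$ and $\bfC_2$ are disjoint, the saddle connections in $\ColTwo(\bfC_2)$ are disjoint from $\ColTwo(\bfC_1)$, and the collapse $\Col_{\ColTwo(\bfC_1)}$ bijects them with $\ColOneTwo(\bfC_2)$; since $T$ preserves $\ColOneTwo(\bfC_2)$ setwise (pulling back $T_1$-invariance of $\ColOne(\bfC_2)$ under the second collapse), pulling back via this bijection gives $T_2(\ColTwo(\bfC_2)) = \ColTwo(\bfC_2)$. The main obstacle is then to show that the Klein four action also forces $J_2$ to preserve $\ColTwo(\bfC_2)$, from which $J_2 T_2$ preserves $\ColTwo(\bfC_2)$ and hence $JT$ preserves $\ColOneTwo(\bfC_2)$; combined with the already-established $T$-invariance, this gives $J$-invariance of $\ColOneTwo(\bfC_2)$, equivalently $\tau$-invariance of $\ColOneTwo(\bfC_2)/T$, contradicting Assumption \ref{A:NoTInvolution}. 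To establish $J_2$-invariance of $\ColTwo(\bfC_2)$, the plan is to use that under the simultaneous Abelian/quadratic double structure from step two, the $\For(\MTwo)$-equivalence class containing $\ColTwo(\bfC_2)$ is preserved by the full affine symmetry group, and in case (2) to combine this with the Collapsing Lemma (Corollary \ref{C:Collapsing2}) applied inside $\wt{\Sigma}_1$ to reduce to a base configuration where the invariance is explicit; by Remark \ref{R:NoTInvolution} this reduction preserves Assumption \ref{A:NoTInvolution}, so a contradiction at the reduced level gives a contradiction at the original level. The principal difficulty is controlling the interaction between the Klein four action and $\ColTwo(\bfC_2)$, which may a priori be permuted non-trivially by $J_2$; the hypothesis on $\wt{\Sigma}_1$ in case (2) and the simple cylinder assumption in case (1) are what ultimately rule this out.
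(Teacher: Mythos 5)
You have the right ingredients but the argument is assembled in a way that leaves real gaps, and in particular you miss that your own Step 1, if established, already yields the contradiction — making Step 3 an unnecessary and (as you acknowledge) unfinished detour.

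First, the contradiction is immediate at the end of Step 1. In case (1), $\ColTwo(\bfC_1)$ is a single simple cylinder; a translation involution cannot fix a simple cylinder (its restriction to the cylinder would be translation by half the core curve, which takes the singularity on the boundary saddle connection to a regular point). So once you have $T_2$ preserving $\ColTwo(\bfC_1)$, you are done. In case (2), $\bfC_1$ is a pair of non-adjacent simple cylinders so $\ColOneTwo(\bfC_1)$ consists of two saddle connections; since $|\ColOneTwo(\bfC_1)/T| = 2$ as well, $T$ cannot preserve $\ColOneTwo(\bfC_1)$ (a $T$-invariant set of two saddle connections descending to two saddle connections would require $T$ to fix each, which a fixed-point-free translation involution cannot do). But $T' := \Col_{\ColTwo(\bfC_1)}(T_2)$ does preserve $\ColOneTwo(\bfC_1)$, since $T_2$ preserves $\ColTwo(\bfC_1)$. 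Hence $T' \neq T$, contradicting the identity $T' = T$ you assert. This is exactly the paper's argument, and it makes Steps 2 and 3 superfluous.

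Second, your justification that $\ColTwo(\bfC_1)$ has no marked points on its boundary — needed to invoke Lemma~\ref{L:MarkedPointsAndInvariance} — is too casual. You call it ``immediate'' in case (1) and ``a similar check'' in case (2), but it is not: a simple cylinder's boundary saddle connection can a priori join a marked point to itself. The paper devotes a sublemma to this, and the argument genuinely uses the hypothesis in Assumption~\ref{A:NoTInvolution} that $\wt{\Sigma}_1$ is not a pair of non-adjacent simple cylinders: if a boundary of a cylinder in $\ColTwo(\bfC_1)$ contained a marked point, then after collapsing one is led to conclude $\Sigma_1$ is a simple cylinder, forcing $\wt{\Sigma}_1$ to be a pair of non-adjacent simple cylinders. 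Your proposal never uses this hypothesis, which is a sign the argument has a hole exactly here.

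Third, Step 3 as written is not a proof; you state it as a plan with a ``main obstacle'' and ``principal difficulty'' left unresolved. For what it's worth, the outline could be made to work ($J_2$ preserves $\ColTwo(\bfC_2)$ because subequivalence classes of generic cylinders in a quadratic double are preimages of cylinders downstairs and hence $J_2$-invariant, then collapse to get $J$-invariance of $\ColOneTwo(\bfC_2)$ and hence $\tau$-invariance of $\ColOneTwo(\bfC_2)/T$), but nothing about the Klein four group or the Collapsing Lemma is really needed, and since Step 1 already closes the proof this whole route should simply be dropped.
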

\begin{proof}
Suppose to a contradiction that (a generic perturbation of)  $\For(\ColTwoX)$ admits a translation involution $T_2$. 

\begin{sublem}\label{SL:LicenseToApplyEarlierLemma}
There are no marked points on the boundary of $\ColTwo(\bfC_1)$.
\end{sublem}
\begin{proof}
Since the cylinders in $\ColTwo(\bfC_1)$ are non-adjacent, a boundary of a cylinder in $\ColTwo(\bfC_1)$ contains a marked point if and only if it is glued to the full boundary of another cylinder $\bfC$ on $\ColTwoX$. \red(If there is more than one boundary component of $\ColTwo(\bfC_1)$ containing marked points, we pick one, and use that choice to pick the single cylinder $\bfC$.) \black By assumption one of the boundaries of $\bfC$ is a single saddle connection. Define $\bfC' := \Col_{\ColTwo(\bfC_1)}(\bfC)$. Since $\bfC_1$ contains only simple cylinders, $\bfC'$ also has a boundary that is a single saddle connection. Although $\bfC'$ is not $T$-invariant we will nonetheless use the notation $\bfC'/T$ to denote its image in the quotient.

Since every cylinder on \red $\ColOneTwo(X, \omega)/T$ \black is fixed by the hyperelliptic involution, and since $\ColOneTwo(\bfC_1)/T$ is invariant by the hyperelliptic involution, we see that $\bfC'/T$ is a simple cylinder whose boundary is $\ColOneTwo(\bfC_1)/T$. This is impossible when $|\ColOneTwo(\bfC_1)/T| = 1$, so suppose that $|\ColOneTwo(\bfC_1)/T| = 2$.

Notice that any saddle connection in $\bfC'/T$ is fixed by the hyperelliptic involution. Since $\ColOneTwo(\bfC_2)/T$ is not fixed by the hyperelliptic involution (by Assumption \ref{A:NoTInvolution}), $\ColOneTwo(\bfC_2)/T$ is not contained in $\bfC'/T$. Therefore, $\Sigma_1 = \bfC'/T$. Since $\Sigma_1$ is a simple cylinder, $\wt{\Sigma}_1$ is either a pair of non-adjacent simple cylinders or a complex cylinder. Since $\bfC'$ is a cylinder contained in $\wt{\Sigma}_1$ that has a boundary consisting of one saddle connection, it follows that $\wt{\Sigma}_1$ consists of two non-adjacent simple cylinders, contradicting Assumption \ref{A:NoTInvolution}. 
%
\end{proof}

If $\bfC_1$ is a simple cylinder, then so is $\ColTwo(\bfC_1)$, which must be fixed by $T_2$ by Lemma \ref{L:MarkedPointsAndInvariance} (which may be applied by Sublemma \ref{SL:LicenseToApplyEarlierLemma}). However, no simple cylinder can be fixed by a translation involution, so we have a contradiction. 

Similarly, if $\bfC_1$ is a pair of non-adjacent simple cylinders, so is $\ColTwo(\bfC_1)$. The $T_2$-involution must exchange those two cylinders by Lemma \ref{L:MarkedPointsAndInvariance} (which may be applied by Sublemma \ref{SL:LicenseToApplyEarlierLemma}).

Let $T':=\Col_{\ColTwo(\bfC_1)}(T_2)$ (which is defined, by Lemma \ref{L:DefinitionCol(f)}, since $\ColTwo(\bfC_1)$ is fixed by $T_2$). Since $\ColOneTwo(\bfC_1)$ and $\ColOneTwo(\bfC_1)/T$ both consist of two saddle connections, it follows that $\ColOneTwo(\bfC_1)$ is not $T$-invariant. It is equally clear, since $T_2$ fixes $\ColTwo(\bfC_1)$ that $\ColOneTwo(\bfC_1)$ is $T'$-invariant. We will deduce a contradiction by showing that $T = T'$.

Since $\MOneTwo$ is an Abelian and quadratic double (by Lemma \ref{L:ColOneTwoXConnected}) that is not a double of $\cH(0)$ (since $\ColOneTwo(\bfC_2)/T$ is not $\tau$-invariant) it follows that $T = T'$ by Corollary \ref{C:HypSymmetries}. This is a contradiction. 
\end{proof}


\noindent Recall from Definition \ref{D:Cutting}, that, when $\ColOneTwo(\bfC_2)/T$ is not $\tau$-invariant,  $\Sigma_2$ is defined as the component of
\[ \ColOneTwo(X, \omega)/T - \left( \ColOneTwo(\bfC_2)/T \cup \tau\left( \ColOneTwo(\bfC_2)/T\right) \right) \]  not containing $\ColOneTwo(\bfC_1)/T$. Recall too that $\wt{\Sigma}_2$ denotes the preimage of $\Sigma_2$ on $\ColOneTwoX$. We will use $\wt{\Sigma}_2^{top}$ to refer to the corresponding subsurface on $(X, \omega)$ as in Definition \ref{D:Cutting}. 

\begin{lem}\label{L:FinalMarkedPointSL}
If Assumption \ref{A:NoTInvolution} holds, then $\cM$ has rank at least two and there is a singularity that is not a marked point in the intersection of the boundaries of $\bfC_2$ and $\wt{\Sigma}_2^{top}$. 
\end{lem}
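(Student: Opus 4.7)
The first claim follows immediately from Lemma \ref{L:QuickSummary}, since Assumption \ref{A:NoTInvolution} includes the hypothesis that $\ColOneTwo(\bfC_2)/T$ is not $\tau$-invariant.

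For the second claim, the plan is to argue by contradiction. Suppose that every vertex in the intersection of the boundaries of $\bfC_2$ and $\wt{\Sigma}_2^{top}$ in $(X,\omega)$ is a marked point rather than a true singularity of $\omega$. The goal is to construct a translation involution $T_2$ on a generic perturbation of $\For(\ColTwoX)$ in $\MTwo$, contradicting Lemma \ref{L:NoTInvolution}. The construction will have two pieces, to be glued along the boundary of $\ColTwo(\wt{\Sigma}_2^{top})$ in $\ColTwoX$. First, since $\wt{\Sigma}_2$ is $T$-invariant and neither $\bfC_1$ nor $\bfC_2$ meets the interior of $\wt{\Sigma}_2^{top}$, the restriction of $T$ to $\wt{\Sigma}_2$ lifts canonically to a translation involution of $\wt{\Sigma}_2^{top}$, and hence of its image in $\ColTwoX$. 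Second, $T$ restricts to an involution of $\ColOneTwoX - \wt{\Sigma}_2$ which contains $\ColOneTwo(\bfC_1)$, and I will lift this involution through the uncollapse of the simple cylinders $\ColTwo(\bfC_1)$ to produce a translation involution of the complement of $\ColTwo(\wt{\Sigma}_2^{top})$ in $\ColTwoX$. Here Assumption \ref{A:NoTInvolution} is crucial: both the simplicity of the cylinders in $\bfC_1$ and, in the pair case, the exclusion of $\wt{\Sigma}_1$ being a pair of non-adjacent simple cylinders, are used to guarantee that the lift exists and is unique.

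The two local involutions agree on the interiors of the boundary saddle connections of $\bfC_2$ by construction, since both are induced by the same $T$-action on $\ColOneTwoX$. The only possible obstruction to gluing them into a single translation involution of $\ColTwoX$ is a discrepancy in cone angle at the vertices of these saddle connections; but our contradictory hypothesis says every such vertex is merely a marked point with cone angle $2\pi$. Forgetting these marked points removes the obstruction and produces the desired involution $T_2$ on $\For(\ColTwoX)$, contradicting Lemma \ref{L:NoTInvolution}. The main obstacle I expect in executing this plan rigorously is the second step: carefully verifying that $T$ on $\ColOneTwoX - \wt{\Sigma}_2$ admits a unique lift through the uncollapse of $\ColTwo(\bfC_1)$. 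This will require a short case analysis driven by Assumption \ref{A:NoTInvolution} to rule out competing symmetries of the simple cylinders that would otherwise prevent a canonical choice of lift, and to confirm that after forgetting marked points no further singularities of the flat structure are created that would obstruct the gluing.
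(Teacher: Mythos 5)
Your derivation of the rank claim from Lemma \ref{L:QuickSummary} matches the paper, and the target contradiction (a translation involution on a generic perturbation of $\For(\ColTwoX)$, contradicting Lemma \ref{L:NoTInvolution}) is the same contradiction the paper reaches. But your route there has a fatal gap in piece~2. You assert that $T$ restricted to $\ColOneTwoX - \wt{\Sigma}_2$ admits a lift through the uncollapse of $\ColTwo(\bfC_1)$, and you claim Assumption \ref{A:NoTInvolution} \emph{guarantees} this lift exists. In fact, under Assumption \ref{A:NoTInvolution} the involution $T$ does not even preserve the set $\ColOneTwo(\bfC_1)$ of slits along which the uncollapse happens, so no such lift can exist. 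In case (1) of the assumption, $\ColOneTwo(\bfC_1)$ is a single saddle connection $s$, and a translation involution cannot fix any saddle connection, so $Ts \notin \ColOneTwo(\bfC_1)$. In case (2), $\ColOneTwo(\bfC_1)$ has two saddle connections and $|\ColOneTwo(\bfC_1)/T| = 2$; if $T$ preserved that two-element set it would have to swap them (it can't fix either), which would force $|\ColOneTwo(\bfC_1)/T|=1$. Your intuition about the role of Assumption \ref{A:NoTInvolution} is therefore exactly backwards: the assumption and its use in Lemma \ref{L:NoTInvolution} are designed precisely to preclude any translation involution on $\ColTwoX$, so any purported construction must founder somewhere, and this is where.

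Your intended use of the marked-point hypothesis (avoiding cone-angle obstructions to gluing) is also not how that hypothesis enters. The paper's proof uses a completely different mechanism. Let $P$ be the set of marked points in the common boundary. Because $\MOne$ is an Abelian double and $\ColOne(\bfC_2)$ is a subequivalence class of generic cylinders, $\ColOne(P)$ is a $T_1$-invariant pair and hence a slope $+1$ irreducible pair of marked points on $\ColOneX$, and the same holds on $(X,\omega)$. Apisa--Wright (Lemma \ref{L:ApisaWright}), together with the rank-two conclusion, then shows $\For(X,\omega)$ admits a translation cover of a distinct surface; since $\ColTwoX$ is formed by merging $P$ with other points, $\ColTwoX$ and a generic perturbation also admit such a cover, which by Lemma \ref{L:InvolutionImpliesHyp-background} must be the quotient by a translation involution --- contradicting Lemma \ref{L:NoTInvolution}. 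In short, the marked-point hypothesis produces a covering structure via the marked-point theory, rather than removing a local gluing obstruction; the involution is obtained from Apisa--Wright, not constructed by hand from $T$.
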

\begin{proof}
By Assumption \ref{A:NoTInvolution}, $\ColOneTwo(\bfC_2)$ is not $\tau$-invariant, so $\cM$ has rank at least two by Lemma \ref{L:QuickSummary}. Suppose now, in order to derive a contradiction, that the intersection of the boundaries of $\bfC_2$ and $\wt{\Sigma}_2^{top}$ contains no singularities (only marked points).

Let $P$ be the set of marked points on the common boundary of $\bfC_2$ and $\wt{\Sigma}_2^{top}$. Because $\MOne$ is an Abelian double and $\ColOne(\bfC_2)$ is a subequivalence class of generic cylinders, $\ColOne(P)$ is a slope $+1$ irreducible pair of marked points on $\ColOneX$. The same holds on $(X, \omega)$.  

Since $\cM$ has rank at least two, Apisa-Wright \cite{ApisaWright} (Lemma \ref{L:ApisaWright}) implies that $\For(X, \omega)$ admits a translation cover of a distinct surface. Since $\ColTwoX$ is formed by merging $P$ with zeros and/or marked points, it follows that $\ColTwoX$ (and any generic perturbation thereof)  also admits a translation cover of a distinct surface and that $\MTwo$ has rank at least two since $\cM$ does. By Lemma \ref{L:InvolutionImpliesHyp}, this translation cover is the quotient by a translation involution. But no translation involution exists on $\ColTwoX$ by Lemma \ref{L:NoTInvolution}, a contradiction.
\end{proof}

\begin{lem}\label{L:DisconnectedImpliesSimple}
If Assumption \ref{A:NoTInvolution} holds and $\wt{\Sigma}_2$ is disconnected, then $\bfC_2$ is a pair of simple cylinders.
\end{lem}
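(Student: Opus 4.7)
The plan is to assume, for contradiction, that $\bfC_2$ is a single complex cylinder and $\wt{\Sigma}_2$ is disconnected, and then to construct a translation involution on a generic perturbation of $\For(\ColTwoX)$, which will contradict Lemma~\ref{L:NoTInvolution}. Recall from Lemma~\ref{L:StructureOfS} that $\bfC_2$ is either a pair of simple cylinders or a single complex cylinder, so ruling out the second possibility suffices.

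First I would unpack the geometric consequences of the complex-cylinder hypothesis. By Lemma~\ref{L:StructureOfS}, $\ColOne(\bfC_2)$ is a single complex cylinder on $\ColOneX$, and since it is a subequivalence class in the Abelian double $\MOne$ it is $T_1$-invariant, with $T_1$ acting on it as the nontrivial translation by half the circumference.  Consequently $\ColOneTwo(\bfC_2)$ consists of two saddle connections $s_1, s_2$ arising from identifying the top and bottom boundary saddle connections of the complex cylinder in pairs, and the induced involution $T$ on $\ColOneTwoX$ swaps $s_1$ and $s_2$ (so $\ColOneTwo(\bfC_2)/T$ is a single saddle connection, consistent with Assumption~\ref{A:NoTInvolution}).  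By Assumption~\ref{A:NoTInvolution}, $\ColOneTwo(\bfC_2)/T$ is not $\tau$-invariant, so $J(\ColOneTwo(\bfC_2))=\{Js_1,Js_2\}$ is disjoint from $\ColOneTwo(\bfC_2)$ and is also a $T$-orbit.

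Second, since $\wt{\Sigma}_2$ is disconnected and maps to the connected set $\Sigma_2$ under the degree-two map $\ColOneTwoX\to\ColOneTwoX/T$, it has exactly two components $A_1$ and $A_2=T(A_1)$.  By Lemma~\ref{L:Connectedness-New}, $\ColOneTwoX\setminus\ColOneTwo(\bfC_2)$ is connected; so the disconnection of $\wt{\Sigma}_2$ is produced entirely by also cutting $J(\ColOneTwo(\bfC_2))$.  I would then determine the distribution of the four saddle connections $\{s_1,s_2,Js_1,Js_2\}$ onto $\partial A_1$ and $\partial A_2$ (each boundary having two of them, swapped by $T$), and translate this back to $(X,\omega)$: because $\bfC_2$ is a \emph{single} complex cylinder, the subsurfaces $A_1^{top}$ and $A_2^{top}$ are attached to $\bfC_2$ through pairs of boundary saddle connections that are interchanged by the half-circumference translation of $\bfC_2$.

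Third, I would build an involution $T_2$ on $\For(\ColTwoX)$ by defining it to agree with $T$ on $\ColOneTwoX\subset\ColTwoX$ (viewing $\ColTwoX=\ColOneTwoX\cup\ColTwo(\bfC_1)$) and to act on each simple cylinder of $\ColTwo(\bfC_1)$ by the half-circumference translation.  The existence of $T$ with the property above and the simple-cylinder structure of $\bfC_1$ guaranteed by Assumption~\ref{A:NoTInvolution} make these two definitions match along $\ColOneTwo(\bfC_1)$, producing a bona fide translation involution; once in hand, this contradicts Lemma~\ref{L:NoTInvolution}.

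The main obstacle is the last paragraph: executing the bookkeeping that shows the complex-cylinder pairing of saddle connections genuinely forces the symmetry to extend across $\ColTwo(\bfC_1)$, and in particular handling the two sub-cases of Assumption~\ref{A:NoTInvolution} (single simple cylinder versus pair of non-adjacent simple cylinders, the latter coming with the extra hypothesis that $\wt{\Sigma}_1$ is not a pair of non-adjacent simple cylinders, which is precisely what is needed for the involution to glue correctly).  If the direct gluing argument proves cumbersome, I would fall back on first applying the Collapsing Lemma (Corollary~\ref{C:Collapsing2}) to replace $\wt{\Sigma}_1$ and $\wt{\Sigma}_3$ by cylinders or parallelograms, as permitted by Remark~\ref{R:NoTInvolution}, making the check of compatibility of $T$ and the half-twist on $\ColTwo(\bfC_1)$ entirely explicit.
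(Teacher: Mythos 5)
Your plan — derive a contradiction with Lemma~\ref{L:NoTInvolution} by constructing a translation involution $T_2$ on $\For(\ColTwoX)$ — aims at the same target as the paper's proof, but the gluing construction in your third step does not work, and the obstruction is not ``cumbersome bookkeeping'': the involution you want to build does not exist.

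The missing observation is that $T$ does \emph{not} preserve $\ColOneTwo(\bfC_1)$ under Assumption~\ref{A:NoTInvolution}, so there is no way for the definition ``$T$ on $\ColOneTwoX$, half-twist on $\ColTwo(\bfC_1)$'' to be coherent along the attaching locus. In Case~1 ($\bfC_1$ a single simple cylinder), $\ColOneTwo(\bfC_1)$ is one saddle connection $\sigma$, and a non-trivial translation involution cannot carry $\sigma$ to itself: if it swapped the endpoints it would have derivative $-\mathrm{Id}$, and if it fixed them it would be the identity on $\sigma$ and hence everywhere. In Case~2 ($\bfC_1$ a pair of non-adjacent simple cylinders), $\ColOneTwo(\bfC_1)=\{\sigma_1,\sigma_2\}$ and the hypothesis $|\ColOneTwo(\bfC_1)/T|=2$ says precisely that $\sigma_1,\sigma_2$ lie in distinct $T$-orbits, so $T$ moves $\ColOneTwo(\bfC_1)$ off itself. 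Either way the two local definitions of $T_2$ cannot agree. Indeed, if $T$ did preserve $\ColOneTwo(\bfC_1)$ and the gluing were equivariant, $\cM$ would have extra symmetry in the sense of Definition~\ref{D:ExtraSymmetry}, and Lemma~\ref{L:NoTInvolution} already rules that out; the difficulty in Lemma~\ref{L:DisconnectedImpliesSimple} is exactly that no such extension exists, so the contradiction has to be extracted indirectly.

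The paper's proof never constructs $T_2$ by hand. It collapses $\Sigma_2$ so that $\wt{\Sigma}_2$ becomes a pair of non-adjacent simple cylinders $\bfC_3$, passes to the degenerations $\cM_{\bfC_3,\bfC_2}$ and $\cM_{\bfC_3}$, uses Lemma~\ref{L:Connectedness-New} (as you do) to show $\cM_{\bfC_3}$ is a \emph{proper} invariant subvariety of a quadratic double (Sublemma~\ref{SL:SecondDiamond2}), and then invokes Theorem~\ref{T:complex-gluing0} — the classification of proper codimension-one orbit closures obtained by gluing a complex envelope — to conclude that $\cQ'$ must be hyperelliptic with $\Col_{\bfC_3,\bfC_2}(\bfC_3)/\Col_{\ColTwo(\bfC_3)}(J_2)$ fixed by the hyperelliptic involution. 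This is then shown (Sublemma~\ref{SL:SecondDiamond1}) to produce a translation involution on a perturbation of $\For(\ColTwoX)$, and \emph{only then} does Lemma~\ref{L:NoTInvolution} apply. The essential ingredient you are missing is Theorem~\ref{T:complex-gluing0}: the involution is forced by the classification, not assembled by a local gluing argument.
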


Before we prove this lemma, we will show how it can be used to conclude this section. 

\begin{prop}\label{P:NoSimpleCylinder}
If $\bfC_1$ is a simple cylinder, then $\ColOneTwo(\bfC_2)/T$ is $\tau$-invariant. In particular, $\cM$ has extra symmetry. 
\end{prop}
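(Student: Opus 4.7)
The plan is to argue by contradiction: suppose $\ColOneTwo(\bfC_2)/T$ is not $\tau$-invariant. Since $\bfC_1$ is a single simple cylinder, $\ColOneTwo(\bfC_1)$ is a single saddle connection fixed pointwise by $T$, so $|\ColOneTwo(\bfC_1)/T|=1$ and Assumption \ref{A:NoTInvolution} holds; by Remark \ref{R:NoTInvolution} it continues to hold throughout applications of the Collapsing Lemma. By Corollary \ref{C:Cutting}, the complement of $\ColOneTwo(\bfC_1)/T \cup \ColOneTwo(\bfC_2)/T \cup \tau(\ColOneTwo(\bfC_2)/T)$ in $\ColOneTwoX/T$ has exactly two components: $\Sigma_2$, with boundary $\{\ColOneTwo(\bfC_2)/T,\tau(\ColOneTwo(\bfC_2)/T)\}$, and a second component $\Sigma_0$ whose boundary consists of two copies of $\ColOneTwo(\bfC_1)/T$ (since the single saddle connection $\ColOneTwo(\bfC_1)/T$ is non-separating in the doubly-cut surface) together with $\ColOneTwo(\bfC_2)/T$ and $\tau(\ColOneTwo(\bfC_2)/T)$, so $|S_0|=4$.

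Next, I would apply the Collapsing Lemma (Corollary \ref{C:Collapsing2}) to reduce the diamond to one in which $\Sigma_0$ is a planar parallelogram and $\Sigma_2$ is either a cylinder (if already so) or a slit torus; by items \eqref{C:Collapsing2:RotatedScaledCi}--\eqref{I:S2} of Corollary \ref{C:Collapsing2} and Remark \ref{R:NoTInvolution}, $\bfC_1$ remains a simple cylinder and $\ColOneTwo(\bfC_2)/T$ remains non-$\tau$-invariant, so all our hypotheses persist. In the reduced picture, $\ColOneTwoX/T$ is completely explicit: identifying the two $\ColOneTwo(\bfC_1)/T$ sides of the parallelogram $\Sigma_0$ yields a flat cylinder, which is glued along its boundary saddle connections to the corresponding boundary of $\Sigma_2$; the surface $\ColOneTwoX$ is then its double cover by $T$, and $(X,\omega)$ is recovered by gluing $\bfC_1$ and $\bfC_2$ into $\ColOneTwo(\bfC_1)$ and $\ColOneTwo(\bfC_2)$.

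From this explicit description I would enumerate the configurations, splitting on whether the preimage $\wt{\Sigma}_2 \subset \ColOneTwoX$ is connected. If $\wt{\Sigma}_2$ is disconnected, Lemma \ref{L:DisconnectedImpliesSimple} forces $\bfC_2$ to be a pair of simple cylinders; the explicit form of $\wt{\Sigma}_2^{top}$ then shows that its common boundary with $\bfC_2$ contains no singularities of the flat metric (only marked points, if any), contradicting Lemma \ref{L:FinalMarkedPointSL}. If $\wt{\Sigma}_2$ is connected, I expect to read off directly from the picture that $\For(\ColTwoX)$ admits a translation involution preserving $\ColTwo(\bfC_1)$, contradicting Lemma \ref{L:NoTInvolution}. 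Having ruled out all cases, $\ColOneTwo(\bfC_2)/T$ must be $\tau$-invariant; the ``in particular'' statement then follows at once from the second part of Lemma \ref{L:StructureOfS}, since $J$ descends to $\tau$ on $\ColOneTwoX/T$ by Lemma \ref{L:AllAboutThatBase}, so $\tau$-invariance of $\ColOneTwo(\bfC_2)/T$ is equivalent to $J$-invariance of $\ColOneTwo(\bfC_2)$, yielding an extension $J_1$ of $J$ to $\ColOneX$.

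The main obstacle will be the explicit case analysis of the final step: the simplified surface has a few combinatorial variants coming from the shape of $\Sigma_2$ (cylinder vs.\ slit torus), the way the endpoints of the parallelogram's sides are identified with the boundary singularities of $\Sigma_2$, and the connectivity of $\wt{\Sigma}_2$, and in each variant one must either exhibit a translation symmetry of $\ColTwoX$ to contradict Lemma \ref{L:NoTInvolution} or verify the absence of a zero on the common boundary of $\bfC_2$ and $\wt{\Sigma}_2^{top}$ to contradict Lemma \ref{L:FinalMarkedPointSL}.
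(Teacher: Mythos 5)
Your overall outline (use the Collapsing Lemma to reduce $\Sigma_2$ and its complement to elementary pieces, then contradict Lemma \ref{L:FinalMarkedPointSL} or Lemma \ref{L:NoTInvolution}) is the right shape, and the ``in particular'' deduction via Lemma \ref{L:StructureOfS} and Lemma \ref{L:AllAboutThatBase} is a legitimate alternative to the paper's citation of Lemma \ref{L:Cutting}. But there is a genuine gap in the middle, and a small error at the start.

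The gap: you split on whether $\wt{\Sigma}_2$ is connected and leave the connected case speculative (``I expect to read off directly from the picture\dots''), but the whole point of the paper's argument is that this case never arises, and the reason is not a picture --- it is Lemma \ref{L:Connectedness-New}. Since $\bfC_1$ is a single simple cylinder, $\ColOneTwo(\bfC_1)$ is a single saddle connection $s$; since $\ColTwo(\bfC_1)$ is a subequivalence class in the quadratic double, $s$ is $J$-invariant. The second part of Lemma \ref{L:Connectedness-New} then says $\ColOneTwoX - \{s, Ts\}$ has two components exchanged by $T$. Because $\Sigma_2$ is by construction disjoint from $\ColOneTwo(\bfC_1)/T$, its full preimage $\wt{\Sigma}_2$ is a $T$-invariant subset of $\ColOneTwoX - \{s, Ts\}$, hence must meet both $T$-exchanged components and so is disconnected. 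This collapses your two cases to one. Without this step, your ``connected case'' would need an actual argument (exhibiting a translation involution of $\For(\ColTwoX)$ from a complex cylinder $\wt{\Sigma}_2$ glued to $\bfC_2$ is not obviously possible, since you don't a priori control the identifications), and the proof would be incomplete.

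A secondary issue: your justification that $|\ColOneTwo(\bfC_1)/T| = 1$ reads ``a single saddle connection fixed pointwise by $T$.'' A translation involution has no fixed points (near a fixed point it would equal the identity), so $T$ fixes no saddle connection pointwise, and it cannot even fix a single saddle connection as a set. In fact $\ColOneTwo(\bfC_1)$ is not $T$-invariant here; the reason $|\ColOneTwo(\bfC_1)/T|=1$ is simply that the image of one saddle connection under the quotient map $\ColOneTwoX \to \ColOneTwoX/T$ is one saddle connection. (Your conclusion is correct, your reason is not.) Relatedly, your parenthetical about making $\Sigma_2$ a slit torus ``if already so'' misreads Corollary \ref{C:Collapsing2}\eqref{I:Stop}: the default output for $|S|=2$ is always a cylinder, the slit torus is an optional alternative, and the paper simply takes the cylinder. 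Once you know $\wt{\Sigma}_2$ is disconnected and reduce $\Sigma_2$ to a simple cylinder, $\wt{\Sigma}_2$ is a pair of simple cylinders, $\bfC_2$ is a pair of simple cylinders by Lemma \ref{L:DisconnectedImpliesSimple}, and Lemma \ref{L:FinalMarkedPointSL} finishes as you say.
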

\begin{proof}
Suppose to a contradiction $\bfC_1$ is a simple cylinder, and that $\ColOneTwo(\bfC_2)/T$ is not $\tau$-invariant. Note that once we establish that $\ColOneTwo(\bfC_2)/T$ is $\tau$-invariant, it will follow that $\cM$ has extra symmetry (by Lemma \ref{L:Cutting}).

Let $s$ denote the single saddle connection in $\ColOneTwo(\bfC_1)$, which is $J$-invariant. By Lemma \ref{L:Connectedness-New}, $\ColOneTwoX - \{s, Ts\}$ is disconnected. Since $\Sigma_2$ is disjoint from $\ColOneTwo(\bfC_1)/T$ by construction, $\wt{\Sigma}_2$ is disconnected. By Lemma \ref{L:DisconnectedImpliesSimple}, $\bfC_2$ is a pair of simple cylinders.



By the Collapsing Lemma (Corollary \ref{C:Collapsing2}), we may reduce to a generic diamond $\left( (X', \omega'), \cM', \bfC_1, \bfC_2 \right)$ where $\Sigma_2$ is a simple cylinder and its complement is a parallelogram (see Figure \ref{F:C1Simple}). As observed in Remark \ref{R:NoTInvolution}, Assumption \ref{A:NoTInvolution} continues to hold after applying the Collapsing Lemma. Since $\bfC_2$ is a pair of simple cylinders glued into $\wt{\Sigma}_2$, which is also a pair of simple cylinders, there are no singularities (only marked points) on their common boundary, contradicting Lemma \ref{L:FinalMarkedPointSL}. 
%
%
%
%
\begin{figure}[h!]\centering
\includegraphics[width=.4\linewidth]{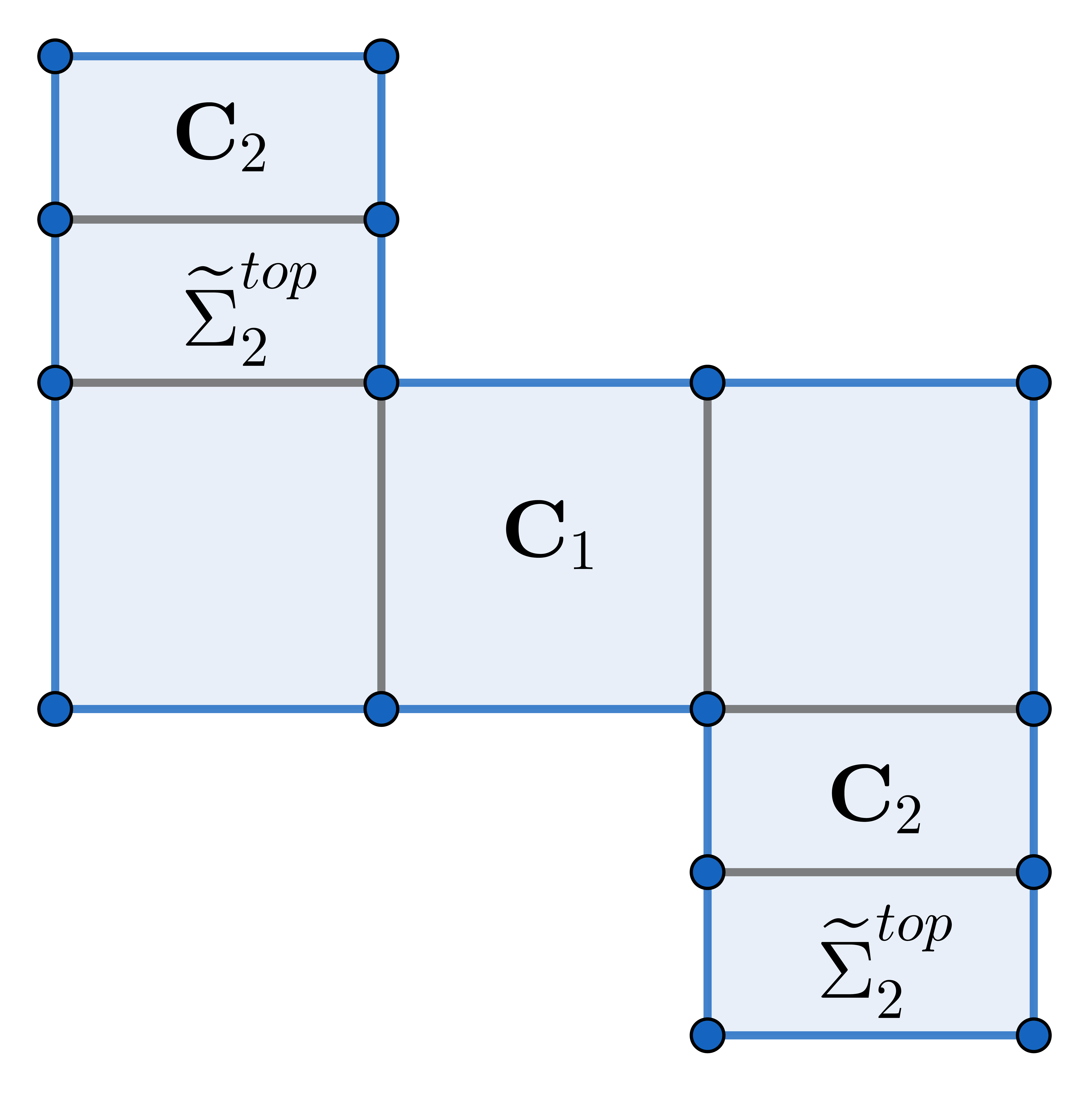}
\caption{The surface that results from applying the Collapsing Lemma. Recall the convention that any unlabelled side is glued to the opposite side of the polygon containing it.}
\label{F:C1Simple}
\end{figure}
%
%
\end{proof}

\begin{proof}[Proof of Lemma \ref{L:DisconnectedImpliesSimple}]
Suppose to a contradiction that $\wt{\Sigma}_2$ is disconnected and that $\bfC_2$ is not a pair of simple cylinders. By Lemma \ref{L:StructureOfS}, $\bfC_2$ is a single complex cylinder. 

By applying the Collapsing Lemma (Corollary \ref{C:Collapsing2}) to $\Sigma_2$, we may replace $(X, \omega)$ with another surface, which we will give the same name for simplicity, that fits into a generic diamond with $\bfC_1$ and $\bfC_2$ and so $\Sigma_2$ is a simple cylinder on $\ColOneTwoX/T$. By Remark \ref{R:NoTInvolution}, Assumption \ref{A:NoTInvolution} continues to hold. Moreover, after collapsing, it is clear that $\wt{\Sigma}_2$ remains disconnected and that $\bfC_2$ remains a complex cylinder (by Corollary \ref{C:Collapsing2} \eqref{C:Collapsing2:RotatedScaledCi}).



Since $\wt{\Sigma}_2$ is disconnected it follows that it is a pair of non-adjacent simple cylinders both of which are isometric to $\Sigma_2$. A specific example of a surface satisfying the hypotheses of this lemma is shown in Figure \ref{F:Impossibility}.

\begin{figure}[h!]\centering
\includegraphics[width=.7\linewidth]{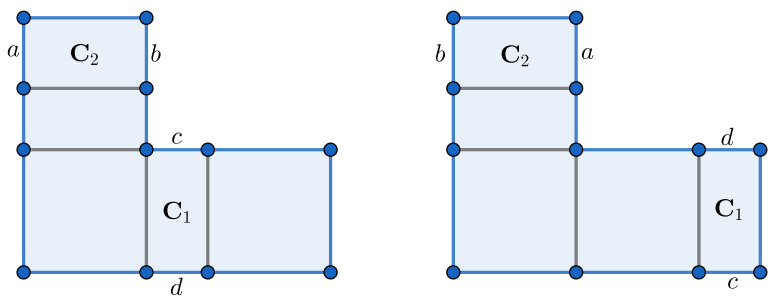}
\caption{An example of a surface that will be ruled out by the proof.}
\label{F:Impossibility}
\end{figure}

Let $\bfC_3$ denote the pair of non-adjacent simple cylinders on $(X, \omega)$ corresponding to $\wt{\Sigma}_2$. We use this notation, instead of the $\wt{\Sigma}_2^{top}$, to emphasize the fact that, in this case, $\wt{\Sigma}_2^{top}$ is a subequivalence class of cylinders.   Notice that $\left( (X, \omega), \cM, \bfC_2, \bfC_3 \right)$ is not a diamond since $\bfC_2$ and $\bfC_3$ share boundary saddle connections. Nevertheless, the invariant subvariety $\cM_{\bfC_3, \bfC_2}$ is well-defined.

\begin{sublem}\label{SL:SecondDiamond1}
The orbit closure $\cM_{\bfC_3, \bfC_2}$ is a quadratic double of a component $\cQ'$ of a stratum. If $\cQ'$ is hyperelliptic, then $\Col_{\bfC_3, \bfC_2}(\bfC_3)/\Col_{\ColTwo(\bfC_3)}(J_2)$ is not fixed by the hyperelliptic involution. 
\end{sublem}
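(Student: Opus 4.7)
The plan is to establish the two claims in sequence. The first is a direct application of Lemma \ref{L:codim1doubles}; the second proceeds by contradiction, using the assumed fixing by the hyperelliptic involution to construct a translation involution on $\For(\ColTwoX)$ in violation of Lemma \ref{L:NoTInvolution}.

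For the quadratic double assertion, first note that since $\bfC_3$ is a subequivalence class on $(X,\omega)$ in $\cM$ and is disjoint from $\bfC_2$ (sharing only boundary saddle connections), its image $\ColTwo(\bfC_3)$ on $\ColTwoX$ remains a pair of non-adjacent simple cylinders whose standard deformation stays in $\MTwo$. A dimension count using the generic diamond hypothesis shows that collapsing $\ColTwo(\bfC_3)$ gives a codimension-one degeneration of $\MTwo$. Because collapsing non-adjacent simple cylinders cannot disconnect a surface, $\Col_{\bfC_3, \bfC_2}(X,\omega) = \Col_{\ColTwo(\bfC_3)}(\ColTwoX)$ is connected. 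Lemma \ref{L:codim1doubles} then yields that $\cM_{\bfC_3, \bfC_2}$ is a quadratic double of some component $\cQ'$ of a stratum of quadratic differentials.

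For the second assertion, suppose for contradiction that $\cQ'$ is hyperelliptic with hyperelliptic involution $\iota$, and that $\iota$ fixes (setwise) the saddle connection $s := \Col_{\bfC_3, \bfC_2}(\bfC_3)/\Col_{\ColTwo(\bfC_3)}(J_2)$. The involution $\iota$ has two lifts to the holonomy double cover $\Col_{\bfC_3, \bfC_2}(X,\omega) \to \cQ'$, differing by the deck transformation $\Col_{\ColTwo(\bfC_3)}(J_2)$, which has derivative $-\mathrm{Id}$; hence exactly one of the two lifts has derivative $+\mathrm{Id}$. Call this translation involution $\tilde{\iota}$. Because $\iota$ preserves $s$ setwise, $\tilde{\iota}$ preserves the pair of saddle connections $\Col_{\bfC_3, \bfC_2}(\bfC_3)$, either fixing each individually or swapping them.

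Finally, $\ColTwoX$ is recovered from $\Col_{\bfC_3, \bfC_2}(X, \omega)$ by gluing in the pair of isometric simple cylinders $\ColTwo(\bfC_3)$ to $\Col_{\bfC_3, \bfC_2}(\bfC_3)$ (the cylinders are isometric because they arise from two copies of $\Sigma_2$ exchanged by $T$). Since $\tilde{\iota}$ permutes the gluing saddle connections and the cylinders being glued in are isometric, $\tilde{\iota}$ extends naturally to a translation involution $T_2$ on $\For(\ColTwoX)$ that preserves the set $\ColTwo(\bfC_3)$. This contradicts Lemma \ref{L:NoTInvolution}, which, under Assumption \ref{A:NoTInvolution}, rules out any translation involution on a generic perturbation of $\For(\ColTwoX)$ in $\MTwo$. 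The main obstacle I expect is verifying compatibility of the extension step, but this is handled by the fact that $\ColTwo(\bfC_3)$ is a subequivalence class of isometric simple cylinders, so each of the two possible actions of $\tilde{\iota}$ on the pair of boundary saddle connections lifts uniquely to an isometric action on the glued-in cylinders.
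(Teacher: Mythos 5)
Your overall structure matches the paper's: you establish the quadratic-double claim via Lemma \ref{L:codim1doubles} and then derive a contradiction with Lemma \ref{L:NoTInvolution}. But your route through the second half is genuinely different, and it leaves a gap. The paper works downstairs: it observes that $\ColTwoX/J_2$ is formed by gluing the simple cylinder $\ColTwo(\bfC_3)/J_2$ into the saddle connection $s := \Col_{\bfC_3, \bfC_2}(\bfC_3)/\Col_{\ColTwo(\bfC_3)}(J_2)$ of the hyperelliptic surface $\cQ'$, and since $s$ is fixed by the hyperelliptic involution, the involution extends to the glued-in cylinder by rotation by $\pi$ (which is a symmetry of every cylinder, regardless of twist), so $\ColTwoX/J_2$ is hyperelliptic and hence $\For(\ColTwoX)$ admits a translation involution. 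You instead work upstairs, lifting the hyperelliptic involution to a translation involution $\tilde\iota$ on $\Col_{\bfC_3, \bfC_2}(X,\omega)$ and attempting to extend it directly to $\ColTwoX$ by mapping $C_1$ to $C_2$.

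The extension step is where the gap lies. When gluing a pair of cylinders into a pair of saddle connections swapped by a translation $\tilde\iota$, it is not automatic that $\tilde\iota$ extends as a translation: the prescribed boundary behavior on the two boundary components of $C_1$ gives two a priori different translations $C_1 \to C_2$, which may differ by a twist. Your justification --- that the cylinders are isometric and the action on the boundary saddle connections ``lifts uniquely to an isometric action'' --- names the issue without resolving it. The obstruction does in fact vanish here, but seeing this requires using the holonomy involution $J_2$ (which already swaps $C_1$ and $C_2$ with derivative $-\mathrm{Id}$, so composing with a rotation by $\pi$ produces the desired translation), or equivalently first proving hyperellipticity of $\ColTwoX/J_2$ as the paper does. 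The paper's approach sidesteps the twist issue entirely, because rotation by $\pi$ of a single glued-in simple cylinder is always a symmetry. A secondary, smaller inaccuracy: your dimension count for the first claim invokes ``the generic diamond hypothesis,'' but as you yourself note, $\bfC_3$ and $\bfC_2$ share boundary saddle connections and so do not form a diamond; the codimension-one claim instead follows from $\ColTwo(\bfC_3)$ being a pair of simple cylinders exchanged by $J_2$, so collapsing it corresponds to a codimension-one degeneration of the quotient surface in $\cQ_2$.
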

\begin{proof}
Since $\bfC_3$ consists of a pair of simple cylinders and since $\ColTwoX$ is connected, $\Col_{\bfC_3, \bfC_2}(X, \omega)$ is also connected. Moreover, $\cM_{\bfC_3, \bfC_2}$ has dimension exactly one less than $\MTwo$ since $\ColTwo(\bfC_3)$ consists of two simple cylinders exchanged by the holonomy involution. By Lemma \ref{L:codim1doubles}, it follows that $\cM_{\bfC_3, \bfC_2}$ is a quadratic double of a component $\cQ'$ of a stratum. By Lemma \ref{L:QHolonomy}, $\Col_{\ColTwo(\bfC_3)}(J_2)$ is the holonomy involution on $\Col_{\bfC_3, \bfC_2}(X, \omega)$. 

Suppose to a contradiction that $\cQ'$ is hyperelliptic, and that $$\Col_{\bfC_3, \bfC_2}(\bfC_3)/\Col_{\ColTwo(\bfC_3)}(J_2)$$ is fixed by the hyperelliptic involution. Then $\ColTwoX/J_2$ belongs to a hyperelliptic component since it is formed by gluing in a simple cylinder, namely $\ColTwo(\bfC_3)/J_2$, to a saddle connection fixed by the hyperelliptic involution on $\Col_{\bfC_3, \bfC_2}(X, \omega)$. This implies that $\For(\ColTwoX)$ (and any generic perturbation thereof) admits a translation involution, which contradicts Lemma \ref{L:NoTInvolution}.
\end{proof}

\begin{sublem}\label{SL:SecondDiamond2}
$\cM_{\bfC_3}$ is properly contained in a quadratic double. 
\end{sublem}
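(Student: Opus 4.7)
The plan is to construct an involution $J'$ of derivative $-\mathrm{Id}$ on $\Col_{\bfC_3}(X, \omega)$ whose quotient is a generic surface in a component $\cQ''$ of a stratum of quadratic differentials, thereby exhibiting a quadratic double $\cN$ of $\cQ''$ containing $\cM_{\bfC_3}$; proper containment will then be argued by contradiction using the fact that $\cM_{\bfC_3, \bfC_1}$ is already known to be an Abelian double. The construction of $J'$ will proceed by first descending the holonomy involution $J_2$ on $\ColTwoX$ through the $J_2$-invariant pair of simple cylinders $\ColTwo(\bfC_3)$ to obtain an involution $\widetilde{J}_2 := \Col_{\ColTwo(\bfC_3)}(J_2)$ on $\Col_{\bfC_3, \bfC_2}(X, \omega)$ --- this realizes the quadratic double structure of Sublemma \ref{SL:SecondDiamond1} --- and then extending $\widetilde{J}_2$ across the complex cylinder $\Col_{\bfC_3}(\bfC_2)$ glued back in to form $\Col_{\bfC_3}(X, \omega)$.

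For the construction, I would first verify that $J_2$ exchanges the two saddle connections comprising $\Col_{\bfC_2}(\bfC_2)$ on $\ColTwoX$. This should follow from the Masur--Zorich cylinder classification on holonomy double covers (Theorem \ref{T:MZ} together with Figure \ref{F:CylinderTypes}): a single complex cylinder such as $\bfC_2$ on the Abelian-double side must arise as the $J_2$-preimage of either a simple cylinder or a simple envelope on the quadratic quotient, and in both cases the two sides of the complex cylinder (and hence the two saddle connections obtained upon collapse) are swapped rather than individually fixed by $J_2$. The $J_2$-invariance of $\ColTwo(\bfC_3)$ established in Sublemma \ref{SL:SecondDiamond1} then ensures that $\widetilde{J}_2$ continues to exchange the two saddle connections in $\Col_{\bfC_3, \bfC_2}(\bfC_2)$. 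Extending $\widetilde{J}_2$ by the appropriate $\pi$-rotation of the complex cylinder $\Col_{\bfC_3}(\bfC_2)$ (chosen so that its action on the boundary components matches $\widetilde{J}_2$'s swap of the collapsed saddle connections) yields the required involution $J'$ on $\Col_{\bfC_3}(X, \omega)$.

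For proper containment, I would argue by contradiction: suppose $\cM_{\bfC_3} = \cN$. Then its codimension one boundary component $\cM_{\bfC_3, \bfC_1}$ --- which is an Abelian double by construction --- would inherit the quadratic double structure of $\cN$ via Lemma \ref{L:codim1doubles}, hence would simultaneously be an Abelian and a quadratic double. Lemma \ref{L:QuadAbDouble} then forces $\cM_{\bfC_3, \bfC_1}$ to be a quadratic double of $\cQ^{hyp}(a^{(1)}, -1^2)$ for some integer $a \geq -1$, with tightly controlled marked-point data. I would then check that this rigid structure can be lifted, through the simple cylinders in $\bfC_1$, to produce an affine involution on a generic perturbation of $\For(\ColTwoX)$ --- which directly contradicts Lemma \ref{L:NoTInvolution} (available under Assumption \ref{A:NoTInvolution}). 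This contradiction forces $\cM_{\bfC_3} \subsetneq \cN$.

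The hard part will be the last step: making precise the passage from the rigid Lemma \ref{L:QuadAbDouble} structure on $\cM_{\bfC_3, \bfC_1}$ back up to an unwanted translation involution on $\For(\ColTwoX)$. The main technical concern is ensuring that the additional translation symmetry inherited from the hyperelliptic structure of $\cQ^{hyp}(a^{(1)}, -1^2)$ genuinely extends across the complex cylinder $\Col_{\bfC_3}(\bfC_2)$ (as opposed to only existing downstairs on $\Col_{\bfC_3, \bfC_2}(X, \omega)$), and that this extension restricted to $\ColTwoX$ yields the translation involution ruled out by Lemma \ref{L:NoTInvolution}. If difficulties arise, a backup is to directly verify proper containment by exhibiting extra parameters in $\cN$ coming from the freedom to move the branch points of the double cover $\Col_{\bfC_3}(X, \omega) \to \Col_{\bfC_3}(X, \omega)/J'$ independently of the constraints satisfied by surfaces in $\cM_{\bfC_3}$.
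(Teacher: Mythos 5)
Your construction of the involution $J_3$ on $\Col_{\bfC_3}(X,\omega)$ is morally the same as the paper's, though the justification for why the two collapsed saddle connections are exchanged rather than individually fixed is somewhat confused. You describe $\bfC_2$ as being ``on the Abelian-double side'' and reason about ``$J_2$-preimages,'' but $\bfC_2$ lives on $(X,\omega)$, not on $\ColTwoX$, and the object that needs to be understood is the pair of saddle connections $\Col_{\bfC_3, \bfC_2}(\bfC_2)$ under the holonomy involution $\Col_{\ColTwo(\bfC_3)}(J_2)$. The paper gets this cleanly from the observation that $\Col_{\bfC_3, \bfC_2}(\bfC_2) = \Col_{\bfC_3, \bfC_2}(\bfC_3)$, and the two simple cylinders in $\ColTwo(\bfC_3)$ are exchanged by $J_2$ (so their collapsed saddle connections are exchanged by the induced involution). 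Your reasoning about ``swapping the two sides of a complex cylinder'' does not obviously establish this; I would redo this step along the paper's lines, using the $T$- and $J$-invariance of $\ColOneTwo(\bfC_3)$.

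For proper containment, your strategy is genuinely different from the paper's and has a real gap. The paper argues by contradiction using connectedness: if $\cM_{\bfC_3}$ were the full quadratic double, then by Theorem \ref{T:MZ}~\eqref{T:MZ:TrivHolComponent} the complement of $\Col_{\bfC_3}(\bfC_2)$ in $\Col_{\bfC_3}(X,\omega)$ has two components, and this disconnectedness propagates down the diagram (through $\Col_{\bfC_3,\bfC_2}(X,\omega)$, then $\Col_{\bfC_2}(X,\omega)$, then $\ColOneTwoX$) to contradict Lemma \ref{L:Connectedness-New}, which is already available under Assumption \ref{A:NoTInvolution}. Your route via $\cM_{\bfC_3,\bfC_1}$ being simultaneously an Abelian and a quadratic double needs several facts the paper never establishes: that $\Col_{\bfC_3}(\bfC_1)$ is a subequivalence class on $\Col_{\bfC_3}(X,\omega)$ and that collapsing it gives a connected codimension-one boundary (so Lemma \ref{L:codim1doubles} applies from the quadratic-double side), that $\ColOne(\bfC_3)$ plays the analogous role on the Abelian side, and crucially, the passage from the rigid $\cQ^{hyp}(a^{(1)},-1^2)$ structure on $\Col_{\bfC_3,\bfC_1}(X,\omega)$ back up to a translation involution on $\For(\ColTwoX)$. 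That last step --- which you yourself flag as the hard part --- involves gluing in two collections of cylinders ($\bfC_3$ and $\bfC_1$) and then collapsing a third ($\bfC_2$), which is not a single degeneration, and it is not clear the extra symmetry survives this passage. Without filling in that argument, the proof is incomplete. The paper's connectedness argument avoids all of this, and I would recommend it as the intended route.
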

\begin{proof}
Since $\ColOneTwo(\bfC_2)$ consists of boundary saddle connections of $\wt{\Sigma}_2$, it follows that $\ColTwo(\bfC_2)$ consists of boundary saddle connections of $\ColTwo(\bfC_3)$. Notice that $\ColOneTwo(\bfC_2)$ is $T$-invariant and so is $\ColOneTwo(\bfC_3)$, since it is the full preimage of $\Sigma_2$ under the quotient by $T$. The $T$-involution exchanges the two cylinders in $\ColOneTwo(\bfC_3)$. Therefore, for each cylinder in $\ColOneTwo(\bfC_3)$ there is a saddle connection in $\ColOneTwo(\bfC_2)$ that borders it. Notice too that since $\bfC_2$ is a complex cylinder, $\ColOneTwo(\bfC_2)$ consists of exactly two saddle connections. Therefore, $\Col_{\bfC_3, \bfC_2}(\bfC_2) = \Col_{\bfC_3, \bfC_2}(\bfC_3)$.

Since $\ColOneTwo(\bfC_3)/T = \Sigma_2$, which is invariant by $\tau$, it follows that $\ColOneTwo(\bfC_3)$ is invariant by $J$. Therefore, $\ColTwo(\bfC_3)$ is fixed by $J_2$. Since $\ColTwo(\bfC_3)$ is fixed by $J_2$, it follows that $\Col_{\bfC_3, \bfC_2}(\bfC_3)$ is fixed by $\Col_{\bfC_3}(J_2)$. Therefore, $\Col_{\bfC_3}(X, \omega)$ is formed by gluing the complex cylinder  $\Col_{\bfC_3}(\bfC_2)$  into the two saddle connections in $\Col_{\bfC_3, \bfC_2}(\bfC_2)$, which are exchanged by the holonomy involution $\Col_{\ColTwo(\bfC_3)}(J_2)$. It is clear that there is a marked-point preserving involution $J_3$ on $\Col_{\bfC_3}(X, \omega)$ such that $\Col_{\bfC_2}(J_3)$ is the holonomy involution on $\Col_{\bfC_3, \bfC_2}(X, \omega)$ (see Lemma \ref{L:StructureOfS} where an identical argument is given). This implies that $\cM_{\bfC_3}$ is contained in a quadratic double. 

Suppose to a contradiction that the $\cM_{\bfC_3}$ is not a proper invariant subvariety in the quadratic double containing it. Since $\Col_{\bfC_3}(\bfC_2)$ is a complex cylinder whose saddle connections are generically parallel to it, Masur-Zorich (Theorem \ref{T:MZ} \eqref{T:MZ:TrivHolComponent}) implies that there are two connected components of \[ \Col_{\bfC_3}(X, \omega) - \Col_{\bfC_3}(\bfC_2). \]
This implies that there are two connected components of 
\[ \Col_{\bfC_2, \bfC_3}(X, \omega) - \Col_{\bfC_2, \bfC_3}(\bfC_2). \]
Since $\ColTwo(\bfC_3)$ consists of two simple cylinders and since gluing in simple cylinders does not cause the number of connected components that a translation surface with boundary has to decrease, there are at least two connected components of 
\[ \Col_{\bfC_2}(X, \omega) - \Col_{\bfC_2}(\bfC_2). \]
This implies that there are at least two connected components of 
\[ \ColOneTwoX - \ColOneTwo(\bfC_2). \]
However, this contradicts Lemma \ref{L:Connectedness-New} (note that $\ColOneTwo(\bfC_2)/T$ is not $\tau$-invariant by the fact that Assumption \ref{A:NoTInvolution} holds). 
%
\end{proof}

Let $J_3$ denote the holonomy involution on $\Col_{\bfC_3}(X, \omega)$. Notice that $\Col_{\bfC_3}(X, \omega)/J_3$ is formed by gluing in the complex envelope  $\Col_{\bfC_3}(\bfC_2)/J_3$  to the saddle connection  $\Col_{\bfC_3, \bfC_2}(\bfC_3)/\Col_{\ColTwo(\bfC_3)}(J_2)$. By Sublemma \ref{SL:SecondDiamond2} and Theorem \ref{T:complex-gluing0}, it follows that $\cM_{\bfC_3, \bfC_2}$ is a quadratic double of a hyperelliptic component $\cQ'$ and that $\Col_{\bfC_3, \bfC_2}(\bfC_3)/\Col_{\ColTwo(\bfC_3)}(J_2)$ is fixed by the hyperelliptic involution, contradicting Sublemma \ref{SL:SecondDiamond1}.
\end{proof}

\subsection{When $\bfC_1$ is a pair of simple cylinders}

\begin{prop}\label{P:NoPairSimple}
If $\bfC_1$ is a pair of simple cylinders, then $\cM$ is as in Theorem \ref{TP2}. 
\end{prop}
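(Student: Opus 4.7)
I will assume $\cM$ has no extra symmetry, since Proposition \ref{PP2} handles that case. Under this hypothesis, Lemma \ref{L:Cutting} gives that $\ColOneTwo(\bfC_2)/T$ is not $\tau$-invariant, and Lemma \ref{L:QuickSummary} then ensures that $\cM$ has rank at least two and that $\ColOneTwoX$ has no marked points. First I will rule out $|\ColOneTwo(\bfC_1)/T|=1$: Lemma \ref{L:P(S1)} would force $|\ColOneTwo(\bfC_1)|=1$, which is impossible since $\bfC_1$ is a pair of distinct simple cylinders whose collapses produce two distinct saddle connections. Hence $|\ColOneTwo(\bfC_1)/T|=2$ and Definition \ref{D:Cutting} produces the subsurfaces $\Sigma_1,\Sigma_2,\Sigma_3$. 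A short component-counting argument, using that $T$ has derivative $+I$ and preserves each saddle connection in $\ColOneTwo(\bfC_1)$ individually (hence pointwise), shows that $\wt{\Sigma}_1$ is connected and therefore not a pair of non-adjacent simple cylinders, so Assumption \ref{A:NoTInvolution}(2) applies automatically.

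With Assumption \ref{A:NoTInvolution} in force, I will follow the template of Proposition \ref{P:NoSimpleCylinder}. The two saddle connections $s_1,s_2$ of $\ColOneTwo(\bfC_1)$ are exchanged by $J$, which has derivative $-I$, so their homology classes on the Abelian differential $\ColOneTwoX$ are negatives of each other; by Theorem \ref{T:HatHomologous} they are hat-homologous, and cutting them disconnects $\ColOneTwoX$ into two $T$-invariant components. After applying the Collapsing Lemma (Corollary \ref{C:Collapsing2}) to reduce $\Sigma_2$ to a simple cylinder---which preserves Assumption \ref{A:NoTInvolution} by Remark \ref{R:NoTInvolution}---I aim to deduce that $\wt{\Sigma}_2$ is disconnected. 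Lemma \ref{L:DisconnectedImpliesSimple} then forces $\bfC_2$ to be a pair of simple cylinders, so $(X,\omega)$ is assembled by gluing the pair of simple cylinders $\bfC_2$ to the pair of simple cylinders $\wt{\Sigma}_2^{top}$ along saddle connections with no singularities in their common boundary, contradicting Lemma \ref{L:FinalMarkedPointSL}.

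The main obstacle will be the step showing $\wt{\Sigma}_2$ is disconnected. In Proposition \ref{P:NoSimpleCylinder}, the existence of a single $J$-invariant saddle connection in $\ColOneTwo(\bfC_1)$ allowed a direct appeal to the second part of Lemma \ref{L:Connectedness-New}. In our setting, neither $s_1$ nor $s_2$ is individually $J$-invariant, and moreover under the no extra symmetry hypothesis $\ColOneTwo(\bfC_2)$ itself fails to be $J$-invariant. I expect that after the Collapsing Lemma reduces $\Sigma_2$ to a cylinder and $\Sigma_3$ to a parallelogram, a careful tracking of the $T$-action on the components of $\ColOneTwoX$ cut by $\ColOneTwo(\bfC_1)\cup\ColOneTwo(\bfC_2)\cup J(\ColOneTwo(\bfC_2))$ will yield the disconnectedness of $\wt{\Sigma}_2$, completing the contradiction.
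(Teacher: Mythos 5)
Your plan correctly opens by reducing to the no-extra-symmetry case and invoking Lemmas \ref{L:Cutting}, \ref{L:QuickSummary}, and \ref{L:P(S1)}, which is where the paper's proof also starts. But there are several genuine gaps.

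First, the adjacent case is not addressed. You assert that collapsing a pair of distinct simple cylinders always produces two distinct saddle connections; this is false when the two cylinders in $\bfC_1$ share a boundary saddle connection at a marked point (such subequivalence classes do occur as preimages of envelopes in the quadratic double $\MTwo$; see Figure \ref{F:CylinderTypes}). The paper disposes of this case first via Lemma \ref{L:ForgettingMarkedPoints}, reducing to a diamond with $\bfC_1$ a single simple cylinder and then invoking Proposition \ref{P:NoSimpleCylinder}.

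Second, and more seriously, the ``component-counting argument'' for $\wt{\Sigma}_1$ is built on a false premise. You claim that $T$, having derivative $+I$, ``preserves each saddle connection in $\ColOneTwo(\bfC_1)$ individually (hence pointwise).'' But $T$ is the covering involution of an \emph{unramified} degree-two cover; a translation involution with derivative $+\mathrm{Id}$ has no fixed points, and hence cannot preserve any saddle connection even setwise (it would then fix the whole segment pointwise). The actual structure is that $T$ maps $\ColOneTwo(\bfC_1)$ off of itself; see the parenthetical in Lemma \ref{L:P(S1)}. Consequently your conclusion that $\wt{\Sigma}_1$ is always connected is wrong: the paper's proof explicitly treats the possibility that $\wt{\Sigma}_1$ is a pair of non-adjacent simple cylinders, in which case Assumption \ref{A:NoTInvolution} fails and one must instead use Lemma \ref{L:FinalCountdown}.

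Third, your plan hinges on showing $\wt{\Sigma}_2$ is always disconnected (Case~1 territory), with only a hope that ``careful tracking of the $T$-action'' will yield this. But $\wt{\Sigma}_2$ need not be disconnected, and the paper must handle the connected case (a complex cylinder) separately. That case (the paper's Case~2) requires an entirely different argument: after running the Collapsing Lemma to make $\Sigma_2$ a simple cylinder and $\Sigma_3$ a parallelogram, the paper pins down $(X,\omega)$ explicitly (Sublemmas \ref{SL:PictureTheBase} and \ref{SL:M-Dimension}) and derives a contradiction by ``overcollapsing'' $\wt{\Sigma}_2^{top}$ to attack the cylinders in $\bfC_2$, using Corollary \ref{C:ConstantRatio}. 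There is no route around this case by component-counting alone, so your proof is missing its central contradiction.
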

\begin{proof}
Suppose that $\bfC_1$ is a pair of simple cylinders and suppose to a contradiction that $\cM$ has no extra symmetry. By Lemma \ref{L:Cutting}, $\ColOneTwo(\bfC_2)/T$ is not $\tau$-invariant. If $\bfC_1$ is a pair of adjacent simple cylinders, then by Lemma \ref{L:ForgettingMarkedPoints}, forgetting the marked point on their common boundary is a diamond in which $\bfC_1$ is a simple cylinder, which is impossible by Proposition \ref{P:NoSimpleCylinder}. 
 
Therefore, $\bfC_1$ consists of two non-adjacent simple cylinders. By Lemma \ref{L:P(S1)}, $|\ColOneTwo(\bfC_1)/T| = 2$. If $\wt{\Sigma}_1$ is a pair of simple cylinders, then we are done by Lemma \ref{L:FinalCountdown}. Therefore, suppose that $\wt{\Sigma}_1$ is not a pair of non-adjacent simple cylinders. Notice that Assumption \ref{A:NoTInvolution} now holds.


\noindent \textbf{Case 1: $\wt{\Sigma}_2$ is disconnected.}

Since $\wt{\Sigma}_2$ is disconnected, $\bfC_2$ is a pair of simple cylinders by Lemma \ref{L:DisconnectedImpliesSimple}. By applying the Collapsing Lemma (Corollary \ref{C:Collapsing2}) to $\Sigma_2$  we may reduce to the case where $\wt{\Sigma}_2$ is a pair of non-adjacent  simple cylinders. Assumption \ref{A:NoTInvolution} continues to hold by Remark \ref{R:NoTInvolution}.  Since $\bfC_2$ is a pair of simple cylinders, its boundary with $\wt{\Sigma}_2^{top}$ consists solely of marked points, contradicting Lemma \ref{L:FinalMarkedPointSL}.

\noindent \textbf{Case 2: $\wt{\Sigma}_2$ is connected.}

By applying the Collapsing Lemma (Corollary \ref{C:Collapsing2}) to $\Sigma_2$ and $\Sigma_3$, we may reduce to the case where $\Sigma_2$ is a simple cylinder and $\Sigma_3$ is a parallelogram.  We continue to call this surface $(X, \omega)$. By Remark \ref{R:NoTInvolution}, Assumption \ref{A:NoTInvolution} continues to hold. \red Case 1 gives a contradiction if $\wt{\Sigma}_2$ is disconnected, so we can assume that $\wt{\Sigma}_2$ is a complex cylinder. \black

Since both $\wt{\Sigma}_2$ and $\ColOneTwo(\bfC_2)$ are $T$-invariant, $\ColOneTwo(\bfC_2)$ comprises either the entire top or the entire bottom boundary of $\wt{\Sigma}_2$. Suppose that $\ColOneTwo(\bfC_2)$ is the bottom boundary of $\wt{\Sigma}_2$. The argument is identical when $\ColOneTwo(\bfC_2)$ is the top boundary.

By Lemma \ref{L:FinalMarkedPointSL}, $\bfC_2$ cannot be a single complex cylinder (since then the common boundary of $\bfC_2$ and $\wt{\Sigma}_2^{top}$ would not contain a singularity, only marked points) and so $\bfC_2$ must be a pair of simple cylinders by Lemma \ref{L:StructureOfS}. 

\begin{sublem}\label{SL:PictureTheBase}
There is a $J$-invariant saddle connection $s$ contained in $\wt{\Sigma}_2$ such that $\ColOneTwoX - \{s, Ts\}$ consists of two isometric copies of $\ColOneTwoX/T - s/T$ exchanged by $T$. 
\end{sublem}
\begin{proof}
Since $\wt{\Sigma}_2$ is connected, it is a complex cylinder that is fixed by $J$ (since $\Sigma_2$ is fixed by $\tau$ and since $J$ descends to $\tau$ on $\ColOneTwoX/T$). Therefore, $\wt{\Sigma}_2$ contains two fixed points of $J$. Let $s$ be a saddle connection in $\wt{\Sigma}_2$ that contains one of these fixed points. We are now done by Lemma \ref{L:Connectedness-New} (note that $\ColOneTwo(\bfC_2)/T$ is not $\tau$-invariant by the fact that Assumption \ref{A:NoTInvolution} holds).
%
%
\end{proof}

By Sublemma \ref{SL:PictureTheBase}, $\wt{\Sigma}_1$ is disconnected. Since $\wt{\Sigma}_1$ is not a non-adjacent pair of simple cylinders, it follows that $\Sigma_1$ is not a simple cylinder. By applying the Collapsing Lemma (Corollary \ref{C:Collapsing2}) to $\Sigma_1$ we may reduce to the case where $\Sigma_1$ is a slit torus (and hence $\wt{\Sigma}_1$ is a pair of disjoint slit tori). We continue to call this surface $(X, \omega)$. 




\begin{sublem}\label{SL:M-Dimension}
$\cM$ has dimension seven and $(X, \omega)$ is depicted in Figure \ref{F:BigAttack} (top).
\end{sublem}

\begin{figure}[h!]\centering
\includegraphics[width=.6\linewidth]{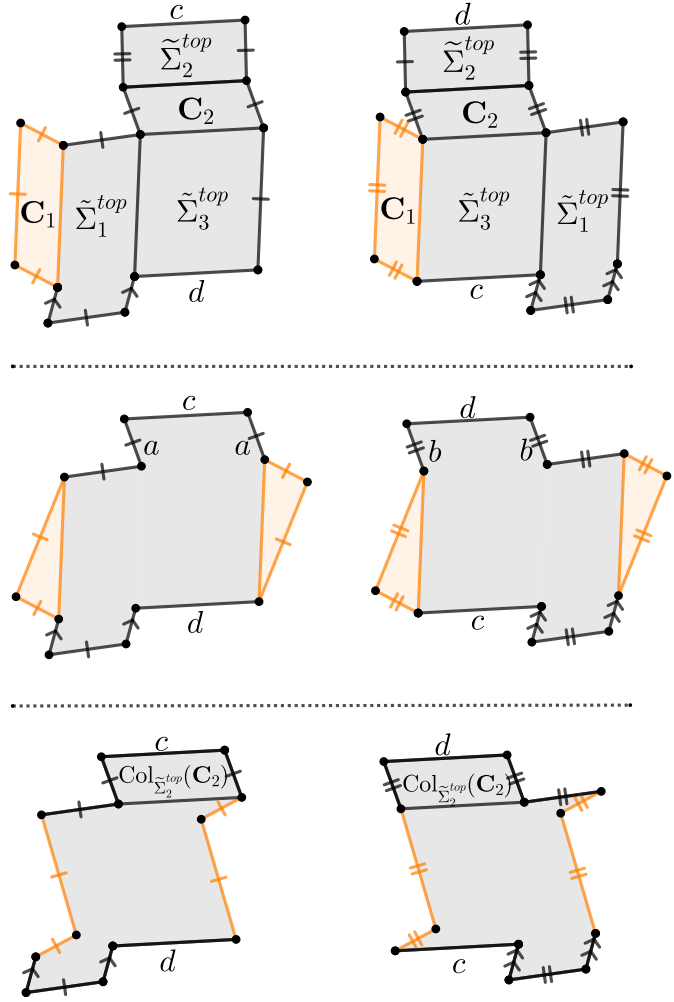}
\caption{The top figure is the surface in Case 2 of Proposition \ref{P:NoPairSimple}. The middle figure is $\Col_{\wt{\Sigma}_2^{top}}(X, \omega)$. The bottom figure illustrates the ``attack".  }
\label{F:BigAttack}
\end{figure}

\begin{proof}
The surface $\ColOneTwoX/T$ is comprised of three subsurfaces - $\Sigma_1, \Sigma_2, \Sigma_3$ - consisting of a slit torus, simple cylinder, and parallelogram respectively - that are glued together. This shows that $\ColOneTwoX/T$ belongs to $\cH(1,1)$ (see Lemma \ref{L:AllAboutThatBase}) and hence that $\MOneTwo$ has dimension five. By definition of generic diamond (Definition \ref{D:GenericDiamond} \eqref{E:one}), $\cM$ has dimension seven.

At this point, appealing to Sublemma \ref{SL:PictureTheBase}, we have shown that $\ColOneTwoX$ is the surface in Figure \ref{F:BigAttack} (top) once the cylinders labelled $\bfC_1$ and $\bfC_2$ have been collapsed. Since $\bfC_2$ is a pair of simple cylinders glued into the bottom of $\wt{\Sigma}_2$ and since $\bfC_1$ is a pair of simple cylinder such $\ColOneTwo(\bfC_1)/T$ forms the boundary of $\wt{\Sigma}_1$ the surface $(X, \omega)$ is as depicted in Figure \ref{F:BigAttack} (top). 
\end{proof}

Collapsing $\wt{\Sigma}_2^{top}$ on $(X, \omega)$, as in the middle subfigure of Figure \ref{F:BigAttack}, we see that $\cM_{\wt{\Sigma}_2^{top}}$ belongs to a quadratic double of $\cQ(8, -1^4)$, which has rank three and rel one (and hence dimension seven) by Lemma \ref{L:Q-rank}. By Sublemma \ref{SL:M-Dimension}, $\cM$ has dimension seven. Since $\wt{\Sigma}_2^{top}$ is a generic cylinder, $\cM_{\wt{\Sigma}_2^{top}}$ has dimension six and hence is codimension one in the quadratic double (in fact, using the saddle connections labelled $a$ and $b$ in the middle subfigure of Figure \ref{F:BigAttack}, the additional equation defining $\cM_{\wt{\Sigma}_2^{top}}$ is $a = b$).

Notice that $\cM_{\wt{\Sigma}_2^{top}}$ has no rel, as can be seen either by directly working with the surface or since $\cM_{\wt{\Sigma}_2^{top}}$ is codimension one in an invariant subvariety with rel one. By Corollary \ref{C:ConstantRatio}, this implies that equivalent cylinders, for instance the ones in $\Col_{\wt{\Sigma}_2^{top}}\left(\bfC_2\right)$, have a constant ratio of moduli. 


We're now going to describe a deformation of the surface in which we use the cylinders in $\Col_{\wt{\Sigma}_2^{top}}\left(\bfC_1\right)$ to ``attack" those in $\Col_{\wt{\Sigma}_2^{top}}\left(\bfC_2\right)$, resulting in a family of surfaces where the modulus of one of the two cylinders in $\Col_{\wt{\Sigma}_2^{top}}\left(\bfC_2\right)$ changes but not the other. This will contradict Corollary \ref{C:ConstantRatio}. We will describe this deformation by making reference to Figure \ref{F:BigAttack}. While keeping all other edges constant, take the corner of the orange triangle and move it as shown in Figure \ref{F:BigAttack}. Moving the corner vertically causes the corner to enter one  $\Col_{\wt{\Sigma}_2^{top}}\left(\bfC_2\right)$, changing the height of one cylinder of $\Col_{\wt{\Sigma}_2^{top}}\left(\bfC_2\right)$ but not the other.  This is the desired deformation, which produces the desired contradiction.
\end{proof}

\begin{proof}[Proof of Theorem \ref{TP2}:]
This is immediate from Propositions \ref{PP2},  \ref{P:NoSingleComplexCylinder}, \ref{P:NoSimpleCylinder}, and \ref{P:NoPairSimple}.
\end{proof}

\subsection{Supplemental statements in case \eqref{I:FinalPossibility} }\label{S:ExtraSymmetrySupplement}

In this section, we deduce the following supplemental claims from the statement of Theorem \ref{TP2} \eqref{I:FinalPossibility} and the results of Section \ref{S:ExtraSymmetryPrep}. 

\begin{thm}\label{T:3bSupplement}
The following hold in Case (\ref{I:FinalPossibility}) when $\cM$ is not a quadratic double. 
\begin{enumerate}[label=\ref{I:FinalPossibility}-\arabic*]
    \item\label{I:ExtraSymmetry:NoP} If the surfaces in $\cN$ do not contain a free marked point, then, defining 
    \[ a^{(3)} :=
  \begin{cases}
                                  \{a+1, a+1\} & \text{ $a$ odd} \\
                                  \{2a+4\} & \text{ $a$ even} 
  \end{cases} \]
  $\cM$ is contained in the quadratic double of 
  \begin{enumerate}[label=(\roman*)]
  \item\label{I:Case1} $\cQ_0=\cQ\left( a^{(3)}, 2, -1^2 \right)$ with no preimages of poles marked, or 
  \item\label{I:Case2} $\cQ_0=\cQ\left( a^{(3)}, -1^4 \right)$ with one or two preimages of poles marked (see Figure \ref{F:EmptyP}), or 
  \item\label{I:Case3} $\cQ_0=\cQ\left( a^{(3)}, 0, -1^4 \right)$ with no preimages of poles marked (see Figure \ref{F:MC1notQdouble}), 
  \end{enumerate}
where $a\geq 0$.
Moreover, $\For(\cM)$ is the preimage of a codimension one hyperelliptic locus in $\For(\cQ_0)$. 
  \begin{figure}[h!]\centering
\includegraphics[width=.6\linewidth]{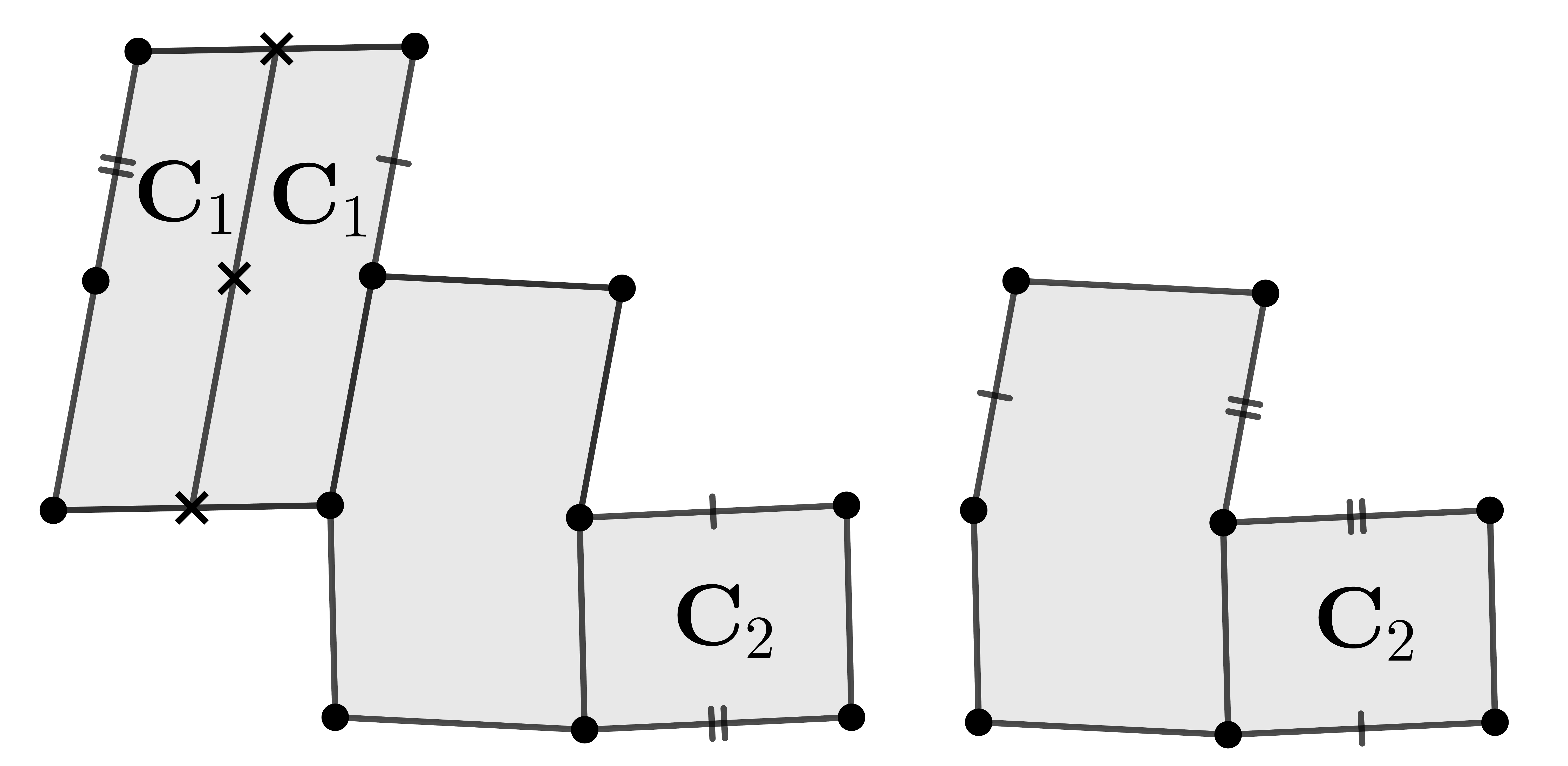}
\caption{ An illustration of Case (\ref{I:FinalPossibility}) when the surfaces in $\cN$ contain no free marked points. Either 1 or 2 of the points labelled $x$ can be marked. More specifically see \eqref{I:ExtraSymmetry:NoP} \ref{I:Case2}.
}
\label{F:EmptyP}
\end{figure}
  \begin{figure}[h!]\centering
\includegraphics[width=.6\linewidth]{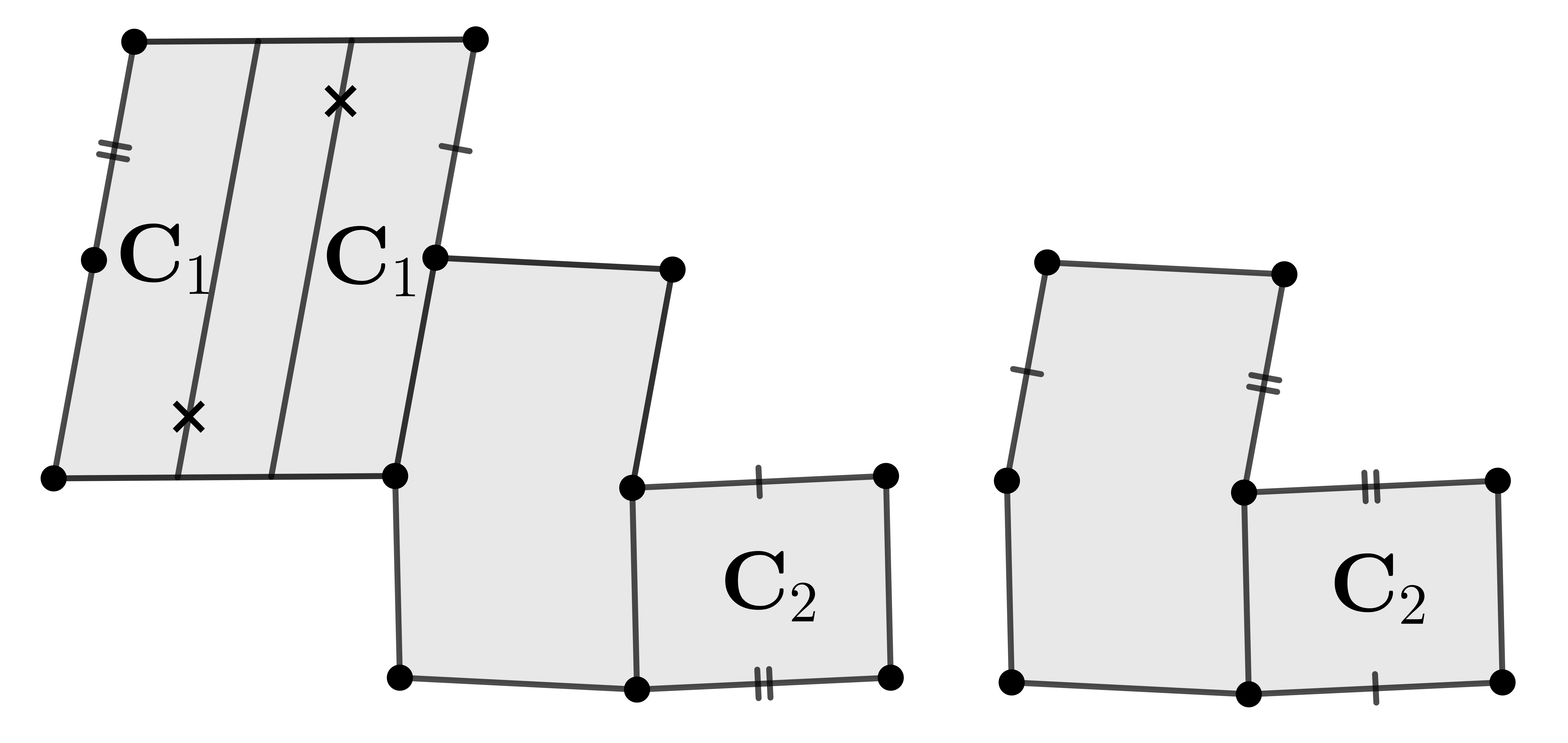}
\caption{ An illustration of Case (\ref{I:FinalPossibility}) when the surfaces in $\cN$ contain no free marked points. Both points labelled $x$ are marked. More specifically see \eqref{I:ExtraSymmetry:NoP} \ref{I:Case3}. 
} 
\label{F:MC1notQdouble}
\end{figure}
    \item\label{I:ExtraSymmetry:PMarkedPoints} If the surfaces in $\cN$ contain a free marked point that is not a branch point, then $\For(\cM) = \MOneTwo$ and $\MOneTwo$ is a quadratic double of $\cQ^{hyp}( a^{(1)}, -1^2)$ for some $a \geq 1$. Up to two cylinder deformations, $\ColOneTwoX$ is $(X, \omega)$ with four marked points forgotten - a slope $+1$ irreducible pair of points exchanged by $T$ and a slope $-1$ irreducible pair of points exchanged by $J$. 
  \item \label{I:ExtraSymmetry:ConnectedBase} If $\ColOneTwoX$ is connected, then surfaces in $\cM$ contain a slope $-1$ irreducible pair of marked points (see Figure \ref{F:FinalPossibility}).
   \begin{figure}[h!]\centering
\includegraphics[width=.6\linewidth]{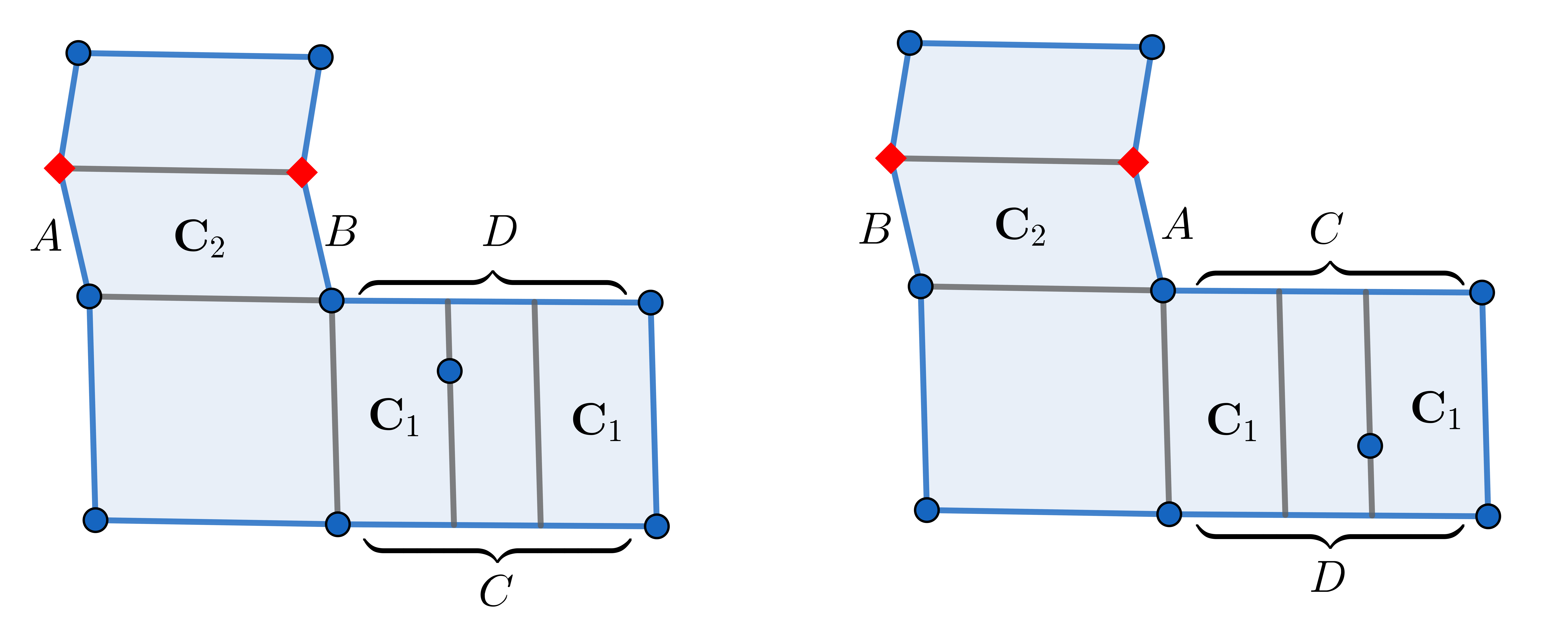}
\caption{An illustration of possibility (\ref{I:FinalPossibility}) of Theorem \ref{TP2}. Here there is a slope $-1$ irreducible pair of marked points. There is also an additional free point that is a branch point on surfaces in $\cN$. An additional example can be obtained by gluing the segments labelled $A$ and $B$ together in the other possible way (so each segment labelled $A$ gets glued to one labelled $B$); in this example the points there is still an additional free point on surfaces in $\cN$, but it is not in the branch locus. A third example can be obtained by collapsing the vertical cylinder between the cylinders labelled $\bfC_1$. On this new surface $\ColOneTwoX$ is disconnected and one or two periodic points are marked.}
\label{F:FinalPossibility}
\end{figure}
  \item \label{I:ExtraSymmetry:DisconnectedBase} If $\ColOneTwoX$ is disconnected, then if surfaces in $\cN$ contain a free point, it belongs to the branch locus. Letting $Q'$ denote the non-free marked points on surfaces in $\cN$, one of the following occurs:
  \begin{enumerate}
      \item $Q'$ consists of one point that is not in the branch locus (see Figures \ref{F:FinalPossibility} and \ref{F:EmptyP} for the cases where surfaces in $\cN$ contain (resp. do not contain) a free marked point).
      \item $Q'$ consists of two points that are in the branch locus (see Figure \ref{F:ExtraPossibility} for the cases where $\cN$ does and does not contain a free marked point).
  \end{enumerate}
\end{enumerate}
\end{thm}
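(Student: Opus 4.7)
The starting point is the observation that in case \eqref{I:FinalPossibility}, $\cM$ arrives either via Lemma~\ref{L:MarkedPointsFTW} (which requires a slope $-1$ irreducible pair of marked points on the boundary of $\bfC_1$ in $(X,\omega)$) or via Sublemma~\ref{SL:PairIsometricEnvelopes} (which reduces to the first route). Using this pair together with Lemma~\ref{L:ApisaWright} and the rank $\geq 2$ hypothesis of Lemma~\ref{L:MarkedPointsFTW}, there is a canonical half-translation involution $J_0$ on $\For(X,\omega)$ with $J_0(p_1) = p_2$, giving $(Q,q) := \For(X,\omega)/J_0 \in \cQ_0$ for some stratum $\cQ_0$. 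The translation involution $T_0$ providing the double cover $\cM \to \cN$ commutes with $J_0$ because $J_0$ descends to the unique (by Lemma~\ref{L:InvolutionImpliesHyp}) hyperelliptic involution $\tau$ on surfaces of $\cN$, and by the same uniqueness this forces $(X,\omega) \to (Q,q)$ to be the holonomy double cover, placing $\cM$ in the quadratic double of $\cQ_0$.

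For Part~\ref{I:ExtraSymmetry:NoP}, with no free marked points on $\cN$, Theorem~\ref{T:StrataMarkedPoints} forces every non-free marked point on $\cN$ other than the slope $-1$ pair to be a Weierstrass (periodic) point of $\For(\cH_0)$, i.e.\ fixed by $\tau$. Writing $a+1$ for the order of a zero of $\For(\cH_0)$ (so $a\geq 0$), the quotient of this zero by $\tau$ combined with its further behavior under $J_0$ gives rise to $a^{(3)} = (a+1)^{(1)}$, where the $\{a+1,a+1\}/\{2a+4\}$ dichotomy arises from the parity of $a+1$ via the standard formulas of Lemma~\ref{L:HyperellipticCheatSheet}. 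Each Weierstrass marked point of $\cN$ is $\tau$-fixed, hence lifts to a $J_0$-fixed point, giving a marked preimage of a pole of $\cQ_0$; non-marked Weierstrass points give unmarked poles. The three cases (i)-(iii) correspond to: one Weierstrass marked point on $\cN$ yielding case (i) (whose marked preimage of a pole collides with a zero to produce order $2$); two marked yielding case (ii); zero marked yielding case (iii), where the slope $-1$ pair descends to the order-zero marked point. The codimension-one hyperelliptic condition on $\For(\cM)$ reflects the single codimension-one constraint on $\cN$ requiring the slope $-1$ pair to be exchanged by $\tau$.

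For Part~\ref{I:ExtraSymmetry:PMarkedPoints}, a free non-branch marked point $p$ on $\cN$ lifts through the Abelian double cover to two marked points on $(X,\omega)$; since $p$ is not $\tau$-fixed, the lifted pair is exchanged by $T_0$ and is a slope $+1$ irreducible pair. Combined with the slope $-1$ pair, $(X,\omega)$ carries four distinguished marked points. Forgetting all four is a codimension-two operation (matching the codimension-two degeneration to $\MOneTwo$), so $\For(\cM) = \MOneTwo$ up to the two cylinder deformations that collapse $\bfC_1$ and $\bfC_2$, and Lemma~\ref{L:QuadAbDouble} identifies $\MOneTwo$ as the quadratic double of $\cQ^{hyp}(a^{(1)}, -1^2)$. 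Part~\ref{I:ExtraSymmetry:ConnectedBase} follows because when $\ColOneTwoX$ is connected, $\MOneTwo$ is simultaneously an Abelian and quadratic double (Lemmas~\ref{L:QuadAbDouble} and~\ref{L:AllAboutThatBase}), and the only route by which the proof of Theorem~\ref{TP2} lands in case~\eqref{I:FinalPossibility} rather than in cases \eqref{I:AD}, \eqref{I:QD}, or \eqref{I:ExtraSymmetry} under this connectivity is through Lemma~\ref{L:MarkedPointsFTW}, producing the slope $-1$ pair. For Part~\ref{I:ExtraSymmetry:DisconnectedBase}, Lemma~\ref{L:QBoundaryHolonomy} gives $\MOneTwo$ as an antidiagonal embedding; a free marked point on $\cN$ that failed to be in the branch locus would lift to a slope $+1$ pair on $(X,\omega)$ that persists on $\ColOneTwoX$ as a free pair on each component, incompatible with the antidiagonal marked-point structure. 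Enumerating which non-free marked points remain yields the two subcases.

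The main obstacle will be the bookkeeping in Part~\ref{I:ExtraSymmetry:NoP}: matching each Weierstrass marking configuration to one of the three strata $\cQ_0$ and verifying the codimension-one hyperelliptic-locus condition on $\For(\cM)$ case-by-case. This requires carefully applying the Riemann-Hurwitz behavior of the $J_0$-quotient at each type of point (the zero of $\For(\cH_0)$, the slope $-1$ pair, and each Weierstrass marked or unmarked point) and combining this with the classification of hyperelliptic components from Section~\ref{S:Q-hyp}.
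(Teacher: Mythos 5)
Your plan is not correct, and the gaps are structural rather than cosmetic.

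The most serious problem is circularity in part \ref{I:ExtraSymmetry:ConnectedBase}. You begin by asserting that in case \eqref{I:FinalPossibility} ``$\cM$ arrives either via Lemma~\ref{L:MarkedPointsFTW} or via Sublemma~\ref{SL:PairIsometricEnvelopes},'' and from there you treat the slope~$-1$ irreducible pair on the boundary of $\bfC_1$ as given. But (i) this routing claim is false — case~\eqref{I:FinalPossibility} is also reached from Proposition~\ref{PP2} via Proposition~\ref{P:DiamondWithH}(b), the extra-symmetry route, which does not a priori produce a slope~$-1$ pair — and (ii) the existence of a slope~$-1$ pair is precisely what part~\ref{I:ExtraSymmetry:ConnectedBase} asks you to \emph{prove} (and, as Figure~\ref{F:EmptyP} shows, it fails when $\ColOneTwoX$ is disconnected). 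The paper avoids this by not looking at how the proof of Theorem~\ref{TP2} reached \eqref{I:FinalPossibility} at all: instead it works directly from the conclusion (Assumption~\ref{A:TP2-3-b}), records that Assumption~\ref{A:T0Involution} holds, and analyzes the marked-point set $Q'$ from scratch. In particular, Lemma~\ref{L:TheNatureOfQ'} shows $Q'$ is one $\tau$-fixed point or a $\tau$-exchanged pair, and Lemmas~\ref{L:Con1}--\ref{L:Con3} tie the branching behavior of $Q'$ and $P'$ to the connectedness of $\ColOneTwoX$; parts~\ref{I:ExtraSymmetry:ConnectedBase} and~\ref{I:ExtraSymmetry:DisconnectedBase} are then \emph{deduced} and subsequently fed into the proofs of parts~\ref{I:ExtraSymmetry:PMarkedPoints} and~\ref{I:ExtraSymmetry:NoP}. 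Your order of deductions would have to be rebuilt along these lines.

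The second gap is in the bookkeeping for part~\ref{I:ExtraSymmetry:NoP}. You map ``one/two/zero Weierstrass marked points on $\cN$'' to cases (i)/(ii)/(iii) respectively, but this does not match the actual correspondence. What governs the trichotomy is the integer $b$ with $\cQ_2 = \cQ^{hyp}(a^{(1)}, b^{(1)})$, which must satisfy $b\in\{-1,0\}$ (Lemma~\ref{L:ConstantsComputation}, which your plan never establishes), together with the cardinality and branching status of $Q'$: $b=0$ (so $|Q'|=2$, both branch points) gives $\cQ(a^{(3)},2,-1^2)$; $b=-1$ and $|Q'|=1$ gives $\cQ(a^{(3)},-1^4)$ with marked preimages of poles; and $b=-1$ with $|Q'|=2$ not branched gives $\cQ(a^{(3)},0,-1^4)$. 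Your parenthetical ``whose marked preimage of a pole collides with a zero to produce order~$2$'' also conflates what is happening at the $b$-point with what happens at the $Q'$-points. Without the constraint $b\in\{-1,0\}$, the Riemann--Hurwitz computation you gesture at in your final paragraph has no hope of landing on the three listed strata.
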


We make a standing assumption arising from the statement of Theorem \ref{TP2} \eqref{I:FinalPossibility}.

\begin{ass}\label{A:TP2-3-b}
$\left( (X, \omega), \cM, \bfC_1, \bfC_2 \right)$ is a generic diamond, and  $\cM$ is a full locus of \red double \black covers of a codimension one locus $\cN$ in a component of a stratum of Abelian differentials $\cH_0$, where $\For(\cN) = \For(\cH_0)$ is a hyperelliptic connected component of rank at least two, and surfaces in $\cN$ have at most three marked points at most one of which is free. Additionally, $\cM$ is not a quadratic double. 
\end{ass}

Throughout this subsection we will let $T_0$ denote the involution on $(X, \omega)$ such that $(X, \omega)/T_0$ is a surface in $\cH_0$.  By Lemma \ref{L:InvolutionImpliesHyp-background}, since $\For\left( (X, \omega)/T_0 \right)$ is a generic surface in a stratum of Abelian differentials of rank at least two, $(X, \omega)$ is not a translation cover of degree greater than one of another surface. Therefore, Assumption \ref{A:T0Involution} holds, as do Lemmas \ref{L:C2TInvariant} and \ref{L:C1TInvariant}. We will let $Q'$ denote the marked points on $(X, \omega)/T_0$ that are not free. 

\begin{lem}\label{L:TheNatureOfQ'}
$Q'$ consists of either a single point fixed or two points exchanged by the hyperelliptic involution.
\end{lem}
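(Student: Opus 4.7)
The plan is to combine a dimension count with the classification of marked points in hyperelliptic components of Abelian differentials. Let $(X_0, \omega_0) := (X, \omega)/T_0 \in \cN$, let $m \leq 3$ be its total number of marked points, and let $k \in \{0, 1\}$ be the number of free ones. Since $\For(\cN) = \For(\cH_0)$ is a hyperelliptic component of rank at least two, I would first invoke Theorem \ref{T:StrataMarkedPoints} to conclude that any periodic marked point is a Weierstrass point, and hence fixed by the hyperelliptic involution $\tau$.

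Next, I would argue that every non-free marked point in $Q'$ is either such a Weierstrass point or is one of a pair of points exchanged by $\tau$. Any non-periodic non-free point must be part of an irreducible multi-point marking; by Lemma \ref{L:ApisaWright}, a slope $\pm 1$ irreducible pair on a surface in $\For(\cH_0)$ gives rise to a non-bijective (half-)translation cover. Combined with Lemma \ref{L:InvolutionImpliesHyp-background}, which says that the hyperelliptic involution is the \emph{only} such cover of the generic surface in a hyperelliptic component of rank at least two, one gets that each irreducible pair has slope $-1$ and consists of two points exchanged by $\tau$. The classification of marked points in hyperelliptic components from \cite{ApisaWright} rules out irreducible markings with $n \geq 3$ points or with slope different from $\pm 1$ in this setting.

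Finally I would run the following dimension count. Let $p$ be the number of Weierstrass points in $Q'$ and $q$ the number of $\tau$-exchanged pairs, so $|Q'| = p + 2q$ and $m = k + p + 2q$. A free marked point contributes complex dimension one over $\For(\cH_0)$, a Weierstrass point contributes zero, and an exchanged pair contributes one. Since $\cH_0$ is a stratum all of whose marked points are free and $\cN$ is codimension one, one obtains
\[ k + q = \dim_{\bC}\cN - \dim_{\bC}\For(\cH_0) = \dim_{\bC}\cH_0 - 1 - \dim_{\bC}\For(\cH_0) = m - 1 = k + p + 2q - 1, \]
i.e.\ $p + q = 1$. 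Thus either $(p, q) = (1, 0)$, in which case $Q'$ is a single Weierstrass point fixed by $\tau$, or $(p, q) = (0, 1)$, in which case $Q'$ is a pair of points exchanged by $\tau$. The main obstacle is the second step: verifying that the combined Apisa/Apisa-Wright classification really does rule out all irreducible $n$-point markings for $n \geq 3$ and all 2-point markings of slope other than $-1$ in hyperelliptic components of rank at least two, so that the dimension count has the clean form above.
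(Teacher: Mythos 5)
Your approach is essentially the paper's argument reorganized as a dimension count rather than case analysis, and it is correct. The paper cases directly on $|Q'|\in\{1,2\}$: if $|Q'|=1$, the single non-free point must be periodic, hence a Weierstrass point by Theorem~\ref{T:StrataMarkedPoints}; if $|Q'|=2$, the two points must form an irreducible pair (both periodic would force codimension two), and then Lemma~\ref{L:ApisaWright} together with Lemma~\ref{L:InvolutionImpliesHyp-background} identifies the quotient map as the hyperelliptic involution, so the pair is $\tau$-exchanged. Your version with the parameters $p,q$ has the small advantage of treating $|Q'|\in\{0,3\}$ uniformly, which the paper handles only implicitly. As for the ``main obstacle'' you flag: this is indeed the one substantive input you would need, and it is exactly what \cite[Theorem~1.3]{ApisaWright} supplies (every non-free marked point is either periodic or part of a slope $\pm1$ irreducible pair; no irreducible $n$-point markings with $n\geq3$ arise here). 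The paper cites that theorem explicitly in the proof of Lemma~\ref{L:C1TInvariant} but leans on it tacitly in the proof of the present lemma, so your concern is addressable by the same reference; and the elimination of slope $+1$ proceeds exactly as you say, via Lemma~\ref{L:InvolutionImpliesHyp-background} forcing the cover to be the hyperelliptic quotient, whose derivative is $-\mathrm{Id}$.
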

\begin{proof}
First, if $Q'$ contains a single point, then since $\cN$ is codimension one in $\cH_0$ and $\cF(\cN)=\cF(\cH_0)$ this point must be a periodic point and hence a Weierstrass point by Theorem \ref{T:StrataMarkedPoints}. 

Second, if $Q'$ consists of two points, then for the same reasons they must be an irreducible pair. The other possibility would have been that the points in $Q'$ were both periodic points, but then $\cN$ would have codimension two. By Lemma \ref{L:ApisaWright}, an irreducible pair of marked points must belong to the same fiber of a map to a lower genus surface. By Lemma \ref{L:InvolutionImpliesHyp-background}, since $\For(\cH_0)$ is a hyperelliptic component of rank at least two, the only such map is the quotient by the hyperelliptic involution. Therefore, the points in $Q'$ consist of two points exchanged by the hyperelliptic involution. 
\end{proof}

We will let $P'$ denote the marked points on $(X, \omega)/T_0$ in the complement of $Q'$. The set $P'$ is either empty or contains a single free marked point by Assumption \ref{A:TP2-3-b}. Finally, we will let $P$ (resp. $Q$) denote the marked points and singularities of $(X, \omega)$ that are preimages of $P'$ (resp. $Q'$) under the map from $(X, \omega)$ to $(X, \omega)/T_0$. 

\begin{lem}\label{L:AdjacencyOfC2AndQ}
The points in $Q'$ do not belong to the boundary of cylinders in $\bfC_2/T_0$. When $\bfC_1$ is $T_0$-invariant, the points in $Q'$ are contained in the boundary of $\bfC_1/T_0$ and $\bfC_1/T_0$ consists of two simple cylinders.
\end{lem}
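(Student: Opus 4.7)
The strategy is to prove both claims by contradiction, exploiting the covering and involution structure via Lemmas~\ref{L:C2TInvariant} and~\ref{L:QuadAbDouble} together with Corollary~\ref{C:Commuting}.

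For the first claim, suppose for contradiction that some $q'\in Q'$ lies on the boundary of a cylinder in $\bfC_2/T_0$. Since $\bfC_2$ is $T_0$-invariant by Lemma~\ref{L:C2TInvariant}, the preimage $Q=T_0^{-1}(q')$ on $(X,\omega)$ lies in $\partial\bfC_2$. I would split on whether the degree-two translation cover $(X,\omega)\to(X,\omega)/T_0$ is branched at $q'$. In the unbranched case, $Q$ consists of two marked points of $(X,\omega)$ exchanged by $T_0$, hence a slope $+1$ irreducible pair; since $\bfC_2$ is $T_0$-invariant and $Q$ generically avoids singularities of the flat metric during the collapse, the descended pair $\ColTwo(Q)$ remains a slope $+1$ irreducible pair of marked points on $\ColTwoX$. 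But $\ColTwoX$ is generic in the quadratic double $\MTwo$, in which the holonomy involution $J_2$ has derivative $-\mathrm{Id}$; the only constrained pairs of marked points in a quadratic double arise via $J_2$ and are therefore slope $-1$ irreducible pairs, so a slope $+1$ pair is forbidden, giving a contradiction.

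In the branched case, $Q$ is a single zero of $(X,\omega)$ situated at a vertex of $\bfC_2$ (zeros can only occur at vertices of cylinder boundaries). Upon collapsing $\bfC_2/T_0$, the point $q'$ merges with the zero at the vertex and the hyperelliptic constraint cutting $\cN$ out of $\cH_0$ becomes vacuous on $\cN_{\bfC_2/T_0}$; hence $\cN_{\bfC_2/T_0}$ is a full component of a stratum, making $\MTwo$ simultaneously an Abelian double and a quadratic double. Lemma~\ref{L:QuadAbDouble} then pins down the structure of $\MTwo$ explicitly. I would propagate this Abelian-plus-quadratic-double structure from $\MTwo$ back across the diamond to $\cM$, using Corollary~\ref{C:Commuting} together with a symmetry-extension argument in the spirit of the Diamond Lemma, to conclude that $\cM$ is itself a quadratic double, contradicting the standing Assumption~\ref{A:TP2-3-b}. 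This branched sub-case is expected to be the main technical obstacle.

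For the second claim, assume $\bfC_1$ is $T_0$-invariant. Corollary~\ref{C:Commuting} yields $\ColOne(T_0)=T_1$, so $\ColOneX/T_1\in\cH_1$ is generic in a component of a stratum of Abelian differentials in which every marked point is free. Since $Q'$ is non-free on $(X,\omega)/T_0$ by Lemma~\ref{L:TheNatureOfQ'}, and the hyperelliptic constraint defining $\cN$ concerns positions of $Q'$ rather than cylinder moduli, the only way for this constraint to disappear upon collapsing $\bfC_1/T_0$ to produce a full stratum $\cH_1$ is for the points of $Q'$ to merge with singularities; this forces $Q'\subset\partial(\bfC_1/T_0)$. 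To identify $\bfC_1/T_0$ as a pair of non-adjacent simple cylinders, I would classify the generic cylinder types of a subequivalence class in $\cN$, using that two saddle connections on a surface in $\cH_0$ are generically parallel if and only if they are exchanged by the hyperelliptic involution (Lemma~\ref{L:HypParallelism}), and rule out the remaining possibilities (a single simple cylinder, or complex cylinders) by showing they fail to collapse $\cN$ onto a full stratum component $\cH_1$ while simultaneously merging $Q'$ into zeros. Only a pair of non-adjacent simple cylinders with $Q'$ at the vertices between them achieves both.
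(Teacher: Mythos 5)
Your second-claim argument (Corollary~\ref{C:Commuting} to get $\ColOne(T_0)=T_1$, then the fact that $\cH_1$ has only free marked points) tracks the paper's. Your first-claim argument takes a genuinely different route — a branched/unbranched case split rather than the paper's split on whether $\bfC_1$ is $T_0$-invariant — and it has two concrete gaps in the unbranched case, while the branched case is an outline rather than a proof. First, ``exchanged by $T_0$, hence a slope $+1$ irreducible pair'' is not a valid deduction: Lemma~\ref{L:TheNatureOfQ'} allows $Q'$ to be a single Weierstrass point, in which case the two $T_0$-exchanged preimages in $Q$ are each \emph{periodic} — a reducible 2-point marking with zero degrees of freedom — not an irreducible slope $+1$ pair, so the slope-based contradiction does not fire. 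Second, even when $Q$ really is a slope $+1$ pair, your claim that $\ColTwo(Q)$ survives as a pair of marked points on $\ColTwoX$ is false when $Q\subset\partial\bfC_2$: Sublemma~\ref{SL:C2Invariance3} shows that the boundary component of $\bfC_2$ containing a marked point carries only that $T_0$-exchanged pair and no singularities, so collapsing $\bfC_2$ forcibly merges those marked points into the singularities on the opposite boundary. There is no ``generic avoidance of singularities during the collapse'' — the merging is precisely what the codimension-one degeneration $\Col_{\bfC_2}$ does.

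The paper's proof of the first claim avoids these issues entirely. If $\bfC_1$ is $T_0$-invariant, the (already proved) second claim gives $Q'\subset\partial(\bfC_1/T_0)$, and the disjointness of $\bfC_1/T_0$ and $\bfC_2/T_0$ — which follows from disjointness of $\bfC_1$, $\bfC_2$ together with their $T_0$-invariance — excludes $Q'$ from $\partial(\bfC_2/T_0)$. If $\bfC_1$ is not $T_0$-invariant, Lemma~\ref{L:C1TInvariant} supplies a slope $-1$ pair in $\partial\bfC_1$ that projects to the two distinct points of $Q'$; if some $q'\in Q'$ also lay in $\partial(\bfC_2/T_0)$, then the $T_0$-invariance of $\bfC_2$ would place both preimages of $q'$ in $\partial\bfC_2$, and one of those preimages is already in $\partial\bfC_1$, producing a non-singular point in $\overline{\bfC}_1\cap\overline{\bfC}_2$ and violating the disjointness required by the definition of a diamond. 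This mechanism — Lemma~\ref{L:C1TInvariant} feeding into the disjointness of $\overline{\bfC}_1$ and $\overline{\bfC}_2$ — is the missing ingredient in your proposal, and it handles uniformly the cases you split by branching.
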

\begin{proof}
Suppose first that $\bfC_1$ is $T_0$-invariant. This implies that $\ColOne(T_0)$ is defined and that $\ColOne(T_0) = T_1$ (by Corollary \ref{C:Commuting}).

The codomain of $T_1$ is $\Col_{\bfC_1/T_0}\left( (X, \omega)/T_0 \right)$ (by Lemma \ref{L:DefinitionCol(f)}). By definition of Abelian double all marked points on this surface are free, which implies that $\Col_{\bfC_1/T_0}(Q')$ no longer contains marked points. In particular, this implies that the points in $Q'$ belonged to the boundary of $\bfC_1/T_0$ and that $\bfC_1/T_0$ consists of two simple cylinders (the other possibility - that $\bfC_1/T_0$ consisted of a single simple cylinder, with both boundaries consisting of a loop from a marked point to itself - would lead to periodic marked points on $\ColOneX/T_1$). In this case, it is also easy to see that $\bfC_2/T_0$ cannot contain a point in $Q'$ in its boundary since then $\bfC_1/T_0$ and $\bfC_2/T_0$ would not be disjoint; and so, $\bfC_1$ and $\bfC_2$ would not be disjoint (since both $\bfC_1$ and $\bfC_2$ are $T_0$-invariant). This contradicts the definition of a diamond. We have established the second claim in general and the first claim in the case that $\bfC_1$ is $T_0$-invariant.

By Lemma \ref{L:C1TInvariant}, either $\bfC_1$ is $T_0$-invariant or $(X, \omega)$ contains an irreducible pair of marked points in $Q$ that belong to the boundary of $\bfC_1$ and map to distinct points of $Q'$. We may suppose that the latter is the case. Suppose to a contradiction that the boundary of $\bfC_2/T_0$ does contain a point in $Q'$. 

Because each point of $Q'$ has a marked point in its preimage (in $Q$), neither point of $Q'$ is a branch point. So all preimages of points in $Q'$ are non-singular points.


We have assumed that $\bfC_2$ has a point of $Q$ in its boundary. Since $\bfC_2$ is $T_0$-invariant (by Lemma \ref{L:C2TInvariant}), the boundary of $\bfC_2$ contains both preimages of a point in $Q'$. Since the boundary of $\bfC_1$ contains one of these preimages, we have that $\overline{\bfC}_2$ and $\overline{\bfC}_1$ contain a non-singular point in their intersection. This contradicts the fact that $\bfC_1$ and $\bfC_2$ are disjoint. 
%
\end{proof}

Define $T_2 := \ColTwo(T_0)$, which is defined since $\bfC_2$ is $T_0$-invariant (by Lemma \ref{L:C2TInvariant}). By Lemma \ref{L:InvolutionImpliesHyp-background}, the existence of the map $\ColTwoX/J_2 \ra \ColTwoX/\langle J_2, T_2 \rangle$ implies that $\For(\cQ_2)$ is a hyperelliptic component. In particular, there are integers $a \geq b \geq -1$ such that $\For(\cQ_2) = \cQ^{hyp}\left( a^{(1)}, b^{(1)} \right)$ (see Subsection \ref{SS:Hyp} where this notation is first defined). 

\begin{lem}\label{L:Commuting2}
If $\For(\cQ_2) \ne \cQ(-1^4)$, then $\{\mathrm{id}, J_2, T_2, J_2 T_2\}$ are the affine symmetries identified in Lemma \ref{L:HypSymmetries}. 
\end{lem}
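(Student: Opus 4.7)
The plan is to invoke Lemma \ref{L:HypSymmetries} directly and identify $J_2$ and $T_2$ with the canonical generators $J$ and $T$ it produces. First, I would verify the hypotheses of Lemma \ref{L:HypSymmetries} are met: the discussion immediately preceding Lemma \ref{L:Commuting2} identifies $\For(\cQ_2)$ with the hyperelliptic component $\cQ^{hyp}(a^{(1)}, b^{(1)})$, which is not $\cQ(-1^4)$ by assumption, and $\MTwo$ is a quadratic double of $\cQ_2$. Assumption \ref{A:DenseComplicated} guarantees that $(X,\omega)$ has dense $\GL$-orbit in $\cM$, hence $\ColTwoX$ has dense orbit in $\MTwo$ and may be treated as generic for the purposes of counting affine symmetries. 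Consequently the affine self-maps of derivative $\pm \mathrm{Id}$ on $\ColTwoX$ form the four-element group $\{\mathrm{id}, J, T, JT\}$ described in Lemma \ref{L:HypSymmetries}, where $J$ has derivative $-\mathrm{Id}$ with quotient a generic surface in $\cQ_2$, $T$ is a translation involution, and $JT$ is the hyperelliptic involution.

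Second, I would identify $J_2$ with $J$. By definition $J_2$ is the holonomy involution on $\ColTwoX$, so it has derivative $-\mathrm{Id}$ and its quotient lands in $\cQ_2$; by the uniqueness of the holonomy involution proved in Lemma \ref{L:QHolonomy} (which applies here precisely because $\For(\cQ_2) \ne \cQ(-1^4)$), $J_2$ must coincide with $J$.

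Third, the involution $T_2 = \ColTwo(T_0)$ has derivative $+\mathrm{Id}$, so it lies in $\{\mathrm{id}, T\}$. The only potential obstacle, which I expect to be the main (though minor) subtlety, is ruling out $T_2 = \mathrm{id}$. This is handled as follows. Assumption \ref{A:TP2-3-b} posits that $\cM$ is not a quadratic double, so $\cM \to \cN$ is a genuine degree-two cover and hence $T_0 \ne \mathrm{id}$. A non-trivial affine involution with derivative $+\mathrm{Id}$ on a connected translation surface cannot act trivially on any non-empty open set, so $T_0$ acts non-trivially on $(X,\omega) - \overline{\bfC}_2$; under the natural identification of $(X,\omega) - \overline{\bfC}_2$ with $\ColTwoX - \Col_{\bfC_2}(\bfC_2)$, this action agrees with that of $T_2$, forcing $T_2 \ne \mathrm{id}$ and therefore $T_2 = T$. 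Combining these identifications yields $\{\mathrm{id}, J_2, T_2, J_2 T_2\} = \{\mathrm{id}, J, T, JT\}$, as required.
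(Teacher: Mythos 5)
Your proof is correct and follows essentially the same route as the paper's: invoke Lemma \ref{L:HypSymmetries}, identify $J_2$ with the canonical holonomy involution, and identify $T_2$ with the unique translation involution. Your extra care in ruling out $T_2 = \mathrm{id}$ is a reasonable elaboration of a point the paper leaves implicit, though note the non-triviality of $T_0$ follows directly from $\cM$ being a full locus of \emph{degree-two} covers of $\cN$ in Assumption \ref{A:TP2-3-b}, not from the ``$\cM$ is not a quadratic double'' clause that you cite.
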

\begin{proof}
Suppose that $\For(\cQ_2) \ne \cQ(-1^4)$. By Lemma \ref{L:HypSymmetries}, the generic surface in $\For(\cQ_2)$ has four affine symmetries of derivative $\pm \mathrm{Id}$. These are: the identity, the holonomy involution, a unique translation involution, and the composition of the previous two involutions. Since $J_2$ is the holonomy involution on $\ColTwoX$ and $T_2$ is a translation involution, the claim follows. 
%
%
\end{proof}

\begin{lem}\label{L:ConstantsComputation}
$b \in \{-1, 0\}$. 
\end{lem}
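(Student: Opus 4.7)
The plan is to leverage the $\mathbb{Z}/2 \times \mathbb{Z}/2$ symmetry on $\ColTwoX$ established in the preceding lemmas, and then translate the constraint ``$\ColOneTwoX/T$ lies in a standard hyperelliptic Abelian component" into an algebraic restriction on $b$ via the classification in Section \ref{SS:Hyp}.

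First, I would apply Lemma \ref{L:Commuting2}: since $\For(\cQ_2) \neq \cQ(-1^4)$ (which we may arrange by genericity, treating the exceptional case separately), the involutions $J_2$ and $T_2$ commute and generate the four affine symmetries of $\ColTwoX$ of derivative $\pm \mathrm{Id}$. This places $\ColTwoX$ squarely in the $\mathbb{Z}/2 \times \mathbb{Z}/2$ framework of Section \ref{SS:Hyp}, so that $\ColTwoX/\langle J_2, T_2\rangle$ is a generic surface in the genus zero stratum $\cQ(a, b, -1^{a+b+4})$, and correspondingly $\ColTwoX/T_2 \in \cH(a^{(2)}, b^{(2)})$.

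Next, I would further degenerate to $\ColOneTwoX$. By Lemma \ref{L:AllAboutThatBase}, $\ColOneTwoX/T \in \cH^{hyp}(2g-2)$ or $\cH^{hyp}(g-1,g-1)$, and $J$ descends to the hyperelliptic involution on this surface. Hence the double quotient $\ColOneTwoX/\langle J, T\rangle$ is the hyperelliptic quotient of a surface in one of these standard hyperelliptic Abelian components, which by the usual covering calculation lies in $\cQ(2g-3, -1^{2g+1})$ or $\cQ(g-2, g-2, -1^{2g})$. On the other hand, this base is obtained by collapsing the image of $\ColTwo(\bfC_1)$ on $\cQ(a, b, -1^{a+b+4})$. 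By Lemma \ref{L:AdjacencyOfC2AndQ}, $\bfC_1/T_0$ is a pair of simple cylinders, and since $\For(\cH_0)$ is hyperelliptic these two cylinders are exchanged by the hyperelliptic involution; passing through the quotient by $\tau = J$ shows that the image of $\ColTwo(\bfC_1)$ on the base is either a single simple cylinder or a simple envelope (in the sense of Masur--Zorich, Theorem \ref{T:MZ}).

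The heart of the argument is then a case analysis matching the two descriptions of the degenerated base. Collapsing a simple cylinder or envelope on $\cQ(a, b, -1^{a+b+4})$ can only merge a few of its singularities, so the orders $a$ and $b$ are largely preserved in the resulting stratum; conversely the target stratum $\cQ(2g-3, -1^{2g+1})$ or $\cQ(g-2, g-2, -1^{2g})$ contains at most two non-pole singularities of tightly constrained orders. Tracking cone angles through the collapse (using $k_{\text{new}} = k_1 + k_2 + 2$ for merged singularities in a quadratic differential), I would show that if $b \geq 1$ then the $b$-singularity of the base produces a non-pole contribution in the collapsed stratum that cannot be reconciled with the target, leaving only $b \in \{-1, 0\}$ as a consistent possibility.

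The main obstacle will be this last case analysis: there are several subcases depending on the parity of $b$, on whether the cylinder on the base is a simple cylinder or a simple envelope, on which singularities of $\cQ(a, b, -1^{a+b+4})$ lie on the boundary of this cylinder/envelope, and on whether the target is $\cQ(2g-3, -1^{2g+1})$ or $\cQ(g-2, g-2, -1^{2g})$. Handling all of these cleanly while correctly accounting for the marked versus unmarked preimages of poles (which enter via the definition of $\cQ_2$ as a quadratic double) is the bulk of the work; the $\For(\cQ_2) = \cQ(-1^4)$ case can be treated separately by a direct low-dimensional argument.
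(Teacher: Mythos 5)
Your proposal takes a genuinely different and considerably longer route than the paper. The paper's proof stops at $\Col_{\bfC_2}(X,\omega)/T_2$ rather than going down to the base of the diamond: by Assumption \ref{A:TP2-3-b}, $\cF\bigl((X,\omega)/T_0\bigr)$ lies in a hyperelliptic component of a stratum of Abelian differentials, hence so does its degeneration $\cF\bigl(\Col_{\bfC_2}(X,\omega)/T_2\bigr)$, which by Lemmas \ref{L:Commuting2} and \ref{L:HyperellipticCheatSheet} lies in $\cH(a^{(2)}, b^{(2)})$. Such a component has either a single zero fixed by the hyperelliptic involution, or a pair exchanged by it; for $a>0$ the set $a^{(2)}$ already exhausts this pattern (a singleton if $a$ is odd, a $\tau$-exchanged pair if $a$ is even), so $b^{(2)}$ must consist of marked points, i.e., $b\in\{-1,0\}$. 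No further degeneration, no Masur--Zorich cylinder classification, and no cone-angle bookkeeping is needed.

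Beyond being more labored, your sketch has a concrete error and a gap. The hyperelliptic quotient of a surface in $\cH^{hyp}(g-1,g-1)$ lies in $\cQ(2g-2, -1^{2g+2})$, not in $\cQ(g-2, g-2, -1^{2g})$: the two order-$(g-1)$ zeros are exchanged by the hyperelliptic involution, so they map unramified to a single zero of order $2g-2$, and the $2g+2$ Weierstrass points give $2g+2$ poles. The stratum $\cQ(g-2,g-2,-1^{2g})$ corresponds to the cheat-sheet base with $a=b=g-2$ odd; its $T$-cover lands in $\cH(g-1,g-1)$ as a stratum, but $T$ need not be the hyperelliptic involution, so that is not the hyperelliptic quotient. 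Your appeal to Lemma \ref{L:AdjacencyOfC2AndQ} also quietly assumes that $\bfC_1$ is $T_0$-invariant, which by Lemma \ref{L:C1TInvariant} can fail (when $\bfC_1$ contains a slope $-1$ irreducible pair); that branch would need to be handled. Finally, you acknowledge that the case analysis at the end is not carried out; given the alternatives above, that analysis is unnecessary once you observe, as the paper does, that $a^{(2)}$ alone must account for all the zeros.
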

While technically $a^{(1)}$ is a set of integers we will also use it to denote the corresponding points on quadratic differentials with zeros of the corresponding order. Similarly for $a^{(2)}$. 
\begin{proof}
If not, then, by Lemma \ref{L:Commuting2}, $\For(\ColTwoX)/T_2$ belongs to $\cH\left( a^{(2)}, b^{(2)} \right)$ (by Lemma \ref{L:HyperellipticCheatSheet}) \red with $a\geq b>0$. \black By Assumption \ref{A:TP2-3-b}, $\cF((X,\omega)/T_0)$ belongs to a hyperelliptic connected component of a stratum of Abelian differentials, so the same is true for $\cF(\ColTwoX/T_2)$. Hyperelliptic connected components of strata of Abelian differentials have either a single zero or a pair of zeros that differ by the hyperelliptic involution, which describes the points in $a^{(2)}$ (since $a>0$). So $b^{(2)}$ must consist of marked points and hence $b \in \{-1, 0\}$, a contradiction. 
\end{proof}

\begin{lem}\label{L:Con1}
Suppose $Q'$ consists of a single point. Then it is not a branch point, $Q$ entirely consists of one or two marked points, and $\ColOneTwoX$ is disconnected.
\end{lem}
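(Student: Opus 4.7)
The proof splits along the three assertions. Claim 1 (that $Q'$ is not a branch point) follows at once from the fact that $T_0$ is a translation involution: in any local translation chart $T_0$ has the form $z\mapsto z+c$, so the existence of a fixed point forces $c=0$ and then $T_0=\mathrm{id}$ by analytic continuation. Hence $(X,\omega)\to (X,\omega)/T_0$ is an unbranched degree two cover and $Q'$ has two distinct preimages exchanged by $T_0$. For Claim 2, these two preimages share the same local singularity type since $T_0$ preserves $\omega$; if they were zeros of order $k\geq 1$, then $Q'$ would also be a zero of order $k$, contradicting the fact that $Q'$ is a marked point. So both preimages are smooth, and by Definition \ref{D:Covering}\,(\ref{D:Covering:3}) at least one is marked, giving $|Q|\in\{1,2\}$ with $Q$ consisting entirely of marked points.

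For Claim 3 we argue by contradiction: suppose $\ColOneTwoX$ is connected. Then by Lemma \ref{L:AllAboutThatBase}, $\MOneTwo$ is simultaneously an Abelian and a quadratic double, and by Lemma \ref{L:QuadAbDouble} it must be a quadratic double of $\cQ^{hyp}(a^{(1)},-1^2)$ for some integer $a\geq -1$, with marked points on its surfaces either absent (if $a\geq 0$) or consisting precisely of the two preimages of the poles $a^{(1)}$ (if $a=-1$). The plan is to trace the marked points of $(X,\omega)$ through the collapse of $\bfC_1\cup\bfC_2$ and contradict this narrow list.

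By Lemma \ref{L:AdjacencyOfC2AndQ}, $\bfC_2$ never borders $Q$, so the points of $\ColTwo(Q)$ survive on $\ColTwoX$. The analysis then splits on whether $\bfC_1$ is $T_0$-invariant. If it is, Lemma \ref{L:AdjacencyOfC2AndQ} says $\bfC_1/T_0$ is a pair of simple cylinders meeting at $Q'$, so $Q$ sits on the boundary of $\bfC_1$ and is absorbed into singularities by $\ColOne$; one then shows that $P$ must also be empty (else free marked points on $(X,\omega)/T_0$ would force free marked points on $\ColOneTwoX/T$, which $\cH^{hyp}(a^{(2)})$ does not admit except in $\cH(0)$ or $\cH(0,0)$), and the case $P=\emptyset$ is ruled out by a cylinder-count contradicting the simple-cylinder structure of $\bfC_1/T_0$ given $|Q'|=1$. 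If $\bfC_1$ is not $T_0$-invariant, then Lemma \ref{L:C1TInvariant} forces a slope $-1$ irreducible pair of marked points on its boundary; since $|Q'|=1$ and $Q$ can only produce slope $+1$ pairs under $T_0$, this pair must lie in $P$, and one then uses Lemma \ref{L:ApisaWright} to produce a half-translation cover of $(X,\omega)$ distinct from the quotient by $T_0$.

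The main obstacle is the second horn of this dichotomy: organizing the extra half-translation symmetry arising from the slope $-1$ pair. The plan is to combine the uniqueness of the hyperelliptic involution on $\ColOneTwoX/T$ (Lemmas \ref{L:AllAboutThatBase} and \ref{L:HypSymmetries}) with the Diamond Lemma (Lemma \ref{L:diamond}) to show that the holonomy involution $J$ on $\ColOneTwoX$ extends to a marked-point-preserving half-translation involution on all of $(X,\omega)$, making $\cM$ a quadratic double and contradicting Assumption \ref{A:TP2-3-b}. Verifying that the extensions are compatible with the marking of $Q$ (in particular that $Q$ is preserved by the putative $J$-extension) is where the bulk of the work lies and will require a careful case check using the structure of $\bfC_1$ and $\bfC_2$ given by Lemma \ref{L:StructureOfS} and Masur--Zorich (Theorem \ref{T:MZ}).
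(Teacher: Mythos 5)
Your argument for Claim 1 is incorrect. Translation involutions \emph{can} have fixed points at singularities: at a zero of order $1$ (cone angle $4\pi$), choosing a local coordinate $w$ with $\omega=2w\,dw$, the map $w\mapsto -w$ preserves $\omega$ and fixes the origin. The local chart in which $T_0$ is $z\mapsto z+c$ is a translation chart, which only exists away from singularities; a marked point $Q'$ that is a branch point has a unique preimage of cone angle $4\pi$, i.e.\ a singularity of the flat metric, where your local argument simply does not apply. Indeed, the paper's own setup takes branching at marked points seriously (see Definition \ref{D:Covering}, and see Lemma \ref{L:Con2} where the case that both points of $Q'$ are branch points genuinely occurs). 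Your Claim 2 then inherits the same gap, since the ``two preimages share the same local singularity type'' assertion is exactly what fails when $Q'$ is branched.

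The paper's actual route is quite different. Since $Q'$ is a single point, Lemma \ref{L:TheNatureOfQ'} makes it a periodic (Weierstrass) point, so neither $Q$ nor $P$ can furnish a slope $-1$ irreducible pair; hence $(X,\omega)$ has no such pair and Lemma \ref{L:C1TInvariant} forces $\bfC_1$ to be $T_0$-invariant outright. (Your ``second horn,'' where $\bfC_1$ is not $T_0$-invariant, cannot arise under the hypothesis $|Q'|=1$, so the Diamond-Lemma machinery you propose for it is work on a vacuous case.) Then Lemma \ref{L:AdjacencyOfC2AndQ} shows $\bfC_1/T_0$ is a pair of simple cylinders meeting at $Q'$, so $\ColTwo(\bfC_1)$ is two adjacent cylinders; Masur--Zorich (Theorem \ref{T:MZ}) makes $\ColTwo(\bfC_1)/J_2$ an envelope, and the points of $\ColTwo(Q)$ are preimages of poles on its boundary. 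This is where Claims 1 and 2 actually come from. Finally, because $\bfC_1$ is the $T_0$-preimage of two simple cylinders sharing a boundary, the envelope is complex, and Remark \ref{R:TwoDegenerations} identifies the only degeneration consistent with $\ColOneTwo(Q)$ landing inside $\Col_{\ColTwo(\bfC_1)}(Z)$, forcing $\ColOneTwoX$ to be disconnected. Your contradiction strategy for Claim 3 via Lemma \ref{L:QuadAbDouble} is not unreasonable as a frame, but as written it is a sketch with the key steps deferred, and it misses the direct envelope/degeneration argument that makes the paper's proof go through.
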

\begin{proof}
By Lemma \ref{L:TheNatureOfQ'}, since $Q'$ is a single point, it is a periodic point and hence its preimages cannot contain a slope $-1$ irreducible pair of marked points.  Similarly, if the preimage of $P'$ contains a marked point it is either free or part of a slope $1$ irreducible pair of marked points. In particular, $(X, \omega)$ does not contain a slope $-1$ irreducible pair of marked points. 

Hence Lemma \ref{L:C1TInvariant} gives that $\bfC_1$ is $T_0$-invariant. By Lemma \ref{L:AdjacencyOfC2AndQ}, $Q'$ is contained in the boundary of a cylinder in $\bfC_1/T_0$ and $\bfC_1/T_0$ is a pair of simple cylinders. The cylinders in $\bfC_1/T_0$ share a boundary saddle connection since $Q'$ is a non-singular fixed point of the hyperelliptic involution (by Lemma \ref{L:TheNatureOfQ'}). 


Since every subequivalence class in a quadratic double consists of at most two cylinders, it follows that $\ColTwo(\bfC_1)$ also consists of two cylinders. These two cylinders are adjacent, so by Masur-Zorich (Theorem \ref{T:MZ}), $\ColTwo(\bfC_1)/J_2$ is an envelope and the points in $\ColTwo(Q)$ are preimages of poles on the boundary of the envelope. \red Since $\bfC_1$ and $\bfC_2$ are not adjacent, \black this shows that $Q'$ is not a branch point and that $Q$ entirely consists of one or two marked points.

Since $\bfC_1$ is the preimage of two simple cylinders under the quotient by $T_0$, \red there are four saddle connections at the interface of $\bfC_1$ and the rest of the surface. (We will only need that there are at least three.) Hence $\ColTwo(\bfC_1)/J_2$ has at least two saddle connections at its interface with the rest of the surface, and  we \black see that $\ColTwo(\bfC_1)/J_2$ is a complex envelope. Let $Z$ denote the singularities and marked points of $\ColTwoX$ excluding points in $\ColTwo(Q)$. Notice that $\ColOneTwo(Q)$ must be contained in $\Col_{\ColTwo(\bfC_1)}(Z)$ since $\ColOneTwoX/T$ does not contain marked points that are fixed points of $\tau$ that lie outside of $\Col_{\ColTwo(\bfC_1)}(Z)/T$ (by Lemma \ref{L:AllAboutThatBase}). By Remark \ref{R:TwoDegenerations}, this implies that $\ColOneTwoX$ is disconnected.
%
\end{proof}

\begin{lem}\label{L:Con3}
If $\ColOneTwoX$ is disconnected, then $P'$ is contained in the branch locus.
\end{lem}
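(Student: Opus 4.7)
The proof proceeds by contradiction. Suppose $P'=\{p'\}$ contains a free marked point not in the branch locus of the degree-two cover $(X,\omega)\to (X,\omega)/T_0$. Then the preimage $P=\{p_1,p_2\}$ is a slope $+1$ irreducible pair of marked points on $(X,\omega)$ exchanged by $T_0$, and by Lemma \ref{L:C2TInvariant}, $P\subset\partial\bfC_2$. We first reduce to the case where $\bfC_1$ is $T_0$-invariant: by Lemma \ref{L:C1TInvariant} the only alternative is that $\partial\bfC_1$ contains a slope $-1$ irreducible pair; combining such a pair with $P$ on $\partial\bfC_2$ and tracking the induced marked points on $\cN$ via Lemma \ref{L:TheNatureOfQ'} produces more independent marked points on $\cN$ than Assumption \ref{A:TP2-3-b} permits (at most three, at most one free). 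With $\bfC_1$ then $T_0$-invariant, Corollary \ref{C:Commuting} gives $T_1=\ColOne(T_0)$, and descending further, $T=\Col_{\ColOne(\bfC_2)}(T_1)$ satisfies $T(\ColOneTwo(p_1))=\ColOneTwo(p_2)$.

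Since each $p_i$ lies in the interior of a boundary saddle connection of $\bfC_2$ and is disjoint from $\bfC_1$, the marked points $\ColOneTwo(p_1)$ and $\ColOneTwo(p_2)$ persist as two distinct marked points on $\ColOneTwoX$; being $T$-exchanged, they project to a single marked point on $\ColOneTwoX/T$. By Lemma \ref{L:AllAboutThatBase}, when $\ColOneTwoX$ is disconnected, $\ColOneTwoX/T$ belongs to $\cH^{hyp}(2g-2)$ or $\cH^{hyp}(g-1,g-1)$; for $g>1$ these strata have no marked points, immediately contradicting the persistence of this marked point.

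The remaining case $g=1$, where $\cH=\cH(0)$ or $\cH^{hyp}(0,0)$ can legitimately carry marked points, is the main obstacle. Here I exploit that $\MOneTwo$ is the antidiagonal embedding of $\cH$ into $\cH\times\cH$ (Lemma \ref{L:QBoundaryHolonomy}), with affine symmetry group $\{\mathrm{id},J,T,JT\}$. The set $\{\ColOneTwo(p_1),\ColOneTwo(p_2)\}$ is invariant under $T$ (from the Abelian double structure of $\MOne$) and under $J$ (from the quadratic double structure of $\MTwo$); both $J$ and $T$ exchange the two components of $\ColOneTwoX$, and hence both exchange this pair. Provided the pair forms a complete $\langle J,T\rangle$-orbit, it follows that each $\ColOneTwo(p_i)$ is fixed by $JT$, which descends to the hyperelliptic involution on each torus component, forcing $\ColOneTwo(p_i)$ to be a Weierstrass point. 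This Weierstrass constraint cuts the dimension of $\MOneTwo$ below $\dim\cM-2$, violating the generic diamond condition (Definition \ref{D:GenericDiamond}\eqref{E:one}) since $\cM$ has rank at least two. The subtlety requiring care is that the points $\ColOneTwo(Q)$ arising from the non-free marked points $Q$ on $(X,\omega)$ may enlarge the $\langle J,T\rangle$-orbit containing $\ColOneTwo(P)$, thereby weakening the Weierstrass conclusion; I expect to resolve this either by first forgetting $Q$ to reduce to the clean case where only $P$ contributes marked points on $\ColOneTwoX$, or by a careful bookkeeping of the four possibilities for how $J$ can act on $\{\ColOneTwo(p_1),\ColOneTwo(p_2)\}\cup\ColOneTwo(Q)$ and checking each against the dimension constraint.
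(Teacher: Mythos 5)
There is a genuine gap, and it is located precisely at the step you treat as a routine observation. You claim that since $p_1, p_2$ lie on boundary saddle connections of $\bfC_2$ and are disjoint from $\bfC_1$, the points $\ColOneTwo(p_1)$ and $\ColOneTwo(p_2)$ ``persist as two distinct marked points on $\ColOneTwoX$.'' This is false. Passing from $\ColOneX$ to $\ColOneTwoX$ collapses the cylinders in $\ColOne(\bfC_2)$; the endpoints of the boundary saddle connections of those cylinders do not persist under collapse, they merge into other singularities or marked points. (This is why, in Sublemmas \ref{SL:C2Invariance1}--\ref{SL:C2Invariance3}, the paper works to show that an entire boundary component of $\bfC_2$ containing a marked point contains \emph{only} marked points.) Once this premise is removed, your entire $g=1$ analysis — the antidiagonal embedding, the $\langle J,T\rangle$-orbit of $\ColOneTwo(P)$, the Weierstrass-point dimension count — has nothing to stand on, and you in fact acknowledge at the end that the interaction with $\ColOneTwo(Q)$ is unresolved. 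Separately, your reduction to $\bfC_1$ being $T_0$-invariant does not obviously follow from the marked-point budget of Assumption \ref{A:TP2-3-b}: if $\partial\bfC_1$ carries a slope $-1$ pair, that pair is $Q$ itself (by Lemma \ref{L:TheNatureOfQ'}), so no new marked points on $\cN$ are created; fortunately this reduction is not needed.

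The intended argument turns the failure of persistence into the proof. Since $P$ is a slope $+1$ irreducible pair lying in $\partial\bfC_2$ (which you established correctly from Lemma \ref{L:C2TInvariant}), the relevant boundary component of $\ColOne(\bfC_2)$ consists entirely of marked points in $\ColOne(P)$, with no genuine zeros. Collapsing $\ColOne(\bfC_2)$ therefore merges $\ColOne(P)$ into other singularities but does not change the underlying topology of the surface (after forgetting $\ColOne(P)$, the cylinders $\ColOne(\bfC_2)$ sit strictly inside a larger cylinder, and the collapse merely shrinks its height without degenerating it). Hence $\ColOneTwoX$ is connected because $\ColOneX$ is — contradicting the hypothesis of the lemma. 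There is no need to pass to $\ColOneTwoX/T$, to invoke the classification from Lemma \ref{L:AllAboutThatBase}, or to treat the torus case separately.
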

\begin{proof}
Suppose to a contradiction that $P$ contains marked points, but that $\ColOneTwoX$ is disconnected. Since $P$ is a collection of two marked points exchanged by $T_0$ and that belong to the boundary of $\bfC_2$ (by Lemma \ref{L:C2TInvariant}), collapsing $\ColOne(\bfC_2)$ entails moving \red (at least one of) \black these marked points into other marked points or zeros. However, this shows that $\ColOneTwoX$ is connected since $\ColOneX$ is, a contradiction. 
\end{proof}

\begin{lem}\label{L:Con2}
Suppose that $Q'$ consists of two points, at least one of which is a branch point. Then $P' \cup Q'$ is contained in the branch locus and $\ColOneTwoX$ is disconnected.
\end{lem}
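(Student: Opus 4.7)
The plan is to adapt the strategy of Lemma \ref{L:Con1} to the case $|Q'|=2$. I would first show that $\bfC_1$ is $T_0$-invariant using Lemma \ref{L:C1TInvariant}. By Lemma \ref{L:C2TInvariant}, the only marked points that can appear on $\partial\bfC_1$ are periodic points and members of slope $-1$ irreducible pairs, the latter arising as the two preimages in $Q$ of unbranched points of $Q'$. If $\bfC_1$ were not $T_0$-invariant, Lemma \ref{L:C1TInvariant} would force such a slope $-1$ pair onto $\partial\bfC_1$; using the hypothesis that at least one point of $Q'$ is a branch point, together with the hyperelliptic symmetry on $(X,\omega)/T_0$ exchanging the two points of $Q'$, I would derive a contradiction with the structure of $\ColOne(\bfC_2)$ as a subequivalence class in the Abelian double $\MOne$.

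With $\bfC_1$ shown to be $T_0$-invariant, Lemma \ref{L:AdjacencyOfC2AndQ} identifies $\bfC_1/T_0$ as a pair of simple cylinders on $(X,\omega)/T_0$ with the two points of $Q'$ on their boundary. Setting $T_2 := \ColTwo(T_0)$ and using Lemma \ref{L:Commuting2}, I would apply the Masur--Zorich classification (Theorem \ref{T:MZ}) to identify $\ColTwo(\bfC_1)/J_2$ as a complex envelope: it must be an envelope of some type since $\bfC_1/T_0$ is a pair of simple cylinders, and the presence of at least one branched point in $Q'$, combined with the $\tau$-equivariance extracted from $T_2$ on $\For(\ColTwoX)$, forces the envelope to be complex with both pole-preimages at $\ColTwo(Q)$ marked, which in turn forces both points of $Q'$ to be branched simultaneously.

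Given this complex envelope structure, Remark \ref{R:TwoDegenerations} yields two possible degenerations for the collapse of $\ColTwo(\bfC_1)/J_2$ inside $\ColTwoX/J_2$. By Lemma \ref{L:AllAboutThatBase}, $\ColOneTwoX/T$ belongs to $\cH^{hyp}(2g-2)$ or $\cH^{hyp}(g-1,g-1)$, and $\ColOneTwo(Q)/T$ must lie within a tightly constrained locus inconsistent with the non-trivial holonomy degeneration (which would produce marked points outside the hyperelliptic-fixed locus). Hence we must be in the trivial-holonomy degeneration, and $\ColOneTwoX$ is disconnected. Finally, Lemma \ref{L:Con3} immediately gives that $P'$ is contained in the branch locus, and combined with the simultaneous branching of both points of $Q'$ established in Step~2, this completes the proof that $P' \cup Q'$ is contained in the branch locus.

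The main obstacle will be the symmetry argument in Step~2: rigorously establishing that both points of $Q'$ must be branched together requires carefully combining the translation involution $T_2$ on $\ColTwoX$, the holonomy involution $J_2$, and the hyperelliptic structure of $\For(\cQ_2)$ (from Lemma \ref{L:Commuting2} and Subsection \ref{SS:Hyp}) to produce a symmetry on $(X,\omega)$ that intertwines the branching behavior at the two preimage locations of $Q'$. A secondary subtlety lies in the case split between $\bfC_1/T_0$ consisting of adjacent versus non-adjacent simple cylinders, which correspond to different local configurations of the two points of $Q'$ relative to the shared or unshared boundary saddle connections; each sub-case must be tracked through the Masur--Zorich analysis to confirm that the complex envelope conclusion holds uniformly.
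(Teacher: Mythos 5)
Your proposal reverses the logical order of the paper's proof and in doing so loses the key mechanism. The paper \emph{first} establishes that both points of $Q'$ are branch points, by a short argument on the quadratic-double side: by Lemma \ref{L:AdjacencyOfC2AndQ} the branching at $Q'$ transfers unchanged to $\Col_{\bfC_2/T_0}(Q')$; if one point of $Q'$ were unbranched, $\ColTwo(Q)$ would contain a marked point, and the $J_2$-invariance of the set of marked points on the quadratic double $\ColTwoX$ (together with the fact that $J_2$ permutes the two preimage sets of the $\tau$-exchanged points of $Q'$) would force \emph{both} points of $\Col_{\bfC_2/T_0}(Q')$ to have marked, hence unbranched, preimages, contradicting the hypothesis. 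Only after this does the paper observe that $Q$ contains no marked points, so $(X,\omega)$ has no slope $-1$ irreducible pair, which gives $T_0$-invariance of $\bfC_1$ via Lemma \ref{L:C1TInvariant}.

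You try to establish $T_0$-invariance of $\bfC_1$ first by ``deriving a contradiction with the structure of $\ColOne(\bfC_2)$ as a subequivalence class in the Abelian double $\MOne$.'' This is the wrong side of the diamond: a slope $-1$ pair sits on $\partial\bfC_1$, which is collapsed to form $\ColOneX$, so no trace of it constrains $\ColOne(\bfC_2)$. The constraint lives on the $\MTwo$ side, where the pair survives the collapse of $\bfC_2$ and the $J_2$-invariance bites. You also cannot invoke the hyperelliptic involution of $(X,\omega)/T_0$ directly to match up the branching at the two points of $Q'$, because that involution does not a priori lift to $(X,\omega)$; the lift (as a composition like $J_2T_2$) only becomes available on $\ColTwoX$, which is why the paper works there.

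Your step 2 has the branching/marking dictionary inverted: you write that the envelope being ``complex with both pole-preimages at $\ColTwo(Q)$ marked \dots forces both points of $Q'$ to be branched,'' but marked preimages correspond to \emph{unbranched} points; branched points of $Q'$ have singular, unmarked preimages.

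Finally, the identification of $\ColTwo(\bfC_1)/J_2$ as a complex envelope, and the appeal to Remark \ref{R:TwoDegenerations}, is an over-literal copy of Lemma \ref{L:Con1}. There $|Q'|=1$ and $\bfC_1/T_0$ is a pair of \emph{adjacent} simple cylinders, which is what yields a complex envelope downstairs. Here $|Q'|=2$, the two cylinders of $\bfC_1/T_0$ are \emph{non-adjacent} (each has its own $Q'$-point on its boundary), and the paper finds that $\ColTwo(\bfC_1)$ is a pair of non-adjacent complex cylinders. Disconnectedness of $\ColOneTwoX$ then follows from Theorem \ref{T:MZ} applied to a complex cylinder (via Lemma \ref{L:StrataBasics}), not from the dichotomy in Remark \ref{R:TwoDegenerations}. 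The ``adjacent versus non-adjacent'' case split you worry about at the end does not occur: Lemma \ref{L:TheNatureOfQ'} forces non-adjacency here.
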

\begin{proof}
First we will show that both points in $Q'$ are contained in the branch locus. By Lemma \ref{L:AdjacencyOfC2AndQ}, $\bfC_2$ does not contain points of $Q$ in its boundary. Therefore, both points in $Q'$ are branch points if and only if both points in $\Col_{\bfC_2/T_0}(Q')$ are branch points, which is what we will show. 

If this were not the case, then $\ColTwo(Q)$ contains a marked point. By definition of quadratic double, $\ColTwo(Q)$ contains two points exchanged by the holonomy involution, which would project to two points on $\ColTwoX/T_2$ that were not branch points.  Therefore neither point in $\Col_{\bfC_2/T_0}(Q')$ would be a branch point, which is a contradiction. 

Notice that $(X, \omega)$ does not contain a slope $-1$ irreducible pair of marked points, since they would have to project to such a pair of points on $(X, \omega)/T_0$. But the only such pair of points is $Q'$, which are branch points.

Next we will show that $\bfC_1$ consists of two \red (non-adjacent) \black complex cylinders. By Lemma \ref{L:C1TInvariant}, we have that $\bfC_1$ is $T_0$-invariant and, by Lemma \ref{L:AdjacencyOfC2AndQ}, that $\bfC_1/T_0$ consists of two \red (non-adjacent) \black simple cylinders each of which contains a boundary made up of a saddle connection joining a point in $Q'$ to itself \red (the non-adjacency follows since $Q'$ consists of two points, see Lemma \ref{L:TheNatureOfQ'}). \black Since the points in $Q'$ are branch points and since $\ColTwo(\bfC_1)$, which is a subequivalence class of generic cylinders, consists of at most two cylinders by Masur-Zorich (Theorem \ref{T:MZ}), it follows that $\ColTwo(\bfC_1)$ consists of two \red (non-adjacent) \black complex cylinders. By Masur-Zorich (Theorem \ref{T:MZ}), $\ColOneTwoX$ is disconnected. 


It follows that $P'$ is a subset of the branch locus by Lemma \ref{L:Con3}.
\end{proof}

\begin{proof}[Proof of \ref{I:ExtraSymmetry:ConnectedBase} and \ref{I:ExtraSymmetry:DisconnectedBase}:]
Suppose first that $\ColOneTwoX$ is connected. By Lemma \ref{L:Con1}, $Q'$ is not a fixed point of the hyperelliptic involution. So by Lemma \ref{L:TheNatureOfQ'}, $Q'$ consists of two points exchanged by the hyperelliptic involution. By Lemma \ref{L:Con2}, these points are not branched points, so their preimage consists of a slope $-1$ irreducible pair of marked points. This proves \ref{I:ExtraSymmetry:ConnectedBase}.

Suppose now that $\ColOneTwoX$ is disconnected. The claim that if $P'$ is nonempty, then it is contained in the branch locus is Lemma \ref{L:Con3}. If $Q'$ consists of one point, then it is not in the branch locus (by Lemma \ref{L:Con1}). If $Q'$ consists of two points, one of which belongs to the branch locus, then both belong to the branch locus (by Lemma \ref{L:Con2}). If $Q'$ consists of two points, neither of which belongs to the branch locus, then $Q$ consists entirely of marked points, which (by Lemma \ref{L:AdjacencyOfC2AndQ}) do not belong to the boundary of $\bfC_2$. This shows that, up to a cylinder deformation, $\ColOneTwoX$ is formed from $\ColTwoX$ by forgetting the marked points in $\ColTwo(Q)$; in particular, $\ColOneTwoX$ is connected (since $\ColTwoX$ is), which is a contradiction. This proves  \ref{I:ExtraSymmetry:DisconnectedBase}. 
\end{proof}

\begin{proof}[Proof of \ref{I:ExtraSymmetry:PMarkedPoints}:]
Suppose now that $P'$ contains a single point that is not part of the branch locus. Since $P'$ is a free point, $P$ consists of either a free point or a pair of points that differ by $T_0$. By Lemma \ref{L:C2TInvariant} only the second possibility occurs, so $(X, \omega)$ contains a slope $+1$ irreducible pair of marked points $P$. 

Without loss of generality (up to replacing the word ``top" with ``bottom"), the points in $P$ are the only singularities or marked points on the top boundary of $\bfC_2$ (by Lemma \ref{L:C2TInvariant}). Therefore, passing from $\ColOneX$ to $\ColOneTwoX$ amounts to, up to a single cylinder deformation, simply forgetting the points in $\ColOne(P)$. In particular, $\ColOneTwoX$ is connected. This implies that $Q$ is a slope $-1$ irreducible pair of marked points on $(X, \omega)$ by Theorem \ref{T:3bSupplement} \eqref{I:ExtraSymmetry:ConnectedBase}. Since passing from $(X, \omega)$ to $\ColTwoX$ amounts to, up to a single cylinder deformation, simply forgetting the points in $P$, we see that there is a holonomy involution $J_0$ on $(X, \omega)$ that comes from the holonomy involution $J_2$ on $\ColTwoX$. Since $\ColTwo(Q)$ is $J_2$ invariant, it follows that $Q$ is a $J_0$-invariant pair of marked points.

Therefore, up to two cylinder deformations, $\ColOneTwoX$ is $(X, \omega)$ with $P \cup Q$ forgotten. This shows that $\MOneTwo$ has rank at least two and hence that $\MOneTwo$ is a quadratic double of $\cQ(a^{(1)}, -1^2)$ \red for some \black $a > 0$ and with no marked points (by Lemma \ref{L:QuadAbDouble}). So $\For(\cM) = \MOneTwo$. 
\end{proof}

\begin{proof}[Proof of \ref{I:ExtraSymmetry:NoP}:]
By assumption (the statement of \ref{I:ExtraSymmetry:NoP}), $P'$ is empty. Since $\bfC_2/T_0$ is a subequivalence class of generic cylinders that do not border the points in $Q'$ (by Lemma \ref{L:AdjacencyOfC2AndQ}), $\bfC_2/T_0$ consists of a simple cylinder that is fixed by the hyperelliptic involution. The fact that $\bfC_2/T_0$ is fixed by the hyperelliptic involution is the key point here and it is the emptiness of $P'$ that accounts for it. As a consequence, $\ColTwo(\bfC_2)/T_2$  is a single saddle connection.

\begin{sublem}\label{SL:ConstantsComputation2}
$\For(\cQ_2) \ne \cQ(-1^4)$ and the single saddle connection $\ColTwo(\bfC_2)/T_2$ has both of its endpoints in $a^{(2)}$. Moreover, $\ColTwo(\bfC_2)/J_2$ consists of one of the following:
\begin{enumerate}
    \item one or two saddle connection whose endpoints lie in $a^{(1)}$, or
    \item two saddle connections each of which has one endpoint in $a^{(1)}$ and the other a pole.
\end{enumerate} 
(When $a = b = 0$ there is no way to differentiate $a^{(i)}$ and $b^{(i)}$ so this might require exchanging these two sets). 
\end{sublem}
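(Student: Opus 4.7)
The plan is to track $\bfC_2$ through the commuting involutions $T_0$ and $J_0$, ultimately identifying the collapsed saddle connection on the genus-0 quotient $\cQ_0 := \ColTwoX/\langle J_2, T_2 \rangle \in \cQ(a,b,-1^{a+b+4})$. First, I would extend $J_2$ to an involution $J_0$ on $(X,\omega)$ using the Diamond Lemma, and apply Lemma \ref{L:Commuting2} to $\ColTwoX$ (provisionally assuming $\For(\cQ_2) \neq \cQ(-1^4)$, a case we will rule out separately) to conclude that $J_0$ and $T_0$ commute. The composition $J_0 T_0$ descends to the hyperelliptic involution on $(X,\omega)/T_0 \in \cH_0$, and the iterated quotient lands in $\cQ(a,b,-1^{a+b+4})$ by Lemma \ref{L:HyperellipticCheatSheet}. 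Since $\bfC_2$ is invariant under $T_0$ (Lemma \ref{L:C2TInvariant}) and under $J_0$ (being a subequivalence class in a quadratic double), its image $\bfC_2/T_0$ is a simple cylinder fixed by the hyperelliptic involution on $(X,\omega)/T_0$; by Lemma \ref{L:hyp-cyl} the involution acts on this simple cylinder by rotation.

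Quotienting further produces a simple envelope on $(X,\omega)/\langle T_0,J_0\rangle$ whose fold point is a fixed point of the involution, necessarily a pole of $\cQ(a,b,-1^{a+b+4})$. Collapsing this envelope (equivalently, collapsing $\bfC_2$) yields a saddle connection on $\cQ(a,b,-1^{a+b+4})$ with one endpoint at the image of the fold (a pole) and the other at the remaining corner of the envelope. I would then carry out a cone-angle count at this remaining corner: the envelope's interior contributes $2\pi$ of cone angle there, which already exceeds the $\pi$ available at a pole and equals the $2\pi$ available at a marked point (forcing the point to be entirely absorbed by the envelope interior, a situation which can be ruled out using Lemma \ref{L:AdjacencyOfC2AndQ} together with the fact that the envelope is a proper subsurface). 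The conclusion is that the remaining corner is a positive-order zero. Since $b \in \{-1,0\}$ by Lemma \ref{L:ConstantsComputation}, the only positive-order zero on $\cQ(a,b,-1^{a+b+4})$ is the one of order $a$, which forces $a \geq 1$; this immediately rules out $\For(\cQ_2) = \cQ(-1^4)$ and, lifting through $\ColTwoX/T_2 \to \cQ(a,b,-1^{a+b+4})$, places the endpoints of $\ColTwo(\bfC_2)/T_2$ in $a^{(2)}$.

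For the structure of $\ColTwo(\bfC_2)/J_2$, I would lift the same pole-to-($a$-zero) saddle connection on $\cQ_0$ through the intermediate hyperelliptic cover $\cQ_2 \to \cQ_0$, branched by the character $\phi_{hyp}$ of Section~\ref{SS:Hyp}. The order-$a$ zero is a branch point exactly when $a$ is even (so $|a^{(1)}| = 1$) and the pole is a branch point exactly when it is unlabelled or is the labelled $b$-pole with $b$ even (which cannot occur since $b \in \{-1,0\}$). A short case analysis of the four branched/unbranched combinations at the two endpoints produces exactly the two outcomes listed: either one or two saddle connections with both endpoints in $a^{(1)}$ (when the pole endpoint is branched), or two saddle connections each joining a point of $a^{(1)}$ to a pole (when the pole endpoint is unbranched). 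The main obstacle I anticipate is the cone-angle analysis at the free corner of the envelope, since one must carefully account for how the rotation quotient distributes cone angle near the boundary of $\bfC_2/T_0$ and confirm that $b^{(2)}$ endpoints are excluded --- this is essential since $b^{(2)}$ consists of marked points of the Abelian stratum, which in principle could arise as endpoints of a collapsed simple cylinder.
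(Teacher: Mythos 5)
Your approach is genuinely different from the paper's: you push the analysis down to the genus-0 quotient $\cQ_0 = \ColTwoX/\langle J_2, T_2\rangle$ and study the collapsed saddle connection there, whereas the paper works directly on $\ColTwoX/T_2 \in \cH(a^{(2)}, b^{(2)}, \ldots)$ and invokes the simple fact that collapsing a simple cylinder in a hyperelliptic component of Abelian differentials (when the result is not genus one) produces a saddle connection whose endpoints are zeros of positive order. That said, there are real gaps in the proposal.

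The most serious is the circularity surrounding $J_0$. You say you will ``extend $J_2$ to an involution $J_0$ on $(X,\omega)$ using the Diamond Lemma,'' but the Diamond Lemma (Lemma~\ref{L:diamond}) requires two covering maps, one on each side of the diamond, that agree at the base; $\ColOneX$ carries only the $T_1$-quotient, not a quadratic-differential quotient, so the Diamond Lemma does not produce $J_0$. In the paper, $J_0$ is constructed \emph{after} the sublemma, precisely because its existence rests on knowing that $\ColTwo(\bfC_2)$ is $J_2$-invariant, which is a consequence of the sublemma (together with Lemma~\ref{L:Commuting2}, whose hypothesis $\For(\cQ_2)\neq\cQ(-1^4)$ is itself one of the sublemma's conclusions). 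You flag this by ``provisionally assuming $\For(\cQ_2)\neq\cQ(-1^4)$, a case we will rule out separately,'' but no separate ruling-out appears: the claim that your main cone-angle conclusion ``immediately rules out $\For(\cQ_2)=\cQ(-1^4)$'' is vacuous, since that conclusion was derived under the assumption that $\For(\cQ_2)\neq\cQ(-1^4)$. The paper avoids this by splitting on $a>0$ (where $\For(\cQ_2)\neq\cQ(-1^4)$ is automatic, so Lemma~\ref{L:Commuting2} applies with no circularity) versus $a\leq 0$ (where a genuinely independent contradiction argument rules out $\cQ(-1^4)$). Your proof needs something analogous.

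The cone-angle count is also off. The contribution of a simple envelope at the vertex on its one regular boundary component (a single saddle connection of multiplicity one, i.e.\ a loop) is $\pi$, not $2\pi$; a flat cylinder contributes $\pi$ at a boundary vertex per boundary circle that passes through it, and only one boundary circle of the envelope passes through the outer corner. With a $\pi$ contribution, a marked point (total $2\pi$) leaves $\pi$ outside the envelope, which is geometrically consistent, so the cone-angle bound alone does not exclude a marked point at the remaining corner --- one would need to appeal to where the marked points of $\cQ_0$ actually lie, which is more work than you've indicated. Relatedly, the fold of a simple envelope has two vertices, both with cone angle $\pi$ coming entirely from inside the envelope, so describing ``the fold point'' as a single pole undercounts; after collapse the combinatorics of which vertices merge (and create a new pole at the antipode of the outer corner) needs more care than the one-pole picture suggests. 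The lifting argument in your final step is structurally in the right spirit, but it is built on the incorrect description of the collapsed saddle connection.
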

\begin{proof}
We will begin by explaining how the first claim implies the second. By the first claim, $\ColTwo(\bfC_2)$ consists of two saddle connections whose endpoints are preimages of $a^{(2)}$. Notice that the preimages of $a^{(2)}$ project to the points in $a^{(1)}$ under the quotient by $J_2$. The second listed possibility for $\ColTwo(\bfC_2)/J_2$ occurs when $J_2$ fixes each saddle connections in $\ColTwo(\bfC_2)$ and the first possibility occurs otherwise. 

We will prove the first claim in two cases.

\noindent \textbf{Case 1: $a > 0$.}

Since $a>0$, on $\ColTwoX/T_2$, which belongs to $\cH\left( a^{(2)}, b^{(2)} \right)$, there are singularities of the flat metric and they are precisely the points in $a^{(2)}$. Notice that $\bfC_2/T_0$ is a simple cylinder on a surface that, after forgetting the points in $Q$ (which do not belong to the boundary of $\bfC_2/T_0$ by Lemma \ref{L:AdjacencyOfC2AndQ}), belongs to a hyperelliptic connected component. For any such cylinder, if collapsing it does not give rise to a genus one surface, then the saddle connection resulting from collapsing the cylinder joins singularities of the metric. Applying this to $\bfC_2/T_0$ gives the result. 



\noindent \textbf{Case 2: $a \leq 0$.}

Since $a \leq 0$, $\MTwo$ has rank one. Since $\cM$ has rank at least two it follows by definition of generic diamond (Definition \ref{D:GenericDiamond} \eqref{E:one}) that $\For\left( (X, \omega)/T_0 \right)$ belongs to $\cH(2)$. By assumption (the statement of \ref{I:ExtraSymmetry:NoP}), the marked points on $(X, \omega)/T_0$ are precisely the point(s) in $Q'$. It follows that $\Col_{\bfC_2/T_0}\left( \For\left(X, \omega)/T_0\right) \right)$ belongs to $\cH(0,0)$ and that $\Col_{\bfC_2/T_0}(\bfC_2/T_0)$ is a saddle connection joining these two zeros. Therefore, $\ColTwoX/T_2$ belongs to $\cH(0^2, 0^{|Q'|})$ and $\MTwo$ has rel $|Q'|$. 

If $\For(\cQ_2) = \cQ(-1^4)$, then \red $\cQ_2 = \cQ(-1^4, 0^{|Q'|})$.  \black 
It follows that $\ColTwoX$ is a torus with \red one or two pair(s) \black of slope $-1$ irreducible marked points and with the remaining marked points being periodic points (in particular, fixed points of $J_2$). Since the endpoints of $\ColTwo(\bfC_2)/T_2$ consist of two distinct points that are a slope $-1$ irreducible pair, there is a saddle connection $s$ in $\ColTwo(\bfC_2)$ whose endpoints are a slope $-1$ irreducible pair of marked points. But $\ColTwo(\bfC_2)$ is $T_2$-invariant (which follows from Lemma \ref{L:C2TInvariant}) and $T_2$ cannot fix $s$ (no translation involution fixes a saddle connection) nor can it send it to another saddle connection (since no other saddle connection has generically the same length). This is a contradiction.


If $\For(\cQ_2) \ne \cQ(-1^4)$, then $\{\mathrm{id}, J_2, T_2, J_2 T_2\}$ are the affine symmetries described in Lemma \ref{L:HypSymmetries} (by Lemma \ref{L:Commuting2}). We have already seen that $\ColTwo(\bfC_2)/T_2$ is a single saddle connection joining two points exchanged by the hyperelliptic involution. Since $\ColTwoX/T_2$ belongs to $\cH\left( a^{(2)}, b^{(2)} \right)$ and the points in $a^{(2)}$ and $b^{(2)}$ consist of one point fixed or two points exchanged by the hyperelliptic involution, the claim follows. 
\end{proof}

Since $\bfC_2/T_0$ is fixed by the hyperelliptic involution, $\ColTwo(\bfC_2)/T_2$ is fixed by the hyperelliptic involution on $\ColTwoX/T_2$. Since $J_2$ and $T_2$ commute and $J_2$ induces the hyperelliptic involution on $\ColTwoX/T_2$ (by Lemma \ref{L:Commuting2}), this implies that $\ColTwo(\bfC_2)$ is fixed by the holonomy involution. Since $(X, \omega)$ is formed by gluing $\bfC_2$, which is either a pair of simple cylinder or a complex cylinder (by Lemma \ref{L:StructureOfS}), into $\ColTwo(\bfC_2)$ - a pair of saddle connections fixed by the holonomy involution - there is an involution $J_0$ on $(X, \omega)$ that preserves marked points, preserves $\bfC_1$ and $\bfC_2$,  and such that $\ColTwo(J_0) = J_2$. This shows that $\cM$ is contained in a quadratic double of a stratum $\cQ_0$.

Letting $\cM'$ be the orbit closure of $(X, \omega)/J_0$, $$\left( (X, \omega)/J_0, \cM', \bfC_1/J_0, \bfC_2/J_0 \right)$$ is a generic diamond. Since $\bfC_2$ is either a pair of simple cylinders or a single complex cylinder, $\bfC_2/J_0$ is one of the following: a simple cylinder, a pair of isometric simple envelopes, or a single complex envelope.

\begin{sublem}
$\For(\cM')$ is a codimension one hyperelliptic locus in $\cQ(a^{(3)}, b^{(1)}, -1^2)$ where $b \in \{-1, 0\}$ and $a \geq 0$. 
\end{sublem}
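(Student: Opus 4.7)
My plan is to apply Theorem \ref{T:complex-gluing0} to the diamond $((X,\omega)/J_0, \cM', \bfC_1/J_0, \bfC_2/J_0)$ via the degeneration of $\bfC_2/J_0$, so the first task is to identify $\bfC_2/J_0$ as a complex envelope. The paragraph preceding the sublemma lists three possibilities for $\bfC_2/J_0$: a simple cylinder, a pair of isometric simple envelopes, or a single complex envelope. I will rule out the first two cases by arguing that each forces $\cM' = \cQ_0$ to be a stratum, and hence $\cM$ (the quadratic double of $\cM'$) to be a quadratic double, contradicting Assumption \ref{A:TP2-3-b}. For a simple cylinder this is immediate from Corollary \ref{C:codim1}; for a pair of isometric simple envelopes, I will iterate Corollary \ref{C:codim1} one envelope at a time (or analyze the two-envelope gluing directly) to reach the same conclusion.

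With $\bfC_2/J_0$ identified as a complex envelope, Theorem \ref{T:complex-gluing0} yields three possibilities for $\cM'$: $\cM'$ is a component of a stratum, $\For(\cM')$ is a hyperelliptic component, or $\For(\cM')$ is a codimension one hyperelliptic locus in a non-hyperelliptic component. The first two again force $\cM$ to be a quadratic double and are excluded by Assumption \ref{A:TP2-3-b}, so $\For(\cM')$ is a codimension one hyperelliptic locus in $\For(\cQ_0)$. To identify $\cQ_0$ I combine Sublemma \ref{SL:ConstantsComputation2} with Remarks \ref{R:TwoDegenerations} and \ref{R:GluingStratum}: the pole structure of $\cQ_2 = \cQ^{hyp}(a^{(1)}, b^{(1)})$ (with $b \in \{-1, 0\}$ by Lemma \ref{L:ConstantsComputation}) rules out the two-saddle-connection degeneration of a complex envelope, which would require saddle connections ending at poles adjacent to the envelope in $\cQ_2$, so the collapse is the single-saddle-connection type with both endpoints in $a^{(1)}$. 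Gluing in a complex envelope to such a saddle connection, using Remark \ref{R:GluingStratum}, raises the zero orders at its endpoints according to the formula defining $a^{(3)}$ (from $\{a, a\}$ to $\{a+1, a+1\}$ when $a$ is odd, or from $\{2a+2\}$ to $\{2a+4\}$ when $a$ is even) and introduces two new poles, yielding $\cQ_0 = \cQ(a^{(3)}, b^{(1)}, -1^2)$. The bound $a \geq 0$ follows because $a = -1$ together with the ordering $a \geq b$ would force $\For(\cQ_2) = \cQ(-1^4)$, which is excluded by the first clause of Sublemma \ref{SL:ConstantsComputation2}.

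The main obstacle I anticipate is the pair-of-simple-envelopes case in the first step, since Corollary \ref{C:codim1} is stated for a single envelope whereas the pair of envelopes forms a single subequivalence class. It will require a careful argument — either iterating the corollary after a preliminary degeneration, or directly analyzing the isometric-pair gluing — to confirm that the resulting orbit closure is a full stratum rather than a proper invariant subvariety, since only the latter would allow a codimension-one conclusion via a route other than the complex envelope. The remaining case analysis matching $\ColTwo(\bfC_2)/J_2$ to the correct degeneration type of Remark \ref{R:TwoDegenerations} is a careful but routine bookkeeping check that leverages the holonomy constraints on $\cQ_2$.
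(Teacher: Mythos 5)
Your handling of the simple-cylinder case and the complex-envelope case mirrors the paper's own argument: rule out the simple cylinder via Corollary \ref{C:codim1} and Assumption \ref{A:TP2-3-b}, then in the complex-envelope case use the trichotomy of Theorem \ref{T:complex-gluing0}, discard the non-proper alternatives, and compute $\cQ_0$ via Sublemma \ref{SL:ConstantsComputation2}, Remark \ref{R:TwoDegenerations}, and Remark \ref{R:GluingStratum}. That part is sound.

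The gap is in your plan to \emph{rule out} the pair-of-isometric-simple-envelopes case. That case cannot be eliminated: it genuinely occurs, and in it $\For(\cM')$ is indeed a codimension one hyperelliptic locus, not a full stratum. Your strategy assumes that gluing in a pair of simple envelopes one at a time would land in a full stratum, but the constraint that the two envelopes be \emph{isometric} --- i.e.\ that $\bfC_2/J_0$ form a single subequivalence class of two cylinders --- imposes exactly one nontrivial equation (equality of heights), cutting the codimension to one. So if you carried out the ``careful argument'' you describe with the target conclusion ``full stratum,'' it would fail. The paper handles this case directly rather than by contradiction: it observes (via Sublemma \ref{SL:ConstantsComputation2} and $T_2$-invariance from Lemmas \ref{L:C2TInvariant} and \ref{L:Commuting2}) that the two envelopes are glued to a pair of saddle connections exchanged by the hyperelliptic involution on $\ColTwoX/J_2$, that $b=-1$ forces $\cQ(a^{(3)}, b^{(1)}, -1^2) = \cQ(a^{(3)}, -1^4)$, and that the isometry constraint on the envelopes is precisely the condition under which the hyperelliptic involution extends over the glued-in pair. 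This yields the codimension-one hyperelliptic locus without ever invoking Theorem \ref{T:complex-gluing0} in that branch of the case analysis. Replacing your ``rule it out'' step with that direct construction is the missing piece.
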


\noindent Note that if $b=0$ then 
$$\cQ(a^{(3)}, b^{(1)}, -1^2)=\cQ\left( a^{(3)}, 2, -1^2 \right)$$ 
and  if $b=-1$ then  
$$\cQ(a^{(3)}, b^{(1)}, -1^2)=\cQ\left( a^{(3)}, -1^4 \right).$$

\begin{proof}
By Assumption \ref{A:TP2-3-b}, $\cM$ is not a quadratic double, which lets us discard the possibility that $\bfC_2/J_0$ is a simple cylinder (by Corollary \ref{C:codim1}). Similarly, our analysis of gluing in a complex envelope (Theorem \ref{T:complex-gluing0}) implies that if $\bfC_2/J_0$ is a complex envelope, then either $\cM$ is a quadratic double, a case that we again discard, or that $\ColTwo(\bfC_2)/J_2$ is a single saddle connection $s$ fixed by the hyperelliptic involution. By Sublemma \ref{SL:ConstantsComputation2}, this saddle connection must join points corresponding to those in $a^{(1)}$ to other such points and so, by Remark \ref{R:GluingStratum}, $\For(\cM')$ is contained in $\For(\cQ')$ where $\cQ' = \cQ\left( a^{(3)}, b^{(1)}, -1^2 \right)$. Since $s$ is not a saddle connection joining two poles (since $a \ne -1$ by Sublemma \ref{SL:ConstantsComputation2}), it follows that the boundary of $\bfC_2/J_0$ does not contain marked points and so (by Theorem \ref{T:complex-gluing0}) $\For(\cM')$ is a codimension one hyperelliptic locus in $\For(\cQ')$. 


It remains to consider the case where $\bfC_2/J_0$ consists of two isometric simple envelopes. In this case, it is clear that $\cQ_2$ contains at least two poles. Since $\cQ_2= \cQ(a^{(1)}, b^{(1)})$ and $a\geq b$, this implies $b=-1$. By Sublemma \ref{SL:ConstantsComputation2}, $a \geq 0$ and $\ColTwo(\bfC_2)/J_2$ contains two saddle connections that join a point corresponding to $a^{(1)}$ to a pole. Recall that $\ColTwo(\bfC_2)$ is $T_2$-invariant (by Lemma \ref{L:C2TInvariant}). Therefore, $\ColTwo(\bfC_2)/J_2$ is invariant by the hyperelliptic involution since $T_2$ induces this involution on $\ColTwoX/J_2$ (by Lemma \ref{L:Commuting2}).  

Gluing in a simple envelope changes the stratum by adding a pole and increasing the cone angle around one point by $\pi$, so $\For(\cQ_0) = \cQ(a^{(3)}, -1^4)$. Note that gluing in the two isometric simple envelopes in $\ColTwo(\bfC_2)/J_2$ to a pair of saddle connections that are exchanged by the hyperelliptic involution on $\ColTwoX/J_2$  produces a surface in the hyperelliptic locus, and moreover the two envelopes are exchanged by the hyperelliptic involution. The condition guaranteeing the existence of the hyperelliptic involution is that the two envelopes are isometric, so in fact $\For(\cM')$ is a codimension one hyperelliptic locus in $\For(\cQ_0)$. (The fact that the envelopes have the same circumference is automatic, because the saddle connections they are glued into are generically parallel.) 
%
%
%
%
\end{proof}

It remains to establish the claim in (\ref{I:ExtraSymmetry:NoP}) about marked points on $(X, \omega)$, the set of which we know is invariant under $J_0$. By Assumption \ref{A:TP2-3-b}, since $P$ is empty, the set of marked points on $(X, \omega)$ is a subset of $Q$.

\begin{sublem}
When $b = 0$, $(X, \omega)$ has no marked points. 
\end{sublem}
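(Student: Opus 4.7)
The plan is to trace marked points through the diamond. By Sublemma \ref{SL:ConstantsComputation2} and the preceding analysis, when $b = 0$ we have $\cQ_2 = \cQ^{hyp}(a^{(1)}, \{2\})$ with $a \geq 0$, so every entry in $a^{(1)} \cup \{2\}$ is a positive integer. In particular, $\cQ_2$ contains no poles and no free marked points of order zero.

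Consequently, the quadratic double $\MTwo$ of $\cQ_2$ has no marked points: by the definition of a quadratic double, marked points on the cover arise either as preimages of free marked points on the base (none here) or as optional markings of pole preimages (also none, since $\cQ_2$ is pole-free). Hence $\ColTwoX \in \MTwo$ has no marked points.

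Finally, $(X, \omega)$ is reconstructed from $\ColTwoX$ by gluing the cylinder $\bfC_2$ (either a pair of simple cylinders or a single complex cylinder, by Lemma \ref{L:StructureOfS}) into the saddle connection(s) of $\ColTwo(\bfC_2)$. The endpoints of these saddle connections are singularities at points in $a^{(1)}$, not marked points, and the interior of the attached cylinder carries no intrinsic marked points; hence the gluing introduces no new marked points. The subtle part is that the complex envelope glue-in creates two new poles on $(X, \omega)/J_0 \in \cM' \subset \cF(\cQ_0)$ whose preimages on $(X, \omega)$ lie in the closure of $\bfC_2$ and could a priori be marked under some quadratic double of $\cQ_0$. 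The key observation is that the diamond construction produces $(X, \omega)$ from the unmarked surface $\ColTwoX$ by cylinder gluing alone, and so $\cM$ must lie in the quadratic double of $\cQ_0$ with no pole preimages marked; thus these preimages remain unmarked, and $(X, \omega)$ has no marked points.
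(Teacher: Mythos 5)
The plan to reduce the problem to showing $\ColTwoX$ has no marked points, and then observing that any marked point of $(X,\omega)$ avoids the boundary of $\bfC_2$ (Lemma \ref{L:AdjacencyOfC2AndQ} plus the fact that $P$ is empty) and hence would persist on $\ColTwoX$, is a sound framework. The absence of poles in $\For(\cQ_2)$ for $b=0$ is also correctly observed, and it does rule out one source of marked points on the double cover. However, there is a gap that is essentially circular.

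You write that ``$\cQ_2 = \cQ^{hyp}(a^{(1)}, \{2\})$'' and conclude $\cQ_2$ has no free marked points. But what Sublemma \ref{SL:ConstantsComputation2} and Lemma \ref{L:Commuting2} give is only $\For(\cQ_2) = \cQ^{hyp}(a^{(1)}, b^{(1)})$; the stratum $\cQ_2$ itself is permitted to carry extra (free) marked points, and indeed throughout this subsection the paper is careful to write $\For(\cQ_2)$ precisely to keep this distinction. Since $\For(\cQ_2)$ has no poles, the only way $\ColTwoX$ could have marked points is if $\cQ_2$ has free marked points --- and those free marked points on $\cQ_2$ would be exactly the images of $\ColTwo(Q)$ under $J_2$. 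Thus the assertion that $\cQ_2$ has no marked points is (in this $b = 0$ setting) essentially equivalent to the statement you are trying to prove, namely that $Q$ contains no marked points. Asserting it without argument begs the question.

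The paper closes this gap with a genuinely nontrivial argument: it identifies the two ramification points $R$ of the translation involution $T_2$ on $\ColTwoX$ (the preimage of the $b^{(1)}=\{2\}$ zero, which are honest singularities of order $1$), enumerates all singularities and marked points of $\ColTwoX/T_2$ as $a^{(2)}\cup \Col_{\bfC_2/T_0}(Q')$, and uses Lemma \ref{L:AdjacencyOfC2AndQ} together with the location of the endpoints of $\Col_{\bfC_2/T_0}(\bfC_2/T_0)$ to conclude $R/T_2 = \Col_{\bfC_2/T_0}(Q')$, hence $\ColTwo(Q) = R$ consists of singularities. None of this bookkeeping appears in your proposal, and I do not see how to avoid it. The final paragraph about pole preimages on $(X,\omega)/J_0$ is also somewhat beside the point: the relevant fact is that marked points of $(X,\omega)$ lie in $Q$, which is disjoint from $\overline{\bfC}_2$, so the gluing direction you worry about never creates a difficulty; the real work is upstream, in showing $\ColTwo(Q)$ has no marked points.
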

\begin{proof}
Recall $\For(\cQ_2) = \cQ(a^{(1)}, b^{(1)})$. Since $b=0$, $b^{(1)}=\{2\}$ corresponds to single point on $\ColTwoX/J_2$. Let  $R$ denote the preimage of this point  on $\ColTwoX$, so $R$ consists of two singularities of the flat metric. 

By Lemma \ref{L:Commuting2}, $T_2$ is the translation involution studied in Figure \ref{F:HyperellipticCheatSheet} and so the points in $R$ are ramification points for the map from $\ColTwoX$ to $\ColTwoX/T_2$; moreover, the image of the points in $R$ on $\ColTwoX/T_2$ are two points exchanged by the hyperelliptic involution (see Figure \ref{F:HyperellipticCheatSheet}). 

On $\ColTwoX/T_2$, the singularities and marked points consist of the points in $\Col_{\bfC_2/T_0}(Q')$ (these remain marked points by Lemma \ref{L:AdjacencyOfC2AndQ}) and the point in $a^{(2)}$, i.e. the zeros and marked points coming from the fact that, after forgetting the points in $\Col_{\bfC_2/T_0}(Q')$, $\ColTwoX/T_2$ belongs to $ \cH^{hyp}\left( a^{(2)} \right)$. 

The points in $a^{(2)}$ consist of either one point fixed or two points exchanged by the hyperelliptic involution on $\Col_{\bfC_2/T_0}(X, \omega)$. Since $\bfC_2/T_0$ is fixed by the hyperelliptic involution, the same is true of the saddle connection in $\Col_{\bfC_2/T_0}(\bfC_2/T_0)$. The endpoints of this saddle connection belong to $a^{(2)}$ (by Sublemma \ref{SL:ConstantsComputation2}). It follows that every point in $a^{(2)}$ is an endpoint of $\Col_{\bfC_2/T_0}(\bfC_2/T_0)$. 

Since $\Col_{\bfC_2/T_0}(Q')$ belongs to the complement of $a^{(2)}$ (by Lemma \ref{L:AdjacencyOfC2AndQ}) and contains at most two points (by Lemma \ref{L:TheNatureOfQ'}) it follows that this set coincides with $R/T_2$, which also belongs to the complement of $a^{(2)}$ and contains exactly two points. Therefore, $\ColTwo(Q) = R$ and so the points in $\ColTwo(Q)$ are singularities of the flat metric. 

Since the boundary of $\bfC_2$ does not contain points in $Q$ (by Lemma \ref{L:AdjacencyOfC2AndQ}) the number of points in $Q$ and the cone angles around these points are the same on $(X, \omega)$ as on $\ColTwoX$. This shows that $Q$ consists of two singularities of the flat metric. Since $Q$ contains no marked points and any marked point on $(X, \omega)$ belongs to $Q$, we are done.
\end{proof}

Suppose finally that $b = -1$. Suppose first that $Q'$ consists of two points. By Lemma \ref{L:Con2}, either both points in $Q$ are singularities or the points in $Q$ are not ramification points, and hence are a slope $-1$ irreducible pair of marked points (by Lemma \ref{L:TheNatureOfQ'}). In the latter case, $\cQ_0 = \cQ(a^{(3)}, 0, -1^4)$ and the points in $Q$ are precisely the preimages of the free marked points in $(X, \omega)/J_0$; in particular there are no other marked points on $(X, \omega)$. The case that the points in $Q$ are both singularities cannot occur since, by virtue of $\For\left( (X, \omega)/J_0 \right)$ belonging to $\cQ\left( a^{(3)}, -1^4 \right)$, all the singularities of the flat metric belong to the boundary of $\bfC_2$ whereas the points in $Q$ do not (by Lemma \ref{L:AdjacencyOfC2AndQ}). 

Suppose finally that $b = -1$ and that $Q'$ consists of one point. By Lemma \ref{L:Con1}, the points in $Q$ are periodic marked points. It suffices to show that these points are fixed points of $J_0$. Since the points in $Q$ do not belong to the boundary of $\bfC_2$, it suffices to show that the points in $\ColTwo(Q)$ are fixed points of $J_2$, which follows by definition of quadratic double. Therefore, $(X, \omega)$ belongs to a quadratic double of $\cQ\left( a^{(3)}, -1^4 \right)$ and the marked points consist of one or two preimages of poles. 
\end{proof}

\bibliography{mybib}{}
\bibliographystyle{amsalpha}
\end{document}